\pgfplotsset{compat=1.16}
\numberwithin{equation}{section}
\theoremstyle{definition}
\newtheorem{definition}{Definition}[section]
\theoremstyle{plain}
\newtheorem{theorem}{Theorem}[section]
\newtheorem{proposition}[theorem]{Proposition}
\newtheorem{lemma}[theorem]{Lemma}
\newtheorem{corollary}[theorem]{Corollary}
\newtheorem{conjecture}[theorem]{Conjecture}
\providecommand{\customgenericname}{}
\newcommand{\newcustomtheorem}[2]{%
  \newenvironment{#1}[1]
  {%
   \renewcommand\customgenericname{#2}%
   \renewcommand\theinnercustomgeneric{##1}%
   \innercustomgeneric
  }
  {\endinnercustomgeneric}
}
\theoremstyle{remark}
\newtheorem*{remark}{Remark}
\newcommand{\R}{\mathbb{R}}
\newcommand{\Z}{\mathbb{Z}}
\newcommand{\matr}[3]{{#1}_{{#2}}^{\phantom{{#2}}{#3}}}
\newcommand{\CC}{\mathbb{C}}
\DeclareMathOperator{\Skew}{Skew}
\DeclareFontFamily{U}{mathx}{}
\DeclareFontShape{U}{mathx}{m}{n}{<-> mathx10}{}
\DeclareSymbolFont{mathx}{U}{mathx}{m}{n}
\DeclareMathAccent{\widehat}{0}{mathx}{"70}
\DeclareMathAccent{\widecheck}{0}{mathx}{"71}
\title{Scattering towards the singularity for the wave equation and \\ the linearized Einstein--scalar field system in Kasner spacetimes}
\author{Warren Li}
\affil{\small Princeton University, Department of Mathematics, Fine Hall, Washington Road, Princeton, NJ 08544, USA}
\begin{document}

\maketitle

\begin{abstract}
    We consider the scalar wave equation $\square_g \phi = 0$ and the linearized Einstein--scalar field system around generalized Kasner spacetimes with spatial topology $\mathbb{T}^D$. In suitable regimes for the Kasner exponents, it is known that solutions to such equations arising from regular Cauchy data (e.g.~at $t=1$) have certain quantitative blow-up asymptotics near the initial time (i.e~$t=0$) singularity of Kasner. For instance, solutions to the wave equation behave as $\phi(t, x) \approx \psi_{\infty} (x) \log t + \varphi_{\infty}(x)$ near $t = 0$.

    This article provides a description, and proof, of a scattering theory for the above equations, linking Cauchy data at $t=1$ and suitable asymptotic data at $t = 0$ in Kasner. For the scalar wave equation, this means a Hilbert space isomorphism between $(\phi, \partial_t \phi)$ at $t = 1$ and the functions $(\psi_{\infty}, \varphi_{\infty})$. A curious detail is that certain quantities e.g.~$\psi_{\infty}$, feature a gain of $1/2$ a derivative when compared to $\partial_t \phi$ at $t = 1$. 

    The study of the linearized Einstein--scalar field system reveals further interesting phenomena, including differences between diagonal and off-diagonal components of certain tensors in the scattering theory, and that the losses of derivatives feature a sensitive dependence on the anisotropy of the background Kasner spacetime. In fact, though our result holds for the entire subcritical regime of background Kasner exponents, the number of derivatives lost and gained in the scattering theory can become unbounded as one nears the boundary of this regime.
%
%    Our proof features a frequency space decomposition, and a novel splitting of energy estimates into a low-frequency and high-frequency regime where the classification of frequencies is time-dependent. We remark that our sharp energy estimates allow us to use a transported coordinate gauge in the whole subcritical regime of background Kasner exponents, without resorting to frames as in \cite{FournodavlosRodnianskiSpeck}.
%
%    We hope that the linear scattering theory provides a key step towards a deeper understanding of Kasner-like spacelike singularity formation for the fully nonlinear Einstein--scalar field system, for instance a nonlinear scattering theory between Cauchy data and suitable asymptotic data.
\end{abstract}

\setcounter{tocdepth}{2}
\tableofcontents

%auto-ignore
\section{Introduction} \label{sec:intro}

\subsection{Background} \label{sub:intro_background}
The (generalized) Kasner spacetimes \cite{Kasner} are a finite-dimensional family of spacetimes, defined on the manifold $\mathcal{M}_{Kas} = (0, + \infty) \times \mathbb{T}^D$ (we view $\mathbb{T}^D$ as $[- \pi, \pi]^D$ with endpoints identified) and equipped with the following Lorentzian metric $g_{Kas}$:
\begin{equation} \label{eq:kasner}
    g_{Kas} = - dt^2 + \sum_{i = 1}^D t^{2p_i} (dx^i)^2.
\end{equation}
The parameters $p_i \in \R$ are called the \emph{Kasner exponents}. 

We consider the Kasner metric \eqref{eq:kasner} as a family of cosmological\footnote{Following the relativistic literature, the word cosmological refers to a gloally hyperbolic spacetimes having a closed spatial topology.} solutions to the Einstein field equations coupled to a massless scalar field $\phi: \mathcal{M}_{Kas} \to \R$:
\begin{equation} \label{eq:einstein}
    \mathbf{G}_{\mu\nu}[g] \coloneqq \mathbf{Ric}_{\mu\nu}[g] - \frac{1}{2} \mathbf{R}[g] \, g_{\mu\nu} = 2 \, \mathbf{T}_{\mu\nu} [\phi],
\end{equation}
\begin{equation} \label{eq:energymomentum}
    \mathbf{T}_{\mu\nu}[\phi] \coloneqq \mathbf{D}_{\mu} \phi \, \mathbf{D}_{\nu} \phi - \frac{1}{2} \mathbf{D}_{\sigma} \phi \, \mathbf{D}^{\sigma}\phi \, g_{\mu\nu}.
\end{equation}
Here $\mathbf{D}$, $\mathbf{Ric}_{\mu\nu}[g]$ and $\mathbf{R}[g]$ denote the Levi-Civita connection, the Ricci curvature and the scalar curvature of the spacetime metric $g$, while $\mathbf{T}_{\mu\nu}[\phi]$ denotes the \emph{energy-momentum} tensor of the scalar field $\phi$. It follows from the contracted Bianchi equation and \eqref{eq:einstein}--\eqref{eq:energymomentum} that the scalar wave equation holds:
\begin{equation} \label{eq:wave0}
    \square_g \phi \coloneqq \left( g^{-1} \right)^{\mu\nu} \mathbf{D}_{\mu} \mathbf{D}_{\nu} \phi = 0.
\end{equation}

The Kasner metrics \eqref{eq:kasner}, complemented by the following expression for $\phi$:
\begin{equation} \label{eq:kasner_phi}
    \phi_{Kas} = p_{\phi} \log t,
\end{equation}
are solutions to the Einstein--scalar field system \eqref{eq:einstein}--\eqref{eq:wave0} if and only if the following two algebraic conditions, known as the (generalized) \emph{Kasner relations}, between the Kasner exponents $p_i \in \R$ and the scalar field coefficient $p_{\phi} \in \R$, hold:
\begin{equation} \label{eq:kasner_relations}
    \sum_{i=1}^D p_i = 1, \qquad \sum_{i=1}^D p_i^2 + 2 p_{\phi}^2 = 1. 
\end{equation}

The Kasner spacetimes feature a spacelike past curvature singularity at $t = 0$, and play an important role in the physics literature regarding singularity formation for Einstein's equations, see \cite{kl63, bkl71, bk72, DHRW}. In particular, at least in certain regimes, it is expected there exists a reasonably large class of spacetimes containing spacelike singularities which are locally described by a Kasner spacetime, though whose Kasner exponents have spatially dependence. We return to these heuristics in Section~\ref{sub:bkl}. 

In this article, we determine a \emph{scattering theory} (between Cauchy data at $t=1$ and suitable asymptotic data living at the $t=0$ singularity) for two model \emph{linear} hyperbolic problems in the Kasner spacetimes $(\mathcal{M}_{Kas}, g_{Kas})$, as an initial step towards understanding the analogous scattering problem for the nonlinear Einstein--scalar field system \eqref{eq:einstein}--\eqref{eq:energymomentum}. By scattering, we mean a Hilbert space isomorphism between Cauchy data for our hyperbolic evolution equations at the $t=1$ Cauchy hypersurface $\Sigma_1 = \{1\} \times \mathbb{T}^D \subset \mathcal{M}_{Kas}$ of Kasner, and suitable asymptotic data which can be thought to live at the $t=0$ boundary of Kasner, between Hilbert spaces which must be defined as part of the theory.

The first of our model problems will be the scalar wave equation in a fixed Kasner background:
\begin{equation} \label{eq:wave}
    \square_{g_{Kas}} \phi \coloneqq (g_{Kas}^{-1})^{\mu \nu} \mathbf{D}_{\alpha} \mathbf{D}_{\beta} \phi = 0.
\end{equation}
In the $(t, x^i)$ coordinates of \eqref{eq:kasner}, this can be written as the following PDE:
\begin{equation} \label{eq:wave_coord}
    - \partial_t^2 \phi - \frac{\partial_t \phi}{t} + \sum_{i=1}^D t^{- 2 p_i} \partial_{x^i}^2 \phi = 0.
\end{equation}

The standard theory of linear hyperbolic PDE allows one to study the following initial value problem: if one poses regular Cauchy data $(\phi, \partial_t \phi) \in H^{s+1}(\mathbb{T}^D) \times H^s(\mathbb{T}^D)$ on the spacelike Cauchy hypersurface $\Sigma_1$, then there exists a unique regular solution to \eqref{eq:wave} on the entire spacetime $\mathcal{M}_{Kas}$ attaining the initial data. Standard hyperbolic regularity theory yields that the solution $(\phi, \partial_t \phi)$ obeys:
\[
    \left( \phi, \partial_t \phi \right) \in C^0((0, + \infty), H^{s+1} \times H^s) \cap C^1((0, + \infty), H^s \times H^{s-1}).
\]
However, since the metric $g_{Kas}$ degenerates towards $t=0$, the scalar field $\phi$ may blow up as $t \to 0$, even for smooth Cauchy data. In fact, one obtains e.g.~from \cite{AlhoFournodavlosFranzen, PetersenMode, RingstromAsterisque} the following asymptotics for $\phi$ as $t \to 0$:

\begin{theorem}[Asymptotics for the wave equation in non-degenerate Kasner spacetimes] \label{thm:asymp_wave}
    Let $\phi: \mathcal{M}_{Kas} \to \mathbb{R}$ be a smooth solution to the wave equation \eqref{eq:wave} on a non-degenerate\footnote{Non-degenerate means that the Kasner exponents are not such that all but one of the Kasner exponents $p_i$ vanish. The degenerate case with e.g.~$p_1 = 1$ and the remaining exponents vanishing is special in that the $t = 0$ boundary becomes a null Cauchy horizon rather than a spacelike singularity.} Kasner spacetime $(\mathcal{M}_{Kas}, g_{Kas})$, arising from smooth initial data $(\phi, \partial_t \phi)|_{\Sigma_1} = (\phi_C, \psi_C) \in C^{\infty}(\mathbb{T}^D)^2$. Then there exist smooth functions $\psi_{\infty}$ and $\varphi_{\infty}$ on $\mathbb{T}^D$ such that as $t \to 0$, $\phi$ obeys the following asympotics:
    \begin{equation} \label{eq:phi_asymp}
        \phi(t, x) = \psi_{\infty}(x) \log t + \varphi_{\infty}(x) + o(1).
    \end{equation}
%    Here $o(1)$ can be taken to be in the $L^{\infty}$ sense as $t \to 0$.
\end{theorem}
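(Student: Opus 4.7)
The plan is to Fourier-expand in $x \in \mathbb{T}^D$, reducing equation \eqref{eq:wave_coord} to a family of second-order ODEs for the Fourier coefficients $\hat\phi(t, k)$ indexed by $k \in \Z^D$, and to pass to logarithmic time $\tau := \log t$ in order to kill the first-order damping term $t^{-1}\partial_t$. After this substitution the equation becomes
\[
    \partial_\tau^2 \hat\phi + \mathcal{V}(\tau, k)\, \hat\phi = 0, \qquad \mathcal{V}(\tau, k) := \sum_{i=1}^D k_i^2 \, e^{2(1 - p_i)\tau}.
\]
The non-degeneracy assumption combined with the Kasner relations \eqref{eq:kasner_relations} forces $p_i < 1$ for every $i$, so $\mathcal{V}(\tau, k)$ decays exponentially as $\tau \to -\infty$ for each fixed $k$. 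Heuristically the equation is then an asymptotically vanishing perturbation of $\partial_\tau^2 u = 0$, whose solutions are affine $A + B\tau$; undoing the substitution recovers exactly the form $\varphi_\infty + \psi_\infty \log t$.

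I would make this rigorous via a monotone energy argument. Defining
\[
    E(\tau, k) := |\partial_\tau \hat\phi(\tau, k)|^2 + \mathcal{V}(\tau, k)|\hat\phi(\tau, k)|^2,
\]
a direct computation using the ODE gives $\partial_\tau E = (\partial_\tau \mathcal{V})|\hat\phi|^2 \geq 0$, since each term $2(1 - p_i) k_i^2 e^{2(1-p_i)\tau}$ in $\partial_\tau \mathcal{V}$ is non-negative under the non-degeneracy assumption. Consequently $E(\tau, k) \leq E(0, k)$ for all $\tau \leq 0$, whence $|\partial_\tau \hat\phi(\tau, k)| \leq \sqrt{E(0, k)}$ and, by integration, the at-worst-linear bound $|\hat\phi(\tau, k)| \leq |\hat\phi(0, k)| + |\tau|\sqrt{E(0, k)}$ throughout $\tau \leq 0$.

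Equipped with these a priori bounds, I would then apply the Duhamel identity
\[
    \partial_\tau \hat\phi(\tau, k) = \partial_\tau \hat\phi(0, k) + \int_\tau^0 \mathcal{V}(s, k)\, \hat\phi(s, k) \, ds,
\]
whose integrand is bounded pointwise by $\mathcal{V}(s, k)(1 + |s|)\sqrt{E(0, k)}$ and hence is absolutely integrable on $(-\infty, 0]$. Thus $\partial_\tau \hat\phi(\tau, k)$ has a well-defined limit as $\tau \to -\infty$, which I define to be $\widehat{\psi_\infty}(k)$. A second application of the same idea to $\hat\phi(\tau, k) - \widehat{\psi_\infty}(k)\,\tau$ produces a limit $\widehat{\varphi_\infty}(k)$, together with a quantitative error controlled by $\mathcal{V}(\tau, k)(1 + |\tau|)^2 \sqrt{E(0, k)}$, which vanishes as $\tau \to -\infty$.

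Finally I would invert the Fourier transform: for smooth initial data, $|\hat\phi(0, k)|$ and $|\partial_t \hat\phi(0, k)|$ decay faster than any polynomial in $|k|$, and the crude bound $|\widehat{\psi_\infty}(k)|, |\widehat{\varphi_\infty}(k)| \lesssim \langle k \rangle^C\, \bigl(|\hat\phi(0, k)| + |\partial_t \hat\phi(0, k)|\bigr)$ inherited from the preceding estimates ensures the same for $\widehat{\psi_\infty}$ and $\widehat{\varphi_\infty}$. This delivers smoothness of $\psi_\infty, \varphi_\infty$ on $\mathbb{T}^D$ and the pointwise convergence $\phi(t, x) = \psi_\infty(x) \log t + \varphi_\infty(x) + o(1)$. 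The step I expect to be delicate is the uniformity in $k$ of the asymptotic expansion: here it is essentially free thanks to the rapid decay of Fourier coefficients for smooth data, but in the quantitative Hilbert-space scattering theory developed later in the paper, the same step must be handled much more carefully, and balancing the $|k|^2$ loss coming from $\mathcal{V}(0, k)$ against high-frequency oscillatory gains (which my crude monotone-energy argument ignores) is presumably what produces the $1/2$-derivative gain advertised in the abstract.
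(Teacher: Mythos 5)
Your proposal is correct, and it takes a genuinely different and noticeably more elementary route than the paper. The paper does not give a direct proof of Theorem~\ref{thm:asymp_wave}; it cites \cite{AlhoFournodavlosFranzen, PetersenMode, RingstromAsterisque} and then effectively reproves a much sharper statement in Section~\ref{sec:wavescat}, namely Theorem~\ref{thm:wave_scat}, via a Fourier decomposition combined with \emph{two} carefully designed energy quantities: a high-frequency energy in the region $\tau_\lambda(t)\ge 1$ whose $\zeta$-weighted structure exploits top-order cancellations (Proposition~\ref{prop:wave_high_freq}), and a separate low-frequency energy for $\tau_\lambda(t)\le 1$ applied to the renormalized $\tilde\varphi_\lambda$ (Proposition~\ref{prop:wave_low_freq}). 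The splitting and the $\zeta$-weights are designed precisely to make the energy comparable to $\langle\tau\rangle^{-1}\psi_\lambda^2 + \langle\tau\rangle\phi_\lambda^2$ \emph{from above and below}, which is what produces the sharp $H^s \leftrightarrow H^{s+\frac12}$ correspondence and the $\log(\mathcal{T}_*)$ correction. By contrast, your single monotone quantity $E(\tau,k) = |\partial_\tau\hat\phi|^2 + \mathcal{V}(\tau,k)|\hat\phi|^2$ in log-time (which equals $\psi_\lambda^2 + \tau_\lambda^2(t)\phi_\lambda^2$ in the paper's notation) gives a one-sided bound $E(\tau)\le E(0)$ for $\tau\le 0$, valid uniformly across both frequency regimes, with no need for the high/low split. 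That one observation plus Duhamel and rapid Fourier decay of smooth data closes the argument. The price is a polynomial loss of derivatives — you bound $|\widehat{\psi_\infty}|,|\widehat{\varphi_\infty}|$ by $\langle k\rangle^C$ times the data — which is irrelevant for the qualitative smooth-in/smooth-out statement of Theorem~\ref{thm:asymp_wave} but is exactly what the paper's two-regime machinery is built to avoid. One small point worth stating explicitly if you write this up: the non-degeneracy assumption combined with the two Kasner relations \eqref{eq:kasner_relations} really does force $p_i<1$ for every $i$ (if some $p_j\ge 1$ then $\sum p_i^2\ge p_j^2\ge 1$, forcing equality and hence $p_\phi=0$ with $p_j=1$ and the rest zero, which is the excluded degenerate case), and this is what guarantees $\partial_\tau\mathcal{V}\ge 0$. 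You correctly identify, at the end, where the gap between your argument and the paper's sits: the balancing of the $|k|^2$ loss in $\mathcal{V}(0,k)$ against oscillatory gains at high frequency, which your monotone energy discards.
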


\begin{remark}
    Theorem~\ref{thm:asymp_wave} implies that solutions to \eqref{eq:wave} asymptotically behave like solutions to the velocity dominated equation, meaning the equation \eqref{eq:wave_coord} with the spatial $\partial_{x^i}$-terms ignored, i.e.
    \(
        - \partial_t^2 \phi - \frac{\partial_t \phi}{t} = 0.
        \)
    We make further comments on asymptotically velocity dominated behaviour in Section~\ref{sub:bkl}.
\end{remark}

The asymptotics \eqref{eq:phi_asymp} obtained in Theorem~\ref{thm:asymp_wave} suggest a notion of asymptotic data at the $t=0$ singularity for the problem under consideration, namely the functions $\psi_{\infty}, \varphi_{\infty}: \mathbb{T}^D \to \R$. The full scattering problem for the wave equation may thus be formulated as follows:
\begin{enumerate}[(i)]
    \item (\emph{Existence of the scattering operator}):
        For what class of Cauchy data $(\phi_C, \psi_C)$ does there exist corresponding scattering data $(\psi_{\infty}, \varphi_{\infty})$ such that the solution $\phi$ to the wave equation attains the asymptotics \eqref{eq:phi_asymp} in a suitable sense? What can be said about the map $\mathcal{S}_{\downarrow}: (\phi_C, \psi_C) \mapsto (\psi_{\infty}, \varphi_{\infty})$?
    \item (\emph{Asymptotic completeness}):
        If instead we are given $(\psi_{\infty}, \phi_{\infty})$, when is it possible to find Cauchy data $(\phi_C, \psi_C)$ such that the solution $\phi$ to the wave equation attaining the Cauchy data obeys the asymptotics \eqref{eq:phi_asymp} with the given $\psi_{\infty}$ and $\phi_{\infty}$? What about the map $\mathcal{S}_{\uparrow} = (\mathcal{S}_{\downarrow})^{-1}: (\psi_{\infty}, \phi_{\infty}) \mapsto (\phi_C, \psi_C)$.
    \item (\emph{Scattering isomorphism}):
        Can we find Hilbert spaces $\mathcal{H}_C$ and $\mathcal{H}_{\infty}$ such that the scattering operator $\mathcal{S}_{\downarrow}$ can be extended to a Hilbert space isomorphism $\mathcal{S}_{\downarrow}: \mathcal{H}_C \to \mathcal{H}_{\infty}$ with inverse $\mathcal{S}_{\uparrow}$?
\end{enumerate}
This article provides a complete resolution to the scattering problem for the wave equation in non-degenerate Kasner spacetimes, see Theorem~\ref{thm:wave_scat} below.

The second model system addressed in this article is the linearized Einstein--scalar field system. This is given by the Fr\'echet derivative of the Einstein--scalar field system \eqref{eq:einstein}--\eqref{eq:wave0} with respect to the metric $g = g_{Kas} + \varepsilon h$ and scalar field $\phi = \phi_{Kas} + \varepsilon \upphi$. Inserting these into \eqref{eq:einstein}--\eqref{eq:wave0} and considering the resulting $O(\varepsilon)$ term, one finds the formal linearization to be
\begin{equation} \label{eq:einsteinlinear}
    \left( g_{Kas}^{-1} \right)^{\alpha \beta} \mathbf{D}_{\alpha} \mathbf{D}_{\beta} h_{\mu\nu} + \mathbf{D}_{\mu} \mathbf{D}_{\nu} \left( \tr h \right) - 2 \mathbf{D}_{(\mu} \left( \div h \right)_{\nu)} + 2 \, \mathbf{Riem}^{\alpha \phantom{\mu} \beta \phantom{\nu}}_{\phantom{\alpha} \mu \phantom{\beta} \nu} \, h_{\alpha \beta} = 4 \mathbf{D}_{(\mu} \upphi \mathbf{D}_{\nu)} \phi_{Kas}.
\end{equation}

Here and in the sequel the Levi-Civita connection $\mathbf{D}$, the Riemann tensor $\mathbf{Riem}$, and the raising and lowering of indices, will all be associated to the background (non-degenerate) Kasner spacetime $g_{Kas}$. We also have the following linearization of the wave equation\footnote{Just as the wave equation \eqref{eq:wave} follows from \eqref{eq:einstein}--\eqref{eq:energymomentum} and the (twice contracted) Bianchi equation, one can derive \eqref{eq:wavelinear} from \eqref{eq:einsteinlinear} and the linearized Bianchi equations.}:
\begin{equation} \label{eq:wavelinear}
    \square_{g_{Kas}} \upphi = h^{\alpha \beta} \mathbf{D}_{\alpha} \mathbf{D}_{\beta} \phi_{Kas} + \left( \div h \right)^{\alpha} \mathbf{D}_{\alpha} \phi_{Kas} - \frac{1}{2} \mathbf{D}^{\alpha} \! \left( \tr h \right) \mathbf{D}_{\alpha} \phi_{Kas}.
\end{equation}

To view the system \eqref{eq:einsteinlinear}--\eqref{eq:wavelinear} in the context of a well-posed initial value problem, we must impose a choice of (linearized) gauge, most often inherited from a suitable gauge for the nonlinear Einstein--scalar field system. 
In this article we to adopt a $(1+D)$-dimensional ADM-type gauge with Constant Mean Curvature (CMC), where the spacetime is foliated using a time function $t$ whose level sets $\Sigma_t = \{ t \} \times \mathbb{T}^D$ have constant mean curvature equal to $\tr k = - t^{-1}$. 

In this gauge, the key evolutionary variables of the linearized problem will be $h_{ij}$, $\kappa_{ij}$, $\upphi$, $\uppsi$, which represent linearized versions of the first fundamental form, the second fundamental form, the scalar field, and the normal derivative of the scalar field with respect to the foliation.  
%
%The ADM-type gauge, which we return to in Section~??, will also be chosen to arise from a CMC foliation where the timelike coordinate $t$ is chosen to be related to the mean curvature $k$ of the Cauchy hypersurfaces $\Sigma_t = \{t\} \times \mathbb{T}^d$ in the following way
%\[
%    k |_{\Sigma_t} \equiv - \frac{1}{t}.
%\]
With this choice of CMC foliation, together with a choice of spatial gauge, one may cast \eqref{eq:einsteinlinear}--\eqref{eq:wavelinear} as a well-posed initial value problem of elliptic-hyperbolic type. See \cite{AnderssonMoncrief} for an introduction to elliptic-hyperbolic problems in general relativity.

In particular, if one poses Cauchy data $((h_C)_{ij}, (\kappa_C)_{ij}, \upphi_C, \uppsi_C) \in H^{s+1} \times H^s \times H^{s+1} \times H^s$ on the Cauchy hypersurface $\Sigma_1$, which moreover satisfies various (linearized) constraints, then this data launches a solution to \eqref{eq:einsteinlinear}--\eqref{eq:wavelinear} in $\mathcal{M}_{Kas}$, with
\begin{gather*}
    \left( h_{ij}, \kappa_{ij} \right) \in C^0((0, + \infty), H^{s+1} \times H^s) \cap C^1((0, + \infty), H^s \times H^{s-1}), \\[0.5em]
    \left( \upphi, \uppsi \right) \in C^0((0, + \infty), H^{s+1} \times H^s) \cap C^1((0, + \infty), H^s \times H^{s-1}).
\end{gather*}

Now consider the asymptotics of these quantities as $t \to 0$. 
It turns out the analogue of Theorem~\ref{thm:asymp_wave} will only hold if we assume the Kasner exponents of $g_{Kas}$ to satisfy the following inequality, related to the subcritical regime of \cite{DHRW}:
\begin{equation} \label{eq:subcritical}
    \max_{i \neq j} \left( p_i + p_j - p_k \right) < 1.
\end{equation}
We defer the further discussion of subcritical exponents to Section~\ref{sub:bkl}. Assuming that this relation between the exponents holds on the background Kasner spacetime, we have the following result, where for now we define $\kappa_{ij}$ and $\uppsi$ as logarithmic time derivatives in the following sense:
\begin{equation*}
    \kappa_{ij} = - \frac{1}{2} t \partial_t h_{ij} + l.o.t., \qquad \uppsi = t \partial_t \upphi + l.o.t.
\end{equation*}
%See Section~\ref{sub:intro_scattering2_eq} for more precise definitions.

\begin{theorem}[Asymptotics for the linearized Einstein--scalar field system in subcritical Kasner spacetimes] \label{thm:asymp_einstein}
    Let $(\mathcal{M}_{Kas}, g_{Kas}, \phi_{Kas})$ be a Kasner spacetime whose Kasner exponents $p_i \in \R$ obey the subcriticality relation \eqref{eq:subcritical}. Suppose $(h_{ij}, \kappa_{ij}, \upphi, \uppsi)$ is a \underline{smooth} solution to the linearized Einstein--scalar field system \eqref{eq:einsteinlinear}--\eqref{eq:wavelinear} around $(g_{Kas}, \phi_{Kas})$ in a CMC ADM-type gauge, arising from smooth Cauchy data $((h_C)_{ij}, (\kappa_C)_{ij}, \upphi_C, \uppsi_C)$ on $\Sigma_1$ satisfying suitable constraints. 

    Then there exist smooth $(1, 1)$ tensors $(\Upupsilon_{\infty})_i^{\phantom{i}j}$, $(\upkappa_{\infty})_{i}^{\phantom{i}j}$ and smooth functions $\uppsi_{\infty}$ and $\upvarphi_{\infty}$ on $\mathbb{T}^D$ such that the solution obeys the following asymptotics\footnote{
    The expressions \eqref{eq:einstein_asymp_1}--\eqref{eq:einstein_asymp_2} do not, at first, appear symmetric in $i$ and $j$; the fact that they are follow from the symmetry constraints in Lemma~\ref{lem:scattering_constraints}.}as $t \to 0$,
    \begin{gather} \label{eq:einstein_asymp_1}
        \kappa_{ij} = \begin{cases}
            \left( \matr{(\kappa_{\infty})}{{i}}{j} + 2 p_{{i}} \matr{(\Upupsilon_{\infty})}{{i}}{j} \right) t^{2p_{{i}}} + 2 p_{{i}} \matr{(\upkappa_{\infty})}{{i}}{j} t^{2 p_{{i}}} \log t + o(t^{2p_i})  & \text{ if } p_i = p_j, \\
            - \frac{p_j}{p_i - p_i} \matr{(\upkappa_{\infty})}{i}{j} t^{2p_j} + \left( \frac{p_i}{p_i -p_j} \matr{(\upkappa_{\infty})}{i}{j} + 2 p_i \matr{(\Upupsilon_{\infty})}{j}{i} \right) t^{2 p_i} + o(t^{\min\{2p_i, 2p_j\}})  & \text{ if } p_i \neq p_j,
        \end{cases} 
        \\[0.5em] \label{eq:einstein_asymp_2}
        h_{ij} = \begin{cases}
            - 2 \left (\left( \upkappa_{\infty} \right) _i^{\phantom{i}j} \log t + \left( \Upupsilon_{\infty} \right)_i^{\phantom{i}j} \right) t^{2 p_j} + o(t^{2p_j}), & \text{ if }p_i = p_j, \\
            - \left ( \frac{\left (\upkappa_{\infty} \right)_i^{\phantom{i}j}}{p_i - p_j} + 2 \left( \Upupsilon_{\infty} \right)_i^{\phantom{i}j} \right) t^{2p_j} + \frac{\left( \upkappa_{\infty} \right)_i^{\phantom{i}j}}{p_i - p_j} \, t^{2p_i} + o(t^{\min\{2p_i, 2p_j\}}),
                                                                                                                                                                       & \text{ if }p_i \neq p_j,
        \end{cases} 
        \\[0.5em] \label{eq:einstein_asymp_3}
        \uppsi = \uppsi_{\infty} + o(1),
        \\[0.5em] \label{eq:einstein_asymp_4}
        \upphi = \uppsi_{\infty} \log t + \upvarphi_{\infty} + o(1).
    \end{gather}
\end{theorem}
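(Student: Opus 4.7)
The plan follows the same general outline as the proof of Theorem~\ref{thm:asymp_wave}, but is complicated by the tensorial nature of the metric perturbation $(h_{ij}, \kappa_{ij})$, the coupling with the scalar field variables $(\upphi, \uppsi)$ through the background $\phi_{Kas} = p_{\phi} \log t$, and the anisotropy of the Kasner background that forces distinct components to decay or blow up at distinct rates. The strategy is to extract a \emph{velocity dominated} ODE system from \eqref{eq:einsteinlinear}--\eqref{eq:wavelinear} written in the CMC ADM-type gauge (together with a suitably chosen spatial gauge), solve this ODE system explicitly in closed form, and then use weighted energy estimates---crucially exploiting the subcriticality condition \eqref{eq:subcritical}---to show that the spatial derivative terms behave perturbatively, so that the full solution converges at quantitative rates to a solution of the velocity dominated system.

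Concretely, in the chosen gauge \eqref{eq:einsteinlinear} yields an evolution equation for $\matr{h}{i}{j}$ coupled to $\matr{\kappa}{i}{j}$ (with indices raised using $g_{Kas}$) of schematic form $t \partial_t \matr{h}{i}{j} = -2 \matr{\kappa}{i}{j} + (\text{spatial derivatives})$ and an analogous equation for $t \partial_t \matr{\kappa}{i}{j}$ with a curvature-coupling term and further spatial derivative contributions. Dropping all spatial derivatives produces a constant-coefficient linear ODE system in the variable $\log t$ whose characteristic exponents are exactly $2p_i$ and $2p_j$. A direct computation shows that the general solution of this model ODE takes precisely the form appearing in \eqref{eq:einstein_asymp_1}--\eqref{eq:einstein_asymp_2}: when $p_i \neq p_j$ the two modes $t^{2p_i}$ and $t^{2p_j}$ appear with independent coefficients which I identify with $\matr{(\upkappa_{\infty})}{i}{j}$ and $\matr{(\Upupsilon_{\infty})}{i}{j}$, and when $p_i = p_j$ the Jordan block degeneracy forces a logarithmic correction $t^{2p_i} \log t$. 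The scalar field equation \eqref{eq:wavelinear} likewise reduces, after dropping spatial derivatives, to an inhomogeneous version of the ODE underlying Theorem~\ref{thm:asymp_wave}, yielding \eqref{eq:einstein_asymp_3}--\eqref{eq:einstein_asymp_4}.

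The main analytic task is to justify this truncation. I would set up a hierarchy of weighted high-order Sobolev energies for $\matr{h}{i}{j}, \matr{\kappa}{i}{j}, \upphi, \uppsi$, with weights of the form $t^{-2\min\{p_i, p_j\} + \delta}$ (and $t^{\delta}$ for the scalar components) adapted to the expected leading behavior. The spatial derivative sources in \eqref{eq:einsteinlinear}--\eqref{eq:wavelinear} contribute terms schematically of shape $t^{-2p_k} \partial_{x^k}^2 (\text{component})$; tracking the worst time-power produced by these interactions along the evolution gives combinations of the form $p_i + p_j - p_k$, and the subcriticality hypothesis \eqref{eq:subcritical} is precisely what renders the corresponding $t$-factors integrable near $t = 0$, so that a Gronwall argument closes the bootstrap and produces quantitative convergence rates faster than the leading-order asymptotics. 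The asymptotic tensors $\matr{(\upkappa_{\infty})}{i}{j}, \matr{(\Upupsilon_{\infty})}{i}{j}$ and the functions $\uppsi_{\infty}, \upvarphi_{\infty}$ are then obtained as limits of rescaled evolutionary quantities, and their symmetries follow from propagating the linearized Hamiltonian and momentum constraints from $\Sigma_1$, as will be recorded in Lemma~\ref{lem:scattering_constraints}.

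The hardest step is disentangling the two modes $t^{2p_i}$ and $t^{2p_j}$ in the off-diagonal case $p_i \neq p_j$: one must design the rescaled combinations that extract $\matr{(\upkappa_{\infty})}{i}{j}$ so that subleading contributions of size $t^{2p_i - 2p_j}$ (or the reverse) coming from $\matr{(\Upupsilon_{\infty})}{j}{i}$ do not contaminate the limit. This separation becomes increasingly delicate as $|p_i - p_j|$ grows, and as the abstract anticipates, the Sobolev regularity at which Step~3 is performed must be calibrated against the quantity $\max_{i \neq j}(p_i + p_j - p_k)$---the number of derivatives that must be controlled grows unboundedly as one approaches the boundary of the subcritical regime. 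Handling this gauge-by-gauge and component-by-component bookkeeping is the principal obstacle, and it is here that the assumption of smooth Cauchy data (rather than $H^s$ data at fixed $s$) enters at this stage of the argument.
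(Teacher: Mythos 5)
Your proposal takes a genuinely different route from the paper. You propose a physical-space Fuchsian argument: truncate to the velocity-dominated ODE, solve it explicitly, and use weighted-Sobolev energy estimates with a Gr\"onwall bootstrap to show the spatial-derivative terms are perturbative. This is in the spirit of the Rodnianski--Speck and Fournodavlos--Rodnianski--Speck approach (and the paper explicitly attributes Theorem~\ref{thm:asymp_einstein}, in restricted regimes, to those works). The paper instead proves Theorem~\ref{thm:asymp_einstein} as a corollary of the sharp scattering isomorphism (Theorem~\ref{thm:einstein_scat}), which is proved via a Fourier decomposition on $\mathbb{T}^D$ followed by mode-by-mode ODE analysis in two different regimes: a high-frequency energy (in CMCSH gauge, exploiting top-order cancellations) and a low-frequency energy (in CMCTC gauge, with a frequency-dependent renormalization $T = t_{\lambda*}$ and the metric $\mathring{g}_{ij}(t_{\lambda*})$). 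Your route is more direct for obtaining just the asymptotics, and requires less machinery; the paper's route also yields the half-derivative gain and the $\mathcal{T}_*^{-p_i+p_j}$ weights, which a physical-space argument cannot see.

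There is, however, a real gap: you leave the spatial gauge unspecified, and the gauge choice is not a footnote here. The paper's Section~\ref{sub:intro_proof} explains that CMCSH is needed at high frequency (to write $\mathrm{Ric}[\bar g]$ as an elliptic operator with the right top-order cancellations) while CMCTC is needed at low frequency (because in CMCSH the shift $\upchi^j$ is of size comparable to $\upeta$, which grows like $t^{2p_i - 2p_j}$, and feeds a non-integrable term into the Gr\"onwall argument). A single-gauge physical-space bootstrap must somehow live with both constraints simultaneously. The works you are implicitly channeling resolve this differently: \cite{RodnianskiSpeck0} use transported coordinates but only prove the result in a mildly anisotropic regime, and \cite{FournodavlosRodnianskiSpeck} pass to a Fermi-propagated orthonormal frame, which is not a CMC ADM spatial gauge in the sense of your set-up. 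So either your "suitably chosen spatial gauge" needs to be named (and you should check that the weighted-energy identity closes in it across the full subcritical range), or you should accept a gauge transformation at the end, as the paper does. Relatedly, you should write out the concrete renormalized quantity that extracts $\matr{(\Upupsilon_\infty)}{i}{j}$ — the paper uses $\matr{\Upupsilon}{i}{j} = \matr{\upeta}{i}{j} + \int_t^1 \mathring{g}_{ip}(s)\mathring{g}^{jq}(s)\,\frac{ds}{s}\,\matr{\upkappa}{q}{p}$ — since the naive weight $t^{-2\min\{p_i,p_j\}}$ on $h_{ij}$ alone does not separate the $t^{2p_i}$ and $t^{2p_j}$ modes in the off-diagonal case; you acknowledge this is the hard part but do not resolve it.
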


Theorem~\ref{thm:asymp_einstein} is a corollary of \cite[Theorem 7.1]{RodnianskiSpeck0}, at least for the mildly anisotropic range of exponents included in that result. For the full range of Kasner exponents satisfying the subcriticality condition \eqref{eq:subcritical}, Theorem~\ref{thm:asymp_einstein} will follow from our scattering theory, and the proof is presented in Section~\ref{sub:scat_conclusion}. Note an analogous result for the fully nonlinear Einstein--scalar field equations (for the full subcritical range of exponents) can be deduced from \cite{FournodavlosRodnianskiSpeck}.

Given Theorem~\ref{thm:asymp_einstein}, we can make sense of the associated scattering problem as follows: what are the properties of the map $((h_C)_{ij}, (\kappa_C)_{ij}, \upphi_C, \uppsi_C) \mapsto ((\upkappa_{\infty})_i^{\phantom{i}j}, (\Upupsilon_{\infty})_i^{\phantom{i}j}, \uppsi_{\infty}, \upvarphi_{\infty})$ and, if well-defined, its inverse? Just as in the case of the wave equation, we may ask about (i) existence of the scattering operator, (ii) asymptotic completeness and (iii) the scattering isomorphism. 

In the scattering problem for linearized gravity, one encounters additional subtleties due to the constraint equations and the choice of spatial gauge. As part of our scattering theory, we must understand how the constraint equations are realized on the asymptotic data $((\upkappa_{\infty})_i^{\phantom{i}j}, (\Upupsilon_{\infty})_i^{\phantom{i}j}, \uppsi_{\infty}, \upvarphi_{\infty})$, and also understand how one fixes the spatial gauge throughout the scattering process. We return to these issues in Section~\ref{sub:intro_scattering2}.

\subsection{Scattering for the wave equation in Kasner} \label{sub:intro_scattering1}

We first return to the simpler setting of the wave equation \eqref{eq:wave} on $(\mathcal{M}_{Kas}, g_{Kas})$; our goal in this section will be to state the scattering theorem in (non-degenerate) Kasner spacetimes. The precise scattering theorem will involve the symbol (or pseudodifferential operator) $\mathcal{T}_*$ which we define as follows; see Appendix~\ref{app:fourier} for a review of the Fourier decomposition and symbol calculus on $\mathbb{T}^D$.

\begin{definition} \label{def:tstar}
    Firstly, for $\lambda \in \Z^d \setminus \{0 \}$, let $t_{\lambda*} > 0$ be the unique solution of  
    $\tau_{\lambda}^2 (t_{\lambda*}) \coloneqq \sum_{i=1}^D t_{\lambda*}^{2 - 2p_i} \lambda_i^2 = 1$, and let $t_{0*} = 1$.
    Then set $\mathcal{T}_*$ be the symbol associated (in the sense of Definition~\ref{def:symbolcalc}) to the function $T_*: \Z^D \to \R$ which has $T_*(\lambda) = t_{\lambda*}$ for $\lambda \in \Z^D$. For any exponent $\alpha \in \R$, we also define $\mathcal{T}_*^{\alpha}$ and $\log(\mathcal{T}_*)$ following the standard symbol calculus.
\end{definition}

\begin{remark}
    In the language of \cite[Chapter XVIII]{HormanderPseudo}, we show in Lemma~\ref{lem:tstar} that $\mathcal{T}_*^{\alpha}$ lies in the symbol classes $S^{\max\{0, \alpha (1-p)^{-1}\}}$, where $p = \max_i p_i$, while $\log(\mathcal{T}_*)$ lies in $S^{\varepsilon}$ for any $\varepsilon > 0$. In particular, for suitably chosen $s_1, s_2 \in \R$ the operators $\mathcal{T}_*^{\alpha}$ and $\log(\mathcal{T}_*)$ are bounded operators from $H^{s_1}$ to $H^{s_2}$.
\end{remark}

%\begin{remark}
%    For suitably chosen $s_1, s_2 \in \R$, we will extend $F(\mathcal{T}_*)$ as a bounded operator between Sobolev spaces $H^{s_1}$ and $H^{s_2}$. Indeed, following \cite[Chapter XVIII]{HormanderPseudo}, it is easily shown that $\mathcal{T}_*$ and $\mathcal{T}_*^{-1}$ lie in the symbol classes $S^0$ and $S^m$ respectively, for $m = \max_i \tfrac{1}{1- p_i}$. Similarly, the operator $\log(\mathcal{T}_*)$ lies in the symbol class $S^{0+} = \cap_{\varepsilon > 0} S^{\varepsilon}$. As a result, $\log (\mathcal{T}_*)$ can be viewed as a bounded operator from $H^{s_1}$ to $H^{s_2}$ so long as $s_1 > s_2$, while the operator $\mathcal{T}_*^P$ is bounded from $H^s$ to $H^{s}$ when $P > 0$ and from $H^s$ to $H^{s + Pm}$ when $P \leq 0$. %{\color{red} H\"ormander?}
%\end{remark}

\subsubsection{The wave equation as a first-order system} \label{sub:intro_scattering1_wave}

It will be advantageous to rewrite the wave equation \eqref{eq:wave} as a first-order (hyperbolic) system of evolution equations for the variables $\phi$ and $\psi \coloneqq t \partial_t \phi$. Multiplying \eqref{eq:wave_coord} by $t^2$, one gets the system:
\begin{gather}
    t \partial_t \phi = \psi, \label{eq:wave_phi_evol} \\[0.5em]
    t \partial_t \psi = \sum_{i=1}^D t^{2 - 2 p_i} \partial_{x^i}^2 \phi. \label{eq:wave_psi_evol}
\end{gather}

%One advantage of this is that these equations take a simple form in Fourier space. Taking the Fourier decomposition, one finds that for $\lambda \in \Z^D$,
%\begin{equation*}
%    t \partial_t \phi_{\lambda} = \psi_{\lambda}, \qquad t \partial_t \psi_{\lambda} = - \tau_{\lambda}^2 \phi_{\lambda}.
%\end{equation*}
%Here, the function\footnote{When obvious from context, we often denote $\tau$ without its subscript $\tau_{\lambda}$, primarily to shorten the exposition, but also to make it clear that $\tau_{\lambda}$ is not a Fourier coefficient.} $\tau = \tau_{\lambda}(t)$ is given by
%\begin{equation} \label{eq:tau}
%    \tau^2 = \sum_{i=1}^D t^{2 - 2p_i} \lambda_i^2.
%\end{equation}
%Much of our analysis boils down to understanding properties of the function $\tau = \tau_{\lambda}(t)$, see already Section~\ref{sub:wavescat_fourier}.

%We make one final definition before stating our scattering result. 
Next, recall from Theorem~\ref{thm:asymp_wave} that although $\psi$ remains bounded as one approaches $t = 0$, $\phi$ itself will blow up as $O(\log t^{-1})$. Thus as $t \to 0$, instead of considering $\phi$ itself we consider the renormalized variable $\varphi$, defined as:
\begin{equation} \label{eq:varphi}
    \varphi(t, \cdot) = \phi(t, \cdot) - \log t \cdot \psi(t, \cdot).
\end{equation}
Due to Theorem~\ref{thm:asymp_wave} one expects $\varphi(t, \cdot)$ to remain bounded as $t \to 0$, in fact it will converge to $\varphi_{\infty}$.

\subsubsection{The scattering theorem}

Having set up our system, we now state Theorem~\ref{thm:wave_scat}, our scattering result for the scalar wave equation \eqref{eq:wave} in non-degenerate Kasner spacetimes. The proof of Theorem~\ref{thm:wave_scat} will be in Section~\ref{sec:wavescat}.

\begin{theorem}[Scattering for the wave equation in non-degenerate Kasner spacetimes] \label{thm:wave_scat}
    Let $(\mathcal{M}_{Kas}, g_{Kas})$ be a non-degenerate Kasner spacetime. We consider solutions $\phi: \mathcal{M}_{Kas} \to \R$ to the wave equation \eqref{eq:wave} on $(\mathcal{M}_{Kas}, g_{Kas})$. Then we have the following scattering results:
    \begin{enumerate}[(i)]
        \item \label{item:wave_scatop} (Existence of the scattering operator) 
            Let $s > \frac{D}{2}+1$. Given Cauchy data $(\phi_C, \psi_C) \in H^{s+1} \times H^{s}$, there exists a unique (classical) solution $(\phi(t), \psi(t))$ to the system \eqref{eq:wave_phi_evol}--\eqref{eq:wave_psi_evol} in $\mathcal{M}$ attaining the Cauchy data $\phi(1, \cdot) = \phi_C(\cdot)$ and $\partial_t \phi(1, \cdot)  = \psi_C(\cdot)$. Let $\varphi(t)$ be defined by \eqref{eq:varphi}.

            Then there exist unique functions $\psi_{\infty}(x)$ and $\varphi_{\infty}(x)$ in $H^s$ such that
            \begin{equation} \label{eq:asymp_psivarphi}
                \psi(t, x) \to \psi_{\infty}(x) \text{ in } H^s, \qquad \varphi(t, x) \to \varphi_{\infty}(x) \text{ in } H^s \quad \text{ strongly as } t \to 0.
            \end{equation}
            Furthermore, one has that
            \begin{equation} \label{eq:asymp_reg}
                \psi_{\infty} \in H^{s + \frac{1}{2}}, \qquad \varphi_{\infty} + \log(\mathcal{T}_*) \psi_{\infty} \in H^{s + \frac{1}{2}}.
            \end{equation}
            
        \item \label{item:wave_asympcomp} (Asymptotic completeness) 
            Let $s > \frac{D}{2} + 1$, and let $\psi_{\infty}$ and $\varphi_{\infty}$ be functions in $\mathbb{T}^D$ with Sobolev regularity as in \eqref{eq:asymp_reg}. Then there exists a unique (classical) solution $(\phi(t), \psi(t))$ to the system \eqref{eq:wave_phi_evol}--\eqref{eq:wave_psi_evol} in $\mathcal{M}_{Kas}$, with regularity
            \[
                \left( \phi, \psi \right) \in C^0((0, + \infty), H^{s+1} \times H^s) \cap C^1((0, + \infty), H^s \times H^{s-1}),
            \]
            such that if $\varphi(t)$ is defined as in \eqref{eq:varphi}, then the asymptotics \eqref{eq:asymp_psivarphi} hold, strongly as $t \to 0$.

        \item \label{item:wave_scatiso} (Scattering isomorphism)
            For $s \in \R$, define the following Hilbert spaces together with their norms:
            \begin{gather} \label{eq:wave_hilbert1}
                \mathcal{H}^s_C = \{ \left( \phi, \psi \right): \phi \in H^{s+1}, \psi \in H^s \}, \quad \| \left( \phi, \psi \right) \|_{\mathcal{H}^s_C}^2 \coloneqq \| \phi \|_{H^{s+1}}^2 + \| \psi \|_{H^s}^2, 
                \\[0.5em] \label{eq:wave_hilbert2}
                \mathcal{H}^s_{\infty} = \{ \left( \psi, \varphi \right): \psi \in H^{s+\frac{1}{2}}, \varphi + \log(\mathcal{T}_*) \psi \in H^{s + \frac{1}{2}} \}, \quad \| \left( \psi, \varphi \right) \|_{\mathcal{H}^s_C}^2 \coloneqq \| \psi \|_{H^{s+\frac{1}{2}}}^2 + \| \varphi + \log(\mathcal{T}_*) \psi \|_{H^{s+ \frac{1}{2}}}^2.
            \end{gather}
            Then the map $\mathcal{S}_{\downarrow}$, which is defined to map $(\phi_C, \psi_C)$ from (\ref{item:wave_scatop}) to $(\psi_{\infty}, \varphi_{\infty})$ in \eqref{eq:asymp_psivarphi}, may be extended to a Hilbert space isomorphism $\mathcal{S}_{\downarrow}: \mathcal{H}^s_C \to \mathcal{H}^s_{\infty}$, whose inverse $\mathcal{S}_{\uparrow} = \mathcal{S}_{\downarrow}^{-1}$ is exactly given by (a suitable extension of) the map taking $(\psi_{\infty}, \varphi_{\infty})$ in (\ref{item:wave_asympcomp}) to $\left( \phi(1, \cdot), \psi(1, \cdot) \right)$.
    \end{enumerate}
\end{theorem}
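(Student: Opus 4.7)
The plan is to Fourier decompose in $x \in \mathbb{T}^D$, reducing \eqref{eq:wave_phi_evol}--\eqref{eq:wave_psi_evol} to a mode-by-mode ODE
\begin{equation*}
    t \partial_t \hat\phi(t,\lambda) = \hat\psi(t,\lambda), \qquad t \partial_t \hat\psi(t,\lambda) = - \tau_\lambda^2(t) \, \hat\phi(t,\lambda), \qquad \tau_\lambda^2(t) = \sum_i t^{2 - 2p_i} \lambda_i^2,
\end{equation*}
with $\lambda \in \mathbb{Z}^D$. The transition time $t_{\lambda*}$ of Definition~\ref{def:tstar} splits $(0, 1]$ into an \emph{oscillatory regime} $[t_{\lambda*}, 1]$, where $\tau_\lambda \geq 1$, and a \emph{velocity-dominated regime} $(0, t_{\lambda*}]$, where $\tau_\lambda \leq 1$. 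I would study the two regimes separately, glue at $t_{\lambda*}$, and recover Sobolev estimates by Plancherel. All mode-wise estimates will carry explicit $|\lambda|$-dependence in order to produce the derivative count in \eqref{eq:asymp_reg}.

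In the velocity-dominated regime, the spatial term is a perturbation and Theorem~\ref{thm:asymp_wave} takes a sharp quantitative form. Substituting $\hat\varphi = \hat\phi - \log t \cdot \hat\psi$, the system becomes
\begin{equation*}
    t \partial_t \hat\psi = - \tau_\lambda^2 \hat\phi, \qquad t \partial_t \hat\varphi = \tau_\lambda^2 \log t \cdot \hat\phi,
\end{equation*}
and since $\int_0^{t_{\lambda*}} \tau_\lambda^2(t) (1 + |\log t|) t^{-1} \, dt$ is bounded uniformly in $\lambda$ (by construction of $t_{\lambda*}$), a Picard iteration yields a bounded, invertible linear map between data $(\hat\psi(t_{\lambda*}), \hat\varphi(t_{\lambda*}))$ at the transition time and the limits $(\hat\psi_\infty(\lambda), \hat\varphi_\infty(\lambda))$ at $t = 0$. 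This step introduces no net derivative change and is transparently time-reversible, so it handles half of (i)/(ii).

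The main work is the oscillatory regime, which is the source of the $\tfrac{1}{2}$-derivative gain. Passing to logarithmic time $s = \log t$, the equation becomes $\partial_s^2 \hat\phi + \tau_\lambda^2(e^s) \hat\phi = 0$; on $[t_{\lambda*}, 1]$ one checks that $|\partial_s \tau_\lambda| / \tau_\lambda^2$ is integrable uniformly in $\lambda$, placing us in an adiabatic regime where a WKB ansatz $\hat\phi \sim A_{\pm} \tau_\lambda^{-1/2} \exp( \pm i \int \tau_\lambda \, ds)$ applies. The corresponding adiabatic invariant is $\tau_\lambda^{1/2} \hat\phi$, which I would extract cleanly via a renormalized energy identity for $E_\lambda(t) = \tau_\lambda^2 |\hat\phi|^2 + |\hat\psi|^2$. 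Using the endpoint values $\tau_\lambda(1) = |\lambda|$ and $\tau_\lambda(t_{\lambda*}) = 1$, this produces two-sided bounds
\begin{equation*}
    |\hat\phi(t_{\lambda*}, \lambda)|^2 + |\hat\psi(t_{\lambda*}, \lambda)|^2 \sim |\lambda| \, |\hat\phi_C(\lambda)|^2 + |\lambda|^{-1} |\hat\psi_C(\lambda)|^2,
\end{equation*}
with ``$\sim$'' holding in both directions modulo oscillatory cross terms that do not affect $\ell^2_\lambda$-summation. Composing with the velocity-dominated map, and noting that $\log t_{\lambda*}$ is by construction the symbol of $\log \mathcal{T}_*$, this reproduces exactly the weights in \eqref{eq:asymp_reg}: $\psi_\infty$ gains $\tfrac{1}{2}$ derivative over $\psi_C$, while $\varphi_\infty + \log(\mathcal{T}_*) \psi_\infty \approx \phi(t_{\lambda*})$ gains $\tfrac{1}{2}$ over $\phi_C$.

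Assembling the above, (i) follows from forward propagation $t = 1 \to t_{\lambda*} \to 0$, (ii) from reversing each step using time-reversibility of the oscillator and invertibility of the Picard iteration, and (iii) from checking that the mode-by-mode bijections have norms that match \eqref{eq:wave_hilbert1}--\eqref{eq:wave_hilbert2} on both sides; boundedness together with boundedness of the inverse then extends $\mathcal{S}_\downarrow$ to a Hilbert space isomorphism. I expect the principal obstacle to be the uniform-in-$\lambda$ WKB: the adiabatic error must be controlled across the crossover near $t_{\lambda*}$, the anisotropic case where different $\lambda_i$ dominate $\tau_\lambda$ at different times must be handled without losses, and a genuine \emph{two-sided} bound must be proved to obtain invertibility rather than mere boundedness of $\mathcal{S}_\downarrow$.
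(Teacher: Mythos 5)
Your outline matches the paper's at the structural level: Fourier decompose, split the time interval at $t_{\lambda*}$, prove a two-sided mode-wise bound in each regime, glue at $t_{\lambda*}$, and recover Sobolev norms by Plancherel. Two of your key steps, however, would fail as stated, and the fixes are exactly where the paper does its real work.

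\textbf{Low-frequency regime.} You propose to Picard/Gr\"onwall the system for $\hat\varphi = \hat\phi - \log t \cdot \hat\psi$ on $(0, t_{\lambda*}]$. Writing out $t\partial_t \hat\varphi = \tau^2 \log t\,(\hat\varphi + \log t\,\hat\psi)$, the Gr\"onwall weight is $\tau^2 (\log t)^2$, and
\[
    \int_0^{t_{\lambda*}} \tau^2(s) (\log s)^2 \, \frac{ds}{s} \asymp 1 + (\log t_{\lambda*})^2,
\]
which diverges as $|\lambda| \to \infty$. This is not a uniform-in-$\lambda$ bound, so the claimed bounded invertible map between $(\hat\psi(t_{\lambda*}), \hat\varphi(t_{\lambda*}))$ and $(\hat\psi_\infty, \hat\varphi_\infty)$ is false. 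You must instead work with the frequency-corrected variable $\tilde\varphi_\lambda = \hat\phi - \log(t/t_{\lambda*})\,\hat\psi$ (equation \eqref{eq:varphitilde} in the paper), whose ODE has coefficient $\tau^2 \log(t_{\lambda*}/s)^2$, and \emph{that} integral is $O(1)$ uniformly in $\lambda$ by \eqref{eq:tauintegraldown}. You do recognize at the end that $\log \mathcal{T}_*$ is what makes things match up, but the correction has to be inserted before the Gr\"onwall step, not after.

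\textbf{High-frequency regime.} Your energy $E_\lambda = \tau^2|\hat\phi|^2 + |\hat\psi|^2$ satisfies $t\partial_t E_\lambda = 2\zeta^{-1}\tau^2|\hat\phi|^2 \sim E_\lambda$; the WKB candidate $\tau|\hat\phi|^2 + |\hat\psi|^2/\tau$ satisfies $t\partial_t(\cdot) = \zeta^{-1}(\tau|\hat\phi|^2 - |\hat\psi|^2/\tau)$, again $O(\mathcal{E})$. Neither Gr\"onwalls to a uniform constant: you would get an $\exp(C\log t_{\lambda*}^{-1})$ loss, which is polynomial in $|\lambda|$ and kills the sharp half-derivative. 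The paper's energy \eqref{eq:wave_energy_high} carries additional $\zeta(t)$-dependent weights and a lower-order block $\zeta\psi\phi/\tau + \phi^2/(2\tau)$; these are tuned so that in $t\partial_t\mathcal{E}_{\lambda,high}$ every $O(\mathcal{E})$ term cancels pairwise, leaving only $-\zeta^{-1}\phi^2/(2\tau) = O(\tau^{-2}\mathcal{E})$ plus $t\zeta'(t)$-terms, and both $\int \tau^{-2}\,dt/t$ and $\int |\zeta'|\,dt$ are bounded uniformly in $\lambda$ by Lemmas~\ref{lem:tau} and~\ref{lem:tauint}. Alternatively you could push the full WKB argument with explicit amplitudes $A_\pm = \tau^{1/2}\hat\phi \pm i\tau^{-1/2}\hat\psi$ and control the adiabatic error $\int |\partial_s\tau|/\tau^2\,ds < \infty$, which is equivalent in content; but you would then have to show this carefully, and the quantity you named is not that.

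Finally, for asymptotic completeness you need to solve the singular ODE from data at $t=0$; a bare Picard iteration does not apply because the equation is singular there. The paper goes through a Fuchsian local existence lemma (Lemma~\ref{lem:fuchsian}); your ``Picard iteration'' would need to be exactly this, built around the corrected variable $\tilde\varphi$. Your remark about the genuine two-sided bound being needed for invertibility is correct and is precisely why both directions of Gr\"onwall are run in Propositions~\ref{prop:wave_high_freq} and~\ref{prop:wave_low_freq}.
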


\begin{remark}
    A remarkable feature of Theorem~\ref{thm:wave_scat} is the gain or loss of half a derivative from Cauchy data to scattering data. In particular, the renormalized time derivative $\psi = t \partial_t \phi$ lies in $H^s$ at any fixed time $t > 0$, but its limit $\psi_{\infty}$ at $t = 0$ has its regularity increased to $H^{s + \frac{1}{2}}$. This as a sign that spatial derivatives not only become negligible compared to temporal derivatives near $t = 0$, but actually become better behaved as one approaches the singular boundary at $t = 0$.

    Note than an almost-sharp correspondence between Cauchy data and asymptotic data was already found by Ringstr\"om in \cite[Propositions 8.1 and 8.9]{RingstromAsterisque}, however in this work the derivative gain is from $H^s$ to $H^{s + \frac{1}{2}-\varepsilon}$ rather than the sharp $H^{s + \frac{1}{2}}$. In order to achieve this sharp result, it is essential that we modify our Hilbert spaces using the symbol $\log(\mathcal{T}_*)$, a feature not present in \cite{RingstromAsterisque}. This final modification was partly inspired by a scattering result for the wave equation in higher-dimensional de Sitter spacetimes \cite{CicortasScattering}.
\end{remark}

\begin{remark}
    One could also entertain scattering between Cauchy data at $t = 1$ and a suitable notion of asymptotic data at $t = + \infty$. The issue is that although one may obtain upper bounds for $\phi(t)$ and $\psi(t)$ as $t \to +\infty$, it is difficult to upgrade these to asymptotics since both $\phi(t)$ and $\psi(t)$ oscillate at late times. Moreover different Fourier modes will oscillate at different frequencies, see e.g.~the Bessel function type behaviour suggested in Section~\ref{sub:intro_proof} or the partial results of \cite{PetersenMode}.

    Another reason we do not study $t \to + \infty$ scattering is that the Kasner spacetimes are expected to be unstable in the $t \to + \infty$ direction as solutions to the Einstein--scalar field system, see \cite{KofmanUzanPitrou}. One could conjecture that generic perturbations lead to either isotropic expansion or recollapse into a singularity. This is related to the ``cosmic no-hair conjecture'' in the presence of a positive cosmological constant, see \cite{AndreassonRingstrom, BrauerRendallReula}. 
\end{remark}

\subsection{Scattering for linearized gravity in Kasner} \label{sub:intro_scattering2}

The second goal of the article will be to describe and prove a similar scattering theory for the linearized Einstein--scalar field system \eqref{eq:einstein}--\eqref{eq:energymomentum}, where the dynamical variables $(\matr{\upeta}{i}{j}, \matr{\upkappa}{i}{j}, \upphi, \uppsi)$ are linearized perturbations around the background Kasner solution \eqref{eq:kasner} and \eqref{eq:kasner_phi}. We first introduce more precisely the linearized system, and formulate it as a first-order system in the sense of Section~\ref{sub:intro_scattering1_wave}.

\subsubsection{Linearized Einstein--scalar field as a first-order system} \label{sub:intro_scattering2_eq}

%To make sense of \eqref{eq:einstein}--\eqref{eq:energymomentum}, or even the linearized \eqref{eq:einsteinlinear}--\eqref{eq:wavelinear} as a PDE, we must first pick a suitable coordinate gauge. 
As stated previously, we employ a (linearized) CMC ADM-type gauge. More precisely, our scattering theorem, Theorem~\ref{thm:einstein_scat}, is with respect to a (linearized) \emph{Constant Mean Curvature Transported Coordinate} (CMCTC) gauge. The nonlinear version of this gauge has coordinates $(t, x^i) \in (0, T) \times \mathbb{T}^D$ and a Lorentzian metric
\begin{equation} \label{eq:adm1}
    g = - n^2 dt^2 + \bar{g}_{ij} dx^i dx^j.
\end{equation}

Here $n: \mathcal{M} \to \R^+$ is known as the \textit{lapse}, while $\bar{g}_{ij} = \bar{g}_{ij}(t)$ are a time-dependent family of Riemannian metrics on $\mathbb{T}^D$.
Our time function $t$, and thus the lapse, is fixed since we choose our foliation by hypersurfaces $\Sigma_t = \{t\} \times \mathbb{T}^D$ to be one of constant mean curvature equal to $\tr k |_{\Sigma_t} = - t^{-1}$. In this gauge
%Coupling also to a scalar field $\phi: \mathcal{M} \to \R$, 
the Einstein--scalar field equations \eqref{eq:einstein}--\eqref{eq:energymomentum} may be written as a system of second-order evolution equations for the $\bar{g}_{ij}$ and $\phi$, coupled to an elliptic equation for the lapse $n$ that arises from the CMC condition. There are also the usual Hamiltonian and momentum constraints.
%On top of these, several components of \eqref{eq:einstein} manifest as the Hamiltonian and momentum constraint equations, and must be satisfied on data.

%As we did in Section~\ref{sub:intro_scattering1_wave}, it will be helpful to rewrite this system as a first-order evolution system for the variables $(\bar{g}_{ij}, k_{ij}, \phi, N \phi)$, where $k_{ij}$ is the second fundamental form of the foliation, and $N \phi$ is the derivative of $\phi$ with respect to the vector field $N$ normal to the hypersurfaces $\Sigma_t$. This first order system will be written in full detail in Section~\ref{}, see also \cite{RendallPDE}.
This system of evolution equations plus elliptic equations plus constraints can be rewritten as a first-order system of evolution equations for the variables $(\bar{g}_{ij}, k_{ij}, \phi, \mathbf{N} \phi)$, where $k_{ij}$ is the second fundamental form and $\mathbf{N} \phi$ is the derivative of $\phi$ normal to the foliation, coupled also to an elliptic equation for $n$ and with $(\bar{g}_{ij}, k_{ij}, \phi, \mathbf{N}\phi, n)$ satisfying several constraints. The resulting system will be written in full detail in Section~\ref{sub:adm_adm}.%, see also \cite[Chapter 2.3]{RendallPDE}.

For now, we first present the first-order system for the \emph{linearized} Einstein--scalar field system, where the variables are linearized perturbations of a fixed generalized Kasner solution of the form \eqref{eq:kasner}, \eqref{eq:kasner_phi} satisfying the Kasner relations \eqref{eq:kasner_relations}. Quantities associated to this background Kasner spacetime $(\mathcal{M}_{Kas}, g_{Kas}, \phi_{Kas})$ are denoted with an overhead circle, for instance\footnote{Here, and in the sequel, underlined indices mean that there is no summation. Also, we often write $\mathring{g}_{ij}$ in place of $\mathring{\bar{g}}_{ij}$.}
\[
    \mathring{g}_{ij} = \mathring{\bar{g}}_{ij} = t^{2 p_{\underline{i}}} \delta_{\underline{i}j}, \quad \mathring{n} = 1, \quad \mathring{\phi} = p_{\phi} \log t, \quad \text{ etc.}
\]

The variables in our linearized Einstein--scalar field system are chosen to be the following (see already Definition~\ref{def:linsmall} in Section~\ref{sub:lineinstein}):
\begin{gather}
    \matr{\upeta}{i}{j} \coloneqq - \frac{1}{2} (\mathring{g}^{-1})^{jk} \left( g_{ik} - \mathring{g}_{ik} \right), \quad
    \matr{\upkappa}{i}{j} \coloneqq t (\bar{g}^{-1})^{jk} k_{ik} - t ( \mathring{\bar{g}}^{-1} )^{jk} \mathring{k}_{ik}, 
    \label{eq:hkappa} \\[0.5em]
    \upnu \coloneqq n - \mathring{n} = n - 1, \label{eq:upnu} \\[0.5em]
    \upphi \coloneqq \phi - \mathring{\phi}, \quad \uppsi \coloneqq t \left( \mathbf{N} \phi - \partial_t \mathring{\phi} \right) \label{eq:upphipsi}. 
\end{gather}
The linearized Einstein--scalar field system \eqref{eq:einsteinlinear}--\eqref{eq:wavelinear}, with respect to these variables, will be derived in Proposition~\ref{prop:adm_linear}, and are roughly categorised as follows. Firstly there are the evolution equations for the linearized metric and matter variables $(\matr{\upeta}{i}{j}, \matr{\upkappa}{i}{j}, \upphi, \uppsi )$:

\begin{equation} \label{eq:upeta_evol_intro}
    t \partial_t \matr{\upeta}{i}{j} = \matr{\upkappa}{i}{j} + (2t \matr{\mathring{k}}{p}{j}) \matr{\upeta}{i}{p} - (2 t \matr{\mathring{k}}{i}{p}) \matr{\upeta}{p}{j} + (t \matr{\mathring{k}}{i}{j}) \upnu, % - \frac{1}{2} \partial_i \upchi^j - \frac{1}{2} \tilde{g}_{ip} \tilde{g}^{jq} \partial_q \upchi^p,
\end{equation}
\begin{equation} \label{eq:upkappa_evol_intro}
    t \partial_t \matr{\upkappa}{i}{j} = t^2 \mathring{g}^{ab} \partial_a \partial_b \matr{\upeta}{i}{j} + t^2 \mathring{g}^{il} \partial_j \partial_l (\tr \upeta) - t^2 \mathring{g}^{ab} \partial_i \partial_b \matr{\upeta}{a}{j} - t^2 \mathring{g}^{aj} \partial_a \partial_b \matr{\upeta}{i}{b} - t^2 \mathring{g}^{jk} \partial_i \partial_k \upnu - ( t \matr{\mathring{k}}{i}{j} ) \upnu,
\end{equation}
\begin{equation} \label{eq:upphi_evol_intro}
    t \partial_t \upphi = \uppsi + p_{\phi} \upnu,
\end{equation}
\begin{equation} \label{eq:uppsi_evol_intro} 
    t \partial_t \uppsi = t^2 \mathring{g}^{ab} \partial_a \partial_b \upphi - p_{\phi} \upnu. 
\end{equation}

Next, we have the elliptic equation for $\upnu$, 
\begin{equation} \label{eq:upnu_elliptic_intro}
    \left( 1 - t^2 \mathring{g}^{ab} \partial_a \partial_b \right) \upnu = - 2 t^2 \mathring{g}^{ab} \partial_a \partial_b (\tr \upeta) + 2 t^2 \mathring{g}^{ab} \partial_b \partial_c \matr{\upeta}{a}{c}.
\end{equation}
Finally, we have the constraint equations and gauge conditions, which are propagated by \eqref{eq:upeta_evol_intro}--\eqref{eq:upnu_elliptic_intro}. To keep the exposition brief, we do not write these here.

\begin{remark}
    The system \eqref{eq:upeta_evol_intro}--\eqref{eq:upnu_elliptic_intro} is written in a tensorial form in order to apply tensorial estimates, and in the hope that one could generalize to spatial topologies other than $\mathbb{T}^D$. The downside is that the equations appear a little mysterious, due to terms featuring $t \matr{\mathring{k}}{i}{j}$. To demystify, note that in the usual coordinates $t \matr{\mathring{k}}{i}{j} = - p_{\underline{i}} \matr{\delta}{\underline{i}}{j}$, where $p_i$ are the Kasner exponents. 
    
    For instance, \eqref{eq:upeta_evol_intro} may be rewritten as:
    \[
        t \partial_t \matr{\upeta}{i}{j} = \matr{\upkappa}{i}{j} + (2 p_{\underline{i}} - 2 p_{\underline{j}}) \matr{\upeta}{\underline{i}}{\underline{j}} - p_{\underline{i}} \matr{\delta}{\underline{i}}{j} \upnu. % - \frac{1}{2} \partial_i \upchi^j - \frac{1}{2} \tilde{g}_{ip} \tilde{g}^{jq} \partial_q \upchi^p,
    \]
    %recalling that underlined indices mean there is no summation. 
    In particular, even ignoring the $\matr{\upkappa}{i}{j}$ term, one expects a term of order $t^{2p_i - 2p_j}$ in the asymptotics of $\matr{\upeta}{i}{j}$. %This is one indication of the powers of $t$ appearing in Theorem~\ref{thm:asymp_einstein}.
\end{remark}

\subsubsection{Renormalized variables} \label{sub:intro_scattering2_renorm}

In light of this remark, not all components of $\matr{\upeta}{i}{j}$ will generically remain bounded as $t \to 0$, hence $\matr{\upeta}{i}{j}$ is not a suitable choice of asymptotic data at $t = 0$. This is true also for $\upphi$, which is $O(\log t^{-1})$ near $t = 0$. %and one sees already in Theorem~\ref{thm:wave_scat} that one requires the renormalized quantity $\varphi$, defined in \eqref{eq:varphi}.)
As a result, we make the following definitions which do have suitable limits as $t \to 0$:
\begin{definition} \label{def:upupsilonupvarphi}
    For a solution $(\matr{\upeta}{i}{j}, \matr{\upkappa}{i}{j}, \upphi, \uppsi)$ to the linearized Einstein--scalar field equations \eqref{eq:upeta_evol_intro}--\eqref{eq:upnu_elliptic_intro}, we define the renormalized quantities $\matr{\Upupsilon}{i}{j}$ and $\upvarphi$ as follows:
    \begin{equation} \label{eq:upupsilon}
        \matr{\Upupsilon}{i}{j} \coloneqq \matr{\upeta}{i}{j} + \int^1_t \mathring{g}_{ip}(s) \mathring{g}^{jq}(s) \frac{ds}{s} \cdot \matr{\upkappa}{q}{p}
        = \begin{cases}
            \matr{\upeta}{i}{j} - \matr{\upkappa}{j}{i} \log t, & \text{ if } p_i = p_j \\[0.6em]
            \matr{\upeta}{i}{j} - \matr{\upkappa}{\underline{j}}{\underline{i}} \left( \displaystyle{\frac{t^{2p_{\underline{i}} - 2 p_{\underline{j}}} - 1}{2 p_{\underline{i}} - 2 p_{\underline{j}}}} \right)  & \text{ if } p_i \neq p_j.
        \end{cases} 
    \end{equation}
    \begin{equation} \label{eq:upvarphi}
        \upvarphi \coloneqq \upphi - \uppsi \log t.
    \end{equation}
\end{definition}

%Using \eqref{eq:upeta_evol_intro}--\eqref{eq:upnu_elliptic_intro}, it is possible to derive evolution equations for $\matr{\Upupsilon}{i}{j}$ and $\upvarphi$, and a crucial calculation in our poof of the scattering theorem is to show that $\partial_t \matr{\Upupsilon}{i}{j}$ and $\partial_t \upvarphi$ are integrable towards $t = 0$. The same will be true for $\partial_t \matr{\upkappa}{i}{j}$ and $\partial_t \uppsi$ without renormalization, and one therefore chooses the appropriate asymptotic data to be the limits $(\matr{(\upkappa_{\infty})}{i}{j}, \matr{(\Upupsilon_{\infty})}{i}{j}, \uppsi_{\infty}, \upvarphi_{\infty})$ at $t = 0$.

\begin{remark}
    Though \eqref{eq:upupsilon} and \eqref{eq:upvarphi} are convenient in that they agree with the natural Cauchy data $\matr{\upeta}{i}{j}$ and $\upphi$ at $t=1$, it will be useful to consider more general versions where $t = 1$ is replaced by $t = T$, i.e.~
    \begin{equation} \label{eq:upupsilonupvarphi_T}
        \matr{\tilde{\Upupsilon}}{i}{j} = \matr{\upeta}{i}{j} + \int^T_t \mathring{g}_{ip}(s) \mathring{g}^{jq}(s) \frac{ds}{s} \cdot \matr{\upkappa}{q}{p}, \qquad
        \tilde{\upvarphi} = \upphi - \uppsi \log( \frac{t}{T} ).
    \end{equation}
    In our energy estimates, the natural choice of $T$ turns out to be the frequency dependent $T = t_{\lambda *}$, with $t_{\lambda*}$ as in Definition~\ref{def:tstar}. This indicates that a sharp scattering statement requires the use of the symbol $\mathcal{T}_*$, as we will see in Theorem~\ref{thm:einstein_scat}.
\end{remark}

\subsubsection{Infinitesimal gauge transformations} \label{sub:intro_scattering2_gauge}

Before proceeding to state the scattering theorem for the linearized Einstein--scalar field system \eqref{eq:upeta_evol_intro}--\eqref{eq:upnu_elliptic_intro}, we make one further observation regarding the choice of spatial gauge. The observation is that though our time function $t$ is fixed by the CMC condition, there is remaining gauge freedom in the ADM-type gauge \eqref{eq:adm1}, due to diffeomorphisms from $\mathbb{T}^D$ to itself.

At the infinitesimal level, such diffeomorphisms are associated to a vector field $\upxi^i$ on $\mathbb{T}^D$, since such vector fields generate a one-parameter family of diffeomorphisms via their flow. Such a vector field (which we often denote a \emph{change of gauge}) causes the linearized quantities $\matr{\upeta}{i}{j}$ and $\matr{\upkappa}{i}{j}$ to transform as follows:
\begin{gather} \label{eq:upeta_gauge}
    \matr{\upeta}{i}{j} \mapsto \matr{\upeta}{i}{j} + \frac{1}{2} \partial_i \upxi^j + \frac{1}{2} \mathring{g}_{ip} \mathring{g}^{jq} \partial_q \upxi^p,
    \\[0.4em] \label{eq:upkappa_gauge}
    \matr{\upkappa}{i}{j} \mapsto \matr{\upkappa}{i}{j} + (t \matr{\mathring{k}}{i}{p}) \partial_p \upxi^j - (t \matr{\mathring{k}}{p}{j}) \partial_i \upxi^p.
\end{gather}
The renormalized quantity $\matr{\Upupsilon}{i}{j}$ defined in \eqref{eq:upupsilon} transforms as follows:
\begin{equation} \label{eq:upupsilon_gauge}
    \matr{\Upupsilon}{i}{j} \mapsto \matr{\Upupsilon}{i}{j} + \frac{1}{2} \partial_i \upxi^j + \frac{1}{2} \mathring{g}_{ip}(1) \mathring{g}^{jq}(1) \partial_q \upxi^p.
\end{equation}

For now, let $\upxi^i$ be a regular vector field on $\mathbb{T}^D$ that does not vary in time. Under the change of gauge \eqref{eq:upeta_gauge}--\eqref{eq:upupsilon_gauge}, the system of equations \eqref{eq:upeta_evol_intro}--\eqref{eq:upnu_elliptic_intro} is invariant. (Such gauge invariance holds in a broader context in Section~\ref{sub:adm_gauge}, where we allow the vector field $\upxi^i$ to also depend on time.)

A complete scattering theory for the linearized Einstein--scalar field system must be armed with a canonical choice of gauge. It turns out that the choice of gauge that leads to sharp estimates will be different for Cauchy data at $t = 1$ and for asymptotic data at $t=0$, as we explain now.%and as part of our scattering theory we must understand the vector field $\upxi^j$ that transforms us from one choice of gauge to the other.

Our canonical gauge for Cauchy data is inherited from the requirement that the coordinates $x^i$ are harmonic with respect to the Riemannian metric $\bar{g}_{ij}(1)$ at $t = 1$. Upon linearization, this translates to the following condition for $\matr{\upeta}{i}{j}$:% satisfies the following condition:
\begin{equation} \label{eq:spatiallyharmonic}
    2 \partial_j \matr{\upeta}{i}{j} = \partial_i ( \tr \upeta ) \quad \text{ at } t = 1.
\end{equation}
This is related to the linearized \emph{Constant Mean Curvature Spatially Harmonic} (CMCSH) gauge that appears in Section~\ref{sub:lineinstein}, the difference being that in CMCSH gauge, \eqref{eq:spatiallyharmonic} is instead true at all times $t > 0$ (and for that we must allow $\upxi^j$ to be time-dependent).
It will be shown in Lemma~\ref{lem:diffeo} that any Cauchy data for the linearized Einstein--scalar field system can be transformed into this gauge. %This is shown in Lemma~\ref{lem:diffeo}.%, and in fact the content of that lemma will allow us to later estimate the vector field $\upxi^i$ used to achieve the gauge transformation.

One may now understand one direction of the scattering problem as follows. Firstly, one prescribes Cauchy data for $(\matr{\upeta}{i}{j}, \matr{\upkappa}{i}{j}, \upphi, \uppsi)$ at $t = 1$ which satisfies suitable constraints as well as the spatially harmonic condition \eqref{eq:spatiallyharmonic}.
One then evolves this data using \eqref{eq:upeta_evol_intro}--\eqref{eq:upnu_elliptic_intro}, as a regular solution in $\mathcal{M}_{Kas}$. 
For (i) the existence of scattering states, we show that the solution obeys certain asymptotics as $t \to 0$, namely that the renormalized quantities $(\matr{\upkappa}{i}{j}, \matr{\Upupsilon}{i}{j}, \uppsi, \upvarphi)$ attain limits as $t \to 0$:
\[
    (\matr{\upkappa}{i}{j}, \matr{\Upupsilon}{i}{j}, \uppsi, \upvarphi) \to (\matr{(\upkappa_{\infty})}{i}{j}, \matr{(\Upupsilon_{\infty})}{i}{j}, \uppsi_{\infty}, \upvarphi_{\infty}).
\]
%By showing also suitable quantitative estimates in Sobolev spaces, one concludes the scattering map from Cauchy data at $t=1$ to asymptotic data at $t = 0$.

The opposite direction involves prescribing the asymptotic data $(\matr{(\upkappa_{\infty})}{i}{j}, \matr{(\Upupsilon_{\infty})}{i}{j}, \uppsi_{\infty}, \upvarphi_{\infty})$ at $t = 0$. These also satisfy suitable constraint equations, derived the usual constraint equations, see Lemma~\ref{lem:scattering_constraints}. More importantly, one must impose a canonical gauge for the asymptotic data. In light of our energy estimates, we make a slightly peculiar choice, which will be the following identity involving\footnote{We make sense of the integral in \eqref{eq:asymptoticallyharmonic} in terms of the symbol calculus; indeed upon evaluation of the integral it is just a combination of constants and $\mathcal{T}_*^{2p_i - 2p_j}$ or $\log(\mathcal{T}_*)$.} the symbol $\mathcal{T}_*$.
\begin{equation} \label{eq:asymptoticallyharmonic}
    2 \partial_j \left( \matr{(\Upupsilon_{\infty})}{i}{j} + \int^1_{\mathcal{T}_*} \mathring{g}_{ip}(s) \mathring{g}^{jq}(s) \frac{ds}{s} \cdot \matr{(\upkappa_{\infty})}{q}{p} \right) = 2 \partial_i \left( \tr \Upupsilon_{\infty} \right).
\end{equation}
We later show that we can always transform suitable asymptotic data into this gauge, which we often call the \emph{canonical asymptotic gauge condition}. %there is a change of gauge vector field $\upxi^j$ which transforms any asymptotic data solving the constraints into one satisfying the canonical gauge condition \eqref{eq:asymptoticallyharmonic}.

\sloppy For (ii) asymptotic completeness, we show that for any sufficiently regular asymptotic data $(\matr{(\upkappa_{\infty})}{i}{j}, \matr{(\Upupsilon_{\infty})}{i}{j}, \uppsi_{\infty}, \upvarphi_{\infty})$ satisfying the constraints as well as the gauge condition \eqref{eq:asymptoticallyharmonic}, there exists a unique regular solution $(\matr{\upeta}{i}{j}, \matr{\upkappa}{i}{j}, \upphi, \uppsi)$ to the system \eqref{eq:upeta_evol_intro}--\eqref{eq:upnu_elliptic_intro} in the whole spacetime $\mathcal{M}_{Kas}$, which achieves the prescribed asymptotic data at $t = 0$.

%Of course, in this converse direction, it will not generally be true that the solution with satisfy the spatially harmonic condition at $t = 1$. However, using Lemma~? we will be able to control the gauge transformation vector field $\upxi^i$ that can be used to put the solution in this canonical gauge, and show that $\upxi^i$ will be suitably bounded in Sobolev spaces. By showing this, along with corresponding Sobolev bounds on $(\matr{\upeta}{i}{j}, \matr{\upkappa}{i}{j}, \upphi, \uppsi)$ at $t=1$, this concludes the scattering map from asymptotic data at $t = 0$ to Cauchy data at $t = 1$.
As warned, the gauge choice corresponding to linearly spatially harmonic data at $t=1$ (i.e.~data satisfying \eqref{eq:spatiallyharmonic}), will not necessarily match in evolution with asymptotic data which satisfies the canonical asymptotic gauge condition \eqref{eq:asymptoticallyharmonic}, and vice versa. Therefore, in the scattering theory we allow for a change of gauge vector field $\upxi^j$ that moves from one gauge to the other.
%
%In the process, one can also provide some estimates on the change of gauge vector field $\upxi^j$ that one must apply -- however it is a minor shortcoming of our analysis that $\partial_i \upxi^j$ seems to satisfy worse estimates than either of $\matr{\upeta}{i}{j}$ at $t = 1$, or of $\matr{(\Upupsilon_{\infty})}{i}{j}$ at $t=0$, at least when the background Kasner spacetime is sufficiently anisotropic. This indicates that flexibility in spatial gauge may be necessary in any sharp scattering result.
%
See Section~\ref{sub:adm_gauge} for further comments regarding the spatial gauge.

\subsubsection{The scattering theorem} \label{sub:intro_scattering2_thm}

We now state a slightly imprecise, but still rather detailed, version of the scattering theorem for the linearized Einstein--scalar field system around a Kasner spacetime obeying the subcriticality relation \eqref{eq:subcritical}. We prove Theorem~\ref{thm:einstein_scat} in Section~\ref{sec:einstein_scat}. The proof will actually be via a scattering theorem in another gauge, Theorem~\ref{thm:einstein_scat_v2}.

\begin{theorem}[Scattering for the linearized Einstein--scalar field system in subcritical Kasner spacetimes] \label{thm:einstein_scat}
    Let $(\mathcal{M}_{Kas}, g_{Kas}, \phi_{Kas})$ be a (generalized) Kasner spacetime whose exponents satisfy the subcriticality condition \eqref{eq:subcritical}. Consider solutions $(\matr{\upeta}{i}{j}, \matr{\upkappa}{i}{j}, \upphi, \uppsi)$ to the linearized Einstein--scalar field equations around $(\mathcal{M}_{Kas},g_{Kas}, \phi_{Kas})$, written as the first-order elliptic-hyperbolic system \eqref{eq:upeta_evol_intro}--\eqref{eq:upnu_elliptic_intro} and satisfying suitable constraints. The scattering theory is as follows:
    \begin{enumerate}[(i)]
        \item \label{item:einstein_scatop} (Existence of the scattering operator) 
            Let $s$ be sufficiently large. Then given Cauchy data \linebreak$(\matr{(\upeta_C)}{i}{j}, \matr{(\upkappa_C)}{i}{j}, \upphi_C, \uppsi_C) \in H^{s+1} \times H^{s} \times H^{s+1} \times H^s$ at $t=1$ satisfying suitable constraints as well as the spatially harmonic gauge condition \eqref{eq:spatiallyharmonic}, there exists a unique (classical) solution $(\matr{\upeta}{i}{j}, \matr{\upkappa}{i}{j}, \upphi, \uppsi)$ to the system \eqref{eq:upeta_evol_intro}--\eqref{eq:upnu_elliptic_intro} in $\mathcal{M}_{Kas}$ attaining the Cauchy data 
            \[
                \matr{\upeta}{i}{j}(1, \cdot) = \matr{(\upeta_C)}{i}{j}(\cdot), \quad
                \matr{\upkappa}{i}{j}(1, \cdot) = \matr{(\upkappa_C)}{i}{j}(\cdot), \quad
                \upphi(1, \cdot) = \upphi_C (\cdot), \quad
                \uppsi(1, \cdot) = \uppsi_C (\cdot).
            \]

            Let $\matr{\Upupsilon}{i}{j}$ and $\upvarphi$ be as in \eqref{eq:upupsilon}--\eqref{eq:upvarphi}. 
            Then there exist unique $(1,1)$ tensors $\matr{(\upkappa_{\infty})}{i}{j}(x)$ and $\matr{(\Upupsilon_{\infty})}{i}{j}(x)$ and unique functions $\uppsi_{\infty}(x)$ and $\upvarphi_{\infty}(x)$ such that
            \begin{equation} \label{eq:asymp_upkappaupupsilon}
                \matr{\upkappa}{i}{j}(t, \cdot) \to \matr{(\upkappa_{\infty})}{i}{j}(\cdot) \text{ in } C^3, \qquad \matr{\Upupsilon}{i}{j}(t, \cdot) \to \matr{(\Upupsilon_{\infty})}{i}{j}(\cdot) \text{ in } C^3 \quad \text{ strongly as } t \to 0,
            \end{equation}
            \begin{equation} \label{eq:asymp_uppsiupvarphi}
                \uppsi(t, \cdot) \to \uppsi_{\infty}(\cdot) \text{ in } H^s, \qquad \upvarphi(t, \cdot) \to \upvarphi_{\infty}(\cdot) \text{ in } H^s \quad \text{ strongly as } t \to 0.
            \end{equation}
            Furthermore, there exists a change of gauge vector field $\upxi_{\infty}^j \in H^{s+\frac{1}{2}}$ such that after applying the transformations \eqref{eq:upeta_gauge}--\eqref{eq:upupsilon_gauge}, the transformed versions of $(\matr{(\upkappa_{\infty})}{i}{j}, \matr{(\Upupsilon_{\infty})}{i}{j}, \uppsi_{\infty}, \upvarphi_{\infty})$ satisfy suitable asymptotic constraints as well as the canonical asymptotic gauge condition \eqref{eq:asymptoticallyharmonic}, and have the following regularity:
            \begin{equation} \label{eq:asymp_metric_reg}
                \mathcal{T}_*^{- p_{\underline{i}} + p_{\underline{j}}} \matr{(\upkappa_{\infty})}{\underline{i}}{\underline{j}} \in H^{s + \frac{1}{2}},  \qquad
                \mathcal{T}_*^{ - p_{\underline{i}} + p_{\underline{j}}} \left( \matr{(\Upupsilon_{\infty})}{\underline{i}}{\underline{j}} + \int^1_{\mathcal{T}_*} \mathring{g}_{\underline{i}p}(s) \mathring{g}^{\underline{j}q}(s) \frac{ds}{s} \cdot \matr{(\upkappa_{\infty})}{q}{p} \right) \in H^{s + \frac{1}{2}},
            \end{equation}
            \begin{equation} \label{eq:asymp_scalar_reg}
                \uppsi_{\infty} \in H^{s + \frac{1}{2}},
                \qquad \upvarphi_{\infty} + \log(\mathcal{T}_*) \uppsi_{\infty} \in H^{s + \frac{1}{2}}.
            \end{equation}
            
        \item \label{item:einstein_asympcomp} (Asymptotic completeness) 
            Let $s$ be sufficiently large, and let $\matr{(\upkappa_{\infty})}{i}{j}$, $\matr{(\Upupsilon_{\infty})}{i}{j}$ be $(1, 1)$ tensors in $\mathbb{T}^D$, and $\uppsi_{\infty}$, $\upvarphi_{\infty}$ be functions in $\mathbb{T}^D$ with Sobolev regularity as in \eqref{eq:asymp_metric_reg}--\eqref{eq:asymp_scalar_reg}, and satisfying suitable constraints as well as the canonical asymptotic gauge condition \eqref{eq:asymptoticallyharmonic}. Then there exists a unique (classical) solution $(\matr{\upeta}{i}{j}, \matr{\upkappa}{i}{j}, \upphi, \uppsi)$ to the system \eqref{eq:upeta_evol_intro}--\eqref{eq:upnu_elliptic_intro} in $\mathcal{M}_{Kas}$, with regularity
            \[
                \left( \matr{\upeta}{i}{j}, \matr{\upkappa}{i}{j} \right) \in C^0((0, + \infty), H^{s+1} \times H^s) \cap C^1((0, + \infty), H^s \times H^{s-1}),
            \]
            \[
                \left( \upphi, \uppsi \right) \in C^0((0, + \infty), H^{s+1} \times H^s) \cap C^1((0, + \infty), H^s \times H^{s-1}),
            \]
            as well as a change of gauge vector field $\upxi^j \in H^{s-1}$, such that:
            \begin{enumerate}[1.]
                \item
                    The quantity $\matr{\upeta}{i}{j}$ satisfies the spatially harmonic condiion \eqref{eq:spatiallyharmonic} at $t = 1$.
                \item
                    If $\matr{\Upupsilon}{i}{j}$ and $\upvarphi$ are defined as in \eqref{eq:upupsilon}--\eqref{eq:upvarphi}, then after applying the transformation \eqref{eq:upeta_gauge}--\eqref{eq:upupsilon_gauge} associated to the vector field $\upxi^j$, the resulting transformed quantities obey the asymptotics \eqref{eq:asymp_upkappaupupsilon} and \eqref{eq:asymp_uppsiupvarphi} hold, strongly as $t \to 0$.
            \end{enumerate}
%            The resulting $(1, 1)$ tensor $\matr{\upeta}{i}{j}$ may not satisfy the spatially harmonic condition \eqref{eq:spatiallyharmonic} at $t = 1$, however there exists a vector field $\upxi^i$ on $\mathbb{T}^D$ such that applying the gauge transformation \eqref{eq:upeta_gauge}--\eqref{eq:upkappa_gauge} transforms $\matr{\upeta}{i}{j}$ into a quantity satisfying \eqref{eq:spatiallyharmonic}.

        \item \label{item:einstein_scatiso} (Scattering isomorphism)
            For $s \in \R$, define the following Hilbert spaces together with their norms:
            \begin{equation} 
                \mathcal{H}^s_C = \left \{ \left( \matr{\upeta}{i}{j}, \matr{\upkappa}{i}{j}, \upphi, \uppsi \right): \matr{\upeta}{i}{j} \in H^{s+1}, \matr{\upkappa}{i}{j} \in H^s, \upphi \in H^{s+1}, \uppsi \in H^s \right \},
            \end{equation}
            \begin{equation}
                \left \| \left( \matr{\upeta}{i}{j}, \matr{\upkappa}{i}{j}, \upphi, \uppsi \right) \right \|_{\mathcal{H}^s_C}^2 \coloneqq \sum_{i, j = 1}^D  \left( \| \matr{\upeta}{i}{j} \|_{H^{s+1}}^2 + \| \matr{\upkappa}{i}{j} \|_{H^s}^2 \right) + \| \upphi \|_{H^{s+1}}^2 + \| \uppsi \|_{H^s}^2,
            \end{equation}
            \begin{multline}
                \mathcal{H}^s_{\infty} = \left \{ \left( \matr{\upkappa}{i}{j}, \matr{\Upupsilon}{i}{j}, \uppsi, \upvarphi \right): \mathcal{T}_*^{-p_i + p_j} \matr{\upkappa}{i}{j} \in H^{s + \frac{1}{2}}, \mathcal{T}_*^{-p_i + p_j} \left( \matr{\Upupsilon}{i}{j} - \int^1_{\mathcal{T}_*} \mathring{g}_{ip}(s) \mathring{g}^{jq}(s) \frac{ds}{s} \cdot \matr{\upkappa}{q}{p}\right) \in H^{s+ \frac{1}{2}} \right . , \\[0.3em] 
                \left . \uppsi \in H^{s+\frac{1}{2}}, \upvarphi + \log(\mathcal{T}_*) \uppsi \in H^{s + \frac{1}{2}} \right \},
            \end{multline}
            \begin{multline}
                \left \| \left( \matr{\upkappa}{i}{j}, \matr{\Upupsilon}{i}{j}, \uppsi, \upvarphi \right) \right \|_{\mathcal{H}^s_C}^2 \coloneqq \sum_{i, j = 1}^D \left( \left \| \mathcal{T}_*^{-p_i  + p_j} \matr{\upkappa}{i}{j} \right \|_{H^{s + \frac{1}{2}}} + \left \| \mathcal{T}_*^{-p_i + p_j} \left( \matr{\Upupsilon}{i}{j} - \int^1_{\mathcal{T}_*} \mathring{g}_{ip}(s) \mathring{g}^{jq} (s) \frac{ds}{s} \cdot \matr{\upkappa}{q}{p} \right) \right \|_{H^{s+\frac{1}{2}}} \right) \\[0.3em] 
                + \| \uppsi \|_{H^{s+\frac{1}{2}}}^2 + \| \upvarphi + \log(\mathcal{T}_*) \uppsi \|_{H^{s+ \frac{1}{2}}}^2.
            \end{multline}

            Further let $\mathcal{H}_{C, c}^s$ be the subspace of $\mathcal{H}_C^s$ obeying suitable constraints as well as the spatially harmonic gauge condition \eqref{eq:spatiallyharmonic}, and similarly let $\mathcal{H}_{\infty,c}^s$ be the subspace of $\mathcal{H}_{\infty}^s$ obeying the appropriate asymptotic constraints as well as the canonical asymptotic gauge condition \eqref{eq:asymptoticallyharmonic}.

            From (\ref{item:einstein_scatop}), let the operator $\mathcal{S}_{\downarrow}$ map the Cauchy data $(\matr{(\upeta_C)}{i}{j}, \matr{(\upkappa_C)}{i}{j}, \upphi_C, \uppsi_C)$ to the asymptotic quantities $(\matr{(\upkappa_{\infty})}{i}{j}, \matr{(\Upupsilon_{\infty})}{i}{j}, \uppsi_{\infty}, \upvarphi_{\infty})$, where the asymptotic quantities are taken \underline{after} modification by the gauge transformation associated to the vector field $\upxi^j$ in (\ref{item:einstein_scatop}). Then this map can be extended to a bounded, injective homomorphism from $\mathcal{H}_{C, c}^s$ to $\mathcal{H}_{\infty,c}^s$.

            Conversely, from (\ref{item:einstein_asympcomp}), let the operator $\mathcal{S}_{\uparrow}$ map the asymptotic data $(\matr{(\upkappa_{\infty})}{i}{j}, \matr{(\Upupsilon_{\infty})}{i}{j}, \uppsi_{\infty}, \upvarphi_{\infty})$ satisfying \eqref{eq:asymptoticallyharmonic} to $(\matr{\upeta}{i}{j}(1, \cdot), \matr{\upkappa}{i}{j}(1, \cdot), \upphi(1, \cdot), \uppsi(1, \cdot))$, noting that $\matr{\upeta}{i}{j}(1, \cdot)$ is chosen to satisfy \eqref{eq:spatiallyharmonic} after modification by the change of gauge vector field $\xi^j$. Then this map $\mathcal{S}_{\uparrow}$ may be extended to a bounded, injective homomorphism from $\mathcal{H}_{\infty, c}^s$ to $\mathcal{H}_{C, c}^s$.

            Finally, $\mathcal{H}_{\infty, c}^s$ is the image of $\mathcal{S}_{\downarrow}$ acting on $\mathcal{H}_{C, c}^s$, and we may view $\mathcal{S}_{\downarrow}: \mathcal{H}_{C, c}^s \to \mathcal{H}_{\infty, c}^s$ as a Hilbert space isomorphism with inverse $\mathcal{S}_{\uparrow}$.
    \end{enumerate}
\end{theorem}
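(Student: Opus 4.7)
The plan is to adapt the strategy of Theorem~\ref{thm:wave_scat} to the tensorial setting of linearized gravity, handling the extra complications from the elliptic constraint \eqref{eq:upnu_elliptic_intro} for the lapse $\upnu$, the distinct asymptotic behaviour of diagonal and off-diagonal tensor components visible in \eqref{eq:einstein_asymp_1}--\eqref{eq:einstein_asymp_2}, and the mismatch between the spatially harmonic Cauchy gauge \eqref{eq:spatiallyharmonic} and the canonical asymptotic gauge \eqref{eq:asymptoticallyharmonic}. First I would pass from the linearly spatially harmonic Cauchy data at $t=1$ to a time-global CMCSH gauge via a time-dependent change of gauge vector field and work throughout with the intermediate result Theorem~\ref{thm:einstein_scat_v2}. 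In CMCSH gauge the last two terms on the right of \eqref{eq:upkappa_evol_intro} combine with the first to produce a tensorial wave-type principal part for $\matr{\upeta}{i}{j}$, and the elliptic equation \eqref{eq:upnu_elliptic_intro} determines $\upnu$ as a zero-order-in-time functional of $\matr{\upeta}{i}{j}$. One is then left with a hyperbolic system whose structure closely parallels \eqref{eq:wave_phi_evol}--\eqref{eq:wave_psi_evol}, but with components indexed by $(i,j)$.

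Second, I would run Fourier-by-Fourier energy estimates, tracking each mode $\lambda \in \Z^D$ across the two regimes separated by the frequency-dependent crossing time $t_{\lambda *}$ of Definition~\ref{def:tstar}. For $t > t_{\lambda *}$ the evolution is wave-like and a suitably weighted energy is almost conserved up to subcritical corrections; for $t < t_{\lambda *}$ the system becomes ODE-dominated and each component converges to the explicit profile \eqref{eq:einstein_asymp_1}--\eqref{eq:einstein_asymp_4}. The subcriticality condition \eqref{eq:subcritical} is precisely what renders the spatial-derivative error integrable in $t$ as $t \to 0$, and the anisotropic weight $\mathcal{T}_*^{-p_i + p_j}$ records that the $(i,j)$ component of $\matr{\upkappa}{i}{j}$ enters $\matr{\upeta}{i}{j}$ through two rates $t^{2p_i}$ and $t^{2p_j}$; the renormalisation in Definition~\ref{def:upupsilonupvarphi} with frequency-dependent lower limit $T = t_{\lambda *}$ (as in \eqref{eq:upupsilonupvarphi_T}) is what extracts the pure sharp gains on the asymptotic side. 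Summing via Plancherel then produces the regularity built into $\mathcal{H}^s_C$ and $\mathcal{H}^s_\infty$.

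Third, part~(\ref{item:einstein_scatop}) follows directly: the uniform estimates give the limits and regularity \eqref{eq:asymp_upkappaupupsilon}--\eqref{eq:asymp_scalar_reg}, and the transfer to the canonical asymptotic gauge \eqref{eq:asymptoticallyharmonic} is achieved by solving an elliptic equation on $\mathbb{T}^D$ of harmonic-coordinate type for $\upxi_\infty^j \in H^{s + \frac{1}{2}}$, then applying \eqref{eq:upeta_gauge}--\eqref{eq:upupsilon_gauge}. For part~(\ref{item:einstein_asympcomp}), given asymptotic data in $\mathcal{H}^s_{\infty,c}$, I would approximate: solve a sequence of Cauchy problems at $t_n \downarrow 0$ whose data is built from truncating \eqref{eq:einstein_asymp_1}--\eqref{eq:einstein_asymp_4} in a manner consistent with the linearized constraints (using Lemma~\ref{lem:scattering_constraints}), then extract a limit as $t_n \to 0$ from the uniform estimates of the second step. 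A second elliptic solve produces the vector field $\upxi^j$ adjusting the solution at $t = 1$ to satisfy \eqref{eq:spatiallyharmonic}. Part~(\ref{item:einstein_scatiso}) then follows by combining (i) and (ii): density of smooth data in $\mathcal{H}^s_{C,c}$ and $\mathcal{H}^s_{\infty,c}$ together with the uniform Hilbert-space bounds yields bounded extensions; the uniqueness statements in (i), (ii) identify $\mathcal{S}_\uparrow$ as the two-sided inverse of $\mathcal{S}_\downarrow$.

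The main obstacle will be the sharp energy estimate near the transition $t = t_{\lambda *}$, in which off-diagonal components with distinct Kasner exponents couple through precisely the divisors $(p_i - p_j)^{-1}$ visible in Definition~\ref{def:upupsilonupvarphi} and in \eqref{eq:einstein_asymp_1}--\eqref{eq:einstein_asymp_2}; combined with the subcriticality margin in \eqref{eq:subcritical} deteriorating towards the boundary of the regime, this is the quantitative source of the unbounded derivative losses advertised in the abstract, and the weights $\mathcal{T}_*^{-p_i + p_j}$ must be deployed with care so that the cross-terms close into a genuine Gr\"onwall inequality rather than merely a local bound. A secondary but substantial difficulty is the consistent bookkeeping of two gauges simultaneously: one must verify that the change-of-gauge elliptic systems producing $\upxi^j$ and $\upxi_\infty^j$ are uniquely solvable at the claimed Sobolev level on both the Cauchy and asymptotic sides, and that the resulting gauge transformations commute appropriately with the scattering operators so that $\mathcal{S}_\downarrow \circ \mathcal{S}_\uparrow$ and $\mathcal{S}_\uparrow \circ \mathcal{S}_\downarrow$ indeed collapse to the identity on the constraint subspaces.
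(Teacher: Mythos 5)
Your overall architecture---Fourier decomposition, a frequency-dependent crossing time $t_{\lambda*}$, separate high- and low-frequency estimates glued by Gr\"onwall, density to pass from smooth to finite regularity---does track the paper's strategy, and you correctly flag the intermediate result Theorem~\ref{thm:einstein_scat_v2} and the role of the renormalization with $T = t_{\lambda*}$. However, there is a concrete gap that sits at the heart of the paper's argument and would cause your proposal to fail.

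You propose to ``pass to a time-global CMCSH gauge\ldots and work throughout'' in that gauge, treating the shift~$\upchi^j$ as a harmless zero-order functional of $\matr{\upeta}{i}{j}$ determined by the elliptic equation \eqref{eq:upchi_elliptic}. This works at high frequency ($\tau_\lambda(t) \geq \mathring{\tau}$), and indeed Section~\ref{sec:high_freq} uses CMCSH precisely to kill the non-principal second-order terms in $\widecheck{\textup{Ric}}$. But at low frequency the CMCSH gauge does \emph{not} close: the elliptic equation \eqref{eq:upchi_elliptic} sources $\upchi^j$ by $\matr{\hat\upeta}{i}{j}$, and it is $\matr{\tilde\Upupsilon}{i}{j}$, not $\matr{\upeta}{i}{j}$, that remains bounded as $t \to 0$---so $\upchi^j$ inherits a term of size $\matr{\upkappa}{i}{j}\log(\lambda t)$ that blows up. The shift term in \eqref{eq:upeta_evol} then destroys the low-frequency Gr\"onwall bound. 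The paper's resolution (Section~\ref{sub:intro_proof}, Section~\ref{sec:low_freq}, and Definition~\ref{def:time_freq_gauge}) is to switch gauges frequency-by-frequency: CMCSH for $t \geq t_{\lambda*}$, CMCTC ($\upchi^j=0$) for $t \leq t_{\lambda*}$, unified as the CMCFA gauge in Theorem~\ref{thm:einstein_scat_v2}. Your proposal never makes this switch, so the `ODE-dominated' regime you describe cannot be controlled in the gauge you chose.

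A consequence of this gap is that you have also mislocated the role of the change-of-gauge vector field $\upxi^j$. You treat it as a pair of auxiliary elliptic solves to bring the data into spatially harmonic form at $t=1$ and into the canonical asymptotic gauge at $t=0$. In the paper, $\upxi^j$ is first and foremost the integral in time of the CMCFA shift, $\upxi^j(t) = - \int_t^1 \upchi^j(s)\,ds/s$, and its boundedness in $H^{s+\frac12}$ (Lemma~\ref{lem:upxi_cmcfa}) requires the decay of $\upchi^j$ in \emph{both} the high- and low-frequency regimes---an estimate you cannot even formulate if the shift itself is unbounded at low frequency. Fixing your argument means introducing the gauge switch (or an equivalent frequency-adapted gauge) before running the low-frequency estimate, and only then solving the elliptic gauge-fixing problems for the endpoints. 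Your approximation scheme for asymptotic completeness (solving Cauchy problems from $t_n \downarrow 0$) is a plausible alternative to the paper's direct Fuchsian ODE solve (Lemma~\ref{lem:fuchsian}, Proposition~\ref{prop:einstein_low_freq_scat}), but without the gauge fix it inherits the same difficulty: the approximate data at $t_n$ in CMCSH gauge would not have uniform bounds as $t_n \to 0$.
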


\begin{remark}
    Aside from the $H^s \leftrightarrow H^{s+\frac{1}{2}}$ phenomenon already observed in Theorem~\ref{thm:wave_scat}, there is a new phenomenon that arises in the scattering theory upon introduction of the $(1, 1)$-tensors $\matr{\upeta}{i}{j}$, $\matr{\upkappa}{i}{j}$ and $\matr{\Upupsilon}{i}{j}$. This is that due to the symbol $\mathcal{T}_*^{-p_i + p_j}$, the anisotropy of the Kasner spacetime also affects the derivative gain and loss incurred in the scattering process, particularly in \emph{off-diagonal} components of the tensors $\matr{\upkappa}{i}{j}$ and $\matr{\Upupsilon}{i}{j}$.

    In fact, as one varies across the whole range of Kasner exponents satisfying the subcriticality condition \eqref{eq:subcritical}, the symbol $\mathcal{T}_*^{-p_i + p_j}$ can be of arbitrarily high order. %(In the sense that $\mathcal{T}_*^{-p_i + p_j}$ can lie in the symbol class $S^N$ for any $N \in \R$, as one approaches the boundary of the subcritical regime.) 
    Thus the degeneration of top order energy estimates -- without modification by such a symbol -- as observed in the nonlinear setting of say \cite{FournodavlosRodnianskiSpeck} or \cite{FournodavlosLuk}, is a true phenomenon, and one expects that in any nonlinear scattering result one must allow for a large loss of derivatives in either direction.
\end{remark}

\begin{remark}
    The subcriticality condition \eqref{eq:subcritical} is necessary in order to conclude asymptotics of the type given in Theorem~\ref{thm:einstein_scat}(i), due to heuristics of \cite{kl63}. However, even if one violates \eqref{eq:subcritical}, it may be possible to prove a version of Theorem~\ref{thm:einstein_scat} if we suppress the linearly unstable directions. 

    For instance, if one makes the additional assumption that for any $i, j, k$ such that $- p_i + p_j + p_k \geq 1$,%, one makes the additional assumption that for all times $t > 0$,
    \begin{equation} \label{eq:subcrit_linear}
        \frac{\partial_k \matr{\upkappa}{\underline{j}}{\underline{i}}(t)}{p_{\underline{i}} - p_{\underline{j}}} - 
        \frac{\partial_j \matr{\upkappa}{\underline{k}}{\underline{i}}(t)}{p_{\underline{i}} - p_{\underline{k}}} = 0
    \end{equation}
    for all times $t > 0$, then our proof shows that a version of Theorem~\ref{thm:einstein_scat} still holds. \eqref{eq:subcrit_linear} is true in certain symmetry classes, for instance $\mathbb{T}^2$ symmetry or polarized $U(1)$ symmetry in $1+3$ dimensions. This complements known nonlinear stability results in these classes \cite{AmesT2Stability, FournodavlosRodnianskiSpeck}. In fact one would expect a version of Theorem~\ref{thm:einstein_scat} to still hold if \eqref{eq:subcrit_linear} is only valid \emph{asymptotically as $t \to 0$}. We return to this issue in Section~\ref{sub:bkl}.
\end{remark}

\subsection{Features of the proof} \label{sub:intro_proof}

We illustrate several ideas involved in the proofs of Theorem~\ref{thm:wave_scat} and Theorem~\ref{thm:einstein_scat}. For simplicity, we mainly focus on the isotropic Kasner spacetime with $p_I = \frac{1}{D}$ for $I = 1, \ldots, D$, and mention only briefly the differences in the anisotropic case. %Furthermore, we make a change of variables $t \mapsto t^{1 - \frac{1}{D}}$, and potentially also rescale the size of the torus $\mathbb{T}^D$.
We also rescale time in such a way that the wave equation \eqref{eq:wave} can be rewritten as the following first order hyperbolic system, where $\Delta = (g_{Euc}^{-1})^{ij} \partial_i \partial_j$ is the standard Laplacian on $\mathbb{T}^D$.
\begin{equation} \label{eq:intro_sketch_wave}
    t \partial_t \phi = \psi, \qquad t \partial_t \psi = t^2 \Delta \phi.
\end{equation}

Our linear scattering results will always follow upon decomposing our PDEs in frequency space using Fourier series, and performing ODE analysis for each Fourier mode. Taking the Fourier decomposition of \eqref{eq:intro_sketch_wave}, we have for $|\lambda|^2 = \sum_{i = 1}^D \lambda_i^2$ the system
\begin{equation} \label{eq:intro_sketch_wave_l}
    t \partial_t \phi_{\lambda} = \psi_{\lambda}, \qquad t \partial_t \psi_{\lambda} = - |\lambda|^2 \phi_{\lambda}.
\end{equation}

Remarkably, \eqref{eq:intro_sketch_wave_l} can be solved explicitly. This is because \eqref{eq:intro_sketch_wave_l} is Bessel's equation (of order 0) written in first order form. Using \cite{NIST:DLMF} as a reference regarding Bessel's equation and the Bessel functions $J_{\nu}$ and $Y_{\nu}$, solutions to \eqref{eq:intro_sketch_wave_l} can be written as the following linear combinations of Bessel functions:
\begin{equation} \label{eq:intro_sketch_wave_bessel}
    \phi_{\lambda} = c_J(\lambda) J_0( |\lambda| t ) + c_Y(\lambda) Y_0(|\lambda| t), \qquad \psi_{\lambda} = - c_J(\lambda) (|\lambda| t) J_1 ( |\lambda| t ) - c_Y(\lambda) (|\lambda| t) Y_1( |\lambda| t).
\end{equation}
In particular, the asymptotics of Bessel functions in \cite[Chapter 10.7]{NIST:DLMF} imply the following behaviour for $\phi_{\lambda}$ and $\psi_{\lambda}$ in the regimes $|\lambda| t \to 0$ and $|\lambda| t \to +\infty$.
\begin{gather*}
    \phi_{\lambda}(t) = \begin{cases}
        \frac{2}{\pi} c_Y(\lambda) \log(|\lambda| t) + c_J(\lambda) + \cdots & \text{ as } |\lambda| t \to 0,\\
        \sqrt{ \frac{2}{\pi |\lambda| t} } \left( c_J(\lambda) \cos(|\lambda|t - \frac{\pi}{4}) + c_Y(\lambda) \sin(|\lambda|t - \frac{\pi}{4}) \right) + \cdots & \text{ as } |\lambda| t \to + \infty,
    \end{cases}
    \\[0.4em]
    \psi_{\lambda}(t) = \begin{cases}
        \frac{2}{\pi} c_Y(\lambda) + \cdots & \text{ as } |\lambda| t \to 0,\\
        \sqrt{ \frac{2 |\lambda| t}{\pi} }\left( - c_J(\lambda) \sin(|\lambda|t - \frac{\pi}{4}) + c_Y(\lambda) \cos(|\lambda|t - \frac{\pi}{4}) \right) + \cdots & \text{ as } |\lambda| t \to + \infty.
    \end{cases}
\end{gather*}

One may read off from these asymptotics that as $t \to 0$, we expect an expansion $\phi = \psi_{\infty} \log t + \varphi_{\infty} + \cdots$ as in Theorem~\ref{thm:asymp_wave}, and moreover that the Fourier coefficients of $\psi_{\infty}$ and $\varphi_{\infty}$ can be read off from the coefficients $c_J(\lambda)$ and $c_Y(\lambda)$. %Lus now translate these asymptotics into statements about Sobolev regularity. 
Using the characterization of Sobolev spaces $H^s$ in Fourier space, we may then translate these asymptotics into statements about Sobolev regularity:
\begin{enumerate}[1.]
    \item
        Having $\phi(1, \cdot) \in H^{s+1}$ and $\psi(1, \cdot) \in H^s$ is equivalent to the sum $\sum_{\lambda} |\lambda|^{2s + 1} (|c_J(\lambda)|^2 + |c_Y(\lambda)|^2)$ being finite. This is due to the powers of $|\lambda| t$ arising in the $|\lambda|t \to \infty$ regime.
    \item
        At $t = 0$, we observe that $(\psi_{\infty})_{\lambda} = \frac{2}{\pi} c_Y(\lambda)$ and $(\varphi_{\infty})_{\lambda} = c_J(\lambda) - \frac{2}{\pi} c_Y(\lambda) \log |\lambda| = c_J(\lambda) - (\psi_{\infty})_{\lambda} \log |\lambda|$. Therefore, the sum $\sum_{\lambda} |\lambda|^{2s + 1} (|c_J(\lambda)|^2 + |c_Y(\lambda)|^2)$ being finite is equivalent to $\psi_{\infty}$ and the modified $\varphi_{\infty} + (\log |\nabla|) \psi_{\infty}$ being in $H^{s + \frac{1}{2}}$.
\end{enumerate}
Combining these two observations yields the scattering result of Theorem~\ref{thm:wave_scat}, at least after detailed analysis of the error terms in the Bessel asymptotics.

However, since we wish to generalize to anisotropic Kasner spacetimes, and also to the study of the linearized Einstein--scalar field system, it is desirable to find a method more robust than using the explicit Bessel functions. That is, we wish to capture Bessel-like asymptotics in both the $|\lambda| t \to + \infty$ and the $|\lambda| t \to 0$ regimes, which we denote the high-frequency and low-frequency regimes respectively.

It turns out we can achieve this via energy estimates for each Fourier mode $(\phi_{\lambda}, \psi_{\lambda})$. In our two regimes, we shall require two different energy estimates, which we explain as follows:
\begin{enumerate}[1.]
    \item
        In the high-frequency regime $|\lambda| t \gg 1$, we use a high-frequency energy which is given by
        \[
            \mathcal{E}_{\lambda, high}(t) = \frac{\psi_{\lambda}^2}{|\lambda| t} + \frac{\psi_{\lambda} \phi_{\lambda}}{|\lambda|t} + \frac{\phi_{\lambda}^2}{2 |\lambda| t} + |\lambda| t \phi_{\lambda}^2.
        \]
        %It is easily checked that this energy has good coercivity properties. 
        Using \eqref{eq:intro_sketch_wave_l}, a computation involving crucial top order cancellations yields that for some $C > 0$,
        \[
            |t \partial_t \mathcal{E}_{\lambda, high}(t)| \leq C (\lambda t)^{-2} \mathcal{E}_{\lambda, high}(t) .
        \]
        Therefore, Gr\"onwall's inequality implies that for a fixed Fourier mode $\lambda \in \Z^D \setminus \{0\}$, the high-frequency energy remains well controlled in the region $|\lambda|t \geq 1$.

    \item
        In the low-frequency regime $|\lambda| t \ll 1$, we first define $ \tilde{\varphi}_{\lambda} = \phi_{\lambda} - \psi_{\lambda} \log(|\lambda| t)$ so that $\phi_{\lambda} = \tilde{\varphi}(|\lambda| t)$ exactly when $|\lambda| t = 1$. We then use a low-frequency energy quantity which is simply
        \[
            \mathcal{E}_{\lambda, low}(t) = \psi_{\lambda}^2 + \tilde{\varphi}_{\lambda}^2.
        \]
        This is uniformly equivalent to the high-frequency energy $\mathcal{E}_{\lambda, high}(t)$ at the transitional time $t = |\lambda|^{-1}$, and one may show from the system \eqref{eq:intro_sketch_wave_l} that
        \[
            |t \partial_t \mathcal{E}_{\lambda, low}(t)| \leq C (\lambda t)^2 (1 + |\log (\lambda t)|^2) \mathcal{E}_{\lambda, low}(t).
        \]
        Therefore, Gr\"onwall's inequality implies that for a fixed Fourier mode $\lambda \in \Z^D \setminus \{0\}$, the low-frequency energy remains well controlled in the region $0 < |\lambda| t \leq 1$.
\end{enumerate}

Combining these two energy estimates and translating into a Sobolev regularity statement yields a proof of Theorem~\ref{thm:wave_scat}. Furthermore, this energy method is robust and may be applied more widely.

In particular, it is robust enough to deal with the linearized Einstein equations. Focusing only on the linearized metric perturbations, we write the linearized Einstein equations (in the isotropic Kasner with a rescaled time variable) in the following schematic form, where $\upeta$ and $\upkappa$ are our main dynamical variables:
\begin{equation} \label{eq:intro_sketch_einstein}
    t \partial_t \upeta = \upkappa + \upnu + \partial \upchi, \qquad t \partial_t \upkappa = t^2 \Delta \upeta + t^2 \partial^2 * \upeta + \upnu + \partial \upchi.
\end{equation}
There are three major differences from the case of the scalar wave equation. Firstly, $\upeta$ and $\upkappa$ are $(1, 1)$-tensors rather than scalars. In particular, we see terms of the kind $t^2 \partial^2 * \upeta$, which schematically represent linear combinations of contractions of the derivatives and the tensor indices. 

Secondly, the equations \eqref{eq:intro_sketch_einstein} are coupled to an elliptic equation for $\upnu$, which is schematically
\begin{equation} \label{eq:intro_sketch_einstein_upnu}
    (1 - t^2 \Delta) \upnu = t^2 \partial^2 * \upeta.
\end{equation}
Finally, we see another variable $\upchi$, which corresponds to a gauge choice (see Section~\ref{sub:adm_gauge}). We single out two such gauge choices: one choice is we set $\upchi = 0$, or alternatively we may choose a gauge, called the CMCSH gauge, such that $\upchi$ will also solve an elliptic equation of the schematic form
\begin{equation} \label{eq:intro_sketch_einstein_upchi}
    t^2 \Delta \partial \upchi = t^2 \partial^2 * \upeta.
\end{equation}
The benefit is that in CMCSH gauge, the term $t^2 \partial^2 * \upeta$ in \eqref{eq:intro_sketch_einstein} vanishes. Thus in CMCSH gauge \eqref{eq:intro_sketch_einstein}, coupled to \eqref{eq:intro_sketch_einstein_upnu}--\eqref{eq:intro_sketch_einstein_upchi}, can be viewed as a first-order elliptic-hyperbolic system.

Let us now see how the high- and low-frequency energy estimates apply to this system.
\begin{enumerate}[1.]
    \item
        At high-frequencies $|\lambda|t \gg 1$, we wish to use a similar looking energy of the form
        \[
            \mathcal{E}_{\lambda, high}(t) = \frac{|\upkappa_{\lambda}|_g^2}{|\lambda|t} + \frac{\langle \upkappa_{\lambda}, \upeta_{\lambda} \rangle_g}{|\lambda| t} + \frac{|\upeta_{\lambda}|^2_g}{2 |\lambda|t} + |\lambda| t \cdot |\upeta_{\lambda}|^2_g.
        \]
        Here the norm $|\cdot|_g^2$ and the inner product $\langle \cdot, \cdot \rangle_g$ indicate one needs a metric to define norms on tensors. As in the wave equation case, we require crucial top order cancellations in order to obtain a good high-frequency energy estimate. In order to achieve the same cancellations in this case, it is essential that we use the CMCSH gauge, in order to remove the top order term $t^2 \partial^2 * \upeta$ in \eqref{eq:intro_sketch_einstein}.

        In this gauge, we get the desired top order cancellations from the $\upeta$ and $\upkappa$ terms. However, we still need to control the contributions from $\upnu$ and $\partial \upchi$. The elliptic equation \eqref{eq:intro_sketch_einstein_upnu} suggests that in the $|\lambda t| \geq 1$ regime, $\upnu_{\lambda}$ is of the same size as $\upeta_{\lambda}$, and thus the $\upnu$ appearing in the first equation in \eqref{eq:intro_sketch_einstein} seems to produce a top order contribution not conducive to using Gr\"onwall towards $t = 0$.

        Fortunately, the Hamiltonian constraint means that we may rewrite the elliptic equation in a different schematic form, namely $(1 - t^2 \Delta) \upnu = \upkappa$. For $|\lambda| t \geq 1$, this tells us that $\upnu$ is of the same size as $(|\lambda| t)^{-2} \upkappa$, which in our definition of $\mathcal{E}_{\lambda, high}(t)$ behaves better than $\upeta$, and allows us to Gr\"onwall terms involving $\upnu$.

        Similar considerations apply for $\partial \upchi$. Though the elliptic equation \eqref{eq:intro_sketch_einstein_upchi} suggests $\partial \upchi$ is of the same size as $\upeta$, upon taking the derivative $t \partial_t \mathcal{E}_{\lambda, high}(t)$, we can apply the CMCSH gauge condition and the momentum constraint to close the energy estimate. %the terms that arise are actually of the form $\partial \upchi * \upeta$ and $\partial \upchi * \upkappa$. In both cases, these terms turn out to be better than they initially seem, due to the CMCSH gauge constraint and the momentum constraint respectively, and this is enough to close our energy estimate. 

    \item
        At low-frequencies $|\lambda| t \ll 1$, we first define a quantity akin to $\tilde{\varphi}$ from the wave equation case which remain boundeds as $t \to 0$. In the isotropic case, it will suffice to take $\tilde{\Upupsilon} = \upeta - \upkappa \log(\lambda t)$ (in the anisotropic setting renormalized quantities are as in \eqref{eq:upupsilon}). We employ an energy of the type
        \[
            \mathcal{E}_{\lambda, low}(t) = | \upkappa_{\lambda} |_{g_*}^2 + | \tilde{\Upupsilon}_{\lambda} |_{g_*}^2.
        \]
        Note that we use a different norm $|\cdot|_{g_*}^2$ to emphasize that different metrics are used to quantify tensorial objects in the high-frequency and the low-frequency estimates.

        Ignoring this subtlety for now, we consider how the different terms in \eqref{eq:intro_sketch_einstein} affect our low-frequency energy estimates. It seems that $\partial \upchi$ will be extremely problematic. This is since if one uses CMCSH gauge and the elliptic equation \eqref{eq:intro_sketch_einstein_upchi}, we expect $\partial \upchi$ to have the same size as $\upeta$. However, since it is not $\upeta$ but instead $\tilde{\Upupsilon}$ that is bounded, a term of order $\upeta$ really has a term of order $\upkappa \log(|\lambda| t)$, even in the isotropic case. Therefore this term is not even bounded as $|\lambda| t \to 0$.

        We resolve this by using a different gauge, namely the gauge where $\upchi \equiv 0$. Of course, this means that the term $t^2 \partial^2 * \upeta$ in \eqref{eq:intro_sketch_einstein} will return. Fortunately, the low-frequency energy estimate did not require top order cancellations for the main terms involving $\upkappa$ and $\upeta$, and thus the term $t^2 \partial^2 * \upeta$, which goes to $0$ as $|\lambda| t \to 0$, is of no issue. 

        For terms involving $\upnu$, from \eqref{eq:intro_sketch_einstein_upnu}, we see $\upnu$ is also of order $t^2 \partial^2 * \upeta$ and presents no further issue in the energy estimates. Thus at least in the isotropic case, we can derive the same estimate $t \partial_t \mathcal{E}_{\lambda, low}(t) \leq C (|\lambda|t)^2 (1 + |\log(\lambda t)|^2)$, and thereby close the argument.
\end{enumerate}

Translating into statements about Sobolev spaces, we can recover a scattering result for the linearized Einstein equations. Note the different gauges used in the high-frequency and low-frequency regimes, and is exactly the reason that we must allow a change of gauge (associated to a vector field $\upxi^j$) in Theorem~\ref{thm:einstein_scat}.

We briefly comment upon the additional difficulties when the background Kasner is not isotropic. Firstly, in order to obtain the correct top order cancellations at high frequency, we need to tailor the high-frequency energy in a way that sees the anisotropy. This is done by introducing frequency-dependent coefficients $\zeta = \zeta_{\lambda}(t)$ into the definition of $\mathcal{E}_{\lambda, high}$, see already Lemma~\ref{lem:tau} and Definition~\ref{def:wave_high_freq}.% in the case of the wave equation.

Another difficulty is associated to the tensorial norms $|\cdot|_g$ and $|\cdot|_{g_*}$ introduced in the high-frequency and low-frequency energies above. In order to close the estimates, $|\cdot|_g$ must be associated to the time-dependent metric $\mathring{g}_{ij}(t)$, where $\mathring{g}_{ij}$ represents the background Kasner metric. On the other hand $|\cdot|_{g_*}$ is associated to the metric $\mathring{g}_{ij}(t_{\lambda*})$ where $t_{\lambda*}$ is as in Definition~\ref{def:tstar}. This is the source of the symbols $\mathcal{T}_*^{-p_i + p_j}$ appearing in Theorem~\ref{thm:einstein_scat}.

Yet another difficulty arises from the fact that our renormalized quantity $\tilde{\Upupsilon}$ is not just $\upeta - \upkappa \log(|\lambda| t)$, but a more complicated expression involving powers of $t$. Therefore, the low-frequency energy estimate involves carefully tracking the powers of $t$ that arise, in order to ensure that $\partial_t \mathcal{E}_{\lambda, low}(t)$ is integrable towards $t = 0$; this is exactly where we require the subcriticality condition \eqref{eq:subcritical}. This concludes our expository discussion of the proof.

\subsection{Outline of the paper} \label{sub:intro_outline}

\begin{itemize}
    \item
        In Section~\ref{sec:background}, we provide further background regarding the Einstein--scalar field system in an ADM-type gauge, and discuss the role played by Kasner and Kasner-like spacetimes in this context. We also provide a review of the literature and briefly discuss the nonlinear scattering problem. Readers only interested in the wave equation \eqref{eq:wave} may skip to Section~\ref{sec:wavescat}.

    \item
        In Section~\ref{sec:wavescat}, we provide a complete proof of our scattering result for the scalar wave equation in (non-degenerate) Kasner spacetimes, Theorem~\ref{thm:wave_scat}. As outlined in Section~\ref{sub:intro_proof}, the main ingredients will be a high-frequency energy estimate and a low-frequency energy estimate for each Fourier mode.

    \item
        In Section~\ref{sec:lingrav}, we introduce the linearized Einstein--scalar field system to which our scattering result Theorem~\ref{thm:einstein_scat} applies. We derive the full system (including the linearized shift vector $\upchi^j$) in Proposition~\ref{prop:adm_linear}, then address several key issues including linearized gauge transformations, well-posedness, and appropriate renormalized variables near $t=0$.

    \item
        In Section~\ref{sec:high_freq}, we derive the high-frequency energy estimate which applies to our linearized Einstein--scalar field system. As mentioned in Section~\ref{sub:intro_proof}, this energy estimate uses the CMCSH gauge.

    \item
        In Section~\ref{sec:low_freq}, we derive the low-frequency energy estimate which applies to our linearized Einstein--scalar field system. As mentioned in Section~\ref{sub:intro_proof}, this energy estimate instead uses the CMCTC gauge.

    \item
        In Section~\ref{sec:einstein_scat}, we patch together our high-frequency energy estimate and low-frequency energy estimate to complete the proof of Theorem~\ref{thm:einstein_scat}. As is turns out, Theorem~\ref{thm:einstein_scat} will be derived from an alternative version of our scattering result, Theorem~\ref{thm:einstein_scat_v2}, which follows more naturally from our energy estimates.
\end{itemize}

\subsection*{Notation}

\noindent
\emph{Indices:} Lower case Roman letters $a, b, i, j, p, q,$ etc.~will always denote spatial indices, and vary between $1$ and $D$. We use Einstein summation convention with one index up and one index down, and specify cases where there is no summation by underlining the indices, e.g.~$t \matr{\mathring{k}}{i}{j} = - p_{\underline{i}} \matr{\delta}{\underline{i}}{j}$.

\vspace{1em} \noindent
\emph{Tensors:} From Section~\ref{sec:background} onwards, tensors will always be on $\mathbb{T}^D$, and written with respect to standard coordinates. We generally clarify the rank of tensors by denoting them with indices. For instance, $\matr{\upalpha}{i}{j}$ is a $(1, 1)$ tensor on $\mathbb{T}^D$, while $\upbeta^j$ is a vector field.

\vspace{1em} \noindent
\emph{Math accents:} We often denote variables with accents to signify their features. Expressions with an overhead circle, e.g.~$\mathring{g}^{ij}$ or $t \matr{\mathring{k}}{i}{j}$ will always refer to background Kasner variables. A hat e.g.~on $\matr{\hat{\upeta}}{i}{j}$ or $\matr{\hat{\upkappa}}{i}{j}$ indicates the usage of the CMCSH gauge. A tilde e.g.~on $\tilde{\upvarphi}$ or $\matr{\tilde{\Upupsilon}}{i}{j}$ indicates the usage of renormalized quantities where the ``renormalization time'' is not chosen to be $T = 1$, see for instance \eqref{eq:upupsilon_tilde_0}.

\vspace{1em} \noindent
\emph{Fourier modes:} The letter $\lambda$ is reserved for Fourier modes $\lambda = (\lambda_1, \ldots, \lambda_D) \in \Z^D$. As in Definition~\ref{def:fourier}, the Fourier series coefficients of a function $f$ or a tensor $\matr{\upalpha}{i}{j}$ will be denoted by $f_{\lambda}$ or $\matr{(\upalpha_{\lambda})}{i}{j}$ respectively.

\vspace{1em} \noindent
\emph{Constants:} The letter $C$ will always be used to denote constants which are dependent only on the exponents of the background Kasner spacetime (and thus independent of the data, as well as the Fourier mode $\lambda \in \Z^D$); as per usual we allow $C$ to vary between lines. 
We write $F \lesssim G$ to denote that $F \leq CG$, while $F \asymp G$ means that both $F \lesssim G$ and $G \lesssim F$.

\subsection*{Acknowledgements}

We would like to thank Mihalis Dafermos for his interest in the problem and for valuable advice in the writing of this paper. We also thank Serban Cicortas and Igor Rodnianski for insightful discussions and suggestions.

%auto-ignore
\section{Kasner spacetimes and the ADM equations} \label{sec:background}

In this second expository section, we further detail the ADM-type gauge used to describe the Einstein--scalar field system, then explain the role of the Kasner metrics \eqref{eq:kasner} and the related Kasner-like metrics.

\subsection{The ADM decomposition} \label{sub:adm_adm}

To study the linearized Einstein--scalar field system in the generality we require, and to simply fix notation, we shall introduce the ADM-type gauge in more detail than was discussed in Section~\ref{sub:intro_background}. In this section we introduce also a \emph{shift vector} $X^i$.% A reference to the ADM-type gauge we employ is \cite[Chapter 2]{RendallPDE}, though our notation differs slightly, and we also generalize to higher dimension.

For a spacetime $\mathcal{M} \cong (0, T) \times \mathbb{T}^D$ endowed with a timelike function $t$, the standard $(1 + D)$-decomposition of the spacetime metric is as follows, where $x^i$ are standard coordinates on $\mathbb{T}^D$.
\begin{equation} \label{eq:adm_metric}
    g = - n^2 dt^2 + \bar{g}_{ij} (dx^i + X^i dt)(dx^j + X^j dt).
\end{equation}
We call this an \emph{ADM-type} gauge, with $n: \mathcal{M} \to \R_+$ the \emph{lapse function} and $X = X^i \partial_{x^i}$ a vector field tangent to the surfaces $\Sigma_t$ of constant $t$ called the \emph{shift vector field}. The Riemannian metric $\bar{g}_{ij} = \bar{g}_{ij}(t)$ on $\Sigma_t$ is often called the \textit{first fundamental form}.

While the lapse $n = \left(- g (\nabla t, \nabla t)\right)^{1/2}$ is determined purely by the choice of time function $t$, the shift vector field $X^i$ is determined by the choice of coordinates $x^i$ on each spatial slice $\Sigma_t$. The (future-directed) unit normal to the foliation of $(\mathcal{M}, g)$ by the hypersurfaces $\Sigma_t$ is given by:
\begin{equation} \label{eq:normal}
    \mathbf{N} \coloneqq n^{-1} \left( \frac{\partial}{\partial t} - X^i \frac{\partial}{\partial x^i} \right). 
\end{equation}
The following definitions are standard for the ADM-type gauge, see e,g,~\cite[Chapter 2]{RendallPDE}.

\begin{definition}
    The \emph{second fundamental form} $k_{ij}$ of the foliation of $\mathcal{M}$ by $\Sigma_t$ is a symmetric $(0, 2)$-tensor tangent to $\Sigma_t$, given by the following:
    \[
        k(\xi, \eta) = - g( \xi, \mathbf{D}_{\eta} \mathbf{N} ), \quad \forall \, \xi, \eta \text{ tangent to }\Sigma_t.
    \]
    The \emph{mean curvature} $\tr k = \tr_{\bar{g}} k$ of the foliation is then given by $\tr k = (\bar{g}^{-1})^{ij} k_{ij}$.
\end{definition}

\begin{definition}
    One says that the ADM-type gauge \eqref{eq:adm_metric} is \emph{Constant Mean Curvature} (CMC) if the mean curvature of the foliation is constant on each slice $\Sigma_t$, with mean curvature $\tr k = - t^{-1}$.
\end{definition}

\begin{definition} \label{eq:cmctc_cmcsh}
    We say that the gauge \eqref{eq:adm_metric} is \emph{Constant Mean Curvature Transported Coordinates} (CMCTC) if it is CMC and also satisfies $X^i \equiv 0$. In this case, the coordinates $x^i$ are transported by the normal vector field $\mathbf{N}$, i.e.~$\mathbf{N} x^i \equiv 0$ for $i = 1, \ldots, D$.

    On the other hand, we say that the gauge \eqref{eq:adm_metric} is \emph{Constant Mean Curvature Spatially Harmonic} (CMCSH) if it is CMC and is such that the coordinate map $(\mathbb{T}^D, \bar{g}_{ij}(t)) \to (\mathbb{T}^D, \bar{g}_{Euc})$ is a harmonic map for all $t \in (0, T)$, where $\bar{g}_{Euc}$ represents the standard Euclidean metric on $\mathbb{T}^D$. In other words,
    \begin{equation} \label{eq:spatiallyharmonic_general}
        (\bar{g}^{-1})^{jk} \left( \Gamma^{i}_{jk}[\bar{g}(t)] - \Gamma^{i}_{jk}[\bar{g}_{Euc}] \right) = 0,
    \end{equation}
    where $\Gamma^i_{jk}[\bar{g}]$ are the Christoffel symbols of the Levi-Civita connection associated to $\bar{g}$, and similarly for $\bar{g}_{Euc}$. (With respect to standard coordinates, $\Gamma^i_{jk}[\bar{g}_{Euc}] = 0$, but we choose to write \eqref{eq:spatiallyharmonic_general} in this form to clarify that it is actually a geometric equation.)
\end{definition}

In Proposition~\ref{prop:adm_evol}, we present the (nonlinear) Einstein--scalar field system \eqref{eq:einstein}--\eqref{eq:energymomentum} with respect to the ADM-type gauge. The variables $(\bar{g}_{ij}, k_{ij}, n, X^i)$ are used to describe the metric, while the variables $(\phi, \mathbf{N}\phi)$ are used to describe the scalar field $\phi$. In Proposition~\ref{prop:adm_evol}, the symbol $\nabla_i$ will always represent the Levi-Civita connection with respect to the Riemannian metric $\bar{g}_{ij} = \bar{g}_{ij}(t)$. We also raise and lower indices with respect to this metric. In particular, $\matr{k}{i}{j}$ will always mean $(\bar{g}^{-1})^{jk} k_{ik}$.

\begin{proposition}[Einstein--scalar field in ADM-type gauge] \label{prop:adm_evol}
    In a spacetime described by a CMC ADM-type gauge with $k = - t^{-1}$, the Einstein--scalar field system \eqref{eq:einstein}--\eqref{eq:energymomentum} are as follows:
    \begin{enumerate}[1.]
        \item
            The metric variables $(\bar{g}_{ij}, k_{ij})$ obey the system
            \begin{equation} \label{eq:h_evol}
                \partial_t \bar{g}_{ij} = - 2 n k_{ij} + \bar{g}_{ik} \nabla_j X^k + \bar{g}_{jk} \nabla_i X^k,
            \end{equation}
            \begin{equation} \label{eq:k_evol}
                \partial_t k_{ij} = - \nabla_i \nabla_j n + n \left( \textup{Ric}_{ij}[\bar{g}] - 2 \nabla_i \phi \nabla _j \phi - 2 \matr{k}{i}{\ell} k_{\ell j} - \frac{1}{t}k_{ij} \right) + X^k \nabla_k k_{ij} + k_{jk} \nabla_i X^k + k_{ik} \nabla_j X^k.
            \end{equation}
            Here $\textup{Ric}_{ij}[\bar{g}]$ denotes the Ricci tensor of the spatial metric $\bar{g}$, and may be expressed as
            \begin{equation} \label{eq:ricci_evol_0}
                \textup{Ric}_{ij}[\bar{g}] = - \frac{1}{2} (\bar{g}^{-1})^{ab} \partial_a \partial_b \bar{g}_{ij} - \frac{1}{2} (\bar{g}^{-1})^{ab} \partial_i \partial_j \bar{g}_{ab} + \frac{1}{2} (\bar{g}^{-1})^{ab} \partial_a \partial_i \bar{g}_{bj} + \frac{1}{2} (\bar{g}^{-1})^{ab} \partial_a \partial_j \bar{g}_{bi} + \mathcal{N}( \bar{g}^{-1}, \partial \bar{g}),
            \end{equation}
            where the expression $\mathcal{N}( \bar{g}^{-1}, \partial \bar{g} )$ is a nonlinear term quadratic in the term $\partial \bar{g}$.

            If the gauge is moreover spatially harmonic (i.e.~satisfies \eqref{eq:spatiallyharmonic_general}), then
            \begin{equation} \label{eq:ricci_evol}
                \textup{Ric}_{ij}[\bar{g}] = - \frac{1}{2} (\bar{g}^{-1})^{ab} \partial_a \partial_b \bar{g}_{ij} + \widehat{\mathcal{N}}( \bar{g}^{-1}, \partial \bar{g}),
            \end{equation}
            where $\widehat{\mathcal{N}}( \bar{g}^{-1}, \partial \bar{g} )$ is a modified nonlinear term, still quadratic in $\partial \bar{g}$.
        \item
            The matter variables $(\phi, \mathbf{N} \phi)$ evolve as follows:
            \begin{equation} \label{eq:phi_evol}
                \partial_t \phi = n \left( \mathbf{N} \phi + X^i \nabla_i \phi \right),
            \end{equation}
            \begin{equation} \label{eq:nphi_evol}
                \partial_t ( \mathbf{N} \phi ) = n \left( (\bar{g}^{-1})^{ij} \nabla_i \nabla_j \phi - \frac{1}{t} \mathbf{N} \phi + X^i \nabla_i (\mathbf{N} \phi) \right) + (\bar{g}^{-1})^{ij} \nabla_i n \nabla_j \phi.
            \end{equation}
        \item
            The lapse $n$ obeys the following elliptic equation:
            \begin{equation} \label{eq:n_elliptic}
                - (\bar{g}^{-1})^{ij} \nabla_i \nabla_j n + \frac{1}{t^2} (n - 1) = - n \left( \textup{R}[\bar{g}] - 2 (\bar{g}^{-1})^{ij} \nabla_i \phi \nabla_j \phi \right).
            \end{equation}
            Here $\textup{R}[\bar{g}] = \tr \textup{Ric}_{ij}[\bar{g}]$ is the scalar curvature of $\bar{g}$. If the gauge is moreover chosen to be spatially harmonic, then the shift vector $X^i$ also obeys an elliptic equation as follows.
            \begin{multline} \label{eq:x_elliptic}
                (\bar{g}^{-1})^{ab} \nabla_a \nabla_b X^i + (\bar{g}^{-1})^{ij}\textup{Ric}_{jk}[\bar{g}] X^k 
                =
                (\bar{g}^{-1})^{ij} \left( 2 \matr{k}{j}{a} \nabla_a n + \frac{1}{t} \nabla_j n - 4 n \, \mathbf{N} \phi \, \nabla_j \phi \right)
                \\ - 2 (\bar{g}^{-1})^{ab} \left( n \matr{k}{a}{c} + \nabla_a X^c \right) \left( \Gamma^i_{bc}[\bar{g}] - \Gamma^i_{bc}[g_{Euc}] \right).
            \end{multline}
        \item Finally, the following Hamiltonian and momentum constraint equations are satisfied:%, which are propagated by the above:
            %\begin{equation} \label{eq:cmcsh}
            %    \tr k = - \frac{1}{t}, \quad (h^{-1})^{jk} ( \Gamma - \Gamma_f )^i_{jk} = 0,
            %\end{equation}
            \begin{equation} \label{eq:hamiltonian}
                \textup{R}[\bar{g}] - \matr{k}{a}{b} \matr{k}{b}{a} + \frac{1}{t^2} = 2 (\mathbf{N} \phi)^2 + 2 (\bar{g}^{-1})^{ij} \nabla_i \phi \nabla_j \phi,
            \end{equation}
            \begin{equation} \label{eq:momentum}
                \nabla_b \, \matr{k}{a}{b} =  - 2 \, \mathbf{N} \phi \, \nabla_a \phi.
            \end{equation}
%            If the gauge is chosen to be spatially harmonic, then the gauge condition \eqref{eq:spatiallyharmonic_general} is satisfied.
    \end{enumerate}
\end{proposition}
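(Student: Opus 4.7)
The proof proposal for Proposition~\ref{prop:adm_evol} is essentially a direct computation within the standard ADM formalism, so my plan is to organize the derivations around the geometric meaning of each equation rather than compute everything blindly.

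First, I would derive the evolution equation \eqref{eq:h_evol} for $\bar{g}_{ij}$ directly from the definition of the second fundamental form. Using \eqref{eq:normal}, one has $k_{ij} = -\tfrac{1}{2}(\mathcal{L}_{\mathbf{N}} \bar{g})_{ij}$ expressed in terms of the ADM data, which upon expansion of the Lie derivative $\mathcal{L}_{n \mathbf{N}} = \partial_t - \mathcal{L}_X$ gives \eqref{eq:h_evol}. For the evolution equation \eqref{eq:k_evol} for $k_{ij}$, I would use the Gauss--Codazzi--Mainardi equations to write the spacetime Ricci tensor $\mathbf{Ric}_{ij}$ in terms of $\textup{Ric}_{ij}[\bar{g}]$, $k_{ij}$, $n$, $X^i$ and their derivatives, and then substitute into the spatial components of the Einstein equations \eqref{eq:einstein}. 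The formula \eqref{eq:ricci_evol_0} for $\textup{Ric}_{ij}[\bar{g}]$ follows from expanding the Christoffel symbols; \eqref{eq:ricci_evol} then follows by differentiating the spatially harmonic condition \eqref{eq:spatiallyharmonic_general} to eliminate the mixed second derivative terms $(\bar{g}^{-1})^{ab} \partial_i \partial_j \bar{g}_{ab}$ and $(\bar{g}^{-1})^{ab} \partial_a \partial_{(i} \bar{g}_{j)b}$ modulo terms quadratic in $\partial \bar{g}$.

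Next, for the matter sector, I would derive \eqref{eq:phi_evol} directly from \eqref{eq:normal} applied to $\phi$, which gives $\mathbf{N} \phi = n^{-1}(\partial_t \phi - X^i \partial_i \phi)$. For \eqref{eq:nphi_evol} I would expand the wave equation $\square_g \phi = 0$ using the ADM decomposition $\square_g = -\mathbf{N}(\mathbf{N} \phi) + (\tr k) \mathbf{N}\phi + (\bar{g}^{-1})^{ij} \nabla_i \nabla_j \phi + n^{-1} (\bar{g}^{-1})^{ij} \nabla_i n \nabla_j \phi$, substitute $\tr k = -t^{-1}$ and rearrange using \eqref{eq:normal} to convert $\mathbf{N}(\mathbf{N}\phi)$ into $\partial_t(\mathbf{N}\phi)$. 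The constraint equations \eqref{eq:hamiltonian} and \eqref{eq:momentum} are then the contractions of the spacetime Einstein equations with two normals and one normal, respectively, using again Gauss--Codazzi together with the CMC relation $\tr k = -t^{-1}$ to eliminate the $(\tr k)^2$ and $\nabla_i \tr k$ terms.

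For the elliptic equations, the equation \eqref{eq:n_elliptic} for $n$ comes from differentiating the CMC condition $\tr k = -t^{-1}$ in time: $\partial_t \tr k = t^{-2}$. Expanding $\partial_t \tr k$ using \eqref{eq:h_evol}--\eqref{eq:k_evol} produces a term $-(\bar{g}^{-1})^{ij} \nabla_i \nabla_j n$ plus a term $n( \textup{R}[\bar{g}] - 2 |\nabla \phi|^2 - |k|^2 - t^{-1} \tr k)$, and one uses the Hamiltonian constraint \eqref{eq:hamiltonian} to rewrite $|k|^2 - \textup{R}[\bar{g}] + 2 |\nabla \phi|^2$ in terms of $(\mathbf{N}\phi)^2$, ultimately yielding \eqref{eq:n_elliptic}. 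The hard part of this proposition is the derivation of \eqref{eq:x_elliptic}: here I would differentiate the spatially harmonic condition \eqref{eq:spatiallyharmonic_general} in time, using the evolution \eqref{eq:h_evol} for $\bar{g}_{ij}$ to express $\partial_t \Gamma^i_{jk}[\bar{g}]$ as an operator on $(n, X^i, k_{ij})$. This produces a second-order elliptic operator $(\bar{g}^{-1})^{ab} \nabla_a \nabla_b X^i + (\bar{g}^{-1})^{ij}\textup{Ric}_{jk}[\bar{g}] X^k$ acting on $X^i$ (the Ricci term arising via the commutation identity $\nabla_a \nabla_b X^i - \nabla_b \nabla_a X^i = \textup{Riem}^i{}_{cab} X^c$ used in symmetrizing), plus source terms involving $n$, $k_{ij}$, and the nonlinearity coming from $\Gamma^i_{bc}[\bar{g}] - \Gamma^i_{bc}[g_{Euc}]$. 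The main bookkeeping challenge is showing that the $\nabla n$ terms organize into the specific form on the right-hand side of \eqref{eq:x_elliptic}, which relies on using the momentum constraint \eqref{eq:momentum} to substitute for $\nabla_b \matr{k}{a}{b}$ wherever it appears.
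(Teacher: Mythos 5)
Your proposal is correct and takes essentially the same (standard ADM) route as the paper, whose proof simply cites Rendall's book and Andersson--Moncrief for the computations you carry out explicitly — the Lie-derivative identity $k_{ij} = -\tfrac{1}{2}(\mathcal{L}_{\mathbf{N}}\bar g)_{ij}$ with $n\mathbf{N} = \partial_t - X$, Gauss--Codazzi for \eqref{eq:k_evol} and the constraints, tracing \eqref{eq:k_evol} together with the CMC condition for \eqref{eq:n_elliptic}, and time-differentiating the spatially harmonic condition for \eqref{eq:x_elliptic}. Incidentally, your direct computation from \eqref{eq:normal} gives $\partial_t\phi = n\,\mathbf{N}\phi + X^i\nabla_i\phi$, so the extra factor of $n$ multiplying $X^i\nabla_i\phi$ in \eqref{eq:phi_evol} as printed is a harmless typo (it contributes only a term $(n-1)X^i\nabla_i\phi$, which is quadratic in the linearly small quantities and vanishes upon linearization).
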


\begin{proof}
    The ADM evolution equations \eqref{eq:h_evol}--\eqref{eq:k_evol} and the constraints \eqref{eq:hamiltonian}--\eqref{eq:momentum} are standard and follow from a higher dimensional analogue of \cite[Section 2.3]{RendallPDE}, upon substitution of the energy momentum tensor \eqref{eq:energymomentum} and the gauge condition $\tr k = - t^{-1}$. One obtains the elliptic equation \eqref{eq:n_elliptic} by taking the trace of \eqref{eq:k_evol} and again using the CMC condition. The equation \eqref{eq:phi_evol} is a consequence of the expression \eqref{eq:normal} for the normal vector field $\mathbf{N}$, while \eqref{eq:nphi_evol} follows from the wave equation \eqref{eq:wave} written in the gauge \eqref{eq:adm_metric}.

    In the case of CMCSH gauge, the relevant equations can be found in \cite{AnderssonMoncrief}, at least in the vacuum case. See also \cite{AnderssonFajman, FajmanUrban} for the addition of matter described by the scalar field $\phi$ with energy momentum tensor \eqref{eq:energymomentum}.
\end{proof} 

\begin{remark}
    From the PDE viewpoint, it is important to have a local well-posedness theory for the equations of Proposition~\ref{prop:adm_evol}. There are several ways to do this, each of which involves picking a specific spatial gauge. One of these is the CMCSH gauge, where local well-posedness is shown in \cite{AnderssonMoncrief}. For a more general overview we refer the reader to \cite{FriedrichRendall}.
\end{remark}

\subsection{Kasner and Kasner-like spacetimes} \label{sub:adm_kasner}

Let us revisit the Kasner spacetimes in light of the discussion of Section~\ref{sub:adm_adm}. The (generalized) Kasner spacetimes $(\mathcal{M}_{Kas}, g_{Kas}, \phi_{Kas})$ with \eqref{eq:kasner}, \eqref{eq:kasner_phi}, and the Kasner relations \eqref{eq:kasner_relations}, are naturally written in the ADM-type gauge, with:
\begin{gather} \label{eq:kasner_metric}
    \mathring{n} \equiv 1, \quad \mathring{X}^i \equiv 0, \quad \mathring{\bar{g}}_{ij} = t^{2 p_{\underline{i}}} \delta_{\underline{i}j}, \quad \mathring{\phi} = p_{\phi} \log t,
    \\[0.3em] \label{eq:kasner_evol}
    \mathring{k}_{ij} = - \frac{p_{\underline{i}}}{t} \mathring{\bar{g}}_{\underline{i}j}, \quad \textup{Ric}_{ij}[\mathring{g}] \equiv 0, \quad \mathring{\mathbf{N}} \mathring{\phi} = \partial_t \mathring{\phi} = \frac{p_{\phi}}{t}.
\end{gather}
The CMC condition $\tr \mathring{k} = - t^{-1}$ is a consequence of the first Kasner relation in \eqref{eq:kasner_relations}, while the second Kasner relation in \eqref{eq:kasner_relations} is due to the Hamiltonian constraint \eqref{eq:hamiltonian}. 

The Kasner spacetimes are spatially homogeneous, in the sense that the coordinate vector fields $\partial_{x^i}$ are Killing fields of $g_{Kas}$, and solve to the Einstein--scalar field equations as stated in Proposition~\ref{prop:adm_evol}. With $\mathring{n}$ and $\mathring{X}^i$ as above, the gauge is moreover both CMCTC and CMCSH. Thus in studying perturbations of the Kasner spacetimes, it is permissible to use either of these gauges.

One should then ask: what do $O(\varepsilon)$-perturbations of Kasner spacetimes to look like? One possibility is that one perturbs a Kasner spacetime into a nearby exact Kasner spacetime with Kasner exponents $q_i = p_i + O(\varepsilon)$ and scalar field coefficient $q_{\phi} = p_{\phi} + O(\varepsilon)$. 
However, when the perturbation is not exactly spatially homogeneous one should expect the Kasner exponents $q_i$ to also depend on the spatial variable $x \in \mathbb{T}^D$.%-directions, between the Kasner exponents in the perturbed spacetime.

Furthermore, one expects spatial variation in the frame that diagonalizes the second fundamental form. That is, one expects \emph{Kasner-like} metrics\footnote{Due to setting $n^2 = 1$ in \eqref{eq:bkl_ansatz1}, this ansatz will not generally be CMC. Nevertheless, it suffices to consider such an ansatz for the heuristic discussion below. One notes, for instance, that heuristically the metric is asymptotically CMC in the sense that $\tr k = - t^{-1} + O(t^{-1})$.} of the following form:
\begin{equation} \label{eq:bkl_ansatz1}
    g = - dt^2 + \sum_{I=1}^D t^{2q_I(x)} \omega_i^I(x) dx^i \otimes \omega^I_j(x) dx^j + l.o.t.
\end{equation}
Here the Kasner exponents $q_I: \mathbb{T}^D \to \R$ are now functions, while $\omega^I(x) = \omega^I_i(x) dx^i$ are now $1$-forms on $\mathbb{T}^D$ which diagonalize $k_{ij}$ near $t = 0$. Finally, $l.o.t.$ represents lower order terms that arise since \eqref{eq:bkl_ansatz1} without the lower order correction cannot be expected to be an exact solution to the Einstein--scalar field system.

Assuming moreover that the scalar field $\phi$ takes the form $\phi = q_{\phi}(x) \log t + r_{\phi}(x) + l.o.t.$ for functions $q_{\phi}, r_{\phi}: \mathbb{T}^D \to \R$, one initially observes that the CMC condition and the Hamiltonian constraint impose the following generalized Kasner relations on the functions $q_I(x)$ and $q_{\phi}(x)$:
\begin{equation} \label{eq:bkl_ansatz_relations}
    \sum_{I = 1}^D q_I(x) \equiv 1, \qquad \sum_{I = 1}^D q_I^2(x) + 2 q_{\phi}^2(x) = 1.
\end{equation}
There are a further $D$ equations relating the quantities $q_I(x)$, $\omega^I_i(x)$, $q_{\phi}(x)$ and $r_{\phi}(x)$ which arise from the momentum constraints.

We return to size $O(\varepsilon)$ perturbations of Kasner spacetimes. Assuming we have stability, one expects the new exponents $q_I(x)$, frame coefficients $\omega^I_i(x)$, and scalar field functions $q_{\phi}(x)$ and $r_{\phi}(x)$, to be $\varepsilon$-close to their analogous values in the background Kasner spacetime, that is,
\begin{equation} \label{eq:bkl_ansatz_first}
    q_I(x) = p_I + O(\varepsilon), \qquad \omega^I_i(x) = \delta^I_i + O(\varepsilon), \qquad q_{\phi}(x) = p_{\phi} + O(\varepsilon), \qquad r_{\phi}(x) = O(\varepsilon).
\end{equation}
In other words, assuming stability, $O(\varepsilon)$-perturbations of the initial data $(\bar{g}_{ij}, k_{ij}, \phi, \partial_t \phi)$ at $t = 1$, should lead to Kasner-like spacetimes which are $O(\varepsilon)$ close to the background in the above sense. 

%We note, moreover, that by inserting \eqref{eq:bkl_ansatz_first} into \eqref{eq:bkl_ansatz1} and Taylor expanding around the background Kasner spacetime \eqref{eq:kasner_metric}, one would find $O(\varepsilon)$ terms with the same $t$-behaviour as that suggested for $\kappa_{ij}$ and $h_{ij}$ in Theorem~\ref{thm:asymp_einstein}. This provides more evidence suggesting the importance of Kasner-like metrics, as described above, in the context of the (linear or nonlinear) stability of Kasner with respect to the Einstein--scalar field system. 

In such situations, one senses that there might be a one-to-one correspondence between the Kasner-like metrics \eqref{eq:bkl_ansatz1} and initial data $(\bar{g}_{ij}, k_{ij}, \phi, \partial_t \phi)$, at least in a neighborhood of exact Kasner spacetimes. One na\"ive justification is the function counting argument of \cite{kl63}: note that there are $D(D+1) + 2 = D + D^2 + 2$ ``functional degrees of freedom'' for initial data\footnote{Recall that $\bar{g}_{ij}$ and $k_{ij}$ are symmetric and therefore have $\frac{D(D+1)}{2}$ independent components.} $(\bar{g}_{ij}, k_{ij}, \phi, \partial_t \phi)$, while amongst the quantities $(q_I, \omega^I_i, q_{\phi}, r_{\phi})$ appearing in the final Kasner-like state, there are also $D + D^2 + 2$ ``functional degrees of freedom''.

One must also take into account constraint equations and gauge choices in justifying this correspondence. Vaguely speaking, there are $D + 2$ functional constraints in both cases: at $t=1$ one has the Hamiltonian constraint, the $D$ momentum constraints, and the constraint arising from the wish to ``synchronise the singularity'' at $t=0$, while at $t = 0$ we have the $2$ Kasner relations and the $D$ momentum constraints. 

Supposing we believe this heuristic function counting argument, this provides another interpretation of the scattering problem. Using the scattering terminology of Section~\ref{sub:intro_background}, this interpretation is as follows:
\begin{enumerate}[(i)]
    \item (\emph{Existence of the scattering operator}):
        For what class of Cauchy data $(\bar{g}_{ij}, k_{ij}, \phi, \partial_t \phi)$ at $t=1$ does the corresponding solution to the Einstein--scalar field system attain Kasner-like aymptotics near $t = 0$ in the above sense, with corresponding Kasner-like quantities $(q_I, \omega^I_i, q_{\phi}, r_{\phi}$)? 
    \item (\emph{Asymptotic completeness}):
        If instead we are given a Kasner-like end state at $t=0$ with quantities $(q_I, \omega^I_i, q_{\phi}, r_{\phi})$, when is it possible to find Cauchy data $(\bar{g}_{ij}, k_{ij}, \phi, \partial_t \phi)$ at $t=1$ such that the corresponding solution to the Einstein--scalar field system has the required Kasner-like asymptotics?
    \item (\emph{Scattering isomorphism}):
        What are the properties of the maps in (i) and (ii)? Can we make sense of it as a map between Banach spaces? In the linearized setting, do we have a Hilbert space isomorphism?
\end{enumerate}

\begin{remark}
    For linearized gravity, the above description of the Kasner-like end state differs somewhat from the asymptotic scattering data that we refer to in Theorem~\ref{thm:einstein_scat}, i.e.~the renormalized variables $\matr{(\upkappa_{\infty})}{i}{j}$, $\matr{(\Upupsilon_{\infty})}{i}{j}$, $\upphi_{\infty}$, and $\uppsi_{\infty}$. 
    The reason we use the latter quantities $(\matr{(\upkappa_{\infty})}{i}{j}, \matr{(\Upupsilon_{\infty})}{i}{j})$ in place of $(q_I, \omega^I_i)$, is that $q_I$ and $\omega^I$ are secretly eigenvalues and eigenvectors of matrices related to $(\matr{(\upkappa_{\infty})}{i}{j}, \matr{(\Upupsilon_{\infty})}{i}{j})$, and there are losses of regularity\footnote{For instance, the map $E: \mbox{Sym}_{D \times D} \to \CC^D$ sending a symmetric $D \times D$ matrix to its $D$ ordered eigenvalues is known to be only Lipschitz continuous in the neighborhood of matrices with eigenvalues having multiplicity greater than 1.} incurred in this diagonalization process.
\end{remark}

\subsection{Velocity dominated asymptotics and the BKL conjecture} \label{sub:bkl}

In claiming the existence of a Kasner-like end state, the discussion of Section~\ref{sub:adm_kasner} posits that the background Kasner spacetime around which we are perturbing is in some sense stable. In this section, we outline several heuristics from the physics literature detailing when we expect this to be the case.

Before one asks the question of stability, an more fundamental question is to ask whether a metric of form \eqref{eq:bkl_ansatz1}, complemented by a scalar field of the form $\phi = q_{\phi} \log t + r_{\phi} + l.o.t.$, can be considered as a legitimate approximate solution to the Einstein--scalar field system \eqref{eq:einstein}--\eqref{eq:wave0}.

This metric and scalar field, together with the Kasner relations \eqref{eq:bkl_ansatz_relations}, solve the so-called \emph{velocity dominated} equations, at least if we ignore terms denoted $l.o.t.$. We explain briefly what we mean by the velocity dominated equations. These are the ADM evolution equations \eqref{eq:h_evol}--\eqref{eq:x_elliptic}, except where we discard any terms involving a spatial derivative of any sort -- this includes discarding the term involving the spatial Ricci tensor $\mathrm{Ric}[\bar{g}]$.
In other words, inserting also $n = 1$ and $X^i = 0$, we solve the equations
\begin{gather*}
    \partial_t \bar{g}_{ij} = - 2 k_{ij}, \qquad
    \partial_t k_{ij} = - 2 k_i^{\phantom{i}\ell} k_{lj} - \frac{1}{t} k_{ij}, \\[0.4em]
    \partial_t \phi = \mathbf{N} \phi, \qquad
    \partial_t (\mathbf{N} \phi) = - \frac{1}{t} \mathbf{N} \phi.
\end{gather*}
Therefore, a reasonable consistency check for the approximation \eqref{eq:bkl_ansatz1} is that all the discarded terms involving spatial derivatives are indeed suitably small.

This heuristic is reflected in work of Belinskii, Khalatnikov and Lifschitz from the latter half of the 20th century. In \cite{kl63} (see also \cite{bk72, DemaretHenneauxSpindel}), the authors suggest that the required smallness for the most important discarded term, namely the spatial Ricci tensor $\mathrm{Ric}_{ij}[\bar{g}]$, is that for some $\delta > 0$, the following bounds hold as $t \to 0$.
\begin{equation} \label{eq:spatialricci}
    | \mathrm{Ric}_{IJ}[\bar{g}] | \lesssim t^{q_I + q_J - 2 + \delta}
\end{equation}
Here $\mathrm{Ric}_{IJ}$ is the projection of the Ricci tensor onto the frame $e_I$ dual to the $1$-forms $\omega^I$.

On the other hand, in the same paper \cite{kl63}, the authors use the (approximately) orthonormal frame $e_I(x)$ to compute the leading order term in $\mathrm{Ric}_{IJ}$. Denoting $\lambda_{IJ}^K = \langle [ e_I, e_J ], e_K \rangle$ to be structure coefficients of the frame, the resulting computation yields
\begin{equation} \label{eq:bkl_ricci}
    t^{- p_I - p_J + 2} \cdot \mbox{Ric}_{IJ}[\bar{g}] = \sum_{\substack{1 \leq K, L, M \leq D \\ L \neq M}} \pm (\lambda^K_{LM})^2 \cdot t^{2(1 + q_K - q_L - q_M)} \cdot \delta_{IJ} + O(t^{\delta}),
\end{equation}
where we remain imprecise about the values of $\pm$. This computation still requires a weaker notion of \emph{asymptotically velocity dominated behaviour}, in assuming that the functions $q_I(x)$ and $\omega_i^I(x)$ are regular as functions on $\mathbb{T}^D$, and thus taking derivatives in spatial directions does not add any singular behaviour.

Now compare \eqref{eq:bkl_ricci} to \eqref{eq:spatialricci}. The key is that, at least in the expected-to-be-generic case where the structure coefficients $\lambda^K_{LM}$ are non-vanishing, the asymptotics \eqref{eq:bkl_ricci} agree with the smallness in \eqref{eq:spatialricci} only if the following \emph{subcriticality condition} holds for some $\delta > 0$:
\begin{equation} \label{eq:subcritical_delta0}
    \min_{x \in \mathbb{T}^D} \min_{\substack{1\leq K,L,M \leq D \\ L \neq M}} (1 + q_K(x) - q_L(x) - q_M(x)) \geq \delta > 0.
\end{equation}
Therefore, in the absence of any symmetries that would lead to the vanishing of certain structure coefficients, the approximate metric \eqref{eq:bkl_ansatz1} is an admissible Kasner-like metric if and only if \eqref{eq:subcritical_delta0} holds.

We return now to stability of exact Kasner spacetimes with metric \eqref{eq:kasner_metric} and scalar field \eqref{eq:kasner_phi}. In order to expect a form of stability (outside of symmetry) where perturbations give rise to \emph{admissible} Kasner-like metrics near $t=0$, then in light of \eqref{eq:bkl_ansatz_first} and \eqref{eq:subcritical_delta0}, one expects to require that for some $\updelta > 0$, the background Kasner exponents satisfy the following inequality:
\begin{equation} \label{eq:subcritical_delta}
    \min_{\substack{1\leq i,j,k \leq D \\ j \neq k}} (1 + p_i - p_j - p_k) \geq \updelta > 0.
\end{equation}
We call this the \emph{subcriticality condition} on the background Kasner exponents, and say that Kasner exponents satisfying \eqref{eq:subcritical_delta} lie in the \emph{subcritical regime}. At the heuristic level, the subcriticality condition seems to be a necessary and sufficient condition for the stability of the Kasner spacetime.

Indeed, the remarkable work of Fournodavlos, Rodnianski and Speck \cite{FournodavlosRodnianskiSpeck} shows that the subcriticality condition \eqref{eq:subcritical_delta} is sufficient to prove the \emph{nonlinear} past stability of the Kasner spacetimes. To go beyond heuristics to a rigorous proof, a key step of \cite{FournodavlosRodnianskiSpeck} is to show that the quantities akin to the variables $(\matr{\upkappa}{i}{j}, \matr{\Upupsilon}{i}{j}, \uppsi, \upvarphi)$ to be defined in Section~\ref{sec:lingrav}, are regular enough to obey the asymptotically velocity dominated behaviour assumed in the heuristics.

In \cite{FournodavlosRodnianskiSpeck}, this is done via a carefully chosen hierarchy of energy and pointwise estimates, where the hierarchy is such that high--order derivatives of these quantities may blow up as $t \to 0$, but that low-order derivatives remain bounded in $L^{\infty}$, and remain close to their background Kasner values. Such pointwise control at low order turns out to be enough to verify the BKL heuristics and hence close the argument. 

One key new insight of the scattering result of the present article, in comparison to \cite{FournodavlosRodnianskiSpeck}, is that correctly chosen top-order energies need not blow up as $t \to 0$, at least at the level of linearized gravity. Note, however, that our top-order energies depend sensitively on both time and frequency. %We conclude this section with two further remarks regarding the subcriticality condition \eqref{eq:subcritical_delta}.

\subsubsection*{Further remarks concerning the subcriticality condition \eqref{eq:subcritical_delta}}

One should comment upon the existence of Kasner spacetimes satisfying \eqref{eq:subcritical_delta} for some $\updelta > 0$. For $D = 3$, it has been known since \cite{kl63} that for \emph{vacuum} Kasner solutions, the subcriticality condition \eqref{eq:subcritical_delta} \emph{never} holds. This remains true for $3 \leq D \leq 9$, see \cite{DemaretHenneauxSpindel}. Thus in these dimensions our result applies only if we introduce also a scalar field $\phi$, as observed in \cite{bk72}. For instance, our result applies to the isotropic spacetimes with $p_I = \frac{1}{D}$ for all $I$ and $p_{\phi} = \sqrt{\frac{D-1}{2D}}$. In high dimension $D \geq 10$, \cite{DemaretHenneauxSpindel} observes there are also vacuum Kasner spacetimes satisfying \eqref{eq:subcritical_delta}, and therefore to which our results indeed apply.

In the case that \eqref{eq:subcritical_delta} fails, i.e.~if there are $i$ and $j \neq k$ such that $1 + p_i - p_j - p_k \leq 0$, then the Kasner spacetime is expected to be unstable to the past. In this case, the BKL heuristics \cite{bkl71} predict that generically the spacetime will transition to another Kasner-like regime with different exponents, and these transitions will cascade either indefinitely, or until an Kasner-like end state satisfying \eqref{eq:subcritical_delta0} is reached. We refer the reader to \cite[Chapter 3]{BelinskiHenneaux} for more details regarding such BKL transitions.

\subsubsection*{Velocity dominated behaviour outside the subcritical regime}

It remains of interest to study backgrounds where \eqref{eq:subcritical_delta} fails, particularly since this is true for $1+3$-dimensional vacuum spacetimes. In light of \eqref{eq:bkl_ricci}, we can only expect asymptotically velocity dominated behaviour in the perturbed spacetime (and in particular no BKL transitions as above), if for any Kasner exponents $q_I, q_J, q_K$ of the perturbed spacetime violating \eqref{eq:subcritical_delta0}, we have that the corresponding connection coefficient $\lambda^I_{JK}$ vanishes. 

At the linearized level, one can show that the vanishing of $\lambda^I_{JK}$ corresponds to saying that for any $p_i, p_j, p_k$ violating \eqref{eq:subcritical_delta}, the following limit holds for the linearized Weingarten map $\matr{\upkappa}{i}{j}$:
\begin{equation} \label{eq:subcrit_lin_1}
    \lim_{t \to 0} \left(
    \frac{\partial_k \matr{\upkappa}{\underline{j}}{\underline{i}}(t)}{p_{\underline{i}} - p_{\underline{j}}} - 
\frac{\partial_j \matr{\upkappa}{\underline{k}}{\underline{i}}(t)}{p_{\underline{i}} - p_{\underline{k}}} \right) = 0.
\end{equation}
This suggests that some version of our scattering result Theorem~\ref{thm:einstein_scat} should hold in cases where the above limit holds. We could interpret this hope as a codimensional\footnote{Since \eqref{eq:subcrit_lin_1} must hold at every spatial point $x \in \mathbb{T}^D$, this is an infinite codimension statement.} stability and scattering statement for linearized gravity around Kasner spacetimes violating \eqref{eq:subcritical_delta}.

In our proof, the only stage where we use \eqref{eq:subcritical} is in the low-frequency energy estimate, in particular the derivative estimate of Propostition~\ref{prop:einstein_low_der}. A more careful study of this proof shows that our result still holds in the case that \eqref{eq:subcrit_lin_1} holds \emph{without the limit} i.e.~if the expression vanishes for all $t > 0$. This is related to known results in polarized $\mathbb{T}^2$ symmetry or polarized $U(1)$ symmetry.

In fact, one could hope to prove a version of Proposition~\ref{prop:einstein_low_der} where one only assumes \eqref{eq:subcrit_lin_1}, which would involve a detailed analyis of the rate at which the expression in \eqref{eq:subcrit_lin_1} tends to $0$. We formulate this resolution of codimensional stability and scattering in the following conjecture.

\begin{conjecture}
    Let $(\mathcal{M}_{Kas}, g_{Kas}, \phi_{Kas})$ be a (generalized) non-degenerate Kasner spacetime. \underline{not necessarily} satisfying the subcriticality relation \eqref{eq:subcritical_delta}. Consider solutions $(\matr{\upeta}{i}{j}, \matr{\upkappa}{i}{j}, \upphi, \uppsi)$ to the linearized Einstein--scalar field equations around $(\mathcal{M}_{Kas}, g_{Kas}, \phi_{Kas})$, written as the first-order elliptic-hyperbolic system \eqref{eq:upeta_evol_intro}--\eqref{eq:upnu_elliptic_intro} and satisfying suitable constraints. Then we have the following:
    \begin{enumerate}[(i)]
        \item (Existence of the scattering operator)
            Let $s$ be sufficiently large. Suppose that the Cauchy data $(\matr{(\upeta_C)}{i}{j}, \matr{(\upkappa_C)}{i}{j}, \upphi_C, \uppsi_C)$, satisfying suitable constraints and the spatially harmonic gauge condition \eqref{eq:spatiallyharmonic}, launches a solution to \eqref{eq:upeta_evol_intro}--\eqref{eq:upnu_elliptic_intro} such that for any Kasner exponents $p_i, p_j, p_k$ violating \eqref{eq:subcritical_delta}, the limit \eqref{eq:subcrit_lin_1} holds.

            Then the conclusions of Theorem~\ref{thm:einstein_scat}(\ref{item:einstein_scatop}) still hold, and the limiting quantity $\matr{(\upkappa_{\infty})}{i}{j}$ is such that for any Kasner exponents $p_i, p_j, p_k$ violating \eqref{eq:subcritical_delta},
            \begin{equation} \label{eq:subcrit_limit}
                \frac{\partial_k \matr{(\upkappa_{\infty})}{\underline{j}}{\underline{i}}(t)}{p_{\underline{i}} - p_{\underline{j}}} - 
                \frac{\partial_j \matr{(\upkappa_{\infty})}{\underline{k}}{\underline{i}}(t)}{p_{\underline{i}} - p_{\underline{k}}} = 0.
            \end{equation}
            %Then there exists a subspace $\mathcal{H}_{AVTD}^s \subset H^{s+1} \times H^s \times H^{s+1} \times H^s$

        \item (Asymptotic completeness)
            Let $s$ be sufficiently large, and let and let $\matr{(\upkappa_{\infty})}{i}{j}$, $\matr{(\Upupsilon_{\infty})}{i}{j}$ be $(1, 1)$ tensors in $\mathbb{T}^D$, and $\uppsi_{\infty}$, $\upvarphi_{\infty}$ be functions in $\mathbb{T}^D$ with Sobolev regularity as in \eqref{eq:asymp_metric_reg}--\eqref{eq:asymp_scalar_reg}, and satisfying suitable constraints as well as the canonical asymptotic gauge condition \eqref{eq:asymptoticallyharmonic}.
            
            Suppose furthermore that $\matr{(\upkappa_{\infty})}{i}{j}$ is such that for any Kasner exponents $p_i, p_j, p_k$ violating \eqref{eq:subcritical_delta}, the identity \eqref{eq:subcrit_limit} holds. Then the conclusions of Theorem~\ref{thm:einstein_scat}(\ref{item:einstein_asympcomp}) still hold.

        \item (Scattering isomorphism)
            For $s \in \R$, recall the Hilbert spaces $\mathcal{H}_{C, c}^{s}$ and $\mathcal{H}_{\infty, c}^s$ from Theorem~\ref{thm:einstein_scat}(\ref{item:einstein_scatiso}). Further define $\mathcal{H}_{C, AVTD}^s \subset \mathcal{H}_{C, c}^s$ be the subspace such that the corresponding solution $(\matr{\upeta}{i}{j}, \matr{\upkappa}{i}{j}, \upphi, \uppsi)$ to the linearized Einstein--scalar field system satisfies \eqref{eq:subcrit_lin_1}\footnote{For $s$ small, the convergence in \eqref{eq:subcrit_lin_1} may hold in a distributional sense.} for Kasner exponents $p_i, p_j, p_k$ violating \eqref{eq:subcritical_delta}. Also define the subspace $\mathcal{H}_{\infty, AVTD}^s \subset \mathcal{H}_{\infty, C}^s$ to be the subspace such that for Kasner exponents $p_i, p_j, p_k$ violating \eqref{eq:subcritical_delta}, the identity \eqref{eq:subcrit_limit} holds.
            
            In light of (i) and (ii), define $\mathcal{S}_{\downarrow}$ and $\mathcal{S}_{\uparrow}$ as in Theorem~\ref{thm:einstein_scat}(\ref{item:einstein_scatiso}). Then $\mathcal{S}_{\downarrow}: \mathcal{H}_{C, AVTD}^s \to \mathcal{H}_{\infty, AVTD}^s$ is a Hilbert space isomorphism with inverse $\mathcal{S}_{\uparrow}$.
    \end{enumerate}
\end{conjecture}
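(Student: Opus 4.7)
The strategy is to mirror the proof of Theorem~\ref{thm:einstein_scat} essentially verbatim, locating the single place where the subcriticality hypothesis \eqref{eq:subcritical_delta} is used --- namely the low-frequency derivative estimate of Proposition~\ref{prop:einstein_low_der} --- and refining it using the assumed vanishing \eqref{eq:subcrit_lin_1}. The high-frequency estimate from Section~\ref{sec:high_freq} is in CMCSH gauge and does not see subcriticality at all, so it carries over unchanged; similarly, the gauge analysis and the patching at the transitional times $t=t_{\lambda*}$ require no modification. It therefore suffices to upgrade the low-frequency energy estimate in CMCTC gauge.

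First I would isolate, in the derivation of Proposition~\ref{prop:einstein_low_der}, precisely which terms in $t\partial_t\mathcal{E}_{\lambda,low}$ fail to be integrable toward $t=0$ when \eqref{eq:subcritical_delta} is violated. From the formulas \eqref{eq:upeta_evol_intro}--\eqref{eq:upnu_elliptic_intro} together with the renormalization \eqref{eq:upupsilon}, one sees that the only obstructing contributions have the schematic form $t^{2(1+p_i-p_j-p_k)}$ multiplied by precisely the antisymmetrized combination
\[
\mathcal{K}_{ijk}(t,x) \;\coloneqq\; \frac{\partial_k \matr{\upkappa}{\underline{j}}{\underline{i}}(t,x)}{p_{\underline{i}} - p_{\underline{j}}} - \frac{\partial_j \matr{\upkappa}{\underline{k}}{\underline{i}}(t,x)}{p_{\underline{i}} - p_{\underline{k}}}
\]
for triples $(i,j,k)$ failing \eqref{eq:subcritical_delta}; this is the same structure that drives BKL transitions in \eqref{eq:bkl_ricci}, and it is already visible in the remark following Theorem~\ref{thm:einstein_scat}. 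All other terms are integrable under the weaker condition $1+p_i-p_j-p_k\geq 0$, which holds automatically on non-degenerate Kasner.

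Next I would run a bootstrap argument on the low-frequency window $|\lambda|t\leq 1$. Under the standing assumption that $\mathcal{K}_{ijk}(t,\cdot)\to 0$ as $t\to 0$ in a sufficiently strong topology (to be identified), define a quantitative modulus $\omega(t)\coloneqq \sup_{t'\leq t}\|\mathcal{K}_{ijk}(t',\cdot)\|_{H^{s-1}}$, so that $\omega(t)\to 0$. The point is that the dangerous contribution to $t\partial_t\mathcal{E}_{\lambda,low}$ can be bounded by $C(|\lambda|t)^{2(1+p_i-p_j-p_k)}\,\omega(t)\,\mathcal{E}_{\lambda,low}^{1/2}$ rather than $C\,\mathcal{E}_{\lambda,low}$. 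Even when the exponent of $(|\lambda|t)$ is nonpositive, the factor $\omega(t)$ guarantees that this term is integrable in $t$ near $0$, by the integral version of the dominated convergence theorem applied frequency by frequency. Gr\"onwall then closes the low-frequency estimate and propagates the asymptotic identity \eqref{eq:subcrit_limit} to the limit. Asymptotic completeness is handled in the opposite direction: starting from asymptotic data with $\mathcal{K}_{ijk}(0,\cdot)=0$ as enforced by \eqref{eq:subcrit_limit}, a short-time backward argument using \eqref{eq:upkappa_evol_intro} shows that $\|\mathcal{K}_{ijk}(t,\cdot)\|_{H^{s-1}}=o(1)$ as $t\to 0$, which feeds back into the same bootstrap and yields a global-in-$t$ solution; the construction of $\mathcal{S}_\uparrow$ then proceeds as in Section~\ref{sec:einstein_scat}. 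The scattering isomorphism statement (iii) follows from injectivity and boundedness of both maps together with the observation that the propagation preserves the AVTD subspaces $\mathcal{H}_{C,AVTD}^s$ and $\mathcal{H}_{\infty,AVTD}^s$.

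The main obstacle is the quantitative control of $\omega(t)$, and in particular the definition of the spaces $\mathcal{H}_{C,AVTD}^s$ and $\mathcal{H}_{\infty,AVTD}^s$. The difficulty is twofold: one must show (a) that $\omega(t)\to 0$ is propagated by the linearized system with a uniform modulus that does not depend on $\lambda$, and (b) that the rate is compatible with the factor $(|\lambda|t)^{2(1+p_i-p_j-p_k)}$ at the most critical triples where $1+p_i-p_j-p_k=0$, which is exactly the borderline case. I expect (a) to require defining $\mathcal{H}_{C,AVTD}^s$ intrinsically in terms of a weighted norm on $\mathcal{K}_{ijk}$ and verifying that the linearized flow preserves such norms; and (b) to require an additional commutator identity for \eqref{eq:upkappa_evol_intro} applied to $\mathcal{K}_{ijk}$, exploiting the fact that the potentially non-integrable contribution to $t\partial_t\mathcal{K}_{ijk}$ is itself antisymmetric in the same indices and therefore self-cancelling modulo integrable error. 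Carrying this cancellation out at top order, with the correct weights of $\mathcal{T}_*$, is the genuinely new analytic input required beyond the machinery already developed for Theorem~\ref{thm:einstein_scat}.
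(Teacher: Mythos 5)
You should first note that the statement is a Conjecture in the paper, and the paper does not supply a proof; there is therefore no proof of the paper's to compare against, and the paper itself flags the missing ingredient---a quantitative rate of decay for the antisymmetrized combination appearing in \eqref{eq:subcrit_lin_1}---as the reason the statement remains conjectural (see the discussion at the end of Section~\ref{sub:bkl} and the remark following Proposition~\ref{prop:einstein_low_der}). Your identification of the bottleneck is correct: the subcriticality condition \eqref{eq:subcritical_delta} enters only in the low-frequency derivative estimate, Proposition~\ref{prop:einstein_low_der}, and the obstructing contributions carry precisely your antisymmetrized quantity $\mathcal{K}_{ijk}$.

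However, the proposed bootstrap does not close, and you essentially acknowledge this in your final paragraph, so let me be concrete about why. When a triple violates \eqref{eq:subcritical_delta} one can have $1+p_i-p_j-p_k$ strictly negative, not merely zero, in which case the weight $(t/t_{\lambda*})^{2(1+p_i-p_j-p_k)}$ diverges as $t\to 0$. A qualitative modulus $\omega(t)\to 0$ is then insufficient: integrability of the product against $dt/t$ requires a power-law bound $\omega(t)\lesssim (t/t_{\lambda*})^{\gamma}$ with $\gamma>2(p_j+p_k-p_i-1)$, and running the $o(1)$ vanishing of $\mathcal{K}_{ijk}$ through the energy estimate produces only another $o(1)$ bound, not a power law. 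Upgrading to polynomial decay would require a separate Fuchsian or commutator analysis of the evolution of $\mathcal{K}_{ijk}$ itself, with the correct frequency-dependent weights and top-order cancellations---which is exactly the ``genuinely new analytic input'' you name, and which neither you nor the paper carries out. (A minor secondary point: the relevant weight is $(t/t_{\lambda*})^{2+2p_a-2p_b-2p_c}$ times logarithmic corrections, as in \eqref{eq:upkappa_low_b_est}, not $(|\lambda|t)^{2(1+p_i-p_j-p_k)}$; the conceptual issue is unchanged.) Since the statement is a conjecture precisely because this step remains open, your proposal should be read as a plausible plan of attack consistent with the paper's own heuristics, not as a proof.
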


\subsection{Relationship to previous literature}

There is a rich mathematical literature regarding problems in general relativity related to the discussions of Sections~\ref{sec:intro} and \ref{sec:background}. Below, we classify the literature into four broad categories.%, and clarify that this is by no means an exhaustive list.
\subsubsection*{Model hyperbolic systems in cosmological spacetimes}

The scalar wave equation \eqref{eq:wave} in Kasner spacetimes has been studied in \cite{AlhoFournodavlosFranzen, AllenRendall, PetersenMode, RingstromAsterisque}; most of these works show that $\phi$ blows up towards $t = 0$, as in Theorem~\ref{thm:asymp_wave}. In fact similar conclusions may be reached in a wider class of spacetimes which have Kasner-like asymptotics in the sense of the BKL ansatz. See also \cite{FournodavlosSbierski} for related results in the case of the interior of the Schwarzschild black hole.

The work \cite{RingstromAsterisque} in particular provides a comprehensive study of a wide array of linear systems of hyperbolic equations in such backgrounds. The related \cite{grisales2023asymptotics} provides detailed asymptotics of all orders, and addresses, as an example, Maxwell's equations on a fixed Kasner background. Note that \cite{grisales2023asymptotics, RingstromAsterisque} address solving the equations both towards and away from the singularity, though their results are not sharp in terms of regularity.

There is also a vast literature on linear and nonlinear hyperbolic systems in expanding cosmological spacetimes, for instance (asymptotically) de Sitter spacetimes \cite{CicortasScattering, HintzdS, VasydS} and expanding FLRW cosmologies \cite{CostaFranzenOliver, CostaNatarioPedro, KlainermanSarnak}. As explained in \cite{RingstromAsterisque}, these can be likened to the study of spacelike singularities if the analysis is with respect to a conformal time variable. We mention in particular \cite{CicortasScattering}, which proves a sharp linear scattering theory in de Sitter, and in particular features similar mechanisms as in our proof.% at least for a ``model equation'' that arises in the analysis.

%We postpone the discussion of the (linearized or nonlinear) Einstein--matter systems, especially in the context of spacelike singularities, to the next two categories.

\subsubsection*{The Einstein equations: from Cauchy data to spacelike singularities}

In $D = 3$ dimensions, heuristics of \cite{kl63} suggest solutions the Einstein \emph{vacuum} equations possessing spacelike singularities with convergent Kasner-like asymptotics are expected to be non-generic, as the subcriticality condition \eqref{eq:subcritical_delta} is violated. This remains true for most matter models (although not the scalar field). Therefore, most results regarding the formation of spacelike singularities from Cauchy data occur in symmetry classes where instabilities of \cite{kl63} are suppressed.

This includes work on Gowdy symmetric spacetimes \cite{SCC_PolarizedGowdy, SCC_T3Gowdy}, where various notions of the strong cosmic censorship conjecture may be proved, as well as work addressing polarized $U(1)$ perturbations of the Schwarzschild singularity \cite{AlexakisFournodavlos}. We also mention the study of the Einstein--scalar field system in spherical symmetry, pioneered by Christodoulou \cite{Christodoulou_formation, Christodoulou_BV, Christodoulou_cc}, and continued by many other authors \cite{Dafermos_trapped, kommemi, AnZhang, Me_Kasner}. Our previous result \cite{Me_Kasner} provides a Kasner-like description of spacelike singularities arising in spherically symmetric spacetimes.

More relevant to our result are the bodies of work addressing stability of singular solutions to the Einstein--scalar field system \emph{outside of symmetry}. The breakthrough results were those of Rodnianski--Speck \cite{RodnianskiSpeck0, RodnianskiSpeck1, RodnianskiSpeck2}, who address the linear and nonlinear stability of generalized Kasner spacetimes which are near-isotropic. This was extended to all Kasner spacetimes obeying the subcriticality condition \eqref{eq:subcritical} in \cite{FournodavlosRodnianskiSpeck}; see also the recent \cite{groeniger2023formation}. % which applies to more general Kasner-like spacetimes satisfying \eqref{eq:subcritical_delta0}. 
Note that in these works the authors use a gauge involving a Fermi-propagated frame in order to capture the subcriticality condition \eqref{eq:subcritical_delta0}. See also \cite{SpeckS3, FajmanUrban, BeyerOliynyk} for extensions to other topologies and gauges.

The linearized Einstein--scalar field system we study is essentially the same as that studied in \cite{RodnianskiSpeck0}. Even upon restricting our scattering result to the study of evolution of Cauchy data at $t = 1$ towards $t = 0$, our result improves upon \cite{RodnianskiSpeck0} in that our linear stability result applies to the whole subcritical range \eqref{eq:subcritical}, as our energy estimates do not rely on ``monotonicity formulae'' that apply only to their mildly anisotropic regime.

In all of \cite{FajmanUrban, FournodavlosRodnianskiSpeck, RodnianskiSpeck0, RodnianskiSpeck1, RodnianskiSpeck2, SpeckS3}, the top-order energy estimates used blow-up towards $t=0$; we interpret this as a loss of derivatives in going from Cauchy data to the Kasner-like asymptotics. Theorem~\ref{thm:einstein_scat} confirms that this is expected even in the linearized setting, at least when one considers the full range of Kasner exponents obeying the subcriticality condition \eqref{eq:subcritical}.

\subsubsection*{The Einstein equations: Evolving from asymptotic data on spacelike singulatities}

The converse direction of our scattering theory addresses prescribing suitable asymptotic data at singularity and solving a ``singular initial value problem''. There is a history of such constructions in general relativity, which typically evolves solving so-called Fuchsian PDEs with singular coefficients at $t = 0$.% (see already Appendix~\ref{app:fuchsian}). 

The original results addressed solving from singular initial data in the real analytic class \cite{AnderssonRendall, DHRW, KichenassamyRendall}, where a Fuchsian analogue of the Cauchy-Kovalevskaya theorem may be applied. Here \cite{AnderssonRendall} addresses the (nonlinear) Einstein--scalar field system in $D = 3$ dimensions, while \cite{DHRW} addresses more general subcritical regimes including higher dimensional spacetimes.

To go beyond real analyticity, one must combine the Fuchsian analysis with energy estimates: as before such energy estimates often allow for a mild rate of blow-up at top-order. % but in such a way that lower-order quantities still remain bounded. 
In the context of Kasner-like spacetimes with asymptotically velocity dominated asymptotics, there are many examples of such results in various symmetry classes \cite{AmesT2, BeyerOliynykOlvera, ChoquetIsenbergMoncrief}.

We also mention the remarkable \cite{FournodavlosLuk}, where the authors evolve from asymptotic data for the Einstein vacuum equations in $D = 3$ spatial dimensions. They are able to perform this procedure whenever the structure coefficient $\lambda^K_{LM}$ responsible for the inconsistency in \eqref{eq:bkl_ricci} is made to vanish by a suitable choice of the frame $\omega^I(x)$. We consider this a construction of the ``non-generic'' class of spacetimes in \cite{kl63}. %In particular, \cite{FournodavlosLuk} confirms the of Khalatnikov and Lifschitz \cite{kl63} in $1 + 3$-dimensional vacuum. 

%Due to the singular energy estimates employed, many of these results feature a loss of derivatives from asymptotic data to Cauchy data, particularly away from the isotropic case. As in the previous section, we remark that Theorem~\ref{thm:einstein_scat} confirms that such loss of derivatives is a genuine phenomenon, even in the linearized setting.

\subsubsection*{Scattering constructions in general relativity}

Scattering theory is ubiquitous in the study of linear and nonlinear evolutionary PDE, see the classical text \cite{ReedSimonScattering}, or \cite{TaoNonlinearDispersive} for a modern treatment in the context of nonlinear dispersive PDE. The scattering terminology used throughout the article is taken from these texts. %The application of scattering theory to Einstein's equations is 

In general relativity, there are many results concerning scattering theory for wave equations in the exterior of black holes, see \cite{Chandrasekhar1984, futterman_handler_matzner_1988} for analysis at the level of modes, and \cite{Dimock_scattering, DRS_scattering} for an energy-method approach. There are also scattering results for linearized gravity in the Schwarzschild exterior \cite{Masaood1, Masaood2}. Going into the black hole interior, there are results \cite{KehleYakov} regarding scattering for the scalar wave equation between the event horizon and the Cauchy horizon of subextremal Reissner-Nordstr\"om. All of these results address scattering between data on \emph{null} rather than spacelike hypersurfaces.

To our knowledge, our result is the first to determine a sharp scattering isomorphism (of Sobolev spaces) between Cauchy data and asymptotic data in the context of spacelike singularities. However, there is some relation to scattering in (asymptotically) de Sitter spacetimes, where past and future null infinities $\mathcal{I}^{\pm}$ appear as spacelike boundaries after Penrose compactification, see \cite{CicortasScattering, TaujanskasScattering, VasydS}. The result \cite[Theorem 1.2]{CicortasScattering} in particular features a symbol $\log \nabla$ which plays the same role as $\log \mathcal{T}_*$ in Theorem~\ref{thm:wave_scat}.

\subsection{Applications to the nonlinear Einstein--scalar field system}

We conclude this section with comments on how our ideas could potentially be applied to the study of Kasner-like singularities for the fully nonlinear Einstein--scalar field system \eqref{eq:einstein}--\eqref{eq:wave0}. %The various issues and problems to be discussed are mostly open and are left as potential avenues for future research.
For instance, it would be desirable to formulate a scattering theory relating Cauchy data and asymptotic data in the context of Kasner-like spacetimes. In light of the nonlinear stability result \cite{FournodavlosRodnianskiSpeck}, one could conjecture that there exists a scattering map between Cauchy data that is a small perturbation Kasner data at $t=1$, and suitable asymptotic data close to the associated Kasner data at $t=0$. %(meaning nonlinear analogues of our $\matr{(\upkappa_{\infty})}{i}{j}$, $\matr{(\Upupsilon_{\infty})}{i}{j}$, $\uppsi_{\infty}$ and $\upvarphi_{\infty}$). 

Many additional difficulties arise in addressing the nonlinear problem. In fact, some of these difficulties arise already in \emph{linear} hyperbolic problems where the background Kasner-like spacetime is not exactly $(\mathcal{M}_{Kas}, g_{Kas})$, but a non-homogeneous perturbation thereof, since our proof is reliant on a Fourier decomposition on $\mathbb{T}^D$, and outside of the homogeneous setting one must what consider carefully what operators such as $\log \mathcal{T}_*$ and $\mathcal{T}^{-p_i + p_j}_*$ mean. It is possible we must consider microlocal operators with both physical space and Fourier space dependence.%, it appears that such a frequency-space decomposition is unavoidable if one wishes to recover a sharp scattering statement. This suggests that even for linear problems in spacetimes which are not exact Kasner,  one may need to view $\mathcal{T}_*$ as a microlocal operator which features both physical space and frequency space dependence.

%Thus if the background is instead a more general Kasner-like spacetime in the sense of Section~\ref{sub:adm_kasner}, we needs to make sense of frequency space decomposition, and the symbol $\mathcal{T}_*$, in a background which is no longer spatially homogeneous on $\Sigma_t$, and furthermore has different time-dependence at different spatial points $x \in \mathbb{T}^D$. This suggests, for instance, that one may need to view $\mathcal{T}_*$ as a microlocal operator which features both physical space and frequency space dependence.

We now comment single out several features of our proof to discuss how they could be adapted to the nonlinear problem. (Or in some cases, linear problems on non-homogeneous spacetimes.)

\subsubsection*{Renormalized quantities in the nonlinear problem}

In Theorem~\ref{thm:einstein_scat}, the Cauchy data at $t = 1$ uses the variables $(\matr{\upeta}{i}{j}, \matr{\upkappa}{i}{j}, \upphi, \uppsi)$ while our asymptotic data at $t = 0$ uses instead the renormalized variables $(\matr{\upkappa}{i}{j}, \matr{\Upupsilon}{i}{j}, \uppsi, \upvarphi)$, since it is these renormalized variables that remain bounded (and convergent) as $t \to 0$.

In the nonlinear problem, one must determine the correct renormalized variables at $t = 0$, or in other words the variables we prescribe as ``scattering data''. One proposal is the following:% may be used (using the notation of Proposition~\ref{prop:adm_evol}):
\begin{gather*}
    \matr{K}{i}{j}(t) = (\bar{g}^{-1}(t))^{jk} (t k_{ik}(t)), \quad
    M_{ij}(t) = \bar{g}_{jk}(t) \cdot \matr{\exp( 2 K(t) \ln t )}{i}{k}, \\[0.3em]
    \psi(t) = t \mathbf{N}\phi(t), \quad \varphi(t) = \phi(t) - \psi(t) \log t.
\end{gather*}
Here $\exp$ denotes matrix exponentiation. Note it is a consequence of \cite[Theorem 6.1]{FournodavlosRodnianskiSpeck} that these quantities are bounded and convergent as $t \to 0$ for perturbations of Kasner spacetimes obeying the subcriticality condition \eqref{eq:subcritical_delta}. % (at least at lower order).
%
%In fact there is a sense in which $\matr{\upkappa}{i}{j}$ and $\matr{\Upupsilon}{i}{j}$ arise from linearizations of $\matr{K}{i}{j}$ and $M_{ij}$. 
Thus a candidate nonlinear scattering map could be a map between Cauchy data $(\bar{g}_{ij}(1), k_{ij}(1), \phi(1), \psi(1))$ and the asymptotic quantities $(\matr{(K_{\infty})}{i}{j}, (M_{\infty})_{ij}, \psi_{\infty}, \varphi_{\infty}) = \lim_{t \to 0} (\matr{K}{i}{j}(t), M_{ij}(t), \psi(t), \varphi(t))$.

Another subtlety regarding how we use renormalized quantities in our energy estimates is that our low-frequency energy estimates really apply to $(\matr{\upkappa}{i}{j}, \matr{\tilde{\Upupsilon}}{i}{j}, \uppsi, \tilde{\upvarphi})$, with the tildes signifying that we use $T = \mathcal{T}_*$ in the definitions \eqref{eq:upupsilonupvarphi_T}. 
Thus in a background which is not exactly Kasner, one must also understand how to adapt the renormalized variables in a manner akin to choosing $T = \mathcal{T}_*$. %This is a major difficulty in the nonlinear problem, and one notes that it arises already in the study of scattering of the linear wave equation \eqref{eq:wave0} in a background that is not exactly Kasner.

\subsubsection*{Gauge in the nonlinear problem}

In contrast to \cite{FournodavlosRodnianskiSpeck} where the authors use Fermi-propagated frames, our scattering result Theorem~\ref{thm:einstein_scat} employs a transported coordinate ADM-type gauge. In reality, the proof does not really use a transported coordinate gauge but instead a \emph{frequency adapted} gauge, see already Section~\ref{sec:einstein_scat} and Theorem~\ref{thm:einstein_scat_v2}.

A quick way of describing this gauge is that it is CMCSH at high-frequency and essentially CMCTC at low-frequency, see Section~\ref{sub:intro_proof} for why this is the case. %this gauge choice is necessary to close our choice of energy estimates. 
Both the CMCSH gauge and the CMCTC gauge exist at the nonlinear level, and our method suggests it is reasonable to use a mixture of the two gauges in the nonlinear setting, so long as one makes sense of the frequency-dependent transition between the gauges.

In particular, this has the effect that the spatial Ricci tensor $\mathrm{Ric}_{ij}[g]$ can be viewed as an elliptic operator acting on $\bar{g}_{ij}$ at high-frequency (see \eqref{eq:ricci_evol}), but that there is no shift vector $X^i$ at low-frequency and thus the equation for $\partial_t \matr{K}{i}{j} = \partial_t( (\bar{g}^{-1})^{jk} (t\matr{k}{i}{j}))$ will remain suitably integrable towards $t = 0$.

\subsubsection*{Norms and energies in the nonlinear problem}

In Theorem~\ref{thm:einstein_scat}, the Hilbert space $\mathcal{H}_{\infty}^s$ associated to the scattering data $(\matr{(\upkappa_{\infty})}{i}{j}, \matr{(\Upupsilon_{\infty})}{i}{j}, \uppsi_{\infty}, \upvarphi_{\infty})$ also features the symbol $\mathcal{T}_*$ extensively; this arises due to the way that one compares the high-frequency energy and the low-frequency energy and the relevant metrics used to control the tensorial norms. The idea is that high-frequencies use the metric $\mathring{g}_{ij}(t)$, while low-frequencies use $\mathring{g}_{ij}(t_{\lambda*})$. %In particular, the norms of tensorial quantities such as $\matr{\upkappa}{i}{j}$ 

%The idea is that at high-frequencies $t \geq t_{\lambda*}$, we expect tensorial quantities such as $\matr{\upkappa}{i}{j}(t)$ to have the size $O(t^{-p_i + p_j})$, while at low-frequencies $0 < t \leq t_{\lambda*}$ the same quantity $\matr{\upkappa}{i}{j}$ morally remains at the same size. Comparability at $t = t_{\lambda*}$ means that we have $\matr{\upkappa}{i}{j} = O(t_{\lambda*}^{-p_i + p_j})$ as $t \to 0$, 
As a result, the symbol $\mathcal{T}_*^{-p_i + p_j}$ appears, especially when acting on off-diagonal components of tensors. %in the definition of the Hilbert space $\mathcal{H}_{\infty}$. 
The dependence of $\mathcal{T}_*^{-p_i + p_j}$ on the Kasner exponents means that if one were instead to consider perturbations of Kasner have spatially dependent exponents, one must be careful about the norms and energies used to control the asymptotic data at $t = 0$.

%This is possibly easier to resolve in the problem of construction a solution given
This issue is perhaps easier to resolve in the direction where one solves ``forwards'' from asymptotic data at $t = 0$, since in this context the perturbed Kasner exponents, and thus potentially the nonlinear analogues of $\mathcal{T}_*^{-p_i + p_j}$, are given. In the other direction, one has to understand how we can generate the correct such operators in evolution. 

\subsubsection*{The nonlinear terms in the evolution equations}

If one can resolve the previous issues (which already play a role in studying linearized gravity around perturbations of Kasner), it remains to control the `semilinear' nonlinear terms in the evolution equations \eqref{eq:h_evol}--\eqref{eq:nphi_evol}. 
For instance, there are the $\mathcal{N}(g^{-1}, \partial g)$ terms appearing in \eqref{eq:ricci_evol}, and terms involving the shift vector $X^i$.

Given our time-dependent frequency decomposition, this would involve an extremely careful analysis of the different types of interaction (i.e.~high-high, high-low, and low-low) both in terms of regularity and in terms of behaviour in $t$. Similarly, one would have to understand the interaction of commutator terms that arise, including commutation with the suggested microlocal operator $\mathcal{T}_*$. %We return to this issue in future work. %To illustrate some of the new difficulties that could arise, let us for instance consider the nonlinear terms $\mathcal{N}(\bar{g}^{-1}, \partial \bar{g})$ that appear in \eqref{eq:ricci_evol_0}.

%auto-ignore

\section{Scattering for the scalar wave equation} \label{sec:wavescat}

In this section, we prove Theorem~\ref{thm:wave_scat}, concerning scattering between Cauchy data and asymptotic data for solutions of the wave equation \eqref{eq:wave} in a non-degenerate Kasner spacetime $(\mathcal{M}_{Kas}, g_{Kas})$. As outlined in Section~\ref{sub:intro_proof}, the proof relies on a detailed ODE analysis for each Fourier mode $(\phi_{\lambda}, \psi_{\lambda})$, $\lambda \in \Z^d$.

\subsection{Fourier decomposition and preliminary lemmas} \label{sub:wavescat_fourier}

%We express $\phi$ and $\psi$ in frequency space using the Fourier decomposition.  
Projecting the first order system \eqref{eq:wave_phi_evol}--\eqref{eq:wave_psi_evol} onto each Fourier mode $\lambda \in \Z^D$, we find the following first order system of ODEs for the Fourier coefficients $\phi_{\lambda}$ and $\psi_{\lambda}$. %For convenience, we record these equations again here:
\begin{gather} 
    t \partial_t \phi_{\lambda} = \psi_{\lambda}, \label{eq:wave_phi_l_evol} \\[0.5em]
    t \partial_t \psi_{\lambda} = - \tau^2 \phi_{\lambda}, \label{eq:wave_psi_l_evol}.
\end{gather}

Here the expression $\tau^2 = \tau^2_{\lambda}(t)$ is the following function\footnote{When clear from context, we often denote $\tau$ without its subscript $\tau_{\lambda}$, primarily to shorten the exposition, and also to make it clear that $\tau_{\lambda}$ is not a Fourier coefficient. Similarly for $\zeta$ and $\zeta_{\lambda}$.} of $\lambda \in \Z^D$ and $t > 0$.
\begin{equation} \label{eq:tau}
    \tau^2 = \sum_{i=1}^D t^{2 - 2p_i} \lambda_i^2.
\end{equation}

Our aim is to find energy estimates for $\phi_{\lambda}$, $\psi_{\lambda}$ and $\varphi_{\lambda} = \phi_{\lambda} - \psi_{\lambda} \log t$ which are uniform in frequency $\lambda \in \Z^D$. The energies appearing in such energy estimates will often feature $\tau$ and derivatives thereof, and thus much of our analysis boils down to understanding properties of $\tau$. We often appeal to the following two lemmas:

\begin{lemma} \label{lem:tau}
    For $\lambda \in \Z^D \setminus \{0\}$, let $\zeta = \zeta_\lambda(t)$ be the following logarithmic $\partial_t$-derivative of $\tau$:
    \begin{equation} \label{eq:zeta}
        \zeta = \zeta_{\lambda}(t) \coloneqq \left( t \frac{d}{dt} \log \tau \right)^{-1} = \frac{\tau}{t} \cdot \left( \frac{d \tau}{dt} \right).
    \end{equation}
    Then the following upper and lower bounds for $\zeta = \zeta_{\lambda}(t)$ hold, uniformly in $\lambda \in \Z^D \setminus \{ 0 \}$ and $t > 0$:
    \begin{equation} \label{eq:zeta_upperlower}
        \min_{1 \leq i \leq D} \frac{1}{1-p_i} \leq \zeta_{\lambda}(t) \leq \max_{1 \leq i \leq D} \frac{1}{1 - p_i}.
    \end{equation}

    Furthermore, $\zeta_{\lambda}(t)$ is nonincreasing in $t$, and one has the integral estimate
    \begin{equation} \label{eq:zeta_bv}
        \int_0^1 | \zeta_{\lambda}'(s) | \, ds \leq \max_{1 \leq i \leq D} \frac{1}{1-p_i} - \min_{1 \leq i \leq D} \frac{1}{1-p_i}.
    \end{equation}
\end{lemma}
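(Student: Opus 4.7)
The plan is to reduce everything to the observation that $\zeta^{-1}$ is a convex combination of the positive numbers $(1-p_i)$. First I would rewrite $\tau^2 = \sum_i a_i(t)$ where $a_i(t) := t^{2-2p_i} \lambda_i^2 \geq 0$ (not all vanishing since $\lambda \neq 0$). Direct differentiation gives $t\, \partial_t (\tau^2) = \sum_i (2-2p_i)\, a_i$, and hence
\begin{equation*}
   \zeta^{-1}_\lambda(t) \;=\; t\,\partial_t \log\tau \;=\; \frac{\tfrac{1}{2} t\,\partial_t(\tau^2)}{\tau^2} \;=\; \sum_{i=1}^D (1-p_i)\, w_i(t), \qquad w_i(t) := \frac{a_i(t)}{\sum_j a_j(t)}.
\end{equation*}
Since the Kasner spacetime is non-degenerate, the second Kasner relation $\sum p_i^2 + 2 p_\phi^2 = 1$ forces $p_i < 1$ strictly for every $i$, so each $1-p_i > 0$. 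As $\{w_i(t)\}$ is a probability vector, the bound \eqref{eq:zeta_upperlower} is immediate: $\min_i (1-p_i) \leq \zeta^{-1} \leq \max_i (1-p_i)$.

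For monotonicity I would compute $\tfrac{d}{d \log t} \zeta^{-1}$ via the logarithmic derivatives $\tfrac{d}{d\log t}\log a_i = 2-2p_i$. A standard calculation for weights of the form $w_i = a_i / \sum_j a_j$ yields
\begin{equation*}
   \frac{d w_i}{d\log t} \;=\; 2\, w_i\Bigl[(1-p_i) - \sum_j (1-p_j) w_j\Bigr],
\end{equation*}
and therefore
\begin{equation*}
   \frac{d}{d \log t}\bigl(\zeta^{-1}\bigr) \;=\; \sum_i (1-p_i) \frac{d w_i}{d\log t} \;=\; 2 \Bigl( \sum_i (1-p_i)^2 w_i - \bigl(\sum_i (1-p_i) w_i\bigr)^2 \Bigr) \;\geq\; 0,
\end{equation*}
the right-hand side being exactly twice the variance of the discrete random variable taking value $(1-p_i)$ with probability $w_i$. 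Consequently $\zeta^{-1}$ is nondecreasing in $t$, so $\zeta$ is nonincreasing as claimed.

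For the BV estimate \eqref{eq:zeta_bv}, monotonicity reduces the total variation to a telescoping difference: $\int_0^1 |\zeta_\lambda'(s)|\, ds = \lim_{s \to 0^+} \zeta_\lambda(s) - \zeta_\lambda(1)$. The limit at $s \to 0^+$ exists because the weights $w_i(s)$ concentrate on the index $i^*$ with the largest $p_i$ among those satisfying $\lambda_i \neq 0$ (in case of ties, on the set of such indices), and in all cases $\lim_{s\to 0^+} \zeta_\lambda(s) \leq \max_i \frac{1}{1-p_i}$ by \eqref{eq:zeta_upperlower}. Combining with the lower bound $\zeta_\lambda(1) \geq \min_i \frac{1}{1-p_i}$ closes the estimate.

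I do not foresee a genuine obstacle: the lemma is essentially an exercise in recognizing $\zeta^{-1}$ as a $t$-dependent expectation of $(1-p_i)$ and $\tfrac{d}{d\log t} \zeta^{-1}$ as its variance. The only mild subtlety is checking that the limit $\lim_{s \to 0^+} \zeta_\lambda(s)$ exists and respects the uniform bound \eqref{eq:zeta_upperlower}, which is handled by examining which $a_i$ dominates as $t \to 0^+$ under the standing assumption $p_i < 1$ for all $i$.
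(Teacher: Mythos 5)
Your proof is correct and follows essentially the same route as the paper's: express $\zeta^{-1}$ as a weighted average of the positive numbers $(1-p_i)$ to get the pointwise bounds, establish monotonicity by a logarithmic-derivative computation, and then read off the BV estimate as a telescoping difference. The only genuine difference is one of packaging. The paper shows $\frac{d}{d\log t}\log\zeta \le 0$ and invokes the Cauchy--Schwarz inequality in $\mathbb{R}^D$ with $x_i = t^{1-p_i}\lambda_i$, $y_i = (1-p_i)t^{1-p_i}\lambda_i$, while you compute $\frac{d}{d\log t}\zeta^{-1}$ directly and recognize it as twice the variance of the random variable taking value $(1-p_i)$ with probability $w_i$. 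These are the same inequality: nonnegativity of variance with respect to the weights $w_i$ is exactly Cauchy--Schwarz after normalization. Your probabilistic framing has the small virtue of making the pointwise bounds (an expectation lies between $\min$ and $\max$) and the monotonicity (variance $\ge 0$) visibly come from the same structure, and avoids logarithms. One remark: the discussion of which $a_i$ dominates as $t\to 0^+$ is unnecessary — once $\zeta$ is monotone and bounded, the one-sided limit exists automatically, which is all the BV estimate needs.
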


\begin{proof}
    We differentiate the expression \eqref{eq:tau} for $\tau$ with respect to $t$. This yields
    \[
        2 t \tau \cdot \frac{d \tau}{dt} = \sum_{i=1}^D (2 - 2p_i) t^{2 - 2p_i} \lambda_i^2.
    \]
    We then use the definition \eqref{eq:zeta} and rearrange to find
    \[
        \zeta(t) = \frac{ 2 \tau^2 }{ \sum_{i=1}^D (2 - 2p_i) t^{2 - 2p_i} \lambda_i^2}
        = \frac{ 2 \sum_{i=1}^D t^{2 - 2p_i} \lambda_i^2 }{ \sum_{i=1}^D (2 - 2p_i) t^{2 - 2p_i} \lambda_i^2}.
    \]
    For $\lambda \in \Z^D \setminus \{0\}$, at least one summand appearing in both the numerator and denominator is positive, and from this the bounds \eqref{eq:zeta_upperlower} are immediate.

    We now show that $\zeta(t)$ is nonincreasing. Using the above, we compute the following logarithmic derivative
    \begin{align*}
        t \frac{d}{dt} \log(\zeta(t)) 
        &= \frac{\sum_{i=1}^{D} (2 - 2p_i) t^{2 - 2p_i} \lambda_i^2 }{ \sum_{i=1}^{D} t^{2 - 2p_i} \lambda_i^2 } - \frac{\sum_{i=1}^{D} (2 - 2p_i)^2 t^{2 - 2p_i}\lambda_i^2}{\sum_{i=1}^{D} (2 - 2p_i) t^{2 - 2p_i}\lambda_i^2} \\[0.8em]
        &= \frac{\left(\sum_{i=1}^{D} (2 - 2p_i) t^{2 - 2p_i} \lambda_i^2 \right)^2 - \left(\sum_{i=1}^{D} t^{2 - 2p_i} \lambda_i^2\right) \left(\sum_{i=1}^{D} (2 - 2p_i)^2 t^{2 - 2p_i} \lambda_i^2 \right)  }{ \left( \sum_{i=1}^{D} t^{2 - 2p_i} \lambda_i^2 \right) \left( \sum_{i=1}^{D} (2 - 2p_i) t^{2 - 2p_i} \lambda_i^2 \right) }.
    \end{align*}
    The result now follows from the Cauchy-Schwarz inequality in $\R^D$, i.e.~that
    \[
        \left( \textstyle{\sum_{i=1}^D} x_i^2 \right) \left( \textstyle{\sum_{i=1}^D} y_i \right)^2 \geq \left( \textstyle{\sum_{i = 1}^D} x_i y_i \right)^2,
    \]
    with $x_i = t^{1 - p_i} \lambda_i$ and $y_i = (1 - p_i) t^{1 - p_i} \lambda_i$. This shows that the numerator above is nonpositive, and thus that $\zeta(t)$ is nonincreasing. This monotonicity and the upper and lower bounds \eqref{eq:zeta_upperlower} for $\zeta(t)$ immediately yields \eqref{eq:zeta_bv}.
\end{proof}

We use Lemma~\ref{lem:tau}, particularly \eqref{eq:zeta_upperlower}, to estimate integrals involving powers of $\tau$.

\begin{lemma} \label{lem:tauint}
    As in Definition~\ref{def:tstar}, for each $\lambda \in \Z^D \setminus \{0\}$, let $t_{\lambda*} \in (0, 1]$ be such that $\tau_{\lambda}^2 (t_{\lambda*}) = 1$. Then for $\alpha > 0$, we have the following integral estimate corresponding to $\tau \geq 1$:
    \begin{equation} \label{eq:tauintegralup}
        \int_{t_{\lambda*}}^{1} \tau^{- \alpha}(s) \, \frac{ds}{s} \lesssim \alpha^{-1}.
    \end{equation}
    One also has the following integral estimate corresponding to $\tau \leq 1$. For $0 < t \leq t_{\lambda_*}$,
    \begin{equation} \label{eq:tauintegraldown}
        \int_{0}^{t} \tau^2(s) \left( 1 + \log(\frac{t_{\lambda*}}{s})^2 \right) \, \frac{ds}{s} \lesssim \tau^2(t) \left(1 + \log( \frac{t_{\lambda*}}{t})^2 \right) \lesssim 1.
    \end{equation}
\end{lemma}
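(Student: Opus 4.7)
The plan is to reduce both estimates to elementary exponential integrals by the change of variables $u = \log \tau_\lambda(s)$. From the definition \eqref{eq:zeta} of $\zeta_\lambda$, one has $\frac{du}{ds} = (s \zeta_\lambda(s))^{-1}$, so $\frac{ds}{s} = \zeta_\lambda \, du$, and Lemma~\ref{lem:tau}, in particular \eqref{eq:zeta_upperlower}, controls $\zeta_\lambda$ above and below by positive constants that are uniform in $\lambda$ (these are finite since non-degeneracy of the Kasner metric forces $p_i < 1$ for all $i$). Since each summand of $\tau_\lambda^2(s) = \sum_i s^{2-2p_i}\lambda_i^2$ is strictly increasing in $s$, the map $s \mapsto u$ is a diffeomorphism from $(0, \infty)$ onto $(-\infty, \infty)$ sending $t_{\lambda*}$ to $0$ and $s \to 0^+$ to $u \to -\infty$.

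For \eqref{eq:tauintegralup}, direct substitution gives
\[
    \int_{t_{\lambda*}}^{1} \tau_\lambda^{-\alpha}(s) \, \frac{ds}{s} = \int_{0}^{\log \tau_\lambda(1)} e^{-\alpha u} \zeta_\lambda \, du \lesssim \int_0^{\infty} e^{-\alpha u} \, du = \alpha^{-1},
\]
using the $\lambda$-uniform upper bound on $\zeta_\lambda$ to pull it out.

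For \eqref{eq:tauintegraldown}, I first need to rewrite $\log(t_{\lambda*}/s)$ in the $u$-variable. Integrating $\frac{d}{ds} \log \tau_\lambda = (s \zeta_\lambda)^{-1}$ from $s$ to $t_{\lambda*}$ and again invoking \eqref{eq:zeta_upperlower} gives $\log(t_{\lambda*}/s) \lesssim |\log \tau_\lambda(s)| = |u|$ for $0 < s \leq t_{\lambda*}$. Substituting $u = \log \tau_\lambda(s) \in (-\infty, 0]$ with $u_0 = \log \tau_\lambda(t) \leq 0$ bounds the integral in question by a uniform constant times
\[
    \int_{-\infty}^{u_0} e^{2u}(1 + u^2) \, du.
\]
One or two integrations by parts show this quantity is $\lesssim e^{2u_0}(1 + u_0^2)$ for all $u_0 \leq 0$, giving the first inequality $\lesssim \tau_\lambda^2(t)(1 + \log(t_{\lambda*}/t)^2)$ in \eqref{eq:tauintegraldown}. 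The last inequality reduces, after setting $u_0 = \log \tau_\lambda(t)$, to the elementary observation that the function $u_0 \mapsto e^{2u_0}(1 + u_0^2)$ is bounded on $(-\infty, 0]$.

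I do not anticipate any genuine obstacle. The only subtlety is that all implicit constants must be uniform in $\lambda \in \Z^D \setminus \{0\}$, but this is exactly what Lemma~\ref{lem:tau} provides through its $\lambda$-uniform two-sided bound on $\zeta_\lambda$.
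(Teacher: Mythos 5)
Your argument is correct and is essentially the same as the paper's, just written in the variable $u = \log\tau$ rather than $\tau$ itself: the substitution $\frac{ds}{s} = \zeta\,du$ and the comparison $\log(t_{\lambda*}/s) \asymp |\log\tau(s)|$ are exactly what the paper uses (as $\frac{dt}{t} = \zeta \frac{d\tau}{\tau}$ and \eqref{eq:log_upperlower}). One tiny presentational remark: you state only $\log(t_{\lambda*}/s) \lesssim |u|$, but passing from $e^{2u_0}(1+u_0^2)$ to $\tau^2(t)\bigl(1+\log(t_{\lambda*}/t)^2\bigr)$ also requires the reverse inequality $|u_0| \lesssim \log(t_{\lambda*}/t)$, which likewise follows from \eqref{eq:zeta_upperlower} via the lower bound on $\zeta$.
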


\begin{proof}
    The key to these integral estimates is that we may perform a change of variables $t \mapsto \tau$, and that 
    \[
        \frac{dt}{t} = \zeta \frac{d \tau}{\tau}.
    \]

    For instance, for \eqref{eq:tauintegralup}, upon applying this change of variables the upper bound in \eqref{eq:zeta_upperlower} yields
    \[
        \int_{t_{\lambda*}}^{1} \tau^{- \alpha}(s) \, \frac{ds}{s} \leq
        \int_1^{+\infty} \tau^{- \alpha} \, \zeta \frac{d\tau}{\tau} \leq \alpha^{-1} \cdot \max_{1 \leq i \leq D} \frac{1}{1-p_i}.
    \]

    For \eqref{eq:tauintegraldown} we start by expressing $\log(\frac{t_{\lambda*}}{t})$ in terms of $\tau$. Using the same change of variables, we find
    \[
        \log(\frac{t_{\lambda*}}{t}) = \int^{t_{\lambda *}}_{t} \frac{dt}{t} = \int^1_{\tau(t)} \zeta \frac{d \tau}{\tau}.
    \]
    Using again the upper and lower bounds \eqref{eq:zeta_upperlower} on $\zeta(t)$, we thus have that for $0 < t \leq t_{\lambda*}$,
    \begin{equation} \label{eq:log_upperlower}
        \log \left( \frac{t_{\lambda*}}{t} \right) \asymp \log \tau^{-1}.
    \end{equation}

    Therefore, the integral in \eqref{eq:tauintegraldown} is estimated as follows.
    \begin{align*}
        \int_{0}^{t} \tau^2(s) \left( 1 + \log(\frac{t_{\lambda*}}{s})^2 \right) \, \frac{ds}{s} 
        &\lesssim \int_{0}^{\tau(t)} \tau^2 \left( 1 + (\log \tau^{-1})^2 \right) \, \frac{d\tau}{\tau} \\[0.5em]
        &\lesssim \tau^2 \left( 1 + (\log \tau^{-1})^2 \right).
    \end{align*}
    The estimate \eqref{eq:tauintegraldown} then follows from using \eqref{eq:log_upperlower} again and that $\tau^2 \left( 1 + (\log \tau^{-1})^2 \right) \lesssim 1$ for $\tau \leq 1$.
\end{proof}

\subsection{The high-frequency energy estimate} \label{sub:wavescat_high}

We commence the study of the first order ODE system \eqref{eq:wave_phi_l_evol}--\eqref{eq:wave_psi_l_evol} for $\lambda \in \Z^D \setminus \{0\}$. In the high-frequency regime $\tau = \tau_{\lambda}(t) \geq 1$, we make use of the following energy quantity.

\begin{definition} \label{def:wave_high_freq}
    The \emph{high-frequency energy quantity} $\mathcal{E}_{\lambda, high}(t)$, used for $\lambda \in \Z^D \setminus \{ 0 \}$ and $\tau_{\lambda} (t) \geq 1$, is given by
    \begin{equation} \label{eq:wave_energy_high}
        \mathcal{E}_{\lambda, high}(t) = \zeta^2 \frac{\psi_{\lambda}^2}{\tau} + \zeta \frac{\psi_{\lambda} \phi_{\lambda}}{\tau} + \frac{\phi_{\lambda}^2}{2\tau} + \zeta^2 \tau \phi_{\lambda}^2.
    \end{equation}
\end{definition}

The bounds for $\zeta = \zeta_{\lambda}(t)$ obtained in Lemma~\ref{lem:tau} immediately yield the following coercivity of the energy, which will be especially useful to compare $\mathcal{E}_{\lambda, high}(\cdot)$ to Sobolev norms at $\Sigma_1$.

\begin{corollary} \label{cor:wave_high_coercive}
    One has that uniformly in frequency $\lambda \in \mathbb{Z}^D \setminus \{0\}$ and $t \geq t_{\lambda*}$,
    \[
        \mathcal{E}_{\lambda, high}(t) \asymp \frac{\psi_{\lambda}^2(t)}{\langle \tau(t) \rangle} + \langle \tau(t) \rangle \phi_{\lambda}^2(t).
    \]
    In particular, at $t=1$ one has $\mathcal{E}_{\lambda, high}(1) \asymp \langle \lambda \rangle^{-1} \psi_{\lambda}^2 (1) + \langle \lambda \rangle \, \phi_{\lambda}^2 (1)$.
\end{corollary}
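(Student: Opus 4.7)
The plan is to exploit two structural facts: Lemma~\ref{lem:tau} tells us that the weight $\zeta = \zeta_\lambda(t)$ is uniformly bounded above and below by positive constants depending only on the Kasner exponents, and in the regime $t \geq t_{\lambda*}$ we have $\tau(t) \geq 1$, hence $\langle \tau(t) \rangle \asymp \tau(t)$. So throughout the argument $\zeta$ and $\langle \tau \rangle / \tau$ behave like harmless universal constants, and it suffices to prove $\mathcal{E}_{\lambda,high}(t) \asymp \psi_\lambda^2/\tau + \tau \phi_\lambda^2$ with implicit constants depending only on the background.

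The key algebraic observation is that the three terms of $\mathcal{E}_{\lambda,high}$ involving $\psi_\lambda$ and the first $\phi_\lambda^2/\tau$ contribution complete a square. Namely, a direct expansion gives
\begin{equation*}
    \mathcal{E}_{\lambda, high}(t) \;=\; \frac{1}{\tau}\left(\zeta \psi_\lambda + \tfrac{1}{2}\phi_\lambda\right)^2 + \frac{\phi_\lambda^2}{4\tau} + \zeta^2 \tau \phi_\lambda^2.
\end{equation*}
This immediately establishes nonnegativity and is the starting point for both bounds.

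For the upper bound, I will control the cross term $\zeta \psi_\lambda \phi_\lambda / \tau$ by AM--GM as $|\zeta \psi_\lambda \phi_\lambda/\tau| \leq \zeta^2 \psi_\lambda^2/\tau + \phi_\lambda^2/(4\tau)$, so that
\begin{equation*}
    \mathcal{E}_{\lambda, high}(t) \;\lesssim\; \frac{\psi_\lambda^2}{\tau} + \frac{\phi_\lambda^2}{\tau} + \tau \phi_\lambda^2 \;\lesssim\; \frac{\psi_\lambda^2}{\tau} + \tau \phi_\lambda^2,
\end{equation*}
where the last step uses $\tau \geq 1$ to absorb $\phi_\lambda^2/\tau \leq \tau \phi_\lambda^2$. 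For the lower bound, applying $(a+b)^2 \geq \tfrac{1}{2}a^2 - b^2$ to the square in the displayed decomposition yields
\begin{equation*}
    \frac{1}{\tau}\left(\zeta \psi_\lambda + \tfrac{1}{2}\phi_\lambda\right)^2 \;\geq\; \frac{\zeta^2 \psi_\lambda^2}{2\tau} - \frac{\phi_\lambda^2}{4\tau},
\end{equation*}
and this negative term is exactly cancelled by the free $\phi_\lambda^2/(4\tau)$ already present in $\mathcal{E}_{\lambda,high}$. Using the uniform lower bound on $\zeta$ from Lemma~\ref{lem:tau} and $\tau \geq 1$, one concludes $\mathcal{E}_{\lambda, high}(t) \gtrsim \psi_\lambda^2/\tau + \tau \phi_\lambda^2$.

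Finally, for the specialization to $t = 1$, note $\tau_\lambda(1)^2 = \sum_i \lambda_i^2 = |\lambda|^2$, so $\tau_\lambda(1) = |\lambda| \geq 1$ for every $\lambda \in \mathbb{Z}^D \setminus \{0\}$; in particular $t_{\lambda*} \leq 1$ and the preceding equivalence applies, giving $\mathcal{E}_{\lambda, high}(1) \asymp \langle\lambda\rangle^{-1} \psi_\lambda^2(1) + \langle\lambda\rangle \phi_\lambda^2(1)$. There is no real obstacle here; the only subtlety is making sure the cancellation between $-\phi_\lambda^2/(4\tau)$ and the extra $\phi_\lambda^2/(4\tau)$ in the square-completion is used, which is precisely the reason for the specific coefficients $\zeta^2$, $\zeta$, $\tfrac{1}{2}$ chosen in Definition~\ref{def:wave_high_freq}.
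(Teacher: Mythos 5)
Your proposal is correct and follows essentially the same approach the paper takes: the corollary is stated as immediate from the $\zeta$ bounds in Lemma~\ref{lem:tau}, and your complete-the-square decomposition is just the natural way to spell out that elementary algebra. The cancellation you highlight between the $-\phi_\lambda^2/(4\tau)$ from the square and the leftover $+\phi_\lambda^2/(4\tau)$ is indeed why the coefficient $\tfrac{1}{2}$ appears in Definition~\ref{def:wave_high_freq}.
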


The main result of the current section is the following high-frequency energy estimate.

\begin{proposition} \label{prop:wave_high_freq}
    Let $(\phi_{\lambda}(t), \psi_{\lambda}(t))$ be a solution to \eqref{eq:wave_phi_l_evol}--\eqref{eq:wave_psi_l_evol}. Then the $\partial_t$-derivative of the high-frequency energy quantity obeys the following bound, uniformly in $\lambda \in \Z^D \setminus \{0\}$ and $t \geq t_{\lambda*}$:
    \begin{equation} \label{eq:wave_high_freq_der}
        \left| t \frac{d}{dt} \mathcal{E}_{\lambda, high}(t) \right| \lesssim \left( t |\zeta'(t)| + \frac{1}{\tau^2} \right) \mathcal{E}_{\lambda, high}(t).
    \end{equation}
    There exists some $C_{high} > 0$, depending only on the background Kasner spacetime, such that for $\lambda \in \Z^D \setminus \{ 0 \}$ and $t_{\lambda*} \leq t \leq 1$, one has
    \begin{equation} \label{eq:wave_high_freq_fin}
        C_{high}^{-1} \mathcal{E}_{\lambda, high}(t) \leq \mathcal{E}_{\lambda, high}(1) \leq C_{high} \, \mathcal{E}_{\lambda, high}(t).
    \end{equation} 
\end{proposition}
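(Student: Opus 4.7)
The plan is to differentiate $\mathcal{E}_{\lambda,high}$ directly along solutions of \eqref{eq:wave_phi_l_evol}--\eqref{eq:wave_psi_l_evol}, exploit top-order cancellations built into the choice of energy to reduce to a remainder controlled by $t|\zeta'| + \tau^{-2}$, and then close the integral statement \eqref{eq:wave_high_freq_fin} by Grönwall using Lemmas~\ref{lem:tau} and \ref{lem:tauint}.

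First I would record the identity $t\partial_t \tau = \zeta^{-1}\tau$ coming straight from the definition \eqref{eq:zeta}, and differentiate each of the four summands in \eqref{eq:wave_energy_high} in turn. The main bookkeeping task is tracking which terms cancel: the top-order cross terms $\mp 2\zeta^2 \tau \phi_\lambda\psi_\lambda$ produced by $t\partial_t(\zeta^2\psi_\lambda^2/\tau)$ (through $t\partial_t \psi_\lambda = -\tau^2\phi_\lambda$) and by $t\partial_t(\zeta^2\tau\phi_\lambda^2)$ (through $t\partial_t \phi_\lambda = \psi_\lambda$) cancel pairwise, and three further pairs $\pm\zeta\psi_\lambda^2/\tau$, $\pm\zeta\tau\phi_\lambda^2$, and $\pm \phi_\lambda\psi_\lambda/\tau$ cancel similarly. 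After these cancellations only the terms carrying a factor of $\zeta'$ (or $\zeta^{-1}$) survive, giving
\[
t\frac{d}{dt}\mathcal{E}_{\lambda,high} = 2\zeta t\zeta'\,\frac{\psi_\lambda^2}{\tau} + t\zeta'\,\frac{\phi_\lambda\psi_\lambda}{\tau} + 2\zeta t\zeta'\,\tau\phi_\lambda^2 - \frac{\phi_\lambda^2}{2\zeta\tau}.
\]

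Next I would establish coercivity of $\mathcal{E}_{\lambda,high}$ by completing the square as $\mathcal{E}_{\lambda,high} = \bigl(\zeta\psi_\lambda/\sqrt\tau + \phi_\lambda/(2\sqrt\tau)\bigr)^2 + \phi_\lambda^2/(4\tau) + \zeta^2\tau\phi_\lambda^2$. This yields the simultaneous bounds $\zeta^2\psi_\lambda^2/\tau \lesssim \mathcal{E}_{\lambda,high}$, $\tau\phi_\lambda^2 \lesssim \mathcal{E}_{\lambda,high}/\zeta^2$, $|\phi_\lambda\psi_\lambda|/\tau \lesssim \mathcal{E}_{\lambda,high}$, and, crucially, the stronger pointwise bound $\phi_\lambda^2 \lesssim \mathcal{E}_{\lambda,high}/(\zeta^2\tau)$. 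Combined with $\zeta \asymp 1$ from \eqref{eq:zeta_upperlower}, the first three error terms above are each bounded by $C\,t|\zeta'(t)|\,\mathcal{E}_{\lambda,high}$, while the last uses the stronger bound to give $\phi_\lambda^2/(2\zeta\tau) \lesssim \mathcal{E}_{\lambda,high}/\tau^2$. Summing yields \eqref{eq:wave_high_freq_der}.

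For \eqref{eq:wave_high_freq_fin}, dividing \eqref{eq:wave_high_freq_der} by $\mathcal{E}_{\lambda,high}$ (the trivial case $\phi_\lambda\equiv\psi_\lambda\equiv 0$ aside) and integrating in $t$ from $t\in[t_{\lambda*},1]$ up to $1$ produces $\bigl|\log(\mathcal{E}_{\lambda,high}(1)/\mathcal{E}_{\lambda,high}(t))\bigr| \leq C\int_t^1 |\zeta'(s)|\,ds + C\int_{t_{\lambda*}}^1 \tau^{-2}(s)\,ds/s$, and both integrals are bounded uniformly in $\lambda$ by \eqref{eq:zeta_bv} and by \eqref{eq:tauintegralup} with $\alpha = 2$, respectively. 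The hard part is really the cancellation step in paragraph two: the precise coefficients in \eqref{eq:wave_energy_high} (in particular the $\zeta$-weighting and the factor $\tfrac12$ on the $\phi_\lambda^2/\tau$ summand) are forced by the requirement that the would-be $\tau$-scale contributions vanish identically; without this the derivative would degrade to $\tau\cdot\mathcal{E}_{\lambda,high}$, which is not integrable towards $t=t_{\lambda*}$ and would destroy the frequency-uniform estimate.
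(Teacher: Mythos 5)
Your proof is correct and follows essentially the same route as the paper: you compute $t\frac{d}{dt}\mathcal{E}_{\lambda,high}$ term-by-term using $t\partial_t\tau=\zeta^{-1}\tau$, exploit the four pairwise cancellations built into the choice of energy to reduce to $-\phi_\lambda^2/(2\zeta\tau)$ plus the $t\zeta'$-terms, bound these via coercivity (your completion of the square is a clean way to obtain the bounds that the paper reads off from Corollary~\ref{cor:wave_high_coercive}), and close via Gr\"onwall using \eqref{eq:zeta_bv} and \eqref{eq:tauintegralup}. The only cosmetic difference is that you divide by $\mathcal{E}$ before integrating rather than applying Gr\"onwall directly, which requires the (easily handled) aside about the trivial solution.
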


\begin{proof}
    We take the $t \frac{d}{dt}$ derivative of each expression appearing in \eqref{eq:wave_energy_high}, using the equations \eqref{eq:wave_phi_l_evol}--\eqref{eq:wave_psi_l_evol} as well as $t \frac{d}{dt} \tau = \zeta^{-1} \tau$. Isolating the terms where the derivative hits $\zeta(t)$, one has
    \begin{align}
        t \frac{d}{dt} \mathcal{E}_{\lambda, high} \addtocounter{equation}{1}
        &= \zeta^2 \frac{2 \psi_{\lambda} ( - \tau^2 \phi_{\lambda} )}{\tau} - \zeta \frac{\psi_{\lambda}^2}{\tau} 
        \tag{\theequation a} \label{eq:wave_high_freq_a} \\[0.3em]
        &\qquad + \zeta \frac{ \phi_{\lambda}(- \tau^2 \phi_{\lambda}) }{\tau} + \zeta \frac{\psi_{\lambda}^2}{\tau} - \frac{\psi_{\lambda} \phi_{\lambda}}{\tau} 
        \tag{\theequation b} \label{eq:wave_high_freq_b} \\[0.3em]
        &\qquad + \frac{\phi_{\lambda} \psi_{\lambda}}{\tau} - \zeta^{-1} \frac{\phi_{\lambda}^2}{2 \tau}
        \tag{\theequation c} \label{eq:wave_high_freq_c} \\[0.3em]
        &\qquad + 2 \zeta^2 \tau \phi_{\lambda} \psi_{\lambda} + \zeta \tau \phi_{\lambda}^2 
        \tag{\theequation d} \label{eq:wave_high_freq_d} \\[0.3em]
        &\qquad + t \zeta'(t) \left( 2 \zeta \frac{\psi_{\lambda}^2}{\tau} + \frac{\psi_{\lambda} \phi_{\lambda}}{\tau} + 2 \zeta \tau \phi_{\lambda}^2 \right).
        \tag{\theequation e} \label{eq:wave_high_freq_e} 
    \end{align}

    We shall exploit crucial cancellations in the above expression. We see that
    \begin{itemize}
        \item
            The first term in \eqref{eq:wave_high_freq_a} cancels the first term in \eqref{eq:wave_high_freq_d}.
        \item
            The second term in \eqref{eq:wave_high_freq_a} cancels the second term in \eqref{eq:wave_high_freq_b}.
        \item
            The first term in \eqref{eq:wave_high_freq_b} cancels the second term in \eqref{eq:wave_high_freq_d}.
        \item
            The third term in \eqref{eq:wave_high_freq_b} cancels the first term in \eqref{eq:wave_high_freq_c}.
    \end{itemize}

    Upon applying these cancellations, we are left with the following:
    \[
        t \frac{d}{dt} \mathcal{E}_{\lambda, high} = - \zeta^{-1} \frac{\phi_{\lambda}^2}{2 \tau} + \text{\eqref{eq:wave_high_freq_e}}.
    \]
    The proof of the derivative bound \eqref{eq:wave_high_freq_der} then follows straightforwardly, after applying Corollary~\ref{cor:wave_high_coercive} and the upper and lower bounds on $\zeta(t)$ from \eqref{eq:zeta_upperlower}.

    We shall deduce \eqref{eq:wave_high_freq_fin} by applying Gr\"onwall argument to \eqref{eq:wave_high_freq_der}. For any $t_{\lambda_*}\leq t_1 \leq t_2 \leq 1$ and $C$ the implied constant in \eqref{eq:wave_high_freq_der}, Gr\"onwall's inequality yields that
    \[
        \mathcal{E}_{\lambda, high} (t_2) \leq \exp \left( C \int_{t_1}^{t_2} |\zeta'(t)| \, dt + C \int_{t_1}^{t_2} \tau^{-2}(t) \, \frac{dt}{t} \right) \mathcal{E}_{\lambda, high}(t_1).
    \]
    In fact, by applying Gr\"onwall in the opposite direction, the reverse inequality also holds:
    \[
        \mathcal{E}_{\lambda, high} (t_1) \leq \exp \left( C \int_{t_1}^{t_2} |\zeta'(t)| \, dt + C \int_{t_1}^{t_2} \tau^{-2}(t) \, \frac{dt}{t} \right) \mathcal{E}_{\lambda, high}(t_2).
    \]

    Now, from Lemmas~\ref{lem:tau} and \ref{lem:tauint}, the exponent
    \[
        C \int_{t_1}^{t_2} |\zeta'(t)| \, dt + C \int_{t_1}^{t_2} \tau^{-2}(t) \, \frac{dt}{t} \leq
        C \int_{t_{\lambda*}}^{1} |\zeta'(t)| \, dt + C \int_{t_{\lambda*}}^{1} \tau^{-2}(t) \, \frac{dt}{t}
    \]
    is bounded, uniformly in $\lambda \in \Z^D \setminus \{0\}$. The energy estimate \eqref{eq:wave_high_freq_fin} then follows upon taking $t_2 = 1$.
\end{proof}

\subsection{The low-frequency energy estimates} \label{sub:wavescat_low}

In the low-frequency regime where $\tau = \tau_{\lambda}(t) \leq 1$, we need to use a different energy estimate. Since $\phi_{\lambda}(t)$ generally blows up as $t \to 0$, we first introduce a renormalized scalar field:
\begin{equation} \label{eq:varphitilde}
    \tilde{\varphi}_{\lambda}(t) = \phi_{\lambda}(t) - \psi_{\lambda}(t) \log( \frac{t}{t_{\lambda*}} ).
\end{equation}

Note that $\tilde{\varphi}_{\lambda}(t)$ is chosen to coincide with $\phi_{\lambda}(t)$ exactly at $t = t_{\lambda*}$, and differs from the (Fourier coefficients) of the quantity $\varphi(t)$ defined in \eqref{eq:varphi} by a frequency dependent object. Nevertheless, it will be the correct renormalized quantity for our energy estimates. We use the following low-frequency energy quantity. %d; see, for instance, that the  high-frequency energy quantity at $t = t_{\lambda*}$, due to Corollary~\ref{cor:wave_high_coercive}.

\begin{definition}
    The \emph{low-frequency energy quantity} $\mathcal{E}_{\lambda, low}(t)$, used for $\lambda \in \Z^D \setminus \{ 0 \}$ and $0 < \tau_{\lambda} (t) \leq 1$, is given by
    \begin{equation} \label{eq:varphi2}
        \mathcal{E}_{\lambda, low}(t) = \psi_{\lambda}^2 + \tilde{\varphi}_{\lambda}^2.
    \end{equation}
\end{definition}

Using Corollary~\ref{cor:wave_high_coercive}, the low-frequency energy quantity is such that $\mathcal{E}_{\lambda, low}(t_{\lambda*}) \asymp \mathcal{E}_{\lambda, high}(t_{\lambda*})$. 
The main result of this section is the following low-frequency energy estimate.

\begin{proposition} \label{prop:wave_low_freq}
    Let $(\phi_{\lambda}(t), \psi_{\lambda}(t))$ be a solution to \eqref{eq:wave_phi_l_evol}--\eqref{eq:wave_psi_l_evol}. Then the $\partial_t$-derivative of the low-frequency energy quantity obeys the following bound, uniformly in $\lambda \in \Z^D \setminus \{0\}$ and $0 < t \leq t_{\lambda*}$:
    \begin{equation} \label{eq:wave_low_freq_der}
        \left| t \frac{d}{dt} \mathcal{E}_{\lambda, low}(t) \right| \lesssim \tau^2 \left( 1 + \log( \frac{t_{\lambda*}}{t})^2 \right) \mathcal{E}_{\lambda, low}(t).
    \end{equation}
    There exists some $C_{low} > 0$, depending only on the background Kasner spacetime, such that for $\lambda \in \Z^D \setminus \{ 0 \}$ and $0 < t \leq t_{\lambda*}$, one has
    \begin{equation} \label{eq:wave_low_freq_fin}
        C_{low}^{-1} \mathcal{E}_{\lambda, low}(t) \leq \mathcal{E}_{\lambda, low}(t_{\lambda*}) \leq C_{low} \, \mathcal{E}_{\lambda, low}(t).
    \end{equation} 
\end{proposition}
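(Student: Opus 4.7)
The plan is to first derive an ODE for $\tilde{\varphi}_{\lambda}$ analogous to \eqref{eq:wave_phi_l_evol}--\eqref{eq:wave_psi_l_evol}, then use it to obtain the pointwise derivative bound \eqref{eq:wave_low_freq_der} directly, and finally integrate using Grönwall together with the integral estimate \eqref{eq:tauintegraldown} of Lemma~\ref{lem:tauint}.

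Differentiating \eqref{eq:varphitilde} and using the evolution equations \eqref{eq:wave_phi_l_evol}--\eqref{eq:wave_psi_l_evol}, I compute
\[
    t\partial_t \tilde{\varphi}_{\lambda} = t\partial_t \phi_{\lambda} - (t\partial_t \psi_{\lambda})\log\!\left(\tfrac{t}{t_{\lambda*}}\right) - \psi_{\lambda} = \tau^2 \phi_{\lambda} \log\!\left(\tfrac{t}{t_{\lambda*}}\right) = \tau^2 \bigl(\tilde{\varphi}_{\lambda} + \psi_{\lambda}\log(\tfrac{t}{t_{\lambda*}})\bigr) \log\!\left(\tfrac{t}{t_{\lambda*}}\right),
\]
where in the last equality I used $\phi_{\lambda} = \tilde{\varphi}_{\lambda} + \psi_{\lambda}\log(t/t_{\lambda*})$. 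Similarly rewriting \eqref{eq:wave_psi_l_evol} in terms of $\tilde{\varphi}_{\lambda}$, I obtain
\[
    t\partial_t \psi_{\lambda} = -\tau^2 \tilde{\varphi}_{\lambda} - \tau^2 \psi_{\lambda} \log\!\left(\tfrac{t}{t_{\lambda*}}\right).
\]

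The key observation is that every term on the right-hand side of these two equations carries a factor of $\tau^2$, multiplied by at most $\log(t/t_{\lambda*})^1$ and a linear combination of $\psi_{\lambda}$ and $\tilde{\varphi}_{\lambda}$. Thus
\[
    t\frac{d}{dt}\mathcal{E}_{\lambda,low} = 2\psi_{\lambda}(t\partial_t\psi_{\lambda}) + 2\tilde{\varphi}_{\lambda}(t\partial_t\tilde{\varphi}_{\lambda})
\]
is a sum of terms of the form $\tau^2 \log(t/t_{\lambda*})^{k} \cdot (\text{bilinear in }\psi_{\lambda},\tilde{\varphi}_{\lambda})$ with $k \in \{0,1,2\}$. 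Applying the elementary inequality $|ab| \leq \tfrac{1}{2}(a^2 + b^2)$ and bounding $|\log(t/t_{\lambda*})|^k \leq 1 + \log(t_{\lambda*}/t)^2$ for $0 < t \leq t_{\lambda*}$ gives \eqref{eq:wave_low_freq_der}. No cancellations are needed at this stage, in contrast to the high-frequency case.

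With \eqref{eq:wave_low_freq_der} in hand, Grönwall's inequality applied on any subinterval $[t_1, t_2] \subseteq (0, t_{\lambda*}]$ yields in both directions
\[
    \mathcal{E}_{\lambda, low}(t_i) \leq \exp\!\left( C\int_0^{t_{\lambda*}} \tau^2(s)\bigl(1 + \log(\tfrac{t_{\lambda*}}{s})^2\bigr) \frac{ds}{s} \right) \mathcal{E}_{\lambda, low}(t_j),
\]
and the exponent is bounded uniformly in $\lambda \in \Z^D \setminus \{0\}$ by \eqref{eq:tauintegraldown} of Lemma~\ref{lem:tauint}. Taking $t_j = t_{\lambda*}$ gives \eqref{eq:wave_low_freq_fin}. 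The main (mildly) delicate point is that the logarithmic weight $\log(t_{\lambda*}/t)^2$ would be non-integrable against $dt/t$ alone, but the $\tau^2(t)$ factor forces integrability near $t = 0$: this is precisely what Lemma~\ref{lem:tauint} quantifies.
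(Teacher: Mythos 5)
Your proof is correct and follows essentially the same route as the paper: you derive the same evolution equations for $(\psi_\lambda, \tilde{\varphi}_\lambda)$ (written with $\log(t/t_{\lambda*})$ instead of $\log(t_{\lambda*}/t)$, which is cosmetic), obtain the same derivative bound via Young's inequality, and close with Grönwall together with the integral estimate \eqref{eq:tauintegraldown}. The only superficial difference is that the paper writes out the combined expression for $t\,\frac{d}{dt}\mathcal{E}_{\lambda,low}$ explicitly before estimating, whereas you argue by inspecting the structure of the terms; this is not a distinct argument.
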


\begin{proof}
    In the low-frequency regime, our fundamental quantities are now $\psi_{\lambda}(t)$ and $\tilde{\varphi}_{\lambda}(t)$. We start by deriving appropriate evolution equations for these quantities. From \eqref{eq:wave_phi_l_evol}--\eqref{eq:wave_psi_l_evol} and \eqref{eq:varphi2} above we have
    \begin{gather}
        t \partial_t \psi_{\lambda} = - \tau^2 \tilde{\varphi}_{\lambda} + \tau^2 \log ( \frac{t_{\lambda*}}{t} ) \psi_{\lambda} \label{eq:wave_psi_evol_2}, \\[0.5em]
        t \partial_t \tilde{\varphi}_{\lambda} = - \tau^2 \log ( \frac{t_{\lambda*}}{t} ) \tilde{\varphi}_{\lambda} + \tau^2 \log ( \frac{t_{\lambda*}}{t} )^2 \psi_{\lambda}. \label{eq:wave_phi_evol_2}
    \end{gather}

    From these equations, we easily compute
    \[
        t \frac{d}{dt} \mathcal{E}_{\lambda, low} (t) = 2 \tau^2 \log ( \frac{t_{\lambda*}}{t} ) \psi_{\lambda}^2 + 2 \tau^2 \left( \log ( \frac{t_{\lambda*}}{t}^2 ) - 1 \right) \psi_{\lambda} \tilde{\varphi}_{\lambda} - 2 \tau^2 \log ( \frac{t_{\lambda*}}{t} ) \tilde{\varphi}_{\lambda}^2.
    \]
    Therefore the derivative bound \eqref{eq:wave_low_freq_der} is immediate.

    The energy estimate \eqref{eq:wave_low_freq_fin} follows as in the proof of Proposition~\ref{prop:wave_high_freq}. Letting $C$ be the implied constant in \eqref{eq:wave_low_freq_der}, Gr\"onwall's inequality tells us that for any $0 < t_1 \leq t_2 \leq t_{\lambda*}$, we have
    \[
        \mathcal{E}_{\lambda, low} (t_2) \leq \exp ( C \int_{t_1}^{t_2} \tau^2(t) \left( 1 + \log ( \frac{t_{\lambda*}}{t} )^2 \right) \frac{dt}{t} ) \, \mathcal{E}_{\lambda, low} (t_1),
    \]
    \[
        \mathcal{E}_{\lambda, low} (t_1) \leq \exp ( C \int_{t_1}^{t_2} \tau^2(t) \left( 1 + \log ( \frac{t_{\lambda*}}{t} )^2 \right) \frac{dt}{t} ) \, \mathcal{E}_{\lambda, low} (t_2).
    \]
    By Lemma~\ref{lem:tauint}, the integral in the exponent is bounded, uniformly in $\lambda \in \Z^D \setminus \{ 0 \}$ and $0 < t_1 \leq t_2 \leq t_{\lambda*}$. This completes the proof of the energy estimate.
\end{proof}

In order to justify the existence of the scattering states $(\psi_{\infty}, \tilde{\varphi}_{\infty})$, one must also show that the functions $\psi_{\lambda}(t)$ and $\tilde{\varphi}_{\lambda}(t)$ attain a limit as $t \to 0$. Furthermore, for the purpose of asymptotic completeness we need to show that we can prescribe the limits $(\psi_{\infty})_{\lambda}$ and $(\tilde{\varphi}_{\infty})_{\lambda}$ to launch a solution to the ODEs. This is achieved in the following proposition.

\begin{proposition} \label{prop:wave_low_freq_scat}
    For some fixed $\lambda \in \Z^D \setminus \{ 0 \}$, and $C_{low}>0$ as in Proposition~\ref{prop:wave_low_freq}, the following hold.
    \begin{enumerate}[(i)]
        \item 
            Let $(\phi_{\lambda}(t), \psi_{\lambda}(t))$ be a solution to the system \eqref{eq:wave_phi_l_evol}--\eqref{eq:wave_psi_l_evol} for $0 < t \leq t_{\lambda*}$. Then there exist real numbers $(\psi_{\infty})_{\lambda}$ and $(\tilde{\varphi}_{\infty})_{\lambda}$ such that $\psi_{\lambda}(t)$ and $\tilde{\varphi}_{\lambda}(t)$ attain these as limits as $t \to 0$, at the following rate
            \begin{equation} \label{eq:wave_low_freq_scat}
                | \psi_{\lambda}(t) - (\psi_{\infty})_{\lambda} | + | \tilde{\varphi}_{\lambda}(t) - (\tilde{\varphi}_{\infty})_{\lambda} | \lesssim \tau^2 \left( 1 + \log(\frac{t_{\lambda*}}{t})^2 \right) \mathcal{E}_{\lambda, low}^{1/2}(t_{\lambda*}) \lesssim \tau \, \mathcal{E}_{\lambda, low}^{1/2}(t_{\lambda*}).
            \end{equation}
            Furthermore, one has that $(\psi_{\infty})_{\lambda}^2 + (\tilde{\varphi}_{\infty})_{\lambda}^2 \leq C_{low} \, \mathcal{E}_{\lambda, low}(t_{\lambda*})$.
        \item
            Conversely, let $(\psi_{\infty})_{\lambda}$ and $(\tilde{\varphi}_{\infty})_{\lambda}$ be any real numbers. Then there exists a unique solution to $(\phi_{\lambda}(t), \psi_{\lambda}(t))$ to the system \eqref{eq:wave_phi_l_evol}--\eqref{eq:wave_psi_l_evol} for $0 < t \leq t_{\lambda*}$ such that $\psi_{\lambda}(t)$ and $\tilde{\varphi}_{\lambda}(t)$ obey the asymptotics \eqref{eq:wave_low_freq_scat}.

            For this solution, one has that $\mathcal{E}_{\lambda, low}(t_{\lambda*}) \leq C_{low} \left( (\psi_{\infty})_{\lambda}^2 + (\tilde{\varphi}_{\infty})_{\lambda}^2 \right)$.
    \end{enumerate}
\end{proposition}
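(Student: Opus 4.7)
The plan for part (i) is to use the evolution equations \eqref{eq:wave_psi_evol_2}--\eqref{eq:wave_phi_evol_2} combined with the uniform bound from Proposition~\ref{prop:wave_low_freq}. Since $\mathcal{E}_{\lambda,low}(t) \leq C_{low}\mathcal{E}_{\lambda,low}(t_{\lambda*})$, we have $|\psi_\lambda(t)| + |\tilde{\varphi}_\lambda(t)| \lesssim \mathcal{E}_{\lambda,low}^{1/2}(t_{\lambda*})$. Substituting this pointwise bound into the right-hand sides of \eqref{eq:wave_psi_evol_2}--\eqref{eq:wave_phi_evol_2} yields
\begin{equation*}
|t\partial_t \psi_\lambda(t)| + |t\partial_t \tilde{\varphi}_\lambda(t)| \lesssim \tau^2(t)\bigl(1+\log(t_{\lambda*}/t)^2\bigr)\,\mathcal{E}_{\lambda,low}^{1/2}(t_{\lambda*}).
\end{equation*}
The integral estimate \eqref{eq:tauintegraldown} of Lemma~\ref{lem:tauint} shows that $\int_0^{t_{\lambda*}} \tau^2(s)(1+\log(t_{\lambda*}/s)^2)\,\frac{ds}{s}$ is absolutely convergent and bounded by the value of $\tau^2(1+\log^2)$ at the upper endpoint. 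Consequently $\psi_\lambda(t)$ and $\tilde{\varphi}_\lambda(t)$ are Cauchy as $t \to 0^+$, producing the limits $(\psi_\infty)_\lambda$ and $(\tilde{\varphi}_\infty)_\lambda$. Fixing $t$ and integrating from $0$ to $t$ then delivers the sharp rate \eqref{eq:wave_low_freq_scat}, where the second inequality in \eqref{eq:wave_low_freq_scat} is a consequence of the elementary bound $\tau(1+|\log\tau|) \lesssim 1$ for $\tau \leq 1$ (which follows from \eqref{eq:log_upperlower}). The final norm estimate $(\psi_\infty)_\lambda^2 + (\tilde{\varphi}_\infty)_\lambda^2 \leq C_{low}\mathcal{E}_{\lambda,low}(t_{\lambda*})$ is obtained by passing to the limit $t \to 0^+$ in \eqref{eq:wave_low_freq_fin}, using continuity of $\mathcal{E}_{\lambda,low}$ at $t=0$ which follows from the just established convergence.

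For part (ii), the approach is to recast the system \eqref{eq:wave_psi_evol_2}--\eqref{eq:wave_phi_evol_2} as an integral equation incorporating the prescribed asymptotic data. Namely, one seeks a solution of
\begin{equation*}
\begin{pmatrix} \psi_\lambda(t) \\ \tilde{\varphi}_\lambda(t) \end{pmatrix} = \begin{pmatrix} (\psi_\infty)_\lambda \\ (\tilde{\varphi}_\infty)_\lambda \end{pmatrix} + \int_0^t K_\lambda(s) \begin{pmatrix} \psi_\lambda(s) \\ \tilde{\varphi}_\lambda(s) \end{pmatrix} \frac{ds}{s},
\end{equation*}
where $K_\lambda(s)$ is the $2\times 2$ matrix read off from \eqref{eq:wave_psi_evol_2}--\eqref{eq:wave_phi_evol_2}, whose entries are bounded by a multiple of $\tau^2(s)(1+\log(t_{\lambda*}/s)^2)$. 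Using \eqref{eq:tauintegraldown} again, we can choose a threshold $t_0 \in (0, t_{\lambda*}]$ (depending only on the background exponents) such that the operator norm of the integral operator on $C^0([0,t_0],\R^2)$ endowed with the sup norm is at most $1/2$. Banach's fixed-point theorem then furnishes a unique solution on $[0,t_0]$, and standard linear ODE theory extends it uniquely and smoothly to all of $[0, t_{\lambda*}]$. By construction, $\psi_\lambda(t) \to (\psi_\infty)_\lambda$ and $\tilde{\varphi}_\lambda(t) \to (\tilde{\varphi}_\infty)_\lambda$, and the rate \eqref{eq:wave_low_freq_scat} is recovered by applying part (i) to this solution.

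Finally, the reverse bound $\mathcal{E}_{\lambda,low}(t_{\lambda*}) \leq C_{low}\bigl((\psi_\infty)_\lambda^2 + (\tilde{\varphi}_\infty)_\lambda^2\bigr)$ is obtained by letting $t \to 0^+$ in the other half of \eqref{eq:wave_low_freq_fin} and using the continuity of $\mathcal{E}_{\lambda,low}$ at $0$ established above. The main technical point throughout is to exploit the sharp integrability bound \eqref{eq:tauintegraldown}; once that is in hand, both the convergence argument in (i) and the contraction argument in (ii) are essentially routine. Perhaps the only subtle aspect is verifying that solutions produced by the fixed-point procedure genuinely attain the \emph{prescribed} asymptotic values (rather than merely some limits), but this is immediate from evaluating the integral equation at $t=0$, where the integral vanishes by absolute integrability of its kernel.
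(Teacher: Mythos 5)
Your proof is correct and essentially follows the paper's approach, with one presentational difference: for part~(ii) the paper invokes the Fuchsian ODE lemma (Lemma~\ref{lem:fuchsian} in the appendix, applied with $A=0$), whereas you directly re-derive the underlying Banach fixed-point argument on $C^0([0,t_0],\R^2)$ -- this is precisely how that lemma is proven, so there is no real methodological divergence. One small slip: you justify the second inequality in \eqref{eq:wave_low_freq_scat} by citing $\tau(1+|\log\tau|)\lesssim 1$, but what is actually needed (and also true) is $\tau(1+\log^2\tau^{-1})\lesssim 1$, together with \eqref{eq:log_upperlower} to convert $\log(t_{\lambda*}/t)$ into $\log\tau^{-1}$; this is a typo rather than a gap. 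Similarly, the contraction threshold $t_0$ depends on $\lambda$ through $\tau_\lambda$ (one fixes a $\lambda$-independent threshold in $\tau$, not in $t$), but since the statement is for a fixed $\lambda$ this does not affect the argument.
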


\begin{proof}
    We first prove (i). By Proposition~\ref{prop:wave_low_freq}, we have that the low-frequency energy $\mathcal{E}_{\lambda, low}(t)$ is uniformly bounded for $0 < t \leq t_{\lambda*}$ by its value at $t = t_{\lambda*}$. Therefore, one sees from the equations \eqref{eq:wave_psi_evol_2}--\eqref{eq:wave_phi_evol_2} that the following derivative bounds hold:
    \[
        \left| t \frac{d}{dt} \psi_{\lambda}(t) \right| + \left| t \frac{d}{dt} \tilde{\varphi}_{\lambda} (t) \right | %\leq 4 \tau^2 \left ( 1 + \log (\frac{t_{\lambda*}}{t} )^2 \right) \mathcal{E}_{\lambda, low}^{1/2}(t) 
        \lesssim \tau^2 \left ( 1 + \log (\frac{t_{\lambda*}}{t} )^2 \right) \mathcal{E}_{\lambda, low}^{1/2}(t_{\lambda *}).  
    \]

    This derivative estimate, along with Lemma~\ref{lem:tauint}, shows that $\partial_t \psi_{\lambda}(t)$ and $\partial_t \tilde{\varphi}_{\lambda}(t)$ are integrable towards $t = 0$. Therefore the limits $(\psi_{\infty})_{\lambda}$ and $(\tilde{\varphi}_{\infty})_{\lambda}$ exist, and moreover satisfy the bounds \eqref{eq:wave_low_freq_scat}. 
    The final estimate $(\psi_{\infty})_{\lambda}^2 + (\tilde{\varphi}_{\infty})_{\lambda}^2 \leq C_{low} \, \mathcal{E}_{\lambda, low}(t_{\lambda*})$ follows from the upper bound in \eqref{eq:wave_low_freq_fin} and taking the limit $t \to 0$.

    For the converse statement (ii), we require a local existence statement for the ODE system \eqref{eq:wave_psi_evol_2}--\eqref{eq:wave_phi_evol_2} given initial data at $t = 0$. Since these ODEs are singular at $t=0$, we cannot apply the usual Picard--Lindel\"of theorem. Nonetheless, local existence follows, using the Fuchsian theory of first-order ODEs in Lemma~\ref{lem:fuchsian}. 

    We apply Lemma~\ref{lem:fuchsian}(ii) with $\mathbf{V} = (\psi_{\lambda}, \tilde{\varphi}_{\lambda})$ and $A = 0$. The derivative bounds above imply that we can apply this lemma, with initial data $\mathbf{v} = ((\psi_{\infty})_{\lambda}, (\tilde{\varphi}_{\infty})_{\lambda})$, and thereby generate a local solution $(\psi_{\lambda}(t), \tilde{\varphi}_{\lambda}(t))$ to these ODEs such that $(\psi_{\lambda}(t), \tilde{\varphi}_{\lambda}(t)) \to ((\psi_{\infty})_{\lambda}, (\tilde{\varphi}_{\infty})_{\lambda})$ as $t \to 0$, and that this solution is the unique solution attaining the desired limit.

    Since \eqref{eq:wave_psi_evol_2}--\eqref{eq:wave_phi_evol_2} are regular for $t > 0$ are linear in $(\psi_{\lambda}, \tilde{\varphi}_{\lambda})$, we may extend the solution to $0 < t \leq t_{\lambda*}$, indeed one may extend to the whole spacetime $t > 0$. The asymptotics \eqref{eq:wave_low_freq_scat} follow as in (i), and the final estimate $\mathcal{E}_{\lambda, low}(t_{\lambda*}) \leq C_{low} \left( (\psi_{\infty})_{\lambda}^2 + (\tilde{\varphi}_{\infty})_{\lambda}^2 \right)$ follows from the lower bound in \eqref{eq:wave_low_freq_fin}
\end{proof}

\subsection{Construction of the scattering isomorphism} \label{sub:wavescat_construction}

We now complete the proof of Theorem~\ref{thm:wave_scat} using our energy estimates in Sections~\ref{sub:wavescat_high} and \ref{sub:wavescat_low}. We shall first assume smoothness for the solution $(\psi(t), \phi(t))$ and derive a priori estimates relating Sobolev spaces defined at $\Sigma_1$ and Sobolev spaces defined at the $t = 0$ singularity. From these a priori estimates, the density of smooth functions in $H^s(\mathbb{T}^D)$ may be used recover the full scattering theory.

There are two directions for our scattering theory. First we show that in Section~\ref{subsub:wavescat_10} that smooth Cauchy data $(\phi_C, \psi_C)$ at $\Sigma_1$ gives rise to a smooth solution to \eqref{eq:wave_phi_evol}--\eqref{eq:wave_psi_evol} for $t > 0$, achieving suitable (smooth) asymptotics at $t=0$ with Sobolev bounds, and conversely that one may prescribe smooth asymptotic data $(\phi_{\infty}, \varphi_{\infty})$ at $t = 0$ which launches a smooth solution to \eqref{eq:wave_phi_evol}--\eqref{eq:wave_psi_evol} in the same region, and whose restriction to $\Sigma_1$ is smooth and satisfies good Sobolev bounds.

Before proceeding, note that our results in Sections~\ref{sub:wavescat_high} and \ref{sub:wavescat_low} do not apply to the zero frequency $\lambda = 0$, since $\tau(t)$ defined in \eqref{eq:tau} vanishes. However, the projection of \eqref{eq:wave_phi_evol}--\eqref{eq:wave_psi_evol} on zero frequency is trivial, as evident from the following lemma.

\begin{lemma} \label{lem:wave_zerofreq}
    Let $(\phi(t), \psi(t))$ be a solution of the equations \eqref{eq:wave_phi_evol}--\eqref{eq:wave_psi_evol}. Then defining $\varphi(t)$ as in \eqref{eq:varphi}, the $0$-frequency Fourier coefficients $\psi_0(t)$ and $\varphi_0(t)$ are constant, in particular for all $t > 0$:
    \begin{equation}
        \psi_0 (t) = \psi_0(1), \qquad \varphi_0 (t) = \varphi_0 (1) = \phi_0(1).
    \end{equation}
\end{lemma}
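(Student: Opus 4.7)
The plan is to simply project the system \eqref{eq:wave_phi_evol}--\eqref{eq:wave_psi_evol} onto the zero Fourier mode and observe that the spatial derivative terms in \eqref{eq:wave_psi_evol} vanish identically after projection. More precisely, since each term $t^{2-2p_i} \partial_{x^i}^2 \phi$ is a pure spatial derivative on $\mathbb{T}^D$, its integral over $\mathbb{T}^D$ is zero, so its $0$-th Fourier coefficient vanishes. This reduces the system for $(\phi_0(t), \psi_0(t))$ to the decoupled ODEs
\begin{equation*}
    t \partial_t \phi_0 = \psi_0, \qquad t \partial_t \psi_0 = 0.
\end{equation*}

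From the second equation I conclude immediately that $\psi_0(t)$ is constant in $t$, so $\psi_0(t) = \psi_0(1)$ for all $t > 0$. Substituting into the first equation and integrating from $1$ to $t$ yields $\phi_0(t) = \phi_0(1) + \psi_0(1) \log t$. Finally, recalling the definition \eqref{eq:varphi} of $\varphi(t)$, the $0$-th Fourier coefficient is $\varphi_0(t) = \phi_0(t) - \psi_0(t) \log t$, and plugging in the two expressions above gives $\varphi_0(t) = \phi_0(1) = \varphi_0(1)$ as claimed.

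The argument is essentially a one-line observation and there is no real obstacle; the only thing to be careful about is matching conventions (that the $0$-th Fourier mode really coincides with the spatial average on $\mathbb{T}^D$, as set up in Appendix~\ref{app:fourier}), and that the definition \eqref{eq:varphi} of $\varphi$ is linear in $\phi$ and $\psi$ so that it commutes with Fourier projection.
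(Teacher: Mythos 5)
Your proof is correct and follows essentially the same route as the paper, which simply notes that the Fourier-projected system \eqref{eq:wave_phi_l_evol}--\eqref{eq:wave_psi_l_evol} has $\tau_0(t) \equiv 0$ at $\lambda = 0$. You have merely spelled out the resulting decoupled ODEs and integrated them explicitly, which is the same observation in more words.
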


\begin{proof}
    This is immediate from \eqref{eq:wave_phi_evol} and \eqref{eq:wave_psi_evol} in the case that $\lambda = 0$, since in this case $\tau_0(t) = 0$ also.
\end{proof}

\subsubsection{From Cauchy data to asymptotic data} \label{subsub:wavescat_10}

%We first show the existence of the scattering operator $\mathcal{S}_{\downarrow}$ when restricted to smooth functions, and prove its boundedness between the Hilbert spaces defined in Theorem~\ref{thm:wave_scat}(\ref{item:wave_scatiso}).

\begin{proposition} \label{prop:wavescat_10}
    Let $\phi_C$ and $\psi_C$ be smooth functions on $\mathbb{T}^D$. Then there exists a unique smooth solution $(\phi(t), \psi(t))$ to the system \eqref{eq:wave_phi_evol}--\eqref{eq:wave_psi_evol} achieving the initial data in the sense
    \[
        \phi(1, \cdot) = \phi_C(\cdot), \qquad \psi(1, \cdot) = \psi_C(\cdot).
    \]

    Moreover, there exist smooth functions $\psi_{\infty}, \varphi_{\infty} \in C^{\infty}(\mathbb{T}^D)$ such that for any $s \in \mathbb{R}$ and $\varphi(t)$ as in \eqref{eq:varphi}, the following (strong) convergence holds:
    \begin{equation} \label{eq:wavescat_10_conv}
        \psi(t) \to \psi_{\infty} \text{ in } H^s(\mathbb{T}^D), \quad \varphi(t) \to \varphi_{\infty} \text{ in } H^s(\mathbb{T}^D) \quad \text{ as } t \to 0.
    \end{equation}
    Finally, for the symbol $\log(\mathcal{T}_*)$ defined in Definition~\ref{def:tstar}, one has
    \begin{equation} \label{eq:wavescat_10_bounded}
        \| \psi_{\infty} \|_{H^{s+\frac{1}{2}}}^2 + \| \varphi_{\infty} + (\log \mathcal{T}_*) \psi_{\infty} \|_{H^{s + \frac{1}{2}}}^2 \lesssim \| \phi_C \|_{H^{s+1}}^2 + \| \psi_C \|_{H^s}^2.
    \end{equation}
\end{proposition}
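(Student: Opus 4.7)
The existence and uniqueness of a smooth solution $(\phi(t), \psi(t))$ to \eqref{eq:wave_phi_evol}--\eqref{eq:wave_psi_evol} for $t>0$ attaining smooth Cauchy data at $t=1$ is a routine application of linear hyperbolic theory: the underlying equation \eqref{eq:wave_coord} is a linear wave equation on the smooth globally hyperbolic region $(0,+\infty) \times \mathbb{T}^D$, and smoothness propagates. I will therefore focus on the asymptotic statement.

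The plan is to work mode-by-mode in Fourier space and then reassemble. Fix $\lambda \in \Z^D \setminus \{0\}$. If $t_{\lambda*} \leq 1$, I first apply the high-frequency estimate \eqref{eq:wave_high_freq_fin} from Proposition~\ref{prop:wave_high_freq} on the interval $[t_{\lambda*}, 1]$, giving $\mathcal{E}_{\lambda, high}(t_{\lambda*}) \asymp \mathcal{E}_{\lambda, high}(1)$, and then Proposition~\ref{prop:wave_low_freq} on $(0, t_{\lambda*}]$. Observing from Definition~\ref{def:wave_high_freq} and the renormalization \eqref{eq:varphitilde} that $\mathcal{E}_{\lambda, low}(t_{\lambda*}) = \psi_\lambda(t_{\lambda*})^2 + \phi_\lambda(t_{\lambda*})^2 \asymp \mathcal{E}_{\lambda, high}(t_{\lambda*})$ (since $\tau_\lambda(t_{\lambda*}) = 1$), Proposition~\ref{prop:wave_low_freq_scat}(i) then produces limits $(\psi_\infty)_\lambda$ and $(\tilde{\varphi}_\infty)_\lambda$ satisfying
\[
(\psi_\infty)_\lambda^2 + (\tilde{\varphi}_\infty)_\lambda^2 \lesssim \mathcal{E}_{\lambda,low}(t_{\lambda*}) \asymp \mathcal{E}_{\lambda, high}(1) \asymp \langle \lambda \rangle^{-1} \psi_\lambda(1)^2 + \langle \lambda \rangle \phi_\lambda(1)^2.
\]
In the edge case $t_{\lambda*} > 1$, the low-frequency estimate may be applied directly on $(0,1]$; for $\lambda = 0$, Lemma~\ref{lem:wave_zerofreq} shows both $\psi_0$ and $\varphi_0$ are constant and the bound is trivial.

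The next step is to translate $\tilde{\varphi}_\infty$ into the quantities appearing in \eqref{eq:wavescat_10_bounded}. From \eqref{eq:varphi} and \eqref{eq:varphitilde}, one has the pointwise identity $\tilde{\varphi}_\lambda(t) = \varphi_\lambda(t) + \psi_\lambda(t) \log t_{\lambda*}$ for every $t > 0$, so setting $\varphi_\infty$ by $(\varphi_\infty)_\lambda \coloneqq (\tilde{\varphi}_\infty)_\lambda - (\psi_\infty)_\lambda \log t_{\lambda*}$ precisely realizes the identity $\tilde{\varphi}_\infty = \varphi_\infty + \log(\mathcal{T}_*) \psi_\infty$ in the sense of the symbol calculus (Definition~\ref{def:tstar}). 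Multiplying the per-mode bound by $\langle\lambda\rangle^{2s+1}$ and summing then gives
\[
\sum_{\lambda} \langle \lambda \rangle^{2s+1}\bigl((\psi_\infty)_\lambda^2 + (\tilde{\varphi}_\infty)_\lambda^2\bigr) \lesssim \sum_{\lambda} \langle \lambda \rangle^{2s} \psi_\lambda(1)^2 + \langle \lambda \rangle^{2s+2} \phi_\lambda(1)^2,
\]
which is exactly \eqref{eq:wavescat_10_bounded} after using the Fourier characterization of $H^{s+\frac{1}{2}}$ on the left and of $H^s, H^{s+1}$ on the right. The smoothness of $\psi_\infty, \varphi_\infty$ follows from applying this bound for arbitrarily large $s$.

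Finally, for the strong convergence \eqref{eq:wavescat_10_conv}, I would use a standard tail argument. By Proposition~\ref{prop:wave_low_freq_scat}(i), $|\psi_\lambda(t) - (\psi_\infty)_\lambda| + |\tilde{\varphi}_\lambda(t) - (\tilde{\varphi}_\infty)_\lambda| \lesssim \tau_\lambda(t)$ times $\mathcal{E}_{\lambda,low}^{1/2}(t_{\lambda*})$ whenever $t \leq t_{\lambda*}$, giving mode-by-mode convergence; translating back via the same identity gives $\varphi_\lambda(t) - (\varphi_\infty)_\lambda = (\tilde{\varphi}_\lambda(t) - (\tilde{\varphi}_\infty)_\lambda) - (\psi_\lambda(t) - (\psi_\infty)_\lambda) \log t_{\lambda*}$, and the extra $\log t_{\lambda*}$ is harmless since the data is smooth. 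I then split $\|\psi(t) - \psi_\infty\|_{H^s}^2$ into $|\lambda| \leq N$ and $|\lambda| > N$: the high-frequency tail is controlled uniformly in $t$ by applying \eqref{eq:wavescat_10_bounded} at level $s + 1$ (which holds by smoothness of the data), while the low-frequency piece tends to zero as $t \to 0$ for each fixed $N$ by the per-mode estimate. The main technical bookkeeping is handling the modes in the transitional regime $t \approx t_{\lambda*}$, but this is absorbed into the comparability $\mathcal{E}_{\lambda,low}(t_{\lambda*}) \asymp \mathcal{E}_{\lambda,high}(t_{\lambda*})$ noted above; no genuinely new ingredient is needed beyond Propositions~\ref{prop:wave_high_freq}, \ref{prop:wave_low_freq}, and \ref{prop:wave_low_freq_scat}. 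The step requiring the most care is verifying the sharp relationship $\tilde{\varphi}_\infty = \varphi_\infty + \log(\mathcal{T}_*)\psi_\infty$, since this is what delivers the $H^{s+1/2}$ gain in the corrected variable rather than the weaker $H^{s+1/2-\varepsilon}$ statement of \cite{RingstromAsterisque}.
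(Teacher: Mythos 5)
Your proposal is correct and follows essentially the same line of argument as the paper: high- and low-frequency energy estimates per Fourier mode (Propositions~\ref{prop:wave_high_freq} and \ref{prop:wave_low_freq}), matched at $\tau_\lambda = 1$, together with the Fuchsian convergence statement Proposition~\ref{prop:wave_low_freq_scat}(i), followed by translating $\tilde\varphi_\infty$ back to $\varphi_\infty + \log(\mathcal{T}_*)\psi_\infty$ and summing in $\lambda$. The only cosmetic difference is in the strong convergence step: you split the frequency sum at a fixed $N$, while the paper splits at the time-dependent threshold $\tau_\lambda(t) \gtrless 1$ to obtain an explicit $t^{1-p}$ rate in \eqref{eq:wavescat_10_psipsi}--\eqref{eq:wavescat_10_phiphi}; both deliver the claimed qualitative convergence, though your phrase ``applying \eqref{eq:wavescat_10_bounded} at level $s+1$'' should really say ``applying the uniform-in-$t$ energy bounds from Propositions~\ref{prop:wave_high_freq} and \ref{prop:wave_low_freq} at level $s+1$'', since \eqref{eq:wavescat_10_bounded} only controls the limits and not $\psi(t)$ itself. (Also, the edge case $t_{\lambda*} > 1$ you mention never occurs, since $\tau_\lambda(1) = |\lambda| \geq 1$ for every $\lambda \neq 0$, but covering it is harmless.)
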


\begin{proof}
    The existence and uniqueness of a smooth solution $(\phi(t), \psi(t))$ for $t > 0$, which attains the initial data $\phi(1, \cdot) = \phi_C(\cdot), \psi(1,\cdot) = \psi_C( \cdot )$, follows from standard theory for first-order linear hyperbolic systems. We then take the Fourier decomposition.

    Since $\phi_C$ and $\psi_C$ are smooth, it follows from the definition of the high-frequency energy quantity \eqref{eq:wave_energy_high} and Corollary~\ref{cor:wave_high_coercive} that the energy quantities and Sobolev norms can be related in the following way:
    \begin{equation} \label{eq:wavescat_10_data}
        \phi_0^2 + \psi_0^2 + \sum_{\lambda \in \mathbb{Z}^D \setminus \{0\}} \langle \lambda \rangle^{2s+1} \mathcal{E}_{\lambda, high}(1)
        \lesssim \sum_{\lambda \in \Z^D} \left( \langle \lambda \rangle^{2s+2} \phi_{\lambda}^2(t) + \langle \lambda \rangle^{2s} \psi_{\lambda}^2(t) \right) 
        = \| \phi_C \|_{H^{s+1}}^2 + \| \psi_C \|_{H^s}^2.
    \end{equation}

    The next step is to produce the functions $\psi_{\infty}$ and $\varphi_{\infty}$. For this, we consider the Fourier coefficients $\phi_{\lambda}(t)$ and $\psi_{\lambda}(t)$, which evolve according to \eqref{eq:wave_phi_l_evol}--\eqref{eq:wave_psi_l_evol} when $\lambda \neq 0$. By Proposition~\ref{prop:wave_high_freq} and Corollary~\ref{cor:wave_high_coercive}, we know that for $t_{\lambda*} \leq t < 1$, one has
    \begin{equation} \label{eq:wavescat_10_high}
        \frac{\psi_{\lambda}^2(t)}{\tau_{\lambda}(t)} + \tau_{\lambda}(t) \phi_{\lambda}^2(t) \lesssim \mathcal{E}_{\lambda, high}(1).
    \end{equation}

    Combining this with Proposition~\ref{prop:wave_low_freq} and the fact that $\mathcal{E}_{\lambda, high}(t_{\lambda*})) \asymp \mathcal{E}_{\lambda, low}(t_{\lambda*})$, we find that for $0 < t < t_{\lambda*}$, one has
    \begin{equation} \label{eq:wavescat_10_low}
        \psi_{\lambda}^2(t) + \tilde{\varphi}_{\lambda}^2(t) \lesssim \mathcal{E}_{\lambda, high}(1).
    \end{equation}
    Here $\tilde{\varphi}_{\lambda}(t)$ is defined in \eqref{eq:varphi2}, and is related to Fourier coefficients of $\varphi(t)$ via $\tilde{\varphi}_{\lambda}(t) = \varphi_{\lambda}(t) + \psi_{\lambda}(t) \log t_{\lambda*}$.%, and using \eqref{eq:varphi} we translate this into an energy estimate on Fourier coefficients of $\varphi(t)$ as follows:

    Recall now that Proposition~\ref{prop:wave_low_freq_scat}(i) asserts the existence of limiting coefficients $(\psi_{\infty})_{\lambda}$ and $(\tilde{\varphi}_{\infty})_{\lambda}$ such that $\psi_{\lambda}(t) \to (\psi_{\infty})_{\lambda}$ and $\varphi_{\lambda}(t) + \psi_{\lambda}(t) \log t_{\lambda*} = \tilde{\varphi}_{\lambda}(t) \to (\tilde{\varphi}_{\infty})_{\lambda}$ as $t \to 0$. Recalling also Lemma~\ref{lem:wave_zerofreq}, we thus define the functions $\psi_{\infty}$ and $\varphi_{\infty}$ as follows:
    \begin{equation} \label{eq:wavescat_10_psii}
        \psi_{\infty}(x) = \psi_0(1) + \sum_{\lambda \in \Z^D \setminus \{0\}} (\psi_{\infty})_{\lambda} \, e^{i \lambda \cdot x}, 
        \qquad \varphi_{\infty}(x) = \varphi_0(1) + \sum_{\lambda \in \Z^D \setminus \{ 0 \}} \left( (\tilde{\varphi}_{\infty})_{\lambda} - (\psi_{\infty})_{\lambda} \log t_{\lambda*} \right) e^{i \lambda \cdot x}.
    \end{equation}
    Note that by definition, the function $\tilde{\varphi}_{\infty}(x) = \varphi_{\infty}(x) + (\log \mathcal{T}_*) \psi_{\infty}(x)$ has Fourier decomposition 
    \begin{equation} \label{eq:wavescat_10_phii}
        \tilde{\varphi}_{\infty}(x) = \varphi_0(1) + \sum_{\lambda \in \Z^D \setminus \{ 0 \}} (\tilde{\varphi}_{\infty})_{\lambda} \, e^{i \lambda \cdot x}.
    \end{equation}

    We next prove the regularity of $\psi_{\infty}$ and $\varphi_{\infty}$. For any $s \in \R$, letting $t \to 0$ in \eqref{eq:wavescat_10_low}, multiplying by $\langle \lambda \rangle^{2s+1}$ and summing over $\lambda \in \Z^D$ yields
    \begin{multline*}
        \left( \psi_0^2(1) + \sum_{\lambda \in \Z^D \setminus \{0\}} \langle \lambda \rangle^{2s+1} (\psi_{\infty})_{\lambda}^2 \right) 
        + \left( \varphi_0^2(1) + \sum_{\lambda \in \Z^D \setminus \{0\}} \langle \lambda \rangle^{2s+1} (\tilde{\varphi}_{\infty})_{\lambda}^2 \right) \\[0.3em]
        \lesssim \phi_0^2(1) + \psi_0^2(1) + \sum_{\lambda \in \Z^D \setminus \{0\}} \langle \lambda \rangle^{2s+1} \mathcal{E}_{\lambda, high}(1).
    \end{multline*}
    By \eqref{eq:wavescat_10_data}, this tells us that $\psi_{\infty}$ and $\tilde{\varphi}_{\infty} = \varphi_{\infty} + (\log \mathcal{T}_* ) \psi_{\infty}$ are both in $H^{s+ \frac{1}{2}}$, and in fact \eqref{eq:wavescat_10_bounded} holds. Since this is true for all $s \in \R$, and $\log \mathcal{T}_*: H^{s + \frac{1}{2}} \to H^s$ is bounded for every $s \in \R$ (see Lemma~\ref{lem:tstar}), we have that $\psi_{\infty}$ and $\varphi_{\infty}$ are smooth, as required.

    The final step is to show the convergence \eqref{eq:wavescat_10_conv}. We write the $H^s$-norm of $\psi(t) - \psi_{\infty}$ in the following manner:
    \begin{equation*}
        \| \psi(t) - \psi_{\infty} \|_{H^s}^2
        = \sum_{\substack{\lambda \in \Z^D \setminus \{0\} \\ \tau_\lambda(t) > 1}} \langle \lambda \rangle^{2s} \left| \psi_{\lambda}(t) - (\psi_{\infty})_{\lambda} \right|^2
        + \sum_{\substack{\lambda \in \Z^D \setminus \{0\} \\ \tau_\lambda(t) \leq 1}} \langle \lambda \rangle^{2s} \left| \psi_{\lambda}(t) - (\psi_{\infty})_{\lambda} \right|^2.
    \end{equation*}
    We estimate the first sum on the right hand side by expanding crudely and using \eqref{eq:wavescat_10_high}, while for the second sum we use Proposition~\ref{prop:wave_low_freq_scat}, particularly the estimate \eqref{eq:wave_low_freq_scat}. We find the following:
    \begin{equation*}
        \| \psi(t) - \psi_{\infty} \|_{H^s}^2
        \lesssim \sum_{\substack{\lambda \in \Z^D \setminus \{0\} \\ \tau_\lambda(t) > 1}} \left( \tau_{\lambda}(t) \langle \lambda \rangle^{2s} \mathcal{E}_{\lambda, high}(1)
        + \langle \lambda \rangle^{2s} ( \psi_{\infty} )_{\lambda}^2 \right)
        + \sum_{\substack{\lambda \in \Z^D \setminus \{0\} \\ \tau_\lambda(t) \leq 1}} \tau_{\lambda}^2(t) \langle \lambda \rangle^{2s} \mathcal{E}_{\lambda, low}(t_{\lambda*}).
    \end{equation*}
    
    Using again Propositions~\ref{prop:wave_high_freq} and \ref{prop:wave_low_freq} to deduce $\mathcal{E}_{\lambda, low}(t_{\lambda*}) \lesssim \mathcal{E}_{\lambda, high}(1) \lesssim \langle \lambda \rangle^{-1} (\psi_C)_{\lambda}^2 + \langle \lambda \rangle (\phi_C)_{\lambda}^2$, as well as the fact that $\tau_{\lambda}(t) \leq \langle \lambda \rangle t^{1 - p}$ for $p = \max \{p_i\}$, we therefore have that
    \begin{equation} \label{eq:wavescat_10_psipsi}
        \| \psi(t) - \psi_{\infty} \|_{H^s}^2 \lesssim t^{1-p} \left( \| \phi_C \|_{H^{s+1}}^2 + \| \psi_C \|_{H^{s}}^2 \right) 
        + \sum_{\substack{\lambda \in \Z^D \setminus \{0\} \\ \tau_\lambda(t) > 1}} \langle \lambda \rangle^{2s} ( \psi_{\infty} )_{\lambda}^2.
    \end{equation}
    Since we have $\psi_{\infty} \in H^s$ and $\min \{ |\lambda|: \tau_{\lambda}(t) < 1 \} \to \infty$ as $t \to 0$, we conclude that $\psi_{\infty}(t) \to \psi_{\infty}$ in $H^s$ as $t \to 0$.

    The convergence of $\varphi(t)$ is more complicated due to the additional log factors. Nevertheless, a similar but slightly more involved computation will yield that
    \begin{align*}
        \| \varphi(t) - \varphi_{\infty} \|_{H^s}^2
        &\lesssim \sum_{\substack{\lambda \in \Z^D \setminus \{0\} \\ \tau_\lambda(t) > 1}} \left( \tau_{\lambda}(t) (1 + (\log t^{-1})^2) \langle \lambda \rangle^{2s} \mathcal{E}_{\lambda, high}(1)
        + \langle \lambda \rangle^{2s} ( \varphi_{\infty} )_{\lambda}^2 \right) \\[-1em]
        &\qquad \hspace{2cm}
        + \sum_{\substack{\lambda \in \Z^D \setminus \{0\} \\ \tau_\lambda(t) \leq 1}} \tau_{\lambda}^2(t) (1 + (\log t_{\lambda*}^{-1})^2) \langle \lambda \rangle^{2s} \mathcal{E}_{\lambda, low}(t_{\lambda*}).
    \end{align*}
    Using the additional fact \eqref{eq:log_upperlower}, one may show that when $\tau_{\lambda}(t) \leq 1$, one has $\tau_{\lambda} (1 + (\log t_{\lambda^*}^{-1} )^2 ) \lesssim (1 + (\log t^{-1})^2)$. Therefore the analogue of \eqref{eq:wavescat_10_psipsi} is that
    \begin{equation} \label{eq:wavescat_10_phiphi}
        \| \varphi(t) - \varphi_{\infty} \|_{H^s}^2 \lesssim t^{1-p} \, ( 1 + (\log t^{-1})^2 )  \left( \| \phi_C \|_{H^{s+1}}^2 + \| \psi_C \|_{H^{s}}^2 \right) 
        + \sum_{\substack{\lambda \in \Z^D \setminus \{0\} \\ \tau_\lambda(t) > 1}} \langle \lambda \rangle^{2s} ( \varphi_{\infty} )_{\lambda}^2.
    \end{equation}
    Since $\varphi_{\infty}$ also lies in $H^s$, the right hand side converges to $0$ as $t \to 0$. This concludes the proof of the proposition.
\end{proof}

\subsubsection{From asymptotic data to Cauchy data} \label{subsub:wavescat_01}

%Now we prove the converse result, showing asymptotic completeness and the invertibility of the scattering operator $\mathcal{S}_{\downarrow}$.

\begin{proposition} \label{prop:wavescat_01}
    Let $\psi_{\infty}$ and $\varphi_{\infty}$ be smooth functions on $\mathbb{T}^D$. Then there exists a unique smooth solution $(\phi(t), \psi(t))$ to the system \eqref{eq:wave_phi_evol}--\eqref{eq:wave_psi_evol} for $t > 0$, which achieves the asymptotic data in the sense that for any $s \in \R$, one has the (strong) convergence:
    \begin{equation} \label{eq:wavescat_01_asymp}
        \psi(t) \to \psi_{\infty} \text{ in } H^s(\mathbb{T}^D), \quad \varphi(t) \to \varphi_{\infty} \text{ in } H^s(\mathbb{T}^D) \quad \text{ as } t \to 0.
    \end{equation}

    Moreover, for any $s \in \R$ one has the following bound between Sobolev spaces:
    \begin{equation} \label{eq:wavescat_01_bounded}
        \| \phi (1, \cdot) \|_{H^{s+1}}^2 + \| \psi (1, \cdot) \|_{H^s}^2 \lesssim \| \psi_{\infty} \|_{H^{s+\frac{1}{2}}}^2 + \| \varphi_{\infty} + (\log \mathcal{T}_*) \psi_{\infty} \|_{H^{s + \frac{1}{2}}}^2.
    \end{equation}
\end{proposition}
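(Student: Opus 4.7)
The plan is to run the argument of Proposition~\ref{prop:wavescat_10} essentially in reverse, constructing the solution one Fourier mode at a time from the prescribed asymptotic data and then using the low- and high-frequency energy estimates to propagate control up to $\Sigma_1$. First I would Fourier-decompose $\psi_\infty$ and $\varphi_\infty$. The zero mode is trivial: Lemma~\ref{lem:wave_zerofreq} forces $\psi_0(t) \equiv (\psi_\infty)_0$ and $\phi_0(t) = (\varphi_\infty)_0 + (\psi_\infty)_0 \log t$ for all $t > 0$. For each $\lambda \in \Z^D \setminus \{0\}$, I would introduce the frequency-adapted renormalization $(\tilde\varphi_\infty)_\lambda \coloneqq (\varphi_\infty)_\lambda + (\log t_{\lambda*}) (\psi_\infty)_\lambda$, chosen exactly so that convergence of the mode-level variable $\tilde\varphi_\lambda(t)$ in \eqref{eq:varphitilde} is equivalent to $\varphi_\lambda(t) \to (\varphi_\infty)_\lambda$. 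Proposition~\ref{prop:wave_low_freq_scat}(ii) then supplies a unique solution $(\phi_\lambda(t), \psi_\lambda(t))$ to \eqref{eq:wave_phi_l_evol}--\eqref{eq:wave_psi_l_evol} on $(0, t_{\lambda*}]$ attaining $((\psi_\infty)_\lambda, (\tilde\varphi_\infty)_\lambda)$ as $t \to 0$, together with the bound $\mathcal{E}_{\lambda, low}(t_{\lambda*}) \lesssim (\psi_\infty)_\lambda^2 + (\tilde\varphi_\infty)_\lambda^2$. Since the ODE system is linear with coefficients smooth for $t > 0$, each modewise solution extends uniquely to all of $(0, \infty)$.

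The next step is to transfer control to $\Sigma_1$. Evaluating the definitions \eqref{eq:wave_energy_high} and \eqref{eq:varphi2} at the transitional time $\tau_\lambda(t_{\lambda*}) = 1$, together with the upper and lower bounds on $\zeta$ from Lemma~\ref{lem:tau}, yields $\mathcal{E}_{\lambda, high}(t_{\lambda*}) \asymp \mathcal{E}_{\lambda, low}(t_{\lambda*})$. Combined with the high-frequency estimate \eqref{eq:wave_high_freq_fin}, this gives the mode-by-mode inequality $\mathcal{E}_{\lambda, high}(1) \lesssim (\psi_\infty)_\lambda^2 + (\tilde\varphi_\infty)_\lambda^2$. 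Multiplying by $\langle \lambda \rangle^{2s+1}$, summing over $\lambda$, and invoking Corollary~\ref{cor:wave_high_coercive} on the Cauchy side yields
\[
\| \phi(1,\cdot) \|_{H^{s+1}}^2 + \| \psi(1,\cdot) \|_{H^s}^2 \lesssim \sum_{\lambda \in \Z^D} \langle \lambda \rangle^{2s+1} \left( (\psi_\infty)_\lambda^2 + (\tilde\varphi_\infty)_\lambda^2 \right),
\]
which is exactly \eqref{eq:wavescat_01_bounded} once one recognizes that $\tilde\varphi_\infty = \varphi_\infty + \log(\mathcal{T}_*)\psi_\infty$ by the very definition of the symbol $\log(\mathcal{T}_*)$ in Definition~\ref{def:tstar}. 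Smoothness of the reconstructed Cauchy data then follows because this estimate holds for every $s \in \R$ and because $\log(\mathcal{T}_*): H^{s+\frac{1}{2}} \to H^s$ is bounded by Lemma~\ref{lem:tstar}. Launching the smooth data through the standard well-posedness of \eqref{eq:wave_phi_evol}--\eqref{eq:wave_psi_evol} produces a smooth solution on $(0, \infty) \times \mathbb{T}^D$ which, by uniqueness of Fourier coefficients, coincides with the modewise construction. The strong convergence \eqref{eq:wavescat_01_asymp} is then obtained by repeating verbatim the splitting that produced \eqref{eq:wavescat_10_psipsi} and \eqref{eq:wavescat_10_phiphi} in Proposition~\ref{prop:wavescat_10}, since those estimates only use the same energy inputs. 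Uniqueness of the solution attaining the prescribed asymptotic data reduces, again modewise, to the uniqueness statement in Proposition~\ref{prop:wave_low_freq_scat}(ii).

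The one delicate point, which I expect to be the main obstacle, is bookkeeping: one must carefully align the frequency-dependent renormalization $\tilde\varphi_\lambda = \varphi_\lambda + \psi_\lambda \log t_{\lambda*}$ of Proposition~\ref{prop:wave_low_freq_scat} with the global renormalization $\varphi = \phi - \psi \log t$ appearing in the statement, so that the correction $\log(\mathcal{T}_*)\psi_\infty$ enters the Hilbert space $\mathcal{H}^s_\infty$ of \eqref{eq:wave_hilbert2} exactly in the right place and no spurious logarithmic loss appears when summing over $\lambda$. Provided this identification is carried out correctly, the remainder of the argument is essentially the dual of Proposition~\ref{prop:wavescat_10}, with the low- and high-frequency energy estimates run upwards from $t = 0$ rather than downwards from $t = 1$.
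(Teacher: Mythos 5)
Your proposal is correct and follows essentially the same route as the paper's proof: reconstruct the modewise solution from the renormalized asymptotic data $(\tilde\varphi_\infty)_\lambda = (\varphi_\infty)_\lambda + (\psi_\infty)_\lambda \log t_{\lambda*}$ via Proposition~\ref{prop:wave_low_freq_scat}(ii), transfer control up to $\Sigma_1$ by chaining $\mathcal{E}_{\lambda, low}(t_{\lambda*}) \asymp \mathcal{E}_{\lambda, high}(t_{\lambda*})$ with Proposition~\ref{prop:wave_high_freq} and Corollary~\ref{cor:wave_high_coercive}, and sum over $\lambda$ using the frequency-space characterization of $\log(\mathcal{T}_*)$. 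The minor variation of defining the solution by first assembling smooth Cauchy data at $t=1$ and re-launching, rather than directly assembling the Fourier series for all $t$, is equivalent and does not change the substance.
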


\begin{proof}
    For existence, we first decompose $(\psi_{\infty}, \varphi_{\infty})$ into their Fourier modes, and define also for $\lambda \neq 0$,
    \[
        (\tilde{\varphi}_{\infty})_{\lambda} = (\varphi_{\infty})_{\lambda} + (\psi_{\infty})_{\lambda} \log t_{\lambda*}.
    \]
    By definition of the Sobolev norm and the operator $\log \mathcal{T}_*$, one has
    \begin{equation} \label{eq:wavescat_01_data}
        (\psi_{\infty})_0^2 + (\varphi_{\infty})_0^2 + \sum_{\lambda \in \Z^D \setminus \{ 0 \}} \langle \lambda \rangle^{2s + 1} \left( (\psi_{\infty})_{\lambda}^2 + (\tilde{\varphi}_{\infty})_{\lambda}^2 \right) = 
        \| \psi_{\infty} \|_{H^{s+\frac{1}{2}}}^2 + \| \varphi_{\infty} + (\log \mathcal{T}_*) \psi_{\infty} \|_{H^{s + \frac{1}{2}}}^2.
    \end{equation}

    We now appeal to Proposition~\ref{prop:wave_low_freq_scat}(ii), which states that for all $\lambda \in \Z^D \setminus \{0\}$, we can find a unique solution to the ODE system \eqref{eq:wave_psi_evol_2}--\eqref{eq:wave_phi_evol_2} which achieves the limits $(\psi_{\infty})_{\lambda}$ and $(\tilde{\varphi}_{\infty})_{\lambda}$ as $t \to 0$, at the rate \eqref{eq:wave_low_freq_scat}. 
    Transforming back to $(\phi_{\lambda}(t), \psi_{\lambda}(t))$, we can find a solution to \eqref{eq:wave_phi_l_evol}--\eqref{eq:wave_psi_l_evol} in the whole domain $t > 0$. Furthermore, Propositions~\ref{prop:wave_low_freq} and \ref{prop:wave_low_freq_scat} imply that for $\lambda \in \Z^D \setminus \{ 0 \}$ and $0 < t \leq t_{\lambda*}$, one has
    \begin{equation} \label{eq:wavescat_01_low}
        \mathcal{E}_{\lambda, low}(t) \lesssim (\psi_{\infty})_{\lambda}^2 + (\tilde{\varphi}_{\infty})_{\lambda}^2.
    \end{equation}
        
    Combining this with Proposition~\ref{prop:wave_high_freq}, Corollary~\ref{cor:wave_high_coercive} and that $\mathcal{E}_{\lambda, high}(t_{\lambda*}) \asymp \mathcal{E}_{\lambda, low}(t_{\lambda*})$, one has for $t_{\lambda*} \leq t \leq 1$ the estimate
    \begin{equation} \label{eq:wavescat_01_high}
        \frac{\psi_{\lambda}^2(t)}{\tau_{\lambda}(t)} + \tau_{\lambda}(t) \phi_{\lambda}^2(t) \lesssim (\psi_{\infty})_{\lambda}^2 + (\tilde{\varphi}_{\infty})_{\lambda}^2.
    \end{equation}

    We now define our solution $(\phi(t), \psi(t))$ to the equations \eqref{eq:wave_phi_evol}--\eqref{eq:wave_psi_evol} as the sum.
    \begin{equation}
        \phi(t, x) = (\varphi_{\infty})_0 + \sum_{\lambda \in \Z^D \setminus \{ 0 \}} \phi_{\lambda}(t) e^{i \lambda \cdot x}, \qquad 
        \psi(t, x) = (\psi_{\infty})_0 + \sum_{\lambda \in \Z^D \setminus \{ 0 \}} \psi_{\lambda}(t) e^{i \lambda \cdot x}.
    \end{equation}
    Let us verify the smoothness of $(\phi(t), \psi(t))$ at $t = 1$. Using \eqref{eq:wavescat_01_high},\eqref{eq:wavescat_01_data} and $\tau_{\lambda}(1) \asymp \langle \lambda \rangle$, we get
    \[
        (\varphi_{\infty})_0^2 + \sum_{\lambda \in \Z^D \setminus \{0\}} \langle \lambda \rangle^{2s+2} \phi_{\lambda}^2(1) + 
        (\psi_{\infty})_0^2 + \sum_{\lambda \in \Z^D \setminus \{0\}} \langle \lambda \rangle^{2s} \psi_{\lambda}^2(1) \lesssim 
        \| \psi_{\infty} \|_{H^{s+\frac{1}{2}}}^2 + \| \varphi_{\infty} + (\log \mathcal{T}_*) \psi_{\infty} \|_{H^{s + \frac{1}{2}}}^2.
    \]
    Therefore $\phi(1, \cdot) \in H^{s+1}$, $\psi(1, \cdot) \in H^s$, and the estimate \eqref{eq:wavescat_01_bounded} holds. Since this is true for all $s \in \R$, these functions are smooth. Furthermore, since the Fourier modes $(\phi_{\lambda}(t), \psi_{\lambda}(t))$ satisfy the ODEs \eqref{eq:wave_phi_l_evol}--\eqref{eq:wave_psi_l_evol} for $\lambda \neq 0$, and we may appeal to Lemma~\ref{lem:wave_zerofreq} for the zero mode, $(\phi(t), \psi(t))$ is a smooth solution to the first order system \eqref{eq:wave_phi_evol}--\eqref{eq:wave_psi_evol}, as required.

    By the construction of $(\phi(t), \psi(t))$ and the proof of Proposition~\ref{prop:wavescat_01}, $\psi(t)$ and $\varphi(t)$ also obey the asymptotics \eqref{eq:wavescat_01_asymp}. Finally, the fact that $(\phi(t), \psi(t))$ is the \textit{unique} smooth solution to \eqref{eq:wave_phi_evol}--\eqref{eq:wave_psi_evol} obeying these asymptotics follows by studying each Fourier mode for $\phi_{\lambda}(t), \psi_{\lambda}(t)$ and appealing to the uniqueness statement in Proposition~\ref{prop:wave_low_freq_scat} for $\lambda \in \Z^D \setminus \{ 0 \}$, and Lemma~\ref{lem:wave_zerofreq} for $\lambda = 0$.
\end{proof}

\subsubsection{Completion of the proof of Theorem~\ref{thm:wave_scat}} \label{sub:wavescat_density}

All that remains is to extend Proposition~\ref{prop:wavescat_10} and Proposition~\ref{prop:wavescat_01} to the setting where the functions $\phi(t)$ and $\psi(t)$ have finite regularity. For the sake of completeness, we provide the standard density argument here:
\begin{enumerate}[(i)]
    \item (Existence of the scattering operator)
        By Proposition~\ref{prop:wavescat_10}, we are already done in the case that $(\phi_C, \psi_C)$ are smooth. Now, for $s > \frac{D}{2} + 1$ suppose instead that $(\phi_C, \psi_C) \in H^{s+1} \times H^s$. By the density of smooth functions in Sobolev spaces, there exist smooth functions $(\phi^{(n)}_C, \psi^{(n)}_C)$ such that
        \[
            (\phi^{(n)}_C, \psi^{(n)}_C) \to (\phi_C, \psi_C) \text{ in } H^{s + 1} \times H^s \text{ as } n \to \infty.
        \]

        Applying Proposition~\ref{prop:wavescat_10} to each of $(\phi^{(n)}_C, \psi^{(n)}_C)$, we construct a sequence of solutions $(\phi^{(n)}(t), \phi^{(n)}(t))$ to the system \eqref{eq:wave_phi_evol}--\eqref{eq:wave_psi_evol} and a sequence of functions $(\psi_{\infty}^{(n)}, \varphi_{\infty}^{(n)})$ such that
        \[
            \| \psi_{\infty}^{(n)} \|_{H^{s + \frac{1}{2}}}^2 + \| \varphi_{\infty}^{(n)} - (\log \mathcal{T}_*) \psi_{\infty}^{(n)} \|_{H^{s + \frac{1}{2}}}^2 \lesssim \| \phi_C^{(n)} \|_{H^{s + 1}}^2 + \| \psi_C^{(n)} \|_{H^{s}}^2,
        \]
        and we have the (strong convergence)
        \[
            \psi^{(n)}(t) \to \psi_{\infty}^{(n)} \text{ in } H^s, \quad \varphi^{(n)}(t) \to \varphi_{\infty}^{(n)} \text{ in } H^s \quad \text{ as } t \to 0.
        \]

        Furthermore, since \eqref{eq:wave_phi_evol}--\eqref{eq:wave_psi_evol} is linear we also have the difference estimates of the type
        \begin{multline*}
            \| \psi_{\infty}^{(n)} - \psi_{\infty}^{(m)} \|_{H^{s + \frac{1}{2}}}^2 + \| (\varphi_{\infty}^{(n)} - (\log \mathcal{T}_*) \psi_{\infty}^{(n)}) - (\varphi_{\infty}^{(m)} - (\log \mathcal{T}_*) \psi_{\infty}^{(m)} ) \|_{H^{s + \frac{1}{2}}}^2 \\[0.4em]
            \lesssim \| \phi_C^{(n)} - \phi_C^{(m)} \|_{H^{s + 1}}^2 + \| \psi_C^{(n)} - \psi_C^{(m)} \|_{H^{s}}^2.
        \end{multline*}
        Therefore, by construction of the sequence $(\phi_C^{(n)}, \psi_C^{(n)})$, it is clear we can obtain $(\psi_{\infty}, \varphi_{\infty})$ such that $(\psi_{\infty}^{(n)}, \varphi_{\infty}^{(n)}) \to (\psi_{\infty}, \varphi_{\infty})$ in the space $\mathcal{H}^s_{\infty}$ defined in \eqref{eq:wave_hilbert2}, and by similar difference estimates applied to the proof of Proposition~\ref{prop:wavescat_10} we obtain a limiting solution 
        \[
            (\phi(t), \psi(t)) \in C^0((0, + \infty), H^{s + 1} \times H^s) \cap C^1((0, + \infty), H^s \times H^{s - 1}),
        \]
        which attain the asymptotics \eqref{eq:asymp_psivarphi}. Since $s > \frac{D}{2} + 1$, Sobolev embedding implies that $H^{s - 1} \subset C^0$ and therefore this solution is classical.

    \item (Asymptotic completeness)
        By Proposition~\ref{prop:wavescat_01}, we are done in the case that $(\psi_{\infty}, \varphi_{\infty})$ are smooth. If instead we only have $(\psi_{\infty}, \varphi_{\infty})$ with regularity as in \eqref{eq:asymp_reg}, i.e.~we have $(\psi_{\infty}, \varphi_{\infty}) \in \mathcal{H}_{\infty}^s$, then first pick a sequence $(\psi_{\infty}^{(n)}, \varphi_{\infty}^{(n)})$ of smooth functions such that
        \[
            (\psi_{\infty}^{(n)}, \varphi_{\infty}^{(n)}) \to (\psi_{\infty}, \varphi_{\infty}) \text{ in } \mathcal{H}_{\infty}^s \text{ as } n \to \infty.
        \]

        Now apply Proposition~\ref{prop:wavescat_01} to each of $(\psi_{\infty}^{(n)}, \varphi_{\infty}^{(n)})$. We obtain a sequence $(\phi^{(n)}(t), \psi^{(n)}(t))$ of solutions of \eqref{eq:wave_phi_evol}--\eqref{eq:wave_psi_evol} which moreover obey the difference estimate
        \[
            \| \phi^{(n)}(1, \cdot) - \phi^{(m)}(1, \cdot) \|_{H^{s+1}}^2 + \| \psi^{(n)}(1, \cdot) - \psi^{(m)}(1, \cdot) \|_{H^s}^2 \lesssim \| (\psi_{\infty}^{(n)} - \psi_{\infty}^{(m)}, \varphi_{\infty}^{(n)} - \varphi_{\infty}^{(m)} ) \|_{\mathcal{H}^s_{\infty}}^2.
        \]
        Therefore, by construction of the sequence $(\phi_{\infty}^{(n)}, \psi_{\infty}^{(n)})$, we construct firstly a limit $(\phi^{(n)}(1, \cdot), \psi^{(n)}(1, \cdot)) \to (\phi(1, \cdot), \psi(1, \cdot))$ in the space $\mathcal{H}^s_C = H^{s+1} \times H^s$, and then by applying (i) again, we get a limiting solution $(\phi(t), \psi(t))$ to the system \eqref{eq:wave_phi_evol}--\eqref{eq:wave_psi_evol} as desired.

    \item (Scattering isomorphism)
        By Proposition~\ref{prop:wavescat_10} and Proposition~\ref{prop:wavescat_01}, we know that the two maps
        \begin{gather*}
            \mathcal{S}_{\downarrow}: (\phi_C, \psi_C) \mapsto (\psi_{\infty}, \varphi_{\infty}) \text{ from (i)}, \\[0.4em]
            \mathcal{S}_{\uparrow}: (\psi_{\infty}, \varphi_{\infty}) \mapsto (\phi(1, \cdot), \psi(1, \cdot)) \text{ from (ii)},
        \end{gather*}
        upon restriction to the space of smooth functions, are well-defined and are such that both $\mathcal{S}_{\downarrow} \circ \mathcal{S}_{\uparrow}$ and $\mathcal{S}_{\uparrow} \circ \mathcal{S}_{\downarrow}$ are the identity map. Furthermore, the bounds \eqref{eq:wavescat_10_bounded} and \eqref{eq:wavescat_01_bounded}, together with the density of $C^{\infty}(\mathbb{T}^D)^2$ in both the Hilbert spaces $\mathcal{H}_C^{\infty}$ and $\mathcal{H}_{\infty}^s$, imply that these maps may indeed be extended to be Hilbert space isomorphisms, as required. \qed
\end{enumerate}

%auto-ignore

\section{The Linearized Einstein--scalar field system around Kasner} \label{sec:lingrav}

In this section, we derive the linearized Einstein--scalar field system around Kasner, to which Theorem~\ref{thm:einstein_scat} applies. %Once we have the full system as written in Proposition~\ref{prop:adm_linear}, we then present an alternative version of the scattering result in Theorem~\ref{thm:einstein_scat}, namely Theorem~\ref{thm:einstein_scat_v2}.
This system, to be presented in full detail in Proposition~\ref{prop:adm_linear}, will include a linearized shift vector, and is thus more general than the setup discussed in Section~\ref{sub:intro_scattering2}. 
After stating the equations, we then comment upon several features of this system, including gauge invariance (see Lemma~\ref{lem:diffeo}) and well posedness (see Proposition~\ref{prop:adm_lwp}), and remark upon how to interpret the system near $t = 0$ in Section~\ref{sub:adm_scat}.

%Before stating this alternative version, we setup the problem in a slightly different fashion to Section~\ref{sub:intro_scattering2}. In particular, in allowing for a linearized shift we may consider more general spatial gauge choices, a useful consequence of which is a local well-posedness theory for the system derived in Proposition~\ref{prop:adm_linear}. We then proceed to introduce renormalized quantities, gauges and norms which are frequency-sensitive, but will be convenient for the clean scattering theorem presented in Theorem~\ref{thm:einstein_scat_v2}. 

%After stating Theorem~\ref{thm:einstein_scat_v2}, we conclude this section by projecting this system in frequency space using the Fourier transform as defined in Section~\ref{sub:intro_scattering1_fourier}. This is the system we use in deriving the energy estimates of Section~\ref{sec:high_freq} and Section~\ref{sec:low_freq}.
After finishing this discussion, we project this system in frequency space using Fourier series (see Appendix~\ref{app:fourier}), in preparation for the energy estimates of Section~\ref{sec:high_freq} and Section~\ref{sec:low_freq}. Finally, we explain how different gauges and energy estimates will be applied in our proof, which will be completed in Section~\ref{sec:einstein_scat}.

\subsection{The linearized Einstein equations} \label{sub:lineinstein}

Our aim is to present the linearized Einstein--scalar field system \eqref{eq:einsteinlinear}--\eqref{eq:wavelinear} around a fixed Kasner background $(\mathcal{M}_{Kas}, g_{Kas}, \phi_{Kas})$. To make this concrete, we first define several variables which we deem \emph{linearly small} in Definition~\ref{def:linsmall}. We then derive the linearized system by Taylor expanding (to first order) the system \eqref{eq:h_evol}--\eqref{eq:momentum} around a fixed Kasner background with 
\[
    (\bar{g}_{ij}, k_{ij}, \phi, \mathbf{N}\phi, n, X^i) = (\mathring{\bar{g}}_{ij}, \mathring{k}_{ij}, p_{\phi} \log t, p_{\phi}, 1, 0),
\]
and removing all terms which are quadratic or higher order in the linearly small quantities.% of Definition~\ref{def:linsmall}. 

Since the Kasner solution is itself a solution to the nonlinear system \eqref{eq:h_evol}--\eqref{eq:momentum}, the outcome of the procedure described above is a homogeneous linear system of PDEs where the unknowns are the linearly small quantities and the coefficients are related to the background Kasner variables. This system of PDEs is what is presented in Proposition~\ref{prop:adm_linear}.

For a further justification of the linearization procedure, we refer the reader to Section 3 of \cite{RodnianskiSpeck0}. We now define our linearly small variables, which differ slightly from those of \cite{RodnianskiSpeck0}. %in order to make more obvious the similarities between the metric variables $(\matr{\hat{\upeta}}{i}{j}, \matr{\hat{\upkappa}}{i}{j})$ and the matter variables $(\upphi, \uppsi)$.

\begin{definition} \label{def:linsmall}
    We define the following \emph{linearly small} quantities (see also \eqref{eq:kasner_evol}):
    \begin{gather} \label{eq:hk}
        \matr{\upeta}{i}{j} \coloneqq - \frac{1}{2} (\mathring{\bar{g}}^{-1})^{jk} (\bar{g}_{ik} - \mathring{\bar{g}}_{ik}), \quad 
        \matr{\upkappa}{i}{j} \coloneqq t ({\bar{g}}^{-1})^{jk} k_{ik} - t (\mathring{\bar{g}}^{-1})^{jk}\mathring{k}_{ik}
        = t (\bar{g}^{-1})^{jk} k_{ik} + {p_{\underline{i}}} \matr{\delta}{\underline{i}}{j}.
        \\[0.5em] \label{eq:pp}
        \upphi \coloneqq \phi - \mathring{\phi}, \quad \uppsi \coloneqq t ( \mathbf{N} \phi - \mathring{\mathbf{N}} \mathring{\phi}) = t \, \mathbf{N}\phi - p_{\phi},
        \\[0.5em] \label{eq:nx}
        \upnu \coloneqq n - \mathring{n} = n - 1, \quad \upchi^i \coloneqq t X^i.
    \end{gather}
    We also let $\tr \upeta = \matr{\upeta}{i}{i}$ and $\tr \upkappa = \matr{\upkappa}{i}{i}$ be the usual traces. %In this article we always have $\tr \hat{\upkappa} = 0$ due to our choice of CMC gauge.
\end{definition}

\begin{lemma} \label{lem:adm_lin_gauge}
    A consequence of the CMC gauge condition $\tr_{\bar{g}} k = - t^{-1}$ is that:
    \begin{equation} \label{eq:cmc_linear}
        \tr \upkappa = 0.
    \end{equation}
    On the other hand, the linearized version of the spatially harmonic condition \eqref{eq:spatiallyharmonic}, which we often denote the linearized CMCSH gauge\footnote{We always demarcate when we are using a CMCSH gauge by writing the $(1, 1)$ tensors $\matr{\hat{\upeta}}{i}{j}$ and $\matr{\hat{\upkappa}}{i}{j}$ with a hat.}, is the following:
    \begin{equation} \label{eq:spatiallyharmonic_linear}
        2 \partial_j \matr{\hat{\upeta}}{i}{j} = \partial_i \tr \hat{\upeta}.
    \end{equation}
\end{lemma}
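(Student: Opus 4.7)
Both identities are essentially index computations once the definitions of the linearized variables are unpacked; the second requires a careful linearization of the Christoffel symbols around Kasner.

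\emph{Part (i).} I would simply take the trace of the definition of $\matr{\upkappa}{i}{j}$ in \eqref{eq:hk}. This gives
\[
    \tr \upkappa = \matr{\upkappa}{i}{i} = t (\bar{g}^{-1})^{ik} k_{ik} + \sum_{i=1}^D p_i = t \tr_{\bar{g}} k + \sum_{i=1}^D p_i.
\]
Substituting the CMC condition $\tr_{\bar{g}} k = -t^{-1}$ and the first Kasner relation $\sum_i p_i = 1$ from \eqref{eq:kasner_relations} gives $\tr \upkappa = -1 + 1 = 0$. Note that this is an exact (not merely linearized) identity, so the argument is just algebra.

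\emph{Part (ii).} Here the strategy is to linearize the geometric condition \eqref{eq:spatiallyharmonic_general} around the Kasner background. The key observation is that because $\mathring{\bar{g}}_{ij}(t) = t^{2p_{\underline i}}\delta_{\underline i j}$ is diagonal and depends only on $t$, the background Christoffel symbols $\Gamma^i_{jk}[\mathring{\bar{g}}]$ vanish identically in the standard spatial coordinates, as does $\Gamma^i_{jk}[\bar g_{Euc}]$. Consequently, linearizing $(\bar g^{-1})^{jk}(\Gamma^i_{jk}[\bar g(t)] - \Gamma^i_{jk}[\bar g_{Euc}]) = 0$ reduces to
\[
    (\mathring{\bar g}^{-1})^{jk}\, \delta\Gamma^i_{jk} = 0,
\]
where $\delta\Gamma^i_{jk}$ is the variation of the Christoffel symbol. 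Using the standard formula $\delta\Gamma^i_{jk} = \tfrac{1}{2}(\mathring{\bar g}^{-1})^{il}(\mathring\nabla_j \delta\bar g_{lk} + \mathring\nabla_k \delta\bar g_{lj} - \mathring\nabla_l \delta\bar g_{jk})$, and noting that background covariant derivatives reduce to partial derivatives (again because $\Gamma[\mathring{\bar g}]=0$ in these coordinates), one gets
\[
    (\mathring{\bar g}^{-1})^{jk}\delta\Gamma^i_{jk} = (\mathring{\bar g}^{-1})^{jk}(\mathring{\bar g}^{-1})^{il}\partial_j \delta\bar g_{lk} - \tfrac{1}{2}(\mathring{\bar g}^{-1})^{jk}(\mathring{\bar g}^{-1})^{il}\partial_l \delta\bar g_{jk}.
\]

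\emph{Finishing the computation.} Inverting \eqref{eq:hk} yields $\delta \bar g_{lk} = -2\mathring{\bar g}_{km}\matr{\hat\upeta}{l}{m}$. Since $\mathring{\bar g}_{km}$ depends only on $t$, the spatial derivatives $\partial_j$ and $\partial_l$ pass through it. The first term becomes $-2(\mathring{\bar g}^{-1})^{il}\partial_j\matr{\hat\upeta}{l}{j}$ after contracting $(\mathring{\bar g}^{-1})^{jk}\mathring{\bar g}_{km} = \delta^j_m$, and the second becomes $+(\mathring{\bar g}^{-1})^{il}\partial_l(\tr\hat\upeta)$. Multiplying through by $\mathring{\bar g}_{im}$ to clear the inverse metric leaves
\[
    -2\partial_j \matr{\hat\upeta}{m}{j} + \partial_m(\tr \hat\upeta) = 0,
\]
which after relabeling $m \to i$ is precisely \eqref{eq:spatiallyharmonic_linear}.

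\textbf{Main obstacle.} There is no conceptual difficulty here; the only thing to be careful about is the index bookkeeping in the Christoffel variation, and in particular ensuring that the simplifying feature $\Gamma^i_{jk}[\mathring{\bar g}] = 0$ is used to replace covariant by partial derivatives and to drop the variation of $(\bar g^{-1})^{jk}$ multiplying $\Gamma^i_{jk}[\mathring{\bar g}]$.
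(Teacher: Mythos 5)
Your proposal is correct and follows essentially the same path as the paper: part (i) is the same trace computation (the paper phrases it via $\tr_{\bar g}k = \tr_{\mathring{\bar g}}\mathring{k} = -t^{-1}$, which is equivalent to your use of $\sum_i p_i = 1$), and part (ii) is the same linearization of the spatially harmonic condition, with the paper Taylor-expanding the coordinate Christoffel formula directly where you phrase it in terms of $\delta\Gamma^i_{jk}$ — in both cases the key simplification is that $\Gamma^i_{jk}[\mathring{\bar g}]=0$ so the variation of the prefactor $(\bar g^{-1})^{jk}$ contributes only at quadratic order.
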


\begin{proof}
    The equation \eqref{eq:cmc_linear} follows immediately by taking the trace of \eqref{eq:hk} and using that $\tr_{\bar{g}} k = \tr_{\mathring{\bar{g}}} \mathring{k} = - t^{-1}$. In the case of spatially harmonic gauge, note that in the usual Euclidean coordinates, the condition \eqref{eq:spatiallyharmonic_general} may be written as $(\bar{g}^{-1})^{ab} \Gamma^i_{ab}[\bar{g}] = 0$, or as
    \[
        \frac{1}{2} (\bar{g}^{-1})^{ab} (\bar{g}^{-1})^{ij} \left( \partial_a \bar{g}_{jb} + \partial_b \bar{g}_{ja} - \partial_j \bar{g}_{ab} \right) = 0.
    \]

    As the background $\mathring{\bar{g}}_{ij}$ is spatially homogeneous, $\partial_i \bar{g}_{jk} = \partial_i ( \bar{g}_{jk} - \mathring{\bar{g}}_{jk} )$ is linearly small. Writing $\bar{h}_{jk} = \bar{g}_{jk} - \mathring{\bar{g}}_{jk}$, one may then Taylor expand the inverse metric $(\bar{g}^{-1})^{ab}$ to write the above equation as
    \[
        \frac{1}{2} (\mathring{g}^{-1})^{ab} ( \mathring{g}^{-1})^{ij} \left( \partial_a \bar{h}_{jb} + \partial_b \bar{h}_{ja} - \partial_j \bar{h}_{ab} \right) = \mathcal{Q}(h, \partial h),
    \]
    where $\mathcal{Q}(h, \partial h)$ is quadratic and higher order in the linearly small quantity $h$. Ignoring this term, and using $\matr{\upeta}{i}{j} = - \frac{1}{2} (\mathring{g}^{-1})^{jk} \bar{h}_{ik}$, this above equation becomes
    \[
        - 2 (\mathring{g}^{-1})^{ij} \partial_a \matr{\upeta}{j}{a} + (\mathring{g}^{-1})^{ij} \partial_j \matr{\upeta}{a}{a} = 0.
    \]
    Up to multiplication by $\mathring{g}_{ik}$ and relabelling indices, this is exactly \eqref{eq:spatiallyharmonic}.
\end{proof}

\begin{proposition}[Linearized Einstein--scalar field system] \label{prop:adm_linear}
    The linear system of PDEs for the variables $(\matr{\upeta}{i}{j}, \matr{\upkappa}{i}{j}, \upphi, \uppsi, \upnu, \upchi^j)$ obtained by linearizing the ADM system \eqref{eq:h_evol}--\eqref{eq:momentum} around a background Kasner solution, where the linearization procedure is as described at the start of Section~\ref{sub:lineinstein}, is as follows.
    \begin{enumerate}[1.]
        \item
            The metric variables $(\matr{\upeta}{i}{j}, \matr{\upkappa}{i}{j})$ obey the evolution equations
            \begin{equation} \label{eq:upeta_evol}
                t \partial_t \matr{\upeta}{i}{j} = \matr{\upkappa}{i}{j} + (2t \matr{\mathring{k}}{p}{j}) \matr{\upeta}{i}{p} - (2 t \matr{\mathring{k}}{i}{p}) \matr{\upeta}{p}{j} + (t \matr{\mathring{k}}{i}{j}) \upnu - \frac{1}{2} \partial_i \upchi^j - \frac{1}{2} \mathring{g}_{ip} \mathring{g}^{jq} \partial_q \upchi^p,
            \end{equation}
            \begin{equation} \label{eq:upkappa_evol}
                t \partial_t \matr{\upkappa}{i}{j} = t^2 \, \matr{\widecheck{\textup{Ric}}}{i}{j}[\upeta] - t^2 \mathring{g}^{jk} \partial_i \partial_k \upnu - ( t \matr{\mathring{k}}{i}{j} ) \upnu + (t \matr{\mathring{k}}{a}{j}) \partial_i \upchi^a - (t \matr{\mathring{k}}{i}{a}) \partial_a \upchi^j.
            \end{equation}
            Here, the expression $\widecheck{\textup{Ric}}[\upeta]$ is the linearized Ricci tensor, and is defined as
            \begin{equation} \label{eq:ricci_lin}
                \matr{\widecheck{\textup{Ric}}}{i}{j}[\upeta] \coloneqq \mathring{g}^{ab} \partial_a \partial_b \matr{\upeta}{i}{j} + \mathring{g}^{ja} \partial_i \partial_a (\tr \upeta) - \mathring{g}^{ab} \partial_i \partial_b \matr{\upeta}{a}{j} - \mathring{g}^{ja} \partial_a \partial_b \matr{\upeta}{i}{b}.
            \end{equation}
            In particular, if $\matr{\hat{\upeta}}{i}{j}$ moreover satisfies the spatially harmonic condition \eqref{eq:spatiallyharmonic_linear}, then
            \begin{equation} \label{eq:ricci_lin_sh}
                \matr{\widecheck{\textup{Ric}}}{i}{j}[\hat{\upeta}] = \mathring{g}^{ab} \partial_a \partial_b \matr{\hat{\upeta}}{i}{j}.
            \end{equation}
        \item
            The matter variables $(\upphi, \uppsi)$ obey
            \begin{equation} \label{eq:upphi_evol}
                t \partial_t \upphi = \uppsi + p_{\phi} \upnu,
            \end{equation}
            \begin{equation} \label{eq:uppsi_evol}
                t \partial_t \uppsi = t^2 \mathring{g}^{ab} \partial_a \partial_b \upphi - p_{\phi} \upnu.
            \end{equation}
        \item
            The linearized lapse $\upnu$ obeys the elliptic equation
            \begin{equation} \label{eq:upnu_elliptic}
                \left( 1 - t^2 \mathring{g}^{ab} \partial_a \partial_b \right) \upnu = - 2 t^2 \mathring{g}^{ab} \partial_a \partial_b (\tr \upeta) + 2 t^2 \mathring{g}^{ab} \partial_i \partial_a \matr{\upeta}{b}{i}.
            \end{equation}
            If $\matr{\hat{\upeta}}{i}{j}$ moreover satisfies the spatially harmonic condition \eqref{eq:spatiallyharmonic_linear}, this equation simplifies to
            \begin{equation} \label{eq:upnu_elliptic_sh}
                \left( 1 - t^2 \mathring{g}^{ab} \partial_a \partial_b \right) \upnu = - t^2 \mathring{g}^{ab} \partial_a \partial_b (\tr \hat{\upeta}).
            \end{equation}
            Furthermore, if \eqref{eq:spatiallyharmonic_linear} holds we have an additional elliptic equation for the shift:
            \begin{equation} \label{eq:upchi_elliptic}
                - t^2 \mathring{g}^{ab} \partial_a \partial_b \upchi^j 
                = - t^2 \mathring{g}^{ij} \partial_i \upnu - 2 t^2 \mathring{g}^{ij} ( t \matr{\mathring{k}}{i}{k} ) \partial_k \upnu
                - 4 t^2 \mathring{g}^{pq} (t \matr{\mathring{k}}{p}{i}) \partial_q \matr{\hat{\upeta}}{i}{j} 
                + 2 t^2 \mathring{g}^{ij} \partial_i [ t \matr{\mathring{k}}{b}{a} \, \matr{\hat{\upeta}}{a}{b} + 2 p_{\phi} \upphi ].
            \end{equation}
        \item
            As well as the linearized CMC condition \eqref{eq:cmc_linear}, the following constraints are satisfied:
            \begin{gather} \label{eq:hamiltonian_linear}
                - 2 t^2 \mathring{g}^{ab} \partial_a \partial_b (\tr \upeta) + 2 t^2 \mathring{g}^{ab} \partial_i \partial_a \matr{\upeta}{b}{i} + 2 t \matr{\mathring{k}}{b}{a} \, \matr{\upkappa}{a}{b} + 4 p_{\phi} \uppsi = 0.
                \\[0.5em] \label{eq:momentum_linear1}
                \partial_j \matr{\upkappa}{i}{j} + \partial_i [ t \matr{\mathring{k}}{b}{a} \, \matr{\upeta}{a}{b} + 2 p_{\phi} \upphi ] - t \matr{\mathring{k}}{i}{j} \partial_j (\tr \upeta) = 0.
                \\[0.5em] \label{eq:momentum_linear2}
                \mathring{g}^{ab} \partial_a \matr{\upkappa}{b}{c} + \mathring{g}^{cd} \partial_d [ t \matr{\mathring{k}}{b}{a} \, \matr{\upeta}{a}{b} + 2 p_{\phi} \upphi ] - 2 \mathring{g}^{ab} \, t \matr{\mathring{k}}{b}{d} \, \partial_a \matr{\upeta}{d}{c} = 0.
            \end{gather}
            As mentioned the solution is in CMCSH gauge if \eqref{eq:spatiallyharmonic_linear} is also satisfied. In this case, \eqref{eq:hamiltonian_linear} simplifies to
            \begin{gather} \label{eq:hamiltonian_linear2}
                - t^2 \mathring{g}^{ab} \partial_a \partial_b (\tr \hat{\upeta}) + 2 t \matr{\mathring{k}}{b}{a} \, \matr{\hat{\upkappa}}{a}{b} + 4 p_{\phi} \uppsi = 0.
            \end{gather}
        \item
            Finally, there are linearized symmetry conditions for $\matr{\upeta}{i}{j}$ and $\matr{\upkappa}{i}{j}$:
            \begin{gather} \label{eq:upeta_sym}
                \mathring{g}^{ab} \matr{\upeta}{b}{c} = \mathring{g}^{cb} \matr{\upeta}{b}{a},
                \\[0.5em] \label{eq:upkappa_sym}
                \mathring{g}^{ab} \left( \matr{\upkappa}{b}{c} - 2 t \matr{\mathring{k}}{b}{d} \, \matr{\upeta}{d}{c} \right)
                =
                \mathring{g}^{cb} \left( \matr{\upkappa}{b}{a} - 2 t \matr{\mathring{k}}{b}{d} \, \matr{\upeta}{d}{a} \right).
            \end{gather}
    \end{enumerate}
\end{proposition}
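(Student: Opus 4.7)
The proof is a systematic linearization: I would write each geometric quantity in the ADM system of Proposition~\ref{prop:adm_evol} as a background Kasner quantity plus $\varepsilon$ times a perturbation, substitute into the nonlinear equations \eqref{eq:h_evol}--\eqref{eq:momentum}, and extract the $O(\varepsilon)$ terms. Two features of the Kasner background drive the simplifications throughout. First, since $\mathring{g}_{ij} = t^{2p_{\underline{i}}}\delta_{\underline{i}j}$ has no spatial dependence, all spatial Christoffel symbols of $\mathring{g}$ vanish, so spatial covariant derivatives reduce to partial derivatives at leading order; second, by \eqref{eq:kasner_evol} the background Ricci tensor, scalar curvature, and spatial gradient of $\mathring{\phi}$ all vanish, eliminating whole families of terms before linearization. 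The key algebraic input is the linearized inverse-metric expansion $(\bar{g}^{-1})^{ij} = (\mathring{g}^{-1})^{ij} + 2\varepsilon \mathring{g}^{ia}\matr{\upeta}{a}{j} + O(\varepsilon^2)$, which combined with the definitions \eqref{eq:hk} produces the recurring $t\matr{\mathring{k}}{i}{j}$ coefficients.

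I would work through the equations in the order listed. The evolution equations \eqref{eq:upeta_evol}--\eqref{eq:upkappa_evol} follow from differentiating \eqref{eq:hk} in $t$ and substituting \eqref{eq:h_evol}--\eqref{eq:k_evol}: the $(2t\matr{\mathring{k}}{p}{j})\matr{\upeta}{i}{p}$ and $(t\matr{\mathring{k}}{i}{j})\upnu$ terms arise from the background $t$-dependence of $\mathring{g}$ used to define $\matr{\upeta}{i}{j}$, while the shift contributions come from linearizing $\bar{g}_{ik}\nabla_j X^k$ using $\mathring{X}^i = 0$ and vanishing background spatial Christoffels. The scalar field equations \eqref{eq:upphi_evol}--\eqref{eq:uppsi_evol} are analogous but much simpler. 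The elliptic equation \eqref{eq:upnu_elliptic} follows from linearizing \eqref{eq:n_elliptic} using $\mathring{\textup{R}}[\mathring{g}] = 0$ and the linearized Hamiltonian constraint to rewrite the right-hand side in terms of $\matr{\upeta}{i}{j}$; similarly \eqref{eq:upchi_elliptic} comes from linearizing \eqref{eq:x_elliptic} and tracking the rescaling $\upchi^i = tX^i$. The CMCSH simplifications \eqref{eq:ricci_lin_sh}, \eqref{eq:upnu_elliptic_sh} and \eqref{eq:hamiltonian_linear2} drop out by substituting \eqref{eq:spatiallyharmonic_linear} into the generic expressions.

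For the constraints: the identity $\tr\upkappa = 0$ is purely algebraic, since by \eqref{eq:hk} and the first Kasner relation $\sum_i p_i = 1$ one has $\tr\upkappa = t\tr k + 1 = 0$. The Hamiltonian constraint \eqref{eq:hamiltonian_linear} follows from linearizing \eqref{eq:hamiltonian} and using the second Kasner relation to cancel the background contribution, while the two momentum constraints \eqref{eq:momentum_linear1}--\eqref{eq:momentum_linear2} arise from linearizing \eqref{eq:momentum} with the free index kept in two different natural positions, each choice generating its own extra terms from the perturbation of the inverse metric. The symmetry conditions \eqref{eq:upeta_sym}--\eqref{eq:upkappa_sym} are read off the definitions: $\mathring{g}^{ab}\matr{\upeta}{b}{c}$ equals $-\tfrac{1}{2}\mathring{g}^{ab}\mathring{g}^{cd}(\bar{g}_{bd}-\mathring{g}_{bd})$, which is manifestly symmetric in $(a,c)$, and adding $-2t\matr{\mathring{k}}{b}{d}\matr{\upeta}{d}{c}$ to $\matr{\upkappa}{b}{c}$ in the $\upkappa$-statement symmetrizes exactly because $\matr{\mathring{k}}{i}{j}$ is $\mathring{g}$-symmetric.

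The main obstacle is the linearization of the Ricci tensor $\textup{Ric}[\bar{g}]$ that produces the principal-order part of \eqref{eq:upkappa_evol}. Starting from \eqref{eq:ricci_evol_0}, one must verify that $\mathcal{N}(\bar{g}^{-1}, \partial\bar{g})$ is genuinely quadratic in $\partial\bar{g}$, substitute $h_{ij} = -2\mathring{g}_{jk}\matr{\upeta}{i}{k}$ into each of the four second-derivative terms, raise the free lower index $j$ against $(\mathring{g}^{-1})^{jk}$, and collapse the contractions $(\mathring{g}^{-1})^{jk}\mathring{g}_{k\ell}$ to Kronecker deltas to arrive at \eqref{eq:ricci_lin}. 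Extensive bookkeeping of index positions and $t$-factors is required throughout, but the tensorial form of \eqref{eq:ricci_lin} together with its reduction under \eqref{eq:spatiallyharmonic_linear} to \eqref{eq:ricci_lin_sh} (where the three auxiliary terms each vanish upon commuting a partial derivative through the spatially harmonic identity) provides a useful consistency check at each stage.
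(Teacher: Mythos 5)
Your proposal is correct and follows essentially the same strategy as the paper: Taylor-expand the nonlinear ADM system \eqref{eq:h_evol}--\eqref{eq:momentum} around the Kasner background, use the vanishing of the background spatial Christoffel symbols, background Ricci, and background spatial $\phi$-gradient to eliminate whole families of terms, and discard quadratic remainders in the linearly small quantities; the paper selects four representative equations (\eqref{eq:upeta_evol}, \eqref{eq:upkappa_evol}, \eqref{eq:upchi_elliptic}, \eqref{eq:upkappa_sym}) to detail and leaves the rest to the reader, whereas you give a uniform overview. One small imprecision: the derivation of \eqref{eq:upnu_elliptic} does \emph{not} require invoking the linearized Hamiltonian constraint -- it is the direct linearization of \eqref{eq:n_elliptic}, with $\textup{R}[\mathring{g}]=0$ and the observation that the $\nabla\phi\nabla\phi$ term is already quadratic; the RHS of \eqref{eq:upnu_elliptic} is simply $-t^2$ times the trace $\matr{\widecheck{\textup{Ric}}}{i}{i}[\upeta]$ of the linearized Ricci tensor \eqref{eq:ricci_lin}, so no constraint is needed to express it in terms of $\matr{\upeta}{i}{j}$. (The Hamiltonian constraint \emph{can} be used to rewrite the source in terms of $\matr{\upkappa}{i}{j}$, as mentioned in Section~\ref{sub:intro_proof}, but that is a subsequent step in the energy argument, not part of deriving \eqref{eq:upnu_elliptic}.)
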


\begin{remark}
    We outline some of the main differences from the linearized system derived in \cite[Proposition 3.2]{RodnianskiSpeck0}. The first difference is that we use the linearly small quantity $\matr{\upeta}{i}{j}$, while \cite{RodnianskiSpeck0} directly uses the metric perturbation $\bar{h}_{ij} = \bar{g}_{ij} - \mathring{g}_{ij}$. Secondly, we do not insist on transported coordinates, and therefore allow a linearized shift vector $\upchi^j$. Thirdly, we use $\uppsi = \mathbf{N} \phi - p_{\phi}$ as our linearly small quantity, in place of $\partial_t \phi - \frac{p_{\phi}}{t}$. (\cite{RodnianskiSpeck0} do mention our $\uppsi$ as an `alternative approach'.)
\end{remark}

\begin{proof}
    We shall provide detailed proofs of four of the equations in Proposition~\ref{prop:adm_linear}, namely the evolution equations \eqref{eq:upeta_evol} and \eqref{eq:upkappa_evol}, the elliptic equation \eqref{eq:upchi_elliptic} for $\upchi^j$ in CMCSH gauge, and the symmetry condition \eqref{eq:upkappa_sym}. The remaining equations may be derived by similar methods and are left to the reader. 

    As in \cite[Proposition 3.2]{RodnianskiSpeck0}, the overall strategy will be to consider the equations in Proposition~\ref{prop:adm_evol} and Taylor expand them around the background Kasner solution, eventually discarding terms which are quadratic or higher in the linearly small quantities of Definition~\ref{def:linsmall}. We also make use of $\bar{h}_{ij} = \bar{g}_{ij} - \mathring{g}_{ij}$, and as in \cite{RodnianskiSpeck0}, at least within this proof we shall write $(\mathring{g}^{-1})^{jk}$ and $(\bar{g}^{-1})^{jk}$ to denote the inverse metrics.

    We start with the derivation of \eqref{eq:upeta_evol}. For this, the first step is to multiply \eqref{eq:h_evol} by $t (\mathring{g}^{-1})^{jk}$. Using the fact that $t \partial_t (\mathring{g}^{-1})^{jk} = 2 t \matr{\mathring{k}}{\ell}{j} (\mathring{g}^{-1})^{k \ell}$, and relabelling indices, we deduce that
    \[
        - \frac{1}{2} t \partial_t ( (\mathring{g}^{-1})^{jk} \bar{g}_{ik} ) = - t \matr{\mathring{k}}{\ell}{j} (\mathring{g}^{-1})^{k \ell} \bar{g}_{ik} + n (\mathring{g}^{-1})^{jk} t k_{ik} - \frac{1}{2} (\mathring{g}^{-1})^{jk} \bar{g}_{i\ell} \nabla_k (tX^{\ell}) - \frac{1}{2} (\mathring{g}^{-1})^{jk} \bar{g}_{k \ell} \nabla_i (t X^{\ell}).
    \]
    We now begin to Taylor expand. Writing $\bar{g}_{ab} = \mathring{g}_{ab} + \bar{h}_{ab}$ in the first, third and fourth terms of the right hand side, while using $n = 1 + \upnu$ and $(\mathring{g}^{-1})^{jk} = (\bar{g}^{-1})^{jk} + (\mathring{g}^{-1})^{ja} \bar{h}_{ab} (\bar{g}^{-1})^{bk}$ in the second term, we have
    \begin{align*}
        t \partial_t \left( - \frac{1}{2} (\mathring{g}^{-1})^{jk} \bar{g}_{ik} \right)
        &= - t \matr{\mathring{k}}{i}{j} - t \matr{\mathring{k}}{\ell}{j} (\mathring{g}^{-1})^{k\ell} \bar{h}_{ik} + (1 + \upnu) \left( t (\bar{g}^{-1})^{jk} + (\mathring{g}^{-1})^{ja} \bar{h}_{ab} \, t (\bar{g}^{-1})^{bk} \right) k_{ik} \\[0.3em]
        &\qquad - \frac{1}{2} (\mathring{g}^{-1})^{jk} (\mathring{g}_{i \ell} + \bar{h}_{i \ell}) \nabla_k \upchi^{\ell} - \frac{1}{2} (\mathring{g}^{-1})^{jk} (\mathring{g}_{k \ell} + \bar{h}_{k \ell}) \nabla_i \upchi^{\ell}.
    \end{align*}

    We now simply use that $\matr{\upeta}{i}{j} = - \frac{1}{2} (\mathring{g}^{-1})^{jk} \bar{h}_{ik}$, as well as the fact that modulo quadratic error terms, $\nabla_a$ can be replaced by $\partial_a$ since the Christoffel symbols and the renormalized shift $\upchi^{\ell}$ are both linearly small. Therefore for $\mathcal{Q}_{\upeta} = \mathcal{Q}_{\upeta}(\bar{h}, \partial \bar{h}, \upnu, \upchi, \partial \upchi)$ a quadratic and higher order nonlinearity,
    \begin{align*}
        t \partial_t \matr{\upeta}{i}{j} 
        &= t (\bar{g}^{-1})^{jk} k_{ik} - t \matr{\mathring{k}}{i}{j} + 2 t \matr{\mathring{k}}{\ell}{j} \matr{\upeta}{i}{\ell} - 2 t (\bar{g}^{-1})^{bk} k_{ik} \matr{\upeta}{b}{j} + t (\bar{g}^{-1})^{jk} k_{ik} \upnu \\[0.3em]
        &\qquad \hspace{4cm} - \frac{1}{2} (\mathring{g}^{-1})^{jk} \mathring{g}_{i \ell} \partial_k \upchi^{\ell} - \frac{1}{2} \partial_i \upchi^j + \mathcal{Q}_{\upeta}.
    \end{align*}
    Thereby upon inserting the definition of the linearly small quantity $\matr{\upkappa}{i}{j} = t (\bar{g}^{-1})^{jk} k_{ik} - t \matr{\mathring{k}}{i}{j}$, and removing all quadratic or higher order terms in the linearly small quantities, the equation \eqref{eq:upeta_evol} follows.

    The next equation we derive is \eqref{eq:upkappa_evol}. For this equation, we firstly combine the two equations \eqref{eq:h_evol} and \eqref{eq:k_evol} to deduce
    \begin{align*}
        t \partial_t \left( t (\bar{g}^{-1})^{jk} k_{ik} \right) 
        &= - t^2 (\bar{g}^{-1})^{jk} \nabla_i \nabla_k n + n t^2 (\bar{g}^{-1})^{jk} \mathrm{Ric}_{ik}[\bar{g}] - 2 n t^2 (\bar{g}^{-1})^{jk} \nabla_i \phi \nabla_k \phi - (n - 1) t (\bar{g}^{-1})^{jk} k_{ik} \\[0.3em]
        &\qquad  + t (\bar{g}^{-1})^{jk} (t X^{\ell}) \nabla_{\ell} k_{ik} + t (\bar{g}^{-1})^{jk} k_{k \ell} \nabla_k (t X^{\ell}) - t (\bar{g}^{-1})^{k \ell} k_{i k} \nabla_{\ell} (t X^j).
    \end{align*}
    We now simply insert the definitions of all the linearly small quantities, and replace all $\nabla_a$ derivatives with $\partial_a$ derivatives modulo quadratic error terms, to deduce that for some term $\mathcal{Q}_{\upkappa}$ quadratic in the linearly small quantities $\upeta$, $\partial \upeta$, $\partial^2 \upeta$, $\upnu$, $\partial \upnu$, $\partial^2 \upnu$, $\upkappa$, $\partial \upkappa$, $\upchi$, $\partial \upchi$ and $\partial \upphi$,
    \[
        t \partial_t \matr{\upkappa}{i}{j} = 
        t^2 (\mathring{g}^{-1})^{jk} \mathrm{Ric}_{ik}[\bar{g}] - t^2 (\mathring{g}^{-1})^{jk} \partial_i \partial_k \upnu - t \matr{\mathring{k}}{i}{j} \upnu + t \matr{\mathring{k}}{\ell}{j} \partial_k \upchi^{\ell} - t \matr{\mathring{k}}{i}{\ell} \partial_{\ell} \upchi^j + {\mathcal{Q}}_{\upkappa}.
    \]
    Upon using \eqref{eq:ricci_evol_0} to express $\mathrm{Ric}_{ik}[\bar{g}]$ and removing all quadratic or higher order error terms, the equation \eqref{eq:upkappa_evol} follows. (If instead we used \eqref{eq:ricci_evol} we get the desired result in the case of CMCSH gauge.)

    The next order of business is the elliptic equation \eqref{eq:upchi_elliptic}, assuming the CMCSH gauge condition \eqref{eq:spatiallyharmonic}. We shall derive this equation by using \eqref{eq:x_elliptic}, which we multiply by $t^3$ and rearrange to get
    \begin{multline*}
        t^2 (\bar{g}^{-1})^{ab} \nabla_a \nabla_b (t X^i)                 =
        t^2 (\bar{g}^{-1})^{ij} \left( 2 t\matr{k}{j}{a} \nabla_a n + \nabla_j n - 4 n p_{\phi} \nabla_j \phi \right) - 2 n t^2 (\bar{g}^{-1})^{ab} (t \matr{k}{a}{c}) \Gamma^i_{bc}[\bar{g}]
        \\ - 2 t^2 (\bar{g}^{-1})^{ab} \nabla_a (t X^c) \Gamma^i_{bc}[\bar{g}]  - t^2 (\bar{g}^{-1})^{ij}\textup{Ric}_{jk}[\bar{g}] t X^k - 4 n t^2 (\bar{g}^{-1})^{ij} (t \mathbf{N} \phi - p_{\phi}) \nabla_j \phi.
    \end{multline*}

    Now, we observe that the second line of this expression is already quadratic or higher order in our linearly small variables $(\matr{\upeta}{i}{j}, \matr{\upkappa}{i}{j}, \upphi, \uppsi, \upnu, \upchi^j)$ and their derivatives, and we therefore discard it. Similarly, we may Taylor expand all but the final term on the first line, and we deduce that for some $\mathcal{Q}_{\upchi}$ which is quadratic or higher order in the linearly small quantities, one has:
    \[
        t^2 (\mathring{g}^{-1})^{ab} \partial_a \partial_b \upchi^i = t^2 (\mathring{g}^{-1})^{ij} ( 2 t \matr{\mathring{k}}{j}{a} \partial_a \upnu + \partial_j \upnu - 4 p_{\phi} \partial_j \upphi) - 2 t^2 (\bar{g}^{-1})^{ab} (t \matr{k}{a}{c}) \Gamma_{bc}^i[\bar{g}] + \mathcal{Q}_{\upchi}.
    \]

    It remains to expand the expression $t^2(\bar{g}^{-1})^{ab} (t \matr{k}{a}{c}) \Gamma^i_{bc}[\bar{g}]$. Using the local coordinate form of Christoffel symbols, and the fact that $\partial_i \bar{g}_{jk} = \partial_i \bar{h}_{jk}$, we have
    \[
        - 2 t^2 (\bar{g}^{-1})^{ab} (t \matr{k}{a}{c}) \Gamma^i_{bc}[\bar{g}] = 
        - t^2 (\bar{g}^{-1})^{ab} (\bar{g}^{-1})^{ij} (t \matr{k}{a}{c}) \left( \partial_b \bar{h}_{jc} + \partial_c \bar{h}_{jb} - \partial_j \bar{h}_{cb} \right).
    \]
    Since the final parenthesized expression is linearly small, we may replace $(\bar{g}^{-1})^{ij}$ and $(t \matr{k}{i}{j})$ by their background Kasner analogues up to a quadratic error. Using also that $t \matr{k}{i}{j}$ is self-adjoint with respect to $\bar{g}_{ij}$, we find that up to a quadratic or higher order error we call $\mathcal{Q}_{\Gamma}$, one has
    \begin{align*}
        - 2 t^2 (\bar{g}^{-1})^{ab} (t \matr{k}{a}{c}) \Gamma^i_{bc}[\bar{g}]
        &= - 2 t^2 (\mathring{g}^{-1})^{ab} (t \matr{\mathring{k}}{a}{c}) \partial_b ((\mathring{g}^{-1})^{ij} \bar{h}_{jc}) + \partial_a \bar{h}_{jb} + t^2 (\mathring{g}^{-1})^{ij} (t \matr{\mathring{k}}{a}{c}) \partial_j ((\mathring{g}^{-1})^{ab} \bar{h}_{bc}) + \mathcal{Q}_{\Gamma} \\[0.3em]
        &= 4 t^2 (\mathring{g}^{-1})^{ab} (t \matr{\mathring{k}}{a}{c}) \partial_b \matr{\hat{\upeta}}{c}{i} - 2 t^2 (\mathring{g}^{-1})^{ij} (t \matr{\mathring{k}}{a}{c}) \partial_j \matr{\hat{\upeta}}{c}{a} + \mathcal{Q}_{\Gamma}.
    \end{align*}

    Combining this with the expression for $t^2 (\mathring{g}^{-1})^{ab} \partial_a \partial_b \upchi^i$ above, and discarding the nonlinearities $\mathcal{Q}_{\upchi}$ and $\mathcal{Q}_{\Gamma}$, we get the elliptic equation \eqref{eq:upchi_elliptic}, as required.

    The final equation we derive in detail is the symmetry constraint \eqref{eq:upkappa_sym} for $\matr{\upkappa}{i}{j}$ and $\matr{\upeta}{i}{j}$. This arises from the symmetry of the second fundamental form $k_{ij}$, which we write in the following way (as in the elliptic equation for shift, the $(1, 1)$-tensor $\matr{k}{i}{j}$ means $(\bar{g}^{-1})^{jk} k_{ik}$.)
    \[
        (\bar{g}^{-1})^{ab} (t \matr{k}{b}{c}) = (\bar{g}^{-1})^{cb} (t \matr{k}{b}{a}).
    \]
    Expanding the various terms, we therefore have
    \[
        ((\mathring{g}^{-1})^{ab} - (\mathring{g}^{-1})^{ad} \bar{h}_{de} (\bar{g}^{-1})^{eb}) (t \matr{\mathring{k}}{b}{c} + \matr{\upkappa}{b}{c}) = 
        ((\mathring{g}^{-1})^{cb} - (\mathring{g}^{-1})^{cd} \bar{h}_{de} (\bar{g}^{-1})^{eb}) (t \matr{\mathring{k}}{b}{a} + \matr{\upkappa}{b}{a}).
    \]
    The equation \eqref{eq:upkappa_sym} then follows by inserting the definition of $\matr{\upeta}{i}{j}$, rearranging, cancelling $(\mathring{g}^{-1})^{ab} (t \matr{\mathring{k}}{b}{c}) = (\mathring{g}^{-1})^{cb} (t \matr{\mathring{k}}{b}{a})$ from both sides, and finally removing quadratic and higher order terms.
\end{proof}

\subsection{Spatial gauge and well-posedness} \label{sub:adm_gauge}

Though we have fixed our foliation via the CMC condition $\tr_{\bar{g}} k = - t^{-1}$, there is still gauge freedom in the choice of linearized quantities $(\matr{\upeta}{i}{j}, \matr{\upkappa}{i}{j}, \upchi^j)$ in Definition~\ref{def:linsmall} and in Proposition~\ref{prop:adm_linear} due to freedom in the choice of spatial coordinates, which we may view as diffeomorphisms of the spatial manifold $\mathbb{T}^D$ that we allow to be time-dependent. 

While one way of removing this freedom by imposing some gauge on initial data as well as in evolution, for instance by specifying a shift vector $\upchi^j$ or alternatively by imposing the linearized CMCSH condition \eqref{eq:spatiallyharmonic_linear}, it will be useful in our context of this article to allow all such gauge choices, as well as how to change between them. At the infinitesimal or linearized level, this is achieved via a time-dependent spatial vector field $\upxi^j$ on $\mathbb{T}^D$, as in the following lemma.

\begin{lemma} \label{lem:diffeo}
    Let $(\matr{\upeta}{i}{j}, \matr{\upkappa}{i}{j}, \upphi, \uppsi, \upnu, \upchi^j)$ be a solution to the linearized Einstein--scalar field system in Proposition~\ref{prop:adm_linear} (\underline{not necessarily} obeying the linearized CMCSH condition \eqref{eq:spatiallyharmonic_linear}). Let $\upxi^j$ be a possibly time-dependent vector field on $\mathbb{T}^D$. Then the system \eqref{eq:upeta_evol}--\eqref{eq:upkappa_sym} is invariant under the gauge transformation:
    \begin{gather} \label{eq:upeta_diffeo}
        \matr{\upeta}{i}{j} \mapsto \matr{\upeta}{i}{j} + \frac{1}{2} \partial_i \upxi^j + \frac{1}{2} \mathring{g}_{ip} \mathring{g}^{jq} \partial_q \upxi^p,
        \\[0.5em] \label{eq:upkappa_diffeo}
        \matr{\upkappa}{i}{j} \mapsto \matr{\upkappa}{i}{j} + (t \matr{\mathring{k}}{i}{p}) \partial_p \upxi^j - (t \matr{\mathring{k}}{p}{j}) \partial_i \upxi^p,
        \\[0.5em] \label{eq:upchi_diffeo}
        \upchi^j \mapsto \upchi^j - t \partial_t \upxi^j,
    \end{gather}
    while the scalar variables $\upphi, \uppsi, \upnu$ are left unchanged. In particular,
    \begin{enumerate}[(i)]
        \item
            For a solution $(\matr{\upeta}{i}{j}, \matr{\upkappa}{i}{j}, \upphi, \uppsi, \upnu, \upchi^j)$ as above, then applying the above transformation using the choice of vector field $\upxi^j$ given by inverting the elliptic equation
            \[
                t^2 \mathring{g}^{ab} \partial_a \partial_b \upxi^j = t^2 \mathring{g}^{jk} \left( \partial_k \tr \upeta - 2 \partial_{\ell} \matr{{\upeta}}{k}{\ell} \right)
            \]
            yields a solution in linearized CMCSH gauge, via the transformation \eqref{eq:upeta_diffeo}--\eqref{eq:upchi_diffeo}.
        \item
            Let $\upchi^j_{\circ}$ be a fixed (possibly time-dependent) vector field. For any solution $(\matr{\upeta}{i}{j}, \matr{\upkappa}{i}{j}, \upphi, \uppsi, \upnu, \upchi^j)$ as above, then for any choice of $T > 0$ choose the vector field $\upxi^j$ to satisfy
            \[
                \upxi^j = \int^t_{T} \left( \upchi^j(s) - \upchi^j_{\circ}(s) \right) \frac{ds}{s}.
            \]
            Then after applying the transformation \eqref{eq:upeta_diffeo}--\eqref{eq:upchi_diffeo} to our solution, the transformed solution has $\upchi^j \equiv \upchi^j_{\circ}$. In particular, by choosing $\upchi^j_{\circ} \equiv 0$, we can always transform to a solution with zero shift. We say that such solutions are in \underline{linearized CMCTC gauge}.% and will always denote such solutions without the hat, i.e.~when $\upchi^j$ vanishes we write $\matr{\upeta}{i}{j}$ and $\matr{\upkappa}{i}{j}$ in place of $\matr{\hat{\upeta}}{i}{j}$ and $\matr{\hat{\upkappa}}{i}{j}$.
    \end{enumerate}
\end{lemma}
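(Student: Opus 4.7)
My plan is to derive the gauge transformation \eqref{eq:upeta_diffeo}--\eqref{eq:upchi_diffeo} as the linearization, at the background Kasner solution, of the action on ADM variables of a time-dependent family of spatial diffeomorphisms $\Phi_t:\mathbb{T}^D\to\mathbb{T}^D$ generated by $\upxi^j$. Pulling back $\bar{g}$ and the Weingarten map $t\matr{k}{i}{j}$ under $\Phi_t$ produces Lie-derivative variations $\mathcal{L}_{\upxi}\mathring{g}_{ij}$ and $\mathcal{L}_{\upxi}(t\matr{\mathring{k}}{i}{j})$, which, converted into the normalized $(1,1)$-tensors $\matr{\upeta}{i}{j}$, $\matr{\upkappa}{i}{j}$ and simplified using spatial homogeneity of $\mathring{g}$, reproduce \eqref{eq:upeta_diffeo}--\eqref{eq:upkappa_diffeo} (up to an overall sign convention); the time-dependence of $\Phi_t$ then contributes the additional $-t\partial_t\upxi^j$ in the shift, accounting for \eqref{eq:upchi_diffeo}. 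Invariance of the whole system \eqref{eq:upeta_evol}--\eqref{eq:upkappa_sym} follows from diffeomorphism invariance of the nonlinear ADM equations: considering two one-parameter families of nonlinear solutions through the Kasner background that differ by a family of diffeomorphisms, the two resulting solutions of the linearized system, obtained by differentiating in the parameter, differ precisely by \eqref{eq:upeta_diffeo}--\eqref{eq:upchi_diffeo}. A self-contained direct check hinges on the identities $t\matr{\mathring{k}}{i}{j}=-p_{\underline{i}}\matr{\delta}{\underline{i}}{j}$ and $\mathring{g}_{kp}\mathring{g}^{kq}=\matr{\delta}{p}{q}$, commutativity of partial derivatives, and the crucial absorption of the $t\partial_t\upxi^j$ produced on the left of \eqref{eq:upeta_evol} by the shift transformation \eqref{eq:upchi_diffeo} on the right.

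For item (i), I would examine how the CMCSH defect $\mathcal{D}_i\coloneqq 2\partial_j\matr{\upeta}{i}{j}-\partial_i\tr\upeta$ transforms under \eqref{eq:upeta_diffeo}. Tracing \eqref{eq:upeta_diffeo} and using $\mathring{g}_{kp}\mathring{g}^{kq}=\matr{\delta}{p}{q}$ gives $\tr\upeta\mapsto\tr\upeta+\partial_k\upxi^k$, while differentiating \eqref{eq:upeta_diffeo} in $x^j$ and contracting yields $2\partial_j\matr{\upeta}{i}{j}\mapsto 2\partial_j\matr{\upeta}{i}{j}+\partial_i\partial_k\upxi^k+\mathring{g}_{ip}\mathring{g}^{jq}\partial_j\partial_q\upxi^p$. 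The two $\partial_i\partial_k\upxi^k$ contributions cancel, leaving $\mathcal{D}_i\mapsto\mathcal{D}_i+\mathring{g}_{ip}\mathring{g}^{jq}\partial_j\partial_q\upxi^p$. Requiring that this vanish and raising the $i$ index with $\mathring{g}^{ij}$ reproduces exactly the elliptic equation for $\upxi^j$ in (i). Solvability on $\mathbb{T}^D$ is automatic, as the right-hand side is a spatial divergence and thus has vanishing mean, which is the solvability condition for the associated Poisson problem on the torus.

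Item (ii) is a direct ODE calculation: \eqref{eq:upchi_diffeo} reads $\upchi^j\mapsto\upchi^j-t\partial_t\upxi^j$, so the target shift $\upchi_\circ^j$ is attained precisely when $t\partial_t\upxi^j=\upchi^j-\upchi_\circ^j$, and integrating from any fixed reference time $T>0$ recovers the formula for $\upxi^j$ in the statement; taking $\upchi_\circ^j\equiv 0$ realizes CMCTC gauge. The principal obstacle lies not in (i) or (ii) but in the invariance claim itself, whose brute-force verification requires patient bookkeeping across all ten coupled equations of Proposition~\ref{prop:adm_linear}. I would therefore favor the conceptual argument via the diffeomorphism invariance of the nonlinear system, explicitly verifying at most one or two representative equations (e.g.~\eqref{eq:upeta_evol} and \eqref{eq:upnu_elliptic}) to exhibit the cancellation mechanism.
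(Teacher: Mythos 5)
Your proposal is correct and follows essentially the same route as the paper: the transformation rules are read off from linearizing a spatial coordinate change (equivalently, a Lie derivative) in the ADM variables, item (i) is verified by computing how the CMCSH defect $2\partial_j\matr{\upeta}{i}{j}-\partial_i\tr\upeta$ transforms and observing the $\partial_i\partial_k\upxi^k$ cancellation, and item (ii) is a direct integration of \eqref{eq:upchi_diffeo}. The one small addition beyond the paper's sketch is your correct observation that the right-hand side of the elliptic equation in (i) is a divergence and hence has zero mean, guaranteeing solvability on $\mathbb{T}^D$.
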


\begin{proof}[(Sketch) Proof]
    We leave the proof of invariance of the system \eqref{eq:upeta_evol}--\eqref{eq:upkappa_sym} under the transformation \eqref{eq:upeta_diffeo}--\eqref{eq:upchi_diffeo} to the reader, and only mention at a heuristic level how the transformation \eqref{eq:upeta_diffeo}--\eqref{eq:upchi_diffeo} is derived. Essentially, we consider coordinate transformations of the form $\tilde{x}^j = x^j + \upxi^j$. Then the ADM-type metric \eqref{eq:adm_metric} may be rewritten as
    \begin{align*}
        g 
        &= - n^2 dt^2 + \bar{g}_{ij} (dx^i + X^i dt)(dx^j + X^j dt) \\[0.3em]
        &= - n^2 dt^2 + \bar{g}_{ij} \left( d \tilde{x}^i - \frac{\partial \upxi^i}{\partial x^k} dx^k - \frac{\partial \upxi^i}{\partial t} dt + X^i dt \right) \left( d \tilde{x}^j - \frac{\partial \upxi^j}{\partial x^{\ell}} dx^{\ell} - \frac{\partial \upxi^j}{\partial t} dt + X^i dt \right) \\[0.3em]
        &= -n^2 + \tilde{\bar{g}}_{ij} (d \tilde{x}^i + \tilde{X}^i dt) (d \tilde{x}^j + \tilde{X}^j ) + \mathcal{Q}_{\upxi}(\upxi, \partial \upxi).
    \end{align*}

    In this final line, $\mathcal{Q}_{\upchi}(\upxi, \partial \upxi)$ is a quadratic nonlinearity, while the modified first fundamental form $\tilde{\bar{g}}_{ij}$ and shift vector $\tilde{X}^j$ are given by
    \[
        \tilde{\bar{g}}_{ij} = \bar{g}_{ij} - \bar{g}_{ik} \frac{\partial \upxi^k}{\partial x^j} - \bar{g}_{jk} \frac{\partial \upxi^k}{\partial x^i}, \qquad \tilde{X}^j = X^j - \frac{\partial \upxi^j}{\partial t}.
    \]
    One now inserts $\tilde{\bar{g}}_{ij}$ and $\tilde{X}^j$ in place of $\bar{g}_{ij}$ and $X^j$ in the expressions of the linearly small quantities of Definition~\ref{def:linsmall}. (One can also suitably modify the second fundamental form). Discarding terms which are quadratic or higher in the original linearly small quantities and $\upxi^j$, we are left with \eqref{eq:upeta_diffeo}--\eqref{eq:upchi_diffeo}.

    Leaving the remaining details to the reader, we move on to (i) and (ii). For (i), suppose that the change of gauge vector field $\upxi^j$ satisfies the elliptic equation $t^2 \mathring{g}^{ab} \partial_a \partial_b \upxi^j = t^2 \mathring{g}^{jk} (\partial_k \tr \upeta - 2 \partial_{\ell} \matr{\upeta}{k}{l})$, and consider the transformed version of $\matr{\upeta}{i}{j}$, which from \eqref{eq:upeta_diffeo} we know to be
    \[
        \matr{\hat{\upeta}}{i}{j} = \matr{\upeta}{i}{j} + \frac{1}{2} \partial_i \upxi^j + \frac{1}{2} \mathring{g}_{ip} \mathring{g}^{jq} \partial_q \upxi^p.
    \]
    To check this is now in CMCSH gauge, we want to check that $2 \partial_j \matr{\hat{\upeta}}{i}{j} = \partial_i \tr \hat{\upeta}$. For this we compute:
    \begin{align*}
        t^2 \mathring{g}^{ij} ( 2 \partial_k \matr{\hat{\upeta}}{i}{k}  - \partial_i \tr \hat{\upeta} )
        &= t^2 \mathring{g}^{ij} ( 2 \partial_k \matr{\upeta}{i}{k}  - \partial_i \tr \upeta ) + t^2 \mathring{g}^{ij} \partial_k \partial_i \upxi^k + t^2 \mathring{g}^{ik} \partial_i \partial_k \upxi^j - t^2 \mathring{g}^{ij} \partial_i \partial_k \upxi^k \\[0.3em]
        &= t^2 \mathring{g}^{ij} ( 2 \partial_k \matr{\upeta}{i}{k}  + t^2 \mathring{g}^{ik} \partial_i \partial_k \upxi^j = 0.
    \end{align*}

    The final equality followed exactly due to the choice of $\upxi^j$. This completes the proof of (i). The proof of (ii) is more straightforward; one simply uses the transformation formula for the shift \eqref{eq:upchi_diffeo}.
\end{proof}

\begin{remark}
    In Lemma~\ref{lem:diffeo}, one may in particular choose a vector field $\upxi^j$ which is \emph{time-independent}. This has the effect of transforming $\matr{\hat{\upeta}}{i}{j}$ and $\matr{\hat{\upkappa}}{i}{j}$ but leaving $\upchi^j$ unchanged. In particular, even in the class of solutions with $\upchi^j = 0$ such gauge changes are still permitted. Gauge changes of this type were discussed previously in Section~\ref{sub:intro_scattering2_gauge} and were used in the statements of Theorem~\ref{thm:einstein_scat}.% and Theorem~\ref{} so as to have optimal Sobolev regularity for the Cauchy data and the asymptotic data.
\end{remark}

Upon fixing the spatial gauge, by either fixing the shift vector $\upchi^j$ or by insisting upon the linearized CMCSH condition \eqref{eq:spatiallyharmonic_linear}, we can appeal to the following well-posedness result.

\begin{proposition}[Well-posedness for the linearized Einstein--scalar field system] \label{prop:adm_lwp}
    Let $(\matr{(\upeta_C)}{i}{j}, \matr{(\upkappa_C)}{i}{j}, \upphi_C, \uppsi_C)$ be Cauchy data (for instance at $t=1$) that verifies the constraint equations \eqref{eq:cmc_linear} and \eqref{eq:hamiltonian_linear}--\eqref{eq:upkappa_sym}, with regularity
    \(
    (\matr{(\upeta_C)}{i}{j}, \matr{(\upkappa_C)}{i}{j}, \upphi_C, \uppsi_C) \in H^{s+1} \times H^s \times H^{s+1} \times H^s.
    \)
    Then both of the following well-posedness statements hold.
    \begin{enumerate}[(i)]
        \item
            Suppose furthermore that $\matr{(\hat{\upeta}_C)}{i}{j}$ verifies the linearized CMCSH condition \eqref{eq:spatiallyharmonic_linear}. Then there exists a unique CMCSH solution $(\matr{\hat{\upeta}}{i}{j}(t), \matr{\hat{\upkappa}}{i}{j}(t), \upphi(t), \uppsi(t), \upnu(t), \upchi^j(t))$ to the system \eqref{eq:upeta_evol}--\eqref{eq:upkappa_sym} for $t \in (0, + \infty)$, where $\upchi^j$ has zero average\footnote{In this paper, the vector fields $\upchi^j$ and $\upxi^j$ will always have zero average, meaning their $\lambda = 0$ Fourier coefficients $\upchi_0^j$ and $\upxi_0^j$ always vanish. Note the zero modes have no affect on how $\matr{\upeta}{i}{j}$ and $\matr{\upkappa}{i}{j}$ transform.}, and with regularity:
            \begin{gather*}
                \left( \matr{\upeta}{i}{j}, \matr{\upkappa}{i}{j} \right) \in C^0((0, + \infty), H^{s+1} \times H^s) \cap C^1((0, + \infty), H^s \times H^{s-1}), \\[0.5em]
                \left( \upphi, \uppsi \right) \in C^0((0, + \infty), H^{s+1} \times H^s) \cap C^1((0, + \infty), H^s \times H^{s-1}), \\[0.5em]
                \upnu \in C^0((0, + \infty), H^{s+2}), \qquad \upchi^j \in C^0((0, +\infty), H^{s+2}).
            \end{gather*}
        \item
            Alternatively, let $\upchi^j_{\circ}$ be any (possibly time-dependent) vector field on $\mathbb{T}^D$, with $\upchi^j_{\circ} \in C^0((0, +\infty), H^{s+2})$. Then there exists a unique solution $(\matr{\upeta}{i}{j}(t), \matr{\upkappa}{i}{j}(t), \upphi(t), \uppsi(t), \upnu(t))$ to the system \eqref{eq:upeta_evol}--\eqref{eq:upkappa_sym} for $t \in (0, + \infty)$, such that $\upchi^j \equiv \upchi^j_{\circ}$ at all times, with the same regularity as above.
    \end{enumerate}
\end{proposition}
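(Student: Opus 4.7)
The plan is to establish both (i) and (ii) by closing the elliptic equations for the non-dynamical variables $\upnu$ and (in CMCSH) $\upchi^j$, substituting into the evolution equations to obtain a closed first-order linear hyperbolic-type system for $(\matr{\upeta}{i}{j}, \matr{\upkappa}{i}{j}, \upphi, \uppsi)$, and then applying standard linear theory together with constraint propagation. I expect this to be fairly routine modulo the bookkeeping for constraints, which will be the main obstacle.

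The first step is to solve the elliptic equations. At each fixed $t > 0$, the operator $1 - t^2 \mathring{g}^{ab} \partial_a \partial_b$ appearing in \eqref{eq:upnu_elliptic} (and its CMCSH analogue \eqref{eq:upnu_elliptic_sh}) is invertible on $H^{s}(\mathbb{T}^D)$ for every $s$, with gain of two derivatives; indeed, it is an elliptic Fourier multiplier on $\mathbb{T}^D$ whose symbol $1 + t^2 \tau^2_{\lambda}(t)$ is bounded below by $1$ uniformly in $\lambda$. Similarly, the operator $-t^2 \mathring{g}^{ab} \partial_a \partial_b$ appearing in \eqref{eq:upchi_elliptic} has kernel consisting of the constants, and the condition that $\upchi^j$ have zero average singles out a unique solution with a gain of two derivatives. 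Thus in case (i) we may express $\upnu$ and $\upchi^j$ as bounded linear maps of $(\matr{\hat{\upeta}}{i}{j}, \matr{\hat{\upkappa}}{i}{j}, \upphi, \uppsi)$, smoothly in $t > 0$; in case (ii) we simply treat $\upchi^j = \upchi^j_{\circ}$ as a given forcing term and invert only the elliptic equation \eqref{eq:upnu_elliptic} for $\upnu$.

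Once $\upnu$ and $\upchi^j$ are expressed in this way, the evolution equations \eqref{eq:upeta_evol}--\eqref{eq:uppsi_evol} form a closed first-order linear system whose principal part is wave-like: the combined equations for $\matr{\upeta}{i}{j}$ and $\matr{\upkappa}{i}{j}$ reduce, after using \eqref{eq:ricci_lin_sh} in case (i), to a $t^2 \mathring{g}^{ab} \partial_a \partial_b$-driven wave-type evolution analogous to \eqref{eq:wave_phi_evol}--\eqref{eq:wave_psi_evol} for the scalar wave equation. The coefficients are smooth on $(0, +\infty)$ (though degenerate as $t \to 0$), so standard energy estimates (see, e.g.,~\cite{AnderssonMoncrief} for the CMCSH case, extended to include the scalar field as in \cite{AnderssonFajman, FajmanUrban}) yield local well-posedness in $H^{s+1} \times H^s \times H^{s+1} \times H^s$ on any compact subinterval of $(0, +\infty)$; linearity then immediately extends the solution to all of $(0, +\infty)$ with the stated regularity. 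For case (ii), the identical argument applies with $\upchi^j_{\circ}$ included as an inhomogeneity, using its assumed $C^0((0, +\infty), H^{s+2})$ regularity.

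The remaining task is to verify the propagation of the constraints \eqref{eq:cmc_linear}, \eqref{eq:hamiltonian_linear}--\eqref{eq:momentum_linear2}, \eqref{eq:upeta_sym}--\eqref{eq:upkappa_sym}, and, in case (i), the CMCSH condition \eqref{eq:spatiallyharmonic_linear}. For each constraint $\mathcal{C}$ one computes $t \partial_t \mathcal{C}$ using the evolution equations \eqref{eq:upeta_evol}--\eqref{eq:uppsi_evol} and the elliptic equations for $\upnu$ and (where applicable) $\upchi^j$; after cancellations using the background Kasner relations \eqref{eq:kasner_relations} and the identity $\mathrm{Ric}[\mathring{g}]=0$, the result is a linear combination of the constraints themselves. (In case (i), the CMCSH condition is preserved precisely because \eqref{eq:upchi_elliptic} was derived by demanding $t \partial_t (2 \partial_j \matr{\hat{\upeta}}{i}{j} - \partial_i \tr \hat{\upeta}) = 0$.) Since the constraints vanish at $t=1$ by hypothesis, uniqueness for this homogeneous linear system yields that all constraints hold throughout $(0, +\infty)$. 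The main obstacle — and the place where some patience is required — is precisely this bookkeeping step, since there are several constraints and many nonlinear terms in the underlying system \eqref{eq:h_evol}--\eqref{eq:momentum} that must be checked to linearize consistently; the computation parallels the vacuum calculation of \cite{AnderssonMoncrief} with the additional scalar field contributions tracked.
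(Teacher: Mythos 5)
Your outline for case (i) matches the paper's: fix the CMCSH gauge, invert the elliptic equations \eqref{eq:upnu_elliptic_sh} and \eqref{eq:upchi_elliptic} for $\upnu$ and $\upchi^j$, observe that \eqref{eq:ricci_lin_sh} reduces the principal part to $\mathring{g}^{ab}\partial_a\partial_b$, and invoke the elliptic--hyperbolic theory of \cite{AnderssonMoncrief} plus constraint propagation. That is exactly the paper's route for (i), up to delegating the constraint computation to Proposition~\ref{prop:constraints_1}.

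Case (ii) is where you have a genuine gap. You propose to treat $\upchi^j = \upchi^j_{\circ}$ as a known forcing term and ``apply the identical argument.'' But the argument is \emph{not} identical: once the CMCSH condition \eqref{eq:spatiallyharmonic_linear} is dropped, the linearized Ricci operator in \eqref{eq:upkappa_evol} must be taken in its full form \eqref{eq:ricci_lin},
\[
    \matr{\widecheck{\textup{Ric}}}{i}{j}[\upeta] = \mathring{g}^{ab} \partial_a \partial_b \matr{\upeta}{i}{j} + \mathring{g}^{ja} \partial_i \partial_a (\tr \upeta) - \mathring{g}^{ab} \partial_i \partial_b \matr{\upeta}{a}{j} - \mathring{g}^{ja} \partial_a \partial_b \matr{\upeta}{i}{b},
\]
and the last three terms are second-order but not of Laplacian type. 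The principal symbol of $\widecheck{\textup{Ric}}$ is degenerate on pure-gauge directions (this is precisely the diffeomorphism degeneracy that motivates harmonic/CMCSH-type gauges), so the resulting first-order system for $(\matr{\upeta}{i}{j}, \matr{\upkappa}{i}{j})$ is not symmetric- or strictly-hyperbolic in any obvious sense, and ``standard energy estimates'' do not apply directly. The same obstruction kills your implicit claim of uniqueness for (ii). The paper circumvents this entirely: to prove (ii) it first performs a time-independent gauge transformation putting the data in CMCSH at $t=1$, solves via (i), then applies a further time-dependent gauge transformation (Lemma~\ref{lem:diffeo}(ii)) to achieve $\upchi^j \equiv \upchi^j_{\circ}$; uniqueness is proved by a contradiction argument that reduces a hypothetical nontrivial zero-data solution with vanishing shift to a CMCSH solution with zero data and nonzero shift, contradicting CMCSH uniqueness. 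That gauge-transfer idea is essential to (ii) and is absent from your proposal.

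A smaller point: in the constraint-propagation step you write that $t\partial_t \mathcal{C}$ becomes ``a linear combination of the constraints themselves.'' In fact the paper's Proposition~\ref{prop:constraints_1} shows the constraints satisfy a closed \emph{first-order hyperbolic system} (e.g.\ $t\partial_t \mathcal{H} = t^2\mathring{g}^{ij}\partial_i\mathcal{P}_j$, $t\partial_t\mathcal{P}_i = \partial_i\mathcal{H}$) — derivatives of one constraint feed another — and moreover for the symmetry constraints $\mathcal{A}^{ac},\mathcal{B}^{ac}$ the second-order spatial terms only drop out \emph{assuming} CMCSH, so even here one needs a gauge transformation before the propagation argument closes. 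This is recoverable, but it is not a pointwise ODE argument.
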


\begin{proof}
    Let us start with (i). We first use the equations \eqref{eq:upeta_evol}, \eqref{eq:upkappa_evol}, \eqref{eq:ricci_lin_sh}, \eqref{eq:upphi_evol}, \eqref{eq:uppsi_evol}, \eqref{eq:upnu_elliptic}, \eqref{eq:upchi_elliptic}, which is a linear elliptic-hyperbolic system in the background $(\mathcal{M}_{Kas}, g_{Kas})$. (Note it is crucial that we used \eqref{eq:ricci_lin_sh} in place of \eqref{eq:ricci_lin} since only then does $\widecheck{\textup{Ric}}[\hat{\upeta}]$ appearing in \ref{eq:upkappa_evol} become a top order term of hyperbolic character.) 
    Therefore we may apply standard elliptic-hyperbolic theory, e.g.~that of \cite{AnderssonMoncrief}, to deduce that there is a solution to these equations \eqref{eq:upeta_evol}, \eqref{eq:upkappa_evol}, \eqref{eq:ricci_lin_sh}, \eqref{eq:upphi_evol}, \eqref{eq:uppsi_evol}, \eqref{eq:upnu_elliptic}, \eqref{eq:upchi_elliptic} for $t \in (0, + \infty)$. Moreover, this solution has the regularity claimed in (i).

    It remains to check that the constraint equations \eqref{eq:hamiltonian_linear}--\eqref{eq:upkappa_sym} and the gauge conditions \eqref{eq:cmc_linear} and \eqref{eq:spatiallyharmonic} also remain true for $ t \in (0, + \infty)$. The propagation of constraints is left to Proposition~\ref{prop:constraints_1}. Finally, uniqueness follows from the elliptic-hyperbolic theory, so long as we fix $\upchi^j$ to have zero average in order to have uniqueness for solutions of the elliptic equation \eqref{eq:upchi_elliptic}. 

    We now move on to (ii). In order to show existence, the first step will be to apply a gauge transformation to the Cauchy data $(\matr{(\upeta_C)}{i}{j}, \matr{(\upkappa_C)}{i}{j}, \upphi_C, \uppsi_C)$ using a vector field $\upxi^j_C$ which solves the elliptic equation
    \[
        \mathring{g}^{ab}(1) \partial_a \partial_b \upxi^j_C = \mathring{g}^{jk}(1) (\partial_k \tr \upeta_C - 2 \partial_{\ell} \matr{(\upeta_C)}{k}{\ell} ).
    \]
    As in Lemma~\ref{lem:diffeo}, this has the effect of putting the transformed $\matr{(\hat{\upeta}_C)}{i}{j} = \matr{(\upeta_C)}{i}{j} + \frac{1}{2} \partial_i \upxi^j + \frac{1}{2} \mathring{g}_{ip}(1) \mathring{g}^{jq}(1) \partial_q \upxi^p$ in CMCSH gauge, at least at $t = 1$. This allows us to use the existence result in (i), and we generate a CMCSH solution $(\matr{\hat{\upeta}}{i}{j}(t), \matr{\hat{\upkappa}}{i}{j}(t), \upphi(t), \uppsi(t), \upnu(t), \upchi^j(t))$ for $t > 0$ which achieves the Cauchy data $(\matr{(\hat{\upeta}_C)}{i}{j}, \matr{(\hat{\upkappa}_C)}{i}{j}, \upphi_C, \uppsi_C)$.

    We now perform another change of gauge to recover the solution we desire. Note that the shift vector $\upchi^j(t)$ was generated in evolution in CMCSH gauge, and thus cannot be expected to equal $\upchi^j_{\circ}$, and also that our CMCSH solution does not match our initial choice of Cauchy data $(\matr{(\upeta_C)}{i}{j}, \matr{(\upkappa_C)}{i}{j}, \upphi_C, \uppsi_C)$. We resolve both these issues by using yet another gauge transformation vector field coming from Lemma~\ref{lem:diffeo}(ii), namely %To also solve the initial data issue, we choose
    \[
        \upxi^j = - \upxi^j_C + \int^t_{1} \left( \upchi^j(s) - \upchi^j_{\circ}(s) \right) \frac{ds}{s}.
    \]
    After applying this gauge transformation to our CMCSH solution $(\matr{\hat{\upeta}}{i}{j}(t), \matr{\hat{\upkappa}}{i}{j}(t), \upphi(t), \uppsi(t), \upnu(t), \upchi^j(t))$, it is immediate to see that we obtain a solution satisfying the requirements of (ii). 

    The regularity is also immediate, and we finally turn to uniqueness. By taking the difference of two such solutions, uniqueness is equivalent to the statement that the only solution to the linearized Einstein--scalar field system with trivial data $(\matr{(\upeta_C)}{i}{j}, \matr{(\upkappa_C)}{i}{j}, \upphi_C, \uppsi_C) = (0, 0, 0, 0)$ and vanishing shift $\upchi^j(t) = 0$ being the zero solution. 
    Suppose, on the contrary, that there exists a non-trivial such solution $(\matr{(\upeta_{\circ})}{i}{j}(t), \matr{(\upkappa_{\circ})}{i}{j}(t), \upphi_{\circ}(t), \uppsi_{\circ}(t))$. 

    By uniqueness of CMCSH solutions, we know that this solution must not be CMCSH, hence there exists some time $T > 0$ such that $2 \partial_j \matr{(\upeta_{\circ})}{i}{j} (T) \neq \partial_i \tr \upeta_{\circ}(T)$. 
    Now we apply Lemma~\ref{lem:diffeo}(i) to this solution -- this generates, a new CMCSH solution $(\matr{(\hat{\upeta}_{\circ})}{i}{j}(t), \matr{(\hat{\upkappa}_{\circ})}{i}{j}(t), \upphi_{\circ}(t), \uppsi_{\circ}(t), \upnu_{\circ}(t), \upchi^j_{\circ}(t))$, via a transformation associated to the (zero average) vector field $\upxi^j_{\circ}$ which solves the elliptic equation
    \[
        t^2 \mathring{g}^{ab} \partial_a \partial_b \upxi^j_{\circ} = t^2 \mathring{g}^{jk} ( \partial_j \tr \upeta_{\circ} - 2 \partial_{\ell} ( \matr{(\upeta_{\circ})}{k}{\ell} ).
    \]
    Since $\matr{(\upeta_{\circ})}{i}{j} = 0$ at $t = 1$, we must have $\upxi^j_{\circ}(1) = 0$. On the other hand, by definition of $T > 0$, we must have $\upxi^j_{\circ}(T) \neq 0$. As a result, there must exist some $\hat{T} > 0$ such that $\partial_t \upxi^j_{\circ}(\hat{T}) \neq 0$, and therefore by the transformation rule \eqref{eq:upchi_diffeo} and the fact that we initially had vanishing shift, we must have $\upchi^j_{\circ} (\hat{T}) \neq 0$.

    However, by $\upxi^j_{\circ}(1) = 0$ the solution $(\matr{(\hat{\upeta}_{\circ})}{i}{j}(t), \matr{(\hat{\upkappa}_{\circ})}{i}{j}(t), \upphi_{\circ}(t), \uppsi_{\circ}(t), \upnu_{\circ}(t), \upchi^j_{\circ}(t))$ is a CMCSH solution with trivial data, and hence by uniqueness in the CMCSH gauge we must have $\upchi^j_{\circ}(t) = 0$ for all $t > 0$. This contradicts the existence of a nontrivial solution with $\upchi^j = 0$, and completes the proof of (ii).
\end{proof}

\subsection{Scattering data at \texorpdfstring{$t=0$}{t=0}} \label{sub:adm_scat}

The linearly small quantities $\matr{\upeta}{i}{j}$ and $\upphi$ generically blow up towards $t=0$, and are thus unsuitable choices for scattering data at $t=0$ itself. Therefore, in Definition~\ref{def:upupsilonupvarphi}, we introduced the renormalized quantities $\matr{\Upupsilon}{i}{j}$ and $\upvarphi$, which we recall to be: 
\begin{equation*}
    \matr{\Upupsilon}{i}{j} \coloneqq \matr{\upeta}{i}{j} + \int^1_t \mathring{g}_{ip}(s) \mathring{g}^{jq}(s) \frac{ds}{s} \cdot \matr{\upkappa}{q}{p}, \qquad
        \upvarphi \coloneqq \upphi - \uppsi \log t.
\end{equation*}

In fact, we often entertain a more general form of $\matr{\Upupsilon}{i}{j}$ and $\upvarphi$, where the time $t=1$ is replaced by $t = T > 0$. That is, we introduce the quantities $\matr{\tilde{\Upupsilon}}{i}{j}$ and $\tilde{\upvarphi}$ as
\begin{equation} \label{eq:upupsilon_tilde_0}
    \matr{\tilde{\Upupsilon}}{i}{j} \coloneqq \matr{\upeta}{i}{j} + \int^T_t \mathring{g}_{ip} (s) \mathring{g}^{jq} (s) \, \frac{ds}{s} \cdot \matr{\upkappa}{q}{p}, \qquad
    \tilde{\upvarphi} \coloneqq \upphi - \uppsi \log( \frac{t}{T} ).
\end{equation}
These more generally defined quantities will play a role in the following two lemmas.

Part of Theorem~\ref{thm:einstein_scat} is that $\matr{\Upupsilon}{i}{j}$ and $\upvarphi$ remain bounded near $t = 0$, and in fact tend to limits $\matr{(\Upupsilon_{\infty})}{i}{j}$ and $\upvarphi_{\infty}$ as $t \to 0$. This will also be true for $\matr{\upkappa}{i}{j}$ and $\uppsi$, which will tend to limits $\matr{(\upkappa_{\infty})}{i}{j}$ and $\uppsi_{\infty}$ as $t \to 0$. In this section, we prove two important properties of these renormalized quantities, or more generally the corresponding quantities $(\matr{(\upkappa_{\infty})}{i}{j}, \matr{(\tilde{\Upupsilon}_{\infty})}{i}{j}, \uppsi_{\infty}, \tilde{\upvarphi}_{\infty})$ for any $T > 0$.

Firstly, in Lemma~\ref{lem:scattering_constraints}, we show that these limiting quantities $(\matr{(\upkappa_{\infty})}{i}{j}, \matr{(\tilde{\Upupsilon}_{\infty})}{i}{j}, \uppsi_{\infty}, \tilde{\upvarphi}_{\infty})$ also obey constraint equations, analogous to those of \eqref{eq:hamiltonian_linear}--\eqref{eq:upkappa_sym}, which we often denote the \emph{asymptotic constraint equations}.
Then in Lemma~\ref{lem:upupsilon_gauge}, we determine how $\matr{\tilde{\Upupsilon}}{i}{j}$ transforms under the (linearized) change of gauge discussed in Section~\ref{sub:adm_gauge}. 

\begin{lemma} \label{lem:scattering_constraints}
    Let $s$ be sufficiently large, and let $(\matr{{\upeta}}{i}{j}, \matr{{\upkappa}}{i}{j}, \upphi, \uppsi, \upnu, \upchi^j)$ be a solution to the linearized Einstein--scalar field system in Proposition~\ref{prop:adm_linear}, with regularity as in Proposition~\ref{prop:adm_lwp}, where the Kasner exponents satisfy the subcriticality condition \eqref{eq:subcritical_delta}. For arbitrary $T > 0$, suppose furthermore that the quantities $\matr{\upkappa}{i}{j}$, $\matr{\tilde{\Upupsilon}}{i}{j}$, $\uppsi$ and $\tilde{\upvarphi}$ remain bounded towards $t = 0$, and moreover tend to a limit:
    \[
        (\matr{\upkappa}{i}{j}, \matr{\tilde{\Upupsilon}}{i}{j}, \uppsi, \tilde{\upvarphi}) \to (\matr{(\upkappa_{\infty})}{i}{j}, \matr{(\tilde{\Upupsilon}_{\infty})}{i}{j}, \uppsi_{\infty}, \tilde{\upvarphi}_{\infty}) \quad \text{ in } C^3 \text{ as } t\to 0.
    \]

    Then the following \underline{asymptotic constraint equations} hold: 
    \begin{gather} \label{eq:hamiltonian_linear_scat}
        t \matr{\mathring{k}}{b}{a} \, \matr{(\upkappa_{\infty})}{a}{b} + 2 p_{\phi} \uppsi_{\infty} = 0,
        \\[0.5em] \label{eq:momentum_linear1_scat}
        \partial_j \matr{(\upkappa_{\infty})}{i}{j} + \partial_i [ t \matr{\mathring{k}}{b}{a} \, \matr{({\tilde{\Upupsilon}}_{\infty})}{a}{b} + 2 p_{\phi} \tilde{\upvarphi}_{\infty} ] - t \matr{\mathring{k}}{i}{j} \partial_j (\tr \tilde{\Upupsilon}_{\infty}) = 0,
        \\[0.5em] \label{eq:momentum_linear2_scat}
        \mathring{g}^{ab}(T) \partial_a \matr{(\upkappa_{\infty})}{b}{c} + \mathring{g}^{cd}(T) \partial_d [ t \matr{\mathring{k}}{b}{a} \, \matr{(\tilde{\Upupsilon}_{\infty})}{a}{b} + 2 p_{\phi} \tilde{\upvarphi}_{\infty} ] - 2 \mathring{g}^{ab}(T) \, t \matr{\mathring{k}}{b}{d} \, \partial_a \matr{(\tilde{\Upupsilon}_{\infty})}{d}{c} = 0,
    \end{gather}
    as well as the following symmetry conditions:
    \begin{gather} \label{eq:upeta_sym_scat}
        \mathring{g}^{ab}(T) \matr{(\tilde{\Upupsilon}_{\infty})}{b}{c} = \mathring{g}^{cb}(T) \matr{(\tilde{\Upupsilon}_{\infty})}{b}{a},
        \\[0.5em] \label{eq:upkappa_sym_scat}
        \mathring{g}^{ab}(T) \left( \matr{(\upkappa_{\infty})}{b}{c} - 2 t \matr{\mathring{k}}{b}{d} \, \matr{(\tilde{\Upupsilon}_{\infty})}{d}{c} \right)
        =
        \mathring{g}^{cb}(T) \left( \matr{(\upkappa_{\infty})}{b}{a} - 2 t \matr{\mathring{k}}{b}{d} \, \matr{(\tilde{\Upupsilon}_{\infty})}{d}{a} \right).
    \end{gather} 
    Finally, we have the linearized CMC condition $\tr \upkappa_{\infty} = 0$. 
\end{lemma}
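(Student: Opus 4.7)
The plan is to insert the renormalization relations
\[
    \matr{\upeta}{\underline{i}}{\underline{j}}(t) = \matr{\tilde{\Upupsilon}}{\underline{i}}{\underline{j}}(t) - K_{ij}(t)\, \matr{\upkappa}{\underline{j}}{\underline{i}}(t), \qquad
    \upphi(t) = \tilde{\upvarphi}(t) + \uppsi(t) \log(t/T),
\]
where $K_{ij}(t) := \int_t^T s^{2p_{\underline{i}} - 2p_{\underline{j}} - 1}\,ds$ (equal to $\log(T/t)$ when $p_i=p_j$ and $(T^{2p_i-2p_j}-t^{2p_i-2p_j})/(2p_i-2p_j)$ otherwise), into each of the finite-$t$ constraint and symmetry identities of Proposition~\ref{prop:adm_linear}, and then to pass to $t\to 0$ using the hypothesised $C^3$ convergence. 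The CMC condition $\tr\upkappa_\infty=0$ is immediate from $\tr\upkappa(t)\equiv 0$ and continuity; note also that this same CMC condition collapses $\tr\upeta=\tr\tilde{\Upupsilon}$, since the diagonal $K_{ii}(t)=\log(T/t)$ contributions sum to $\log(T/t)\tr\upkappa=0$.

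For the Hamiltonian constraint \eqref{eq:hamiltonian_linear_scat}, the plan is to show that the two $t^2 \mathring{g}^{ab}\partial^2\upeta$ terms in \eqref{eq:hamiltonian_linear} vanish as $t\to 0$. The term $t^2\mathring{g}^{ab}\partial_a\partial_b(\tr\upeta)=t^{2-2p_a}\partial_a^2(\tr\tilde{\Upupsilon})$ goes to zero since each $p_a<1$. For $t^2\mathring{g}^{ab}\partial_i\partial_a\matr{\upeta}{b}{i}$, substitution produces terms of the form $t^{2-2p_a}K_{bi}(t)\partial^2\matr{\upkappa}{i}{b}$; the three sub-cases $p_b=p_i$, $p_b>p_i$, $p_b<p_i$ yield rates $t^{2-2p_a}\log(T/t)$, $t^{2-2p_a}$, and $t^{2-2p_i}$ respectively, all vanishing by non-degeneracy alone. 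The remaining terms of \eqref{eq:hamiltonian_linear} then converge directly to \eqref{eq:hamiltonian_linear_scat}. The first momentum constraint \eqref{eq:momentum_linear1_scat} is obtained from \eqref{eq:momentum_linear1} by the same substitution: since $t\matr{\mathring{k}}{b}{a}$ is diagonal,
\[
    t\matr{\mathring{k}}{b}{a}\matr{\upeta}{a}{b}+2p_\phi\upphi = t\matr{\mathring{k}}{b}{a}\matr{\tilde{\Upupsilon}}{a}{b}+2p_\phi\tilde{\upvarphi} + \log(t/T)\bigl[t\matr{\mathring{k}}{b}{a}\matr{\upkappa}{a}{b}+2p_\phi\uppsi\bigr],
\]
and the bracketed combination equals, by the finite-$t$ Hamiltonian identity, a $t^2\partial^2\upeta$-type remainder whose $\partial_i$-derivative multiplied by $\log(t/T)$ still vanishes as $t\to 0$ by the previous analysis.

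The symmetry identities \eqref{eq:upeta_sym_scat}--\eqref{eq:upkappa_sym_scat} follow similarly by substituting the renormalization relations into \eqref{eq:upeta_sym} and \eqref{eq:upkappa_sym}, rearranging, and multiplying by the appropriate power of $t$ to expose the dominant scaling. A case analysis over $p_a=p_c$, $p_a>p_c$, $p_a<p_c$ then yields the limit: the divergent $K_{ij}(t)$ pieces cancel between the two sides, and the residual constants $T^{2p_i-2p_j}/(2p_i-2p_j)$ (together with $\log T$-type terms in the diagonal case) combine with the surviving $T^{-2p_k}$ prefactors to reproduce precisely the $\mathring{g}(T)$ factors appearing in \eqref{eq:upeta_sym_scat}--\eqref{eq:upkappa_sym_scat}. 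Finally, \eqref{eq:momentum_linear2_scat} is obtained by the same substitution procedure applied to \eqref{eq:momentum_linear2}, using the already-derived symmetry \eqref{eq:upkappa_sym_scat} to reorganise the $\mathring{g}^{ab}(t)$-contracted pieces into a form compatible with $\mathring{g}(T)$, with the residual logarithmic terms again disposed of by the Hamiltonian identity.

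The main obstacle is the bookkeeping of opposing divergences: every $\log(T/t)$ or power $t^{2p_i-2p_j}$ introduced by the renormalization must cancel exactly against a partner term, either via the Hamiltonian identity or via the finite-$t$ symmetries, and the surviving residues must be correctly recognised as entries of $\mathring{g}(T)$ rather than of the divergent $\mathring{g}(t)$. The subcriticality condition \eqref{eq:subcritical_delta} enters only implicitly through the hypothesis that the renormalized variables attain $C^3$ limits; the asymptotic relations themselves follow from non-degeneracy together with the careful limit analysis above.
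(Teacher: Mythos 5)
Your treatment of the Hamiltonian constraint \eqref{eq:hamiltonian_linear_scat} and the first momentum constraint \eqref{eq:momentum_linear1_scat} matches the paper's proof: substitute the renormalization, observe that the diagonal structure of $t\matr{\mathring{k}}{b}{a}$ turns the integral into $\log(T/t)$, use the rewritten Hamiltonian constraint to dispose of the $\log$-weighted remainder, and invoke non-degeneracy to send the $t^2\mathring{g}^{ab}\partial^2\upeta$ terms to zero. The observation that the constraint propagation uses only non-degeneracy (with subcriticality entering only via the hypothesised convergence) also agrees with the paper.

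Your argument for the symmetry conditions \eqref{eq:upeta_sym_scat}--\eqref{eq:upkappa_sym_scat} takes a genuinely different route. The paper shows the $\mathring{g}(T)$-symmetry holds for \emph{every} $t>0$ via the substitution identity $\mathring{g}^{ac}(T)\mathring{g}_{bd}(T)\int_t^T \mathring{g}_{cp}\mathring{g}^{dq}\,\frac{ds}{s}=\mathring{g}_{cp}(t)\mathring{g}^{dq}(t)\int_t^T\mathring{g}^{ac}\mathring{g}_{bd}\,\frac{ds}{s}$ (change of variable $s\mapsto Tt/s$), followed by an application of the finite-$t$ symmetry \eqref{eq:upkappa_sym} and integration by parts. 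Your proposal instead works directly on the two symmetry identities with the renormalization inserted. This can indeed be made to work, and in some ways gives a cleaner conclusion — but the phrase \textquotedblleft the divergent $K_{ij}(t)$ pieces cancel between the two sides\textquotedblright{} is misleading and conceals the step that actually makes the argument close.

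To be concrete, take $p_a<p_c$. Substituting into \eqref{eq:upeta_sym} and multiplying by $t^{2p_c}$ does give a finite limit, but it only determines the \emph{smaller} component, namely $\matr{(\tilde{\Upupsilon}_\infty)}{\underline{c}}{\underline{a}}=-\frac{1}{2(p_c-p_a)}\matr{(\upkappa_\infty)}{\underline{c}}{\underline{a}}+\frac{T^{2p_c-2p_a}}{2(p_c-p_a)}\matr{(\upkappa_\infty)}{\underline{a}}{\underline{c}}$. If you instead multiply by $t^{2p_a}$, the left side (which involves the component $\matr{\tilde\Upupsilon}{\underline{a}}{\underline{c}}$ you still need) and the right side both diverge like $t^{2p_a-2p_c}$, and after subtracting the matching divergent coefficients one is left with an indeterminate product of a blowing-up factor and a quantity converging to zero; the hypothesised $C^3$ convergence does not by itself supply the rate needed to evaluate this product. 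The resolution is to take the linear combination of the finite-$t$ $\upeta$-symmetry \eqref{eq:upeta_sym} and $\upkappa$-symmetry \eqref{eq:upkappa_sym} that eliminates $K_{ac}(t)$ entirely: one then obtains the exact identity $\matr{\tilde\Upupsilon}{\underline{a}}{\underline{c}}(t)=\frac{\matr{\upkappa}{\underline{a}}{\underline{c}}(t)-T^{2p_a-2p_c}\matr{\upkappa}{\underline{c}}{\underline{a}}(t)}{2(p_c-p_a)}$, valid at every $t>0$, and the analogous formula for $\matr{\tilde\Upupsilon}{\underline{c}}{\underline{a}}(t)$, from which the $\mathring{g}(T)$-symmetry is an immediate algebraic consequence. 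So what cancels is not the divergences \textquotedblleft between the two sides\textquotedblright{} of \eqref{eq:upeta_sym}, but the divergence of $\matr{\upeta}{\underline{a}}{\underline{c}}$ (pinned down by the $\upkappa$-symmetry) against the divergence of $K_{ac}(t)\matr{\upkappa}{\underline{c}}{\underline{a}}$ inside the definition of $\matr{\tilde\Upupsilon}{\underline{a}}{\underline{c}}$. Your proposal says \textquotedblleft substituting into \eqref{eq:upeta_sym} and \eqref{eq:upkappa_sym}\textquotedblright{} but treats them as independent identities to be limited separately; you should make the linear combination explicit, since without it the $p_a<p_c$ branch of your case analysis does not close.
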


\begin{proof}
    For \eqref{eq:hamiltonian_linear_scat} and \eqref{eq:momentum_linear1_scat}, the idea is to simply take the usual constraints \eqref{eq:hamiltonian_linear} and \eqref{eq:momentum_linear1}, rewrite them in terms of the renormalized variables $\matr{\tilde{\Upupsilon}}{i}{j}$, $\tilde{\upvarphi}$, and then finally take the limit $t \to 0$. For instance, we rewrite the Hamiltonian constraint \eqref{eq:hamiltonian_linear} using \eqref{eq:upupsilon} as
    \[
        - 2 t^2 \mathring{g}^{ab} \partial_a \partial_b \tr \tilde{\Upupsilon} + 2 t^2 \mathring{g}^{ab} \partial_i \partial_a \matr{\tilde{\Upupsilon}}{b}{i} 
        - 2 t^2 \mathring{g}^{ab} \int^T_t \mathring{g}_{bp}(s) \mathring{g}^{iq}(s)\frac{ds}{s} \cdot \partial_i \partial_a \matr{\upkappa}{q}{p}
        + 2(t \matr{\mathring{k}}{b}{a}) \matr{\upkappa}{a}{b} + 4 p_{\phi} \uppsi = 0.
    \]
    We now take the limit of this equation as $t \to 0$. The last two terms will give us the desired \eqref{eq:hamiltonian_linear_scat}, so it suffices to show that the remaining terms tend to $0$.

    As $\partial_a \partial_b \matr{\tilde{\Upupsilon}}{i}{j}$ and $\partial_a \partial_b \matr{\upkappa}{i}{j}$ are bounded towards $t \to 0$ by assumption, it suffices to show that as $t \to 0$,
    \[
        t^2 \mathring{g}^{ab} \to 0 \quad \text{ and } \quad t^2 \mathring{g}^{ab} \int^1_t \mathring{g}_{bp}(s) \mathring{g}^{iq}(s) \frac{ds}{s} = 0.
    \]
    In fact both of these will follow by the non-degeneracy of the Kasner exponents; using a modification of Lemma~\ref{lem:gbound} (with $t_{\lambda*}$ replaced by $1$ in the lemma) to estimate the integral above, one shows these terms are bounded by $\max_{i} t^{2 - 2p_i} (1 + |\log t|)$ and thus tend to $0$ as $t \to 0$.

    Moving to the momentum constraint, we rewrite \eqref{eq:momentum_linear1} as follows: 
    \begin{align*}
        \partial_i \matr{\tilde{\Upupsilon}}{i}{j} + \partial_i [ (t \matr{\mathring{k}}{b}{a}) \matr{\tilde{\Upupsilon}}{a}{b} + 2 p_{\phi} \tilde{\upvarphi}] - (t \matr{\mathring{k}}{i}{j}) \partial_j \tr \tilde{\Upupsilon} = - \partial_i [ (t \matr{\mathring{k}}{b}{a}) \matr{\upkappa}{a}{b} + 2 p_{\phi} \uppsi] \log t.
    \end{align*}
    Note that we used the fact that $t \matr{\mathring{k}}{b}{a}$ is non-zero only when $a = b$, hence the integral appearing in \eqref{eq:upupsilon} ends up being $\log t$. The right hand side of this rewritten momentum constraint appears exactly in the rewritten Hamiltonian constraint above, and (given that third derivatives of $\matr{\tilde{\Upupsilon}}{i}{j}$ and $\matr{\upkappa}{i}{j}$ are also bounded) is therefore bounded by $\max_i t^{2 - 2p_i} (1 + |\log t|^2)$. Thus taking the limit $t \to 0$ yields \eqref{eq:momentum_linear1_scat}.

    We next move to the symmetry conditions \eqref{eq:upeta_sym_scat} and \eqref{eq:upkappa_sym_scat}. We shall actually prove that for all times $t > 0$, we have the symmetry conditions
    \begin{gather} \label{eq:upeta_sym_scat_0}
        \mathring{g}^{ab}(T) \matr{\tilde{\Upupsilon}}{b}{c}(t) = \mathring{g}^{cb}(T) \matr{\tilde{\Upupsilon}}{b}{a}(t),
        \\[0.5em] \label{eq:upkappa_sym_scat_0}
        \mathring{g}^{ab}(T) \left( \matr{\upkappa}{b}{c}(t) - 2 t \matr{\mathring{k}}{b}{d} \, \matr{\tilde{\Upupsilon}}{d}{c}(t) \right)
        =
        \mathring{g}^{cb}(T) \left( \matr{\upkappa}{b}{a}(t) - 2 t \matr{\mathring{k}}{b}{d} \, \matr{\tilde{\Upupsilon}}{d}{a}(t) \right),
    \end{gather} 
    %In fact we will first prove the general symmetry condition that for $\matr{\tilde{\Upupsilon}}{i}{j}$ defined as in \eqref{eq:upupsilon_tilde_0}, we have
%    \begin{gather} \label{eq:upeta_sym_scat_0}
%        \mathring{g}^{ab}(T) \, \matr{\tilde{\Upupsilon}}{b}{c} = \mathring{g}^{cb}(T) \, \matr{\tilde{\Upupsilon}}{b}{a},
%        \\[0.5em] \label{eq:upkappa_sym_scat_0}
%        \mathring{g}^{ab}(T) \, \left( \matr{\upkappa}{b}{c} - 2 t \matr{\mathring{k}}{b}{d} \, \matr{\tilde{\Upupsilon}}{d}{c} \right)
%        =
%        \mathring{g}^{cb}(T) \, \left( \matr{\upkappa}{b}{a} - 2 t \matr{\mathring{k}}{b}{d} \, \matr{\tilde{\Upupsilon}}{d}{a} \right).
%    \end{gather} 
%    Then in order to deduce \eqref{eq:upeta_sym_scat} and \eqref{eq:upkappa_sym_scat} we take $T = 1$ and take the limit $t \to 0$.
%
    and simply take the limit of these as $t \to 0$.
    Note that \eqref{eq:upeta_sym_scat_0} is equivalent to $\mathring{g}^{ac} (T) \mathring{g}_{bd} (T) \matr{\tilde{\Upupsilon}}{c}{d} = \matr{\tilde{\Upupsilon}}{b}{a}$, or
    \begin{equation} \label{eq:upupsilon_wish}
        \mathring{g}^{ac}(T) \mathring{g}_{bd} (T) \, \matr{\upeta}{c}{d} + \mathring{g}^{ac} (T) \mathring{g}_{bd} (T) \int^T_t \mathring{g}_{cp}(s) \mathring{g}^{dq}(s) \frac{ds}{s} \cdot \matr{\upkappa}{q}{p} = \matr{\upeta}{b}{a} + \int^T_t \mathring{g}_{bp}(s) \mathring{g}^{aq}(s) \frac{ds}{s} \cdot \matr{\upkappa}{q}{p}.
    \end{equation}
    We now make the following observation. Using the explicit form of $\mathring{g}_{ij}$ in \eqref{eq:kasner_metric}, one has the identity:
    \[
        \mathring{g}^{ac} (T) \mathring{g}_{bd} (T) \int^T_t \mathring{g}_{cp}(s) \mathring{g}^{dq}(s) \frac{ds}{s} = \mathring{g}_{cp}(t) \mathring{g}^{dq}(t) \int^T_t \mathring{g}^{ac}(s) \mathring{g}_{bd}(s) \frac{ds}{s}.
    \]
    One obtains this identity using a substitution of the form $s \mapsto \tilde{s} = \frac{Tt}{s}$ in the integral.

    Using this identity, one computes the second term on the left hand side of \eqref{eq:upupsilon_wish} to be
    \begin{align*}
        \mathring{g}^{ac} (T) \mathring{g}_{bd} (T) \int^T_t \mathring{g}_{cp}(s) \mathring{g}^{dq}(s) \frac{ds}{s} \cdot \matr{\upkappa}{q}{p} 
        &=
        \int^T_t \mathring{g}^{ac}(s) \mathring{g}_{bd}(s) \frac{ds}{s} \cdot \mathring{g}_{cp}(t) \mathring{g}^{dq}(t)\, \matr{\upkappa}{q}{p}(t) \\[0.5em]
        &=
        \int^T_t \mathring{g}^{ac}(s) \mathring{g}_{bd}(s) \frac{ds}{s} \cdot \left( \matr{\upkappa}{c}{d} + (2 t \matr{\mathring{k}}{p}{d}) \matr{\upeta}{c}{p} - (2 t \matr{\mathring{k}}{c}{p}) \matr{\upeta}{p}{d} \right),
    \end{align*}
    where we used the symmetry condition \eqref{eq:upkappa_sym} in the second step. We next observe that we can identify the latter two terms on the right hand side in the following way:
    \[
        \mathring{g}^{ac} (T) \mathring{g}_{bd} (T) \int^T_t \mathring{g}_{cp}(s) \mathring{g}^{dq}(s) \frac{ds}{s} \cdot \matr{\upkappa}{q}{p} =
        \int^T_t \mathring{g}^{ac}(s) \mathring{g}_{bd}(s) \frac{ds}{s} \cdot \matr{\upkappa}{c}{d} - \int^T_t s \partial_s ( \mathring{g}^{ac} \mathring{g}_{bd} ) \frac{ds}{s} \cdot \matr{\upeta}{c}{d}.
    \]

    Therefore we obtain \eqref{eq:upupsilon_wish} by using the fundamental theorem of calculus, and simply relabelling indices. This completes the proof of \eqref{eq:upeta_sym_scat_0}. For \eqref{eq:upkappa_sym_scat_0}, we firstly observe that
    \[
        \matr{\upkappa}{b}{c} - (2 t \matr{\mathring{k}}{b}{d}) \matr{\tilde{\Upupsilon}}{d}{c} = 
        \matr{\upkappa}{b}{c} - (2 t \matr{\mathring{k}}{b}{d}) \matr{\upeta}{d}{c} + \int^T_t s \partial_s (\mathring{g}_{bp}(s)) \mathring{g}^{cq} \frac{ds}{s} \cdot \matr{\upkappa}{q}{p}.
    \]
    Therefore integrating by parts and using \eqref{eq:upkappa_sym} in a similar way to before, one eventually finds that
    \[
        \matr{\upkappa}{b}{c} - (2 t \matr{\mathring{k}}{b}{d}) \matr{\tilde{\Upupsilon}}{d}{c} = 
        \mathring{g}_{bp}(T) \mathring{g}^{cq}(T) \, \matr{\upkappa}{q}{p} - (2 t \matr{\mathring{k}}{d}{c}) \matr{\tilde{\Upupsilon}}{b}{d}.
    \]
    From this identity, \eqref{eq:upkappa_sym_scat_0} follows from \eqref{eq:upeta_sym_scat_0}. We also deduce \eqref{eq:upeta_sym_scat} and \eqref{eq:upkappa_sym_scat} by taking $t \to 0$.

    Finally, the second momentum constraint \eqref{eq:momentum_linear2_scat} follows from combining \eqref{eq:momentum_linear1_scat} with \eqref{eq:upeta_sym_scat} and \eqref{eq:upkappa_sym_scat}.
\end{proof}

\begin{lemma} \label{lem:upupsilon_gauge}
    Consider the renormalized quantity $\matr{\tilde{\Upupsilon}}{i}{j}$ defined in \eqref{eq:upupsilon_tilde_0}, for some $T > 0$.
    Then the change of gauge associated the vector field $\upxi^j$, which transforms $\matr{\upeta}{i}{j}$ and $\matr{\upkappa}{i}{j}$ as in \eqref{eq:upeta_diffeo} and \eqref{eq:upkappa_diffeo}, then transforms the expression $\matr{\tilde{\Upupsilon}}{i}{j}$ as follows:
    \begin{equation} \label{eq:upupsilon_diffeo}
        \matr{\tilde{\Upupsilon}}{i}{j} \mapsto \matr{\tilde{\Upupsilon}}{i}{j} + \frac{1}{2} \partial_i \upxi^j + \frac{1}{2} \mathring{g}_{ip}(T) \mathring{g}^{jq}(T) \partial_q \upxi^p.
    \end{equation}

    In particular, choosing $\upxi^j$ to be a solution of the elliptic equation
    \begin{equation} \label{eq:upxi_elliptic}
        \mathring{g}^{ab} (T) \partial_a \partial_b \upxi^j = \mathring{g}^{jk} (T) \left( \partial_k \tr \tilde{\Upupsilon} - 2 \partial_{\ell} \matr{\tilde{\Upupsilon}}{k}{\ell} \right),
    \end{equation}
    one can transform $\matr{\tilde{\Upupsilon}}{i}{j}$ in such a way that the transformed version satisfies the differential relation
    \[
        \partial_i \tr \tilde{\Upupsilon} - 2 \partial_j \matr{\tilde{\Upupsilon}}{i}{j} = 0.
    \]
\end{lemma}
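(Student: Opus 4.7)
The plan is to compute $\Delta \matr{\tilde{\Upupsilon}}{i}{j}$ under the gauge transformation by directly applying \eqref{eq:upeta_diffeo} and \eqref{eq:upkappa_diffeo} to the defining expression \eqref{eq:upupsilon_tilde_0}. Assuming for the moment that $\upxi^j$ is time-independent (which is the only case needed for the subsequent application), the transformation yields
\[
    \Delta \matr{\tilde{\Upupsilon}}{i}{j} = \tfrac{1}{2} \partial_i \upxi^j + \tfrac{1}{2} \mathring{g}_{ip}(t) \mathring{g}^{jq}(t) \partial_q \upxi^p + \int^T_t \mathring{g}_{ip}(s) \mathring{g}^{jq}(s) \frac{ds}{s} \cdot \left[ (s \matr{\mathring{k}}{q}{r}) \partial_r \upxi^p - (s \matr{\mathring{k}}{r}{p}) \partial_q \upxi^r \right].
\]
The central computation is the following identity, which I would verify by a direct calculation in diagonal coordinates using $s \matr{\mathring{k}}{i}{j} = - p_{\underline{i}} \matr{\delta}{\underline{i}}{j}$ and $s \partial_s \mathring{g}_{ij}(s) = 2 p_{\underline{i}} \mathring{g}_{\underline{i}j}(s)$:
\[
    \mathring{g}_{ip}(s) \mathring{g}^{jq}(s) \left[ (s \matr{\mathring{k}}{q}{r}) \partial_r \upxi^p - (s \matr{\mathring{k}}{r}{p}) \partial_q \upxi^r \right] = \tfrac{1}{2} \, s \partial_s \bigl[ \mathring{g}_{ip}(s) \mathring{g}^{jq}(s) \bigr] \, \partial_q \upxi^p.
\]
Both sides reduce in diagonal coordinates to $(p_{\underline{i}} - p_{\underline{j}}) \, s^{2 p_{\underline{i}} - 2 p_{\underline{j}}} \, \partial_{\underline{j}} \upxi^{\underline{i}}$, so the identity is immediate.

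Granted this, the integral in $\Delta \matr{\tilde{\Upupsilon}}{i}{j}$ collapses via the fundamental theorem of calculus to
\[
    \tfrac{1}{2} \bigl[ \mathring{g}_{ip}(T) \mathring{g}^{jq}(T) - \mathring{g}_{ip}(t) \mathring{g}^{jq}(t) \bigr] \partial_q \upxi^p,
\]
and the $t$-boundary term exactly cancels the $\mathring{g}_{ip}(t) \mathring{g}^{jq}(t)$ contribution coming from $\Delta \matr{\upeta}{i}{j}$. Combining everything yields \eqref{eq:upupsilon_diffeo}, as claimed.

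For the second assertion, I would apply \eqref{eq:upupsilon_diffeo} to compute the change in $\partial_i \tr \tilde{\Upupsilon} - 2 \partial_j \matr{\tilde{\Upupsilon}}{i}{j}$. Using that $\mathring{g}_{ip}(T) \mathring{g}^{iq}(T) = \delta_p^q$, the trace transforms simply as $\tr \tilde{\Upupsilon} \mapsto \tr \tilde{\Upupsilon} + \partial_p \upxi^p$, while the divergence picks up $\tfrac{1}{2} \partial_i \partial_j \upxi^j + \tfrac{1}{2} \mathring{g}_{ip}(T) \mathring{g}^{jq}(T) \partial_j \partial_q \upxi^p$. Taking the difference, the $\partial_i \partial_p \upxi^p$ contributions cancel and one is left with
\[
    \bigl[ \partial_i \tr \tilde{\Upupsilon} - 2 \partial_j \matr{\tilde{\Upupsilon}}{i}{j} \bigr] \mapsto \bigl[ \partial_i \tr \tilde{\Upupsilon} - 2 \partial_j \matr{\tilde{\Upupsilon}}{i}{j} \bigr] - \mathring{g}_{ip}(T) \mathring{g}^{jq}(T) \partial_j \partial_q \upxi^p.
\]
Multiplying by $\mathring{g}^{ir}(T)$ and relabeling reproduces exactly the elliptic equation \eqref{eq:upxi_elliptic} for the correcting vector field; solving this equation (which is a standard constant-coefficient elliptic problem on $\mathbb{T}^D$ with zero-mean source, hence uniquely solvable in the space of zero-average vector fields) and applying the transformation therefore achieves the desired differential relation.

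The main (mild) obstacle is the bookkeeping in verifying the central identity, where the three different ``frozen'' metric factors ($\mathring{g}(t)$, $\mathring{g}(s)$, $\mathring{g}(T)$) and the contractions against $\matr{\mathring{k}}{i}{j}$ must be tracked carefully; the cleanest approach is to pass to diagonal Kasner coordinates and check the equality componentwise, as above.
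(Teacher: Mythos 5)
Your proof follows the same route as the paper's: expand the transformed $\matr{\tilde{\Upupsilon}}{i}{j}$, recognize the integrand of the $\upkappa$-correction as $\tfrac{1}{2}\,s\partial_s\bigl[\mathring{g}_{ip}(s)\mathring{g}^{jq}(s)\bigr]\partial_q\upxi^p$, integrate by the fundamental theorem of calculus, and then derive the elliptic equation exactly as in Lemma~\ref{lem:diffeo}(i). Your explicit diagonal-coordinate check of the integrand identity is sound, and you correctly carry the factor of $\tfrac{1}{2}$ that the paper's displayed computation in this proof appears to drop (though the stated formula \eqref{eq:upupsilon_diffeo} is of course correct); your restriction to time-independent $\upxi^j$ is also unnecessary, since the $s$-integral acts only on the weight and not on the vector field, but it is harmless.
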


\begin{proof}
    From the definition \eqref{eq:upupsilon_tilde_0}, the transformation associated to $\upxi^j$ transforms $\matr{\tilde{\Upupsilon}}{i}{j}$ as follows:
    \begin{align*}
        \matr{\tilde{\Upupsilon}}{i}{j} 
        &\mapsto \left( \matr{\upeta}{i}{j} + \frac{1}{2} \partial_i \upxi^j + \frac{1}{2} \mathring{g}_{ip}(t) \mathring{g}^{jq}(t) \partial_q \upxi^p \right)
        + \int^T_t \mathring{g}_{ip}(s) \mathring{g}^{jq}(s) \frac{ds}{s} \cdot \left( \matr{\upkappa}{q}{p} + (t \matr{\mathring{k}}{q}{s}) \partial_s \upxi^p - (t \matr{\mathring{k}}{s}{p}) \partial_q \upxi^s \right) \\[0.5em]
        &= \matr{\tilde{\Upupsilon}}{i}{j} + \frac{1}{2} \partial_i \upxi^j + \frac{1}{2} \mathring{g}_{ip}(t) + \mathring{g}^{jq}(t) \partial_q \upxi^p + \int^T_t s \partial_s ( \mathring{g}_{ip}(s) \mathring{g}^{jq}(s) ) \frac{ds}{s} \cdot \partial_q \upxi^p.
    \end{align*}
    The transformation formula \eqref{eq:upupsilon_diffeo} then follows by applying the fundamental theorem of calculus.

    The second part of the lemma, concerning the change of gauge associated to the vector field $\upxi^j$ solving the elliptic equation \eqref{eq:upxi_elliptic}, follows from the same argument as Lemma~\ref{lem:diffeo}(i).
\end{proof}

\subsection{Fourier decomposition}

The proof of the scattering results will involve a Fourier decomposition and detailed ODE analysis for each Fourier mode. For this purpose, we will project the whole linearized Einstein--scalar field system of Proposition~\ref{prop:adm_linear} onto the Fourier mode corresponding to $\lambda \in \Z^D$, as below.

\begin{proposition}[Linearized Einstein--scalar field in Fourier space]\label{prop:einstein_linear_l}
    For $\lambda \in \Z^D$, suppose that \linebreak$(\matr{(\upeta_{\lambda})}{i}{j}, \matr{(\upkappa_{\lambda})}{i}{j}, \upphi_{\lambda}, \uppsi_{\lambda}, \upnu_{\lambda}, \upxi^j_{\lambda})$ is the projection of a solution to the linearized Einstein--scalar field system \eqref{eq:upeta_evol}--\eqref{eq:upkappa_sym} onto the $\lambda$-Fourier mode.

    Our solutions will always be CMC in the sense that $\tr \upkappa = 0$, and we say that the $\lambda$-Fourier mode is (linearly) spatially harmonic if the following holds (again the hat denotes CMCSH):
    \begin{equation} \label{eq:spatiallyharmonic_linear_l}
        2 \lambda_j \matr{(\hat{\upeta}_{\lambda})}{i}{j} = \lambda_i \tr \hat{\upeta}_{\lambda}.
    \end{equation}
    As before, we demarcate the occasions when we assume the CMCSH gauge \eqref{eq:spatiallyharmonic_linear_l} using a hat. Then the projected variables $(\matr{(\upeta_{\lambda})}{i}{j}, \matr{(\upkappa_{\lambda})}{i}{j}, \upphi_{\lambda}, \uppsi_{\lambda}, \upnu_{\lambda}, \upxi^j_{\lambda})$ obey the following equations.

    \begin{enumerate}[1.]
        \item
            The metric variables $(\matr{(\upeta_{\lambda})}{i}{j}, \matr{(\upkappa_{\lambda})}{i}{j})$ obey the evolution equations:
            \begin{equation} \label{eq:upeta_evol_l}
                t \partial_t \matr{(\upeta_{\lambda})}{i}{j} = \matr{(\upkappa_{\lambda})}{i}{j} + (2t \matr{\mathring{k}}{p}{j}) \matr{(\upeta_{\lambda})}{i}{p} - (2 t \matr{\mathring{k}}{i}{p}) \matr{(\upeta_{\lambda})}{p}{j} + (t \matr{\mathring{k}}{i}{j}) \upnu_{\lambda} + \frac{\mathrm{i}}{2} \lambda_i \upchi_{\lambda}^j + \frac{\mathrm{i}}{2} \mathring{g}_{ip} \mathring{g}^{jq} \lambda_q \upchi_{\lambda}^p,
            \end{equation}
            \begin{equation} \label{eq:upkappa_evol_l}
                t \partial_t \matr{(\upkappa_{\lambda})}{i}{j} = t^2 \, \matr{(\widecheck{\textup{Ric}}_{\lambda})}{i}{j}[\upeta_{\lambda}] + t^2 \mathring{g}^{jk} \lambda_i \lambda_k \upnu_{\lambda} - ( t \matr{\mathring{k}}{i}{j} ) \upnu_{\lambda} - \mathrm{i} (t \matr{\mathring{k}}{a}{j}) \lambda_i \upchi_{\lambda}^a + \mathrm{i} (t \matr{\mathring{k}}{i}{a}) \lambda_a \upchi_{\lambda}^j.
            \end{equation}
            Here, the expression $(\widecheck{\textup{Ric}}_{\lambda})[\upeta_{\lambda}]$ is defined by
            \begin{equation} \label{eq:ricci_lin_l}
                t^2 \matr{\widecheck{\textup{Ric}}}{i}{j}[(\upeta_{\lambda})] \coloneqq 
                - \tau^2 \matr{(\upeta_{\lambda})}{i}{j} - t^2 \mathring{g}^{ja} \lambda_i \lambda_a (\tr \upeta_{\lambda}) + t^2 \mathring{g}^{ab} \lambda_i \lambda_b \matr{(\upeta_{\lambda})}{a}{j} + t^2 \mathring{g}^{ja} \lambda_a \lambda_b \matr{(\upeta_{\lambda})}{i}{b}.
            \end{equation}
            In particular, if $\matr{(\hat{\upeta}_{\lambda})}{i}{j}$ satisfies the spatially harmonic condition \eqref{eq:spatiallyharmonic_linear}, then
            \begin{equation} \label{eq:ricci_lin_sh_l}
                t^2 \matr{(\widecheck{\textup{Ric}}_{\lambda})}{i}{j}[\hat{\upeta}_{\lambda}] = - \tau^2 \matr{(\hat{\upeta}_{\lambda})}{i}{j}.
            \end{equation}
        \item
            The matter variables $(\upphi_{\lambda}, \uppsi_{\lambda})$ obey
            \begin{equation} \label{eq:upphi_evol_l}
                t \partial_t \upphi_{\lambda} = \uppsi_{\lambda} + p_{\phi} \upnu_{\lambda},
            \end{equation}
            \begin{equation} \label{eq:uppsi_evol_l}
                t \partial_t \uppsi_{\lambda} = - \tau^2 \upphi_{\lambda} - p_{\phi} \upnu_{\lambda}.
            \end{equation}
        \item
            The linearized lapse $\upnu_{\lambda}$ obeys the elliptic equation
            \begin{equation} \label{eq:upnu_elliptic_l}
                \left( 1 + \tau^2 \right) \upnu_{\lambda} = 2 \tau^2 (\tr \upeta_{\lambda}) - 2 t^2 \tilde{g}^{ab} \lambda_i \lambda_a \matr{(\upeta_{\lambda})}{b}{i}.
            \end{equation}
            If $\matr{(\hat{\upeta}_{\lambda})}{i}{j}$ moreover satisfies the spatially harmonic condition \eqref{eq:spatiallyharmonic_linear}, this equation simplifies to
            \begin{equation} \label{eq:upnu_elliptic_sh_l}
                \left( 1 + \tau^2\right) \upnu_{\lambda} = \tau^2 (\tr \hat{\upeta}_{\lambda}).
            \end{equation}
            Furthermore, if \eqref{eq:spatiallyharmonic_linear_l} holds we have an additional elliptic equation for the shift:
            \begin{equation} \label{eq:upchi_elliptic_l}
                \tau^2 \upchi_{\lambda}^j 
                = \mathrm{i} t^2 \mathring{g}^{ij} \lambda_i \upnu_{\lambda} + 2 \mathrm{i} t^2 \mathring{g}^{ij} ( t \matr{\mathring{k}}{i}{k} ) \lambda_k \upnu_{\lambda}
                + 4 \mathrm{i} t^2 \mathring{g}^{pq} (t \matr{\mathring{k}}{p}{i}) \lambda_q \matr{(\hat{\upeta}_{\lambda})}{i}{j} 
                - 2 \mathrm{i} t^2 \mathring{g}^{ij} \lambda_i [ t \matr{\mathring{k}}{b}{a} \, \matr{(\hat{\upeta}_{\lambda})}{a}{b} + 2 p_{\phi} \upphi_{\lambda} ].
            \end{equation}
        \item
            As well as the linearized CMC condition $\tr \upkappa_{\lambda} = 0$, the following constraints are satisfied:
            \begin{gather} \label{eq:hamiltonian_linear_l}
                2 \tau^2 (\tr \upeta_{\lambda}) - 2 t^2 \mathring{g}^{ab} \lambda_i \lambda_a \matr{(\upeta_{\lambda})}{b}{i} + 2 t \matr{\mathring{k}}{b}{a} \, \matr{(\upkappa_{\lambda})}{a}{b} + 4 p_{\phi} \uppsi_{\lambda} = 0.
                \\[0.5em] \label{eq:momentum_linear1_l}
                \lambda_j \matr{(\upkappa_{\lambda})}{i}{j} + \lambda_i [ t \matr{\mathring{k}}{b}{a} \, \matr{(\upeta_{\lambda})}{a}{b} + 2 p_{\phi} \upphi_{\lambda} ] - t \matr{\mathring{k}}{i}{j} \lambda_j (\tr \upeta_{\lambda}) = 0.
                \\[0.5em] \label{eq:momentum_linear2_l}
                \mathring{g}^{ab} \lambda_a \matr{(\upkappa_{\lambda})}{b}{c} + \mathring{g}^{cd} \lambda_d [ t \matr{\mathring{k}}{b}{a} \, \matr{(\upeta_{\lambda})}{a}{b} + 2 p_{\phi} \upphi_{\lambda} ] - 2 \mathring{g}^{ab} \, t \matr{\mathring{k}}{b}{d} \, \lambda_a \matr{(\upeta_{\lambda})}{d}{c} = 0.
            \end{gather}
            As mentioned the solution is in CMCSH gauge if \eqref{eq:spatiallyharmonic_linear_l} is also satisfied. In this case, \eqref{eq:hamiltonian_linear_l} simplifies to
            \begin{gather} \label{eq:hamiltonian_linear2_l}
                \tau^2 (\tr \hat{\upeta}_{\lambda}) + 2 t \matr{\mathring{k}}{b}{a} \, \matr{(\hat{\upkappa}_{\lambda})}{a}{b} + 4 p_{\phi} \uppsi_{\lambda} = 0.
            \end{gather}
        \item
            Finally, there are linearized symmetry conditions for $\matr{\upeta}{i}{j}$ and $\matr{\upkappa}{i}{j}$, which are also propagated:
            \begin{equation} \label{eq:upeta_sym_l}
                \mathring{g}^{ab} \matr{(\upeta_{\lambda})}{b}{c} = \mathring{g}^{cb} \matr{(\upeta_{\lambda})}{b}{a},
            \end{equation}
            \begin{equation} \label{eq:upkappa_sym_l}
                \mathring{g}^{ab} \left( \matr{(\upkappa_{\lambda})}{b}{c} - 2 t \matr{\mathring{k}}{b}{d} \, \matr{(\upeta_{\lambda})}{d}{c} \right)
                =
                \mathring{g}^{cb} \left( \matr{(\upkappa_{\lambda})}{b}{a} - 2 t \matr{\mathring{k}}{b}{d} \, \matr{(\upeta_{\lambda})}{d}{a} \right).
            \end{equation}
    \end{enumerate}
\end{proposition}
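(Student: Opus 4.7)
\textbf{Proof proposal for Proposition~\ref{prop:einstein_linear_l}.}
The plan is to derive each equation in Proposition~\ref{prop:einstein_linear_l} by simply projecting the corresponding equation in Proposition~\ref{prop:adm_linear} onto the $\lambda$-th Fourier mode on $\mathbb{T}^D$. The crucial structural observation is that every equation in Proposition~\ref{prop:adm_linear} is a \emph{linear} PDE in the unknowns $(\matr{\upeta}{i}{j}, \matr{\upkappa}{i}{j}, \upphi, \uppsi, \upnu, \upchi^j)$ whose coefficients (namely $\mathring{g}^{ab}(t)$, $t\matr{\mathring{k}}{i}{j}$, $p_\phi$) depend only on $t$, since the background Kasner spacetime is spatially homogeneous. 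Consequently, Fourier decomposition commutes with all differential operators appearing in the system, and we may apply it equation-by-equation using the standard replacement $\partial_{x^j} \leftrightarrow \mathrm{i}\lambda_j$ of Appendix~\ref{app:fourier}.

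The first step is to record the key Fourier identities that appear repeatedly. Under the Fourier transform, $t^2 \mathring{g}^{ab}\partial_a\partial_b f$ projects to $-(\sum_i t^{2-2p_i}\lambda_i^2)\,f_\lambda = -\tau^2 f_\lambda$, by the diagonal form of $\mathring{g}^{ab}$ in \eqref{eq:kasner_metric} and the definition \eqref{eq:tau} of $\tau = \tau_\lambda(t)$. Similarly $t^2 \mathring{g}^{ab}\partial_a\partial_b$ applied to a tensor component contributes $-\tau^2$ on that component, while mixed derivatives $t^2 \mathring{g}^{ab}\partial_i\partial_a$ become $-t^2\mathring{g}^{ab}\lambda_i\lambda_a$, single derivatives $\partial_j$ become $\mathrm{i}\lambda_j$, and the linearized CMCSH condition \eqref{eq:spatiallyharmonic_linear} becomes \eqref{eq:spatiallyharmonic_linear_l} after dividing by a factor of $\mathrm{i}$.

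With these rules in hand, the proof reduces to a mechanical translation. For the evolution equations \eqref{eq:upeta_evol}--\eqref{eq:upkappa_evol}, one substitutes $\partial_j \to \mathrm{i}\lambda_j$ into each term and uses \eqref{eq:ricci_lin} (resp.~\eqref{eq:ricci_lin_sh} in CMCSH gauge) to identify the linearized Ricci contribution, yielding \eqref{eq:upeta_evol_l}--\eqref{eq:upkappa_evol_l} with $\matr{(\widecheck{\mathrm{Ric}}_\lambda)}{i}{j}$ as in \eqref{eq:ricci_lin_l}, simplifying to \eqref{eq:ricci_lin_sh_l} under \eqref{eq:spatiallyharmonic_linear_l}. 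The scalar field evolution equations \eqref{eq:upphi_evol_l}--\eqref{eq:uppsi_evol_l} follow immediately from \eqref{eq:upphi_evol}--\eqref{eq:uppsi_evol} using the Laplacian identity above. The elliptic equations \eqref{eq:upnu_elliptic_l}, \eqref{eq:upnu_elliptic_sh_l}, \eqref{eq:upchi_elliptic_l} follow from \eqref{eq:upnu_elliptic}, \eqref{eq:upnu_elliptic_sh}, \eqref{eq:upchi_elliptic} in the same way, and finally the constraints and symmetry relations \eqref{eq:hamiltonian_linear_l}--\eqref{eq:upkappa_sym_l} are obtained directly from \eqref{eq:hamiltonian_linear}--\eqref{eq:upkappa_sym}.

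There is no substantive obstacle: since the background is spatially homogeneous the Fourier projection introduces no commutator or lower-order remainder, and the only bookkeeping required is the tracking of factors of $\mathrm{i}$ and the identification of the symbol $\tau^2$ in the Laplacian-type terms. The only mildly non-trivial point is that the claim that the constraints \eqref{eq:hamiltonian_linear_l}--\eqref{eq:momentum_linear2_l} and symmetries \eqref{eq:upeta_sym_l}--\eqref{eq:upkappa_sym_l} are \emph{propagated} by the evolution is inherited from the analogous propagation of \eqref{eq:hamiltonian_linear}--\eqref{eq:upkappa_sym} at the non-Fourier level (which is established separately, see Proposition~\ref{prop:constraints_1} as referenced in the proof of Proposition~\ref{prop:adm_lwp}); again by spatial homogeneity this property descends mode-by-mode, with no interaction between distinct Fourier modes.
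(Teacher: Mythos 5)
Your proposal is correct and follows essentially the same mechanical Fourier-projection argument as the paper's own one-sentence proof, which simply substitutes $\partial_i \to -\mathrm{i}\lambda_i$ into \eqref{eq:upeta_evol}--\eqref{eq:upkappa_sym} and simplifies. One minor remark: you write the substitution rule as $\partial_{x^j}\leftrightarrow +\mathrm{i}\lambda_j$ (consistent with the Fourier inversion convention of Appendix~\ref{app:fourier}), whereas the signs of the $\mathrm{i}$-factors actually appearing in \eqref{eq:upeta_evol_l}, \eqref{eq:upkappa_evol_l}, \eqref{eq:upchi_elliptic_l} correspond to $\partial_i\to -\mathrm{i}\lambda_i$, so to reproduce the stated formulas verbatim one must adopt the latter sign (this discrepancy is internal to the paper and, since all downstream use is through $|\lambda_i|_{\mathring{g}}$ and $\tau^2$, it is immaterial to the analysis).
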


\begin{proof}
    The proof follows immediately simply by replacing any spatial derivative $\partial_i$ appearing in the systen \eqref{eq:upeta_evol}--\eqref{eq:upkappa_sym} by the symbol $- \mathrm{i} \lambda_i$, and simplifying accordingly.
\end{proof}

It is also possible to translate Lemma~\ref{lem:diffeo}, Lemma~\ref{lem:scattering_constraints} and Lemma~\ref{lem:upupsilon_gauge} into frequency space. To keep things brief, of these we only write the Fourier projections of the asymptotic constraints \eqref{eq:hamiltonian_linear_scat}--\eqref{eq:upkappa_sym_scat}.

\begin{lemma} \label{lem:scattering_constraints_l}
    Let $(\matr{(\upkappa_{\infty})}{i}{j}, \matr{(\tilde{\Upupsilon}_{\infty})}{i}{j}, \uppsi_{\infty}, \tilde{\upvarphi}_{\infty})$ be as in Lemma~\ref{lem:scattering_constraints}, for some $T > 0$. Then for $\lambda > 0$, the following asymptotic constraints hold between the $\lambda$-Fourier projections $(\matr{((\upkappa_{\infty})_{\lambda})}{i}{j}, \matr{((\tilde{\Upupsilon}_{\infty})_{\lambda})}{i}{j}, (\uppsi_{\infty})_{\lambda}, (\tilde{\upvarphi}_{\infty})_{\lambda})$:
    \begin{gather} \label{eq:hamiltonian_linear_scat_l}
        t \matr{\mathring{k}}{b}{a} \, \matr{((\upkappa_{\infty})_{\lambda})}{a}{b} + 2 p_{\phi} (\uppsi_{\infty})_{\lambda} = 0,
        \\[0.5em] \label{eq:momentum_linear1_scat_l}
        \lambda_j \matr{((\upkappa_{\infty})_{\lambda})}{i}{j} + \lambda_i [ t \matr{\mathring{k}}{b}{a} \, \matr{(({\tilde{\Upupsilon}}_{\infty})_{\lambda})}{a}{b} + 2 p_{\phi} (\tilde{\upvarphi}_{\infty})_{\lambda} ] - t \matr{\mathring{k}}{i}{j} \lambda_j (\tr \tilde{\Upupsilon}_{\infty})_{\lambda} = 0,
        \\[0.5em] \label{eq:momentum_linear2_scat_l}
        \mathring{g}^{ab}(T) \lambda_a \matr{((\upkappa_{\infty})_{\lambda})}{b}{c} + \mathring{g}^{cd}(T) \lambda_d [ t \matr{\mathring{k}}{b}{a} \, \matr{((\tilde{\Upupsilon}_{\infty})_{\lambda})}{a}{b} + 2 p_{\phi} (\tilde{\upvarphi}_{\infty})_{\lambda} ] - 2 \mathring{g}^{ab}(T) \, t \matr{\mathring{k}}{b}{d} \, \lambda_a \matr{((\tilde{\Upupsilon}_{\infty})_{\lambda})}{d}{c} = 0,
        \\[0.5em] \label{eq:upeta_sym_scat_l}
        \mathring{g}^{ab}(T) \matr{((\tilde{\Upupsilon}_{\infty})_{\lambda})}{b}{c} = \mathring{g}^{cb}(T) \matr{((\tilde{\Upupsilon}_{\infty})_{\lambda})}{b}{a},
        \\[0.5em] \label{eq:upkappa_sym_scat_l}
        \mathring{g}^{ab}(T) \left( \matr{((\upkappa_{\infty})_{\lambda})}{b}{c} - 2 t \matr{\mathring{k}}{b}{d} \, \matr{((\tilde{\Upupsilon}_{\infty})_{\lambda})}{d}{c} \right)
        =
        \mathring{g}^{cb}(T) \left( \matr{((\upkappa_{\infty})_{\lambda})}{b}{a} - 2 t \matr{\mathring{k}}{b}{d} \, \matr{((\tilde{\Upupsilon}_{\infty})_{\lambda})}{d}{a} \right).
    \end{gather} 
\end{lemma}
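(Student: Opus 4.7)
The plan is to obtain the Fourier-space asymptotic constraints \eqref{eq:hamiltonian_linear_scat_l}--\eqref{eq:upkappa_sym_scat_l} as the direct projection of the spatial asymptotic constraints \eqref{eq:hamiltonian_linear_scat}--\eqref{eq:upkappa_sym_scat} from Lemma~\ref{lem:scattering_constraints} onto each Fourier mode $\lambda \in \Z^D$. The key observation is that the Fourier decomposition (Definition~\ref{def:fourier} in Appendix~\ref{app:fourier}) converts each spatial derivative $\partial_i$ acting on a function on $\mathbb{T}^D$ into multiplication of its $\lambda$-th Fourier coefficient by $-\mathrm{i} \lambda_i$, in exactly the manner used to pass from Proposition~\ref{prop:adm_linear} to Proposition~\ref{prop:einstein_linear_l}.

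First, I would observe that \eqref{eq:hamiltonian_linear_scat} and the symmetry conditions \eqref{eq:upeta_sym_scat}--\eqref{eq:upkappa_sym_scat} contain no spatial derivatives; all their coefficients $t\matr{\mathring{k}}{b}{a}$, $p_\phi$ and $\mathring{g}^{ab}(T)$ are either scalars or purely time-dependent tensors associated to the background Kasner spacetime. These pass under Fourier projection unchanged, yielding \eqref{eq:hamiltonian_linear_scat_l}, \eqref{eq:upeta_sym_scat_l}, and \eqref{eq:upkappa_sym_scat_l} respectively.

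Next, I would treat the momentum constraints \eqref{eq:momentum_linear1_scat} and \eqref{eq:momentum_linear2_scat}. Every term in each equation contains exactly one spatial derivative, so Fourier projection introduces an overall factor of $-\mathrm{i}$ common to every term; cancelling this factor gives \eqref{eq:momentum_linear1_scat_l} and \eqref{eq:momentum_linear2_scat_l}. As a minor consistency check, one should verify that the integral in the definition \eqref{eq:upupsilon_tilde_0} of $\matr{\tilde{\Upupsilon}}{i}{j}$ commutes with Fourier projection, so that $\matr{((\tilde{\Upupsilon}_{\infty})_{\lambda})}{i}{j}$ is unambiguously defined as the limit of the projection; this is immediate since the integration is in time only and the coefficients $\mathring{g}_{ip}(s)$, $\mathring{g}^{jq}(s)$ are spatially homogeneous, so $\partial_t$ and the Fourier projection commute, as does the integration in $s$.

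Since the lemma is a direct consequence of Lemma~\ref{lem:scattering_constraints} combined with elementary properties of the Fourier decomposition, no substantial obstacles arise and there is really no ``hard part'' to isolate — the nontrivial work lies entirely in Lemma~\ref{lem:scattering_constraints} itself, in particular the manipulation of the constraint equations \eqref{eq:hamiltonian_linear}--\eqref{eq:upkappa_sym} together with the fundamental theorem of calculus applied to the renormalized variable $\matr{\tilde{\Upupsilon}}{i}{j}$. The present lemma then serves merely as the mode-by-mode repackaging used in subsequent sections.
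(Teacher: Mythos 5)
Your proposal is correct and follows exactly the paper's own (one-line) argument: replace each $\partial_i$ by $-\mathrm{i}\lambda_i$ in Lemma~\ref{lem:scattering_constraints}, noting that all remaining coefficients are spatially homogeneous and so pass through the Fourier projection unchanged. The only elaborations you add — cancelling the common factor of $-\mathrm{i}$ in the momentum constraints and checking that the time integral defining $\matr{\tilde{\Upupsilon}}{i}{j}$ commutes with the projection — are harmless and correct.
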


\begin{proof}
    Again just replace $\partial_i$ by $- \mathrm{i} \lambda_i$ in Lemma~\ref{lem:scattering_constraints}.
\end{proof}

%\subsubsection*{A guide to how we use Proposition~\ref{prop:einstein_linear_l} and Lemma~\ref{lem:scattering_constraints_l}}

%auto-ignore

\section{Linearized Einstein--scalar field: the high-frequency energy estimate} \label{sec:high_freq}

In this section, we derive a high-frequency energy estimate similar to that of Proposition~\ref{prop:wave_high_freq}, for the Fourier modes of the linearized Einstein--scalar field system given in Proposition~\ref{prop:einstein_linear_l}. %We first define our associated high-frequency energy quantity. 
The high-frequency energy estimates of this section will \emph{always use the CMCSH gauge}, in particular the condition \eqref{eq:spatiallyharmonic_linear_l}, and we make this clear by denoting $\matr{\hat{\upeta}}{i}{j}$ and $\matr{\hat{\upkappa}}{i}{j}$ with a hat.

\subsection{The high-frequency energy quantity}

\begin{definition} \label{def:high_freq}
    The \emph{geometric high-frequency energy quantity} $\mathcal{E}_{\lambda, \upeta, high}(t)$, used for $\lambda \in \Z^D \setminus \{0 \}$ and $\tau_{\lambda}(t) \geq 1$ large, is given by the following:
    \begin{equation} \label{eq:geometry_energy_high}
        \mathcal{E}_{\lambda, \upeta, high}(t) \coloneqq \frac{\zeta^2}{\tau} \matr{(\hat{\upkappa}_{\lambda})}{i}{j} \matr{(\hat{\upkappa}_{\lambda})}{j}{i} + \frac{\zeta}{\tau} \matr{(\hat{\upkappa}_{\lambda})}{i}{j} \matr{(\hat{\upeta}_{\lambda})}{j}{i} + \left( \frac{1}{2 \tau} + \zeta^2 \tau \right) \matr{(\hat{\upeta}_{\lambda})}{i}{j} \matr{(\hat{\upeta}_{\lambda})}{j}{i}.
    \end{equation}
    We also define the \emph{matter high-frequency energy quantity} $\mathcal{E}_{\lambda, \upphi, high}(t)$ as follows:
    \begin{equation} \label{eq:matter_energy_high}
        \mathcal{E}_{\lambda, \upphi, high}(t) \coloneqq \frac{\zeta^2}{\tau} \uppsi_{\lambda}^2 + \frac{\zeta}{\tau} \uppsi_{\lambda} \upphi_{\lambda} + \left( \frac{1}{2 \tau} + \zeta^2 \tau \right) \upphi_{\lambda}^2.
    \end{equation}
    Finally, define the \emph{total high-frequency energy quantity} as the sum $\mathcal{E}_{\lambda, high}(t) \coloneqq \mathcal{E}_{\lambda, \upeta, high}(t) + \mathcal{E}_{\lambda, \upphi, high}(t)$.
\end{definition}

\begin{remark}
    Since $\matr{\hat{\upkappa}}{i}{j}$ is not self-adjoint with respect to $\mathring{g}_{ij}(t)$, it is not clear a priori that $\mathcal{E}_{\lambda, \hat{\upeta}, high}(t)$ is positive definite. The following lemma shows that $\mathcal{E}_{\lambda, high}(t)$ is still uniformly coercive, as required for the energy estimates, at least for $\tau_{\lambda}(t) \geq \mathring{\tau}$, where $\mathring{\tau} \geq 1$ is a constant independent of $\lambda \in \Z^D \setminus \{ 0 \}$.
\end{remark}

\begin{lemma} \label{lem:einstein_energy_high_coercive}
    For $\mathcal{E}_{\lambda, \upeta, high}(t)$ as defined in \eqref{eq:geometry_energy_high}, and suppose that $\matr{(\hat{\upeta}_{\lambda})}{i}{j}, \matr{(\hat{\upkappa}_{\lambda})}{i}{j}$ satisfy the symmetry properties \eqref{eq:upeta_sym_l}--\eqref{eq:upkappa_sym_l}. Then there exists some $\mathring{\tau} \geq 1$, depending only on the Kasner exponents $p_i$, such that for $\tau(t) = \tau_{\lambda}(t) \geq \mathring{\tau}$, one has the following energy coercivity, uniformly in $\lambda \in \Z^D \setminus \{ 0 \}$:
    \begin{equation} \label{eq:geometry_energy_high_coercive}
        \mathcal{E}_{\lambda, \upeta, high} (t) \asymp \frac{1}{ \langle \tau \rangle } \mathring{g}^{ac}(t) \mathring{g}_{bd}(t) \, \matr{(\hat{\upkappa}_{\lambda})}{a}{b} \matr{(\hat{\upkappa}_{\lambda})}{c}{d} + \langle \tau \rangle \mathring{g}^{ac}(t) \mathring{g}_{bd}(t) \, \matr{(\hat{\upeta}_{\lambda})}{a}{b} \matr{(\hat{\upeta}_{\lambda})}{c}{d}.
    \end{equation}
    Similarly, for $\mathcal{E}_{\lambda, \upphi, high}(t)$, one has the following energy coercivity, uniformly in $\lambda \in \Z^D \setminus \{ 0 \}$ and $\tau_{\lambda}(t) \geq \mathring{\tau}$:
    \begin{equation} \label{eq:matter_energy_high_coercive}
        \mathcal{E}_{\lambda, \upphi, high} (t) \asymp \frac{1}{\langle \tau \rangle} \uppsi_{\lambda}^2 + \langle \tau \rangle \upphi_{\lambda}^2.
    \end{equation}
\end{lemma}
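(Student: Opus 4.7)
\bigskip

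\noindent\textbf{Proof plan.} The heart of the statement is the equivalence \eqref{eq:geometry_energy_high_coercive}; the matter estimate \eqref{eq:matter_energy_high_coercive} is strictly easier since $\upphi_\lambda$ and $\uppsi_\lambda$ are scalars, so the plan is to verify it at the end by the same algebraic argument (treating $\mathcal{E}_{\lambda,\upphi,high}$ as a quadratic form in $|\upphi_\lambda|$, $|\uppsi_\lambda|$). My first step for the geometric part will be to interpret each quadratic pairing $\matr{A}{i}{j}\matr{B}{j}{i}$ in terms of the Hilbert--Schmidt inner product $\langle A,B\rangle_{\mathring g} \coloneqq \mathring g^{ac}\mathring g_{bd}\matr{A}{a}{b}\matr{B}{c}{d}$. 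The symmetry constraint \eqref{eq:upeta_sym_l} says precisely that $\mathring g^{ab}\matr{(\hat\upeta_\lambda)}{b}{c}$ is symmetric in $(a,c)$, i.e.\ $\hat\upeta_\lambda$ is self-adjoint with respect to $\mathring g$. A direct computation then gives the identity
\begin{equation*}
    \matr{(\hat\upeta_\lambda)}{i}{j}\matr{(\hat\upeta_\lambda)}{j}{i} = |\hat\upeta_\lambda|_{\mathring g}^2.
\end{equation*}

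\noindent For $\hat\upkappa_\lambda$ the corresponding constraint \eqref{eq:upkappa_sym_l} only forces $\hat\upkappa_\lambda + 2 t\mathring k \cdot \hat\upeta_\lambda$-type combinations to be self-adjoint (with $t\matr{\mathring k}{i}{j}$ uniformly bounded since the Kasner exponents are fixed). Splitting $\hat\upkappa_\lambda$ into its self-adjoint and anti-self-adjoint parts with respect to $\mathring g$, the anti-self-adjoint part is bilinear and bounded in terms of $|\hat\upeta_\lambda|_{\mathring g}$. Expanding $\matr{(\hat\upkappa_\lambda)}{i}{j}\matr{(\hat\upkappa_\lambda)}{j}{i}$ and the cross term $\matr{(\hat\upkappa_\lambda)}{i}{j}\matr{(\hat\upeta_\lambda)}{j}{i}$ using this decomposition and Cauchy--Schwarz in the Hilbert--Schmidt inner product yields, for some constant $C_{sym}$ depending only on the Kasner background,
\begin{gather*}
    \left|\matr{(\hat\upkappa_\lambda)}{i}{j}\matr{(\hat\upkappa_\lambda)}{j}{i} - |\hat\upkappa_\lambda|_{\mathring g}^2\right| \leq C_{sym}\bigl(|\hat\upkappa_\lambda|_{\mathring g}|\hat\upeta_\lambda|_{\mathring g} + |\hat\upeta_\lambda|_{\mathring g}^2\bigr),\\[0.4em]
    \left|\matr{(\hat\upkappa_\lambda)}{i}{j}\matr{(\hat\upeta_\lambda)}{j}{i} - \langle \hat\upkappa_\lambda,\hat\upeta_\lambda\rangle_{\mathring g}\right| \leq C_{sym}|\hat\upeta_\lambda|_{\mathring g}^2.
\end{gather*}

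\noindent Once these identities are in place, the coercivity becomes an essentially algebraic question about the $2\times 2$ quadratic form in $(x,y) = (|\hat\upkappa_\lambda|_{\mathring g}, |\hat\upeta_\lambda|_{\mathring g})$ with coefficient matrix
\begin{equation*}
    Q(\tau) = \begin{pmatrix} \zeta^2/\tau & \zeta/(2\tau)\\ \zeta/(2\tau) & 1/(2\tau) + \zeta^2\tau \end{pmatrix}.
\end{equation*}
Using the uniform bounds $0 < \zeta_{\min}\leq \zeta_\lambda(t)\leq \zeta_{\max}$ from Lemma~\ref{lem:tau}, I will compute $\det Q(\tau) = \zeta^2/(4\tau^2) + \zeta^4$, which is bounded away from zero, and check directly that the two eigenvalues of $Q(\tau)$ are comparable to $1/\tau$ and $\tau$, respectively, as $\tau\to\infty$. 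This gives the main two-sided bound $\alpha x^2 + 2\beta xy + \gamma y^2 \asymp x^2/\tau + \tau y^2$ in the sense claimed in \eqref{eq:geometry_energy_high_coercive}, uniformly in $\lambda$ and $t$ with $\tau_\lambda(t)\geq 1$.

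\smallskip

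\noindent The main (and really only) obstacle is the fact that $\hat\upkappa_\lambda$ fails to be exactly self-adjoint, so that the corrections in the displayed inequalities above are genuinely present. The plan for absorbing them is as follows: the $O\bigl(|\hat\upeta_\lambda|_{\mathring g}^2\bigr)$ errors enter the energy with prefactor $O(1/\tau)$, which at high frequency is dwarfed by the leading $\zeta^2\tau\,|\hat\upeta_\lambda|_{\mathring g}^2$; the cross error $O(|\hat\upkappa_\lambda|_{\mathring g}|\hat\upeta_\lambda|_{\mathring g}/\tau)$ is handled by an $\varepsilon$-Cauchy--Schwarz, splitting it between an $O(\varepsilon/\tau)\,|\hat\upkappa_\lambda|_{\mathring g}^2$ piece (absorbed into the $\zeta^2/\tau\,|\hat\upkappa_\lambda|_{\mathring g}^2$ term once $\varepsilon$ is chosen small in terms of $\zeta_{\min}$) and an $O(1/(\varepsilon\tau))\,|\hat\upeta_\lambda|_{\mathring g}^2$ piece again absorbed by $\zeta^2\tau\,|\hat\upeta_\lambda|_{\mathring g}^2$. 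This absorption requires $\tau\geq \mathring\tau$ with $\mathring\tau$ a constant depending only on $C_{sym}$, $\zeta_{\min}$ and $\zeta_{\max}$, which is exactly the statement in the lemma. The argument for the matter energy then proceeds without the self-adjointness discussion, simply by diagonalising the same $2\times 2$ quadratic form in $(|\uppsi_\lambda|,|\upphi_\lambda|)$.
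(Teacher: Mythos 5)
Your proposal is correct and essentially mirrors the paper's argument: both exploit the symmetry constraints \eqref{eq:upeta_sym_l}--\eqref{eq:upkappa_sym_l} to rewrite the trace pairings $\matr{A}{i}{j}\matr{B}{j}{i}$ in terms of the $\mathring g$-Hilbert--Schmidt norm, isolate the failure of self-adjointness of $\hat\upkappa_\lambda$ (which is precisely the commutator $[t\mathring k,\hat\upeta_\lambda]$, so controlled by $|\hat\upeta_\lambda|_{\mathring g}$), and finally absorb the cross term via Young's inequality using the uniform bounds on $\zeta$ from Lemma~\ref{lem:tau}, which is what forces the threshold $\mathring\tau$. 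The only cosmetic difference is that the paper then uses the diagonality of $\mathring g_{ij}(t)$ and $t\matr{\mathring k}{i}{j}=-p_{\underline i}\matr{\delta}{\underline i}{j}$ to reduce the whole energy to a sum of scalar $2\times 2$ quadratic forms indexed by $(i,j)$, with explicit cross coefficient $\frac{\zeta}{\tau}(1+(2p_i-2p_j)\zeta)$, whereas you keep the abstract commutator formulation. Minor remarks: your two intermediate error bounds are looser than necessary (in fact $\tr(KE)=\langle K,E\rangle_{\mathring g}$ exactly since $E$ is self-adjoint, and the deviation of $\tr(K^2)$ from $|K|_{\mathring g}^2$ is $-2|[t\mathring k,E]|_{\mathring g}^2=O(|E|_{\mathring g}^2)$, with no $|K||E|$ term), but this extra slack is harmless and the absorption argument you describe still closes.
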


\begin{proof}
    Using the (Fourier projections) of the symmetry constraints \eqref{eq:upeta_sym}--\eqref{eq:upkappa_sym}, one may instead write the geometric high-frequency energy quantity $\mathcal{E}_{\lambda, \upeta, high}$ as:
    \begin{align*}
        \mathcal{E}_{\lambda, \upeta, high}(t) 
        &= \frac{\zeta^2}{\tau} \mathring{g}^{ab} \mathring{g}_{cd} \matr{(\hat{\upkappa}_{\lambda})}{a}{c} \matr{(\hat{\upkappa}_{\lambda})}{b}{d}  + \left( \frac{1}{2 \tau} + \zeta^2 \tau \right) \mathring{g}^{ab} \mathring{g}_{cd} \matr{(\hat{\upeta}_{\lambda})}{a}{c} \matr{(\hat{\upeta}_{\lambda})}{b}{d} \\[0.4em]
        &\qquad + \frac{\zeta}{\tau} \left[ \mathring{g}^{ab} \mathring{g}_{cd} + \zeta \, \mathring{g}^{ab} \mathring{g}_{ce} (2t\matr{\mathring{k}}{d}{e}) - \zeta \, \mathring{g}^{ae} \mathring{g}_{cd} (2t \matr{\mathring{k}}{e}{b})  \right] \matr{(\hat{\upkappa}_{\lambda})}{a}{c} \matr{(\hat{\upeta}_{\lambda})}{b}{d}.
    \end{align*}
    Using that $t \matr{\mathring{k}}{i}{j}$ is simply the diagonal matrix $- p_{\underline{i}} \matr{\delta}{\underline{i}}{j}$, we thus have
    \begin{align*}
        \mathcal{E}_{\lambda, \upeta, high}(t) 
        &= \sum_{i, j=1}^D \mathring{g}^{ii} \mathring{g}_{jj} \left[ \frac{\zeta^2}{\tau}  \left( \matr{(\hat{\upkappa}_{\lambda})}{i}{j} \right)^2 + \left( \frac{1}{2 \tau} + \zeta^2 \tau \right) \left( \matr{(\hat{\upeta}_{\lambda})}{i}{j} \right)^2 
        + \frac{\zeta}{\tau} \big( 1 + (2p_i - 2p_j) \zeta \big) \left( \matr{(\hat{\upkappa}_{\lambda})}{i}{j} \right) \left(  \matr{(\hat{\upeta}_{\lambda})}{i}{j} \right) \right].
    \end{align*}

    Using Young's inequality, and the upper bound for $\zeta$ in \eqref{eq:zeta_upperlower}, we find that there exists some $\mathring{\tau}\geq 1$ such that for $\tau \geq \tau_0$, we have the inequality:
    \begin{equation*}
        \frac{\zeta^2}{2 \tau}  \left( \matr{(\hat{\upkappa}_{\lambda})}{i}{j} \right)^2 + \frac{1}{2} \zeta^2 \tau \left( \matr{(\hat{\upeta}_{\lambda})}{i}{j} \right)^2
        \geq \left| \frac{\zeta}{\tau} \big( 1 + (2p_i - 2p_j) \zeta \big) \left( \matr{(\hat{\upkappa}_{\lambda})}{i}{j} \right) \left(  \matr{(\hat{\upeta}_{\lambda})}{i}{j} \right) \right|.
    \end{equation*}
    Combining all these inequalities as well as \eqref{eq:zeta_upperlower} then gives the lower bound in \eqref{eq:geometry_energy_high_coercive}. (We use also that $\tau \asymp \langle \tau \rangle$ for $\lambda \neq 0$.) The upper bound in \eqref{eq:zeta_upperlower} is more straightforward, as is the coercivity \eqref{eq:matter_energy_high_coercive} for $\mathcal{E}_{\lambda, \upphi, high}(t)$.
\end{proof}

The aim of this section will be to prove the following high-frequency energy estimate. As we see shortly, the energy estimate in Proposition~\ref{prop:einstein_high_freq} will %See Proposition~\ref{prop:wave_high_freq} in the case of the wave equation.
follow by summing two derivative estimates for $\mathcal{E}_{\lambda, \upeta, high}(t)$ and $\mathcal{E}_{\lambda, \upphi, high}(t)$ which will be obtained in Proposition~\ref{prop:einstein_high_freq_geometry} and Proposition~\ref{prop:einstein_high_freq_matter} respectively.

\begin{proposition} \label{prop:einstein_high_freq}
    Let $(\matr{(\hat{\upeta}_{\lambda})}{i}{j}, \matr{(\hat{\upkappa}_{\lambda})}{i}{j}, \upphi_{\lambda}, \uppsi_{\lambda}, \upnu_{\lambda}, \upchi_{\lambda}^j)$ be a solution to the linearized Einstein--scalar field system \eqref{eq:upeta_evol_l}--\eqref{eq:upkappa_sym_l} in \underline{CMCSH gauge}. Then the $\partial_t$-derivative of the total high-frequency energy quantity obeys the following bound, uniformly in $\lambda \in \Z^D \setminus \{0\}$ and $\tau_{\lambda}(t) \geq \mathring{\tau}$:
    \begin{equation} \label{eq:einstein_high_freq_der}
        \left| t \frac{d}{dt} \mathcal{E}_{\lambda, high}(t) \right| \lesssim \left( t |\zeta'(t)| + \frac{1}{\tau} \right) \mathcal{E}_{\lambda, high}(t).
    \end{equation}
    There exists some $\mathring{C}_{high}$, depending only on the background Kasner spacetime, such that for $\lambda \in \Z^D \setminus \{ 0 \}$ obeying\footnote{$\tau^2_{\lambda}(1) = \langle \lambda \rangle^2$ is greater than $\mathring{\tau}^2$ for sufficiently large $|\lambda|$. For smaller $\lambda$ see the mid-frequency energy estimate of Section~\ref{sub:mid_freq}.} $\tau_{\lambda}(1) \geq \mathring{\tau}$, it is true for any $t$ with $\tau_{\lambda}(t) \geq \mathring{\tau}$ that
    \begin{equation} \label{eq:einstein_high_freq_fin}
        \mathring{C}_{high}^{-1} \mathcal{E}_{\lambda, high}(t) \leq \mathcal{E}_{\lambda, high}(1) \leq \mathring{C}_{high} \, \mathcal{E}_{\lambda, high}(t).
    \end{equation} 
\end{proposition}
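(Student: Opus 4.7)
My plan is to derive Proposition~\ref{prop:einstein_high_freq} as the sum of two derivative bounds, one for $\mathcal{E}_{\lambda,\upeta,high}$ and one for $\mathcal{E}_{\lambda,\upphi,high}$, to be established in Propositions~\ref{prop:einstein_high_freq_geometry} and \ref{prop:einstein_high_freq_matter}, followed by a Gr\"onwall argument that mimics the final step of the proof of Proposition~\ref{prop:wave_high_freq}. The overall philosophy is to transport the scalar wave calculation \eqref{eq:wave_high_freq_a}--\eqref{eq:wave_high_freq_e} onto the system \eqref{eq:upeta_evol_l}--\eqref{eq:uppsi_evol_l}, verify that the four top-order cancellations displayed there continue to occur componentwise, and control the new elliptic variables $\upnu_\lambda$ and $\upchi_\lambda^j$ via the constraints together with the CMCSH condition \eqref{eq:spatiallyharmonic_linear_l}.

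For the matter energy, the computation of $t\partial_t \mathcal{E}_{\lambda,\upphi,high}$ using \eqref{eq:upphi_evol_l}--\eqref{eq:uppsi_evol_l} reproduces line by line the wave-equation computation of Proposition~\ref{prop:wave_high_freq}, plus four additional lapse contributions proportional to $p_\phi \upnu_\lambda$. To fit these into an error of size $\tau^{-1}\mathcal{E}_{\lambda,high}$, I would combine the CMCSH elliptic equation \eqref{eq:upnu_elliptic_sh_l} with the Hamiltonian constraint \eqref{eq:hamiltonian_linear2_l} to obtain
\[
(1+\tau^2)\upnu_\lambda = -2\bigl(t\matr{\mathring{k}}{b}{a}\bigr)\matr{(\hat{\upkappa}_{\lambda})}{a}{b} - 4p_\phi \uppsi_\lambda,
\]
so that for $\tau\geq \mathring\tau$ the coercivity bounds \eqref{eq:geometry_energy_high_coercive}--\eqref{eq:matter_energy_high_coercive} give $|\upnu_\lambda|\lesssim \tau^{-3/2}\mathcal{E}_{\lambda,high}^{1/2}$. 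Young's inequality then converts every new cross term into an $O(\tau^{-1}\mathcal{E}_{\lambda,high})$ contribution.

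For the geometric energy, the CMCSH identity \eqref{eq:ricci_lin_sh_l} reduces \eqref{eq:upkappa_evol_l} to $t\partial_t \matr{(\hat{\upkappa}_{\lambda})}{i}{j} = -\tau^2 \matr{(\hat{\upeta}_{\lambda})}{i}{j} + (\text{l.o.t.})$, which is the precise analogue of the scalar wave structure. Rewriting $\mathcal{E}_{\lambda,\upeta,high}$ in the symmetrized form used in the proof of Lemma~\ref{lem:einstein_energy_high_coercive}, contracted against $\mathring{g}^{ac}\mathring{g}_{bd}$, would let me replicate the four cancellations \eqref{eq:wave_high_freq_a}--\eqref{eq:wave_high_freq_d} componentwise. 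The Weingarten perturbations $(t\matr{\mathring{k}}{}{}) \ast \hat{\upeta}$ in \eqref{eq:upeta_evol_l} and the lapse term $(t\matr{\mathring{k}}{i}{j})\upnu_\lambda$ in \eqref{eq:upkappa_evol_l} are zeroth-order and absorb into the $\tau^{-1}\mathcal{E}_{\lambda,high}$ allowance, and the $t\zeta'$ residue appears exactly as in the wave case.

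The hard part will be the shift $\upchi_\lambda^j$ entering \eqref{eq:upeta_evol_l} through $\lambda_i\upchi_\lambda^j$, which is na\"ively top order: using only the transport-type estimate for $\upchi_\lambda^j$ would give a contribution of the same scaling as $\hat{\upeta}_\lambda$, not a small one. My resolution is to exploit the momentum constraints \eqref{eq:momentum_linear1_l}--\eqref{eq:momentum_linear2_l}, which convert the $\lambda\cdot\hat{\upeta}_\lambda$ terms on the right-hand side of the elliptic equation \eqref{eq:upchi_elliptic_l} into combinations of $\matr{(\hat{\upkappa}_{\lambda})}{i}{j}$, $\upphi_\lambda$, and (via the Hamiltonian bound above) $\upnu_\lambda$. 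Together with the observation $t^{2-2p_{\underline{j}}}|\lambda|^2 \leq \tau^2$ that controls the prefactor $t^2\mathring{g}^{jk}\lambda_i$ coming from the elliptic operator, this manipulation brings the shift contribution to $t\partial_t \mathcal{E}_{\lambda,\upeta,high}$ to size $O(\tau^{-1}\mathcal{E}_{\lambda,high})$ after Young's inequality. Adding the geometric and matter derivative estimates yields \eqref{eq:einstein_high_freq_der}, and Gr\"onwall applied with the uniform integrability bounds $\int |\zeta'(t)|\,dt\lesssim 1$ from Lemma~\ref{lem:tau} and $\int_{t_{\lambda*}}^{1} \tau^{-1}\,\tfrac{dt}{t}\lesssim 1$ from Lemma~\ref{lem:tauint} then produces the two-sided comparison \eqref{eq:einstein_high_freq_fin}, exactly as in the final paragraph of the proof of Proposition~\ref{prop:wave_high_freq}.
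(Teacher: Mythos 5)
Your overall architecture matches the paper exactly: split $\mathcal{E}_{\lambda,high}$ into matter and geometric parts, reproduce the wave-equation cancellations, bound the elliptic variables, and close with Gr\"onwall using Lemmas~\ref{lem:tau}--\ref{lem:tauint}. Your treatment of the matter energy is correct: the identity $(1+\tau^2)\upnu_\lambda = -2(t\matr{\mathring{k}}{b}{a})\matr{(\hat{\upkappa}_{\lambda})}{a}{b} - 4p_\phi\uppsi_\lambda$ from \eqref{eq:upnu_elliptic_sh_l} and \eqref{eq:hamiltonian_linear2_l} is precisely how the paper's Proposition~\ref{prop:einstein_high_elliptic} extracts $|\upnu_\lambda|\lesssim \tau^{-3/2}\mathcal{E}^{1/2}$, and with that bound the matter lapse cross terms absorb as you say.

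In the geometric energy, however, you have reversed the roles of the two structural inputs, and one of your steps fails outright. First, a minor point: the Weingarten terms $(2t\mathring{k})\ast\hat{\upeta}$ do not ``absorb'' when paired with the $\zeta^2\tau|\hat{\upeta}_\lambda|^2$ part of the energy — that product is $\asymp\mathcal{E}$, not $\tau^{-1}\mathcal{E}$. They disappear because of an exact cyclic cancellation $\tr(\hat{\upeta}(2t\mathring{k})\hat{\upeta}) - \tr((2t\mathring{k})\hat{\upeta}\hat{\upeta})=0$; the tracial (index-contracted, $\mathring{g}$-free) form of the energy is what makes this cancellation transparent. Second, and more seriously: your proposed use of the momentum constraints on the RHS of \eqref{eq:upchi_elliptic_l} makes the shift estimate \emph{worse}, since it trades $\lambda\cdot\hat{\upeta}_\lambda$ (with $|\hat{\upeta}_\lambda|_{\mathring{g}}\lesssim\tau^{-1/2}\mathcal{E}^{1/2}$) for $\lambda\cdot\hat{\upkappa}_\lambda$ (with $|\hat{\upkappa}_\lambda|_{\mathring{g}}\lesssim\tau^{+1/2}\mathcal{E}^{1/2}$), weakening the elliptic bound for $\upchi_\lambda^j$ by a factor of $\tau$ and breaking the pairing against $\zeta^2\tau\hat{\upeta}_\lambda$. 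The paper's actual fix for the dangerous term $(\tfrac{1}{\tau}+2\zeta^2\tau)\matr{(\hat{\upeta}_\lambda)}{j}{i}\bigl[\tfrac{\mathrm{i}}{2}\lambda_i\upchi_\lambda^j+\tfrac{\mathrm{i}}{2}\mathring{g}_{ip}\mathring{g}^{jq}\lambda_q\upchi_\lambda^p\bigr]$ is to invoke the symmetry constraint \eqref{eq:upeta_sym_l} together with the CMCSH condition \eqref{eq:spatiallyharmonic_linear_l}, collapsing the bracket to $\tfrac{\mathrm{i}}{2}\lambda_j(\tr\hat{\upeta}_\lambda)\upchi_\lambda^j$, and then using the Hamiltonian-derived gain $|\tr\hat{\upeta}_\lambda|\lesssim\tau^{-3/2}\mathcal{E}^{1/2}$, which supplies exactly the missing factor $\tau^{-1}$. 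The momentum constraint \emph{is} needed, but for a term you do not address at all: the top-order lapse contribution $\tfrac{\zeta^2}{\tau}\matr{(\hat{\upkappa}_\lambda)}{j}{i}\,t^2\mathring{g}^{jk}\lambda_i\lambda_k\upnu_\lambda$ from \eqref{eq:upkappa_evol_l}, which is of size $\mathcal{E}$ and must be rewritten via \eqref{eq:momentum_linear1_l} — converting $\lambda_j\matr{\hat{\upkappa}}{i}{j}$ into $\lambda_i(\hat{\upeta},\upphi)$ and $\tr\hat{\upeta}$ terms that scale favorably — before Young's inequality can be used.
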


\begin{proof}[Proof (assuming Propositions~\ref{prop:einstein_high_freq_matter} and \ref{prop:einstein_high_freq_geometry})]
    The derivative estimate \eqref{eq:einstein_high_freq_der} will follow from summing \eqref{eq:einstein_high_freq_matter_der} and \eqref{eq:einstein_high_freq_geometry_der} from Propositions~\ref{prop:einstein_high_freq_matter} and \ref{prop:einstein_high_freq_geometry} respectively. We deduce \eqref{eq:einstein_high_freq_fin} using a Gr\"onwall argument similar to that of the proof of Proposition~\ref{eq:wave_high_freq_fin}.
    By Lemmas~\ref{lem:tau} and \ref{lem:tauint} (in particular \eqref{eq:tauintegralup} with $\alpha = 1$), we have
    \[
        \int^1_{t_{\lambda*}} \left ( s |\zeta'(s)| + \frac{1}{\tau(s)} \right) \frac{ds}{s} \lesssim 1.
    \]

    Therefore, applying Gr\"onwall's inequality to \eqref{eq:einstein_high_freq_der}, in both the forwards and backwards directions, yields that there exists some $\mathring{C}_{high} > 0$ such that for any $t_1, t_2$ with $\mathring{\tau} \leq \tau(t_1) \leq \tau(t_2) \leq \tau(1)$, one has
    \begin{equation*}
        \mathring{C}_{high}^{-1} \mathcal{E}_{\lambda, high}(t_1) \leq \mathcal{E}_{\lambda, high}(t_2) \leq \mathring{C}_{high} \, \mathcal{E}_{\lambda, high}(t_1).
    \end{equation*} 
    To deduce \eqref{eq:einstein_high_freq_fin}, simply set $t_2 = 1$.
\end{proof}

The remainder of Section~\ref{sec:high_freq} is dedicated to the Propositions~\ref{prop:einstein_high_freq_matter} and \ref{prop:einstein_high_freq_geometry}.

\subsection{The \texorpdfstring{$\mathring{g}$}{g}-norm and elliptic estimates} \label{sub:gnorm}

For our energy estimates, due to the coercivity in Lemma~\ref{lem:einstein_energy_high_coercive}, it will simplify the exposition somewhat to define an inner product on (constant-valued) tensors adapted to the Cauchy slices $(\Sigma_t, \mathring{g}_{ij}(t))$.

\begin{definition}
    Let $\upalpha = \upalpha_{ij \cdots k}^{\phantom{ij \cdots k}pq \cdots r}$ be a (constant complex valued) tensor on $\Sigma_t$. We define the $\mathring{g}$-norm, $|\cdot|_{\mathring{g}}$, of $\alpha$ as follows:
    \[
        | \upalpha |_{\mathring{g}}^2 = | \upalpha_{ij \cdots k}^{\phantom{ij \cdots k}pq \cdots r}|_{\mathring{g}}^2 \coloneqq
        \mathring{g}^{i_1 i_2} \cdots \mathring{g}^{k_1 k_2} \; \mathring{g}_{p_1 p_2} \cdots \mathring{g}_{r_1 r_2} \;
        \upalpha_{i_1 j_1 \cdots k_1}^{\phantom{i_1 j_1 \cdots k_1} p_1 q_1 \cdots r_1} \overline{\upalpha}_{i_2 j_2 \cdots k_2}^{\phantom{i_2 j_2 \cdots k_2} p_2 q_2 \cdots r_2},
    \]
    where $\mathring{g}_{ij} = \mathring{g}_{ij}(t)$ and its inverse $\mathring{g}^{ij}$ have $t$-dependence as in \eqref{eq:kasner_metric}. If $\upalpha$ is a scalar, then $| \upalpha |_{\mathring{g}} = |\upalpha|$.
\end{definition}

The following lemma is standard from the theory of tensors in inner product spaces.

\begin{lemma} \label{lem:gnorm_tensor}
    Let $\upalpha = \upalpha_{ij \cdots k}^{\phantom{ij \cdots k}pq \cdots r}$ and $\upbeta = \upbeta_{lm \cdots n}^{\phantom{lm \cdots n}st \cdots u}$ be (constant) tensors on $\Sigma_t$. We define the tensor product $\upalpha \otimes \upbeta$ and the contraction $\tr \upalpha$ as the tensors:
    \[
        (\upalpha \otimes \upbeta)_{ij \cdots k l m \cdots n}^{\phantom{ ij \cdots k lm \cdots n} pq \cdots r st \dots u} = \upalpha_{ij \cdots k}^{\phantom{ij \cdots k}pq \cdots r} \upbeta_{lm \cdots n}^{\phantom{lm \cdots n}st \cdots u}, \quad (\tr \upalpha)_{j \cdots k}^{\phantom{j \cdots k} q \cdots r} = \upalpha_{ a j \cdots k}^{\phantom{a j \cdots k} a q \cdots r}.
    \]
    Then the following inequalities hold regarding the $\mathring{g}$-norms:
    \[
        | \upalpha \otimes \upbeta |_{\mathring{g}} \leq | \upalpha |_{\mathring{g}} | \upbeta |_{\mathring{g}}, \quad | \tr \upalpha |_{\mathring{g}} \leq \sqrt{D} | \upalpha |_{\mathring{g}}.
    \]
\end{lemma}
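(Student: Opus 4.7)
The plan is to reduce both inequalities to elementary Euclidean estimates by exploiting the fact that $\mathring{g}_{ij}(t) = t^{2p_{\underline{i}}} \delta_{\underline{i}j}$ is diagonal. First I would introduce an orthonormal frame $\{e_i(t)\}$ for the inner product $\mathring{g}(t)$ on $T\mathbb{T}^D$; in the standard coordinates this amounts to the rescaled basis $e_i = t^{-p_{\underline{i}}} \partial_{x^i}$ with dual coframe $\theta^i = t^{p_{\underline{i}}} dx^i$. In this frame the components $\tilde{\upalpha}_{ij \cdots k}^{\phantom{ij \cdots k} pq \cdots r}$ of any tensor $\upalpha$ are obtained from the coordinate components by multiplication by appropriate powers of $t^{\pm p_i}$, and by construction the $\mathring{g}$-norm reduces to the standard Euclidean sum-of-squares, i.e.
\[
    |\upalpha|_{\mathring{g}}^2 = \sum_{i,j,\ldots,p,q,\ldots} \bigl| \tilde{\upalpha}_{ij \cdots k}^{\phantom{ij \cdots k} pq \cdots r} \bigr|^2.
\]

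With this reduction, the tensor product inequality is immediate (and is in fact an equality). The components of $\upalpha \otimes \upbeta$ in the orthonormal frame are the products $\tilde{\upalpha}_I \, \tilde{\upbeta}_J$ (with $I,J$ suitable multi-indices), so that
\[
    |\upalpha \otimes \upbeta|_{\mathring{g}}^2 = \sum_{I,J} |\tilde{\upalpha}_I \, \tilde{\upbeta}_J|^2 = \Bigl(\sum_I |\tilde{\upalpha}_I|^2\Bigr) \Bigl(\sum_J |\tilde{\upbeta}_J|^2\Bigr) = |\upalpha|_{\mathring{g}}^2 \, |\upbeta|_{\mathring{g}}^2,
\]
via factorisation of the double sum.

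For the contraction, in the orthonormal frame the trace takes the form $\widetilde{(\tr \upalpha)}_{j \cdots k}^{\phantom{j \cdots k} q \cdots r} = \sum_{a=1}^D \tilde{\upalpha}_{a j \cdots k}^{\phantom{a j \cdots k} a q \cdots r}$, since raising and lowering indices in the orthonormal frame is trivial. The Cauchy--Schwarz inequality applied to the inner sum over $a \in \{1, \ldots, D\}$ yields
\[
    \Bigl| \sum_{a=1}^D \tilde{\upalpha}_{a j \cdots k}^{\phantom{a j \cdots k} a q \cdots r} \Bigr|^2 \leq D \sum_{a=1}^D \bigl| \tilde{\upalpha}_{a j \cdots k}^{\phantom{a j \cdots k} a q \cdots r} \bigr|^2.
\]
Summing this bound over the remaining free indices $j, \ldots, k, q, \ldots, r$ produces $|\tr \upalpha|_{\mathring{g}}^2 \leq D \, |\upalpha|_{\mathring{g}}^2$, and taking square roots yields the claim.

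No step is expected to present any real difficulty; the only mild subtlety is verifying that the rescaling $e_i = t^{-p_{\underline{i}}} \partial_{x^i}$ is genuinely $\mathring{g}(t)$-orthonormal, which is a direct computation from \eqref{eq:kasner_metric}. If one prefers to avoid introducing a frame altogether, an equivalent route is to expand $|\upalpha|_{\mathring{g}}^2$ in coordinates, use that all metric contractions collapse to diagonal sums because $\mathring{g}^{ij}$ and $\mathring{g}_{ij}$ are diagonal, and then apply Cauchy--Schwarz directly; this gives the same bounds and makes manifest that the weights $t^{-2p_a} \cdot t^{2p_a} = 1$ associated to the contracted index exactly cancel.
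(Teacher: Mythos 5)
Your proposal is correct, and in fact the paper offers no proof of this lemma at all---it is asserted as a standard fact about tensors in inner product spaces---so there is no paper argument to compare against. Your approach (pass to the $\mathring{g}$-orthonormal frame $e_i = t^{-p_{\underline{i}}}\partial_{x^i}$, in which the $\mathring{g}$-norm becomes the plain Euclidean sum of squares of components; then observe that the tensor product inequality is an equality by factorisation of the double sum, and obtain the trace inequality by Cauchy--Schwarz in the contracted index) is exactly the textbook argument and is carried out correctly. The only step you leave implicit is the final comparison
\[
    D \sum_{a,\,j\cdots k,\,q\cdots r} \bigl| \tilde{\upalpha}_{a j \cdots k}^{\phantom{a j \cdots k} a q \cdots r} \bigr|^2 \;\leq\; D \sum_{a,\,b,\,j\cdots k,\,q\cdots r} \bigl| \tilde{\upalpha}_{a j \cdots k}^{\phantom{a j \cdots k} b q \cdots r} \bigr|^2 \;=\; D\,|\upalpha|_{\mathring{g}}^2,
\]
i.e.\ that the ``diagonal'' sum over $a=b$ is bounded by the full sum over both indices; since all terms are nonnegative this is immediate, but it is worth stating.
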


On the other hand, this next lemma follows from the definition \eqref{eq:tau} of $\tau$ and \eqref{eq:kasner_evol}:

\begin{lemma} \label{lem:kasner_gnorm}
    Consider $\lambda_i$ as a covector in $\Sigma_t$. Then the $\mathring{g}$-norms of $\lambda_i$ and $t \matr{\mathring{k}}{i}{j}$ are as follows:
    \[
        | \lambda_i |_{\mathring{g}} = \frac{\tau}{t}, \qquad | t \matr{\mathring{k}}{i}{j} |_{\mathring{g}} = \left( \sum_{i=1}^D p_i^2 \right)^{1/2} \leq 1.
    \]
\end{lemma}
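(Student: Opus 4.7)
The proof is a direct computation in both cases, using the explicit form of the Kasner metric $\mathring{g}_{ij} = t^{2p_{\underline{i}}} \delta_{\underline{i}j}$ given in \eqref{eq:kasner_metric}, and its inverse $\mathring{g}^{ij} = t^{-2p_{\underline{i}}} \delta^{\underline{i}j}$. First, for the covector $\lambda_i \in \mathbb{Z}^D$ on $\Sigma_t$, I would simply expand
\[
|\lambda_i|_{\mathring{g}}^2 = \mathring{g}^{ij} \lambda_i \lambda_j = \sum_{i=1}^D t^{-2p_{\underline{i}}} \lambda_i^2 = t^{-2} \sum_{i=1}^D t^{2-2p_{\underline{i}}} \lambda_i^2 = \frac{\tau^2}{t^2},
\]
where the final equality is the definition \eqref{eq:tau} of $\tau = \tau_{\lambda}(t)$. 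Taking the square root gives the first identity.

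For the second identity, I would use the expression $t \matr{\mathring{k}}{i}{j} = -p_{\underline{i}} \matr{\delta}{\underline{i}}{j}$ from \eqref{eq:kasner_evol} (equivalently from the remark following \eqref{eq:upeta_evol_intro}). Since this tensor is diagonal, applying the definition of the $\mathring{g}$-norm for a $(1,1)$-tensor yields
\[
|t \matr{\mathring{k}}{i}{j}|_{\mathring{g}}^2 = \mathring{g}^{i_1 i_2} \mathring{g}_{j_1 j_2} (t \matr{\mathring{k}}{i_1}{j_1})(t \matr{\mathring{k}}{i_2}{j_2}) = \sum_{i=1}^D \mathring{g}^{ii} \mathring{g}_{ii} \, p_i^2 = \sum_{i=1}^D p_i^2,
\]
using that $\mathring{g}^{ii}\mathring{g}_{ii} = t^{-2p_{\underline{i}}} \cdot t^{2p_{\underline{i}}} = 1$ (no summation). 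The inequality $\sum_{i=1}^D p_i^2 \leq 1$ is then immediate from the second Kasner relation in \eqref{eq:kasner_relations}, namely $\sum_{i=1}^D p_i^2 + 2p_{\phi}^2 = 1$, since $p_{\phi}^2 \geq 0$.

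There is no substantive obstacle; the only mild point to be careful about is the bookkeeping of underlined versus summed indices in the second calculation, but the diagonal structure of both $\mathring{g}$ and $t\mathring{k}$ makes the sum collapse immediately.
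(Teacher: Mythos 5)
Your proof is correct and matches the approach the paper indicates; the paper itself merely asserts that the lemma "follows from the definition of $\tau$ and \eqref{eq:kasner_evol}" without writing out the computation, and your direct expansion using $\mathring{g}_{ij} = t^{2p_{\underline{i}}}\delta_{\underline{i}j}$, the diagonality of $t\matr{\mathring{k}}{i}{j}$, and the second Kasner relation is exactly what is meant.
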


The $\mathring{g}$-norm notation will prove useful in following elliptic estimates for $\tr \hat{\upeta}_{\lambda}$, $\upnu_{\lambda}$ and $\upchi_{\lambda}^j$.

\begin{proposition} \label{prop:einstein_high_elliptic}
    For $\lambda \in \Z^D \setminus \{ 0 \}$ and $\tau(t) \geq \mathring{\tau}$, the following bounds hold for $\tr \hat{\upeta}_{\lambda}$ and $\upnu_{\lambda}$.
    \begin{equation} \label{eq:einstein_high_lapse}
        | \tr \hat{\upeta}_{\lambda} | + |\upnu_{\lambda}| \lesssim \frac{1}{\tau^{\frac{3}{2}}} \mathcal{E}_{\lambda, high}^{1/2} (t).
    \end{equation}
    For the shift $\upchi^j_{\lambda}$, we have the following:%, including an improved estimate for the divergence $\lambda_j \upchi^j_{\lambda}$:
    \begin{equation} \label{eq:einstein_high_shift}
       | \upchi^j_{\lambda} |_{\mathring{g}} \lesssim \frac{t}{\tau^{\frac{3}{2}}} \mathcal{E}_{\lambda, high}^{1/2}(t).% \quad
%        | \lambda_j \upchi^j_{\lambda} |_{\mathring{g}} \lesssim \frac{1}{\tau^{\frac{3}{2}}} \mathcal{E}_{\lambda, high}^{1/2}(t).
    \end{equation}
\end{proposition}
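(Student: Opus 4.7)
The plan is to reduce each of the three quantities $\tr\hat{\upeta}_\lambda$, $\upnu_\lambda$, and $\upchi^j_\lambda$ to algebraic expressions in the evolutionary variables $\matr{(\hat{\upkappa}_\lambda)}{i}{j}, \matr{(\hat{\upeta}_\lambda)}{i}{j}, \upphi_\lambda, \uppsi_\lambda$ by inverting the elliptic/constraint equations of Proposition~\ref{prop:einstein_linear_l}, and then apply the coercivity of Lemma~\ref{lem:einstein_energy_high_coercive} together with the tensorial bounds of Lemmas~\ref{lem:gnorm_tensor}--\ref{lem:kasner_gnorm} to convert these into bounds by $\mathcal{E}_{\lambda, high}^{1/2}$. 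Throughout we use that Lemma~\ref{lem:einstein_energy_high_coercive} gives $|\matr{(\hat{\upkappa}_\lambda)}{i}{j}|_{\mathring{g}} + |\uppsi_\lambda| \lesssim \tau^{1/2}\, \mathcal{E}_{\lambda, high}^{1/2}(t)$ and $|\matr{(\hat{\upeta}_\lambda)}{i}{j}|_{\mathring{g}} + |\upphi_\lambda| \lesssim \tau^{-1/2}\, \mathcal{E}_{\lambda, high}^{1/2}(t)$ whenever $\tau(t) \geq \mathring{\tau}$.

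First I would handle $\tr\hat{\upeta}_\lambda$ using the CMCSH-simplified Hamiltonian constraint \eqref{eq:hamiltonian_linear2_l}, which gives the algebraic identity
\[
    \tr\hat{\upeta}_\lambda = - \frac{2}{\tau^2}\bigl( t\matr{\mathring{k}}{b}{a}\,\matr{(\hat{\upkappa}_\lambda)}{a}{b} + 2 p_\phi \uppsi_\lambda \bigr).
\]
Since $|t\matr{\mathring{k}}{i}{j}|_{\mathring{g}} \lesssim 1$ by Lemma~\ref{lem:kasner_gnorm}, Lemma~\ref{lem:gnorm_tensor} controls the first term by $|\matr{(\hat{\upkappa}_\lambda)}{i}{j}|_{\mathring{g}} \lesssim \tau^{1/2}\,\mathcal{E}_{\lambda, high}^{1/2}$, and the same bound applies to $\uppsi_\lambda$; the overall $\tau^{-2}$ prefactor then yields the desired $\tau^{-3/2}\,\mathcal{E}_{\lambda, high}^{1/2}$. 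The estimate for $\upnu_\lambda$ follows immediately because the CMCSH lapse equation \eqref{eq:upnu_elliptic_sh_l} reads $\upnu_\lambda = \tau^2(1+\tau^2)^{-1}\, \tr\hat{\upeta}_\lambda$, so $|\upnu_\lambda| \leq |\tr\hat{\upeta}_\lambda|$.

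For the shift, I would take the $\mathring{g}$-norm of both sides of the CMCSH shift equation \eqref{eq:upchi_elliptic_l}. Each of the four terms on the right-hand side carries a factor $t^2$ together with one power of $\lambda$, which by Lemma~\ref{lem:kasner_gnorm} contributes $t\tau$ in $\mathring{g}$-norm, and a tensorial factor bounded via Lemma~\ref{lem:gnorm_tensor}. The terms proportional to $\upnu_\lambda$ are controlled by $t\tau\cdot\tau^{-3/2}\mathcal{E}_{\lambda, high}^{1/2} = t\tau^{-1/2}\mathcal{E}_{\lambda, high}^{1/2}$ (using the just-proved lapse bound), while the terms containing $\matr{(\hat{\upeta}_\lambda)}{i}{j}$ or $\upphi_\lambda$ — which turn out to be dominant — contribute $t\tau\cdot\tau^{-1/2}\mathcal{E}_{\lambda, high}^{1/2} = t\tau^{1/2}\mathcal{E}_{\lambda, high}^{1/2}$. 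Dividing through by $\tau^2$ gives \eqref{eq:einstein_high_shift}.

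There is no real obstacle here beyond careful bookkeeping of the powers of $\tau$ and $t$; the proof is essentially an algebraic unpacking of the constraint and elliptic equations combined with the coercivity in Lemma~\ref{lem:einstein_energy_high_coercive}. The only conceptual point worth flagging is the factor-of-$\tau^{-2}$ gain obtained by inverting the Hamiltonian constraint for $\tr\hat{\upeta}_\lambda$: this is what produces the improved rate $\tau^{-3/2}$ rather than the $\tau^{-1/2}$ that naive energy coercivity would provide, and is crucial for closing the top-order estimate of Proposition~\ref{prop:einstein_high_freq}.
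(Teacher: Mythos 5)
Your proposal is correct and follows essentially the same route as the paper's proof: invert the CMCSH Hamiltonian constraint \eqref{eq:hamiltonian_linear2_l} to gain a factor of $\tau^{-2}$ for $\tr\hat{\upeta}_\lambda$, pass this to $\upnu_\lambda$ via the lapse equation \eqref{eq:upnu_elliptic_sh_l}, and then bound $\upchi^j_\lambda$ by taking $\mathring{g}$-norms of \eqref{eq:upchi_elliptic_l} and applying the coercivity of Lemma~\ref{lem:einstein_energy_high_coercive} together with Lemmas~\ref{lem:gnorm_tensor}--\ref{lem:kasner_gnorm}. The bookkeeping of the powers of $t$ and $\tau$ matches the paper's, and the flagged ``factor-of-$\tau^{-2}$ gain from the Hamiltonian constraint'' is indeed the key conceptual point.
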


\begin{proof}
    We start with the $\tr \hat{\upeta_{\lambda}}$ estimate, for which we use the linearized Hamiltonian constraint \eqref{eq:hamiltonian_linear_l}. To illustrate how we use the $\mathring{g}$-norm estimates above, one applies the linearized CMCSH gauge \eqref{eq:spatiallyharmonic_linear_l} and rewrites this equation tensorially as
    \[
        \tau^2 ( \tr \hat{\upeta}_{\lambda} ) = - 2 \tr ( t \mathring{k} \otimes \hat{\upkappa}_{\lambda} ) - 4 p_{\phi} \uppsi_{\lambda}.
    \]

    Therefore, upon application of Lemmas~\ref{lem:gnorm_tensor} and \ref{lem:kasner_gnorm} one has
    \[
        \tau^2 | \tr \hat{\upeta}_{\lambda} | = \tau^2 | \tr \hat{\upeta}_{\lambda} |_{\mathring{g}} \lesssim | \hat{\upkappa}_{\lambda} |_{\mathring{g}} + | \uppsi_{\lambda} |_{\mathring{g}}.
    \]
    By the definition of the $\mathring{g}$-norm and the energy coercivity in Lemma~\ref{lem:einstein_energy_high_coercive}, one has $|\hat{\upkappa}|_{\mathring{g}} + |\uppsi|_{\mathring{g}}| \lesssim \tau^{\frac{1}{2}} \mathcal{E}_{\lambda, high}^{1/2}(t)$ for $\tau \geq \mathring{\tau}$, thus the bound $|\tr \hat{\upeta}_{\lambda} | \lesssim \tau^{-\frac{3}{2}} \mathcal{E}_{\lambda, high}^{1/2} (t)$ follows.

    The same estimate for $|\upnu_{\lambda}|$ follows from the elliptic equation for the lapse \eqref{eq:upnu_elliptic_l}; in the $\tau \geq \mathring{\tau}$ regime this equation tells us that $|\upnu_{\lambda}| \leq |\tr \hat{\upeta}_{\lambda} |$. This completes the proof of \eqref{eq:einstein_high_lapse}.

    For \eqref{eq:einstein_high_shift}, we write \eqref{eq:upchi_elliptic} in the tensorial form.
    \[
        \tau^2 \upchi_{\lambda}^j = t^2 \mathring{g}^{ij} \left [ \mathrm{i} \lambda_i + 2 \mathrm{i} (t \mathring{k} \otimes \lambda)_i \right]  \upnu_{\lambda}
        + 4 t^2 \mathring{g}^{pq} (\mathrm{i}\lambda)_q \matr{( t \mathring{k} \otimes \hat{\upeta}_{\lambda} )}{p}{j}
        - 2 t^2 \mathring{g}^{ij} ( \mathrm{i} \lambda )_i \cdot \left [ \tr ( t \mathring{k} \otimes \hat{\upeta}_{\lambda} ) + 2 p_{\phi} \upphi_{\lambda} \right ].
    \]
    Using Lemmas~\ref{lem:gnorm_tensor} and \ref{lem:kasner_gnorm}, we therefore find that
    \[
        \tau^2 | \upchi_{\lambda}^j |_{\mathring{g}} \lesssim \tau t \left( | \upnu_{\lambda} | + | \hat{\upeta}_{\lambda}|_{\mathring{g}} + |\upphi_{\lambda}| \right).
    \]
    Using \eqref{eq:einstein_high_lapse} to estimate $|\upnu_{\lambda}|$ and Lemma~\ref{lem:einstein_energy_high_coercive} to get $|\hat{\upeta}_{\lambda}|_{\mathring{g}} + |\uppsi_{\lambda}| \lesssim \tau^{- \frac{1}{2}} \mathcal{E}_{\lambda, high}^{1/2}$, the bound \eqref{eq:einstein_high_shift} follows. This completes the high-frequency elliptic estimates.
\end{proof}

\subsection{Estimates for the matter terms}

Here, we prove the high-frequency energy estimate for the matter, which is analogous to the proof of Proposition~\ref{prop:wave_high_freq}, except for an argument to deal with the lapse $\upnu_{\lambda}$.

\begin{proposition} \label{prop:einstein_high_freq_matter}
    For $(\matr{(\hat{\upeta}_{\lambda})}{i}{j}, \matr{(\hat{\upkappa}_{\lambda})}{i}{j}, \upphi_{\lambda}, \uppsi_{\lambda}, \upnu_{\lambda}, \upchi_{\lambda})$ as in Proposition~\ref{prop:einstein_high_freq}, the following derivative estimate holds for $\mathcal{E}_{\lambda, \upphi, high}(t)$, uniformly in $\lambda \in \mathbb{Z}^D$ and $\tau(t) \geq \mathring{\tau}$:
    \begin{equation} \label{eq:einstein_high_freq_matter_der}
        \left| t \frac{d}{dt} \mathcal{E}_{\lambda, \upphi, high}(t) \right| \lesssim \left( t |\zeta'(t)| + \frac{1}{\tau} \right) \mathcal{E}_{\lambda, high}(t).
    \end{equation}
\end{proposition}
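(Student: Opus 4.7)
The proof will closely mirror the wave equation analogue in Proposition~\ref{prop:wave_high_freq}, with the essential new ingredient being control of the lapse coupling term $p_{\phi} \upnu_{\lambda}$ appearing in the evolution equations \eqref{eq:upphi_evol_l}--\eqref{eq:uppsi_evol_l}. The plan is to directly compute $t \frac{d}{dt} \mathcal{E}_{\lambda, \upphi, high}(t)$ using these equations together with the identity $t \frac{d}{dt} \tau = \zeta^{-1} \tau$, and then separate the resulting terms into three groups: (i) terms independent of $\upnu_\lambda$ and $t\zeta'(t)$, (ii) terms proportional to $t \zeta'(t)$, and (iii) terms proportional to $p_\phi \upnu_\lambda$.

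For group (i), the computation is structurally identical to that of Proposition~\ref{prop:wave_high_freq}, since the matter energy \eqref{eq:matter_energy_high} has the same form as the wave equation energy \eqref{eq:wave_energy_high}, and the non-$\upnu$ parts of \eqref{eq:upphi_evol_l}--\eqref{eq:uppsi_evol_l} agree with \eqref{eq:wave_phi_l_evol}--\eqref{eq:wave_psi_l_evol}. The same sequence of cancellations (between cross-terms generated by $t\partial_t(\zeta^2 \uppsi_\lambda^2/\tau)$, $t\partial_t(\zeta \uppsi_\lambda \upphi_\lambda/\tau)$, etc.) reduces this group to a single term of the form $- \zeta^{-1} \upphi_\lambda^2/(2\tau)$, which by Lemma~\ref{lem:einstein_energy_high_coercive} and the uniform bounds on $\zeta$ in Lemma~\ref{lem:tau} is controlled by $\tau^{-1} \mathcal{E}_{\lambda, \upphi, high}(t)$. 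Group (ii) is directly bounded by $|t \zeta'(t)| \cdot \mathcal{E}_{\lambda, \upphi, high}(t)$ using \eqref{eq:zeta_upperlower} and the coercivity from Lemma~\ref{lem:einstein_energy_high_coercive}.

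The main new work lies in group (iii), the collection of terms of the form $(\text{coeff}) \cdot p_\phi \upnu_\lambda \cdot \{\uppsi_\lambda, \upphi_\lambda\}$, where the coefficient is one of $\zeta/\tau$, $\zeta^2/\tau$, $1/\tau$, or $\zeta^2 \tau$. This is where we must invoke the elliptic estimate of Proposition~\ref{prop:einstein_high_elliptic}, namely $|\upnu_\lambda| \lesssim \tau^{-3/2} \mathcal{E}_{\lambda, high}^{1/2}(t)$, which crucially uses the Hamiltonian constraint and thus involves the total energy $\mathcal{E}_{\lambda, high}$ rather than $\mathcal{E}_{\lambda, \upphi, high}$ alone. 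Combining this with $|\uppsi_\lambda| \lesssim \tau^{1/2} \mathcal{E}_{\lambda, \upphi, high}^{1/2}$ and $|\upphi_\lambda| \lesssim \tau^{-1/2} \mathcal{E}_{\lambda, \upphi, high}^{1/2}$ from Lemma~\ref{lem:einstein_energy_high_coercive} and applying Cauchy--Schwarz, the worst of these terms (which comes from $\zeta^2 \tau \cdot \upnu_\lambda \upphi_\lambda$) contributes at most $\tau^{-1} \mathcal{E}_{\lambda, high}(t)$, while all other $\upnu$-terms are even smaller.

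I expect group (iii) to be the only real obstacle, and the key observation is that the loss of $\mathcal{E}_{\lambda, \upphi, high}$ to the full $\mathcal{E}_{\lambda, high}$ is precisely why Proposition~\ref{prop:einstein_high_freq} must sum the matter and geometric energy estimates (and symmetrically, some geometric error terms in Proposition~\ref{prop:einstein_high_freq_geometry} will be controlled by the matter energy). Summing the three groups gives \eqref{eq:einstein_high_freq_matter_der}, completing the proof.
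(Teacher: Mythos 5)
Your proposal is correct and follows essentially the same route as the paper: compute $t\frac{d}{dt}\mathcal{E}_{\lambda,\upphi,high}$ from \eqref{eq:upphi_evol_l}--\eqref{eq:uppsi_evol_l}, exploit the identical cancellations as in Proposition~\ref{prop:wave_high_freq} for the $\upnu$-free terms, and absorb the lapse coupling terms via the elliptic estimate \eqref{eq:einstein_high_lapse} combined with the coercivity of Lemma~\ref{lem:einstein_energy_high_coercive}. Your grouping into three classes matches the paper's presentation, and your identification of $\zeta^2\tau\,\upnu_\lambda\upphi_\lambda$ as the borderline term, costing $\tau^{-1}\mathcal{E}_{\lambda,high}$ (and hence explaining the weaker $\tau^{-1}$ versus the $\tau^{-2}$ in the pure wave case), is exactly the key point.
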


\begin{proof}
    Following the proof of Proposition~\ref{prop:wave_high_freq}, we compute $t \frac{d}{dt} \mathcal{E}_{\lambda, \upphi, high}$ using \eqref{eq:upeta_evol_l}--\eqref{eq:upkappa_sym_l} as follows:
    \begin{align}
        t \frac{d}{dt} \mathcal{E}_{\lambda, \upphi, high} \addtocounter{equation}{1}
        &= \zeta^2 \frac{2 \uppsi_{\lambda} ( - \tau^2 \upphi_{\lambda} )}{\tau} - \zeta^2p_{\phi} \frac{2 \uppsi_{\lambda} \upnu_{\lambda}}{\tau} - \zeta \frac{\uppsi_{\lambda}^2}{\tau} 
        \tag{\theequation a} \label{eq:matter_high_freq_a} \\[0.3em]
        &\qquad + \zeta \frac{ \upphi_{\lambda}(- \tau^2 \upphi_{\lambda}) }{\tau} + \zeta \frac{\uppsi_{\lambda}^2}{\tau} + \zeta p_{\phi} \frac{(\uppsi_{\lambda} - \upphi_{\lambda})\upnu_{\lambda}}{\tau} - \frac{\uppsi_{\lambda} \upphi_{\lambda}}{\tau} 
        \tag{\theequation b} \label{eq:matter_high_freq_b} \\[0.3em]
        &\qquad + \frac{\upphi_{\lambda} \uppsi_{\lambda}}{\tau} + p_{\phi} \frac{\upphi_{\lambda} \upnu_{\lambda}}{\tau} - \zeta^{-1} \frac{\upphi_{\lambda}^2}{2 \tau} 
        \tag{\theequation c} \label{eq:matter_high_freq_c} \\[0.3em]
        &\qquad + 2 \zeta^2 \tau \upphi_{\lambda} \uppsi_{\lambda} + 2 \zeta^2 p_{\phi} \tau \upphi_{\lambda} \upnu_{\lambda} + \zeta \tau \upphi_{\lambda}^2 
        \tag{\theequation d} \label{eq:matter_high_freq_d} \\[0.3em]
        &\qquad + t \zeta'(t) \left( 2 \zeta \frac{\uppsi_{\lambda}^2}{\tau} + \frac{\uppsi_{\lambda} \upphi_{\lambda}}{\tau} + 2 \zeta \tau \upphi_{\lambda}^2 \right).
        \tag{\theequation e} \label{eq:matter_high_freq_e} 
    \end{align}

    We now exploit the same cancellations as in the proof of Proposition~\ref{prop:wave_high_freq}. The difference is that we now have some additional terms related to the linearized lapse $\upnu_{\lambda}$. After the cancellations, one has
    \[
        t \frac{d}{dt} \mathcal{E}_{\lambda, high} = - \zeta^{-1} \frac{\upphi_{\lambda}^2}{2 \tau} + \text{\eqref{eq:matter_high_freq_e}} + \mathfrak{E}_{\upnu}, \qquad |\mathfrak{E}_{\upnu}| \lesssim \frac{|\uppsi_{\lambda} \upnu_{\lambda}|}{\tau} + \tau |\upphi_{\lambda} \upnu_{\lambda}|.
    \]

    As before, it is straightforward to bound \eqref{eq:matter_high_freq_e} by $t |\zeta'(t)| \mathcal{E}_{\lambda, high}$, while Proposition~\ref{prop:einstein_high_elliptic} and Lemma~\ref{lem:einstein_energy_high_coercive} allow us to estimate the final term $\mathfrak{E}_{\upnu}$ by
    \[
        | \mathfrak{E}_{\upnu} | \lesssim \frac{1}{\tau} \left( \frac{\uppsi_{\lambda}^2}{\tau} + \tau \upphi_{\lambda}^2 \right) + \tau^2 \upnu_{\lambda}^2 \lesssim \frac{1}{\tau} \mathcal{E}_{\lambda, high}.
    \]
    Combining all of these, we deduce the derivative estimate \eqref{eq:einstein_high_freq_matter_der}.
\end{proof}

\subsection{Estimates for the metric terms}

Next, we prove the high frequency energy estimate for the metric quantities $(\matr{\hat{\upeta}}{i}{j}, \matr{\hat{\upkappa}}{i}{j})$, which is similar to Proposition~\ref{prop:einstein_high_freq} but crucially uses the CMCSH gauge and cancellations due to symmetry via \eqref{eq:upeta_sym_l}--\eqref{eq:upkappa_sym_l}.

\begin{proposition} \label{prop:einstein_high_freq_geometry}
    For $(\matr{(\hat{\upeta}_{\lambda})}{i}{j}, \matr{(\hat{\upkappa}_{\lambda})}{i}{j}, \upphi_{\lambda}, \uppsi_{\lambda}, \upnu_{\lambda}, \upchi_{\lambda})$ as in Proposition~\ref{prop:einstein_high_freq}, the following derivative estimate holds for $\mathcal{E}_{\lambda, \upeta, high}(t)$, uniformly in $\lambda \in \mathbb{Z}^D$ and $\tau(t) \geq \mathring{\tau}$:
    \begin{equation} \label{eq:einstein_high_freq_geometry_der}
        \left| t \frac{d}{dt} \mathcal{E}_{\lambda, \upeta, high}(t) \right| \lesssim \left( t |\zeta'(t)| + \frac{1}{\tau} \right) \mathcal{E}_{\lambda, high}(t).
    \end{equation}
\end{proposition}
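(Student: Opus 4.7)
The proof will follow the template of Propositions~\ref{prop:wave_high_freq} and \ref{prop:einstein_high_freq_matter}, computing $t\frac{d}{dt}\mathcal{E}_{\lambda, \upeta, high}(t)$ directly by differentiating each of the three summands in \eqref{eq:geometry_energy_high} and substituting the evolution equations \eqref{eq:upeta_evol_l}--\eqref{eq:upkappa_evol_l} with the CMCSH simplification \eqref{eq:ricci_lin_sh_l}. That simplification is the decisive structural ingredient: it means the principal part of $t\partial_t \matr{(\hat{\upkappa}_\lambda)}{i}{j}$ is $-\tau^2 \matr{(\hat{\upeta}_\lambda)}{i}{j}$, exactly mirroring the scalar relation $t\partial_t \psi_\lambda = -\tau^2 \phi_\lambda$ of the wave case, so that the four-fold cancellation pattern from \eqref{eq:wave_high_freq_a}--\eqref{eq:wave_high_freq_d} can be reproduced term-by-term. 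After this principal cancellation, one is left with a good leftover of the form $-\zeta^{-1} |\hat{\upeta}_\lambda|_{\mathring{g}}^2/(2\tau) \lesssim \tau^{-2} \mathcal{E}_{\lambda, \upeta, high}$ together with a $t\zeta'(t)$-term bounded by $t|\zeta'(t)| \mathcal{E}_{\lambda, \upeta, high}$, in direct analogy to the wave and matter arguments.

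A preliminary step will be to re-express the energy using $\mathring{g}$-norms via the symmetry identities \eqref{eq:upeta_sym_l}--\eqref{eq:upkappa_sym_l}, which give $\matr{(\hat{\upeta}_\lambda)}{i}{j}\matr{(\hat{\upeta}_\lambda)}{j}{i} = |\hat{\upeta}_\lambda|_{\mathring{g}}^2$ and a similar identity for the $\hat{\upkappa}_\lambda$-contractions. The benefit is that the $(2t\mathring{k})\hat{\upeta}$ cross-terms appearing in \eqref{eq:upeta_evol_l}, which would naively contribute at the borderline size $\mathcal{E}_{\lambda,\upeta,high}$ when contracted with $\zeta^2\tau\, \hat{\upeta}_\lambda$, are exactly cancelled by the $t\partial_t\mathring{g}$-contributions produced when differentiating the $\mathring{g}$-norm; this cancellation uses crucially that $t\mathring{k}$ is self-adjoint with respect to $\mathring{g}$ and that the background satisfies $t\partial_t\mathring{g} = -2(t\mathring{k})\cdot\mathring{g}$. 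An analogous but simpler cancellation takes care of the corresponding $(2t\mathring{k})\hat{\upkappa}$-type cross-terms in the $|\hat{\upkappa}_\lambda|_{\mathring{g}}^2/\tau$ piece.

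The main obstacle will be the treatment of the elliptic coupling terms involving $\upnu_\lambda$ and $\upchi^j_\lambda$, since a naive application of Proposition~\ref{prop:einstein_high_elliptic} only yields error terms of size $\mathcal{E}_{\lambda, high}$ rather than the required $\tau^{-1}\mathcal{E}_{\lambda, high}$. Two refinements must be combined. First, the lapse error $t^2\mathring{g}^{jk}\lambda_i\lambda_k \upnu_\lambda$ in $t\partial_t \hat{\upkappa}_\lambda$, when paired with $\zeta^2 \tau^{-1}\, \hat{\upkappa}_\lambda$, can be rewritten by regrouping one $\lambda$-factor into the contraction $\mathring{g}^{jk}\lambda_k \matr{(\hat{\upkappa}_\lambda)}{i}{j}$ and invoking the momentum constraint \eqref{eq:momentum_linear2_l}; this replaces that contraction by lower-order expressions involving $\lambda\hat{\upeta}_\lambda$ and $\lambda\upphi_\lambda$, gaining the missing factor of $\tau^{-1}$ when combined with $|\upnu_\lambda|\lesssim \tau^{-3/2}\mathcal{E}_{\lambda, high}^{1/2}$ from Proposition~\ref{prop:einstein_high_elliptic}. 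Second, the shift error $\lambda_i\upchi^j_\lambda$ from $t\partial_t \hat{\upeta}_\lambda$, paired with the dominant $\zeta^2\tau\, \hat{\upeta}_\lambda$ term in the energy, is handled using the CMCSH gauge condition \eqref{eq:spatiallyharmonic_linear_l}, which yields the exchange $\lambda_i \matr{(\hat{\upeta}_\lambda)}{j}{i} = \tfrac{1}{2}\lambda_j(\tr \hat{\upeta}_\lambda)$. The extra decay then follows from the sharpened bound $|\tr \hat{\upeta}_\lambda| \lesssim \tau^{-3/2} \mathcal{E}_{\lambda, high}^{1/2}$ in Proposition~\ref{prop:einstein_high_elliptic}, rather than the cruder $|\hat{\upeta}_\lambda|_{\mathring{g}} \lesssim \tau^{-1/2} \mathcal{E}_{\lambda, high}^{1/2}$. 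The remaining subleading $(t\mathring{k})\upnu$ and $(t\mathring{k})\lambda\upchi$ contributions are estimated directly by Proposition~\ref{prop:einstein_high_elliptic}, and collecting all bounds yields \eqref{eq:einstein_high_freq_geometry_der}.
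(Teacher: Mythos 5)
Your proposal takes essentially the same route as the paper's proof: expand $t\frac{d}{dt}\mathcal{E}_{\lambda,\upeta,high}$ term by term using \eqref{eq:upeta_evol_l}--\eqref{eq:upkappa_evol_l} with the CMCSH simplification \eqref{eq:ricci_lin_sh_l}, reproduce the four-fold wave-equation cancellation, handle the $\hat{\upkappa}\cdot t^2\mathring{g}^{jk}\lambda_i\lambda_k\upnu$ term via the momentum constraint (the paper uses \eqref{eq:momentum_linear1_l} rather than \eqref{eq:momentum_linear2_l}, but they are equivalent modulo the symmetry identities), and absorb the problematic $\hat{\upeta}\cdot\lambda\upchi$ shift term by combining symmetry, the CMCSH relation $2\lambda_j\matr{\hat{\upeta}}{i}{j}=\lambda_i\tr\hat{\upeta}$, and the sharpened estimate $|\tr\hat{\upeta}_\lambda|\lesssim\tau^{-3/2}\mathcal{E}_{\lambda,high}^{1/2}$ from Proposition~\ref{prop:einstein_high_elliptic}. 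Two small imprecisions in your accounting are worth flagging, though neither is fatal: (1) the claimed identity $\matr{(\hat{\upkappa}_\lambda)}{i}{j}\matr{(\hat{\upkappa}_\lambda)}{j}{i}=|\hat{\upkappa}_\lambda|_{\mathring{g}}^2$ does not hold — only the combination $\matr{\hat{\upkappa}}{i}{j}-2t\matr{\mathring{k}}{i}{p}\matr{\hat{\upeta}}{p}{j}$ is $\mathring{g}$-symmetric by \eqref{eq:upkappa_sym_l}, so this rewriting produces additional $\hat{\upkappa}$-$\hat{\upeta}$ cross-terms which must be tracked (cf.\ the proof of Lemma~\ref{lem:einstein_energy_high_coercive}); and (2) the cancellation of the $(2t\mathring{k})\hat{\upeta}$ cross-terms is not complete. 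The commutator-trace type term coming from the pure $\hat{\upeta}$-contraction (paper's \eqref{eq:metric_high_freq_g}) does vanish exactly, but the analogous contribution arising in the $\hat{\upkappa}$-$\hat{\upeta}$ mixed piece, namely $\frac{\zeta}{\tau}\matr{(\hat{\upkappa}_\lambda)}{j}{i}\left[(2t\matr{\mathring{k}}{p}{j})\matr{(\hat{\upeta}_\lambda)}{i}{p}-(2t\matr{\mathring{k}}{i}{p})\matr{(\hat{\upeta}_\lambda)}{p}{j}\right]$, does not cancel and must be bounded directly; it contributes at size $\tau^{-1}\mathcal{E}_{\lambda,high}$ rather than $\tau^{-2}\mathcal{E}_{\lambda,high}$ as you claim for the ``good leftover.'' Since the target estimate \eqref{eq:einstein_high_freq_geometry_der} only requires $\tau^{-1}$ decay, this does not break the argument, but your proposal should acknowledge this surviving cross-term rather than suggesting all such terms cancel.
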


\begin{proof}
    Using the system \eqref{eq:upeta_evol_l}--\eqref{eq:upkappa_sym_l}, we calculate the derivative $t \frac{d}{dt} \mathcal{E}_{\lambda, \upeta, high}$ as follows, where we group separately the main terms involving $(\matr{(\hat{\upeta}_{\lambda})}{i}{j}, \matr{(\hat{\upkappa}_{\lambda})}{i}{j})$, the terms involving the lapse $\upnu_{\lambda}$ and the shift $\upchi_{\lambda}^j$, and finally the terms where the derivative falls on $\zeta$.
    \begin{align}
        t \frac{d}{dt} \mathcal{E}_{\lambda, \upeta, high} \addtocounter{equation}{1}
        &= \frac{2 \zeta^2}{\tau} \matr{(\hat{\upkappa}_{\lambda})}{j}{i} ( - \tau^2 \matr{(\hat{\upeta}_{\lambda})}{i}{j}) - \frac{\zeta}{\tau} \matr{(\hat{\upkappa}_{\lambda})}{j}{i} \matr{(\hat{\upkappa}_{\lambda})}{i}{j} 
        \tag{\theequation a} \label{eq:metric_high_freq_a} \\[0.4em]
        &\qquad + \frac{2 \zeta^2}{\tau} \matr{(\hat{\upkappa}_{\lambda})}{j}{i} \left[ t^2 \mathring{g}^{jk} \lambda_i \lambda_k - t \matr{\mathring{k}}{i}{j} \right] \upnu_{\lambda} + \frac{2 \zeta^2}{\tau} \matr{(\hat{\upkappa}_{\lambda})}{j}{i} \left[ - \mathrm{i} (t \matr{\mathring{k}}{a}{j}) \lambda_i \upchi_{\lambda}^a + \mathrm{i} (t \matr{\mathring{k}}{i}{a}) \lambda_a \upchi_{\lambda}^j \right]
        \tag{\theequation b} \label{eq:metric_high_freq_b} \\[0.4em]
        &\qquad + \frac{\zeta}{\tau} \matr{(\hat{\upeta}_{\lambda})}{j}{i} ( - \tau^2 \matr{(\hat{\upeta}_{\lambda})}{i}{j} ) - \frac{1}{\tau} \matr{(\hat{\upkappa}_{\lambda})}{j}{i} \matr{(\hat{\upeta}_{\lambda})}{i}{j} 
        \tag{\theequation c} \label{eq:metric_high_freq_c} \\[0.4em]
        &\qquad + \frac{\zeta}{\tau} \matr{(\hat{\upeta}_{\lambda})}{j}{i} \left[ t^2 \mathring{g}^{jk} \lambda_i \lambda_k - t \matr{\mathring{k}}{i}{j} \right] \upnu_{\lambda} + \frac{\zeta}{\tau} \matr{(\hat{\upeta}_{\lambda})}{j}{i} \left[ - \mathrm{i} (t \matr{\mathring{k}}{a}{j}) \lambda_i \upchi_{\lambda}^a + \mathrm{i} (t \matr{\mathring{k}}{i}{a}) \lambda_a \upchi_{\lambda}^j \right]
        \tag{\theequation d} \label{eq:metric_high_freq_d} \\[0.4em]
        &\qquad + \frac{\zeta}{\tau} \matr{(\hat{\upkappa}_{\lambda})}{j}{i} \matr{(\hat{\upkappa}_{\lambda})}{i}{j} 
        + \frac{\zeta}{\tau} \matr{(\hat{\upkappa}_{\lambda})}{j}{i} \left[ (2t \matr{\mathring{k}}{p}{j}) \matr{(\hat{\upeta}_{\lambda})}{i}{p} - (2 t \matr{\mathring{k}}{i}{p}) \matr{(\hat{\upeta}_{\lambda})}{p}{j}\right]  
        \tag{\theequation e} \label{eq:metric_high_freq_e} \\[0.4em]
        &\qquad + \frac{\zeta}{\tau} \matr{(\hat{\upkappa}_{\lambda})}{j}{i} (t \matr{\mathring{k}}{i}{j}) \upnu_{\lambda} + \frac{\zeta}{\tau} \matr{(\hat{\upkappa}_{\lambda})}{j}{i} \left[ \frac{\mathrm{i}}{2} \lambda_i \upchi_{\lambda}^j + \frac{\mathrm{i}}{2} \mathring{g}_{ip} \mathring{g}^{jq} \lambda_q \upchi_{\lambda}^p \right]
        \tag{\theequation f} \label{eq:metric_high_freq_f} \\[0.4em]
        &\qquad + \left( \frac{1}{\tau} + 2 \zeta^2 \tau \right) \matr{(\hat{\upeta}_{\lambda})}{j}{i} \matr{(\hat{\upkappa}_{\lambda})}{i}{j} + \left( \frac{1}{\tau} + 2 \zeta^2 \tau \right) \matr{(\hat{\upeta}_{\lambda})}{j}{i} \left[ (2t \matr{\mathring{k}}{p}{j}) \matr{(\hat{\upeta}_{\lambda})}{i}{p} - (2 t \matr{\mathring{k}}{i}{p}) \matr{(\hat{\upeta}_{\lambda})}{p}{j} \right]
        \tag{\theequation g} \label{eq:metric_high_freq_g} \\[0.4em]
        &\qquad + \left( - \frac{1}{2 \zeta \tau} + \zeta \tau \right) \matr{(\hat{\upeta}_{\lambda})}{j}{i} \matr{(\hat{\upeta}_{\lambda})}{i}{j}
        \tag{\theequation h} \label{eq:metric_high_freq_h} \\[0.4em]
        &\qquad + \left( \frac{1}{\tau} + 2 \zeta^2 \tau \right) \matr{(\hat{\upeta}_{\lambda})}{j}{i} (t \matr{\mathring{k}}{i}{j}) \upnu_{\lambda} + \left( \frac{1}{\tau} + 2 \zeta^2 \tau \right) \matr{(\hat{\upeta}_{\lambda})}{j}{i} \left[ \frac{\mathrm{i}}{2} \lambda_i \upchi_{\lambda}^j + \frac{\mathrm{i}}{2} \mathring{g}_{ip} \mathring{g}^{jq} \lambda_q \upchi_{\lambda}^p \right]
        \tag{\theequation i} \label{eq:metric_high_freq_i} \\[0.4em]
        &\qquad + t \zeta' \left[ \frac{2 \zeta}{\tau} \matr{(\hat{\upkappa}_{\lambda})}{j}{i} \matr{(\hat{\upkappa}_{\lambda})}{i}{j} + \frac{1}{\tau} \matr{(\hat{\upkappa}_{\lambda})}{j}{i} \matr{(\hat{\upeta}_{\lambda})}{i}{j} + 2 \zeta \tau \matr{(\hat{\upeta}_{\lambda})}{j}{i} \matr{(\hat{\upeta}_{\lambda})}{i}{j} \right]
        \tag{\theequation j} \label{eq:metric_high_freq_j}
    \end{align}

    The terms \eqref{eq:metric_high_freq_a}, \eqref{eq:metric_high_freq_c}, \eqref{eq:metric_high_freq_e}, \eqref{eq:metric_high_freq_g} and \eqref{eq:metric_high_freq_h} are what we deem the \textit{main terms}. They feature the following crucial cancellations:
    \begin{itemize}
        \item
            The first term in \eqref{eq:metric_high_freq_a} cancels the part of the first term in \eqref{eq:metric_high_freq_g} featuring $2 \zeta^2 \tau$.
        \item
            The second term in \eqref{eq:metric_high_freq_a} cancels the first term in \eqref{eq:metric_high_freq_e}.
        \item
            The first term in \eqref{eq:metric_high_freq_c} cancels the part of \eqref{eq:metric_high_freq_h} featuring $2 \zeta^2 \tau$.
        \item
            The second term in \eqref{eq:metric_high_freq_c} cancels the part of the first term in \eqref{eq:metric_high_freq_g} featuring $\frac{1}{\tau}$.
        \item
            We leave the remaining term in \eqref{eq:metric_high_freq_e} alone, while we notice upon relabelling indices, the remaining term in \eqref{eq:metric_high_freq_g} cancels itself. In other words,
            \[
                \matr{(\hat{\upeta}_{\lambda})}{j}{i} \left[ (2t \matr{\mathring{k}}{p}{j}) \matr{(\hat{\upeta}_{\lambda})}{i}{p} - (2 t \matr{\mathring{k}}{i}{p}) \matr{(\hat{\upeta}_{\lambda})}{p}{j} \right] = 
                \tr ( \hat{\upeta}_{\lambda} \cdot 2 t \mathring{k} \cdot \hat{\upeta}_{\lambda} ) - \tr ( 2 t \mathring{k} \cdot \hat{\upeta}_{\lambda} \cdot \hat{\upeta}_{\lambda} ) = 0
            \]
    \end{itemize}

    Upon realizing these cancellations, we rewrite the sum of the main terms as follows:
    \begin{equation*} 
        \text{\eqref{eq:metric_high_freq_a}} + \text{\eqref{eq:metric_high_freq_c}} + \text{\eqref{eq:metric_high_freq_e}} + \text{\eqref{eq:metric_high_freq_g}} + \text{\eqref{eq:metric_high_freq_h}} =
        - \frac{1}{2 \zeta \tau} \matr{(\hat{\upeta}_{\lambda})}{j}{i} \matr{(\hat{\upeta}_{\lambda})}{i}{j} 
        + \frac{\zeta}{\tau} \matr{(\hat{\upkappa}_{\lambda})}{j}{i} \left[ (2t \matr{\mathring{k}}{p}{j}) \matr{(\hat{\upeta}_{\lambda})}{i}{p} - (2 t \matr{\mathring{k}}{i}{p}) \matr{(\hat{\upeta}_{\lambda})}{p}{j}\right]  
    \end{equation*}
    and hence using Lemmas~\ref{lem:gnorm_tensor} and \ref{lem:kasner_gnorm} as well as the energy coercivity in Lemma~\ref{lem:einstein_energy_high_coercive}, one has
    \begin{equation} \label{eq:metric_high_freq_main}
        |\text{\eqref{eq:metric_high_freq_a}} + \text{\eqref{eq:metric_high_freq_c}} + \text{\eqref{eq:metric_high_freq_e}} + \text{\eqref{eq:metric_high_freq_g}} + \text{\eqref{eq:metric_high_freq_h}}| \lesssim
        \frac{1}{\tau} \mathcal{E}_{\lambda, \upeta, high}.
    \end{equation}

    We now move onto the terms \eqref{eq:metric_high_freq_b}, \eqref{eq:metric_high_freq_d}, \eqref{eq:metric_high_freq_f} and \eqref{eq:metric_high_freq_i}, which are the \emph{terms involving the lapse and the shift}. In order to estimate these terms, we will have to use the linearized momentum constraint \eqref{eq:momentum_linear1_l} as well as the linearized CMCSH gauge condition \eqref{eq:spatiallyharmonic_linear_l}.
    \begin{itemize}
        \item
            We start with the first term of \eqref{eq:metric_high_freq_b}. Using the linearized momentum constraint \eqref{eq:momentum_linear1_l}, 
            \[
                \frac{1}{\tau} \matr{(\hat{\upkappa}_{\lambda})}{j}{i} t^2 \mathring{g}^{jk} \lambda_i \lambda_k \upnu_{\lambda} = 
                - \tau [ t \matr{\mathring{k}}{b}{a} \matr{(\hat{\upeta}_{\lambda})}{a}{b} + 2 p_{\phi} \upphi_{\lambda} ] \upnu_{\lambda} + \frac{1}{\tau} t^2 \mathring{g}^{jk} ( t \matr{\mathring{k}}{i}{j}) \lambda_j \lambda_k (\tr \hat{\upeta}_{\lambda}) \upnu_{\lambda}.
            \]
            We can now estimate this using Lemmas~\ref{lem:gnorm_tensor}, \ref{lem:kasner_gnorm} and the elliptic estimates in Proposition~\ref{prop:einstein_high_elliptic}. The result is the following:
            \[
                \left | \frac{2 \zeta^2}{\tau} \matr{(\hat{\upkappa}_{\lambda})}{j}{i} t^2 \mathring{g}^{jk} \lambda_i \lambda_k \upnu_{\lambda} \right | \lesssim
                \tau \cdot ( | \matr{(\hat{\upeta}_{\lambda})}{i}{j} |_{\mathring{g}} + |\upphi| ) \cdot |\upnu_{\lambda}| \lesssim \frac{1}{\tau} \mathcal{E}_{\lambda, high}.
            \]
            The remaining terms of \eqref{eq:metric_high_freq_b} can be estimated immediately using the $\mathring{g}$-norm and the elliptic estimates of Proposition~\ref{prop:einstein_high_elliptic}. As an illustrative example, we have
            \[
                \left| \frac{2 \zeta^2}{\tau} \matr{(\hat{\upkappa}_{\lambda})}{j}{i} \mathrm{i} (t \matr{\mathring{k}}{a}{j} ) \lambda_i \upchi_{\lambda}^a \right| \lesssim
                \frac{1}{\tau} \cdot | \matr{(\hat{\upkappa}_{\lambda})}{i}{j} |_{\mathring{g}} \cdot | \lambda_i |_{\mathring{g}} \cdot | \upchi_{\lambda}^j |_{\mathring{g}} \lesssim
                \frac{1}{\tau} \mathcal{E}_{\lambda, high}.
            \]
            At the end, we find the estimate $|\text{\eqref{eq:metric_high_freq_b}}| \lesssim \frac{1}{\tau} \mathcal{E}_{\lambda, high}$.
            
        \item
            The expressions \eqref{eq:metric_high_freq_d} and \eqref{eq:metric_high_freq_f} can be estimated immediately using the $\mathring{g}$-norm lemmas and Proposition~\ref{prop:einstein_high_elliptic}. Omitting the details, one finds that
            \[
                |\text{\eqref{eq:metric_high_freq_d}}| + |\text{\eqref{eq:metric_high_freq_f}}| \lesssim \frac{1}{\tau} \mathcal{E}_{\lambda, high}.
            \]

        \item
            The first term in \eqref{eq:metric_high_freq_i} can be estimated immediately using
            \[
                \left | \left( \frac{1}{\tau} + 2 \zeta^2 \tau \right)  \matr{(\hat{\upeta}_{\lambda})}{j}{i} (t \matr{\mathring{k}}{i}{j}) \upnu_{\lambda} \right | \lesssim
                \tau \cdot | \matr{(\hat{\upeta}_{\lambda})}{i}{j} |_{\mathring{g}} \cdot |\upnu_{\lambda}| \lesssim \frac{1}{\tau} \mathcal{E}_{\lambda, high}.
            \]
            For the second term in \eqref{eq:metric_high_freq_i}, we use the symmetry property \eqref{eq:upeta_sym_l} and the linearized CMCSH condition \eqref{eq:spatiallyharmonic_linear_l} to rewrite this term as
            \[
                \left( \frac{1}{\tau} + 2 \zeta^2 \tau \right) \matr{(\hat{\upeta}_{\lambda})}{j}{i} \left[ \frac{\mathrm{i}}{2} \lambda_i \upchi_{\lambda}^j + \frac{\mathrm{i}}{2} \mathring{g}_{ip} \mathring{g}^{jq} \lambda_q \upchi_{\lambda}^p \right] = 
                \left( \frac{1}{\tau} + 2 \zeta^2 \tau \right) \frac{\mathrm{i}}{2} \lambda_j ( \tr \hat{\upeta}_{\lambda} ) \upchi_{\lambda}^j.
            \]
            The insight is that from \eqref{eq:einstein_high_lapse}, the trace $\tr \hat{\upeta}_{\lambda}$, behaves better than $|\matr{(\hat{\upeta}_{\lambda})}{i}{j}|_{\mathring{g}}$, indeed
            \[
                \left | \left( \frac{1}{\tau} + 2 \zeta^2 \tau \right) \frac{\mathrm{i}}{2} \lambda_j ( \tr \hat{\upeta}_{\lambda} ) \upchi_{\lambda}^j \right | \lesssim
                \tau \cdot |\lambda_i |_{\mathring{g}} \cdot |\upchi_{\lambda}^j|_{\mathring{g}} \cdot | \tr \hat{\upeta}_{\lambda} | \lesssim \frac{1}{\tau} \mathcal{E}_{\lambda, high}.
            \]
            The conclusion is that $|\text{\eqref{eq:metric_high_freq_i}}| \lesssim \frac{1}{\tau} \mathcal{E}_{\lambda, high}$.
    \end{itemize}
    Combining all of the above, we estimate the lapse and shift terms using
    \begin{equation} \label{eq:metric_high_freq_gauge}
        |\text{\eqref{eq:metric_high_freq_b}} + \text{\eqref{eq:metric_high_freq_d}} + \text{\eqref{eq:metric_high_freq_f}} + \text{\eqref{eq:metric_high_freq_i}} \lesssim
        \frac{1}{\tau} \mathcal{E}_{\lambda, high}.
    \end{equation}

    Finally, the final term \eqref{eq:metric_high_freq_j} is estimated trivially using the bounds \eqref{eq:zeta_upperlower} and Lemma~\ref{lem:einstein_energy_high_coercive}:
    \begin{equation} \label{eq:metric_high_freq_zeta}
        |\text{\eqref{eq:metric_high_freq_i}}| \lesssim t |\zeta'(t)| \mathcal{E}_{\lambda, \upeta, high}.
    \end{equation}
    To complete the proof, simply combine the 3 estimates \eqref{eq:metric_high_freq_main}, \eqref{eq:metric_high_freq_gauge} and \eqref{eq:metric_high_freq_zeta}.
\end{proof}

%auto-ignore

\section{Linearized Einstein--scalar field: the low-frequency energy estimate} \label{sec:low_freq}

In this section, we prove a low-frequency energy estimate analogous to that derived in Proposition~\ref{prop:wave_low_freq} for the Fourier modes of the linearized Einstein--scalar field system derived in Proposition~\ref{prop:einstein_linear_l}. Since $\matr{\upeta}{i}{j}$ and $\upphi$ generically blow up as $t \to 0$, at low frequency we instead consider (Fourier modes of) renormalized quantities $\tilde{\matr{\Upupsilon}{i}{j}}$ and $\tilde{\upvarphi}$, as defined in \eqref{eq:upupsilon_tilde_0}.

To be more precise, we pick $T = t_{\lambda*}$ in \eqref{eq:upupsilon_tilde_0}, and thus define the renormalized quantities $\matr{(\tilde{\Upupsilon}_{\lambda})}{i}{j}$ and $\tilde{\upvarphi}_{\lambda}$ in the following way:
\begin{gather} \label{eq:upupsilon2}
    \matr{(\tilde{\Upupsilon}_{\lambda})}{i}{j}(t) = \matr{(\upeta_{\lambda})}{i}{j} + G_{ip}^{\phantom{ip}jq} (t; t_{\lambda*})\cdot \matr{(\upkappa_{\lambda})}{q}{p}, \quad G_{ip}^{\phantom{ip}jq}(t; t_{\lambda*}) \coloneqq \int^{t_{\lambda*}}_t \mathring{g}_{ip}(s) \mathring{g}^{jq}(s) \frac{ds}{s},
    \\[0.5em] \label{eq:upvarphi2}
    \tilde{\upvarphi}_{\lambda} = \upphi_{\lambda} + \log( \frac{t_{\lambda*}}{t} ) \cdot \uppsi_{\lambda}.
\end{gather}
%It is these quantities that will appear in the low-frequency energy quantity in Definition~\ref{def:lowfreq}, as it is the $t \partial_t$-derivatives of these renormalized quantities that will be favourable.

Moreover, in contrast to the CMCSH gauge used in the high-frequency energy estimates of Section~\ref{sec:high_freq}, for the low-frequency estimates we shall \emph{use a CMCTC} gauge, i.e.~a gauge where the linearized shift $\upchi^j$ vanishes. We firstly rewrite the evolution equations \eqref{eq:upeta_evol_l}--\eqref{eq:uppsi_evol_l} with respect to our renormalized $\matr{\tilde{\Upupsilon}}{i}{j}$, $\tilde{\upvarphi}$ and assuming the gauge.

\begin{lemma} \label{lem:renormalized}
    In a CMCTC gauge with $\upchi^j = 0$, the $t \partial_t$-derivatives of the quantities $(\matr{(\upkappa_{\lambda})}{i}{j}, \matr{(\tilde{\Upupsilon}_{\lambda})}{i}{j}, \uppsi_{\lambda}, \tilde{\upvarphi}_{\lambda})$ can be rewritten as follows:
    \begin{gather} \label{eq:upkappa_evol_l_2}
        \begin{split}
        t \partial_t \matr{(\upkappa_{\lambda})}{i}{j} = 
        - t^2 \mathring{g}^{ab} \lambda_a \lambda_b \left( \matr{(\tilde{\Upupsilon}_{\lambda})}{i}{j} - G_{ip}^{\phantom{ip}jq} \matr{(\upkappa_{\lambda})}{q}{p} \right) - t^2 \mathring{g}^{ja} \lambda_i \lambda_a (\tr \tilde{\Upupsilon}_{\lambda} )
        + t^2 \mathring{g}^{jk} \lambda_i \lambda_k \upnu_{\lambda} - (t \matr{\mathring{k}}{i}{j}) \upnu_{\lambda} \\
        + t^2 \mathring{g}^{ab} \lambda_i \lambda_b \left( \matr{(\tilde{\Upupsilon}_{\lambda})}{a}{j} - G_{ap}^{\phantom{ap}jq} \matr{(\upkappa_{\lambda})}{q}{p}\right) 
        + t^2 \mathring{g}^{ja} \lambda_a \lambda_b \left( \matr{(\tilde{\Upupsilon}_{\lambda})}{i}{b} - G_{ip}^{\phantom{ip}bq} \matr{(\upkappa_{\lambda})}{q}{p}\right),
        \end{split}
        \\[0.5em] \label{eq:upeta_evol_l_2}
        t \partial_t \matr{(\tilde{\Upupsilon}_{\lambda})}{i}{j}
        = G_{ip}^{\phantom{ip}jq} \cdot t \partial_t \matr{(\upkappa_{\lambda})}{q}{p} + (t \matr{\mathring{k}}{i}{j}) \upnu_{\lambda},
        \\[0.5em] \label{eq:uppsi_evol_l_2}
        t \partial_t \uppsi_{\lambda} = - t^2 \mathring{g}^{ab} \lambda_a \lambda_b \left( \tilde{\upvarphi}_{\lambda} - \log( \frac{t_{\lambda*}}{t} ) \uppsi_{\lambda} \right) - p_{\phi} \upnu_{\lambda},
        \\[0.5em] \label{eq:upphi_evol_l_2}
        t \partial_t \tilde{\upvarphi}_{\lambda} = \log( \frac{t_{\lambda*}}{t} ) \cdot t \partial_t \uppsi_{\lambda} + p_{\phi} \upnu_{\lambda}.
    \end{gather}
\end{lemma}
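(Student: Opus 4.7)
The plan is to derive each of the four equations \eqref{eq:upkappa_evol_l_2}--\eqref{eq:upphi_evol_l_2} by direct substitution from the unrenormalized system \eqref{eq:upeta_evol_l}--\eqref{eq:uppsi_evol_l} (with $\upchi^j_\lambda = 0$), using the defining relations \eqref{eq:upupsilon2}--\eqref{eq:upvarphi2}. The ingredients I expect to use are: (a) the identity $\tau^2 = t^2\mathring{g}^{ab}\lambda_a\lambda_b$, which is immediate from \eqref{eq:tau} and \eqref{eq:kasner_metric}; (b) the fundamental theorem of calculus in the form $t\partial_t G_{ip}^{\phantom{ip}jq}(t;t_{\lambda*}) = -\mathring{g}_{ip}(t)\mathring{g}^{jq}(t)$; (c) the symmetry constraint \eqref{eq:upkappa_sym_l}; and (d) the linearized CMC condition $\tr \upkappa_\lambda = 0$.

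The equations \eqref{eq:uppsi_evol_l_2} and \eqref{eq:upphi_evol_l_2} are the easiest: the first follows by substituting $\upphi_\lambda = \tilde{\upvarphi}_\lambda - \log(t_{\lambda*}/t)\uppsi_\lambda$ into \eqref{eq:uppsi_evol_l} and using $\tau^2 = t^2\mathring{g}^{ab}\lambda_a\lambda_b$, while the second is a direct computation of $t\partial_t \tilde{\upvarphi}_\lambda$ using \eqref{eq:upphi_evol_l}, where the two copies of $\uppsi_\lambda$ generated respectively by $t\partial_t \upphi_\lambda$ and by $t\partial_t \log(t_{\lambda*}/t)$ exactly cancel. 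Equation \eqref{eq:upkappa_evol_l_2} is obtained by starting from \eqref{eq:upkappa_evol_l}, expanding the Ricci expression \eqref{eq:ricci_lin_l}, and replacing $\matr{(\upeta_\lambda)}{i}{j} = \matr{(\tilde{\Upupsilon}_\lambda)}{i}{j} - G_{ip}^{\phantom{ip}jq}\matr{(\upkappa_\lambda)}{q}{p}$ in every instance (including inside the contracted and transposed variants). At this stage I should check that the contraction in the $\tr \upeta_\lambda$ term collapses to $\tr \tilde{\Upupsilon}_\lambda$; this is precisely where ingredient (d) enters, since $G_{ap}^{\phantom{ap}aq} = \log(t_{\lambda*}/t)\,\delta_p^q$ and hence $G_{ap}^{\phantom{ap}aq}\matr{(\upkappa_\lambda)}{q}{p} = \log(t_{\lambda*}/t)\tr\upkappa_\lambda = 0$.

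The main obstacle is \eqref{eq:upeta_evol_l_2}. Differentiating the definition \eqref{eq:upupsilon2} gives
\[
t\partial_t \matr{(\tilde{\Upupsilon}_\lambda)}{i}{j} = t\partial_t \matr{(\upeta_\lambda)}{i}{j} - \mathring{g}_{ip}(t)\mathring{g}^{jq}(t)\matr{(\upkappa_\lambda)}{q}{p} + G_{ip}^{\phantom{ip}jq}\cdot t\partial_t \matr{(\upkappa_\lambda)}{q}{p},
\]
and substituting \eqref{eq:upeta_evol_l} (with $\upchi^j_\lambda = 0$) leaves us needing to verify the identity
\[
\matr{(\upkappa_\lambda)}{i}{j} + (2t\matr{\mathring{k}}{p}{j})\matr{(\upeta_\lambda)}{i}{p} - (2t\matr{\mathring{k}}{i}{p})\matr{(\upeta_\lambda)}{p}{j} \;=\; \mathring{g}_{ip}(t)\mathring{g}^{jq}(t)\matr{(\upkappa_\lambda)}{q}{p}.
\]
This is the delicate step. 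I would prove it by viewing \eqref{eq:upkappa_sym_l} as the self-adjointness (with respect to $\mathring{g}$) of the tensor $\matr{A}{b}{c} := \matr{(\upkappa_\lambda)}{b}{c} - 2t\matr{\mathring{k}}{b}{d}\matr{(\upeta_\lambda)}{d}{c}$, which after raising/lowering yields $\mathring{g}_{ip}\mathring{g}^{jq}\matr{A}{q}{p} = \matr{A}{i}{j}$. Rearranging produces exactly the required identity, provided one also uses the elementary fact (following from $\matr{\mathring{k}}{i}{j} = -p_{\underline{i}}\delta^{\underline{i}}_j/t$ being diagonal and \eqref{eq:upeta_sym_l}) that $\mathring{g}_{ip}\mathring{g}^{jq}\matr{\mathring{k}}{q}{d}\matr{(\upeta_\lambda)}{d}{p} = \matr{\mathring{k}}{p}{j}\matr{(\upeta_\lambda)}{i}{p}$.

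Once this identity is in hand, all the $\matr{\mathring{k}}{}{}\cdot\upeta$ cross-terms collapse, leaving only the $(t\matr{\mathring{k}}{i}{j})\upnu_\lambda$ term and the $G_{ip}^{\phantom{ip}jq}\cdot t\partial_t\matr{(\upkappa_\lambda)}{q}{p}$ term, which is \eqref{eq:upeta_evol_l_2}. No new estimates or function-space arguments are needed; the whole lemma is algebraic, and its role is simply to set up the variables in which the low-frequency energy estimate (to be carried out subsequently in the manner of Proposition~\ref{prop:wave_low_freq}) will be performed.
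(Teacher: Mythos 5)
Your proposal is correct and takes exactly the same route as the paper's (terse) proof, which simply cites direct computation from \eqref{eq:upeta_evol_l}--\eqref{eq:uppsi_evol_l} and the symmetry conditions \eqref{eq:upeta_sym_l}--\eqref{eq:upkappa_sym_l}; you have filled in that computation correctly, including the two steps that are easy to miss: the use of $\tr\upkappa_\lambda = 0$ (linearized CMC) to identify $\tr \upeta_\lambda$ with $\tr\tilde\Upupsilon_\lambda$ in the Ricci term, and the reformulation of \eqref{eq:upkappa_sym_l} as self-adjointness of $\matr{(\upkappa_\lambda)}{i}{j} - 2t\matr{\mathring{k}}{i}{p}\matr{(\upeta_\lambda)}{p}{j}$ with respect to $\mathring{g}$, which together with the diagonality of $t\matr{\mathring{k}}{i}{j}$ and \eqref{eq:upeta_sym_l} yields the key cancellation giving \eqref{eq:upeta_evol_l_2}.
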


\begin{proof}
    This follows from direct computation using the equations \eqref{eq:upeta_evol_l}--\eqref{eq:uppsi_evol_l}, as well as the symmetry conditions \eqref{eq:upeta_sym_l}--\eqref{eq:upkappa_sym_l}.
\end{proof}

\subsection{The low-frequency energy quantity}

\begin{definition} \label{def:lowfreq}
    The \emph{geometric low-frequency energy quantity} $\mathcal{E}_{\lambda, \upeta, low}(t)$, used for $\lambda \in \Z^D \setminus \{0 \}$ and $\tau_{\lambda}(t) \leq 1$, is given by the following:
    \begin{equation} \label{eq:geometry_energy_low}
        \mathcal{E}_{\lambda, \upeta, low}(t) \coloneqq \mathring{g}^{ac}(t_{\lambda*}) \mathring{g}_{bd}(t_{\lambda*}) \, \matr{(\upkappa_{\lambda})}{a}{b} \matr{(\upkappa_{\lambda})}{c}{d} + \mathring{g}^{ac}(t_{\lambda*}) \mathring{g}_{bd}(t_{\lambda*}) \, \matr{(\tilde{\Upupsilon}_{\lambda})}{a}{b} \matr{(\tilde{\Upupsilon}_{\lambda})}{c}{d}.
    \end{equation}
    We also define the \emph{matter low-frequency energy quantity} $\mathcal{E}_{\lambda, \upphi, low}(t)$, as follows:
    \begin{equation} \label{eq:matter_energy_low}
        \mathcal{E}_{\lambda, \upphi, low}(t) \coloneqq \uppsi_{\lambda}^2 + \tilde{\upvarphi}_{\lambda}^2.
    \end{equation}
    Finally, define the \emph{total low-frequency energy quantity} as the sum $\mathcal{E}_{\lambda, low}(t) \coloneqq \mathcal{E}_{\lambda, \upeta, low}(t) + \mathcal{E}_{\lambda, \upphi, low}(t)$. 
\end{definition}

\begin{remark}
    Unlike the geometric high-frequency energy quantity defined in \eqref{eq:geometry_energy_high}, the metric $\mathring{g}_{ij}$ used as the inner product on the tensors $\matr{(\upkappa_{\lambda})}{i}{j}$ and $\matr{(\tilde{\Upupsilon}_{\infty})}{i}{j}$ is $\mathring{g}_{ij}(t_{\lambda*})$, rather than $\mathring{g}_{ij}(t)$. The fact that this inner product is no longer $t$-dependent is crucial in order to show that the tensors $\matr{(\upkappa_{\lambda})}{i}{j}$ and $\matr{(\tilde{\Upupsilon}_{\lambda})}{i}{j}$ are indeed bounded towards $t = 0$. This also indicates the operators $\mathcal{T}_*^{-p_i+p_j}$ arise in Theorem~\ref{thm:einstein_scat}.

    Another remark is that though the high-frequency energy is defined for $\tau_{\lambda}(t) \geq 1$, the relevant energy estimates are only valid for $\tau_{\lambda}(t) \geq \mathring{\tau} \geq 1$, see Lemma~\ref{lem:einstein_energy_high_coercive} and Proposition~\ref{prop:einstein_high_freq}. In order to eventually compare the high-frequency energy to the low-frequency energy of Definition~\ref{def:lowfreq}, we will require \emph{mid-frequency energy estimates}, valid for $1 \leq \tau_{\lambda}(t) \leq \mathring{\tau}$, see Section~\ref{sub:mid_freq}.
\end{remark}

The aim of this section will be to prove the following low-frequency energy estimate, see the analogous Proposition~\ref{prop:wave_low_freq} in the case of the wave equation. Unlike the high-frequency energy estimate Proposition~\ref{prop:einstein_high_freq}, the following will depend crucially on the subcriticality condition \eqref{eq:subcritical_delta}.

\begin{proposition} \label{prop:einstein_low_freq}
    Let $(\matr{({\upeta}_{\lambda})}{i}{j}, \matr{({\upkappa}_{\lambda})}{i}{j}, \upphi_{\lambda}, \uppsi_{\lambda}, \upnu_{\lambda})$ be a solution to the linearized Einstein--scalar field system \eqref{eq:upeta_evol_l}--\eqref{eq:upkappa_sym_l} in CMCTC gauge. Suppose the background Kasner exponents $p_i$ obey the subcriticality condition \eqref{eq:subcritical_delta} for some $\updelta > 0$. Then the $\partial_t$-derivative of the total low-frequency energy quantity obeys the following bound, uniformly in $\lambda \in \Z^D \setminus \{0\}$ and $0 < \tau_{\lambda}(t) \leq 1$:
    \begin{equation} \label{eq:einstein_low_freq_der}
        \left| t \frac{d}{dt} \mathcal{E}_{\lambda, low}(t) \right| \lesssim \left( \frac{t}{t_{\lambda*}}\right)^{\updelta} \mathcal{E}_{\lambda, low}(t).
    \end{equation}
    There exists some $\mathring{C}_{low}$, depending only on the background Kasner spacetime, such that for $\lambda \in \Z^D \setminus \{ 0 \}$ and $0 < \tau_{\lambda}(t) \leq 1$, one has
    \begin{equation} \label{eq:einstein_low_freq_fin}
        \mathring{C}_{low}^{-1} \mathcal{E}_{\lambda, low}(t) \leq \mathcal{E}_{\lambda, low}(t_{\lambda*}) \leq \mathring{C}_{low} \, \mathcal{E}_{\lambda, low}(t).
    \end{equation} 
\end{proposition}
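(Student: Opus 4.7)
The plan is to establish the derivative estimate \eqref{eq:einstein_low_freq_der} directly from the evolution equations of Lemma \ref{lem:renormalized}, after which \eqref{eq:einstein_low_freq_fin} follows from Gr\"onwall's inequality: since $\int_0^{t_{\lambda*}} (s/t_{\lambda*})^{\updelta} \, ds/s = 1/\updelta$ is finite and independent of $\lambda$, the Gr\"onwall exponent is uniformly bounded. The computation decomposes naturally as $t \frac{d}{dt} \mathcal{E}_{\lambda, low} = t \frac{d}{dt} \mathcal{E}_{\lambda, \upeta, low} + t \frac{d}{dt} \mathcal{E}_{\lambda, \upphi, low}$, and a key simplification is that the metric $\mathring{g}(t_{\lambda*})$ used in the inner product does not depend on $t$, so the derivative acts only on the tensor components.

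For the matter piece, the analysis follows the template of Proposition~\ref{prop:wave_low_freq}: using \eqref{eq:uppsi_evol_l_2}--\eqref{eq:upphi_evol_l_2}, the derivative $t \frac{d}{dt} \mathcal{E}_{\lambda, \upphi, low}$ exhibits the same cancellations as in the scalar wave equation, producing a main contribution of size $\tau^2(t)(1 + \log(t_{\lambda*}/t)^2)\mathcal{E}$. The additional lapse contribution is controlled via an elliptic estimate: combining the CMCTC elliptic equation \eqref{eq:upnu_elliptic_l} with the Hamiltonian constraint \eqref{eq:hamiltonian_linear_l} yields $|\upnu_{\lambda}| \lesssim |t \matr{\mathring{k}}{b}{a} \matr{(\upkappa_{\lambda})}{a}{b}| + |\uppsi_{\lambda}| \lesssim \mathcal{E}_{\lambda, low}^{1/2}$, so this additional term contributes at the same order.

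For the geometric piece, I compute $t \frac{d}{dt} \mathcal{E}_{\lambda, \upeta, low}$ by pairing \eqref{eq:upkappa_evol_l_2} with $\matr{(\upkappa_{\lambda})}{i}{j}$ and \eqref{eq:upeta_evol_l_2} with $\matr{(\tilde{\Upupsilon}_{\lambda})}{i}{j}$ via the $\mathring{g}(t_{\lambda*})$ inner product. The simplest contributions are of the form $\tau^2(t)\langle \matr{\tilde{\Upupsilon}}{i}{j}, \matr{\upkappa}{i}{j} \rangle$, which are directly controlled by Cauchy--Schwarz and the observation that the subcriticality condition \eqref{eq:subcritical_delta} forces $\max_i p_i \leq 1 - \updelta$, whence $\tau^2(t) = \sum_b (t/t_{\lambda*})^{2 - 2p_b} \cdot t_{\lambda*}^{2 - 2p_b} \lambda_b^2 \leq (t/t_{\lambda*})^{2\updelta}$ for $t \leq t_{\lambda*}$.

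The main technical obstacle will be the terms involving the integral operator $G_{ip}^{\phantom{ip}jq}(t; t_{\lambda*})$, which appear both directly in the evolution equations and implicitly through the identity $\matr{\upeta}{i}{j} = \matr{\tilde{\Upupsilon}}{i}{j} - G_{ip}^{\phantom{ip}jq} \matr{\upkappa}{q}{p}$. Working in the diagonal basis, where $G$ acts as multiplication by $\int_t^{t_{\lambda*}} s^{2p_i - 2p_j} \, ds/s$ paired with an index swap, each such contribution to the energy derivative carries a schematic coefficient $(t/t_{\lambda*})^{2 - 2p_b + 2p_i - 2p_j}$ (from $t^{2-2p_b}\lambda_b^2$, the $G$ factor, and the energy weights $t_{\lambda*}^{-2p_i+2p_j}$). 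For triples $(b, i, j)$ with $b \neq j$, the subcriticality inequality $1 + p_i - p_b - p_j \geq \updelta$ gives the desired decay $(t/t_{\lambda*})^{2\updelta}$; the diagonal case $b = j$ must be handled separately by using the symmetry constraints \eqref{eq:upeta_sym_l}--\eqref{eq:upkappa_sym_l} to rewrite the problematic contribution in a form that again exploits the full range of subcriticality. Careful bookkeeping of these factors, together with the analogous pairing against $\matr{\tilde{\Upupsilon}}{i}{j}$, then produces the uniform estimate $|t \partial_t \mathcal{E}_{\lambda, \upeta, low}| \lesssim (t/t_{\lambda*})^{\updelta} \mathcal{E}_{\lambda, low}$, completing the derivative bound.
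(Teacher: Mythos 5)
Your overall architecture matches the paper's: the paper splits the work into a separate derivative estimate (Proposition~\ref{prop:einstein_low_der}) followed by a Gr\"onwall argument with the integrable weight $(t/t_{\lambda*})^{\updelta}$, and you propose essentially the same split. Your identification of the key mechanism — Lemma~\ref{lem:gbound} to control the $G$-integral, the observation that $\max_i p_i \leq 1-\updelta$, and the subcriticality inequality $1 + p_i - p_b - p_j \geq \updelta$ for $b \neq j$ to make the exponents positive — is also correct.

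However, there is a genuine gap in your treatment of the lapse. You propose to estimate $\upnu_{\lambda}$ by combining the elliptic equation \eqref{eq:upnu_elliptic_l} with the Hamiltonian constraint \eqref{eq:hamiltonian_linear_l}, which yields $(1 + \tau^2)\upnu_{\lambda} = -2\,t\matr{\mathring{k}}{b}{a}\matr{(\upkappa_{\lambda})}{a}{b} - 4p_\phi\uppsi_{\lambda}$ and hence only $|\upnu_{\lambda}| \lesssim \mathcal{E}_{\lambda, low}^{1/2}$ — bounded, but \emph{not decaying} as $t \to 0$ (the quantity $\sum_a p_a \matr{(\upkappa_{\lambda})}{a}{a}$ has a nonzero limit $\sum_a p_a \matr{((\upkappa_\infty)_{\lambda})}{a}{a}$ in general; note $\tr\upkappa = 0$ does not force it to vanish). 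But the evolution equations \eqref{eq:uppsi_evol_l_2}--\eqref{eq:upphi_evol_l_2} contain $\pm p_\phi \upnu_{\lambda}$ directly, so with only a bounded $\upnu_{\lambda}$ you would get $|t\partial_t \uppsi_{\lambda}| \gtrsim |\upnu_{\lambda}| \sim \mathcal{E}_{\lambda, low}^{1/2}$ with no power of $t$ gained, and Gr\"onwall towards $t = 0$ then fails. The Hamiltonian-constraint rewrite of the lapse equation is the right move in the high-frequency regime (where it gives $\upnu \sim \tau^{-2}\upkappa$), but at low frequency one must instead estimate $\upnu_{\lambda}$ from \eqref{eq:upnu_elliptic_l} with $\matr{\upeta}{i}{j}$ rewritten via $\matr{\tilde\Upupsilon}{i}{j}$ and $G_{ip}^{\phantom{ip}jq}\matr{\upkappa}{q}{p}$; then the explicit prefactor $t^2\mathring{g}^{ab}\lambda_a\lambda_b$ on the right-hand side yields $|\upnu_{\lambda}| \lesssim \max_a\{(t/t_{\lambda*})^{2-2p_a}\}(1 + \log(t_{\lambda*}/t))\,\mathcal{E}_{\lambda, low}^{1/2}$, which is what the proof actually needs (this is \eqref{eq:upnu_energy} in the paper).

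A secondary, less decisive difference: for the geometric terms you propose to kill the dangerous $b = j$ contributions using the symmetry constraints \eqref{eq:upeta_sym_l}--\eqref{eq:upkappa_sym_l}. The paper's treatment of Proposition~\ref{prop:einstein_low_der} obtains these cancellations \emph{algebraically} from the structure of the $\upkappa$-evolution — e.g.\ the first and third terms in the analogue of \eqref{eq:upkappa_low_b} cancel identically when the contracted index equals $j$, independently of any constraint — while the symmetry constraints are used only as an auxiliary tool to reduce the analysis of the $\tilde\Upupsilon$-evolution to the case $p_i \geq p_j$ (as noted in the remark after Proposition~\ref{prop:einstein_low_der}). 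Your symmetry-based route is plausible but requires more work to justify; if you pursue it, you should verify that what is actually being cancelled is removable by the constraint and not merely by the algebra of the evolution system.
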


\begin{proof}[Proof (assuming Proposition~\ref{prop:einstein_low_der})]
    The derivative estimate \eqref{eq:einstein_low_freq_der} follows immediately from Proposition~\ref{prop:einstein_low_der} and the definition of $\mathcal{E}_{\lambda, low}(t)$ upon insertion of the expressions $\mathring{g}_{ij}(t_{\lambda*}) = t_{\lambda*}^{2 p_{\underline{i}}} \delta_{\underline{i}j}$ and $\mathring{g}^{ij}(t_{\lambda*}) = t_{\lambda*}^{-2 p_{\underline{i}}} \delta^{\underline{i}j}$.

    To deduce \eqref{eq:einstein_low_freq_fin}, we make the simple observation that
    \[
        \int^{t_{\lambda*}}_0 \left( \frac{s}{t_{\lambda*}} \right)^{\updelta} \, \frac{ds}{s} \leq \updelta^{-1},
    \]
    thus applying Gr\"onwall's inequality to \eqref{eq:einstein_low_freq_der} in both the forwards and backwards directions yields that for an $0 < t_1 \leq t_2 \leq t_{\lambda*}$, we have (where $C$ is the implied constant in \eqref{eq:einstein_low_freq_der}):
    \[
        \exp( - C \updelta^{-1} ) \mathcal{E}_{\lambda, low}(t_1) \leq \mathcal{E}_{\lambda, low}(t_2) \leq \exp( C \updelta^{-1} ) \mathcal{E}_{\lambda, low}(t_1).
    \]
    The energy estimate \eqref{eq:einstein_low_freq_fin} then follows upon choosing $t_2 = t_{\lambda*}$ and $\mathring{C}_{low} = C \updelta^{-1}$.
\end{proof}

In Section~\ref{sub:low_freq_subcrit}, we prove Proposition~\ref{prop:einstein_low_der}. We shall also use Proposition~\ref{prop:einstein_low_der} to produce limiting quantities $\matr{((\upkappa_{\infty})_{\lambda})}{i}{j}$, $\matr{((\tilde{\Upupsilon}_{\infty})_{\lambda})}{i}{j}, (\uppsi_{\infty})_{\lambda}, (\tilde{\upvarphi}_{\infty})_{\lambda})$ in Proposition~\ref{prop:einstein_low_freq_scat}.

\subsection{Derivative estimates using the subcriticality condition} \label{sub:low_freq_subcrit}

In order to prove the energy estimate in Proposition~\ref{prop:einstein_low_freq}, we will have to estimate the powers of $t$ arising in the evolution equations of Lemma~\ref{lem:renormalized}. The first step will be to estimate the integral quantity $G_{ip}^{\phantom{ip}jq}$.

\begin{lemma} \label{lem:gbound}
    Consider the tensor $G_{ip}^{\phantom{ip}jq}$ defined in \eqref{eq:upupsilon2}. It satisfies the bound
    \begin{align} \label{eq:gbound}
        \left| G_{ip}^{\phantom{ip}jq} (t ; t_{\lambda*}) \right| 
        &\leq 
        \max \left \{ \mathring{g}_{ip}(t) \mathring{g}^{jq}(t), \mathring{g}_{ip}(t_{\lambda*}) \mathring{g}^{jq}(t_{\lambda*}) \right \} \cdot \log( \frac{t_{\lambda*}}{t} ) 
        \nonumber \\[0.5em]
        &= \max \left \{ \left( \frac{t}{t_{\lambda*}} \right)^{2p_{\underline{i}} - 2p_{\underline{j}}}, 1 \right \} \cdot t_{\lambda*}^{2p_{\underline{i}}- 2 p_{\underline{j}}} \cdot \log( \frac{t_{\lambda*}}{t} ) \cdot \delta_{\underline{i}p} \, \delta^{\underline{j}q}.
    \end{align}
\end{lemma}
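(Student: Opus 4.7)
The plan is to evaluate the integrand of $G_{ip}^{\phantom{ip}jq}$ directly using the explicit form \eqref{eq:kasner_metric} of the Kasner metric, and then bound the resulting scalar integral by pulling a monotone power of $s$ out of the integral. Concretely, I would first observe that since $\mathring{g}_{ab}(s) = s^{2p_{\underline{a}}} \delta_{\underline{a}b}$, the integrand factors as
\begin{equation*}
    \mathring{g}_{ip}(s) \mathring{g}^{jq}(s) = s^{2p_{\underline{i}} - 2p_{\underline{j}}} \, \delta_{\underline{i}p} \, \delta^{\underline{j}q},
\end{equation*}
so that $G_{ip}^{\phantom{ip}jq}(t; t_{\lambda*}) = \delta_{\underline{i}p}\, \delta^{\underline{j}q} \int_t^{t_{\lambda*}} s^{2 p_{\underline{i}} - 2p_{\underline{j}}} \, \frac{ds}{s}$.

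Next, I would note that the map $s \mapsto s^{2p_i - 2p_j}$ is monotone on $(0, \infty)$ (increasing if $p_i > p_j$, decreasing if $p_i < p_j$, and constantly $1$ if $p_i = p_j$). Consequently its supremum over $s \in [t, t_{\lambda*}]$ is attained at one of the two endpoints and equals $\max\{ t^{2p_i - 2p_j}, t_{\lambda*}^{2p_i - 2p_j} \}$. Pulling this maximum outside the integral gives
\begin{equation*}
    \left| \int_t^{t_{\lambda*}} s^{2p_{\underline{i}} - 2p_{\underline{j}}} \frac{ds}{s} \right| \;\leq\; \max\bigl\{ t^{2p_{\underline{i}} - 2p_{\underline{j}}},\, t_{\lambda*}^{2p_{\underline{i}} - 2p_{\underline{j}}} \bigr\} \int_t^{t_{\lambda*}} \frac{ds}{s} \;=\; \max\bigl\{ t^{2p_{\underline{i}} - 2p_{\underline{j}}},\, t_{\lambda*}^{2p_{\underline{i}} - 2p_{\underline{j}}} \bigr\} \, \log\!\left(\frac{t_{\lambda*}}{t}\right).
\end{equation*}
This is valid for all sign configurations of $2p_i - 2p_j$ (including the degenerate case $p_i = p_j$, where both sides reduce to $\log(t_{\lambda*}/t)$), so no case analysis is really necessary.

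Finally, I would rewrite the factor $\max\{ t^{2p_i - 2p_j}, t_{\lambda*}^{2p_i - 2p_j} \}$ in the two equivalent forms demanded by the statement: pulling out $t_{\lambda*}^{2p_{\underline{i}} - 2p_{\underline{j}}}$ produces $\max\{(t/t_{\lambda*})^{2p_{\underline{i}} - 2p_{\underline{j}}}, 1\} \cdot t_{\lambda*}^{2p_{\underline{i}} - 2p_{\underline{j}}}$ (the second line of \eqref{eq:gbound}), while recognizing $t^{2p_{\underline{i}} - 2p_{\underline{j}}} \delta_{\underline{i}p}\delta^{\underline{j}q} = \mathring{g}_{ip}(t) \mathring{g}^{jq}(t)$ and similarly at $t_{\lambda*}$ gives the first line. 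Since the argument is a direct computation, there is no real obstacle — the only point to watch is that monotonicity of $s \mapsto s^{2p_i - 2p_j}$ lets one avoid splitting into the $p_i = p_j$ versus $p_i \neq p_j$ cases that appeared in the definition \eqref{eq:upupsilon} of $\matr{\tilde{\Upupsilon}}{i}{j}$, yielding a single uniform estimate.
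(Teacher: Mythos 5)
Your proof is correct and follows essentially the same approach as the paper: both recognize that the integrand $\mathring{g}_{ip}(s)\mathring{g}^{jq}(s)$ is monotone in $s$ (you make this explicit by writing it as $s^{2p_{\underline{i}}-2p_{\underline{j}}}\delta_{\underline{i}p}\delta^{\underline{j}q}$ up front, the paper does so at the end), pull its endpoint maximum out of the integral, and integrate $\frac{ds}{s}$ to obtain the logarithm.
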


\begin{proof}
    One simply notes that for any $i, p, j, q$, the quantity $\mathring{g}_{ip}(s) \mathring{g}^{jq}(s)$ is either monotonically increasing or monotonically decreasing in $s \in [t, t_{\lambda*}]$, depending on whether $p_i \leq p_j$ or $p_i \geq p_j$. Therefore
    \[
        G_{ip}^{\phantom{ip}jq} = \int^{t_{\lambda*}}_{t} \mathring{g}_{ip}(s) \mathring{g}^{jq}(s) \frac{ds}{s} \leq
        \max \left \{ \mathring{g}_{ip}(t) \mathring{g}^{jq}(t), \mathring{g}_{ip}(t_{\lambda*}) \mathring{g}^{jq}(t_{\lambda*}) \right \} \cdot \int^{t_{\lambda*}}_{t} \frac{ds}{s},
    \]
    from which the first line in \eqref{eq:gbound} follows immediately. The second line follows upon inserting \eqref{eq:kasner_metric}.
\end{proof}

%\begin{proof}
%    By simply inserting the background Kasner metric \eqref{eq:kasner_metric} into the expression for $G_{ip}^{\phantom{ip}jq}$, one finds
%    \[
%        G_{ip}^{\phantom{ip}jq} = \int^{t_{\lambda*}}_{t} s^{2p_{\underline{i}} - 2p_{\underline{j}}} \frac{ds}{s} \cdot \delta_{\underline{i}p} \, \delta^{\underline{j}q}
%        = \begin{cases}
%            \displaystyle{\log( \frac{t_{\lambda*}}{t} )} \cdot \delta_{ip} \,\delta^{jq} & \text{ if } p_i = p_j, \\[0.5em]
%            \displaystyle{ \frac{1}{2p_{\underline{i}} - 2 p_{\underline{j}}} \cdot \left( t_{\lambda*}^{2 p_{\underline{i}} - 2 p_{\underline{j}}} - t^{2 p_{\underline{i}} - 2 p_{\underline{j}}} \right)\cdot \delta_{\underline{i}p} \, \delta^{\underline{j}q} }& \text{ if } p_i \neq p_j.
%        \end{cases}
%    \]
%    If $p_i = p_j$, \eqref{eq:gbound} is then immediate. On the other hand, if $p_i \neq p_j$, then we write
%    \[
%        G_{ip}^{\phantom{ip}jq} = \frac{1}{2p_{\underline{i}} - 2p_{\underline{j}}} \cdot \left( 1 -  \left (\frac{t}{t_{\lambda*}} \right)^{2p_{\underline{i}} - 2 p_{\underline{j}}} \right) \cdot t_{\lambda*}^{2 p_{\underline{i}} - 2 p_{\underline{j}}} \cdot \delta_{\underline{i}p} \, \delta^{\underline{j}q}.
%    \]
%    The result now follows from inserting $x = (2p_i - 2p_j) \log( \frac{t_{\lambda*}}{t} )$ into the inequality $x^{-1} ( 1 - e^{-x} ) \leq \max\{ 1, e^{-x} \}$, which holds for all $x \in \R \setminus \{0\}$.
%\end{proof}

We now use this lemma, along with the evolution equations in Lemma~\ref{lem:renormalized} to show that the time derivatives of our matter and metric quantities are integrable towards $t=0$. The subcriticality condition \eqref{eq:subcritical_delta} will be crucial in order to produce a power of $t$ that is integrable.

\begin{proposition} \label{prop:einstein_low_der}
    For $(\matr{({\upeta}_{\lambda})}{i}{j}, \matr{({\upkappa}_{\lambda})}{i}{j}, \upphi_{\lambda}, \uppsi_{\lambda}, \upnu_{\lambda})$ as in Proposition~\ref{prop:einstein_low_freq}, define $\matr{(\tilde{\Upupsilon}_{\lambda})}{i}{j}$ and $\tilde{\upvarphi}_{\lambda}$ as in \eqref{eq:upupsilon2}, \eqref{eq:upvarphi2}. Then given the subcriticality condition \eqref{eq:subcritical_delta} on the background Kasner exponents, we have the following derivative estimates uniformly in $\lambda \in \Z^D\setminus \{ 0 \}$ and $0 < \tau_{\lambda}(t) \leq 1$.
    \begin{equation} \label{eq:einstein_low_der}
        t_{\lambda *}^{-p_i + p_j} | t \partial_t \matr{(\upkappa_{\lambda})}{i}{j} | + t_{\lambda*}^{-p_i + p_j} | t \partial_t \matr{(\tilde{\Upupsilon}_{\lambda})}{i}{j} | + | t \partial_t \uppsi_{\lambda} | + | t \partial_t \tilde{\upvarphi}_{\lambda} | \lesssim \left( \frac{t}{t_{\lambda*}} \right)^{\updelta} \cdot \mathcal{E}_{\lambda, low}^{1/2} (t).
    \end{equation}
\end{proposition}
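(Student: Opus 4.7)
The plan is to bound each term on the right-hand sides of the evolution equations \eqref{eq:upkappa_evol_l_2}--\eqref{eq:upphi_evol_l_2} by $(t/t_{\lambda*})^\updelta\,\mathcal{E}_{\lambda, low}^{1/2}(t)$, after multiplying by the frequency-dependent weight $t_{\lambda*}^{-p_i + p_j}$ in the tensorial cases. The recurring tools will be four. First, the entrywise energy bounds
$t_{\lambda*}^{-p_i + p_j}|\matr{(\upkappa_\lambda)}{i}{j}| + t_{\lambda*}^{-p_i + p_j}|\matr{(\tilde{\Upupsilon}_\lambda)}{i}{j}| + |\uppsi_\lambda| + |\tilde{\upvarphi}_\lambda| \lesssim \mathcal{E}_{\lambda, low}^{1/2}(t)$, which follow directly from Definition~\ref{def:lowfreq}. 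Second, the sharp $G$-bound of Lemma~\ref{lem:gbound}. Third, the low-frequency identity $\tau^2(t) = \sum_\ell (t/t_{\lambda*})^{2-2p_\ell}\,a_\ell$ with $a_\ell = t_{\lambda*}^{2-2p_\ell}\lambda_\ell^2 \geq 0$ and $\sum_\ell a_\ell = 1$, which in particular gives $\tau^2 \leq (t/t_{\lambda*})^{2(1-p)}$ for $p = \max_\ell p_\ell$. Fourth, the equivalence $\log(t_{\lambda*}/t) \asymp \log \tau^{-1}$ from \eqref{eq:log_upperlower}. The subcriticality condition \eqref{eq:subcritical_delta} enters throughout in the restricted form $1 - p_\ell \geq \updelta$ and, more importantly, in its full form $1 + p_i - p_j - p_k \geq \updelta$ whenever $j \neq k$.

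The first step is a pointwise estimate for the lapse. Using $(1+\tau^2) \asymp 1$ at low frequency, the elliptic equation \eqref{eq:upnu_elliptic_l}, rewritten with $\matr{(\upeta_\lambda)}{a}{b} = \matr{(\tilde{\Upupsilon}_\lambda)}{a}{b} - G_{ap}^{\phantom{ap}bq}\matr{(\upkappa_\lambda)}{q}{p}$ and with the observation $\tr \upeta_\lambda = \tr \tilde{\Upupsilon}_\lambda$ (a consequence of the CMC condition $\tr \upkappa = 0$ and the diagonal-in-frame structure of $G$), expresses $\upnu_\lambda$ as a sum of terms of the form to be analyzed below. An index-by-index application of the four tools yields $|\upnu_\lambda(t)| \lesssim (t/t_{\lambda*})^{\updelta}\,\mathcal{E}_{\lambda, low}^{1/2}(t)$. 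From this, the scalar equations \eqref{eq:uppsi_evol_l_2}--\eqref{eq:upphi_evol_l_2} deliver the derivative bounds for $\uppsi_\lambda$ and $\tilde{\upvarphi}_\lambda$ directly, with logarithmic factors absorbed into a marginally smaller positive exponent.

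The delicate step is bounding $t\partial_t\matr{(\upkappa_\lambda)}{i}{j}$ in \eqref{eq:upkappa_evol_l_2}. The key observation is that, upon substituting $\upeta_\lambda = \tilde{\Upupsilon}_\lambda - G\cdot\upkappa_\lambda$ into the three ``Ricci-type'' terms $-\tau^2\,\matr{(\upeta_\lambda)}{i}{j} + t^2 \mathring{g}^{ab}\lambda_i\lambda_b\,\matr{(\upeta_\lambda)}{a}{j} + t^2 \mathring{g}^{ja}\lambda_a\lambda_b\,\matr{(\upeta_\lambda)}{i}{b}$, the total coefficient of the off-diagonal component $\matr{(\upkappa_\lambda)}{j}{i}$ (for $i \neq j$) is not the naive $\tau^2\,G_{i\underline{i}}^{\phantom{i\underline{i}}j\underline{j}}$ but rather
\begin{equation*}
\bigl[\tau^2 - t^{2-2p_i}\lambda_i^2 - t^{2-2p_j}\lambda_j^2\bigr]\,G_{i\underline{i}}^{\phantom{i\underline{i}}j\underline{j}} = \Bigl(\sum_{\ell \notin\{i,j\}} t^{2-2p_\ell}\lambda_\ell^2\Bigr)\,G_{i\underline{i}}^{\phantom{i\underline{i}}j\underline{j}},
\end{equation*}
the $\ell = i$ and $\ell = j$ contributions having cancelled between the three terms. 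Combining the sharp $G$-bound with $|\matr{(\upkappa_\lambda)}{j}{i}| \lesssim t_{\lambda*}^{p_j - p_i}\,\mathcal{E}_{\lambda, low}^{1/2}$ and the external $t_{\lambda*}^{-p_i + p_j}$ weight, every $t_{\lambda*}$-power cancels exactly, leaving the $\ell$-th contribution as $(t/t_{\lambda*})^{2(1 + p_i - p_j - p_\ell)}\,\log(t_{\lambda*}/t)\,\mathcal{E}_{\lambda, low}^{1/2}$. Since $\ell \neq j$, the full subcriticality condition \eqref{eq:subcritical_delta} gives an exponent $\geq 2\updelta$, which absorbs the logarithm with room to spare. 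The remaining pieces of \eqref{eq:upkappa_evol_l_2}---the $\tilde{\Upupsilon}$-contributions (where there is no $G$-factor), the trace term, the off-off-diagonal $G\cdot\upkappa$-components $\matr{(\upkappa_\lambda)}{j}{a}$ with $a \neq i$ and $\matr{(\upkappa_\lambda)}{b}{i}$ with $b \neq j$, and the lapse pieces---are handled by the same index-by-index scheme, and in each case the $t_{\lambda*}$-bookkeeping collapses to an exponent of the form $2(1 + p_i - p_j - p_\ell) \geq 2\updelta$ or $2(1 - p_\ell) \geq 2\updelta$.

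The main obstacle throughout is this correct off-diagonal $t_{\lambda*}$-bookkeeping, and the cancellation identified above is essential: without it one would be forced to invoke the inequality $1 + p_i - 2p_j \geq \updelta$, which is \emph{not} implied by \eqref{eq:subcritical_delta}. Finally, the derivative bound for $t\partial_t\matr{(\tilde{\Upupsilon}_\lambda)}{i}{j}$ in \eqref{eq:upeta_evol_l_2} follows immediately from the bound for $t\partial_t\matr{(\upkappa_\lambda)}{q}{p}$ combined with one further application of Lemma~\ref{lem:gbound} and the lapse estimate.
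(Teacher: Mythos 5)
Your proposal is correct and follows essentially the same route as the paper's own proof: the entrywise energy bounds, the sharp $G$-bound of Lemma~\ref{lem:gbound}, the lapse estimate, and the crucial cancellation in the $G\cdot\upkappa$ part of the Ricci-type terms, culminating in the subcriticality exponent $2(1+p_a-p_b-p_c)\geq 2\updelta$, are exactly the paper's ingredients. Two differences in presentation are worth flagging. First, you organize the cancellation \emph{component-by-component}: you show the coefficient of $\matr{(\upkappa_\lambda)}{j}{i}$ in the three Ricci-type terms is $G_{i\underline{i}}^{\phantom{i\underline{i}}j\underline{j}}\sum_{\ell\notin\{i,j\}}t^{2-2p_\ell}\lambda_\ell^2$, which makes it transparent \emph{why} the cancellation is essential (the $\ell=j$ piece would otherwise produce the exponent $2(1+p_i-2p_j)$, not controlled by \eqref{eq:subcritical_delta}). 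The paper instead exhibits the same cancellation as pairwise between grouped terms in its \eqref{eq:upkappa_low_b}, without spelling out the no-go that motivates it; your formulation is the cleaner one. Note, though, that only the $\ell=j$ half of your cancellation is actually load-bearing --- the $\ell=i$ piece already has exponent $\max\{2p_i-2p_j,0\}+2-2p_i\geq 2\updelta$ and the paper handles it by direct estimation --- and that the displayed exponent $2(1+p_i-p_j-p_\ell)$ is the $p_i\geq p_j$ branch of the $G$-bound, the complementary branch giving $2(1-p_\ell)$, which is also $\geq 2\updelta$. Second, you derive the $\tilde\Upupsilon$-derivative bound by bootstrapping: multiplying the already-established $\upkappa$-derivative bound by $G$ via \eqref{eq:upeta_evol_l_2}. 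This is a legitimate shortcut the paper does not take --- the paper re-expands $t\partial_t\matr{(\tilde\Upupsilon_\lambda)}{i}{j}$ via \eqref{eq:upkappa_evol_l_2} into \eqref{eq:upupsilon_low_a}--\eqref{eq:upupsilon_low_c} and rediscovers analogous cancellations there from scratch --- and it is valid because the $\upkappa$-bound retains a full factor of $(t/t_{\lambda*})^{2\updelta}$ before absorbing logarithms, so the additional $\log(t_{\lambda*}/t)$ introduced by $G$ is still absorbed down to $(t/t_{\lambda*})^{\updelta}$.
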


\begin{proof}
    Let us start with $|t \partial_t \matr{(\upkappa_{\lambda})}{i}{j}|$. We will bound this by grouping the terms in  \eqref{eq:upkappa_evol_l_2} as follows:
    \begin{align}
        | t \partial_t \matr{(\upkappa_{\lambda})}{i}{j} | \addtocounter{equation}{1}
        &= - t^2 \mathring{g}^{ab} \lambda_a \lambda_b \matr{(\tilde{\Upupsilon}_{\lambda})}{i}{j} + t^2 \mathring{g}^{ab} \lambda_i \lambda_b \matr{(\tilde{\Upupsilon}_{\lambda})}{a}{j} + t^2 \mathring{g}^{ja} \lambda_a \lambda_b \matr{(\tilde{\Upupsilon}_{\lambda})}{i}{b}
        \tag{\theequation a} \label{eq:upkappa_low_a} \\[0.3em]
        &\quad + G_{ip}^{\phantom{ip}jq} t^2 \mathring{g}^{ab} \lambda_a \lambda_b \matr{(\upkappa_{\lambda})}{q}{p} - G_{ap}^{\phantom{ap}jq} t^2 \mathring{g}^{ab} \lambda_i \lambda_b \matr{(\upkappa_{\lambda})}{q}{p} - G_{ip}^{\phantom{ip}bq}t^2 \mathring{g}^{ja} \lambda_a \lambda_b \matr{(\upkappa_{\lambda})}{q}{p} 
        \tag{\theequation b} \label{eq:upkappa_low_b} \\[0.3em]
        &\quad - t^2 \mathring{g}^{ja} \lambda_i \lambda_a (\tr \tilde{\Upsilon}) + t^2 \mathring{g}^{jk} \lambda_i \lambda_k \upnu_{\lambda} - (t \matr{\mathring{k}}{i}{j}) \upnu_{\lambda}.
        \tag{\theequation c} \label{eq:upkappa_low_c}
    \end{align}

    We shall estimate the terms occuring in \eqref{eq:upkappa_low_a}--\eqref{eq:upkappa_low_c} in terms of the low-frequency energy $\mathcal{E}_{\lambda, low}(t)$. By the definition of the low-frequency energy, we have that
    \begin{equation} \label{eq:kappa_upsilon_energy}
        \left| \matr{(\upkappa_{\lambda})}{i}{j} \right| \lesssim t_{\lambda*}^{p_i - p_j} \mathcal{E}_{\lambda, low}^{1/2}, \qquad
        \left| \matr{(\tilde{\Upupsilon}_{\lambda})}{i}{j} \right| \lesssim t_{\lambda*}^{p_i - p_j} \mathcal{E}_{\lambda, low}^{1/2}, \qquad
        \left| \tr \tilde{\Upupsilon} \right| \lesssim \mathcal{E}_{\lambda, low}^{1/2}.
    \end{equation}
    On the other hand, we estimate $\lambda_i$ using \eqref{eq:tau} and $0 < \tau^2 = \sum_i t^{2 - 2p_i} \lambda_i^2 \leq 1$ as follows
    \begin{equation} \label{eq:lambda_energy}
        | \lambda_i | \leq t_{\lambda*}^{p_i - 1}.
    \end{equation}

    Using these estimates and \eqref{eq:kasner_metric}, it is straightforward to then estimate \eqref{eq:upkappa_low_a} in the following manner.
    \begin{align}
        t_{\lambda*}^{-p_i + p_j} \cdot |\text{\eqref{eq:upkappa_low_a}}| 
        &\lesssim \sum_{a = 1}^D \left( \frac{t}{t_{\lambda*}} \right)^{2- 2p_a} \cdot \left[ t_{\lambda*}^{-p_i + p_j} \left| \matr{(\tilde{\Upupsilon}_{\lambda})}{i}{j} \right| + t_{\lambda*}^{-p_a + p_j} \left| \matr{(\tilde{\Upupsilon}_{\lambda})}{a}{j} \right| + t_{\lambda*}^{-p_i + p_a} \left| \matr{(\tilde{\Upupsilon}_{\lambda})}{i}{a} \right| \right] \nonumber \\[0.5em]
        &\lesssim \max_{a = 1, \ldots, D} \left \{ \left( \frac{t}{t_{\lambda*}} \right)^{2 - 2p_a} \right \} \cdot \mathcal{E}_{\lambda, low}^{1/2}. \label{eq:upkappa_low_a_est}
    \end{align}
    For the second line \eqref{eq:upkappa_low_b}, we first exploit a crucial cancellation in the first and third terms of \eqref{eq:upkappa_low_b} when $b = j$. In particular, from the cancellation and \eqref{eq:gbound} one has
    \[ 
        | G_{ip}^{\phantom{ip}jq} t^2 \mathring{g}^{ab} - G_{ip}^{\phantom{ip}bq} t^2 \mathring{g}^{ja} | \lesssim \max_{i, j \neq b} \left \{ \left( \frac{t}{t_{\lambda*}} \right)^{2 + 2p_i - 2p_j - 2p_{b}} \right\} \log( \frac{t_{\lambda*}}{t} ) \cdot t_{\lambda*}^{2 - p_a - p_{b} - p_q + p_p}.
    \]

    To get this estimate, there are really two cases depending on whether $p_i > p_j$ or $p_i \leq p_j$; mnote that the case where $p_i > p_j$ in \eqref{eq:gbound} is covered by allowing $i = j$ in the above maximum.
    Estimating the second term in \eqref{eq:upkappa_low_b} using \eqref{eq:gbound} is more straightforward -- in that the power of $t$ ends up being $t^{2 - 2p_a}$ or $t^{2 - 2p_j}$ rather than featuring more complicated linear combinations of Kasner exponents -- and one eventually yields
    \begin{equation} \label{eq:upkappa_low_b_est}
        t_{\lambda *}^{- p_i + p_j} \cdot | \text{\eqref{eq:upkappa_low_b}} | \lesssim
        \max_{j \neq b} \left \{ \left( \frac{t}{t_{\lambda*}} \right)^{2 + 2p_i - 2p_j - 2p_{\underline{b}}} \right\} \log( \frac{t_{\lambda*}}{t} ) \cdot \mathcal{E}_{\lambda, low}^{1/2}.
    \end{equation}

    To deal with \eqref{eq:upkappa_low_c}, the $\tr \tilde{\Upupsilon}_{\lambda}$ term is straightforward, while for the $\upnu_{\lambda}$ term we may use the elliptic equation \eqref{eq:upnu_elliptic_l} to get the following preliminary estimate, recalling that now $\tau^2 \leq 1$:
    \begin{align}         
        |\upnu_{\lambda}| 
        &\lesssim t^2 \mathring{g}^{ab} \lambda_a \lambda_b \left| \tr \tilde{\Upupsilon}_{\lambda} \right | + t^2 \mathring{g}^{ab} \lambda_i \lambda_a \left | \matr{(\tilde{\Upupsilon_{\lambda}})}{b}{i} \right| + G_{bp}^{\phantom{bp}iq} t^2 \mathring{g}^{ab} \lambda_i \lambda_a \left| \matr{(\upkappa_{\lambda})}{q}{p} \right| \nonumber \\[0.5em]
        \label{eq:upnu_energy}
        &\lesssim \max_{a = 1, \ldots, D} \left \{ \left( \frac{t}{t_{\lambda *}} \right)^{2 - 2p_a} \right \} \left( 1 + \log \left( \frac{t_{\lambda*}}{t} \right) \right) \cdot \mathcal{E}_{\lambda, low}^{1/2}.
    \end{align}
    As the final term of \eqref{eq:upkappa_low_c} contributes only when $i = j$, and $t \matr{\mathring{k}}{i}{j}$ is bounded by $\max p_i$, we therefore have
    \begin{equation} \label{eq:upkappa_low_c_est}
        t_{\lambda*}^{- p_i + p_j} \cdot | \text{\eqref{eq:upkappa_low_c}} |
        \lesssim \max_{a = 1, \ldots, D} \left \{ \left( \frac{t}{t_{\lambda *}} \right)^{2 - 2p_a} \right \} \left( 1 + \log \left( \frac{t_{\lambda*}}{t} \right) \right) \cdot \mathcal{E}_{\lambda, low}^{1/2}.
    \end{equation}
    
    Combining all of \eqref{eq:upkappa_low_a_est}, \eqref{eq:upkappa_low_b_est} and \eqref{eq:upkappa_low_c_est}, we therefore have
    \begin{equation} \label{eq:upkappa_low_est}
        t_{\lambda*}^{-p_i + p_j} | t \partial_t \matr{(\upkappa_{\lambda})}{i}{j} | 
        \lesssim \max_{b \neq c} \left \{ \left( \frac{t}{t_{\lambda*}} \right)^{2 + 2p_a - 2 p_b - 2 p_c} \right\} \left( 1 + \log( \frac{t_{\lambda*}}{t} ) \right) \cdot \mathcal{E}_{\lambda, low}^{1/2}.
    \end{equation}

    We next move onto $|t \partial_t \matr{(\tilde{\Upupsilon}_{\lambda})}{i}{j}|$. Using \eqref{eq:upeta_evol_l_2} and the above, we write this as follows:
    \begin{align}
        t \partial_t \matr{(\tilde{\Upupsilon}_{\lambda})}{i}{j} \addtocounter{equation}{1}
        &= - G_{ir}^{\phantom{ir}js} t^2 \mathring{g}^{ab} \lambda_a \lambda_b \matr{(\tilde{\Upupsilon}_{\lambda})}{s}{r} + G_{ir}^{\phantom{ir}ja} t^2 \mathring{g}^{sb} \lambda_a \lambda_b \matr{(\tilde{\Upupsilon}_{\lambda})}{s}{r} + G_{ir}^{\phantom{ir}js} t^2 \mathring{g}^{ra} \lambda_a \lambda_b \matr{(\tilde{\Upupsilon}_{\lambda})}{s}{b}
        \tag{\theequation a} \label{eq:upupsilon_low_a} \\[0.3em]
        &\; + G_{ir}^{\phantom{ir}js} G_{sp}^{\phantom{sp}rq} t^2 \mathring{g}^{ab} \lambda_a \lambda_b \matr{(\upkappa_{\lambda})}{q}{p} - G_{ir}^{\phantom{ir}ja} G_{sp}^{\phantom{sp}rq} t^2 \mathring{g}^{sb} \lambda_a \lambda_b \matr{(\upkappa_{\lambda})}{q}{p} - G_{ir}^{\phantom{ir}js} G_{sp}^{\phantom{sp}bq}t^2 \mathring{g}^{ra} \lambda_a \lambda_b \matr{(\upkappa_{\lambda})}{q}{p} 
        \tag{\theequation b} \label{eq:upupsilon_low_b} \\[0.3em]
        &\; - G_{ir}^{\phantom{ir}js} t^2 \mathring{g}^{ra} \lambda_s \lambda_a (\tr \tilde{\Upsilon}) + G_{ir}^{\phantom{ir}js} t^2 \mathring{g}^{rk} \lambda_s \lambda_k \upnu_{\lambda} - G_{ip}^{\phantom{ip}jq}(t \matr{\mathring{k}}{q}{p}) \upnu_{\lambda} + (t \matr{\mathring{k}}{i}{j}) \upnu_{\lambda}. 
        \tag{\theequation c} \label{eq:upupsilon_low_c}
    \end{align}
    To estimate \eqref{eq:upupsilon_low_a}, note that there is a cancellation between the first two terms when $a = s$, in a similar fashion to the cancellation in the estimate of \eqref{eq:upkappa_low_c} above. Tracking the powers of $t$ and $t_{\lambda*}$ carefully, and using \eqref{eq:kappa_upsilon_energy} and \eqref{eq:lambda_energy} as before, one yields
    \begin{equation} \label{eq:upupsilon_low_a_est}
        t_{\lambda*}^{-p_i + p_j} \cdot | \text{\eqref{eq:upupsilon_low_a}} |
        \lesssim \max_{a, b \neq c} \left \{ \left( \frac{t}{t_{\lambda*}} \right)^{2 + 2p_a - 2 p_b - 2 p_c} \right\} \left( 1 + \log( \frac{t_{\lambda*}}{t} ) \right) \cdot \mathcal{E}_{\lambda, low}^{1/2}.
    \end{equation}

    For \eqref{eq:upupsilon_low_b}, there are two cases depending on whether $p_i \leq p_j$ or $p_i \geq p_j$. If $p_i \leq p_j$, there is a cancellation between the first and the second expression when $a = s$ while the contribution of the third expression is fine. Alternatively if $p_i \geq p_j$, there is a cancellation between the first and the third when $b = r$ while the contribution of the second is fine. Omitting the details, we end up at the following estimate
    \begin{equation} \label{eq:upupsilon_low_b_est}
        t_{\lambda*}^{-p_i + p_j} \cdot | \text{\eqref{eq:upupsilon_low_b}} |
        \lesssim \max_{a, b \neq c} \left \{ \left( \frac{t}{t_{\lambda*}} \right)^{2 + 2p_a - 2 p_b - 2 p_c} \right\} \left( 1 + \log( \frac{t_{\lambda*}}{t} ) \right)^{2} \cdot \mathcal{E}_{\lambda, low}^{1/2}.
    \end{equation}
    The term involving $\log( \frac{t_{\lambda*}}{t})$ appears with a square since \eqref{eq:upupsilon_low_b} is quadratic in the $G_{ab}^{\phantom{ab}cd}$ tensor.

    Finally, for \eqref{eq:upupsilon_low_c}, we use again \eqref{eq:upnu_energy} and the fact that the third and fourth terms are nonzero if and only if $i = j$. Therefore
    \begin{equation} \label{eq:upupsilon_low_c_est}
        t_{\lambda*}^{-p_i + p_j} \cdot | \text{\eqref{eq:upupsilon_low_c}} |
        \lesssim \max_{a = 1, \ldots, D} \left \{ \left( \frac{t}{t_{\lambda*}} \right)^{2 - 2p_a} \right\} \left( 1 + \log( \frac{t_{\lambda*}}{t} ) \right)^2 \cdot \mathcal{E}_{\lambda, low}^{1/2}.
    \end{equation}
    The square appears here because we must use \eqref{eq:gbound} on to of \eqref{eq:upnu_energy} to estimate the third term $G_{ip}^{\phantom{ip}jq} (t \matr{\mathring{k}}{q}{p} ) \upnu_{\lambda}$. Combining the three estimates \eqref{eq:upupsilon_low_a}, \eqref{eq:upupsilon_low_b}, \eqref{eq:upupsilon_low_c} then yields
    \begin{equation} \label{eq:upupsilon_low_est}
        t_{\lambda*}^{-p_i + p_j} | t \partial_t \matr{(\tilde{\Upupsilon}_{\lambda})}{i}{j} |
        \lesssim \max_{a, b \neq c} \left \{ \left( \frac{t}{t_{\lambda*}} \right)^{2 + 2p_a - 2 p_b - 2 p_c} \right\} \left( 1 + \log( \frac{t_{\lambda*}}{t} ) \right)^{2} \cdot \mathcal{E}_{\lambda, low}^{1/2}.
    \end{equation}
    (We reiterate that powers of $t$ such as $\left( \frac{t}{t_{\lambda*}} \right)^{2- 2p_c}$ are included in \eqref{eq:upupsilon_low_est} upon taking $a = b$ in the $\max$.)

    For the matter quantities $\uppsi_{\lambda}$ and $\tilde{\upvarphi}_{\lambda}$, we use \eqref{eq:uppsi_evol_l_2} and \eqref{eq:upphi_evol_l_2}. Here, it is straightforward to use $\eqref{eq:upnu_energy}$ as well as the trivial bounds from the definition of the low-frequency energy:
    \[
        |\uppsi_{\lambda}| + |\tilde{\upvarphi}_{\lambda}| \lesssim \mathcal{E}_{\lambda, low}^{1/2}.
    \]
    Inserting all these bounds into \eqref{eq:uppsi_evol_l_2} and \eqref{eq:upphi_evol_l_2}, one deduces
    \begin{equation} \label{eq:matter_low_est}
        | t \partial_t \uppsi_{\lambda} | + | t \partial_t \tilde{\upvarphi}_{\lambda} | 
        \lesssim \max_{a = 1, \ldots, D} \left \{ \left( \frac{t}{t_{\lambda*}} \right)^{2 - 2p_a} \right\} \left( 1 + \log( \frac{t_{\lambda*}}{t} ) \right)^2 \cdot \mathcal{E}_{\lambda, low}^{1/2}. 
    \end{equation}
    
    Combining all of \eqref{eq:upkappa_low_est}, \eqref{eq:upupsilon_low_est} and \eqref{eq:matter_low_est}, one deduces the inequality.
    \[
        t_{\lambda *}^{-p_i + p_j} \left( | t \partial_t \matr{(\upkappa_{\lambda})}{i}{j} | + | t \partial_t \matr{(\tilde{\Upupsilon}_{\lambda})}{i}{j} | \right) + | t \partial_t \uppsi_{\lambda} | + | t \partial_t \tilde{\upvarphi}_{\lambda} | 
        \lesssim \max_{a, b \neq c} \left \{ \left( \frac{t}{t_{\lambda*}} \right)^{2 + 2p_a - 2 p_b - 2 p_c} \right\} \left( 1 + \log( \frac{t_{\lambda*}}{t} ) \right)^{2} \mathcal{E}_{\lambda, low}^{1/2}.
    \]
    At this point that we apply the subcriticality condition \eqref{eq:subcritical_delta}. Since, for $b \neq c$, this condition tells us $2 + 2p_a - 2p_b - 2p_c \geq 2 \updelta$, we may apply the simple inequality $x^{\updelta} (1 + \log x^{-1} )^2 \lesssim 1$ for $x = \frac{t}{t_{\lambda*}}$, to yield the desired derivative bound \eqref{eq:einstein_low_der}.
\end{proof}

\begin{remark}
    A more careful study of the proof shows that the only terms which give a contribution of $\left( \frac{t}{t_{\lambda*}} \right)^{2 + 2 p_a - 2 p_b - 2 p_c}$ rather than just $\left( \frac{t}{t_{\lambda*}} \right)^{2 - 2p_c}$ are terms that involve the expression
    \begin{equation} \label{eq:connection}
        \left( G_{ap}^{\phantom{ap}bq} t^2 \mathring{g}^{cd} - G_{ap}^{\phantom{ap}cq} t^2 \mathring{g}^{bd} \right) \lambda_c \matr{(\upkappa_{\lambda})}{q}{p}.
    \end{equation}
    Upon relabelling indices, this arises in \eqref{eq:upkappa_low_b}, and in the case that $p_i \geq p_j$, we also see it in \eqref{eq:upupsilon_low_b}. (One may use the symmetry constraint $\mathring{g}^{ab}(t_{\lambda*}) \matr{\tilde{\Upupsilon}}{b}{c} = \mathring{g}^{cb}(t_{\lambda*}) \matr{\tilde{\Upupsilon}}{b}{a}$ to reduce to this case.)

    Therefore, even if the subcriticality condition \eqref{eq:subcritical_delta} fails, it is possible to close the argument if the expression \eqref{eq:connection} can be shown to converge to $0$ sufficiently fast as $t \to 0$. Upon expanding out $G_{ip}^{\phantom{ip}jq}$ in \eqref{eq:connection}, one may show that \eqref{eq:connection} converging to $0$ requires \eqref{eq:subcrit_lin_1}; note however to close the argument one must find the rate of convergence in \eqref{eq:subcrit_lin_1}.
\end{remark}

\subsection{Convergence to a limit}

To find the scattering states $(\matr{(\upkappa_{\infty})}{i}{j}, \matr{(\tilde{\Upupsilon}_{\infty})}{i}{j}, \uppsi_{\infty}, \tilde{\upvarphi}_{\infty})$, one must show that $(\matr{\upkappa}{i}{j}(t), \matr{\tilde{\Upupsilon}}{i}{j}(t), \uppsi(t), \tilde{\upvarphi}(t))$ converges as $t \to 0$. Conversely, for asymptotic completeness we also need to show that one can prescribe these limits and launch a solution to the system \eqref{eq:upeta_evol_l}--\eqref{eq:upkappa_sym_l}. We achieve this in the following proposition.
\begin{proposition} \label{prop:einstein_low_freq_scat}
    For fixed $\lambda \in \Z^D \setminus \{ 0 \}$, and $\mathring{C}_{low}>0$ as in Proposition~\ref{prop:einstein_low_freq}, the following hold.
    \begin{enumerate}[(i)]
        \item 
            Let $(\matr{({\upeta}_{\lambda})}{i}{j}, \matr{({\upkappa}_{\lambda})}{i}{j}, \upphi_{\lambda}, \uppsi_{\lambda}, \upnu_{\lambda})$ be a solution to the system \eqref{eq:upeta_evol_l}--\eqref{eq:upkappa_sym_l} for $0 < \tau(t) \leq 1$ \underline{in CMCTC gauge}. Then there exist constant $(1, 1)$ tensors $\matr{((\upkappa_{\infty})_{\lambda})}{i}{j}$, $\matr{((\tilde{\Upupsilon}_{\infty})_{\lambda})}{i}{j}$ and real numbers $(\uppsi_{\infty})_{\lambda}$, $(\tilde{\upvarphi}_{\infty})_{\lambda}$ such that $\matr{({\upkappa}_{\lambda})}{i}{j}(t)$, $\matr{({\tilde{\Upupsilon}}_{\lambda})}{i}{j}(t)$, $\uppsi_{\lambda}(t)$ and $\tilde{\upvarphi}_{\lambda}(t)$ attain these as limits as $t \to 0$, at the following rate
            \begin{gather} \label{eq:einstein_low_freq_scat_metric}
                | \matr{(\upkappa_{\lambda})}{i}{j} (t) - \matr{((\upkappa_{\infty})_{\lambda})}{i}{j} | + 
                | \matr{(\tilde{\Upupsilon}_{\lambda})}{i}{j} (t) - \matr{((\tilde{\Upupsilon}_{\infty})_{\lambda})}{i}{j} |
                \lesssim t_{\lambda *}^{p_i - p_j} \cdot \left( \frac{t}{t_{\lambda*}} \right)^{\updelta} \mathcal{E}_{\lambda, low}^{1/2} (t_{\lambda*}),
                \\[0.5em] \label{eq:einstein_low_freq_scat_matter}
                | \uppsi_{\lambda}(t) - (\uppsi_{\infty})_{\lambda} | + | \tilde{\upvarphi}_{\lambda}(t) - (\tilde{\upvarphi}_{\infty})_{\lambda} |
                \lesssim \left( \frac{t}{t_{\lambda*}} \right)^{\updelta} \mathcal{E}_{\lambda, low}^{1/2} (t_{\lambda*}).
            \end{gather}
            Furthermore, one has the following estimate on the limiting quantities:
            \[
                \mathring{g}^{ac}(t_{\lambda*}) \mathring{g}_{bd}(t_{\lambda*}) \left [ \matr{((\upkappa_{\infty})_{\lambda})}{a}{b} \matr{((\upkappa_{\infty})_{\lambda})}{c}{d} + \matr{((\tilde{\Upupsilon}_{\infty})_{\lambda})}{a}{b} \matr{((\tilde{\Upupsilon}_{\infty})_{\lambda})}{c}{d} \right ]
                + ( \uppsi_{\infty})_{\lambda}^2 + (\tilde{\upvarphi}_{\infty})_{\lambda}^2 \leq \mathring{C}_{low} \mathcal{E}_{\lambda, low} (t_{\lambda*}).
            \]
        \item
            Conversely, let $\matr{((\upkappa_{\infty})_{\lambda})}{i}{j}$, $\matr{((\tilde{\Upupsilon}_{\infty})_{\lambda})}{i}{j}$ be constant $(1,1)$ tensors and $(\uppsi_{\infty})_{\lambda}$ and $(\tilde{\upvarphi}_{\infty})_{\lambda}$ be real numbers, satisfying the $\lambda$-Fourier projections of the asymptotic constraints \eqref{eq:hamiltonian_linear_scat}--\eqref{eq:upkappa_sym_scat}, for $T = t_{\lambda*}$. Then there exists a unique solution $(\matr{({\upeta}_{\lambda})}{i}{j}, \matr{({\upkappa}_{\lambda})}{i}{j}, \upphi_{\lambda}, \uppsi_{\lambda}, \upnu_{\lambda})$ to the system \eqref{eq:upeta_evol_l}--\eqref{eq:upkappa_sym_l} for $0 < \tau(t) \leq 1$ \underline{in CMCTC gauge} such that the solution obeys the asymptotics \eqref{eq:einstein_low_freq_scat_metric}--\eqref{eq:einstein_low_freq_scat_matter}.

            For this solution, one has that 
            \[
                \mathcal{E}_{\lambda, low}(t_{\lambda*}) \leq \mathring{C}_{low} \left( 
                    \mathring{g}^{ac}(t_{\lambda*}) \mathring{g}_{bd}(t_{\lambda*}) \left [ \matr{((\upkappa_{\infty})_{\lambda})}{a}{b} \matr{((\upkappa_{\infty})_{\lambda})}{c}{d} + \matr{((\tilde{\Upupsilon}_{\infty})_{\lambda})}{a}{b} \matr{((\tilde{\Upupsilon}_{\infty})_{\lambda})}{c}{d} \right ]
                + ( \uppsi_{\infty})_{\lambda}^2 + (\tilde{\upvarphi}_{\infty})_{\lambda}^2
                \right).
            \]
    \end{enumerate}
\end{proposition}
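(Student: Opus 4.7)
The plan for (i) is to integrate the derivative estimates of Proposition~\ref{prop:einstein_low_der} directly. Given the a priori bounds
\[
t_{\lambda*}^{-p_i+p_j}\bigl|t\partial_t \matr{(\upkappa_{\lambda})}{i}{j}\bigr| + t_{\lambda*}^{-p_i+p_j}\bigl|t\partial_t \matr{(\tilde{\Upupsilon}_{\lambda})}{i}{j}\bigr| + |t\partial_t \uppsi_{\lambda}| + |t\partial_t \tilde{\upvarphi}_{\lambda}| \lesssim \left(\frac{t}{t_{\lambda*}}\right)^{\updelta} \mathcal{E}_{\lambda,low}^{1/2}(t),
\]
combined with the uniform energy control $\mathcal{E}_{\lambda,low}(t) \leq \mathring{C}_{low}\,\mathcal{E}_{\lambda,low}(t_{\lambda*})$ for $0 < t \leq t_{\lambda*}$ from Proposition~\ref{prop:einstein_low_freq}, the factor $(t/t_{\lambda*})^{\updelta}\,dt/t$ is integrable on $(0,t_{\lambda*}]$ with integral bounded by $\updelta^{-1}$. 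Hence each of $\matr{(\upkappa_{\lambda})}{i}{j}$, $\matr{(\tilde{\Upupsilon}_{\lambda})}{i}{j}$, $\uppsi_{\lambda}$, $\tilde{\upvarphi}_{\lambda}$ is Cauchy as $t \to 0$; the limit quantities and the quantitative rates \eqref{eq:einstein_low_freq_scat_metric}--\eqref{eq:einstein_low_freq_scat_matter} are then read off from the fundamental theorem of calculus applied from $t$ down to $0$. The final bound on the asymptotic energy comes from inserting these limits into the definition of $\mathcal{E}_{\lambda,low}$ and invoking the upper bound in \eqref{eq:einstein_low_freq_fin} with $t \to 0$.

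For (ii), the strategy is to solve a singular initial value problem for the first-order ODE system \eqref{eq:upkappa_evol_l_2}--\eqref{eq:upphi_evol_l_2} (coupled algebraically to \eqref{eq:upnu_elliptic_l} for $\upnu_{\lambda}$) with prescribed limit at $t = 0$. As this system is singular at $t = 0$, standard Picard--Lindel\"of fails and one must appeal to Fuchsian ODE theory. I would rescale to the unknowns $\mathbf{V}_{\lambda} = \bigl(t_{\lambda*}^{-p_i+p_j}\matr{(\upkappa_{\lambda})}{i}{j},\; t_{\lambda*}^{-p_i+p_j}\matr{(\tilde{\Upupsilon}_{\lambda})}{i}{j},\;\uppsi_{\lambda},\;\tilde{\upvarphi}_{\lambda}\bigr)$, then recast \eqref{eq:upkappa_evol_l_2}--\eqref{eq:upphi_evol_l_2} in the form $t\partial_t \mathbf{V}_{\lambda} = \mathbf{F}(t,\mathbf{V}_{\lambda})$, where Proposition~\ref{prop:einstein_low_der} shows $|\mathbf{F}(t,\mathbf{V}_{\lambda})| \lesssim (t/t_{\lambda*})^{\updelta}|\mathbf{V}_{\lambda}|$ as $t \to 0$. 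Applying Lemma~\ref{lem:fuchsian}(ii) with $A = 0$ to this system (exactly as in the proof of Proposition~\ref{prop:wave_low_freq_scat}(ii)), with initial value at $t = 0$ being the prescribed asymptotic data, produces the unique solution $\mathbf{V}_{\lambda}(t)$ attaining the required limit on a neighborhood of $t = 0$. Linearity and regularity of the ODE for $t > 0$ then extend this to the full interval $(0,t_{\lambda*}]$, and the energy bound follows from the lower bound in \eqref{eq:einstein_low_freq_fin}.

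The principal obstacle in (ii) is constraint propagation: the Fuchsian construction yields a solution to the evolution equations of Lemma~\ref{lem:renormalized}, but we must verify that the full constraints \eqref{eq:hamiltonian_linear_l}--\eqref{eq:upkappa_sym_l} and the CMC condition $\tr \upkappa_{\lambda} = 0$ hold at every $t \in (0,t_{\lambda*}]$. The plan is to derive from \eqref{eq:upkappa_evol_l_2}--\eqref{eq:upphi_evol_l_2} a closed homogeneous linear ODE system of Fuchsian type satisfied by the constraint quantities themselves, verify (by passing to the limit $t \to 0$ and invoking Lemma~\ref{lem:scattering_constraints_l}) that each such constraint tends to $0$ at $t = 0$, and then invoke the uniqueness clause of Lemma~\ref{lem:fuchsian} to force these constraints to vanish identically. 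Once this is established, $\matr{(\upeta_{\lambda})}{i}{j}$ is recovered from $\matr{(\tilde{\Upupsilon}_{\lambda})}{i}{j}$ and $\matr{(\upkappa_{\lambda})}{i}{j}$ via \eqref{eq:upupsilon2}, and the tuple $(\matr{(\upeta_{\lambda})}{i}{j}, \matr{(\upkappa_{\lambda})}{i}{j}, \upphi_{\lambda}, \uppsi_{\lambda}, \upnu_{\lambda})$ solves \eqref{eq:upeta_evol_l}--\eqref{eq:upkappa_sym_l} in CMCTC gauge, closing the proof of asymptotic completeness.
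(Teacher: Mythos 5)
The proposal is correct and follows essentially the same approach as the paper's proof: part (i) integrates the derivative estimate of Proposition~\ref{prop:einstein_low_der} using the uniform energy bound from Proposition~\ref{prop:einstein_low_freq}, and part (ii) rescales the unknowns, applies Lemma~\ref{lem:fuchsian}(ii) with $A=0$, extends by linearity, and defers constraint propagation to a separate Fuchsian uniqueness argument (the paper's Proposition~\ref{prop:constraints_2}). The only small inaccuracy is the citation of Lemma~\ref{lem:scattering_constraints_l} for showing the constraint quantities vanish at $t=0$ --- in the paper this follows directly from the \emph{hypothesis} that the asymptotic data satisfies the asymptotic constraints \eqref{eq:hamiltonian_linear_scat_l}--\eqref{eq:upkappa_sym_scat_l}, not from Lemma~\ref{lem:scattering_constraints_l}, which instead establishes that such constraints hold for the forward direction --- but this is a matter of attribution rather than a gap.
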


\begin{proof}
    We first prove (i). This follows straightforwardly upon combining Proposition~\ref{prop:einstein_low_freq} with Proposition~\ref{prop:einstein_low_der}. More specifically, combining \eqref{eq:einstein_low_freq_fin} with \eqref{eq:einstein_low_der} yields that for a solution $(\matr{({\upeta}_{\lambda})}{i}{j}, \matr{({\upkappa}_{\lambda})}{i}{j}, \upphi_{\lambda}, \uppsi_{\lambda}, \upnu_{\lambda})$ to the system \eqref{eq:upeta_evol_l}--\eqref{eq:upkappa_sym_l} in the range $0 < \tau(t) \leq 1$,
    \[
        t_{\lambda *}^{-p_i + p_j} | t \partial_t \matr{(\upkappa_{\lambda})}{i}{j} | + t_{\lambda*}^{-p_i + p_j} | t \partial_t \matr{(\tilde{\Upupsilon}_{\lambda})}{i}{j} | + | t \partial_t \uppsi_{\lambda} | + | t \partial_t \tilde{\upvarphi}_{\lambda} | \lesssim \left( \frac{t}{t_{\lambda*}} \right)^{\updelta} \cdot \mathcal{E}_{\lambda, low}^{1/2} (t_{\lambda*}).
    \]

    For $\updelta > 0$, $t^{\updelta - 1}$ is integrable towards $t = 0$, thus there is a limit $(\matr{(\upkappa_{\lambda})}{i}{j}, \matr{(\tilde{\Upupsilon}_{\lambda})}{i}{j}, \uppsi_{\lambda}, \tilde{\upvarphi}_{\lambda}) \to (\matr{((\upkappa_{\infty})_{\lambda})}{i}{j}, \matr{((\tilde{\Upupsilon}_{\infty})_{\lambda})}{i}{j}, (\uppsi_{\infty})_{\lambda}, (\tilde{\upvarphi}_{\infty})_{\lambda})$ as $t \to 0$, and moreover that the estimates \eqref{eq:einstein_low_freq_scat_metric} and \eqref{eq:einstein_low_freq_scat_matter} are satisfied. The final estimate in (i) then follows straightforwardly from Proposition~\ref{prop:einstein_low_freq}.

    Moving onto (ii), we apply the Fuchsian theory of first-order ODEs. In order to do so, we firstly reinterpret Proposition~\ref{prop:einstein_low_der} as follows. This proposition tells us that solutions $\mathbf{V} = ( t_{\lambda*}^{-p_{\underline{i}} + p_{\underline{j}}} \matr{(\upkappa_{\lambda})}{\underline{i}}{\underline{j}}, t_{\lambda*}^{-p_{\underline{i}} + p_{\underline{j}}} \matr{(\tilde{\Upupsilon}_{\lambda})}{\underline{i}}{\underline{j}}, \uppsi_{\lambda}, \tilde{\upvarphi}_{\lambda}) $  to the first-order hyperbolic system \eqref{eq:upkappa_evol_l_2}--\eqref{eq:upphi_evol_l_2} coupled to the elliptic equation \eqref{eq:upnu_elliptic_l} satisfy the following Fuchsian ODE:
    \[
        t \frac{d \mathbf{V}}{dt} = \left( \frac{t}{t_{\lambda*}} \right)^{\updelta} F \left[ \frac{t}{t_{\lambda*}}, \mathbf{V} \right],
    \]
    where $F$ is bounded and continuous in both its variables and linear in $\mathbf{V}$. 

    Therefore, we may apply Lemma~\ref{lem:fuchsian}(b) to the above ODE, with $A = 0$ and initial data
    \[
        \mathbf{v} = ( t_{\lambda*}^{-p_{\underline{i}} + p_{\underline{j}}} \matr{((\upkappa_{\infty})_{\lambda})}{\underline{i}}{\underline{j}}, t_{\lambda*}^{-p_{\underline{i}} + p_{\underline{j}}} \matr{((\tilde{\Upupsilon}_{\infty})_{\lambda})}{\underline{i}}{\underline{j}}, (\uppsi_{\infty})_{\lambda}, (\tilde{\upvarphi}_{\infty})_{\lambda}).
    \]
    The lemma generates a solution $(\matr{({\upeta}_{\lambda})}{i}{j}, \matr{({\upkappa}_{\lambda})}{i}{j}, \upphi_{\lambda}, \uppsi_{\lambda}, \upnu_{\lambda})$ to the first-order elliptic hyperbolic system above, which moreover attains the asymptotics \eqref{eq:einstein_low_freq_scat_metric} and \eqref{eq:einstein_low_freq_scat_matter}. Moreover, the solution is unique in the whole domain $0 < t \leq t_{\lambda*}$. (Note Lemma~\ref{lem:fuchsian}(b) only gives local existence and uniqueness, but since the system \eqref{eq:upkappa_evol_l_2}--\eqref{eq:upphi_evol_l_2} and \eqref{eq:upnu_elliptic_l} are linear one may extend to the whole domain $0 < t \leq t_{\lambda*}$.)%; indeed the solution extends to $t > 0$.)

    So far, the argument (including Proposition~\ref{prop:einstein_low_der}) has \emph{not} used the constraint equations \eqref{eq:hamiltonian_linear_l}--\eqref{eq:upkappa_sym_l}. It remains to recover these constraints for the solution we generated; we leave this to Proposition~\ref{prop:constraints_2}. It is only here that we require that $(\matr{((\upkappa_{\infty})_{\lambda})}{i}{j}, \matr{((\tilde{\Upupsilon}_{\infty})_{\lambda})}{i}{j}, (\uppsi_{\infty})_{\lambda}, (\tilde{\upvarphi}_{\infty})_{\lambda})$ satisfy the asymptotic constraints \eqref{eq:hamiltonian_linear_scat_l}--\eqref{eq:upkappa_sym_scat_l}.
    This completes the proof of the proposition.
\end{proof}

\subsection{Connecting the high and low frequency energies} \label{sub:mid_freq}
 
We shall eventually show that in a well-chosen gauge, the high-frequency energy quantity $\mathcal{E}_{\lambda, high}(t)$ of Definition~\ref{def:high_freq} and the low-frequency energy quantity $\mathcal{E}_{\lambda, low}(t)$ of Definition~\ref{def:lowfreq} are comparable. One subtlety is that $\mathcal{E}_{\lambda, high}(t)$ only has good coercivity properties for $\tau_{\lambda}(t) \geq \mathring{\tau} \geq 1$, while $\mathcal{E}_{\lambda, low}(t)$ is defined only for $\tau_{\lambda}(t) \leq 1$. To fill in the gap $1 \leq \tau_{\lambda}(t) \leq \mathring{\tau}$, we rely on the following \emph{mid-frequency energy estimate}. Like the high-frequency energy estimate, this will \emph{always use the CMCSH gauge} \eqref{eq:spatiallyharmonic_linear_l}.

\begin{definition} \label{def:mid_freq}
    The \emph{geometric mid-frequency energy quantity} $\mathcal{E}_{\lambda, \upeta, mid}(t)$, used for $\lambda \in \Z^D \setminus \{0\}$ and $1 \leq \tau_{\lambda}(t) \leq \mathring{\tau}$ and solutions in the CMCSH gauge, is given by
    \begin{equation} \label{eq:geometry_energy_mid}
        \mathcal{E}_{\lambda, \upeta, mid} (t) \coloneqq \mathring{g}^{ac}(t) \mathring{g}_{bd}(t) \, \matr{(\hat{\upkappa}_{\lambda})}{a}{b} \matr{(\hat{\upkappa}_{\lambda})}{c}{d} + \mathring{g}^{ac}(t) \mathring{g}_{bd}(t) \, \matr{(\hat{\upeta}_{\lambda})}{a}{b} \matr{(\hat{\upeta}_{\lambda})}{c}{d}.
    \end{equation}
    Similarly, the \emph{matter mid-frequency energy quantity} is defined in the same regime, and is given by
    \begin{equation} \label{eq:matter_energy_mid}
        \mathcal{E}_{\lambda, \upphi, mid}(t) \coloneqq \uppsi_{\lambda}^2 + \upphi_{\lambda}^2.
    \end{equation}
    Finally, define the \emph{total mid-frequency energy quantity} as the sum $\mathcal{E}_{\lambda, low}(t) \coloneqq \mathcal{E}_{\lambda, \upeta, low}(t) + \mathcal{E}_{\lambda, \upphi, low}(t)$.
\end{definition}

\begin{proposition} \label{prop:mid_freq}
    Let $(\matr{(\hat{\upeta}_{\lambda})}{i}{j}, \matr{(\hat{\upkappa}_{\lambda})}{i}{j}, \upphi_{\lambda}, \uppsi_{\lambda}, \upnu_{\lambda}, \upchi^j_{\lambda})$ be a solution to the linearized Einstein--scalar field system \eqref{eq:upeta_evol_l}--\eqref{eq:upkappa_sym_l} in CMCSH gauge. Then there exists some $\mathring{C}_{mid}$, depending only on the background Kasner spacetime, such that for $\lambda \in \Z^D \setminus \{ 0 \}$ and $1 \leq \tau_{\lambda}(t) \leq \mathring{\tau}$, one has 
    \begin{equation} \label{eq:einstein_mid_freq_fin}
        \mathring{C}_{mid}^{-1}\, \mathcal{E}_{\lambda, mid}(t) \leq \mathcal{E}_{\lambda, mid}(\tau_{\lambda}^{-1}(\mathring{\tau})) \leq \mathring{C}_{mid} \, \mathcal{E}_{\lambda, mid}(t).
    \end{equation}
    Enlarging the constant $\mathring{C}_{mid}$ if necessary, one has the following equivalence between the high-frequency energy $\mathcal{E}_{\lambda, high}(t)$ at $t = \tau_{\lambda}^{-1}(\mathring{\tau})$ and the low-frequency energy $\mathcal{E}_{\lambda, low}(t)$ at $t = t_{\lambda*} = \tau_{\lambda}^{-1}(1)$.
    \begin{equation} \label{eq:einstein_high_low_freq}
        \mathring{C}_{mid}^{-1} \, \mathcal{E}_{\lambda, low}(t_{\lambda*}) \leq \mathcal{E}_{\lambda, high}(\tau_{\lambda}^{-1}(\mathring{\tau})) \leq \mathring{C}_{mid} \, \mathcal{E}_{\lambda, low}(t_{\lambda*}).
    \end{equation}
\end{proposition}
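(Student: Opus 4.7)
The plan is to handle the two parts of the proposition separately. For the mid-frequency Gronwall estimate \eqref{eq:einstein_mid_freq_fin}, the strategy mirrors the high-frequency argument of Proposition~\ref{prop:einstein_high_freq} but is considerably simpler: since $\tau_{\lambda}(t) \in [1, \mathring{\tau}]$ in this regime, all positive powers of $\tau$ are $O(1)$, so no subtle top-order cancellations are required. I would compute $t \partial_t \mathcal{E}_{\lambda, mid}(t)$ by differentiating \eqref{eq:geometry_energy_mid}--\eqref{eq:matter_energy_mid} directly, plugging in the CMCSH evolution equations \eqref{eq:upeta_evol_l}--\eqref{eq:upkappa_evol_l} together with \eqref{eq:ricci_lin_sh_l}, \eqref{eq:upphi_evol_l}--\eqref{eq:uppsi_evol_l}. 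Derivatives falling on the background metric $\mathring{g}_{ij}(t)$ produce terms proportional to $t \matr{\mathring{k}}{\cdot}{\cdot}$, which are $O(\mathcal{E}_{\lambda, mid})$ by Lemma~\ref{lem:kasner_gnorm}. The dynamical terms are handled by Cauchy--Schwarz with respect to $|\cdot|_{\mathring{g}}$, using the elliptic bounds $|\upnu_\lambda| \lesssim |\hat{\upeta}_\lambda|_{\mathring{g}}$ and $|\upchi_\lambda^j|_{\mathring{g}} \lesssim t(|\hat{\upeta}_\lambda|_{\mathring{g}} + |\upphi_\lambda|)$, which follow from \eqref{eq:upnu_elliptic_sh_l} and \eqref{eq:upchi_elliptic_l} combined with Lemmas~\ref{lem:gnorm_tensor}--\ref{lem:kasner_gnorm} once we note $\tau \asymp 1$ throughout. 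All contributions are thus bounded by a constant (depending only on $\mathring{\tau}$ and the Kasner exponents) times $\mathcal{E}_{\lambda, mid}(t)$.

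Gronwall then yields \eqref{eq:einstein_mid_freq_fin} provided that $\int \tfrac{dt}{t}$ over the mid range is bounded uniformly in $\lambda$. This is exactly the change of variables already used in Lemma~\ref{lem:tauint}: since $\tfrac{dt}{t} = \zeta \, \tfrac{d\tau}{\tau}$ and \eqref{eq:zeta_upperlower} gives $\zeta \leq \max_i (1-p_i)^{-1}$, one has
\[
\int_{\tau_{\lambda}^{-1}(1)}^{\tau_{\lambda}^{-1}(\mathring{\tau})} \frac{dt}{t} \;\leq\; \max_{1 \leq i \leq D} \frac{1}{1 - p_i} \cdot \log \mathring{\tau},
\]
which is $\lambda$-independent. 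Applying Gronwall in both directions on $[\tau_{\lambda}^{-1}(1), \tau_{\lambda}^{-1}(\mathring{\tau})]$ produces the desired two-sided bound with a uniform constant $\mathring{C}_{mid}$.

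For the splicing estimate \eqref{eq:einstein_high_low_freq}, I would chain three uniform equivalences. At $t_1 \coloneqq \tau_{\lambda}^{-1}(\mathring{\tau})$, the coercivity statements \eqref{eq:geometry_energy_high_coercive} and \eqref{eq:matter_energy_high_coercive} of Lemma~\ref{lem:einstein_energy_high_coercive} apply (the hypothesis $\tau \geq \mathring{\tau}$ holds with equality), and combined with $\tau(t_1) = \mathring{\tau} = O(1)$ they give $\mathcal{E}_{\lambda, high}(t_1) \asymp \mathcal{E}_{\lambda, mid}(t_1)$, with implied constants depending only on $\mathring{\tau}$ and the background. Part (\ref{eq:einstein_mid_freq_fin}), just established, propagates this to $\mathcal{E}_{\lambda, mid}(t_1) \asymp \mathcal{E}_{\lambda, mid}(t_{\lambda *})$. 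Finally, at $t = t_{\lambda *}$ the integral $G_{ip}^{\phantom{ip}jq}(t_{\lambda *}; t_{\lambda *})$ in \eqref{eq:upupsilon2} vanishes, so $\matr{\tilde{\Upupsilon}_\lambda}{i}{j}(t_{\lambda *}) = \matr{\hat{\upeta}_\lambda}{i}{j}(t_{\lambda *})$ and similarly $\tilde{\upvarphi}_\lambda(t_{\lambda *}) = \upphi_\lambda(t_{\lambda *})$; substituting into the definitions \eqref{eq:geometry_energy_low}--\eqref{eq:matter_energy_low} and comparing with \eqref{eq:geometry_energy_mid}--\eqref{eq:matter_energy_mid} (both sides use the metric $\mathring{g}_{ij}(t_{\lambda *})$) shows that the two energies coincide exactly at $t_{\lambda *}$. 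Composing the three equivalences yields \eqref{eq:einstein_high_low_freq}.

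The main (mild) obstacle is bookkeeping: verifying that the elliptic error terms contributed by $\upnu_\lambda$ and $\upchi_\lambda^j$ do not hide any $\tau$- or $\lambda$-dependence that would spoil the crude $|t \partial_t \mathcal{E}_{\lambda, mid}| \lesssim \mathcal{E}_{\lambda, mid}$ bound, and that the constant absorbed in Gronwall depends only on $\mathring{\tau}$ and the Kasner exponents. Notably, the subcriticality hypothesis \eqref{eq:subcritical_delta} plays \emph{no} role at this intermediate step, since the mid-frequency range is a bounded interval in $\tau$ and the argument is purely local in $\tau$.
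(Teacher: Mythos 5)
Your proposal is correct and follows essentially the same route as the paper: a crude derivative estimate $|t\frac{d}{dt}\mathcal{E}_{\lambda,mid}| \lesssim \mathcal{E}_{\lambda,mid}$ combined with the observation that the mid-frequency interval has $\lambda$-uniformly bounded length in $\frac{dt}{t}$, followed by comparison of energies at the endpoints via Lemma~\ref{lem:einstein_energy_high_coercive} at $\tau^{-1}(\mathring{\tau})$ and by the vanishing of $G_{ip}^{\phantom{ip}jq}(t_{\lambda*};t_{\lambda*})$ at $t_{\lambda*}$. The only difference is that you spell out the elliptic bounds for $\upnu_\lambda$, $\upchi_\lambda^j$ and the change-of-variables estimate on the Gr\"onwall integral more explicitly than the paper does, which is helpful but not a different argument.
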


\begin{proof}
    The mid-frequency energy estimate \eqref{eq:einstein_mid_freq_fin} will be much more straightforward to show than either of the high- or low-frequency energy estimates, since we are working in a bounded domain $\log \tau \in [0, \log \mathring{\tau}]$, and therefore, by \eqref{eq:log_upperlower}, a bounded domain of $\log( \frac{t}{t_{\lambda*}} ) \subset [0, C \log \mathring{\tau}]$. As a result, it suffices to show a derivative estimate of the form:
    \begin{equation} \label{eq:einstein_mid_freq_der}
        \left| t \frac{d}{dt} \mathcal{E}_{\lambda, mid}(t) \right| \lesssim \mathcal{E}_{\lambda, mid}(t).
    \end{equation}

    To prove \eqref{eq:einstein_mid_freq_der}, we use the same $\mathring{g}$-norm that was introduced in Section~\ref{sub:gnorm}; then by definition of $\mathcal{E}_{\lambda, mid}$,
    \[
        \mathcal{E}_{\lambda, mid}(t) = | \hat{\upeta}_{\lambda} |_{\mathring{g}}^2  + | \hat{\upkappa}_{\lambda} |_{\mathring{g}}^2 + \upphi_{\lambda}^2 + \uppsi_{\lambda}^2.
    \]
    Further, using the same proof as Proposition~\ref{prop:einstein_high_elliptic} it is straightforward to show that $ |\upnu_{\lambda}|^2 \lesssim \mathcal{E}_{\lambda, mid}(t)$ and $ |\upchi_{\lambda}|_{\mathring{g}}^2 \lesssim t^2 \mathcal{E}_{\lambda, mid}(t)$. From this point, \eqref{eq:einstein_mid_freq_der} follows immediately upon differentiating $\mathcal{E}_{\lambda, mid}(t)$ term-by-term using \eqref{eq:upeta_evol_l}--\eqref{eq:uppsi_evol_l}, and applying Cauchy-Schwarz. 

    Using the fact that $\mathcal{E}_{\lambda, mid}(t_{\lambda*}) = \mathcal{E}_{\lambda, low}(t_{\lambda*})$, while $\mathcal{E}_{\lambda, mid}(\tau^{-1}(\mathring{\tau})) \asymp \mathcal{E}_{\lambda, high}(\tau^{-1}(\mathring{\tau}))$ by Lemma~\ref{lem:einstein_energy_high_coercive}, the final estimate \eqref{eq:einstein_high_low_freq} follows immediately from \eqref{eq:einstein_mid_freq_fin}.
\end{proof}

%auto-ignore
\section{Scattering for the linearized Einstein--scalar field equations} \label{sec:einstein_scat}

In Section~\ref{sec:einstein_scat}, we prove our scattering result for the linearized Einstein--scalar field system around Kasner spacetimes satisfying the subcriticality condition, Theorem~\ref{thm:einstein_scat}. Recall that the gauge used in Theorem~\ref{thm:einstein_scat} is a CMCTC gauge with vanishing shift, $\upchi^j = 0$, though we do permit changes of gauge associated to a \underline{time-independent} vector field $\upxi^j$ using \eqref{eq:upeta_gauge}--\eqref{eq:upupsilon_gauge}.

On the other hand, our energy estimates use various different gauges; though the low-frequency energy estimate of Section~\ref{sec:low_freq} used a CMCTC gauge, our high-frequency energy estimate of Section~\ref{sec:high_freq} crucially used a CMCSH gauge (as did the mid-frequency estimate of Section~\ref{sub:mid_freq}, though this was not as crucial). 

As a result, we instead first prove a version of Theorem~\ref{thm:einstein_scat} not in the CMCTC gauge, but in a gauge called the \emph{Constant Mean Curvature Frequency Adapted} (CMCFA) gauge which we introduce in Section~\ref{sub:adm_setup}. This reinterpreted scattering result, Theorem~\ref{thm:einstein_scat_v2}, is stated in Section~\ref{sub:einstein_scat_v2} and its proof will be in Section~\ref{sub:einstein_scat_proof_v2}.

In Section~\ref{sub:scat_conclusion}, we understand how to translate back to CMCTC gauge, and thereby complete the proofs of the remaining theorems regarding the linearized Einstein--scalar field system, that is the scattering result Theorem~\ref{thm:einstein_scat} and also the asymptotics of Theorem~\ref{thm:asymp_einstein}.

\subsection{Frequency adapted norms and gauges} \label{sub:adm_setup}

%Sections~\ref{sub:lineinstein} and \ref{sub:adm_gauge} present a fairly complete picture of the study of the linearized Einstein--scalar field system around Kasner, at least for $t > 0$. However, since we are not only concerned with the evolution of Cauchy data but also the \emph{scattering problem}, we must also understand what it means to have data for the system \eqref{eq:upeta_evol}--\eqref{eq:upkappa_sym} at the initial-time boundary $t = 0$ itself. In particular, one must understand the meaning of the constraint equations at $t=0$, and also how the gauge transformations of Section~\ref{sub:adm_gauge} affect data at $t=0$.

%Furthermore, as we can observe already from Theorem~\ref{thm:einstein_scat}, the Hilbert spaces and energy estimates we will use need to be chosen very delicately, and rely hugely on the frequency-dependent object $t_{\lambda*}$ and its associated symbol $\mathcal{T}_*$. In order to clarify its importance, we shall introduce tensor norms and gauges that very clearly involve $t_{\lambda *}$ and $\mathcal{T}_*$, in preparation for the second version of our scattering theorem, Theorem~\ref{thm:einstein_scat_v2}.

As well as the gauge issue mentioned above, we make the following comments about Theorem~\ref{thm:einstein_scat}; the idea being that these comments will inform how our second version, Theorem~\ref{thm:einstein_scat_v2}, will differ:
%The statement of Theorem~\ref{thm:einstein_scat} is somewhat ugly and cumbersome, in part due to the following reasons:
%
\begin{enumerate}[(I)]
    \item
        The variables $(\matr{\upeta}{i}{j}, \matr{\upkappa}{i}{j}, \upphi, \uppsi)$ used to describe Cauchy data at $t=1$ differ from the variables $(\matr{\upkappa}{i}{j}, \matr{\Upupsilon}{i}{j}, \uppsi, \upvarphi)$ used to describe the asymptotic data at $t = 0$.
    \item
        In retrieving the optimal regularity of \eqref{eq:asymp_metric_reg}, we require a gauge transformation associated to a vector field $\upxi^j$. (Furthermore, the regularity of $\upxi^j$ cannot be controlled particularly well.)
    \item
        The regularity statements \eqref{eq:asymp_metric_reg}--\eqref{eq:asymp_scalar_reg}, and the spaces $\mathcal{H}^s_{\infty}$, feature a correction by a term involving the symbol $\mathcal{T}_*$; in particular $\mathcal{T}_*^{-p_i+p_j}$ often acts on tensorial quantities such as $\matr{\upkappa}{i}{j}$.% which tensorial quantities such as $\matr{\upkappa}{i}{j}$ at different times.
\end{enumerate}
These issues are inevitable, in light of the energy estimates presented in Section~\ref{sec:high_freq} and Section~\ref{sec:low_freq}. This is since both the relevant gauge and the metrics $\mathring{g}_{ij}(t)$ and $\mathring{g}_{ij}(t_{\lambda*})$ used to define the energies differ between the high-frequency and the low-frequency regimes.

Thus, in Theorem~\ref{thm:einstein_scat_v2}, we first present a version of Theorem~\ref{thm:einstein_scat} that will follow more cleanly from our energy estimates. In preparation, we make a series of definitions corresponding to (I)--(III) above. Each of theses definitions will involve the symbol $\mathcal{T}_*$, see Definition~\ref{def:tstar}. %We warn that all of (A) the correct renormalized quantities, (B) the gauge, and (C) the norms appearing in the energies and Hilbert spaces will feature the symbol $\mathcal{T}_*$ in a way that reveals an extremely interesting relationship between the time slice $\Sigma_t$ and the frequency $\lambda \in \Z^D$.
We start by defining new renormalized quantities in order to deal with (I).

\begin{definition} \label{def:time_freq_renormalized}
    We define the following \emph{frequency adapted renormalized quantity} ${}^{(F)} \matr{\Upupsilon}{i}{j}$:
    \begin{equation} \label{eq:upupsilon_tilde}
        {}^{(F)} \matr{\Upupsilon}{i}{j} \coloneqq \matr{\upeta}{i}{j} + \int^{\mathcal{T}_*}_{\min\{t, \mathcal{T}_*\}} \mathring{g}_{ip} (s) \mathring{g}^{jq} (s) \, \frac{ds}{s} \cdot \matr{\upkappa}{q}{p} = \matr{\Upupsilon}{i}{j} - \int^1_{\max\{t,\mathcal{T}_*\}} \mathring{g}_{ip}(s) \mathring{g}^{jq}(s) \, \frac{ds}{s} \cdot \matr{\upkappa}{q}{p},
    \end{equation}
    as well as the following \emph{frequency adapted scalar field quantity} ${}^{(F)} \upvarphi$:
    \begin{equation} \label{eq:upvarphi_tilde}
        {}^{(F)} \upvarphi \coloneqq \upphi + \max \left \{ 0, \log\left( \frac{\mathcal{T}_*}{t} \right) \right\} \cdot \uppsi = \upvarphi + \max \{ \log( \mathcal{T}_* ), \log t \} \cdot \uppsi.
    \end{equation}
\end{definition}

\begin{remark}
    The point is that the Fourier coefficients $\matr{({}^{(F)}\Upupsilon_{\lambda})}{i}{j}$ coincide with $\matr{(\upeta_{\lambda})}{i}{j}$ in the high-frequency regime $t \geq t_{\lambda*}$, while $\matr{({}^{(F)}\Upupsilon_{\lambda})}{i}{j}$ coincides with $\matr{(\tilde{\Upupsilon}_{\lambda})}{i}{j}$ in the low-frequency regime $t \leq t_{\lambda*}$; here $\matr{(\tilde{\Upupsilon}_{\lambda})}{i}{j}$ is defined in \eqref{eq:upupsilon_tilde}. Similarly, ${}^{(F)} \upvarphi_{\lambda}$ coincides with $\upphi_{\lambda}$ at high frequency and with $\tilde{\upvarphi}_{\lambda}$ at low frequency.
\end{remark}

Next, we define a new (linearized) gauge condition used to deal with (II).

\begin{definition} \label{def:time_freq_gauge}
    We say that a solution $(\matr{\upeta}{i}{j}, \matr{\upkappa}{i}{j}, \upphi, \uppsi, \upnu, \upchi^j)$ solving the linearized Einstein--scalar field system \eqref{eq:upeta_evol}--\eqref{eq:upkappa_sym} is in a linearized \emph{Constant Mean Curvature Frequency Adapted} (CMCFA) gauge if frequency adapted renormalized quantity ${}^{(F)} \matr{{\Upupsilon}}{i}{j}$ satisfies the differential relation
    \begin{equation} \label{eq:time_freq_harmonic}
        \partial_i \tr \,\! {}^{(F)} \Upupsilon - 2 \partial_j {}^{(F)} \matr{{\Upupsilon}}{i}{j} = 0.
    \end{equation}
\end{definition}

Finally, we define a norm for $(1, 1)$ tensor fields to deal with (III). We shall often apply this to the $(1, 1)$ tensor fields ${}^{(F)} \matr{\Upupsilon}{i}{j}$, $\matr{\upkappa}{i}{j}$ and also the derivative of the shift, $\partial_i \upchi^j$, viewed as a $(1, 1)$ tensor field.

\begin{definition} \label{def:time_freq_norm}
    Let $A = \matr{A}{i}{j}$ be a $(1, 1)$ tensor field on $\mathbb{T}^D$. Define the \emph{frequency adapted $L^2$ norm} of $A$, thought to live on the boundary $\{ t = 0 \}$ of $\mathcal{M}_{Kas}$, via the following:
    \begin{equation*}
        \| \matr{A}{i}{j} \|_{L^{2, F}(\Sigma_0)}^2 \coloneqq \sum_{i, j = 1}^D \| \mathcal{T}_*^{-p_i + p_j} \matr{A}{i}{j} \|_{L^2}^2 = \mathring{g}^{ac}(\mathcal{T}_*) \mathring{g}_{bd}(\mathcal{T}_*) \cdot \matr{A}{a}{b} \matr{A}{c}{d}.
    \end{equation*}
    Equivalently, one has the frequency space definition:
    \begin{equation*}
        \| \matr{A}{i}{j} \|_{L^{2, F}(\Sigma_0)}^2 \coloneqq \sum_{\lambda \in \Z^D} \mathring{g}^{ac}(t_{\lambda*}) \mathring{g}_{bd}(t_{\lambda*}) \matr{(A_{\lambda})}{a}{b} \matr{(A_{\lambda})}{c}{d}.
    \end{equation*}
    One further defines the \emph{frequency adapted $H^s$ norm} of $A$ using:
    \begin{equation}
        \| \matr{A}{i}{j} \|_{H^{s, F}(\Sigma_0)}^2 \coloneqq \sum_{\lambda \in \Z^D} \langle \lambda \rangle^{2s} \mathring{g}^{ac}(t_{\lambda*}) \mathring{g}_{bd}(t_{\lambda*}) \matr{(A_{\lambda})}{a}{b} \matr{(A_{\lambda})}{c}{d}.
    \end{equation}
%    These norms will also be defined at $\Sigma_0 = \{t = 0\}$, at which $\max \{ t, \mathcal{T}_* \}$ is always just $\mathcal{T}_*$.
\end{definition}

\subsection{Statement of scattering for the linearized Einstein--scalar field system, v2} \label{sub:einstein_scat_v2}

\begin{theorem}[Scattering for the linearized Einstein--scalar field system in subcritical Kasner spacetimes, v2] \label{thm:einstein_scat_v2}
    Let $(\mathcal{M}_{Kas}, g_{Kas}, \phi_{Kas})$ be a (generalized) Kasner spacetime whose exponents satisfy the subcriticality condition \eqref{eq:subcritical_delta} with $\updelta > 0$. Consider solutions $(\matr{\upeta}{i}{j}, \matr{\upkappa}{i}{j}, \upphi, \uppsi, \upnu, \upchi^j)$ to the linearized Einstein--scalar field system of Proposition~\ref{prop:adm_linear} around $(\mathcal{M}_{Kas}, g_{Kas}, \phi_{Kas})$, with the solution such that the CMCFA gauge condition \eqref{eq:time_freq_harmonic} holds. Then the scattering theory is as follows:
    \begin{enumerate}[(i)]
        \item \label{item:einstein_v2_scatop} (Existence of the scattering operator)
            Let $s$ be sufficiently large. Then given Cauchy data \linebreak$(\matr{(\upeta_C)}{i}{j}, \matr{(\upkappa_C)}{i}{j}, \upphi_C, \uppsi_C) \in H^{s+1} \times H^s \times H^{s+1} \times H^s$ at $t=1$ satisfying the constraint equations \eqref{eq:hamiltonian_linear}--\eqref{eq:upkappa_sym} and gauge conditions \eqref{eq:time_freq_harmonic} and $\tr \upkappa_C = 0$, there exists a unique (classical) solution $(\matr{\upeta}{i}{j}, \matr{\upkappa}{i}{j}, \upphi, \uppsi)$ to the system \eqref{eq:upeta_evol}--\eqref{eq:upkappa_sym} satisfying $\tr \upkappa = 0$ and the CMCFA condition \eqref{eq:time_freq_harmonic}, which attains the Cauchy data:
            \[
                \matr{{}^{(F)}\Upupsilon}{i}{j}(1, \cdot) = \matr{\upeta}{i}{j}(1, \cdot) = \matr{(\upeta_C)}{i}{j}(\cdot), \quad
                \matr{\upkappa}{i}{j}(1, \cdot) = \matr{(\upkappa_C)}{i}{j}(\cdot), \quad
                {}^{(F)}{\upvarphi}(1, \cdot) = \upphi(1, \cdot) = \upphi_C (\cdot), \quad
                \uppsi(1, \cdot) = \uppsi_C (\cdot).
            \]

            Furthermore, for some $\beta > 0$ depending only on the background $(\mathcal{M}_{Kas}, g_{Kas})$, there exist unique $(1, 1)$ tensors $\matr{(\upkappa_{\infty})}{i}{j}$ and $\matr{({}^{(F)} \Upupsilon_{\infty})}{i}{j}$, and functions $\uppsi_{\infty}$ and ${}^{(F)}\upvarphi_{\infty}$ on $\mathbb{T}^D$, such that
            \begin{gather} \label{eq:asymp_upkappaupupsilon2}
                \matr{\upkappa}{i}{j}(t, \cdot) \to \matr{(\upkappa_{\infty})}{i}{j}(\cdot), \quad \matr{{}^{(F)}\Upupsilon}{i}{j}(t, \cdot) \to \matr{({}^{(F)}\Upupsilon_{\infty})}{i}{j}(\cdot) \quad \text{ in } H^{s - \beta} \text{ strongly as } t \to 0,
                \\[0.5em] \label{eq:asymp_uppsiupphi2}
                \uppsi(t, \cdot) \to \uppsi_{\infty}(\cdot), \quad {}^{(F)} \upvarphi(t, \cdot) \to {}^{(F)} \upvarphi_{\infty}(\cdot) \quad \text{ in } H^s \text{ strongly as } t \to 0.
            \end{gather}
            These limiting objects $\matr{(\upkappa_{\infty})}{i}{j}$, $\matr{(\tilde{\Upupsilon}_{\infty})}{i}{j}$, $\uppsi_{\infty}$ and $\upphi_{\infty}$ obey the asymptotic constraints \eqref{eq:hamiltonian_linear_scat}--\eqref{eq:upkappa_sym_scat} (with $\matr{\tilde{\Upupsilon}}{i}{j}$ and $\tilde{\upvarphi}$ replaced by $\matr{{}^{(F)}\Upupsilon}{i}{j}$ and ${}^{(F)}\upvarphi$ respectively, and $T$ replaced by the symbol $\mathcal{T}_*$) as well as the limiting frequency adapted gauge condition:
            \begin{equation} \label{eq:asymp_gauge2}
                \partial_i \tr \,\!  {}^{(F)} \Upupsilon_{\infty} - 2 \partial_j \matr{({}^{(F)}\Upupsilon_{\infty})}{i}{j} = 0.
            \end{equation}

            Finally, %we turn to the regularity of $(\matr{(\upkappa_{\infty})}{i}{j}, \matr{(\tilde{\Upupsilon}_{\infty})}{i}{j}, \uppsi_{\infty}, \upphi_{\infty})$. 
            for the frequency adapted Sobolev spaces $H^{s, F}(\Sigma_0)$ in Definition~\ref{def:time_freq_norm}, one has
            \begin{equation} \label{eq:asymp_reg2}
                \matr{(\upkappa_{\infty})}{i}{j}, \matr{({}^{(F)} \Upupsilon_{\infty})}{i}{j} \in H^{s + \frac{1}{2}, F}(\Sigma_0), \qquad \uppsi_{\infty}, {}^{(F)} \upvarphi_{\infty} \in H^{s + \frac{1}{2}}.
            \end{equation}

        \item \label{item:einstein_v2_asympcomp} (Asymptotic completeness)
            Let $s$ be sufficiently large, and let $\matr{(\upkappa_{\infty})}{i}{j}$, $\matr{({}^{(F)}\Upupsilon_{\infty})}{i}{j}$ be $(1, 1)$ tensor fields and $\uppsi_{\infty}$, ${}^{(F)} \upvarphi_{\infty}$ be functions on $\mathbb{T}^D$ verifying the constraints \eqref{eq:hamiltonian_linear_scat}--\eqref{eq:upkappa_sym_scat} (modified as in (i)), with regularity as in \eqref{eq:asymp_reg2}, and obeying the gauge condition \eqref{eq:asymp_gauge2}. Then there exists a unique (classical) solution $(\matr{\upeta}{i}{j}, \matr{\upkappa}{i}{j}, \upphi, \uppsi, \upnu, \upchi^j)$ to the system \eqref{eq:upeta_evol}--\eqref{eq:upkappa_sym} satisfying $\tr \upkappa = 0$ and the CMCFA condition \eqref{eq:time_freq_harmonic}, such that as $t \to 0$, the asymptotics \eqref{eq:asymp_upkappaupupsilon2}--\eqref{eq:asymp_uppsiupphi2} hold.

            The regularity of the solution will be as follows:
            \begin{gather*}
                (\matr{\upeta}{i}{j}, \matr{\upkappa}{i}{j}) \in C^0((0, + \infty), H^{s+1} \times H^s) \cap C^1((0, + \infty), H^s \times H^{s-1}), \\[0.4em]
                ( \upphi, \uppsi ) \in C^0((0, + \infty), H^{s+1} \times H^s) \cap C^1((0, + \infty), H^s \times H^{s-1}), \\[0.4em]
                ( \upnu, \upchi^j ) \in C^0((0, + \infty), H^{s+2} \times H^{s+2}).
            \end{gather*}

        \item \label{item:einstein_v2_scatiso} (Scattering isomorphism)
            For $s \in \R$, we define the following Hilbert spaces together with their norms:
            \begin{gather} 
                \mathcal{H}^s_C = \left \{ \left( \matr{\upeta}{i}{j}, \matr{\upkappa}{i}{j}, \upphi, \uppsi \right): \matr{\upeta}{i}{j} \in H^{s+1}, \matr{\upkappa}{i}{j} \in H^s, \upphi \in H^{s+1}, \uppsi \in H^s \right \},
                \\[0.5em]
                \left \| \left( \matr{\upeta}{i}{j}, \matr{\upkappa}{i}{j}, \upphi, \uppsi \right) \right \|_{\mathcal{H}^s_C}^2 \coloneqq \| \matr{\upeta}{i}{j} \|_{H^{s+1}}^2 + \| \matr{\upkappa}{i}{j} \|_{H^s}^2 + \| \upphi \|_{H^{s+1}}^2 + \| \uppsi \|_{H^s}^2,
                \\[0.5em]
                {}^{(F)} \mathcal{H}^s_{\infty} = \left \{ \left( \matr{\upkappa}{i}{j}, \matr{\Upupsilon}{i}{j}, \uppsi, \upvarphi \right): \matr{\upkappa}{i}{j} \in H^{s+\frac{1}{2}, F}(\Sigma_0), \matr{\Upupsilon}{i}{j} \in H^{s + \frac{1}{2}, F}(\Sigma_0), 
                \uppsi \in H^{s+\frac{1}{2}}, \upvarphi \in H^{s + \frac{1}{2}} \right \},
                \\[0.5em]
                \left \| \left( \matr{\upkappa}{i}{j}, \matr{\Upupsilon}{i}{j}, \uppsi, \upvarphi \right) \right \|_{{}^{(F)}\mathcal{H}^s_{\infty}}^2 = 
                \| \matr{\upkappa}{i}{j} \|_{H^{s + \frac{1}{2}, F}(\Sigma_0)}^2 + \| \matr{\Upupsilon}{i}{j} \|_{H^{s + \frac{1}{2}, F}(\Sigma_0)}^2
                + \| \uppsi \|_{H^{s+\frac{1}{2}}}^2 + \| \upvarphi \|_{H^{s+ \frac{1}{2}}}^2.
            \end{gather}
            Further, let $\mathcal{H}_{C, c}^s$ be the subspace of $\mathcal{H}_C$ obeying the constraint equations \eqref{eq:hamiltonian_linear}--\eqref{eq:upkappa_sym} as well as the gauge conditions $\tr \upkappa = 0$ and \eqref{eq:time_freq_harmonic}, and ${}^{(F)} \mathcal{H}_{\infty, c}^s$ be the subspace of ${}^{(F)}\mathcal{H}^s_{\infty}$ obeying the asymptotic constraints \eqref{eq:hamiltonian_linear_scat_l}--\eqref{eq:upkappa_sym_scat_l}, modified as in (i), and the gauge conditions $\tr \upkappa = 0$ and \eqref{eq:asymp_gauge2}.

            Then the map ${}^{(F)} \mathcal{S}_{\downarrow}$, which is defined to map $(\matr{(\upeta_C)}{i}{j}, \matr{(\upkappa_C)}{i}{j}, \upphi_C, \uppsi_C)$ from (\ref{item:einstein_v2_scatop}) to the limiting quantities $(\matr{(\upkappa_{\infty})}{i}{j}, \matr{({}^{(F)}\Upupsilon_{\infty})}{i}{j}, \uppsi_{\infty}, {}^{(F)} \upvarphi_{\infty})$, may be extended to a Hilbert space isomorphism ${}^{(F)} \mathcal{S}_{\downarrow}: \mathcal{H}_{C, c}^s \to {}^{(F)} \mathcal{H}_{\infty, c}^s$, whose inverse ${}^{(F)} \mathcal{S}_{\uparrow} = {}^{(F)} {\mathcal{S}}_{\downarrow}^{-1}$ is exactly given by (a suitable extension of) the map taking $(\matr{(\upkappa_{\infty})}{i}{j}, \matr{({}^{(F)} {\Upupsilon}_{\infty})}{i}{j}, \uppsi_{\infty}, {}^{(F)} {\upvarphi}_{\infty})$ in (\ref{item:einstein_v2_asympcomp}) to $(\matr{\upeta}{i}{j}(1, \cdot), \matr{\upkappa}{i}{j}(1, \cdot), \upphi(1, \cdot), \uppsi(1, \cdot))$.
%
%            Finally, let $- t^2 \Delta_{\Sigma_t} = - \sum_{i} t^{2 - 2p_i} \partial_{i}^2$ be $t^2$ times the Laplace-Beltrami operator on $\Sigma_t \subset \mathcal{M}_{Kas}$. Since $1 - t^2 \Delta_{\Sigma_t}$ is invertible, for $\alpha \in \R$ we may define the symbol $(1 - t^2 \Delta_{\Sigma_t})^{\alpha}$ in the usual manner. Then for a solution to \eqref{eq:upeta_evol}--\eqref{eq:upkappa_sym} arising from either (\ref{item:einstein_v2_scatop}) or (\ref{item:einstein_v2_asympcomp}), one has for any $t > 0$:
%            \begin{multline} \label{eq:einstein_v2_fixedtime}
%                \| (1 - t^2 \Delta_{\Sigma_t})^{\frac{1}{4}} \matr{{}^{(F)} \Upupsilon}{i}{j} \|_{H^{s + \frac{1}{2}, F}(\Sigma_t)} + 
%                \| (1 - t^2 \Delta_{\Sigma_t})^{-\frac{1}{4}} \matr{\upkappa}{i}{j} \|_{H^{s + \frac{1}{2}, F}(\Sigma_t)} + 
%                \| (1 - t^2 \Delta_{\Sigma_t})^{\frac{1}{4}} \partial_i \upchi^j \|_{H^{s + \frac{1}{2}, F}(\Sigma_t)} 
%                \\[0.3em] 
%                + \| (1 - t^2 \Delta_{\Sigma_t} )^{\frac{1}{4}} {}^{(F)} \upvarphi \|_{H^{s + \frac{1}{2}}}
%                + \| (1 - t^2 \Delta_{\Sigma_t} )^{-\frac{1}{4}} \uppsi \|_{H^{s + \frac{1}{2}}}
%                \\[0.3em]
%                \lesssim \| ( \matr{(\upkappa_{\infty})}{i}{j}, \matr{({}^{(F)}\Upupsilon_{\infty})}{i}{j}, \uppsi_{\infty}, {}^{(F)}\upvarphi_{\infty}) \|_{{}^{(F)} \mathcal{H}^s_{\infty}}.
%            \end{multline}
    \end{enumerate}
\end{theorem}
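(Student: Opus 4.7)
The plan is to mirror the Fourier-mode ODE analysis used for Theorem~\ref{thm:wave_scat}, piecing together the high-frequency estimate (Proposition~\ref{prop:einstein_high_freq}), the mid-frequency estimate (Proposition~\ref{prop:mid_freq}), and the low-frequency estimate (Proposition~\ref{prop:einstein_low_freq}), together with the convergence to asymptotic data recorded in Proposition~\ref{prop:einstein_low_freq_scat}. I would first establish each of (\ref{item:einstein_v2_scatop}) and (\ref{item:einstein_v2_asympcomp}) for smooth data via a priori estimates, and then obtain the full statement of (\ref{item:einstein_v2_scatiso}), as well as the Sobolev-class versions of (\ref{item:einstein_v2_scatop}) and (\ref{item:einstein_v2_asympcomp}), via a density argument in $\mathcal{H}^s_{C,c}$ and ${}^{(F)}\mathcal{H}^s_{\infty,c}$ analogous to Section~\ref{sub:wavescat_density}.

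For (\ref{item:einstein_v2_scatop}), given smooth Cauchy data at $t=1$ satisfying all constraints and the CMCFA condition~\eqref{eq:time_freq_harmonic}, invoke Proposition~\ref{prop:adm_lwp} to obtain a unique smooth global solution in a canonical gauge, then use Lemma~\ref{lem:diffeo} to transform into CMCFA gauge. After Fourier decomposition, observe that the $\lambda$-projection of~\eqref{eq:time_freq_harmonic} reduces exactly to the linearized spatially harmonic condition~\eqref{eq:spatiallyharmonic_linear_l} in the high-frequency regime $t \geq t_{\lambda*}$, so Propositions~\ref{prop:einstein_high_freq} and~\ref{prop:mid_freq} apply directly and yield $\mathcal{E}_{\lambda, high}(1) \asymp \mathcal{E}_{\lambda, low}(t_{\lambda*})$ via~\eqref{eq:einstein_high_low_freq}. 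To enter the low-frequency regime $0 < t \leq t_{\lambda*}$, I would apply a mode-dependent gauge transformation (Lemma~\ref{lem:diffeo}(ii)) to reach a CMCTC gauge for that mode; by Lemma~\ref{lem:upupsilon_gauge} with $T = t_{\lambda*}$ this transformation preserves the $\lambda$-projection of the CMCFA condition, so Proposition~\ref{prop:einstein_low_freq} applies unchanged and, combined with Proposition~\ref{prop:einstein_low_freq_scat}(i), produces asymptotic data bounded by $(\mathcal{E}_{\lambda, low}(t_{\lambda*}))^{1/2}$. Summing over $\lambda$ and unwrapping Definition~\ref{def:time_freq_norm} yields~\eqref{eq:asymp_reg2}; the strong convergence statements~\eqref{eq:asymp_upkappaupupsilon2}--\eqref{eq:asymp_uppsiupphi2} (with a small loss $\beta > 0$) then follow from splitting the Fourier sum at $\tau_\lambda(t) = 1$ and mimicking the proof of Proposition~\ref{prop:wavescat_10}. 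The asymptotic constraints on the limits come from Lemmas~\ref{lem:scattering_constraints} and~\ref{lem:scattering_constraints_l}, while~\eqref{eq:asymp_gauge2} is the $t \to 0$ limit of~\eqref{eq:time_freq_harmonic}.

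For (\ref{item:einstein_v2_asympcomp}), starting from smooth asymptotic data in ${}^{(F)}\mathcal{H}^s_{\infty, c}$, I would decompose in Fourier and, for each $\lambda$, apply Proposition~\ref{prop:einstein_low_freq_scat}(ii) in a mode-dependent CMCTC gauge to launch a solution for $0 < t \leq t_{\lambda*}$. Then propagate into the high-frequency regime via Propositions~\ref{prop:mid_freq} and~\ref{prop:einstein_high_freq}, changing gauge at $t = t_{\lambda*}$ via Lemma~\ref{lem:diffeo} (whose compatibility with the $\lambda$-projection of CMCFA is again guaranteed by Lemma~\ref{lem:upupsilon_gauge}). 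Summing over modes using the coercivity statements~\eqref{eq:geometry_energy_high_coercive}--\eqref{eq:matter_energy_high_coercive} and $\tau_\lambda(1) \asymp \langle \lambda \rangle$ produces the claimed Sobolev regularity of the Cauchy data at $t = 1$. The bounded linear maps ${}^{(F)}\mathcal{S}_\downarrow$ and ${}^{(F)}\mathcal{S}_\uparrow$ from (\ref{item:einstein_v2_scatop}) and (\ref{item:einstein_v2_asympcomp}) obey two-sided operator bounds by construction, so density in the two Hilbert spaces gives the isomorphism claimed in (\ref{item:einstein_v2_scatiso}).

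The main obstacle I anticipate is the frequency-dependent gauge matching at $t = t_{\lambda*}$. One must verify that the shift vector $\upchi^j$ produced in each regime is consistent with the shift prescribed by the CMCFA condition, and that the Lemma~\ref{lem:diffeo} gauge transformations remain harmless at the level of both the energies and the asymptotic constraints. A secondary technical point is the propagation of the asymptotic constraints from $t = 0$ in (\ref{item:einstein_v2_asympcomp}), which would require a constraint-propagation argument (Fourier-mode by Fourier-mode) parallel to the one used for Cauchy data at $t = 1$ but carried out towards the singularity, together with verifying that the subcritical loss factor $(t/t_{\lambda*})^\updelta$ appearing in Proposition~\ref{prop:einstein_low_der} is compatible with the algebraic nature of the constraints.
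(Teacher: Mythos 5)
Your outline mirrors the paper's proof closely: existence and a priori bounds for smooth data (Propositions~\ref{prop:einsteinscat_10} and \ref{prop:einsteinscat_01}), combining the high-/mid-/low-frequency estimates mode by mode with a gauge transformation at $t = t_{\lambda*}$, followed by density. Two points you flag as ``obstacles'' are, however, genuine gaps that need to be closed, not just noted.

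First, the gauge matching: your claim that Lemma~\ref{lem:diffeo}(ii) combined with Lemma~\ref{lem:upupsilon_gauge} with $T = t_{\lambda*}$ ``preserves the $\lambda$-projection of the CMCFA condition'' is not right as stated --- transforming to a CMCTC gauge destroys the CMCFA condition. What the paper actually does (and what your argument needs) is to exhibit the explicit gauge vector field $\upxi^j_{\lambda}$ in \eqref{eq:upxi_low} that takes the auxiliary CMCTC solution to the CMCFA one, and then use the derivative bounds from Proposition~\ref{prop:einstein_low_der} to prove the quantitative estimates \eqref{eq:low_freq_upxi} on $\upxi^j_{\lambda}$ and $t\partial_t \upxi^j_{\lambda}$, so that both the low-frequency energy and its $t$-derivative are comparable across the two gauges. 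Without these controls one cannot conclude that $\mathcal{E}_{\lambda, low}$ applies unchanged to the CMCFA quantities, nor that the rate of convergence in Proposition~\ref{prop:einstein_low_freq_scat}(i) survives the transformation.

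Second, uniqueness in asymptotic completeness. You never address why the solution constructed in (\ref{item:einstein_v2_asympcomp}) is the \emph{unique} CMCFA solution attaining the prescribed asymptotics; Proposition~\ref{prop:einstein_low_freq_scat}(ii) only gives CMCTC uniqueness. The paper's argument (end of proof of Proposition~\ref{prop:einsteinscat_01}) is substantive: one first shows any CMCFA solution with trivial asymptotic data has $|\upchi^j_{\lambda}(t)| \lesssim t^{\updelta}$, so the vector field ${}^{(T)}\upxi^j_{\lambda}(t) = \int_0^t \upchi^j_{\lambda}\,\tfrac{ds}{s}$ converges and vanishes at $t=0$; transforming by it yields a CMCTC solution with trivial asymptotic data, which vanishes; and then one uses the CMCFA condition~\eqref{eq:time_freq_harmonic} to force ${}^{(T)}\upxi^j_{\lambda} = 0$ and hence that the original solution is trivial. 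Similarly, the ``secondary technical point'' you mention --- propagation of constraints from $t = 0$ --- is not automatic and requires a Fuchsian constraint-propagation argument (the paper's Proposition~\ref{prop:constraints_2}), in which one has to verify that the renormalized constraint quantities satisfy a Fuchsian ODE with $t^{\updelta}$-weight and apply the uniqueness in Lemma~\ref{lem:fuchsian}. These need to be supplied for the proof to be complete.
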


\subsection{Proof of scattering for the linearized Einstein--scalar field system, v2} \label{sub:einstein_scat_proof_v2}

As in Section~\ref{sub:wavescat_construction} for the case of the wave equation, we first assume smoothness for all our variables and derive a priori estimates between Sobolev spaces, then use the density of smooth functions to recover the full scattering theorem. As in Section~\ref{sub:wavescat_construction}, we have two propositions, providing the a priori estimates in both directions of the scattering theory. Before proceeding, we determine what happens at zero frequency.

\begin{lemma} \label{lem:einstein_zerofreq}
    Let $(\matr{\upeta}{i}{j}, \matr{\upkappa}{i}{j}, \upphi, \uppsi, \upnu, \upchi^j)$ be a solution of the linearized Einstein--scalar field system of Proposition~\ref{prop:adm_linear}, in any gauge. Then with $\matr{\Upupsilon}{i}{j}$ and $\upvarphi$ as in Definition~\ref{def:upupsilonupvarphi}, the $0$-frequency Fourier coefficients \newline $(\matr{(\upkappa_0)}{i}{j}, \matr{(\Upupsilon_0)}{i}{j}, \uppsi_0, \upvarphi_0)$ are constant, in particular for all $t > 0$:
    \begin{equation}
        \matr{(\upkappa_0)}{i}{j}(t) = \matr{(\upkappa_0)}{i}{j}(0), \quad
        \matr{(\Upupsilon_0)}{i}{j}(t) = \matr{(\Upupsilon_0)}{i}{j}(0), \quad
        \uppsi_0 (t) = \uppsi_0(1), \qquad \upvarphi_0 (t) = \upvarphi_0 (1).
    \end{equation}
\end{lemma}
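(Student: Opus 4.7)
The plan is to project the Fourier-decomposed system of Proposition~\ref{prop:einstein_linear_l} onto the zero mode $\lambda = 0$ and exploit that $\tau_0(t) \equiv 0$, so that every occurrence of $\tau^2$, $\lambda_i \lambda_j$ and $\mathrm{i} \lambda_i$ in those equations drops out. My starting point is the elliptic equation \eqref{eq:upnu_elliptic_l} at $\lambda = 0$: since its right-hand side involves only $\tau^2$ and $\lambda_i \lambda_a$, it collapses to $\upnu_0 = 0$. Note that this argument is insensitive to the gauge, since the shift enters the evolution equations only through terms of the form $\lambda_i \upchi^a_{\lambda}$, which vanish at $\lambda = 0$.

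Three of the four constancy statements are now immediate. The evolution equation \eqref{eq:upkappa_evol_l} at $\lambda = 0$ reduces to $t \partial_t \matr{(\upkappa_0)}{i}{j} = -(t \matr{\mathring{k}}{i}{j}) \upnu_0 = 0$, so $\matr{(\upkappa_0)}{i}{j}$ is $t$-independent. Likewise \eqref{eq:uppsi_evol_l} reduces to $t \partial_t \uppsi_0 = -p_{\phi} \upnu_0 = 0$, so $\uppsi_0$ is constant. Feeding this into \eqref{eq:upphi_evol_l} yields $t \partial_t \upphi_0 = \uppsi_0$, whence $\upphi_0(t) = \upphi_0(1) + \uppsi_0 \log t$ and the definition \eqref{eq:upvarphi} gives $\upvarphi_0(t) = \upphi_0(1)$, a constant.

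The main step is to show that $\matr{(\Upupsilon_0)}{i}{j}$ is also constant. Differentiating the definition \eqref{eq:upupsilon} directly and substituting the reduced evolution equation
\[
t \partial_t \matr{(\upeta_0)}{i}{j} = \matr{(\upkappa_0)}{i}{j} + 2 (p_i - p_j) \matr{(\upeta_0)}{i}{j},
\]
which comes from \eqref{eq:upeta_evol_l} at $\lambda = 0$ after using $\upnu_0 = 0$ together with $(2t \matr{\mathring{k}}{p}{j}) = -2 p_{\underline{p}} \matr{\delta}{\underline{p}}{j}$, produces
\[
t \partial_t \matr{(\Upupsilon_0)}{i}{j} = \matr{(\upkappa_0)}{i}{j} + 2 (p_i - p_j) \matr{(\upeta_0)}{i}{j} - t^{2 p_{\underline{i}} - 2 p_{\underline{j}}} \matr{(\upkappa_0)}{\underline{j}}{\underline{i}}.
\]
The claim is that this right-hand side vanishes identically, by the propagated symmetry constraints. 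Indeed, combining \eqref{eq:upeta_sym_l} (which at the zero mode forces $\matr{(\upeta_0)}{j}{i} = t^{2 p_{\underline{j}} - 2 p_{\underline{i}}} \matr{(\upeta_0)}{i}{j}$) with \eqref{eq:upkappa_sym_l} gives precisely
\[
\matr{(\upkappa_0)}{i}{j} + 2(p_i - p_j) \matr{(\upeta_0)}{i}{j}(t) = t^{2 p_{\underline{i}} - 2 p_{\underline{j}}} \matr{(\upkappa_0)}{\underline{j}}{\underline{i}}
\]
for every $t > 0$, which is exactly the identity required.

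The only mild care is in the diagonal case $p_i = p_j$, where the power of $t$ in \eqref{eq:upupsilon} degenerates into a $\log t$. In that case the same symmetry argument instead forces $\matr{(\upkappa_0)}{i}{j} = \matr{(\upkappa_0)}{j}{i}$, and the computation of $t \partial_t \matr{(\Upupsilon_0)}{i}{j}$ collapses to $\matr{(\upkappa_0)}{i}{j} - \matr{(\upkappa_0)}{j}{i} = 0$, so no further obstacle arises; the main point throughout is that the symmetry constraints \eqref{eq:upeta_sym_l}--\eqref{eq:upkappa_sym_l}, which are propagated in time, are exactly what is needed to cancel the inhomogeneity in the zero-mode ODE for $\matr{(\upeta_0)}{i}{j}$ against the integral defining $\matr{(\Upupsilon_0)}{i}{j}$.
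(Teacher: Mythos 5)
Your proof is correct and follows the same route the paper indicates (projecting Proposition~\ref{prop:einstein_linear_l} to the $\lambda=0$ mode, where the elliptic equation forces $\upnu_0 = 0$); the paper simply leaves the computation to the reader. You have correctly identified the one genuinely nontrivial step, namely that the constancy of $\matr{(\Upupsilon_0)}{i}{j}$ is not a pure zero-mode triviality but rests on the propagated symmetry constraints \eqref{eq:upeta_sym_l}--\eqref{eq:upkappa_sym_l} cancelling the $t$-dependence of $\matr{(\upeta_0)}{i}{j}$ against the integral defining $\matr{\Upupsilon}{i}{j}$.
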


\begin{proof}
    This follows immediately from Proposition~\ref{prop:einstein_linear_l} in the case that $\lambda = 0$.
\end{proof}

\subsubsection{From Cauchy data to asymptotic data}

\begin{proposition} \label{prop:einsteinscat_10}
    Let $\matr{(\upeta_C)}{i}{j}$ and $\matr{(\upkappa_C)}{i}{j}$ be smooth $(1, 1)$ tensors on $\mathbb{T}^D$, and $\upphi_C$ and $\uppsi_C$ be smooth functions on $\mathbb{T}^D$, obeying the constraint equations \eqref{eq:hamiltonian_linear}--\eqref{eq:upkappa_sym} with $t = 1$, as well as \eqref{eq:time_freq_harmonic} and $\tr \upkappa_C = 0$. Then there exists a unique\footnote{For uniqueness, one requires as always that $\upchi^j$ has zero average.} smooth solution $(\matr{\upeta}{i}{j}, \matr{\upkappa}{i}{j}, \upphi, \uppsi, \upnu, \upchi^j)$ to the system \eqref{eq:upeta_evol}--\eqref{eq:upkappa_sym} for $t > 0$, satisfying $\tr \upkappa = 0$ and the CMCFA gauge condition \eqref{eq:time_freq_harmonic}, that achieves the initial data. %(We always assume $\upchi^j$ has zero average.)

    Moreover, for some $\beta > 0$, there exist smooth $(1, 1)$-tensors $\matr{(\upkappa_{\infty})}{i}{j}$ and $\matr{({}^{(F)}\Upupsilon_{\infty})}{i}{j}$, and smooth functions $\uppsi_{\infty}$ and ${}^{(F)} \upvarphi_{\infty}$, such that for $s \in \R$ and the solution as above, the following strong convergence holds as $t \to 0$, where the frequency adapted quantities $\matr{{}^{(F)}\Upupsilon}{i}{j}$ and ${}^{(F)} \upvarphi$ are given in Definition~\ref{def:time_freq_renormalized}:
    \begin{equation} \label{eq:einsteinscat_10_conv}
        \matr{\upkappa}{i}{j}(t) \to \matr{(\upkappa_{\infty})}{i}{j}, \quad
        \matr{{}^{(F)} \Upupsilon}{i}{j}(t) \to \matr{({}^{(F)} \Upupsilon_{\infty})}{i}{j} \text{ in } H^{s- \beta}, \quad
        \uppsi(t) \to \uppsi_{\infty}, \quad {}^{(F)}\upvarphi(t) \to {}^{(F)} \upvarphi_{\infty} \text{ in } H^s.
    \end{equation}

    For the $H^{s, F}(\Sigma_0)$ norms of Definition~\ref{def:time_freq_norm}, one has
    \begin{multline} \label{eq:einsteinscat_10_bounded}
%        \| \psi_{\infty} \|_{H^{s+\frac{1}{2}}}^2 + \| \varphi_{\infty} + (\log \mathcal{T}_*) \psi_{\infty} \|_{H^{s + \frac{1}{2}}}^2 \lesssim \| \phi_C \|_{H^{s+1}}^2 + \| \psi_C \|_{H^s}^2.
        \| \matr{(\upkappa_{\infty})}{i}{j} \|_{H^{s + \frac{1}{2}, F}(\Sigma_0)}^2 + 
        \| \matr{({}^{(F)} \Upupsilon_{\infty})}{i}{j} \|_{H^{s + \frac{1}{2}, F}(\Sigma_0)}^2 +
        \| \upphi_{\infty} \|_{H^{s + \frac{1}{2}}}^2 + \| {}^{(F)} \upvarphi_{\infty} \|_{H^{s + \frac{1}{2}}}^2 \\[0.4em]
        \lesssim \| \matr{(\upeta_C)}{i}{j} \|_{H^{s+1}}^2 
        + \| \matr{(\upkappa_C)}{i}{j} \|_{H^{s}}^2 + \| \upphi_C \|_{H^{s+1}}^2 + \| \uppsi_C \|_{H^s}^2.
    \end{multline}
    Finally, the quantities $\matr{(\upkappa_{\infty})}{i}{j}$, $\matr{({}^{(F)}\Upupsilon_{\infty})}{i}{j}$, $\uppsi_{\infty}$ and ${}^{(F)} \upvarphi_{\infty}$ satisfy the constraints \eqref{eq:hamiltonian_linear_scat}--\eqref{eq:upkappa_sym_scat}, with $T = \mathcal{T}_*$.
\end{proposition}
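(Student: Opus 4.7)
I will parallel the proof of Proposition~\ref{prop:wavescat_10}: Fourier decompose, bound each mode using our high-, mid-, and low-frequency energy estimates, extract limits, and translate the bounds into Sobolev norms. The first step is to note that at $t = 1$, since $\mathcal{T}_* \leq 1$, Definition~\ref{def:time_freq_renormalized} gives $\matr{{}^{(F)}\Upupsilon}{i}{j}(1, \cdot) = \matr{\upeta}{i}{j}(1, \cdot)$ and so the CMCFA condition \eqref{eq:time_freq_harmonic} reduces to the linearized CMCSH condition \eqref{eq:spatiallyharmonic_linear}. Thus the Cauchy data already satisfies the hypotheses of Proposition~\ref{prop:adm_lwp}(i), producing a unique smooth CMCSH solution on $(0, +\infty)$. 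To promote this to a CMCFA solution I would, for each $\lambda \neq 0$, apply a mode-$\lambda$ gauge transformation in the sense of Lemma~\ref{lem:diffeo} by a vector field whose $\lambda$-coefficient vanishes for $t \geq t_{\lambda*}$ and, for $0 < t < t_{\lambda*}$, solves the $\lambda$-projection of the elliptic problem from Lemma~\ref{lem:upupsilon_gauge} with $T = t_{\lambda*}$; the resulting shift assembles into a genuine smooth vector field $\upchi^j$ on $\mathbb{T}^D$.

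For the energy estimates I would then work mode-by-mode. The zero mode is constant in time by Lemma~\ref{lem:einstein_zerofreq}. For each $\lambda \neq 0$ the argument uses three regimes. On $[\tau_\lambda^{-1}(\mathring\tau), 1]$, where CMCFA for this mode coincides with CMCSH, Proposition~\ref{prop:einstein_high_freq} yields $\mathcal{E}_{\lambda, high}(t) \asymp \mathcal{E}_{\lambda, high}(1)$, and Lemma~\ref{lem:einstein_energy_high_coercive} gives
\[
    \mathcal{E}_{\lambda, high}(1) \asymp \langle\lambda\rangle^{-1} \Big(|\matr{((\upkappa_C)_\lambda)}{i}{j}|_{\mathring{g}(1)}^2 + (\uppsi_C)_\lambda^2\Big) + \langle\lambda\rangle \Big(|\matr{((\upeta_C)_\lambda)}{i}{j}|_{\mathring{g}(1)}^2 + (\upphi_C)_\lambda^2\Big).
\]
On $[t_{\lambda*}, \tau_\lambda^{-1}(\mathring\tau)]$ the mid-frequency bridge Proposition~\ref{prop:mid_freq} propagates this into $\mathcal{E}_{\lambda, low}(t_{\lambda*}) \lesssim \mathcal{E}_{\lambda, high}(1)$. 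On $(0, t_{\lambda*}]$ a further mode-$\lambda$ gauge transformation into CMCTC (trivial within a single Fourier mode) places us in the setting of Proposition~\ref{prop:einstein_low_freq}, so $\mathcal{E}_{\lambda, low}(t) \asymp \mathcal{E}_{\lambda, low}(t_{\lambda*})$. Proposition~\ref{prop:einstein_low_freq_scat}(i) then supplies mode-$\lambda$ limits at $t = 0$ with convergence rate $(t/t_{\lambda*})^{\updelta}$, and transforming back through the mode-$\lambda$ gauge map identifies these with the Fourier coefficients of the CMCFA limits $(\matr{(\upkappa_\infty)}{i}{j}, \matr{({}^{(F)}\Upupsilon_\infty)}{i}{j}, \uppsi_\infty, {}^{(F)}\upvarphi_\infty)$.

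The bound \eqref{eq:einsteinscat_10_bounded} now follows by observing that the tensor contraction in $\mathcal{E}_{\lambda, low}$ from Definition~\ref{def:lowfreq} is exactly the $\lambda$-Fourier content of the norm in Definition~\ref{def:time_freq_norm}, so multiplying $\mathcal{E}_{\lambda, low}(t_{\lambda*}) \lesssim \mathcal{E}_{\lambda, high}(1)$ by $\langle\lambda\rangle^{2s+1}$ and summing yields the bound, with the half-derivative gain automatic from the $\langle\lambda\rangle^{\pm 1}$ weights in $\mathcal{E}_{\lambda, high}(1)$. Smoothness of the limits follows by letting $s$ be arbitrary. Strong convergence in \eqref{eq:einsteinscat_10_conv} is proved as in estimates \eqref{eq:wavescat_10_psipsi}--\eqref{eq:wavescat_10_phiphi}: split the norm at $\tau_\lambda(t) = 1$, bound the high-$|\lambda|$ tail by the Sobolev tail of the limit and the low-$|\lambda|$ part by the $(t/t_{\lambda*})^{\updelta}$ rate; for $\uppsi, {}^{(F)}\upvarphi$ this closes in $H^s$, while for $\matr{\upkappa}{i}{j}, \matr{{}^{(F)}\Upupsilon}{i}{j}$ the high-frequency contribution (where ${}^{(F)}\Upupsilon_\lambda = \upeta_\lambda$ has not yet settled) costs a factor $\tau_\lambda(t)^{1/2} \leq \langle\lambda\rangle^{1/2} t^{(1-\max p_i)/2}$ and so a small $\beta > 0$ loss in the convergence exponent is required (but no loss in the final Sobolev bound). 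The asymptotic constraints follow by assembling the mode-by-mode Lemma~\ref{lem:scattering_constraints_l} statements with $T = t_{\lambda*}$, which in the symbol calculus is exactly $T = \mathcal{T}_*$. The principal technical obstacle is Step 1: verifying that the mode-by-mode gauge-transition vector fields assemble into a smooth shift on $\mathbb{T}^D$ with the claimed regularity uniformly in $\lambda$; this hinges on the elliptic operator in Lemma~\ref{lem:upupsilon_gauge} being smoothing in the low-frequency regime $\tau_\lambda \leq 1$ where the transition occurs, so that standard Sobolev bounds on the Cauchy data are preserved under the transformation.
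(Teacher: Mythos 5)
Your proposal follows essentially the same plan as the paper, with one small (and actually cleaner) variant at the start: you observe that the CMCFA gauge condition at $t=1$ collapses to the spatially harmonic condition \eqref{eq:spatiallyharmonic_linear} and hence that the Cauchy data is admissible for Proposition~\ref{prop:adm_lwp}(i), producing a CMCSH solution directly. The paper instead applies Proposition~\ref{prop:adm_lwp}(ii) with $\upchi^j_\circ = 0$ to get a CMCTC solution and then passes to CMCFA. Both starting points end at the same CMCFA solution by uniqueness, so this is a cosmetic choice.

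The genuine gap is in the low-frequency step, where you assert that the passage from CMCFA to CMCTC is \emph{trivial within a single Fourier mode}. It is not. At mode $\lambda$, in the regime $0 < t \leq t_{\lambda*}$, the CMCFA solution differs from a CMCTC solution by a nonzero, time-dependent gauge vector $\upxi^j_\lambda(t)$, and by \eqref{eq:upkappa_diffeo} and \eqref{eq:upupsilon_diffeo} the transformed variables $\matr{(\upkappa_{\lambda})}{i}{j}$ and $\matr{(\tilde{\Upupsilon}_{\lambda})}{i}{j}$ differ from their CMCTC counterparts by terms of the form $\lambda_i \upxi^j_\lambda$. Consequently, Proposition~\ref{prop:einstein_low_freq} does not directly give $\mathcal{E}_{\lambda, low}(t) \asymp \mathcal{E}_{\lambda, low}(t_{\lambda*})$ for the CMCFA solution; it gives it only for the CMCTC solution, and transferring the estimate requires the quantitative bound
\[
    |t_{\lambda*}^{p_j - 1} \upxi^j_\lambda| \lesssim \mathcal{E}_{\lambda, low}^{1/2}(t_{\lambda*}), \qquad
    \left| t\frac{d}{dt}\big(t_{\lambda*}^{p_j - 1}\upxi^j_\lambda\big)\right| \lesssim \left(\frac{t}{t_{\lambda*}}\right)^{\updelta} \mathcal{E}_{\lambda, low}^{1/2}(t_{\lambda*}),
\]
so that $\lambda_i \upxi^j_\lambda$ is controlled by the same energy and with the same $t_{\lambda*}$ weights as $\matr{(\upkappa_\lambda)}{i}{j}$ and $\matr{(\tilde{\Upupsilon}_\lambda)}{i}{j}$. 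This is not automatic; it uses the explicit solvability of the mode-$\lambda$ elliptic equation of Lemma~\ref{lem:upupsilon_gauge} (which at $\tau = 1$ becomes multiplication by $t_{\lambda*}^2 \mathring{g}^{jk}(t_{\lambda*})$) together with Propositions~\ref{prop:einstein_low_freq} and~\ref{prop:einstein_low_der}. The ``principal technical obstacle'' you flag at the end — assembling the mode-by-mode gauge transitions into a genuine vector field with uniform Sobolev control — is resolved by exactly these pointwise mode-$\lambda$ bounds; the correct intuition is less that the elliptic operator is ``smoothing'' and more that its explicit inverse at the transition scale carries precisely the $t_{\lambda*}$-weights needed to match the frequency-adapted norm of Definition~\ref{def:time_freq_norm}. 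Without this step your argument does not close, and once it is in place both the energy bound~\eqref{eq:einsteinscat_10_bounded} and the smoothness of the limiting shift follow.

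A smaller inaccuracy: your stated coercivity of $\mathcal{E}_{\lambda, high}(1)$ in terms of the Cauchy data requires $\tau_\lambda(1) \geq \mathring\tau$, which fails for finitely many small $\lambda$; for those modes one must invoke the mid-frequency energy $\mathcal{E}_{\lambda, mid}(1)$ at $t=1$ instead (the paper packages this as $\mathcal{E}_{\lambda, hm}(1)$). This is cosmetic, but worth keeping track of, since the coercivity lemma is only stated in the range $\tau \geq \mathring\tau$.
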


\begin{proof}
    We start with existence and uniqueness of a smooth solution for $t > 0$. For this purpose, we firstly apply Proposition~\ref{prop:adm_lwp}(ii) with $\upchi^j_{\circ} =0$, which gives us a unique smooth solution $(\matr{\tilde{\upeta}}{i}{j}, \matr{\tilde{\upkappa}}{i}{j}, \upphi, \uppsi, \upnu)$ in the CMCTC gauge (i.e.~with vanishing $\upchi^j$). 

    However, we would like existence and uniqueness in the CMCFA gauge instead. In light of Lemma~\ref{lem:diffeo} and Lemma~\ref{lem:upupsilon_gauge}, this may be achieved via the change of gauge associated to the vector field $\upxi^j$, with
    \begin{equation} \label{eq:upxi_tstar}
        \mathcal{T}_M^2 \mathring{g}^{ab} (\mathcal{T}_M) \partial_a \partial_b \upxi^j = \mathcal{T}_M^2 \mathring{g}^{jk} (\mathcal{T}_M) ( \partial_k \tr \tilde{\Upupsilon} - 2 \partial_{\ell} \matr{\tilde{\Upupsilon}}{k}{\ell} ), \quad \text{ where } \mathcal{T}_M = \max \{t, \mathcal{T}_* \}.
    \end{equation}
    The symbol $\mathcal{T}_M = \max \{t, \mathcal{T}_*\}$ arises due to the definition of ${}^{(F)}\matr{\Upupsilon}{i}{j}$. %In fact noting that the symbol $\mathcal{T}_*^2 \mathring{g}^{ab}(\mathcal{T}_*) \partial_a \partial_b$ is equal to $1$ (at least for $\upxi^j$ with zero average), we can alternatively rewrite this as
%    \begin{equation} \label{eq:upxi_tstar}
%        \partial_i \upxi^j = \mathcal{T}_*^2 \mathring{g}^{jk} (\mathcal{T}_*) ( \partial_i \partial_k \tr \tilde{\Upupsilon} - 2 \partial_i \partial_{\ell} \matr{\tilde{\Upupsilon}}{k}{\ell} ).
%    \end{equation}
    Using this change of gauge, one has existence in the CMCFA gauge. Uniqueness also follows by a similar argument to uniqueness in Proposition~\ref{prop:adm_lwp}(b) -- essentially reverse this change of gauge and use uniqueness in CMCTC.

    Given our solution, we now take the Fourier decomposition. From this point, for $\lambda \in \Z^D \setminus \{ 0 \}$, we shall always work in the variables $(\matr{(\upkappa_{\lambda})}{i}{j}, \matr{({}^{(F)}\Upupsilon_{\lambda})}{i}{j}, \uppsi_{\lambda}, {}^{(F)} \upvarphi_{\lambda}, \upnu_{\lambda}, \upchi^j_{\lambda})$, and that by definition of the CMCFA gauge, we have the following description in the high-frequency regime $t_{\lambda*} \geq 1$ and the low-frequency regime $0 < t_{\lambda*} \leq 1$: for the high-frequency (and mid-frequency) regime $t_{\lambda*} \geq 1$,
    \[
        (\matr{(\upkappa_{\lambda})}{i}{j}, \matr{({}^{(F)}\Upupsilon_{\lambda})}{i}{j}, \uppsi_{\lambda}, {}^{(F)} \upvarphi_{\lambda}, \upnu_{\lambda}, \upchi^j_{\lambda}) = 
        (\matr{(\hat{\upkappa}_{\lambda})}{i}{j}, \matr{(\hat{\upeta}_{\lambda})}{i}{j}, \uppsi_{\lambda}, \upphi_{\lambda}, \upnu_{\lambda}, \upchi^j_{\lambda}) 
    \]
    and the $\lambda$-Fourier projections in this range are in the CMCSH gauge, i.e.~they satisfy \eqref{eq:spatiallyharmonic_linear_l}. This means that we can apply our high-frequency energy estimates of Section~\ref{sec:high_freq}.

    On the other hand, in the low-frequency regime $0 < t_{\lambda*} \leq 1$, we have
    \[
        (\matr{(\upkappa_{\lambda})}{i}{j}, \matr{({}^{(F)}\Upupsilon_{\lambda})}{i}{j}, \uppsi_{\lambda}, {}^{(F)} \upvarphi_{\lambda}, \upnu_{\lambda}, \upchi^j_{\lambda}) = 
        (\matr{(\upkappa_{\lambda})}{i}{j}, \matr{(\tilde{\Upupsilon}_{\lambda})}{i}{j}, \uppsi_{\lambda}, \tilde{\upvarphi}_{\lambda}, \upnu_{\lambda}, \upchi^j_{\lambda}),
    \]
    with $\matr{(\tilde{\Upupsilon}_{\lambda})}{i}{j}$ and $\tilde{\upvarphi}_{\lambda}$ as in \eqref{eq:upupsilon2} and \eqref{eq:upvarphi2}. Here, we do not quite have $\upchi^j_{\lambda} = 0$ as in the CMCTC gauge used in Section~\ref{sec:low_freq}, and instead satisfy the CMCFA gauge condition $2 \lambda_j {}^{(F)}\matr{\Upupsilon}{i}{j} = \lambda_i \tr \,\! {}^{(F)} \Upupsilon$. Nevertheless, as we see shortly it is not a major difficulty to go back and forth from CMCFA gauge to CMCTC gauge in the low-frequency regime, and one may still apply the low-frequency energy estimates of Section~\ref{sec:low_freq}.

    We first relate our energies to the initial data $(\matr{(\upeta_C)}{i}{j}, \matr{(\upkappa_C)}{i}{j}, \uppsi_C, \upphi_C)$. Splitting into high-frequencies and mid-frequences depending on $\tau_{\lambda}(1)$, and using Lemma~\ref{lem:einstein_energy_high_coercive} in the high-frequency case, we relate the energies to Sobolev norms of initial data:
    \begin{multline} \label{eq:einsteinscat_10_data}
        |\matr{(\upkappa_0)}{i}{j}|^2 + |\matr{({}^{(F)} \Upupsilon_0)}{i}{j}|^2 + \uppsi_0^2 + {}^{(F)} \upvarphi_0^2 + \sum_{\substack{\lambda \in \mathbb{Z}^D \setminus \{0\} \\ \tau_{\lambda}(1) < \mathring{\tau}}} \langle \lambda \rangle^{2s + 1} \mathcal{E}_{\lambda, mid}(1) + \sum_{\substack{\lambda \in \mathbb{Z}^D \setminus \{0\} \\ \tau_{\lambda}(1) \geq \mathring{\tau}} } \langle \lambda \rangle^{2s+1} \mathcal{E}_{\lambda, high}(1) \\[0.3em]
        \asymp \| \matr{(\upeta_C)}{i}{j} \|_{H^{s + 1}}^2 + \| \matr{(\upkappa_C)}{i}{j} \|_{H^s}^2 
        + \| \upphi_C \|_{H^{s+1}}^2 + \| \uppsi_C \|_{H^s}^2.
    \end{multline}
    Applying the high-frequency energy estimate Proposition~\ref{prop:einstein_high_freq}, as well as the mid-frequency energy estimate Proposition~\ref{prop:einstein_high_freq} (which both use the CMCSH as in our case for $t \geq t_{\lambda*}$), we deduce that
    \begin{multline} \label{eq:einstein_low_freq_scat_tstar}
        |\matr{(\upkappa_0)}{i}{j}|^2 + |\matr{({}^{(F)} \Upupsilon_0)}{i}{j}|^2 + \uppsi_0^2 + {}^{(F)} \upvarphi_0^2 + \sum_{\lambda \in \mathbb{Z}^D \setminus \{0\}} \langle \lambda \rangle^{2s+1} \mathcal{E}_{\lambda, low}(t_{\lambda*}) \\[0.4em]
        \lesssim \| \matr{(\upeta_C)}{i}{j} \|_{H^{s + 1}}^2 + \| \matr{(\upkappa_C)}{i}{j} \|_{H^s}^2 
        + \| \upphi_C \|_{H^{s+1}}^2 + \| \uppsi_C \|_{H^s}^2.
    \end{multline}

    We next apply the low-frequency energy estimate Proposition~\ref{prop:einstein_low_freq}. However, since this proposition assumes the CMCTC gauge, we need to find a way to translate the energy estimate. Our method is as follows: we first generate another solution $(\matr{({}^{(T)}\upkappa_{\lambda})}{i}{j}, \matr{({}^{(T)} \tilde{\Upupsilon}_{\lambda})}{i}{j}, \uppsi_{\lambda}, \tilde{\upvarphi}_{\lambda}, \upnu_{\lambda})$ which is in the CMCTC gauge and which \emph{agrees with our solution} at $t = t_{\lambda*}$, and thus to which Proposition~\ref{prop:einstein_low_freq} and Proposition~\ref{prop:einstein_low_der} apply, then transform into CMCFA gauge using some change of gauge vector field $\upxi_{\lambda}^j$.

    This change of gauge vector field agrees exactly with \eqref{eq:upxi_tstar} above. Furthermore, projecting \eqref{eq:upxi_tstar} onto low frequencies with $0 < t \leq t_{\lambda*}$ where by definition, $\tau^2(t_{\lambda*}) = t_{\lambda*}^2 \mathring{g}^{ab}(t_{\lambda*}) \lambda_a \lambda_b = 1$, $\upxi_{\lambda}^j$ is exactly given by
    \begin{equation} \label{eq:upxi_low}
        \upxi^j_{\lambda} = t_{\lambda*}^2 \mathring{g}^{jk}(t_{\lambda*}) [ \lambda_k \tr \,\! {}^{(T)} \tilde{\Upupsilon}_{\lambda} - 2 \lambda_{\ell} \matr{({}^{(T)}\tilde{\Upupsilon}_{\lambda})}{k}{\ell} ].
    \end{equation}
    Upon checking the $t_{\lambda*}$ weights carefully, Propositions~\ref{prop:einstein_low_freq} and \ref{prop:einstein_low_der} tell us that
    \begin{equation} \label{eq:low_freq_upxi}
        | t_{\lambda*}^{p_j - 1} \upxi^j_{\lambda} | \lesssim \mathcal{E}_{\lambda, low}^{1/2}(t_{\lambda*}), \qquad \left| t \frac{t}{dt} (t_{\lambda*}^{p_j - 1} \upxi^j_{\lambda}) \right| \lesssim \left( \frac{t}{t_{\lambda*}} \right)^{\updelta} \mathcal{E}_{\lambda, low}^{1/2}(t_{\lambda*}).
    \end{equation}
    
    Next, the change of gauge vector field $\upxi^j_{\lambda}$ relates the two gauges by (see \eqref{eq:upkappa_diffeo}, \eqref{eq:upupsilon_diffeo} and \eqref{eq:upchi_diffeo}):
    \begin{gather*}
        \matr{(\upkappa_{\lambda})}{i}{j} = \matr{({}^{(T)}\upkappa_{\lambda})}{i}{j} - \mathrm{i} (t \matr{\mathring{k}}{i}{p}) \lambda_p \upxi^j_{\lambda} + \mathrm{i} (t \matr{\mathring{k}}{p}{j}) \lambda_i \upxi^p_{\lambda},
        \\[0.5em]
        \matr{({}^{(F)} \Upupsilon_{\lambda})}{i}{j} = \matr{({}^{(T)} \tilde{\Upupsilon}_{\lambda})}{i}{j} - \frac{\mathrm{i}}{2} \lambda_i \upxi_{\lambda}^j - \frac{\mathrm{i}}{2} \mathring{g}_{ip}(t_{\lambda*}) \mathring{g}^{jq}(t_{\lambda*}) \lambda_q \upxi_{\lambda}^p,
        \\[0.5em]
        \upchi^j_{\lambda} = - t \frac{t}{dt} \upxi^j_{\lambda}.
    \end{gather*}
    By \eqref{eq:low_freq_upxi}, $\lambda_i \upxi_{\lambda}^j$ satisfies the same estimates as either of $\matr{({}^{(T)}\upkappa_{\lambda})}{i}{j}$ or $\matr{({}^{(T)} \tilde{\Upupsilon}_{\lambda})}{i}{j}$. So all the low-frequency energy estimates of Section~\ref{sec:low_freq} apply equally well to $(\matr{(\upkappa_{\lambda})}{i}{j}, \matr{({}^{((F)} \Upupsilon_{\lambda})}{i}{j})$ in our CMCFA gauge. 

    In particular, from the second estimate in \eqref{eq:low_freq_upxi} we deduce an analogue of Proposition~\ref{prop:einstein_low_freq_scat}(i), in other words we have unique tensors $\matr{((\upkappa_{\infty})_{\lambda})}{i}{j}$ and $\matr{(({}^{(F)}\Upupsilon_{\infty})_{\lambda})}{i}{j}$, and real numbers $(\uppsi_{\infty})_{\lambda}$ and $({}^{(F)} \upvarphi_{\infty})_{\lambda}$ with the CMCFA analogue of the estimates \eqref{eq:einstein_low_freq_scat_metric}--\eqref{eq:einstein_low_freq_scat_matter}. We now simply define 
    \begin{equation*}
        \matr{(\upkappa_{\infty})}{i}{j} (x) = \matr{(\upkappa_0)}{i}{j} + \sum_{\lambda \in \Z^D \setminus \{0\}} \matr{((\upkappa_{\infty})_{\lambda})}{i}{j} \, e^{i \lambda \cdot x}, 
        \qquad
        \matr{({}^{(F)}\Upupsilon_{\infty})}{i}{j} (x) = \matr{({}^{(F)}\Upupsilon_0)}{i}{j} + \sum_{\lambda \in \Z^D \setminus \{0\}} \matr{(({}^{(F)}\Upupsilon_{\infty})_{\lambda})}{i}{j} \, e^{i \lambda \cdot x}, 
    \end{equation*}
    \begin{equation*}
        \uppsi_{\infty}(x) = \uppsi_0 + \sum_{\lambda \in \Z^D \setminus \{0\}} (\uppsi_{\infty})_{\lambda} \, e^{i \lambda \cdot x}, 
        \qquad
        {}^{(F)} \upvarphi_{\infty}(x) = {}^{(F)} \upvarphi_0 + \sum_{\lambda \in \Z^D \setminus \{0\}} ({}^{(F)} \upvarphi_{\infty})_{\lambda} \, e^{i \lambda \cdot x}.
    \end{equation*}
    The final estimate in Proposition~\ref{prop:einstein_low_freq_scat}(i), combined with \eqref{eq:einstein_low_freq_scat_tstar} and the definition of $H^{s, F}(\Sigma_0)$ then yields \eqref{eq:einsteinscat_10_bounded}. Since this is true for all $s \in \R$, and $H^{s, F}(\Sigma_0) \subset H^{s - \beta}$ for some $\beta > 0$ independent of $s$, this tells us that our limiting tensors and functions are smooth.

    It remains to show the convergence \eqref{eq:einsteinscat_10_conv}. Let us first take some $r \in \R$, and estimate
    \[
        \| \matr{\upkappa}{i}{j} - \matr{(\upkappa_{\infty})}{i}{j} \|_{H^r}^2 = \sum_{\lambda \in \Z^d} \langle \lambda \rangle^{2r} | \matr{(\upkappa_{\lambda})}{i}{j} - \matr{((\upkappa_{\infty})_{\lambda})}{i}{j} |^2.
    \]
    We estimate this by splitting into low and high frequencies, and then applying the energy estimates. At low frequencies, we shall use the convergence \eqref{eq:einstein_low_freq_scat_metric}, while at high and mid frequencies we only use boundedness. For convenience, let us define:
    \begin{equation} \label{eq:hm_freq}
        \mathcal{E}_{\lambda, hm}(1) = \begin{cases}
            \mathcal{E}_{\lambda, high}(1) & \text{ if } \tau_{\lambda}(1) \geq \mathring{\tau},\\
            \mathcal{E}_{\lambda, mid}(1) & \text{ if } 1 \leq \tau_{\lambda}(1) < \mathring{\tau}.
        \end{cases}
    \end{equation}

    Using the energy estimates, one then finds:
    \begin{align*}
        \| \matr{\upkappa}{i}{j}(t) - \matr{(\upkappa_{\infty})}{i}{j} \|_{H^r}^2 
        &\leq \sum_{\substack{\lambda \in \Z^D,\\ \tau_{\lambda}(t) \leq 1}} \langle \lambda \rangle^{2r} | \matr{(\upkappa_{\lambda})}{i}{j}(t) - \matr{((\upkappa_{\infty})_{\lambda})}{i}{j} |^2 + \sum_{\substack{\lambda \in \Z^D, \\ \tau_{\lambda}(t) > 1}} \langle \lambda \rangle^{2r} ( |\matr{(\upkappa_{\lambda})}{i}{j}|^2 + |\matr{((\upkappa_{\infty})_{\lambda})}{i}{j}|^2 ) \\[0.4em]
        &\lesssim \sum_{\substack{\lambda \in \Z^D, \\ \tau_{\lambda}(t) \leq 1}} \langle \lambda \rangle^{2r } t_{\lambda*}^{2 p_i - 2p_j} \left( \frac{t}{t_{\lambda*}} \right)^{2 \updelta} \mathcal{E}_{\lambda, hm}(1) +
        \sum_{\substack{\lambda \in \Z^D \\ \tau_{\lambda}(t) > 1}} \langle \lambda \rangle^{2r} ( \tau t^{2p_i- 2p_j} + t_{\lambda*}^{2p_i - 2p_j} ) \mathcal{E}_{\lambda, hm}(1).
    \end{align*}
    
    To finally show convergence, we select an $N > 0$ such that the following quantities are bounded, uniformly in $\lambda \in \Z^D \setminus \{ 0 \}$:
    \[
        \langle \lambda \rangle^{-N} t_{\lambda *}^{2p_i - 2p_j - 2 \updelta} + \langle \lambda \rangle^{-N} \tau \max_{t \geq t_{\lambda*}} \{ t^{2 p_i - 2p_j} \} \lesssim 1.
    \]
    This is always possible, since $\tau \leq \langle \lambda \rangle$, while $t_{\lambda *}^{-1}$ grows polynomially in $\langle \lambda \rangle$. Substituting this into the above, we therefore find that
    \[
        \| \matr{\upkappa}{i}{j} - \matr{(\upkappa_{\infty})}{i}{j} \|_{H^r}^2 =
        \sum_{\substack{\lambda \in \Z^D, \\ \tau_{\lambda}(t) \leq 1}} t^{2 \updelta} \langle \lambda \rangle^{2r - N} \mathcal{E}_{\lambda, hm}(1) + \sum_{\substack{\lambda \in \Z^D, \\ \tau_{\lambda}(t) > 1}} \langle \lambda \rangle^{2r - N} \mathcal{E}_{\lambda, hm}(1).
    \]
    Selecting $2r - N = 2s + 1$, we therefore deduce the $\matr{\upkappa}{i}{j}$ convergence in \eqref{eq:einsteinscat_10_conv} for $\beta = \frac{1+N}{2}$. The $\matr{{}^{(F)}\Upupsilon}{i}{j}$ convergence follows similarly (the additional difficulty being that we need to incorporate the bound for $G_{ip}^{\phantom{ip}jq}$ in Lemma~\ref{lem:gbound}, and thus may have to increase $\beta$ further), while the $\uppsi$ and ${}^{(F)} \upvarphi$ convergence follow from similar considerations to Proposition~\ref{prop:wavescat_10}. 

    Since $H^{s - \beta} \subset C^3$ for $s$ sufficiently large, the fact that our limiting quantities satisfy the relevant constraints follows from a version of Lemma~\ref{lem:scattering_constraints} with $T = \mathcal{T}_*$. This completes the proof of the proposition.
\end{proof}

\subsubsection{From asymptotic data to Cauchy data}

\begin{proposition} \label{prop:einsteinscat_01}
    Let $\matr{(\upkappa_{\infty})}{i}{j}, \matr{({}^{(F)}\Upupsilon_{\infty})}{i}{j}$ be smooth $(1, 1)$ tensors and $\uppsi_{\infty}, {}^{(F)}\upvarphi_{\infty}$ be smooth functions on $\mathbb{T}^D$ satisfying the asymptotic constraints \eqref{eq:hamiltonian_linear_scat}--\eqref{eq:upkappa_sym_scat} with $T = \mathcal{T}_*$. Then there exists a unique smooth solution $(\matr{\upeta}{i}{j}, \matr{\upkappa}{i}{j}, \upphi, \uppsi, \upnu, \upchi^j)$ to the system \eqref{eq:upeta_evol}--\eqref{eq:upkappa_sym} for $t > 0$ in CMCFA gauge, which achieves the asymptotic data in the sense that for any $s \in \R$ and $\beta$ as in Propostion~\ref{prop:einsteinscat_10}, one has the (strong) convergence:
    \begin{equation} \label{eq:einsteinscat_01_conv}
        \matr{\upkappa}{i}{j}(t) \to \matr{(\upkappa_{\infty})}{i}{j}, \quad
        \matr{{}^{(F)} \Upupsilon}{i}{j}(t) \to \matr{({}^{(F)} \Upupsilon_{\infty})}{i}{j} \text{ in } H^{s- \beta}, \quad
        \uppsi(t) \to \uppsi_{\infty}, \quad {}^{(F)}\upvarphi(t) \to {}^{(F)} \upvarphi_{\infty} \text{ in } H^s.
    \end{equation}

    Moreover, for any $s \in \R$ one has the following bound between Sobolev spaces:
    \begin{multline} \label{eq:einsteinscat_01_bounded}
        \| \matr{\upeta}{i}{j}(1, \cdot ) \|_{H^{s+1}} + \| \matr{\upkappa}{i}{j}(1, \cdot) \|_{H^{s}}^2 + \| \upphi(1, \cdot) \|_{H^{s+1}}^2 + \| \uppsi(1, \cdot) \|_{H^s}^2
        \\[0.4em] \lesssim 
        \| \matr{(\upkappa_{\infty})}{i}{j} \|_{H^{s + \frac{1}{2}, F}(\Sigma_0)}^2 + 
        \| \matr{({}^{(F)} \Upupsilon_{\infty})}{i}{j} \|_{H^{s + \frac{1}{2}, F}(\Sigma_0)}^2 +
        \| \upphi_{\infty} \|_{H^{s + \frac{1}{2}}}^2 + \| {}^{(F)} \upvarphi_{\infty} \|_{H^{s + \frac{1}{2}}}^2.
    \end{multline}
\end{proposition}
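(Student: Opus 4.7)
The plan is to mirror the proof of Proposition~\ref{prop:wavescat_01} and essentially invert, Fourier mode by Fourier mode, the chain of estimates that led to Proposition~\ref{prop:einsteinscat_10}, but with the roles of Cauchy data and asymptotic data exchanged. I will first Fourier decompose the asymptotic data. Using Definition~\ref{def:time_freq_norm}, the right-hand side of \eqref{eq:einsteinscat_01_bounded} is equivalent in Fourier space to
\[
|\matr{((\upkappa_\infty)_0)}{i}{j}|^2 + |\matr{(({}^{(F)}\Upupsilon_\infty)_0)}{i}{j}|^2 + |(\uppsi_\infty)_0|^2 + |({}^{(F)}\upvarphi_\infty)_0|^2 + \sum_{\lambda \neq 0} \langle \lambda \rangle^{2s+1}\, \mathcal{E}_{\lambda,low}^{\infty},
\]
where $\mathcal{E}_{\lambda,low}^{\infty}$ denotes the quantity of Definition~\ref{def:lowfreq} evaluated on the $\lambda$-Fourier mode of the asymptotic data. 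The zero mode is handled trivially via Lemma~\ref{lem:einstein_zerofreq}: the unique solution is constant in $t$, with values equal to the prescribed data.

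For each $\lambda \neq 0$, I would construct the solution in two stages: a Fuchsian low-frequency stage on $(0, t_{\lambda*}]$, and a hyperbolic high/mid-frequency stage on $[t_{\lambda*}, \infty)$. In the low-frequency stage, I first apply Proposition~\ref{prop:einstein_low_freq_scat}(ii) directly to the prescribed $\lambda$-mode: since the asymptotic constraints \eqref{eq:hamiltonian_linear_scat_l}--\eqref{eq:upkappa_sym_scat_l} (with $T = t_{\lambda*}$) are gauge invariant, the CMCFA asymptotic data, interpreted as CMCTC asymptotic data, launches a unique CMCTC mode on $(0, t_{\lambda*}]$ satisfying $\mathcal{E}_{\lambda,low}(t_{\lambda*}) \lesssim \mathcal{E}_{\lambda,low}^{\infty}$. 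I then apply the gauge transformation of Lemma~\ref{lem:diffeo} with the time-dependent vector field $\upxi^j_\lambda(t)$ characterized by \eqref{eq:upxi_low} (normalized so $\upxi^j_\lambda(0) = 0$, which preserves the asymptotic data), bringing the mode into CMCFA gauge throughout $(0, t_{\lambda*}]$; the bound \eqref{eq:low_freq_upxi} guarantees that this does not change the order of magnitude of $\mathcal{E}_{\lambda,low}(t_{\lambda*})$. At $t = t_{\lambda*}$, the CMCFA condition at the $\lambda$-projection coincides with CMCSH (the integral in \eqref{eq:upupsilon_tilde} vanishes there), so I would invoke Proposition~\ref{prop:adm_lwp}(i) to extend uniquely to all $t \geq t_{\lambda*}$ in CMCSH gauge, which is consistent with CMCFA in this range. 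Chaining Proposition~\ref{prop:mid_freq} with Proposition~\ref{prop:einstein_high_freq} gives $\mathcal{E}_{\lambda,high}(1) \lesssim \mathcal{E}_{\lambda,low}(t_{\lambda*})$, and the coercivity in Lemma~\ref{lem:einstein_energy_high_coercive} (applicable since $\tau_\lambda(1) \asymp \langle\lambda\rangle \geq \mathring\tau$ for all but finitely many $\lambda$, with the remaining modes handled separately by a mid-frequency bound) controls the single-mode Cauchy norms.

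To obtain the global physical-space solution I would sum the Fourier modes. Weighting the single-mode bound by $\langle\lambda\rangle^{2s+1}$ and summing in $\lambda$ yields \eqref{eq:einsteinscat_01_bounded}; since this holds for every real $s$, the resulting Fourier series assembles into smooth functions on each slice $\Sigma_t$, and hence a smooth classical solution on $\mathcal{M}_{Kas}$. The convergence \eqref{eq:einsteinscat_01_conv} as $t \to 0$ follows from the same splitting into $\tau_\lambda(t) \leq 1$ and $\tau_\lambda(t) > 1$ used in the proof of Proposition~\ref{prop:einsteinscat_10}: the low-frequency rate $(t/t_{\lambda*})^\updelta$ supplied by Proposition~\ref{prop:einstein_low_freq_scat}(ii) provides uniform-in-$\lambda$ smallness on the first set, while the summability of the asymptotic data controls the Fourier tail on the second. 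Uniqueness follows from the uniqueness statements in Proposition~\ref{prop:einstein_low_freq_scat}(ii) and Proposition~\ref{prop:adm_lwp}(i), together with the elliptic uniqueness of the zero-average gauge transformation vector field $\upxi^j_\lambda$.

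The main obstacle I anticipate is ensuring that the family of Fourier-projected gauge transformations $\upxi^j_\lambda(t)$ that brings the CMCTC low-frequency solution into CMCFA assembles, after summation over $\lambda$, into a genuine classical solution of \eqref{eq:upeta_evol}--\eqref{eq:upkappa_sym} in CMCFA gauge, rather than just a formal Fourier series with the correct mode-wise behaviour. Concretely, one must verify that the frequency-dependent losses encoded in the symbol $\mathcal{T}_*^{-p_i + p_j}$ of Definition~\ref{def:time_freq_norm} are exactly counterbalanced by the structure of $\mathcal{E}_{\lambda,low}$, and that the loss $\beta$ in \eqref{eq:einsteinscat_01_conv} is chosen large enough to absorb the polynomial growth of $t_{\lambda*}^{-1}$ in $\langle\lambda\rangle$. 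This is essentially the reverse of the analogous step in the proof of Proposition~\ref{prop:einsteinscat_10} and presents no fundamentally new PDE difficulty.
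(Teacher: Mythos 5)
Your construction is essentially the same as the paper's: Fourier-decompose the asymptotic data; for each $\lambda \neq 0$, launch the CMCTC Fuchsian solution via Proposition~\ref{prop:einstein_low_freq_scat}(ii); transform to CMCFA via the vector field $\upxi^j_\lambda$ of \eqref{eq:upxi_low}; continue past $t_{\lambda*}$ in CMCSH (which coincides with CMCFA there); chain the low/mid/high-frequency estimates to control $\mathcal{E}_{\lambda,hm}(1)$; sum over $\lambda$ using \eqref{eq:einsteinscat_01_low} and coercivity; handle $\lambda = 0$ with Lemma~\ref{lem:einstein_zerofreq}. This all matches. One small imprecision: you describe choosing $\upxi^j_\lambda$ to be ``normalized so $\upxi^j_\lambda(0) = 0$,'' but this is not a normalization choice --- \eqref{eq:upxi_low} is an explicit formula, and its vanishing at $t=0$ is \emph{forced} by the asymptotic CMCFA gauge condition \eqref{eq:asymp_gauge2} imposed on the data. (The paper relies on this implicitly as well.)

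The genuine gap is uniqueness, which you dismiss in one line and in the closing paragraph characterize as presenting ``no fundamentally new PDE difficulty.'' In fact it is the most delicate step. The difficulty is that ``unique CMCTC solution $+$ uniquely determined gauge transformation'' establishes uniqueness only among CMCFA solutions you have \emph{constructed} that way; it does not a priori rule out some other CMCFA solution with the same trivial asymptotic data that is not a gauge transform of a CMCTC Fuchsian solution. To close this, the paper's argument proceeds as follows: for an arbitrary CMCFA mode with trivial asymptotic data, one first establishes $|\upchi^j_\lambda(t)| \lesssim t^\updelta$ near $t=0$ (from \eqref{eq:upxi_low} and the structure of $\upchi^j_\lambda$), then defines ${}^{(T)}\upxi^j_\lambda(t) = \int_0^t \upchi^j_\lambda \, \frac{ds}{s}$, which is well-defined and vanishes at $t=0$. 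Applying Lemma~\ref{lem:diffeo}(ii) with $T=0$ transforms the CMCFA mode to a CMCTC mode that still has trivial asymptotic data; CMCTC uniqueness forces it to vanish. Finally, substituting the resulting expression $\matr{({}^{(F)}\Upupsilon_\lambda)}{i}{j} = \frac{\mathrm{i}}{2}\lambda_i{}^{(T)}\upxi^j_\lambda + \frac{\mathrm{i}}{2}\mathring{g}_{ip}(t_{\lambda*})\mathring{g}^{jq}(t_{\lambda*})\lambda_q{}^{(T)}\upxi^p_\lambda$ into the CMCFA gauge condition \eqref{eq:time_freq_harmonic} yields $\mathring{g}^{ab}(t_{\lambda*})\lambda_a\lambda_b\,{}^{(T)}\upxi^j_\lambda = 0$, hence ${}^{(T)}\upxi^j_\lambda = 0$ and the original mode was trivial. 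Your citation of ``elliptic uniqueness of the zero-average gauge transformation vector field'' does not substitute for this chain; the uniqueness of $\upxi^j_\lambda$ is not an elliptic statement but an algebraic one that only appears at the final step of the argument, and only after the CMCTC reduction has been performed. Without spelling this out, the uniqueness claim is not established.
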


\begin{proof}
    For existence, we decompose the scattering data $(\matr{(\upkappa_{\infty})}{i}{j}, \matr{({}^{(F)} \Upupsilon_{\infty})}{i}{j}, \uppsi_{\infty}, {}^{(F)}\upphi_{\infty})$ into their Fourier modes. For any $\lambda \neq 0$, we now appeal to Proposition~\ref{prop:einstein_low_freq_scat}(ii), which tells us that there exists a solution to the ODE system \eqref{eq:upkappa_evol_l_2}--\eqref{eq:upphi_evol_l_2} in the range $t \in (0, t_{\lambda*})$, which achieves the relevant limits as $t \to 0$, at the rate \eqref{eq:einstein_low_freq_scat_metric}--\eqref{eq:einstein_low_freq_scat_matter}.

    However, the ODE system \eqref{eq:upkappa_evol_l_2}--\eqref{eq:upphi_evol_l_2} is in the CMCTC gauge, as opposed to the CMCFA gauge as required. Nevertheless, as in the proof of Proposition~\ref{prop:einsteinscat_10}, we may use a change of gauge associated to the vector field $\upxi_{\lambda}^j$ in \eqref{eq:upxi_low} to generate another solution in the CMCFA gauge. As in that proof, we still have the estimates \eqref{eq:low_freq_upxi}, and as a result the energy estimates of Section~\ref{sec:low_freq} will all remain valid in this gauge. We denote this solution in CMCFA gauge to be $(\matr{(\upkappa_{\lambda})}{i}{j}, \matr{({}^{(F)}\Upupsilon_{\lambda})}{i}{j}, \uppsi_{\lambda}, {}^{(F)} \upvarphi_{\lambda}, \upnu_{\lambda}, \upchi^j_{\lambda})$ as before.

    We extend this solution to the range $t > t_{\lambda *}$ by using instead the CMCSH gauge evolution; this is valid since the CMCSH gauge and the CMCFA gauge coincide for $t \geq t_{\lambda*}$. Then combining all of the low-frequency estimates in Proposition~\ref{prop:einstein_low_freq_scat}, the mid-frequency estimate in Proposition~\ref{prop:mid_freq} and the high-frequency estimate of Proposition~\ref{prop:einstein_high_freq}, we find that for $\lambda \in \Z^D$ and $\mathcal{E}_{\lambda, hm}(1)$ as in \eqref{eq:hm_freq}, our solution obeys:
    \begin{equation} \label{eq:einsteinscat_01_low}
        \mathcal{E}_{\lambda, hm}(1) \lesssim 
        \mathring{g}^{ac}(t_{\lambda*}) \mathring{g}_{bd}(t_{\lambda*}) \left [ \matr{((\upkappa_{\infty})_{\lambda})}{a}{b} \matr{((\upkappa_{\infty})_{\lambda})}{c}{d} + \matr{(({}^{(F)} \Upupsilon_{\infty})_{\lambda})}{a}{b} \matr{(({}^{(F)}\Upupsilon_{\infty})_{\lambda})}{c}{d} \right ]
        + ( \uppsi_{\infty})_{\lambda}^2 + ({}^{(F)}\upvarphi_{\infty})_{\lambda}^2
        .
    \end{equation}

    Using Lemma~\ref{lem:einstein_zerofreq} to deal with $\lambda = 0$, and using the coercivity of the high-frequency energy in Lemma~\ref{lem:einstein_energy_high_coercive}, we therefore have the estimate
    \begin{multline*}
        \langle \lambda \rangle^{2s} | \matr{(\upkappa_{\lambda})}{i}{j}(1) |^2 + 
        \langle \lambda \rangle^{2s+2} | \matr{({}^{(F)}\Upupsilon_{\lambda})}{i}{j}(1) |^2 + 
        \langle \lambda \rangle^{2s} | \uppsi_{\lambda} (1)|^2 + 
        \langle \lambda \rangle^{2s+2} | {}^{(F)} \upvarphi_{\lambda} (1)|^2 \\[0.4em]
        \lesssim 
        \langle \lambda \rangle^{2s + 1} \left [ \mathring{g}^{ac}(t_{\lambda*}) \mathring{g}_{bd}(t_{\lambda*}) \left [ \matr{((\upkappa_{\infty})_{\lambda})}{a}{b} \matr{((\upkappa_{\infty})_{\lambda})}{c}{d} + \matr{(({}^{(F)} \Upupsilon_{\infty})_{\lambda})}{a}{b} \matr{(({}^{(F)}\Upupsilon_{\infty})_{\lambda})}{c}{d} \right ] 
        + ( \uppsi_{\infty})_{\lambda}^2 + ({}^{(F)}\upvarphi_{\infty})_{\lambda}^2
    \right].
    \end{multline*}
    By smoothness of the scattering data, the right-hand side is summable in $\lambda \in \Z^D$ for all $s$, and doing this sum yields the estimate \eqref{eq:einsteinscat_01_bounded}.

    Now, upon applying Fourier inversion we recover a smooth solution $(\matr{\upeta}{i}{j}, \matr{\upkappa}{i}{j}, \upphi, \uppsi, \upnu, \upchi^j)$ to the system \eqref{eq:upeta_evol}--\eqref{eq:upkappa_sym} for $t > 0$ in CMCFA gauge, which moreover achieves the asymptotics \eqref{eq:einsteinscat_01_conv} using the same proof as in Proposition~\ref{prop:einsteinscat_10}. We leave the details to the reader.

    All that remains is to show uniqueness. Since the system is linear it suffices to show uniqueness in the case of trivial scattering data $\matr{(\upkappa_{\infty})}{i}{j} = 0$, $\matr{({}^{(F)}\Upupsilon_{\infty})}{i}{j} = 0$, $\uppsi_{\infty} = 0$ and ${}^{(F)} \upvarphi_{\infty} = 0$. Suppose that we have a smooth solution $(\matr{\upeta}{i}{j}, \matr{\upkappa}{i}{j}, \upphi, \uppsi, \upnu, \upxi^j)$ to \eqref{eq:upeta_evol}--\eqref{eq:upkappa_sym} in the CMCFA gauge, achieving these trivial asymptotics in the sense of \eqref{eq:einsteinscat_01_conv}. To show that this solution vanishes, it will suffice to show each Fourier mode vanishes.

    The $0$-Fourier mode must vanish by Lemma~\ref{lem:einstein_zerofreq}, so now take any $\lambda \in \Z^D \setminus \{0\}$, and focus on the low-frequency regime with $0 < t \leq t_{\lambda*}$. Recall that Proposition~\ref{prop:einstein_low_freq_scat}(ii) gives us uniqueness in CMCTC gauge, and for uniqueness in CMCFA gauge we look for a gauge transformation that retains the trivial asymptotic data but so that the transformed solution is in CMCTC gauge.

    The idea is that any $\lambda$-mode projected solution $(\matr{(\upkappa_{\lambda})}{i}{j}, \matr{({}^{(F)}\Upupsilon_{\lambda})}{i}{j}, \uppsi_{\lambda}, {}^{(F)} \upvarphi_{\lambda}, \upnu_{\lambda}, \upchi^j_{\lambda})$ in CMCFA gauge must obey $|\upchi^j_{\lambda}(t)| \leq D t^{\updelta}$ for some constant $D$ that may depend on data but does not depend on $t$. This follows from \eqref{eq:upxi_low} and the description of $\upchi^j_{\lambda}$ in the proof of Proposition~\ref{prop:einsteinscat_10}. (In other words, that any CMCFA solution may be constructed from another CMCTC solution, though this latter CMCTC solution will not necessarily have trivial asymptotic data.)

    We find a CMCTC solution with trivial asymptotic data via the gauge transformation associated to the vector field ${}^{(T)}\upxi^j_{\lambda}(t) = \int^t_0 \upchi^j_{\lambda} \frac{ds}{s}$. Since $|\upchi^j_{\lambda}(t)| \leq D t^{\updelta}$, ${}^{(T)} \upxi^j_{\lambda}(t)$ is well-defined and vanishes at $t=0$. Further, by Lemma~\ref{lem:diffeo}(ii) with $T = 0$, it transforms our CMCFA solution to a CMCTC solution which still has trivial asymptotic data. Thus by uniqueness of CMCTC solutions from asymptotic data, the transformed solution must vanish. By \eqref{eq:upupsilon_diffeo} with $T = t_{\lambda*}$, this tells us that
    \[
        \matr{({}^{(F)} \Upupsilon_{\lambda})}{i}{j} = \frac{\mathrm{i}}{2} \lambda_i {}^{(T)}\upxi^j_{\lambda} + \frac{\mathrm{i}}{2} \mathring{g}_{ip}(t_{\lambda*}) \mathring{g}^{jq}(t_{\lambda*}) \lambda_q {}^{(T)} \upxi^q_{\lambda}.
    \]

    We now simply insert this into our frequency adapted gauge condition \eqref{eq:time_freq_harmonic}. Eventually we obtain $\mathring{g}^{ab}(t_{\lambda*}) \lambda_a \lambda_b {}^{(T)} \upxi_{\lambda}^j = 0$. In other words, the gauge transformation vector field has ${}^{(T)}\upxi^j_{\lambda} =  0$, and our solution must have been trivial in the first place. This concludes the proof of the proposition.
\end{proof}

\subsubsection{Completion of the proof of Theorem~\ref{thm:einstein_scat_v2}}

All the remains is to extend Proposition~\ref{prop:einsteinscat_10} and Proposition~\ref{prop:einsteinscat_01} to the setting where $(\matr{\upkappa}{i}{j}, \matr{{}^{(F)}\Upupsilon}{i}{j}, \uppsi, {}^{(F)} \upvarphi)$ have finite regularity, by using the density of smooth functions in $H^{s}$ and in $H^{s,F}(\Sigma_0)$. This is a slightly more complicated version of what was done for the wave equation in Section~\ref{sub:wavescat_density}, and therefore we leave the details to the reader, with only 2 comments:
\begin{itemize}
    \item[--]
        In both directions, the data $(\matr{(\upeta_C)}{i}{j}, \matr{(\upkappa_C)}{i}{j}, \upphi_C, \uppsi_C)$ or $(\matr{(\upkappa_{\infty})}{i}{j}, \matr{({}^{(F)}\Upupsilon_{\infty})}{i}{j}, \uppsi_{\infty}, {}^{(F)} \upvarphi_{\infty})$ is not completely free, and must be arranged to satisfy various constraint equations and gauge conditions. Thus in approximating finite regularity data by smooth data, the approximating data must also satisfy these equations.

        However, since in our case the constraint equations and gauge conditions are linear, it is easy to smoothly approximate data while still satisfying these equations, for instance by convolution with standard mollifiers.

    \item[--]
        In order to insist both that our solutions are classical and to make sure that the convergence \eqref{eq:asymp_upkappaupupsilon2} is also in $C^3$ in order to satisfy the assumptions of Lemma~\ref{lem:scattering_constraints}, we must choose $s > \frac{D}{2} + 3 + \beta$. This is what we mean by requiring $s$ to be sufficiently large in (i) and (ii). For (iii), the Hilbert space isomorphism remains true for all $s \in \R$.
\end{itemize}
\noindent
This concludes our proof of Theorem~\ref{thm:einstein_scat_v2}. \qed

\subsection{Proof of scattering in transported coordinates} \label{sub:scat_conclusion}

In this section, we prove the original version of the scattering result, Theorem~\ref{thm:einstein_scat}, in a CMCTC gauge, and finally derive the asymptotics of Theorem~\ref{thm:asymp_einstein} concening the linearized metric and second fundamental form. To go from CMCFA gauge to CMCTC gauge, we use Lemma~\ref{lem:diffeo}(ii); thus it remains to understand $\upchi^j$ in CMCFA gauge, or more precisely the related change of gauge vector field
\begin{equation} \label{eq:upxi_cmcfa}
    \upxi^j(t) \coloneqq - \int^1_t \upchi^j(s) \, \frac{ds}{s}.
\end{equation}

\begin{lemma} \label{lem:upxi_cmcfa}
    Let $(\matr{\upeta}{i}{j}, \matr{\upkappa}{i}{j}, \upphi, \uppsi, \upnu, \upchi^j)$ be a solution to the linearized Einstein--scalar field system \eqref{eq:upeta_evol}--\eqref{eq:upkappa_sym} in CMCFA gauge, with regularity as in either of (i) or (ii) of Theorem~\ref{thm:einstein_scat_v2}. Then the change of gauge vector field $\upxi^j$ defined in \eqref{eq:upxi_cmcfa} satisfies
    \[
        \upxi^j \in C^0((0, + \infty): H^{s + \frac{1}{2}} ).
    \]
    Moreover there exists a unique vector field $\upxi^j_{\infty} \in H^{s + \frac{1}{2}}$ such that $\upxi^j(t) \to \upxi_{\infty}^j$ in $H^{s + \frac{1}{2}}$ as $t \to 0$.
\end{lemma}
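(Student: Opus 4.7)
The strategy will be to estimate the Fourier coefficient $\upchi^j_\lambda(\sigma)$ in each of two regimes dictated by the CMCFA gauge: at high frequencies $\sigma \in [t_{\lambda*}, 1]$, where the CMCFA condition coincides with the CMCSH condition \eqref{eq:spatiallyharmonic_linear_l} so that the elliptic estimates of Section~\ref{sec:high_freq} apply, and at low frequencies $\sigma \in (0, t_{\lambda*}]$, where we have the identification $\upchi^j_\lambda(\sigma) = -\sigma \partial_\sigma \upxi^j_{\lambda, (T)}$ for the frequency-adapted gauge vector \eqref{eq:upxi_low} arising in the proof of Proposition~\ref{prop:einsteinscat_10}. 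Integrating $\upchi^j_\lambda/\sigma$ over each of these intervals, summing against $\langle \lambda \rangle^{2s+1}$, and invoking the energy identification \eqref{eq:einsteinscat_10_data} will then deliver the required $H^{s + \frac{1}{2}}$ control.

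In the high-frequency regime, Proposition~\ref{prop:einstein_high_elliptic} (augmented in the transitional band $1 \leq \tau \leq \mathring{\tau}$ by Proposition~\ref{prop:mid_freq}) combined with Proposition~\ref{prop:einstein_high_freq} gives $|\upchi^j_\lambda(\sigma)| \lesssim \sigma^{1 - p_j}\, \tau^{-3/2}(\sigma) \, \mathcal{E}_{\lambda, high}^{1/2}(1)$. Using $\sigma^{1-p_j} \leq 1$ and Lemma~\ref{lem:tauint} with $\alpha = \tfrac{3}{2}$, I can conclude $\int_{t_{\lambda*}}^1 |\upchi^j_\lambda(\sigma)| \frac{d\sigma}{\sigma} \lesssim \mathcal{E}_{\lambda, high}^{1/2}(1)$. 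In the low-frequency regime, the derivative bound \eqref{eq:low_freq_upxi} yields $|\upchi^j_\lambda(\sigma)| \lesssim t_{\lambda*}^{1 - p_j}(\sigma/t_{\lambda*})^\updelta \, \mathcal{E}_{\lambda, low}^{1/2}(t_{\lambda*})$, so integrating and using $t_{\lambda*} \leq 1$ together with the high--low energy equivalence \eqref{eq:einstein_high_low_freq} gives $\int_0^{t_{\lambda*}} |\upchi^j_\lambda(\sigma)| \frac{d\sigma}{\sigma} \lesssim \updelta^{-1} \mathcal{E}_{\lambda, high}^{1/2}(1)$. Combining the two bounds leads to the pointwise estimate $|\upxi^j_\lambda(t)|^2 \lesssim \mathcal{E}_{\lambda, high}(1)$, uniform in $t \in (0, 1]$ and $\lambda \in \Z^D \setminus \{0\}$, with the zero mode handled trivially via Lemma~\ref{lem:einstein_zerofreq}.

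Multiplying by $\langle \lambda \rangle^{2s+1}$, summing over $\lambda$, and applying \eqref{eq:einsteinscat_10_data} then establishes that $\upxi^j(t) \in H^{s + \frac{1}{2}}$ with norm controlled by the Cauchy data in $\mathcal{H}_C^s$ uniformly in $t \in (0, 1]$. For each $\lambda$, the improper integral $\upxi^j_{\infty, \lambda} := -\int_0^1 \upchi^j_\lambda(\sigma) \frac{d\sigma}{\sigma}$ converges by the same estimates, and $|\upxi^j_\lambda(t) - \upxi^j_{\infty, \lambda}|^2 \to 0$ pointwise in $\lambda$ while being dominated by a constant multiple of $\mathcal{E}_{\lambda, high}(1)$. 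Dominated convergence in $\ell^2_\lambda(\langle \lambda \rangle^{2s+1})$ then yields $\upxi^j(t) \to \upxi^j_\infty$ strongly in $H^{s + \frac{1}{2}}$ as $t \to 0$. For $t \geq 1$, continuity of $\upxi^j$ in $H^{s + \frac{1}{2}}$ follows immediately from $\upchi^j \in C^0((0, +\infty), H^{s + 2})$ asserted in Theorem~\ref{thm:einstein_scat_v2}(\ref{item:einstein_v2_asympcomp}).

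The main obstacle will be the low-frequency integrability: the convergence of $\int_0^{t_{\lambda*}} |\upchi^j_\lambda(\sigma)|\, d\sigma/\sigma$ relies entirely on the gained factor $(\sigma/t_{\lambda*})^\updelta$ in \eqref{eq:low_freq_upxi}, which itself traces back through Proposition~\ref{prop:einstein_low_der} to the subcriticality condition \eqref{eq:subcritical_delta}. Without this gain, even the pointwise-in-$\lambda$ limit $\upxi^j_{\infty, \lambda}$ would fail to exist, let alone the strong $H^{s+\frac{1}{2}}$ convergence.
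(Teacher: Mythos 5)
Your proposal is correct and follows essentially the same route as the paper: estimate $\upchi^j_{\lambda}$ via the elliptic bound \eqref{eq:einstein_high_shift} and Propositions~\ref{prop:einstein_high_freq} and \ref{prop:mid_freq} at high/mid frequency, via \eqref{eq:low_freq_upxi} at low frequency, integrate using Lemma~\ref{lem:tauint} and the $(t/t_{\lambda*})^{\updelta}$ gain, and sum against $\langle\lambda\rangle^{2s+1}$ via \eqref{eq:einsteinscat_10_data}. The only cosmetic difference is that the paper writes the combined energy as $\mathcal{E}_{\lambda,hm}(1)$ (defined in \eqref{eq:hm_freq}) to unify the high and mid regimes, which is what your $\mathcal{E}_{\lambda,high}(1)$ effectively denotes.
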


\begin{proof}
    Let us first work at the level of each Fourier mode $(\matr{(\upeta_{\lambda})}{i}{j}, \matr{(\upkappa_{\lambda})}{i}{j}, \upphi_{\lambda}, \uppsi_{\lambda}, \upnu_{\lambda}, \upchi^j_{\lambda})$, where $\lambda \in \Z^D \setminus \{ 0 \}$ (recall that $\upchi^j$ and thus $\upxi^j$ have zero average). Using \eqref{eq:einstein_high_shift} from Lemma~\ref{prop:einstein_high_elliptic}, and unpacking the $\mathring{g}$-norm, we find that for $\tau_{\lambda}(t) \geq \mathring{\tau}$,
    \begin{equation} \label{eq:upxi_high_cmctc}
        | \upchi^j_{\lambda} (t) | \lesssim \frac{t^{1 - p_j}}{\tau^{\frac{3}{2}}} \mathcal{E}_{\lambda, hm}^{1/2}(1).
    \end{equation}

    Here we have used the high-frequency energy estimate Proposition~\ref{prop:einstein_high_freq} to allow the right-hand side to be $\mathcal{E}_{\lambda, hm}(1)$, as defined in \eqref{eq:hm_freq}, in place of $\mathcal{E}_{\lambda, high}(t)$. Using also the mid-frequency energy estimate Proposition~\ref{prop:mid_freq} and the associated elliptic estimates, \eqref{eq:upxi_high_cmctc} also holds in the mid-frequency range, and thus for the whole range $\tau_{\lambda}(t) \geq 1$.

    In the low-frequency regime, we estimate $|\upchi^j_{\lambda}|$ by referring to the proof of Proposition~\ref{prop:einsteinscat_10}, in particular \eqref{eq:low_freq_upxi} and the description of $\upchi^j_{\lambda}$ immediately below it. Combining all the energy estimates, one deduces that in the low-frequency range $0 < \tau_{\lambda}(t) \leq 1$, one has
    \begin{equation} \label{eq:upxi_low_cmctc}
        | \upchi^j_{\lambda} (t) | \lesssim t_{\lambda*}^{1- p_j} \cdot \left( \frac{t}{t_{\lambda*}} \right)^{\updelta} \mathcal{E}_{\lambda, hm}^{1/2}(1).
    \end{equation}
    Upon using that $t \leq 1$ and $t_{\lambda*} \leq 1$ in our setting, we combine \eqref{eq:upxi_high_cmctc} and \eqref{eq:upxi_low_cmctc} to get that for all $t > 0$,
    \begin{equation} \label{eq:upxi_comb_cmctc}
        | \upchi^j_{\lambda}(t) | \lesssim \min \left \{ \tau^{- \frac{3}{2}}, \left( \frac{t}{t_{\lambda*}} \right)^{\updelta} \right\} \cdot \mathcal{E}_{\lambda, hm}^{1/2}(1).
    \end{equation}

    \eqref{eq:upxi_comb_cmctc} implies that $|\upchi^j_{\lambda}(t)|$ is integrable (with respect to the differential $\frac{dt}{t}$) both towards $\tau \to \infty$ and towards $t \to 0$. Indeed, using also Lemma~\ref{lem:tauint}, it is straightforward to see that $\upxi^j_{\lambda}(t)$, defined in \eqref{eq:upxi_cmcfa}, has a limit $(\upxi_{\infty})_{\lambda}^j$ as $t \to 0$, and we have
    \begin{equation} \label{eq:upxi_reg}
        | (\upxi_{\infty})_{\lambda}^j | \lesssim \mathcal{E}^{1/2}_{\lambda, high}(1), \qquad | \upxi^j_{\lambda}(t) -(\upxi_{\infty})_{\lambda}^j | \lesssim \max \left \{ \left( \frac{t}{t_{\lambda*}} \right)^{\updelta}, 1\right\} \cdot \mathcal{E}_{\lambda, high}^{1/2}(1).
    \end{equation}

    To complete the proof, we recall that in Theorem~\ref{thm:einstein_scat_v2}(i) and (ii), we have $\sum_{\lambda \in \Z^D \setminus \{0\}} \langle \lambda \rangle^{2s + 1} \mathcal{E}_{\lambda, hm}(1) < + \infty$. The lemma then follows straightforwardly from \eqref{eq:upxi_reg}.
\end{proof}

With Lemma~\ref{lem:upxi_cmcfa}, we now deduce Theorem~\ref{thm:einstein_scat} and Theorem~\ref{thm:asymp_einstein} from Theorem~\ref{thm:einstein_scat_v2}.

\begin{proof}[Proof of Theorem~\ref{thm:einstein_scat}] We prove (\ref{item:einstein_scatop}), (\ref{item:einstein_asympcomp}) and (\ref{item:einstein_scatiso}) as follows:
    \begin{enumerate}[(i)]
        \item
            Let $(\matr{(\upeta_C)}{i}{j}, \matr{(\upkappa_C)}{i}{j}, \upphi_C, \uppsi_C)$ be Cauchy data with regularity as given and satisfying the constraints \eqref{eq:hamiltonian_linear}--\eqref{eq:upkappa_sym}. We now apply Theorem~\ref{thm:einstein_scat_v2}, which gives us a classical solution $(\matr{\upeta}{i}{j}, \matr{\upkappa}{i}{j}, \upphi, \uppsi, \upnu, \upchi^j)$ of \eqref{eq:upeta_evol}--\eqref{eq:upkappa_sym} in the CMCFA gauge. There are two steps to get to the setup of Theorem~\ref{thm:einstein_scat}.

            Firstly, even before changing gauge, we upgrade the convergence of the frequency adapted quantities $\matr{{}^{(F)}\Upupsilon}{i}{j}$ and ${}^{(F)} \upvarphi$ to convergence of $\matr{\Upupsilon}{i}{j}$ and $\upvarphi$ in Definition~\ref{def:upupsilonupvarphi}. From Definition~\ref{def:time_freq_renormalized}, we have
            \begin{equation} \label{eq:time_freq_back} 
                \matr{\Upupsilon}{i}{j} = \matr{{}^{(F)} \Upupsilon}{i}{j} + \int^1_{\max\{t,\mathcal{T}_*\}} \mathring{g}_{ip}(s) \mathring{g}^{jq}(s) \, \frac{ds}{s} \cdot \matr{\upkappa}{q}{p}, \quad
                \upvarphi = {}^{(F)} \upvarphi - \max \{ \log( \mathcal{T}_* ), \log t \} \cdot \uppsi.
            \end{equation}
            Here, the object involving integral in \eqref{eq:time_freq_back} is a time-dependent symbol which, in some symbol class $S^{\beta_0}$ for $\beta > 0$, converges (strongly) as follows:
            \[
                \int^1_{\max\{t,\mathcal{T}_*\}} \mathring{g}_{ip}(s) \mathring{g}^{jq}(s) \, \frac{ds}{s} \to
                \int^1_{\mathcal{T}_*} \mathring{g}_{ip}(s) \mathring{g}^{jq}(s) \, \frac{ds}{s} 
            \]
            Therefore, for $s$ sufficiently large, the convergence \eqref{eq:asymp_upkappaupupsilon2} together with Sobolev embedding implies the existence of $\matr{(\Upupsilon_{\infty})}{i}{j}$ such that the convergence \eqref{eq:asymp_upkappaupupsilon} also holds, at least in CMCFA gauge. A similar result holds for $\upvarphi$, and here we can get convergence in $H^s$ by the same proof as Proposition~\ref{prop:wavescat_10}.

            To transform back to CMCTC gauge, we use the change of gauge associated to the vector field $\upxi^j$ defined in \eqref{eq:upxi_cmcfa}. By \eqref{eq:upkappa_diffeo} and \eqref{eq:upupsilon_diffeo}, $\matr{\upkappa}{i}{j}$ and $\matr{\Upupsilon}{i}{j}$ transform as:
            \[
                \matr{\upkappa}{i}{j} \mapsto \matr{\upkappa}{i}{j} + (t \matr{\mathring{k}}{i}{p}) \partial_p \upxi^j - (t \matr{\mathring{k}}{p}{j}) \partial_i \upxi^p, \quad
                \matr{\Upupsilon}{i}{j} \mapsto \matr{\Upupsilon}{i}{j} + \frac{1}{2} \partial_i \upxi^j + \frac{1}{2} \mathring{g}_{ip}(1) \mathring{g}^{jq}(1) \partial_q \upxi^p.
            \]
            Since, by Lemma~\ref{lem:upxi_cmcfa}, the vector field $\upxi^j$ also converges to $\upxi_{\infty}^j$ in $H^{s + \frac{1}{2}}$ as $t \to 0$, Sobolev embedding implies that we also have the desired convergence in CMCTC gauge. 

            Finally, the regularity statement \eqref{eq:asymp_metric_reg} follows from \eqref{eq:asymp_reg2}, upon using \eqref{eq:time_freq_back} and unraveling the $H^{s, F}(\Sigma_0)$ norm from Definition~\ref{def:time_freq_norm}. The remainder of Theorem~\ref{thm:einstein_scat}(i) is straightforward.

        \item
            The proof of Theorem~\ref{thm:einstein_scat}(ii) follows from an identical manner to the above; we first apply Theorem~\ref{thm:einstein_scat_v2}(ii) to find the solution in CMCFA gauge and then transform back using $\upxi^j$ from \eqref{eq:upxi_cmcfa}.

        \item
            For the scattering isomorphism in Theorem~\ref{thm:einstein_scat}(iii), one simply notes that the spaces $\mathcal{H}^s_{C,c}$ in Theorem~\ref{thm:einstein_scat} and Theorem~\ref{thm:einstein_scat_v2} are identical, while the difference between $\mathcal{H}^s_{\infty, c}$ and ${}^{(F)} \mathcal{H}^s_{\infty, c}$ exactly encodes going from the frequency adapted quantities $\matr{{}^{(F)} \Upupsilon}{i}{j}$ and ${}^{(F)} \upvarphi$ back to $\matr{\Upupsilon}{i}{j}$ to $\upvarphi$. Thus Theorem~\ref{thm:einstein_scat}(iii) follows immediately from Theorem~\ref{thm:einstein_scat_v2}(iii).  \qedhere
    \end{enumerate}
\end{proof}

\begin{proof}[Proof of Theorem~\ref{thm:asymp_einstein}]
    The asymptotics of Theorem~\ref{thm:asymp_einstein} follow straightforwardly from Theorem~\ref{thm:einstein_scat}(i) and the definition of the linearly small quantities in Definition~\ref{def:linsmall}. The scalar field asymptotics \eqref{eq:einstein_asymp_3} and \eqref{eq:einstein_asymp_4} may be read off immediately, while the metric perturbations $h_{ij} = \bar{g}_{ij} - \mathring{\bar{g}}_{ij}$ and $\kappa_{ij} = t(k_{ij} - \mathring{k}_{ij})$ are related to $\matr{\Upupsilon}{i}{j}$ and $\matr{\upkappa}{i}{j}$ (to linear order) as follows:
    \begin{gather*}
        h_{ij} = - 2 \mathring{g}_{jk} \matr{\upeta}{i}{k} = - 2 \mathring{g}_{jk} \matr{\Upupsilon}{i}{k} + 2 \mathring{g}_{jk} \int^1_t \mathring{g}_{ip}(s) \mathring{g}^{kq}(s) \frac{ds}{s} \cdot \matr{\upkappa}{q}{p}, \\[0.4em]
        k_{ij} = \mathring{g}_{jk} \matr{\upkappa}{i}{k} - 2 \mathring{g}_{jk} (t \matr{\mathring{k}}{\ell}{k}) \matr{\upeta}{i}{\ell} = \mathring{g}_{jk} ( \matr{\upkappa}{i}{k} - (2 t \matr{\mathring{k}}{\ell}{k}) \matr{\Upupsilon}{i}{\ell} ) + \mathring{g}_{jk} (2 t \matr{\mathring{k}}{\ell}{k}) \int^1_t \mathring{g}_{ip} (s) \mathring{g}^{\ell q} (s) \frac{ds}{s} \cdot \matr{\upkappa}{q}{p}.
    \end{gather*}

    The asymptotics \eqref{eq:einstein_asymp_1} and \eqref{eq:einstein_asymp_2} then follow upon insertion of the convergence $\matr{\upkappa}{i}{j}(t) \to \matr{(\upkappa_{\infty})}{i}{j}$ and $\matr{\Upupsilon}{i}{j}(t) \to \matr{(\Upupsilon_{\infty})}{i}{j}$ as $t \to 0$ from Theorem~\ref{thm:einstein_scat}(i), as well as inserting the explicit form of $\mathring{g}_{ij}$ and $t \matr{\mathring{k}}{i}{j}$. 

    To be completely precise, there is a caveat in that Theorem~\ref{thm:einstein_scat}(i) assumes the spatially harmonic condition \eqref{eq:spatiallyharmonic} at $t = 1$, but one may simply proceed via a time-independent vector field $\upxi^j$ which brings us into this gauge, e.g.~in a similar fashion to Lemma~\ref{lem:diffeo}(b) we take $\upxi^j$ time-independent solving 
    \[
        t^2 \mathring{g}^{ab}(1) \partial_a \partial_b \upxi^j = t^2 \mathring{g}^{jk}(1)  \left(  \partial_k \tr \upeta(1) - 2 \partial_{\ell} \matr{\upeta}{k}{\ell}(1) \right).
    \]
    One then transforms back, again using Lemma~\ref{lem:diffeo}; since $\upxi^j$ is time-independent this does not change the structure of the desired asymptotics.
\end{proof}

%auto-ignore
\appendix

\section{Fourier decomposition and \texorpdfstring{$\Psi$DOs}{PDOs}} \label{app:fourier}

%\subsubsection{Fourier decomposition and Sobolev spaces} \label{sub:intro_scattering1_fourier}
In Appendix~\ref{app:fourier}, we review the Fourier decomposition and Sobolev spaces $H^s$ on $\mathbb{T}^D = [- \pi, \pi]^D$. We also review pseudodifferential operators on $\mathbb{T}^D$ and their symbol calculus, and then mention some properties of the symbol $\mathcal{T}_*$ defined in Definition~\ref{def:tstar} in Lemma~\ref{lem:tstar}.

\begin{definition} \label{def:fourier}
    Let $f: \mathbb{T}^D \to \R$ be a sufficiently regular\footnote{From the standard Fourier theory, $f \in L^1(\mathbb{T}^D)$ will be sufficient, however are eventually more interested in the Sobolev spaces $H^s$. As we also entertain $s < 0$ in some cases, one may even consider the case where $f$ is simply a distribution.} function. The \emph{standard Fourier decomposition}, or \emph{Fourier series},  $(f_{\lambda})_{\lambda \in \Z^d}$, is defined as follows. For $\lambda = (\lambda_1, \ldots, \lambda_D) \in \Z^D$, we have
    \begin{equation} \label{eq:fourier}
        f_{\lambda} = \frac{1}{(2 \pi)^D} \int_{\mathbb{T}^D} f(x) e^{- i \lambda \cdot x} \, dx = \frac{1}{(2 \pi)^D} \int_{x \in [- \pi, \pi]^D} f(x) e^{-i \lambda_k x^k} \,dx^1 \cdots dx^D.
    \end{equation}
    A similar decomposition holds for tensors in $\mathbb{T}^D$, e.g.~a $(1, 1)$ tensor $A = \matr{A}{i}{j}$ on $\mathbb{T}^D$ has Fourier decomposition:
    \begin{equation*}
        \matr{\left( A_{\lambda} \right)}{i}{j} = \frac{1}{(2\pi)^D} \int_{\mathbb{T}^D} \matr{A}{i}{j}(x) e^{-i \lambda \cdot x} \, dx.
    \end{equation*}
\end{definition}

\begin{remark}
For sufficiently regular functions $f$, the Fourier inversion formula holds:
\begin{equation}
    f(x) = \sum_{\lambda \in \Z^D} f_{\lambda} \, e^{i \lambda \cdot x},
\end{equation}
hence one is able to recover a function $f$ on $\mathbb{T}^D$ from its Fourier series, and similarly for tensors on $\mathbb{T}^D$. 
\end{remark}
%Next, we define the Sobolev spaces $H^s = H^s(\mathbb{T}^D)$ using the Fourier decomposition.

\begin{definition}
    For $s \in \R$, a function $f: \mathbb{T}^D \to \R$ lies in the \emph{Sobolev space} $H^s = H^s(\mathbb{T}^D)$ if the following squared norm is finite, where $\langle \cdot \rangle$ is the usual Japanese bracket; $\langle x \rangle^2 \coloneqq 1 + |x|^2$.
    \begin{equation}
        \| f \|_{H^s}^2 \coloneqq \sum_{\lambda \in \Z^D} \langle \lambda \rangle^{2s} |f_{\lambda}|^2.
    \end{equation}
     Similarly, a $(1, 1)$ tensor $A = \matr{A}{i}{j}$ lies in the Sobolev space $H^s$ if 
    \begin{equation}
        \| \matr{A}{i}{j} \|_{H^s}^2 \coloneqq \sum_{i, j = 1}^D \sum_{\lambda \in \Z^D} \langle \lambda \rangle^{2s} \left| \matr{\left(A_{\lambda}\right)}{i}{j} \right|^2 < + \infty.
    \end{equation}
\end{definition}

\begin{definition} \label{def:symbolcalc}
    Let $L: \mathbb{Z}^D \to \R$ be a function that grows no faster than polynomially. We define the \emph{symbol} $\mathcal{L}$ associated to $L$, as follows:
    For $f \in C^{\infty}(\mathbb{T}^D)$ smooth, let $\mathcal{L} f$ be the (smooth) function whose Fourier coefficients are given by the following expression
    \begin{singlespace}
        \[
            \left( \mathcal{L} f \right)_{\lambda} = \begin{cases}
                L(\lambda) f_{\lambda} & \text{ if } \lambda \neq 0,\\[0.3em]
                0 & \text { otherwise. }
            \end{cases}
        \]
    \end{singlespace} \noindent
    Next, following the symbol calculus of \cite{HormanderPseudo}, for $F: \R \to \R$ such that $F \circ T: \mathbb{Z}^D \to \R$ grows no faster than polynomially, define the operator $F(\mathcal{L}): C^{\infty}(\mathbb{T}^D) \to C^{\infty}(\mathbb{T}^D)$ to be such that 
    \begin{singlespace}
    \[
        \left( F(\mathcal{L}) f \right)_{\lambda} = \begin{cases}
            F(\mathcal{L}) \, (F \circ L (\lambda)) f_{\lambda} & \text{ if } \lambda \neq 0,\\[0.3em]
            0 & \text { otherwise. }
        \end{cases}
    \]
    \end{singlespace} \noindent
    For $m \in \R$, we say that $\mathcal{L}$ is in the \emph{symbol class} $S^m$ if $L(\lambda)$ is bounded by a constant multiple of $\langle \lambda \rangle^m$. If $\mathcal{L}$ is in the symbol class $S^m$, then for any $s \in \R$, $\mathcal{L}$ may be extended as a bounded operator from $H^s$ to $H^{s - m}$.
\end{definition}

As promised, we end this section with a brief discussion of the properties of the symbol $\mathcal{T}_*$, which is defined in Definition~\ref{def:tstar}.

\begin{lemma} \label{lem:tstar}
    Let $\mathcal{T}_*$ be as in Definition~\ref{def:tstar}. Then:
    \begin{enumerate}[(1)]
        \item
            The symbol $\log(\mathcal{T}_*)$ lies in the symbol class $S^{\varepsilon}$ if and only if $\varepsilon > 0$.
        \item
            For $p = \max p_i$, the symbol $\mathcal{T}_*^{-1}$ lies in the symbol class $S^N$ if and only if $N \geq \frac{1}{1-p}$.
        \item
            For $P = \min p_i$, the symbol $\mathcal{T}_*$ lies in the symbol class $S^N$ if and only if $N \geq - \frac{1}{1-P}$.
    \end{enumerate}
\end{lemma}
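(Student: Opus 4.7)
The plan is to first obtain sharp two-sided bounds on $t_{\lambda*}^{-1}$ in terms of $|\lambda|$, and then translate those bounds directly into the three statements of the lemma via Definition~\ref{def:symbolcalc}. The key auxiliary fact I will establish is
\begin{equation} \label{eq:tstar_plan_bound}
    \max_{1\leq i \leq D} |\lambda_i|^{\frac{1}{1-p_i}} \;\asymp\; t_{\lambda*}^{-1}, \qquad \lambda \in \Z^D \setminus \{0\},
\end{equation}
with implied constants depending only on $D$ and on the Kasner exponents. The upper bound $t_{\lambda*}^{2-2p_i} \lambda_i^2 \leq 1$ for each $i$ follows from the defining relation $\sum_i t_{\lambda*}^{2-2p_i}\lambda_i^2 = 1$ and positivity of all summands; since the background is non-degenerate we have $1-p_i > 0$ for every $i$, so this rearranges to $t_{\lambda*} \leq |\lambda_i|^{-1/(1-p_i)}$ whenever $\lambda_i \neq 0$, giving the lower bound in~\eqref{eq:tstar_plan_bound}. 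Conversely, by pigeonhole there is at least one index $i$ with $t_{\lambda*}^{2-2p_i} \lambda_i^2 \geq 1/D$, giving the matching upper bound.

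Next I will sandwich the right-hand side of~\eqref{eq:tstar_plan_bound} between the two expected powers of $\langle \lambda \rangle$. Upward, since $|\lambda_i| \leq |\lambda| \leq C\langle \lambda\rangle$ and the exponent $1/(1-p_i)$ is maximized at $p_i = p$, one gets $\max_i |\lambda_i|^{1/(1-p_i)} \lesssim \langle\lambda\rangle^{1/(1-p)}$ for $\langle\lambda\rangle \geq 1$. Downward, some component satisfies $|\lambda_i| \geq |\lambda|/\sqrt{D}$, and the exponent $1/(1-p_i)$ is minimized at $p_i = P$, so $\max_i |\lambda_i|^{1/(1-p_i)} \gtrsim \langle\lambda\rangle^{1/(1-P)}$ once $\langle\lambda\rangle$ is sufficiently large. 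Combining with~\eqref{eq:tstar_plan_bound} yields
\[
    c\,\langle\lambda\rangle^{\frac{1}{1-P}} \;\leq\; t_{\lambda*}^{-1} \;\leq\; C\,\langle\lambda\rangle^{\frac{1}{1-p}}
\]
uniformly for $\lambda \in \Z^D\setminus\{0\}$, and a trivial extension (using $t_{0*}=1$) covers $\lambda = 0$. These bounds are sharp: for each extremal exponent, evaluating $t_{\lambda*}$ along the coordinate sequence $\lambda = n\,e_\iota$ with $p_\iota$ equal to $p$, respectively $P$, yields $t_{\lambda*}^{-1} = n^{1/(1-p_\iota)}$ exactly, realizing either endpoint.

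With these bounds in hand, the three statements follow immediately from the definition of the symbol classes. For (2), the upper bound gives $\mathcal{T}_*^{-1} \in S^{1/(1-p)}$, while the $\lambda = n\,e_\iota$ sequence (with $p_\iota = p$) shows $\mathcal{T}_*^{-1} \notin S^N$ for any $N < 1/(1-p)$. For (3), since $t_{\lambda*} \leq 1$ is equivalent to $t_{\lambda*}^{-1} \geq 1$, the lower bound $t_{\lambda*}^{-1} \geq c\langle\lambda\rangle^{1/(1-P)}$ gives $t_{\lambda*} \leq C\langle\lambda\rangle^{-1/(1-P)}$, so $\mathcal{T}_* \in S^{-1/(1-P)}$; the corresponding extremal sequence with $p_\iota = P$ forces $N \geq -1/(1-P)$. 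For (1), taking logarithms of the sandwich gives $|\log t_{\lambda*}| \asymp \log\langle\lambda\rangle$ for $|\lambda| \geq 2$, so $\log(\mathcal{T}_*) \in S^\varepsilon$ for every $\varepsilon > 0$ but fails to lie in $S^0$, and a fortiori in $S^\varepsilon$ for $\varepsilon \leq 0$.

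There is no real obstacle here beyond bookkeeping: the main content is the elementary two-sided estimate~\eqref{eq:tstar_plan_bound}, which is purely algebraic from the defining equation of $t_{\lambda*}$. The only mild point to be careful about is the non-degeneracy hypothesis, which ensures $p_i < 1$ for every $i$ and hence that the exponents $1/(1-p_i)$ are positive and finite, so that the monotonicity arguments comparing $|\lambda_i|$ with $\langle\lambda\rangle$ make sense in the desired direction.
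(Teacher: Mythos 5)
Your proof is correct and follows essentially the same route as the paper's: both proofs use the componentwise bound $t_{\lambda*} \leq |\lambda_i|^{-1/(1-p_i)}$ (each summand in the defining relation is at most $1$) together with a pigeonhole argument that some $i$ has $\lambda_i^2 t_{\lambda*}^{2-2p_i} \geq 1/D$, and both obtain sharpness from the coordinate sequences $\lambda = n e_\iota$. The only cosmetic difference is that you package these two estimates into the explicit intermediate equivalence $\max_i |\lambda_i|^{1/(1-p_i)} \asymp t_{\lambda*}^{-1}$ before converting to powers of $\langle\lambda\rangle$, whereas the paper goes directly; this slightly more structured presentation is fine and arguably clarifies where the extremal exponents $p$ and $P$ enter.
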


\begin{proof}
    Using Definition~\ref{def:tstar}, the lemma will follow from an understanding of the growth rates of the function $t_*: \Z^D \to (0, + \infty)$ such that $t_*(\lambda) = t_{\lambda*}$ satisfies $t_*(0) = 1$ and
    \[
        \sum_{i=1}^D \lambda_i^2 t_{\lambda*}^{2 - 2p_i} = 1, \text{ for } \lambda \neq 0.
    \]

    For each $\lambda \neq 0$, it is immediate from this definition that for all $i$, one has $t_{\lambda*} \leq \lambda_i^{- \frac{1}{1-p_i}}$, and that this is inequality is sharp if $\lambda_i$ is the only nonvanishing component of $\lambda$. This proves (3) immediately.

    To show (2), note that for $\lambda \neq 0$, there must be some $i \in \{1, \ldots D\}$ such that $\lambda_i^2 t_{\lambda*}^{2 - 2p_i} \geq \frac{1}{D}$. Therefore for this $i$, we have that $t_{\lambda*}^{-1} \lesssim \lambda_i^{\frac{1}{1-p_i}} \leq \langle \lambda \rangle^{\frac{1}{1-p}}$. Thus if $N \geq \frac{1}{1-p}$, the symbol $\mathcal{T}_*$ lies in the symbol class $S^N$. Conversely, if without loss of generality $p = p_1$, then considering $\lambda = (\lambda_1, 0, \ldots, 0)$ proves that we cannot have $t_{\lambda*}^{-1} \lesssim \langle \lambda \rangle^{N}$ for any smaller $N$.

    Finally, since we have shown both that $t_{\lambda*} \leq \langle \lambda \rangle^{-\frac{1}{1-P}}$ and that $t_{\lambda*}^{-1} \leq \langle \lambda \rangle^{\frac{1}{1-p}}$, it is clear that $\log t_{\lambda*}^{-1} \asymp \log \langle \lambda \rangle$. From this, the claim (1) is also straightforward.
\end{proof}

\section{Fuchsian ODE systems} \label{app:fuchsian}

A \emph{Fuchsian PDE} is an equation of the following form, where $\mathbf{V}: (0, T) \times \R^k \to \R^N$, $A$ is a suitable matrix-valued operator, and $f$ vanishes suitably as $t \to 0$.
\begin{equation*}
    \left( t \frac{\partial}{\partial t} + A \right) \mathbf{V} = f [ t, \mathbf{V} ].
\end{equation*}
Such equations originally arose in the ``Fuchs-Frobenius'' study of special functions and their singularities in the complex plane.
We refer the reader to the introduction to \cite{KichenassamyFuchsian} for details.

The above definition of Fuchsian PDEs is somewhat imprecise, and in this article we only make concrete definitions in the case of ODEs. In this case, the definition will be as follows.

\begin{definition}[Fuchsian ODEs] \label{def:fuchsian}
    Let $\mathbf{V}: [0, T) \subset \R \to \R^N$ be a continuous vector-valued function, and let $\varepsilon > 0$. We say that $\mathbf{V}$ satisfies a \emph{Fuchsian ODE with $t^{\varepsilon}$ weight} if for $t \in (0, t)$, the following equation holds:
    \begin{equation} \label{eq:fuchsian}
        \left( t \frac{d}{dt} + A \right) \mathbf{V} = t^{\varepsilon} F [ t, \mathbf{V} ]
    \end{equation}
    where $A$ is a constant $N \times N$ matrix, and $F: [0, T) \times \R^N \to \R^N$ is continuous in both arguments and locally Lipschitz in its second argument.
\end{definition}

\begin{remark}
    One can make more general definitions where the RHS of \eqref{eq:fuchsian} tends to $0$ at a slower rate than $t^{\epsilon}$, or where the regularity assumptions are somewhat relaxed. However we do not need to do this here, and we refer the interested reader to \cite[Chapter 5]{KichenassamyFuchsian}.
\end{remark}

We would like a local existence and uniqueness theory for $\mathbf{V}$ satisfying Fuchsian ODEs of the form \eqref{eq:fuchsian}, with initial data given at the `singularity' at $t = 0$. The result, which parallels the usual Picard--Lindel\"of theorem for regular ODEs, is as follows:

\begin{lemma} \label{lem:fuchsian}
    Consider a Fuchsian ODE of the form \eqref{eq:fuchsian}, and suppose moreover that the matrix $A$ is such that (the operator norm of) the matrix $\sigma^A = \exp( A \log \sigma )$ is uniformly bounded for $\sigma \in (0, 1)$. Then
    \begin{enumerate}[(a)]
        \item \label{item:fuchsian_a}
            Consider the initial data $\mathbf{V}(0) = 0$. Then for some $T_* > 0$, there exists a unique continuous $\mathbf{V}: [0, T_*) \to \R^N$ which has $V(0) = 0$ and satisfies \eqref{eq:fuchsian} for $t \in (0, T_*)$. Further, the solution satisfies the bound $|\mathbf{V}(t)| \lesssim t^{\varepsilon}$.

        \item \label{item:fuchsian_b}
            Now let $\mathbf{v} \in \R^N$ be in the kernel of $A$, and consider initial data $\mathbf{V}(0) = \mathbf{v}$. Then for some $T_* > 0$, there exists a unique continuous $\mathbf{V}: [0, T_*) \to \R^N$ which has $V(0) = \mathbf{V}$ and satisfies \eqref{eq:fuchsian} for $t \in (0, T_*)$. Further, the solution satisfies the bound $|\mathbf{V}(t) - \mathbf{v}| \lesssim t^{\epsilon}$.
    \end{enumerate}
\end{lemma}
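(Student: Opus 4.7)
The plan is to recast the Fuchsian ODE \eqref{eq:fuchsian} as an integral equation via an integrating factor, then close the argument by a contraction mapping argument in a weighted sup norm. For part (\ref{item:fuchsian_a}), I would first multiply through by $s^{A-I}$ and use the identity $\frac{d}{ds}(s^A \mathbf{V}) = s^{A-I}(A\mathbf{V} + s \mathbf{V}') = s^{A-I+\varepsilon} F[s,\mathbf{V}]$. Integrating this from $0$ to $t$ (and using that $|s^A \mathbf{V}(s)| \to 0$ as $s \to 0$, which will be consistent with the expected bound $|\mathbf{V}(s)|\lesssim s^{\varepsilon}$ since $s^A$ is uniformly bounded), I arrive at the fixed-point formulation
\[
    \mathbf{V}(t) = \mathcal{T}[\mathbf{V}](t) \coloneqq \int_0^t \left(\frac{s}{t}\right)^{\! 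A} s^{\varepsilon - 1} F[s, \mathbf{V}(s)]\, ds.
\]
Because $\sigma^A$ is bounded for $\sigma \in (0,1)$ by hypothesis and $s/t \in (0,1)$ in the integrand, the matrix factor $(s/t)^A$ contributes only a uniform constant.

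I would then work in the Banach space $X_{T_*}$ of continuous $\mathbf{V}:[0,T_*] \to \R^N$ with $\mathbf{V}(0) = 0$, equipped with the weighted norm $\| \mathbf{V} \| = \sup_{t \in (0, T_*]} t^{-\varepsilon} |\mathbf{V}(t)|$. Fix a large radius $R > 0$ (to be chosen), and let $B_R \subset X_{T_*}$ be the closed ball of radius $R$. For $\mathbf{V} \in B_R$, one has $|\mathbf{V}(s)| \leq R s^{\varepsilon} \leq R T_*^{\varepsilon}$, so by continuity of $F$ the quantity $M_R \coloneqq \sup_{s \in [0,T_*],\, |v| \leq R T_*^{\varepsilon}} |F[s,v]|$ is finite and stays bounded (by $|F[0,0]| + 1$, say) as $T_* \to 0$. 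Then the computation
\[
    |\mathcal{T}[\mathbf{V}](t)| \lesssim \int_0^t s^{\varepsilon-1} M_R \, ds = \frac{M_R}{\varepsilon} t^{\varepsilon}
\]
shows that $\mathcal{T}: B_R \to B_R$ once $R$ is chosen larger than $C M_R/\varepsilon$. For contractivity, for $\mathbf{V}_1, \mathbf{V}_2 \in B_R$, using the local Lipschitz bound $L_R$ on $F$,
\[
    |\mathcal{T}[\mathbf{V}_1](t) - \mathcal{T}[\mathbf{V}_2](t)| \lesssim L_R \int_0^t s^{\varepsilon-1} \cdot s^{\varepsilon}\, ds \cdot \|\mathbf{V}_1 - \mathbf{V}_2\| \lesssim \frac{L_R}{2\varepsilon} \, t^{2\varepsilon} \|\mathbf{V}_1 - \mathbf{V}_2\|,
\]
so dividing by $t^{\varepsilon}$ and taking $T_*$ sufficiently small yields a contraction. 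The Banach fixed point theorem then produces a unique $\mathbf{V} \in B_R$ solving the integral equation, which by construction satisfies $|\mathbf{V}(t)| \lesssim t^{\varepsilon}$ and solves \eqref{eq:fuchsian}. Uniqueness within $B_R$ extends to uniqueness among all continuous solutions with $\mathbf{V}(0) = 0$ by a standard bootstrap: any such solution automatically lies in some $B_{R'}$ for small enough $T_*$, because the integral representation forces the $t^{\varepsilon}$ decay.

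For part (\ref{item:fuchsian_b}), I would reduce directly to part (\ref{item:fuchsian_a}) by substituting $\mathbf{V}(t) = \mathbf{v} + \mathbf{W}(t)$ with $\mathbf{W}(0) = 0$. Since $A\mathbf{v} = 0$ (the only place the kernel hypothesis enters) and $t \frac{d}{dt}\mathbf{v} = 0$, the equation becomes
\[
    \left(t \frac{d}{dt} + A\right) \mathbf{W} = t^{\varepsilon} \tilde{F}[t, \mathbf{W}], \qquad \tilde{F}[t, \mathbf{W}] \coloneqq F[t, \mathbf{v} + \mathbf{W}],
\]
and $\tilde F$ inherits continuity and local Lipschitz dependence on $\mathbf{W}$ from $F$. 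Applying part (\ref{item:fuchsian_a}) to this system yields the unique $\mathbf{W}$ with $|\mathbf{W}(t)| \lesssim t^{\varepsilon}$, which translates to the desired $|\mathbf{V}(t) - \mathbf{v}| \lesssim t^{\varepsilon}$.

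The main (mild) subtlety is the handling of the integrating factor $t^A$ near $t = 0$: if $A$ had eigenvalues with negative real part, then $t^A$ would blow up and the integral formulation would fail to make sense. The hypothesis that $\sigma^A$ is uniformly bounded for $\sigma \in (0,1)$ is precisely what rules this out, and it is exactly what makes $(s/t)^A$ harmless in the integrand. Everything else is a routine Picard-type argument adapted to the singular weight $s^{\varepsilon - 1}$, which is integrable at $s = 0$ thanks to $\varepsilon > 0$.
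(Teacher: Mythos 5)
Your proposal is correct and follows essentially the same route as the paper's proof: both reformulate \eqref{eq:fuchsian} as the integral equation $\mathbf{V}(t) = \int_0^t (s/t)^A s^{\varepsilon-1} F[s,\mathbf{V}(s)]\,ds$ (the paper writes this after the substitution $s = \sigma t$, but it is the same operator) and close via Banach's fixed point theorem, reducing part (\ref{item:fuchsian_b}) to part (\ref{item:fuchsian_a}) by the same translation $\mathbf{V} = \mathbf{v} + \mathbf{W}$ that exploits $A\mathbf{v} = 0$. The only cosmetic difference is that you contract in the weighted norm $\sup_t t^{-\varepsilon}|\mathbf{V}(t)|$ whereas the paper uses the plain $C^0$ norm on a ball $\{|\mathbf{W}(t)| \le M\}$ and recovers the $t^{\varepsilon}$ decay afterwards; both yield the same smallness factor $\sim T_*^{\varepsilon}/\varepsilon$ for contractivity.
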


\begin{proof}
    For part (a), we employ a contraction mapping argument in the following subset of $C([0, T_*), \R^N)$:
    \[
        X = \{ \mathbf{W} \in C([0, T_*), \R^N): \mathbf{W}(0) = 0, \,|\mathbf{W}(t)| \leq M \},
    \]
    where $T_* > 0$ and $M > 0$ will be fixed later. The idea is to use the following map $S: X \to X$.
    \begin{equation} \label{eq:iteration}
        S(\mathbf{W})(0) = 0, \quad S(\mathbf{W}) (t) = t^{\varepsilon} \int^1_0 \sigma^{A - 1 + \varepsilon} F[ \sigma t, \mathbf{W}(\sigma t) ] \, d \sigma \qquad \forall t \in (0, T_*).
    \end{equation}

    Firstly, we note that $\mathbf{V}$ satisfies the Fuchsian ODE \eqref{eq:fuchsian} with $\mathbf{V}(0) = 0$ if and only if $S(\mathbf{V}) = \mathbf{V}$. This is because the integral in \eqref{eq:iteration} can be rewritten as:
    \begin{align*}
        t^{\varepsilon} \int^1_0 \sigma^{A - 1 + \varepsilon} F[ \sigma t, \mathbf{W}(\sigma t) ] \, d \sigma 
        &= t^{- A} \int^1_0 (\sigma t)^{A + \varepsilon} F[ \sigma t, \mathbf{W}(\sigma t) ] \, \frac{d \sigma}{\sigma} \\[0.4em]
        &= t^{- A} \int^t_0 \tilde{\sigma}^{A + \varepsilon} F[ \tilde{\sigma}, \mathbf{W}(\tilde{\sigma})] \, \frac{ d \tilde{\sigma}}{\tilde{\sigma}}.
    \end{align*}
    Therefore $S(\mathbf{V}) = \mathbf{V}$ if and only if we have that
    \[
        t^A \mathbf{V} (t) = \int^t_0 \tilde{\sigma}^{A + \epsilon} F[ \tilde{\sigma}, \mathbf{V}(\tilde{\sigma})] \, \frac{d \tilde{\sigma}}{\tilde{\sigma}}, \qquad \forall t \in (0, T_*)
    \]
    which is obviously equivalent to $\mathbf{V}$ satisfying \eqref{eq:fuchsian} for $t \in (0, T_*)$.

    We next show that for correctly chosen $M$ and $T_*$, the map $S$ sends the domain $X$ to itself. The fact that $S(\mathbf{W})$ is continuous on $[0, T_*)$ is easily checked, while for the boundedness let us first suppose that $K$ is the Lipschitz constant of $F$ with respect to its second argument (since this lemma is local in nature, we assume without loss of generality that $K$ is a global Lipschitz constant). Then we have
    \begin{align*}
        | S (\mathbf{W})(t) | 
        &\leq t^{\varepsilon} \int^1_0 \| \sigma^{A} \|_{op} \, \sigma^{-1 + \epsilon} | F[ \sigma t, \mathbf{W}(\sigma t)] | \, d \sigma \\[0.4em]
        &\leq t^{\epsilon} \sup_{\sigma \in (0, 1)} \| \sigma^{A} \|_{op} \cdot \left( \sup_{s \in (0, t)}F[s, 0] + KM \right) \cdot \int^1_0 \sigma^{-1 + \epsilon} \, d \sigma \\[0.4em]
        &= \epsilon^{-1} t^{\epsilon} \sup_{\sigma \in (0, 1)} \| \sigma^{A} \|_{op} \cdot \left( \sup_{s \in (0, t)}F[s, 0] + KM \right).
    \end{align*}
    Therefore for any $M>0$, for $T_*$ chosen sufficiently small we have $|S (\mathbf{W})(t)| \leq M$, hence $S: X \to X$ as required.
    A similar computation shows that $S$ is a contraction mapping with respect to the $C^0$ norm. Indeed,
    \begin{align*}
        | S (\mathbf{W}_1) (t) - S (\mathbf{W}_2) (t) |
        &\leq t^{\epsilon} \int^1_0 \| \sigma^A \|_{op} \, \sigma^{-1 + \epsilon} | F[ \sigma t, \mathbf{W}_1(\sigma t) ] - F[ \sigma t, \mathbf{W}_2(\sigma t) ] | \, d \sigma \\[0.4em]
        &\leq t^{\epsilon} \sup_{\sigma \in (0, 1)} \| \sigma^A \|_{op} \cdot \int^1_0 \sigma^{-1 + \epsilon} \cdot K | \mathbf{W}_1(\sigma t) - \mathbf{W}_2(\sigma t) | \, d \sigma \\[0.4em]
        &\leq K \epsilon^{-1} t^{\epsilon} \sup_{\sigma \in (0, 1)} \| \sigma^A \|_{op} \cdot \sup_{s \in (0, T_*)} | \mathbf{W}_1 (s) - \mathbf{W}_2 (s)|.
    \end{align*}
    Therefore $S$ is a contraction mapping so long as $T_*$ is chosen such that $K \epsilon^{-1} T_*^{\epsilon} \sup_{\sigma \in (0, 1)} \| \sigma^A \|_{op} < 1$.

    Hence by the contraction mapping theorem, there is a unique fixed point $\mathbf{V}$ of $S$ in the set $X$. This completes the existence part of the proof. For uniqueness, first observe that the contraction argument shows that two solutions $\mathbf{V}_1$ and $\mathbf{V}_2$ to \eqref{eq:fuchsian} agree on an interval $[0, T_-] \subset [0, T_*)$ so long as they remain bounded by $M$. Moreover, by the above bounds they must have $|\mathbf{V}_1(t)|, |\mathbf{V}_2(t)| \lesssim t^{\epsilon}$ for $t \in [0, T_-]$.

    Therefore, by a straightforward continuity argument, they must agree in the whole interval $[0, T_*)$, and satisfy the bound $|\mathbf{V}_1(t)| = |\mathbf{V}_2(t)| \lesssim t^{\epsilon}$ everywhere in this interval of existence. This concludes the proof of (a).

    Finally, part (b) follows immediately from part (a) simply by taking $\tilde{\mathbf{V}} = \mathbf{V} - \mathbf{v}$ and, upon using the fact that $A \mathbf{v} = 0$, rewriting the Fuchsian ODE \eqref{eq:fuchsian} as
    \[
        \left( t \frac{d}{dt} \tilde{\mathbf{V}} + A \right) = t^{\epsilon} \tilde{F}[t, \tilde{\mathbf{V}}], \qquad \tilde{F}[t, \mathbf{W}] = F[t, \mathbf{W} + \mathbf{v}].
    \]
    Applying part (a) to the above equation, this concludes the proof of (b).
\end{proof}

\section{Propagation of constraints} \label{app:constraints}

In Appendix~\ref{app:constraints}, we state and prove two propositions explaining how the constraint equations (and gauge conditions) are propagated in evolution. The first, Proposition~\ref{prop:constraints_1}, shows that in the CMCSH evolution of Proposition~\ref{prop:adm_lwp}(i), the constraints \eqref{eq:hamiltonian_linear}--\eqref{eq:upkappa_sym} and the gauge conditions \eqref{eq:cmc_linear} and \eqref{eq:spatiallyharmonic}, remain true on all Cauchy slices $\Sigma_t$. The second, Proposition~\ref{prop:constraints_2}, starts with the asymptotic constraint equations of Lemma~\ref{lem:scattering_constraints} and shows that the solution produced by Proposition~\ref{prop:einsteinscat_01} solves the actual constraint equations \eqref{eq:hamiltonian_linear}--\eqref{eq:upkappa_sym} as well as the CMC condition $\tr \upkappa = 0$.

\begin{proposition} \label{prop:constraints_1}
    Let $(\matr{\hat{\upeta}}{i}{j}(t), \matr{\hat{\upkappa}}{i}{j}(t), \upphi(t), \uppsi(t), \upnu(t), \upchi^j(t))$ be a solution to the elliptic-hyperbolic system \eqref{eq:upeta_evol}, \eqref{eq:upkappa_evol}, \eqref{eq:ricci_lin_sh}, \eqref{eq:upphi_evol}, \eqref{eq:uppsi_evol}, \eqref{eq:upnu_elliptic}, \eqref{eq:upchi_elliptic} for $t > 0$. Suppose furthermore that for some time $T > 0$, the constraint equations \eqref{eq:hamiltonian_linear}, \eqref{eq:momentum_linear1}, \eqref{eq:upeta_sym} and \eqref{eq:upkappa_sym} hold at $t = T$, and also that the CMCSH gauge constraints $\tr \hat{\upkappa} = 0$ and \eqref{eq:spatiallyharmonic} hold at $t = T$. Then the whole system \eqref{eq:upeta_evol}--\eqref{eq:upkappa_sym} holds for $t > 0$.
\end{proposition}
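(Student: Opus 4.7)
The plan is to introduce constraint quantities for each of the relations \eqref{eq:hamiltonian_linear}, \eqref{eq:momentum_linear1}, \eqref{eq:upeta_sym}, \eqref{eq:upkappa_sym}, along with the gauge quantities $\mathcal{C}_{CMC}:=\tr\hat{\upkappa}$ and $\mathcal{C}_{SH,i}:=2\partial_j\matr{\hat{\upeta}}{i}{j}-\partial_i\tr\hat{\upeta}$, and to derive from the evolution and elliptic equations \eqref{eq:upeta_evol}--\eqref{eq:upchi_elliptic} a closed, linear, homogeneous elliptic-hyperbolic system satisfied by this collection of constraint fields. Once such a system is in hand, standard uniqueness (analogous to Proposition~\ref{prop:adm_lwp}(i)) applied with vanishing data at $t=T$ will force all constraint quantities to vanish identically on $(0,+\infty)$, yielding the claim.

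The derivation proceeds in two stages. First, I handle the gauge conditions and symmetries. For the CMCSH condition, I compute $t\partial_t\mathcal{C}_{SH,i}$ using \eqref{eq:upeta_evol}; the $\matr{\hat{\upkappa}}{i}{j}$-contributions reduce to a combination of $\mathcal{M}_i$ and $\mathcal{C}_{CMC}$ after commuting derivatives, the $\upnu$-terms are absorbed using \eqref{eq:upnu_elliptic}, and the shift contributions are cancelled precisely by the elliptic equation \eqref{eq:upchi_elliptic} — this is, by construction, the defining property of the Anderson--Moncrief shift (cf.~\cite{AnderssonMoncrief}). The CMC condition is treated similarly by tracing \eqref{eq:upkappa_evol}, where the trace of \eqref{eq:ricci_lin_sh} produces $-\mathring{g}^{ab}\partial_a\partial_b\tr\hat{\upeta}$ and the lapse equation \eqref{eq:upnu_elliptic} absorbs the $\upnu$-terms; the result is an equation for $\mathcal{C}_{CMC}$ homogeneous in the constraint quantities. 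The symmetries $\mathcal{S}_1,\mathcal{S}_2$ propagate by directly taking the antisymmetric parts of \eqref{eq:upeta_evol} and \eqref{eq:upkappa_evol}, which by inspection produce equations homogeneous in $\mathcal{S}_1,\mathcal{S}_2$ themselves, since the symmetric structure of $\mathring{k}$ and $\mathring{g}$ ensures all source terms close among the constraint quantities.

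Second, and most importantly, I propagate the Hamiltonian and momentum constraints $\mathcal{H}$ and $\mathcal{M}_i$. This is the linearized analogue of the classical argument that uses the twice-contracted Bianchi identity $\mathbf{D}^\mu\mathbf{G}_{\mu\nu}=0$ to show propagation of constraints for the nonlinear Einstein equations. Explicitly, I compute $t\partial_t\mathcal{H}$ and $t\partial_t\mathcal{M}_i$ using the evolution equations \eqref{eq:upeta_evol}--\eqref{eq:uppsi_evol}; after commutations with $\partial_i$, use of the elliptic equations for $\upnu$ and $\upchi^j$, and cancellation of all top-order terms that do not reconstitute $\mathcal{H}$ or $\mathcal{M}_i$, one obtains a first-order linear system of the schematic form
\[
t\partial_t\begin{pmatrix}\mathcal{H}\\ \mathcal{M}_i\end{pmatrix}= \mathcal{L}\begin{pmatrix}\mathcal{H}\\ \mathcal{M}_i\end{pmatrix}+\mathcal{R}(\mathcal{C}_{SH,i},\mathcal{C}_{CMC},\mathcal{S}_1,\mathcal{S}_2),
\]
with $\mathcal{L}$ a first-order differential operator and $\mathcal{R}$ linear in the already-controlled gauge and symmetry quantities.

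The main obstacle is verifying the cancellation of top-order (i.e., second-derivative) terms in the derivation of the above system for $(\mathcal{H},\mathcal{M}_i)$, a lengthy but mechanical computation that mirrors the nonlinear Bianchi calculation. Once this closed homogeneous system is established, energy estimates (or simply ODE-style Gr\"onwall analysis at the level of each Fourier mode, following the method of Sections~\ref{sec:high_freq}--\ref{sec:low_freq}) applied with zero data at $t=T$ propagate vanishing in both time directions. Combined with uniqueness for the elliptic equations \eqref{eq:upnu_elliptic} and \eqref{eq:upchi_elliptic} (with zero-average normalisation of $\upchi^j$), this concludes that all constraints and gauge conditions hold throughout $(0,+\infty)$.
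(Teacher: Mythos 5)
Your overall strategy — promote the constraint equations to ``constraint-violation'' fields, derive a closed linear homogeneous system for them, then propagate vanishing by uniqueness — is the same as the paper's. However, there are two points on which your proposal glosses over the genuinely nontrivial content of the argument, one of which I regard as a gap.

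The key subtlety concerns the symmetry constraints. Writing $\mathcal{A}^{ac}=\mathring{g}^{ab}\matr{\hat{\upeta}}{b}{c}-\mathring{g}^{cb}\matr{\hat{\upeta}}{b}{a}$ and $\mathcal{B}^{ac}=\mathring{g}^{ab}(\matr{\hat{\upkappa}}{b}{c}-2t\matr{\mathring{k}}{b}{d}\matr{\hat{\upeta}}{d}{c})-(a\leftrightarrow c)$ for what you call $\mathcal{S}_1,\mathcal{S}_2$, you claim that the antisymmetrization of \eqref{eq:upeta_evol}--\eqref{eq:upkappa_evol} ``by inspection'' produces a system homogeneous in $\mathcal{S}_1,\mathcal{S}_2$. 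The paper's computation shows that the evolution equation for $\mathcal{B}^{ac}$ takes the form
\begin{equation*}
  t\partial_t\mathcal{B}^{ac} \;=\; t^2\mathring{g}^{ij}\partial_i\partial_j\mathcal{A}^{ac} \;-\; t^2\mathring{g}^{ab}\partial_b\partial_d\mathcal{A}^{dc} \;-\; t^2\mathring{g}^{cb}\partial_b\partial_d\mathcal{A}^{ad},
\end{equation*}
and while the right-hand side is indeed linear and homogeneous in $\mathcal{A}$, the last two divergence-type second-derivative terms do not fit into the wave-equation-type energy $\mathcal{E}_{\mathcal{A}\mathcal{B}}(t)=\int (t^2\mathring{g}^{ij}\partial_i\mathcal{A}\,\partial_j\mathcal{A}+\mathcal{B}^2+\mathcal{A}^2)\,dx$, so the Gr\"onwall argument you invoke does not close directly. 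The paper resolves this by noting that $\mathcal{A},\mathcal{B}$ are invariant under the infinitesimal change of gauge \eqref{eq:upeta_diffeo}--\eqref{eq:upkappa_diffeo}: one applies a gauge transformation (as in Lemma~\ref{lem:diffeo}(i)) to pass to an auxiliary solution satisfying the SH condition for all $t$, verifies that in that case the problematic terms drop out, runs the energy estimate, and then transfers the vanishing of $\mathcal{A},\mathcal{B}$ back to the original solution via gauge invariance. This gauge-invariance trick is absent from your proposal and is the part that makes the argument actually work; ``closure by inspection'' understates the difficulty.

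The second point is one of structure rather than correctness. You set up all constraint quantities as one big coupled first-order system and anticipate mixing of $\mathcal{H},\mathcal{M}_i$ with $\mathcal{C}_{SH,i},\mathcal{C}_{CMC},\mathcal{S}_1,\mathcal{S}_2$ in the evolution. The paper instead discovers a triangular structure: after the symmetry constraints have been shown to vanish, the Hamiltonian and momentum quantities decouple into the surprisingly clean closed system
\begin{equation*}
  t\partial_t\mathcal{H}=t^2\mathring{g}^{ij}\partial_i\mathcal{P}_j,\qquad t\partial_t\mathcal{P}_i=\partial_i\mathcal{H},
\end{equation*}
with no source terms, and the gauge conditions are derived \emph{last}, from $t\partial_t\tr\hat{\upkappa}=0$ and $t\partial_t(2\partial_j\matr{\hat{\upeta}}{i}{j}-\partial_i\tr\hat{\upeta})=2\mathcal{P}_i$ once $\mathcal{P}_i\equiv 0$ is known. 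Your appeal to the Bianchi-identity analogy correctly explains \emph{why} such closure should exist, but the actual proof is a direct linearized calculation, and the order of operations (symmetry $\to$ $(\mathcal{H},\mathcal{P}_i)$ $\to$ gauge) is what makes each step an unconditional energy estimate rather than a potentially circular coupled argument.
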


\begin{proof}
    Let us start with the symmetry constraints \eqref{eq:upeta_sym} and \eqref{eq:upkappa_sym}. For this purpose, we define
    \[
        \mathcal{A}^{ac} \coloneqq \mathring{g}^{ab} \matr{\hat{\upeta}}{b}{c} - \mathring{g}^{cb} \matr{\hat{\upeta}}{b}{a}, \quad 
        \mathcal{B}^{ac} \coloneqq \mathring{g}^{ab} (\matr{\hat{\upkappa}}{b}{c} - (2 t \matr{\mathring{k}}{b}{d}) \matr{\hat{\upeta}}{d}{c})
        - \mathring{g}^{cb} (\matr{\hat{\upkappa}}{b}{a} - (2 t \matr{\mathring{k}}{b}{d}) \matr{\hat{\upeta}}{d}{a}).
    \]
    Using only \eqref{eq:upeta_evol} and \eqref{eq:upkappa_evol}, one may derive evolution equations for $\mathcal{A}^{ac}$ and $\mathcal{B}^{ac}$:
    \[
        t \partial_t \mathcal{A}^{ac} = \mathcal{B}^{ac} + (2 t \matr{\mathring{k}}{b}{a}) \mathcal{A}^{bc} + (2 t \matr{\mathring{k}}{b}{c}) \mathcal{A}^{ab}, \quad
        t \partial_t \mathcal{B}^{ac} = t^2 \mathring{g}^{ij} \partial_i \partial_j \mathcal{A}^{ac} - t^2 \mathring{g}^{ab} \partial_b \partial_d \mathcal{A}^{dc} - t^2 \mathring{g}^{cb} \partial_b \partial_d \mathcal{A}^{ad}.
    \]

    Now, assuming the spatially harmonic gauge condition \eqref{eq:spatiallyharmonic} holds, the expression $t^2 \mathring{g}^{ab} \partial_b \partial_d \mathcal{A}^{dc} + t^2 \mathring{g}^{cb} \partial_b \partial_d \mathcal{A}^{ad}$ will vanish. Unfortunately we have not yet shown that \eqref{eq:spatiallyharmonic} is true for all times, but our strategy will be first apply a gauge transformation, as in Proposition~\ref{prop:adm_lwp}(ii), to create a new solution to the elliptic-hyperbolic system \eqref{eq:upeta_evol}, \eqref{eq:upkappa_evol}, \eqref{eq:ricci_lin_sh}, \eqref{eq:upphi_evol}, \eqref{eq:uppsi_evol}, \eqref{eq:upnu_elliptic}, \eqref{eq:upchi_elliptic} that is spatially harmonic.

    Then since $\mathcal{A}^{ac}$ and $\mathcal{B}^{ac}$ are gauge invariant under the transformations \eqref{eq:upeta_diffeo} and \eqref{eq:upkappa_diffeo},
    it still suffices to deal with the case where we assume \eqref{eq:spatiallyharmonic}, in which case our equations reduce to
    \[
        t \partial_t \mathcal{A}^{ac} = \mathcal{B}^{ac} + (2 t \matr{\mathring{k}}{b}{a}) \mathcal{A}^{bc} + (2 t \matr{\mathring{k}}{b}{c}) \mathcal{A}^{ab}, \quad
        t \partial_t \mathcal{B}^{ac} = t^2 \mathring{g}^{ij} \partial_i \partial_j \mathcal{A}^{ac}.
    \]
    Hence $\mathcal{A}^{ac}$ and $\mathcal{B}^{ac}$ obey a linear and homogeneous first order hyperbolic system, and therefore their vanishing at $t = T$ implies their vanishing everywhere. To be explicit, one can use the following energy estimate:
    \[
        \text{ if } \; \mathcal{E}_{\mathcal{A} \mathcal{B}} (t) = \int_{\mathbb{T}^D} \mathring{g}_{ab} \mathring{g}_{cd} ( t^2 \mathring{g}^{ij} \partial_i \mathcal{A}^{ac} \partial_j \mathcal{A}^{bd} + \mathcal{B}^{ac} \mathcal{B}^{bd} + \mathcal{A}^{ac} \mathcal{A}^{bd}) \, dx, \qquad \text{ then } \; t \partial_t \mathcal{E}_{\mathcal{A} \mathcal{B}}(t) \lesssim \mathcal{E}_{\mathcal{A} \mathcal{B}}(t).
    \]
    Gr\"onwall's inequality therefore shows that $\mathcal{A}^{ac} = \mathcal{B}^{ac} = 0$ for all $t > 0$, proving \eqref{eq:upeta_sym} and \eqref{eq:upkappa_sym}.

    We next move onto the Hamiltonian and momentum constraints \eqref{eq:hamiltonian_linear} and \eqref{eq:momentum_linear1}. Instead of dealing with these equations directly, we instead define:
    \begin{gather*}
        \mathcal{H} = - t^2 \mathring{g}^{ab} \partial_a \partial_b \tr \hat{\upeta} + t^2 \mathring{g}^{ab} \partial_i \partial_a \matr{\hat{\upeta}}{b}{i} + (t \matr{\mathring{k}}{b}{a}) \matr{\hat{\upkappa}}{a}{b} + 2 p_{\phi} \uppsi, \\[0.3em]
        \mathcal{P}_i = \partial_j \matr{\hat{\upkappa}}{i}{j} + \partial_i ((t \matr{\mathring{k}}{b}{a}) \matr{\hat{\upeta}}{a}{b} + 2 p_{\phi} \upphi) - (t \matr{\mathring{k}}{i}{j}) \partial_j \tr \hat{\upeta}.
    \end{gather*}
    Then using \eqref{eq:upeta_evol}, \eqref{eq:upkappa_evol}, \eqref{eq:ricci_lin_sh} and the elliptic equation \eqref{eq:upnu_elliptic}, one derives the evolution equations:
    \[
        t \partial_t \mathcal{H} = t^2 \mathring{g}^{ij} \partial_i \mathcal{P}_j, \quad 
        t \partial_t \mathcal{P}_i = \partial_i \mathcal{H}.
    \]
    
    Note that the constraints being satisfied at $t = T$ imply that $\mathcal{H} = 0$ and $\mathcal{P}_i = 0$ at $t = T$. Now
    \[
        \text{ if } \; \mathcal{E}_{\mathcal{H} \mathcal{P}} (t) = \int_{\mathbb{T}^D} ( t^2 \mathring{g}^{ij} \mathcal{P}_i \mathcal{P}_j + \mathcal{H}^2 ) \, dx, \qquad \text{ then } \; t \partial_t \mathcal{E}_{\mathcal{H} \mathcal{P}}(t) \lesssim \mathcal{E}_{\mathcal{H} \mathcal{P}}(t).
    \]
    Therefore, $\mathcal{H} = 0$ and $\mathcal{P}_i = 0$ for all times $t > 0$. Furthermore, using also the elliptic equations \eqref{eq:upnu_elliptic} and \eqref{eq:upchi_elliptic} one derives that
    \[
        t \partial_t \tr \hat{\upkappa} = 0, \quad t \partial_t ( 2 \partial_j \matr{\hat{\upeta}}{i}{j} - \partial_i \tr \hat{\upeta}) = 2 \mathcal{P}_i.
    \]
    Since we already have $\mathcal{P}_i = 0$, this implies the gauge conditions \eqref{eq:cmc_linear} and \eqref{eq:spatiallyharmonic} remain true in evolution. Combining this with $\mathcal{H} = 0$ and $\mathcal{P}_i = 0$, we also deduce \eqref{eq:hamiltonian_linear} and \eqref{eq:momentum_linear1}. Finally, \eqref{eq:momentum_linear2} may be derived as a consequence of \eqref{eq:momentum_linear1}, \eqref{eq:upeta_sym} and \eqref{eq:upkappa_sym}.
\end{proof}

\begin{proposition} \label{prop:constraints_2}
As in Proposition~\ref{prop:einstein_low_freq_scat}, let $(\matr{(\upkappa_{\lambda})}{i}{j}, \matr{(\tilde{\Upupsilon}_{\lambda})}{i}{j}, \uppsi_{\lambda}, \tilde{\upvarphi}_{\lambda}, \upnu_{\lambda})$ be a solution to the ODE system \eqref{eq:upkappa_evol_l_2}--\eqref{eq:upphi_evol_l_2} together with \eqref{eq:upnu_elliptic_l}, is such that $(\matr{(\upkappa_{\lambda})}{i}{j}, \matr{(\tilde{\Upupsilon}_{\lambda})}{i}{j}, \uppsi_{\lambda}, \tilde{\upvarphi}_{\lambda}) \to (\matr{((\upkappa_{\infty})_{\lambda})}{i}{j}, \matr{((\tilde{\Upupsilon}_{\infty})_{\lambda})}{i}{j}, (\uppsi_{\infty})_{\lambda}, (\tilde{\upvarphi}_{\infty})_{\lambda}) $ as $t \to 0$.

Suppose moreover that these limiting quantities satisfy the asymptotic constraints \eqref{eq:hamiltonian_linear_scat_l}--\eqref{eq:upkappa_sym_scat_l}, with $T = \mathcal{T}_*$. Then the solution $(\matr{(\upkappa_{\lambda})}{i}{j}, \matr{(\tilde{\Upupsilon}_{\lambda})}{i}{j}, \uppsi_{\lambda}, \tilde{\upvarphi}_{\lambda}, \upnu_{\lambda})$ obeys the constraint equations \eqref{eq:hamiltonian_linear_l}--\eqref{eq:upkappa_sym_l}. (Here $\matr{(\tilde{\Upupsilon}_{\lambda})}{i}{j}$ and $\tilde{\upvarphi}_{\lambda}$ are related to $\matr{(\upeta_{\lambda})}{i}{j}$ and $\upphi_{\lambda}$ by \eqref{eq:upupsilon2} and \eqref{eq:upvarphi2}.)
\end{proposition}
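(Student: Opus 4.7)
The plan is to adapt the propagation-of-constraints argument used in the proof of Proposition~\ref{prop:constraints_1}, with the crucial difference that here one propagates \emph{from} $t = 0$ rather than from a positive time. To start, I would introduce the (Fourier-projected) constraint violation quantities
\[
    \mathcal{H}_{\lambda}(t), \quad \mathcal{P}_{i,\lambda}(t), \quad \mathcal{A}^{ac}_{\lambda}(t), \quad \mathcal{B}^{ac}_{\lambda}(t), \quad \mathcal{C}_{\lambda}(t) = \tr \upkappa_{\lambda}(t),
\]
defined exactly by the left-hand sides of \eqref{eq:hamiltonian_linear_l}--\eqref{eq:upkappa_sym_l}, but rewritten purely in terms of the renormalized variables $(\matr{(\upkappa_{\lambda})}{i}{j}, \matr{(\tilde{\Upupsilon}_{\lambda})}{i}{j}, \uppsi_{\lambda}, \tilde{\upvarphi}_{\lambda}, \upnu_{\lambda})$ using \eqref{eq:upupsilon2}--\eqref{eq:upvarphi2}, together with the manipulations already carried out in the proof of Lemma~\ref{lem:scattering_constraints}.

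Using the ODE system \eqref{eq:upkappa_evol_l_2}--\eqref{eq:upphi_evol_l_2} and the elliptic relation \eqref{eq:upnu_elliptic_l}, I would compute $t \partial_t$ of each of these constraint violations. Following the structure observed in Proposition~\ref{prop:constraints_1}, the computation should close algebraically on the constraint violations alone: for instance one expects identities of the schematic form
\[
    t \partial_t \mathcal{H}_{\lambda} = - \mathrm{i} t^2 \mathring{g}^{ij} \lambda_i \mathcal{P}_{j,\lambda} + (\text{l.o.t. in }\mathcal{H}_\lambda, \mathcal{C}_\lambda), \qquad t \partial_t \mathcal{P}_{i,\lambda} = -\mathrm{i}\lambda_i \mathcal{H}_{\lambda} + \cdots,
\]
and analogous ODE relations for $\mathcal{A}^{ac}_{\lambda}, \mathcal{B}^{ac}_{\lambda}, \mathcal{C}_{\lambda}$. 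Packaging these as a single first-order linear system $t \partial_t \mathbf{V} + A \mathbf{V} = t^{\varepsilon} F[t,\mathbf{V}]$ (possibly after rescaling each entry by suitable powers of $t$ and $t_{\lambda*}$ so that the singular $t\matr{\mathring{k}}{i}{j}$-type coefficients become bounded) fits the framework of Definition~\ref{def:fuchsian}.

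Next, I would verify that each rescaled constraint violation tends to $0$ as $t \to 0$. This uses the convergence of $(\matr{(\upkappa_{\lambda})}{i}{j}, \matr{(\tilde{\Upupsilon}_{\lambda})}{i}{j}, \uppsi_{\lambda}, \tilde{\upvarphi}_{\lambda})$ together with Proposition~\ref{prop:einstein_low_der} (for the convergence rates), exactly as in the proof of Lemma~\ref{lem:scattering_constraints}: the prefactors of the form $t^2 \mathring{g}^{ab} \lambda_a \lambda_b$ or $G_{ip}^{\phantom{ip}jq}$ all decay like $t^{2-2p_a}\log$-factors, while the $(t\matr{\mathring{k}}{i}{j})\upnu_\lambda$-type terms converge to what is dictated by the asymptotic constraints. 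Since the limiting quantities satisfy \eqref{eq:hamiltonian_linear_scat_l}--\eqref{eq:upkappa_sym_scat_l} with $T = \mathcal{T}_*$, all remaining boundary values cancel, giving $\mathbf{V}(t) \to 0$ as $t \to 0$ at rate $t^{\updelta'}$ for some $\updelta' > 0$. Then Lemma~\ref{lem:fuchsian}(a), applied to the Fuchsian ODE for $\mathbf{V}$, yields uniqueness of the trivial solution in a neighbourhood of $t = 0$; since the system is linear, $\mathbf{V} \equiv 0$ extends to all $t > 0$. Finally, \eqref{eq:momentum_linear2_l} is recovered from \eqref{eq:momentum_linear1_l}, \eqref{eq:upeta_sym_l} and \eqref{eq:upkappa_sym_l}, exactly as at the end of the proof of Proposition~\ref{prop:constraints_1}.

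The main obstacle I expect is the bookkeeping in two places: first, algebraically verifying that the evolution of constraints truly closes on constraints (one must be careful because $\upnu_\lambda$ is only determined via the elliptic equation \eqref{eq:upnu_elliptic_l}, so one has to use that equation to eliminate non-constraint terms that would otherwise appear as source terms); and second, choosing the correct rescaling of the constraint violations so that the resulting system is genuinely Fuchsian with a positive $t^{\varepsilon}$ weight on the RHS, which is what allows Lemma~\ref{lem:fuchsian}(a) to force $\mathbf{V} \equiv 0$. Once both points are checked, the conclusion is immediate.
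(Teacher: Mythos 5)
Your outline is sound and the overall strategy (Fuchsian uniqueness from Lemma~\ref{lem:fuchsian}(a) applied to a constraint-violation vector with trivial $t=0$ data, with \eqref{eq:momentum_linear2_l} recovered at the end from the other constraints and symmetry) matches the paper. The interesting difference is in how the two obstacles you flag are actually dealt with.

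On the first obstacle (choice of constraint quantities and rescaling): the rescaling is not a ``possible'' afterthought but the central point, and it must be pinned down. If you take the raw symmetry violations $\mathcal{A}^{ac}_\lambda = \mathring{g}^{ab}(t)\matr{(\upeta_\lambda)}{b}{c} - \mathring{g}^{cb}(t)\matr{(\upeta_\lambda)}{b}{a}$ from Proposition~\ref{prop:constraints_1}, these will generically \emph{not} converge to zero as $t\to 0$ even when the asymptotic constraints hold, because the $\mathring{g}(t)$-weights push them to grow like $t^{2p_c - 2p_a}$ relative to the renormalized quantities. The paper works instead with $(\mathcal{C}_\lambda)^{ac}$, $(\mathcal{D}_\lambda)^{ac}$ built from $\mathring{g}(t_{\lambda*})$ and $\matr{\tilde\Upupsilon}{i}{j}$, and with $\mathcal{H}_\lambda$, $(\mathcal{Q}_\lambda)_i$ including the explicit $G_{ip}^{\phantom{ip}jq}$-integral correction; these are exactly the quantities that vanish at $t=0$ by the asymptotic constraints and that Proposition~\ref{prop:einstein_low_der} controls. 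Your remark about ``the manipulations of Lemma~\ref{lem:scattering_constraints}'' does point at this, but the proof needs that choice made explicit, because it is also what forces the left-hand operator $A$ in the Fuchsian system to be zero rather than something with positive eigenvalues, which Lemma~\ref{lem:fuchsian} would not tolerate.

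On the second obstacle (verifying the ODE closes on constraint violations): the paper explicitly declines the direct computation you propose, calling it ``lengthy,'' and replaces it with a linear-algebra shortcut. The three ingredients are: (a) Proposition~\ref{prop:einstein_low_der} already gives a bound of the form $t\frac{d}{dt}\mathbf{C} = t^{\updelta}B\cdot\mathbf{V}$ with $B$ bounded, where $\mathbf{V}$ is the full (renormalized) state vector, not just the constraints; (b) Proposition~\ref{prop:constraints_1} shows that at any $t>0$, if the constraints vanish then $t\frac{d}{dt}\mathbf{C}=0$, i.e.\ $\ker L\subset\ker B$ where $L:\mathbf{V}\mapsto\mathbf{C}$ is the linear constraint-extraction map; (c) since $L$ is surjective onto the constraint target space, $B$ factorizes as $B=A_{\mathcal{C}\mathcal{D}}\circ L$ with $A_{\mathcal{C}\mathcal{D}}$ bounded, yielding the closed Fuchsian ODE $t\frac{d}{dt}\mathbf{C}=t^{\updelta}A_{\mathcal{C}\mathcal{D}}\mathbf{C}$ without ever computing $A_{\mathcal{C}\mathcal{D}}$. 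This buys you the closure for free from already-established results; your direct route would, in principle, also work, but you would have to redo the constraint-evolution calculation of Proposition~\ref{prop:constraints_1} with all the additional $G_{ip}^{\phantom{ip}jq}$- and $t_{\lambda*}$-dependent terms, which is precisely what the factorization argument sidesteps.
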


\begin{proof}
    The strategy is to combine the constraint propagation method of Proposition~\ref{prop:constraints_1} with some Fuchsian analysis. We start with the symmetry constraints \eqref{eq:upeta_sym_l} and \eqref{eq:upkappa_sym_l}. Instead of dealing with $\matr{\upeta}{i}{j}$ directly, we first define
    \begin{gather*}
        (\mathcal{C}_{\lambda})^{ac} \coloneqq \mathring{g}^{ab}(t_{\lambda*}) \matr{(\tilde{\Upupsilon}_{\lambda})}{b}{c} - \mathring{g}^{cb} (t_{\lambda*}) \matr{(\tilde{\Upupsilon}_{\lambda})}{b}{a}, \\[0.3em] 
        (\mathcal{D}_{\lambda})^{ac} \coloneqq \mathring{g}^{ab}(t_{\lambda*}) (\matr{(\upkappa_{\lambda})}{b}{c} - (2 t \matr{\mathring{k}}{b}{d}) \matr{(\tilde{\Upupsilon}_{\lambda})}{d}{c})
        - \mathring{g}^{cb}(t_{\lambda*}) (\matr{(\upkappa_{\lambda})}{b}{a} - (2 t \matr{\mathring{k}}{b}{d}) \matr{(\tilde{\Upupsilon}_{\lambda})}{d}{a}).
    \end{gather*}
    We use that by the proof of Lemma~\ref{lem:scattering_constraints}, $((\mathcal{C}_{\lambda})^{ac}, (\mathcal{D}_{\lambda})^{ac}) = (0, 0)$ if and only $((\mathcal{A}_{\lambda})^{ac}, (\mathcal{B}_{\lambda})^{ac}) = (0, 0)$, where $\mathcal{A}^{ac}$ and $\mathcal{B}^{ac}$ were defined in the proof of Proposition~\ref{prop:constraints_1}.

    By the assumptions of Proposition~\ref{prop:constraints_2}, $((\mathcal{C}_{\lambda})^{ac}, (\mathcal{D}_{\lambda})^{ac})$ are bounded and tend to $(0, 0)$ as $t \to 0$. Thus by the Fuchsian analysis of Lemma~\ref{lem:fuchsian}, particularly the uniqueness statement, one may deduce that $((\mathcal{C}_{\lambda})^{ac}, (\mathcal{D}_{\lambda})^{ac})$, and therefore also $((\mathcal{A}_{\lambda})^{ac}, (\mathcal{B}_{\lambda})^{ac})$, vanish identically if we can prove that for the vector $\mathbf{C} = ((\mathcal{C}_{\lambda})^{ac}, (\mathcal{D}_{\lambda})^{ac}: a,c = 1, \ldots, D)$, there is some bounded (time-dependent) matrix $A_{\mathcal{C}\mathcal{D}}$ such that
    \begin{equation} \label{eq:fuchsianCD}
        t \frac{d}{dt} \mathbf{C} = t^{\updelta} A_{\mathcal{C}\mathcal{D}} \cdot \mathbf{C}.
    \end{equation}

    One may verify \eqref{eq:fuchsianCD} directly by using the system \eqref{eq:upkappa_evol_l_2}, \eqref{eq:upeta_evol_l_2}, \eqref{eq:upnu_elliptic_l} and going through a lengthy calculation. We propose an alternative approach using linear algebra. Firstly, by Proposition~\ref{prop:einstein_low_der} (ignoring all the appearances of $t_{\lambda*}$), for a solution of the system \eqref{eq:upkappa_evol_l_2}--\eqref{eq:upphi_evol_l_2} and \eqref{eq:upnu_elliptic_l} there does exist a bounded (time-dependent) matrix $B_{\mathcal{C} \mathcal{D}}$ such that for $\mathbf{V} = (\matr{(\upkappa_{\lambda})}{i}{j}, \matr{(\tilde{\Upupsilon}_{\lambda})}{i}{j}, \uppsi_{\lambda}, \tilde{\upvarphi}_{\lambda})$, such that we verify the Fuchsian ODE:
    \begin{equation*}
        t \frac{d}{dt} \mathbf{C} = t^{\updelta} B_{\mathcal{C}\mathcal{D}} \cdot \mathbf{V}.
    \end{equation*}
    On the other hand there is the natural linear map $L: (\matr{(\upkappa_{\lambda})}{i}{j}, \matr{(\tilde{\Upupsilon}_{\lambda})}{i}{j}, \uppsi_{\lambda}, \tilde{\upvarphi}_{\lambda}) \mapsto ((\mathcal{C}_{\lambda})^{ac}, (\mathcal{D}_{\lambda})^{ac})$, which is surjective onto the $D(D-1)$-dimensional space $(\Skew_{D \times D})^2$, meaning two copies of the space of skew-symmetric matrices. 

    Thus \eqref{eq:fuchsianCD} holds if $B_{\mathcal{C} \mathcal{D}}$ factorizes as $B_{\mathcal{C} \mathcal{D}} = A_{\mathcal{C} \mathcal{D}} \circ L$ for some bounded $A_{\mathcal{C} \mathcal{D}}$. The key observation is that $\ker L \subset \ker B_{\mathcal{C} \mathcal{D}}$; this follows since by Proposition~\ref{prop:constraints_1}, for $t > 0$ we know that $t \frac{d}{dt} ( (\mathcal{A}_{\lambda})^{ac}, (\mathcal{B}_{\lambda})^{ac} ) = 0$ if $((\mathcal{A}_{\lambda})^{ac}, (\mathcal{B}_{\lambda})^{ac}) = 0$ and thus the same holds true for $((\mathcal{C}_{\lambda})^{ac}, \mathcal{D}_{\lambda})^{ac})$. Therefore, one may apply the linear algebra fact that since $\ker L \subset \ker B_{\mathcal{C} \mathcal{D}}$ and $L$ is surjective onto $(\Skew_{D \times D})^2$, there must exist some map $A_{\mathcal{C} \mathcal{D}}: (\Skew_{D \times D})^2 \to (\Skew_{D \times D})^2$ such that $B_{\mathcal{C} \mathcal{D}}$ factorizes as $B_{\mathcal{C} \mathcal{D}} = A_{\mathcal{C} \mathcal{D}} \circ L$.

    In fact, one may apply the linear algebra fact in such a way that $A_{\mathcal{C} \mathcal{D}}$ remains uniformly bounded in time. Therefore the Fuchsian ODE \eqref{eq:fuchsianCD}, and thus the propagation of the symmetry constraints follows. We summarize the strategy as suitably renormalized constraints (i.e.~$((\mathcal{C}_{\lambda})^{ac}, (\mathcal{D}_{\lambda})^{ac})$) + good derivative bounds (i.e.~Proposition~\ref{prop:einstein_low_der}) + constraint propagation at $t > 0$ (i.e.~Proposition~\ref{prop:constraints_1}) implies constraint propagation from asymptotic data.

    One can play the same game for the Hamiltonian and momentum constraints. Define
    \begin{gather*}
        \mathcal{H}_{\lambda} = t^2 \mathring{g}^{ab} \lambda_a \lambda_b \tr \tilde{\Upupsilon}_{\lambda} 
        - t^2 \mathring{g}^{ab} \lambda_i \lambda_a \matr{(\tilde{\Upupsilon}_{\lambda})}{b}{i} + t^2 \mathring{g}^{ab} \lambda_i \lambda_a \int^{t_{\lambda*}}_t \mathring{g}_{bp}(s) \mathring{g}^{iq}(s) \frac{ds}{s} \cdot \matr{(\upkappa_{\lambda})}{q}{p} + (t \matr{\mathring{k}}{b}{a}) \matr{(\upkappa_{\lambda})}{a}{b} + 2 p_{\phi} \upphi_{\lambda}, \\[0.3em]
        (\mathcal{Q}_{\lambda})_i = - \mathrm{i} \lambda_j \matr{\upkappa}{i}{j} - \mathrm{i} \lambda_i ((t \matr{\mathring{k}}{b}{a}) \matr{(\tilde{\Upupsilon}_{\lambda})}{a}{b} + 2 p_{\phi} \tilde{\upvarphi}_{\lambda}) + \mathrm{i} (t \matr{\mathring{k}}{i}{j}) \lambda_j \tr \tilde{\Upupsilon} \hspace{6cm}
        \\[0.2em] \hspace{2cm} + \mathrm{i} \lambda_i \log( \frac{t}{t_{\lambda*}} ) \left(t^2 \mathring{g}^{ab} \lambda_a \lambda_b \tr \tilde{\Upupsilon}_{\lambda} 
        - t^2 \mathring{g}^{ab} \lambda_i \lambda_a \matr{(\tilde{\Upupsilon}_{\lambda})}{b}{i} + t^2 \mathring{g}^{ab} \lambda_i \lambda_a \int^{t_{\lambda*}}_t \mathring{g}_{bp}(s) \mathring{g}^{iq}(s) \frac{ds}{s} \cdot \matr{(\upkappa_{\lambda})}{q}{p} \right).
    \end{gather*}

    It is straightforward to check that given the assumptions of the proposition, $(\mathcal{H}_{\lambda}, (\mathcal{Q}_{\lambda})_i)$ are bounded and tend to $(0, 0)$ as $t \to 0$. Since $(\mathcal{Q}_{\lambda})_i = (\mathcal{P}_{\lambda})_i + \mathrm{i} \lambda_i \log( \frac{t}{t_{\lambda*}}) \mathcal{H}_{\lambda}$, for $\mathcal{P}_i$ as in the proof of Proposition~\ref{prop:constraints_1}, we also have constraint propagation at $t > 0$. Finally, from Proposition~\ref{prop:einstein_low_der} one may prove good derivative bounds in the sense that for some bounded (time-dependent) matrix $B_{\mathcal{H} \mathcal{Q}}$ and $\mathbf{V}$ as before,
    \[
        t \frac{d}{dt} \begin{pmatrix} \mathcal{H}_{\lambda} \\[0.1em] (\mathcal{Q}_{\lambda})_i \end{pmatrix} = t^{\updelta} B_{\mathcal{H} \mathcal{Q}} \cdot \mathbf{V}.
    \]
    Therefore following the same program as before we deduce that $\mathcal{H}_{\lambda}$ and $(\mathcal{Q}_{\lambda})_i$ vanish for all $t > 0$, and therefore the constraints \eqref{eq:hamiltonian_linear_l} and \eqref{eq:momentum_linear1_l} are satisfied. 
    
    As always, \eqref{eq:momentum_linear2_l} follows from \eqref{eq:momentum_linear1_l} and \eqref{eq:upeta_sym_l}--\eqref{eq:upkappa_sym_l}. This concludes the proof of the proposition.
\end{proof}

\bibliography{bibliography_master.bib}

\begin{thebibliography}{64}
\providecommand{\natexlab}[1]{#1}
\providecommand{\url}[1]{\texttt{#1}}
\expandafter\ifx\csname urlstyle\endcsname\relax
  \providecommand{\doi}[1]{doi: #1}\else
  \providecommand{\doi}{doi: \begingroup \urlstyle{rm}\Url}\fi

\bibitem[Alexakis and Fournodavlos(2020)]{AlexakisFournodavlos}
S.~Alexakis and G.~Fournodavlos.
\newblock Stable space-like singularity formation for axi-symmetric and
  polarized near-{S}chwarzschild black hole interiors, 2020.
\newblock Preprint available at \url{https://arxiv.org/abs/2004.00692}.

\bibitem[Alho et~al.(2019)Alho, Fournodavlos, and
  Franzen]{AlhoFournodavlosFranzen}
A.~Alho, G.~Fournodavlos, and A.~Franzen.
\newblock The wave equation near flat
  {F}riedmann--{L}ema{\^\i}tre--{R}obertson--{W}alker and {K}asner {B}ig {B}ang
  singularities.
\newblock \emph{J. Hyperbolic Differ. Equ.}, 16\penalty0 (2):\penalty0
  379--400, 2019.

\bibitem[Allen and Rendall(2010)]{AllenRendall}
P.~Allen and A.~Rendall.
\newblock Asymptotics of linearized cosmological perturbations.
\newblock \emph{J. Hyperbolic Differ. Equ.}, 07\penalty0 (02):\penalty0
  255--277, 2010.

\bibitem[Ames et~al.(2013)Ames, Beyer, Isenberg, and LeFloch]{AmesT2}
E.~Ames, F.~Beyer, J.~Isenberg, and P.~LeFloch.
\newblock Quasilinear hyperbolic {F}uchsian systems and {AVTD} behavior in
  {$T^2$}-symmetric vacuum spacetimes.
\newblock \emph{Ann. Henri Poincar\'{e}}, 14\penalty0 (6):\penalty0 1445--1523,
  2013.

\bibitem[Ames et~al.(2022)Ames, Beyer, Isenberg, and Oliynyk]{AmesT2Stability}
E.~Ames, F.~Beyer, J.~Isenberg, and T.~A. Oliynyk.
\newblock Stability of {AVTD} behavior within the polarized
  {$\mathbb{T}^2$}-symmetric vacuum spacetimes.
\newblock \emph{Ann. Henri Poincar\'{e}}, 23\penalty0 (7):\penalty0 2299--2343,
  2022.

\bibitem[An and Zhang(2020)]{AnZhang}
X.~An and R.~Zhang.
\newblock Polynomial blow-up upper bounds for the {E}instein-scalar field
  system under spherical symmetry.
\newblock \emph{Comm. Math. Phys.}, 376\penalty0 (2):\penalty0 1671--1704,
  2020.

\bibitem[Andersson and Fajman(2020)]{AnderssonFajman}
L.~Andersson and D.~Fajman.
\newblock Nonlinear stability of the {M}ilne model with matter.
\newblock \emph{Comm. Math. Phys.}, 378\penalty0 (1):\penalty0 261--298, 2020.

\bibitem[Andersson and Moncrief(2003)]{AnderssonMoncrief}
L.~Andersson and V.~Moncrief.
\newblock Elliptic-hyperbolic systems and the {E}instein equations.
\newblock \emph{Ann. Henri Poincar\'{e}}, 4\penalty0 (1):\penalty0 1--34, 2003.

\bibitem[Andersson and Rendall(2001)]{AnderssonRendall}
L.~Andersson and A.~Rendall.
\newblock Quiescent cosmological singularities.
\newblock \emph{Comm. Math. Phys.}, 218\penalty0 (3):\penalty0 479--511, 2001.

\bibitem[Andr{\'e}asson and Ringstr{\"o}m(2016)]{AndreassonRingstrom}
H.~Andr{\'e}asson and H.~Ringstr{\"o}m.
\newblock Proof of the cosmic no-hair conjecture in the {$\mathbb{T}^3$-Gowdy}
  symmetric {E}instein--{V}lasov setting.
\newblock \emph{J. Eur. Math. Soc.}, 18\penalty0 (7):\penalty0 1565--1650,
  2016.

\bibitem[Belinski and Henneaux(2017)]{BelinskiHenneaux}
V.~Belinski and M.~Henneaux.
\newblock \emph{The Cosmological Singularity}.
\newblock Cambridge Monographs on Mathematical Physics. Cambridge University
  Press, Cambridge, 2017.

\bibitem[Belinski and Khalatnikov(1972)]{bk72}
V.~Belinski and I.~Khalatnikov.
\newblock On the effect of scalar and vector fields on the nature of the
  cosmological singularity.
\newblock \emph{Sov. Phys. JETP}, 63:\penalty0 1121--1134, 1972.

\bibitem[Belinski et~al.(1971)Belinski, Khalatnikov, and Lifshitz]{bkl71}
V.~Belinski, I.~Khalatnikov, and E.~Lifshitz.
\newblock Oscillatory approach to a singular point in relativistic cosmology.
\newblock \emph{Sov. Phys. Usp.}, 13\penalty0 (6):\penalty0 745--765, 1971.

\bibitem[Beyer and Oliynyk(2024)]{BeyerOliynyk}
F.~Beyer and T.~Oliynyk.
\newblock Localized {B}ig {B}ang stability for the {E}instein-scalar field
  equations.
\newblock \emph{Arch Ration Mech Anal}, 248\penalty0 (1):\penalty0 3, 2024.

\bibitem[Beyer et~al.(2021)Beyer, Oliynyk, and
  Olvera-Santamar{\'\i}a]{BeyerOliynykOlvera}
F.~Beyer, T.~Oliynyk, and J.~Olvera-Santamar{\'\i}a.
\newblock The {F}uchsian approach to global existence for hyperbolic equations.
\newblock \emph{Commun. Partial. Differ. Equ.}, 46\penalty0 (5):\penalty0
  864--934, 2021.

\bibitem[Brauer et~al.(1994)Brauer, Rendall, and Reula]{BrauerRendallReula}
U.~Brauer, A.~Rendall, and O.~Reula.
\newblock The cosmic no-hair theorem and the non-linear stability of
  homogeneous {N}ewtonian cosmological models.
\newblock \emph{Class. Quantum Grav.}, 11\penalty0 (9):\penalty0 2283--2296,
  1994.

\bibitem[Chandrasekhar(1984)]{Chandrasekhar1984}
S.~Chandrasekhar.
\newblock \emph{The Mathematical Theory of Black Holes}, pages 5--26.
\newblock Springer Netherlands, Dordrecht, 1984.

\bibitem[Choquet-Bruhat et~al.(2004)Choquet-Bruhat, Isenberg, and
  Moncrief]{ChoquetIsenbergMoncrief}
Y.~Choquet-Bruhat, J.~Isenberg, and V.~Moncrief.
\newblock Topologically general {$U(1)$} symmetric vacuum space-times with
  {AVTD} behavior.
\newblock \emph{Il Nuovo Cimento B}, 119\penalty0 (7-9):\penalty0 625--638,
  2004.

\bibitem[Christodoulou(1991)]{Christodoulou_formation}
D.~Christodoulou.
\newblock The formation of black holes and singularities in spherically
  symmetric gravitational collapse.
\newblock \emph{Comm. Pure Appl. Math.}, 44\penalty0 (3):\penalty0 339--373,
  1991.

\bibitem[Christodoulou(1993)]{Christodoulou_BV}
D.~Christodoulou.
\newblock Bounded variation solutions of the spherically symmetric
  {E}instein-scalar field equations.
\newblock \emph{Comm. Pure Appl. Math.}, 46\penalty0 (8):\penalty0 1131--1220,
  1993.

\bibitem[Christodoulou(1999)]{Christodoulou_cc}
D.~Christodoulou.
\newblock On the global initial value problem and the issue of singularities.
\newblock \emph{Class. Quantum Gravity}, 16\penalty0 (12A):\penalty0 23--35,
  1999.

\bibitem[Chrusciel et~al.(1990)Chrusciel, Isenberg, and
  Moncrief]{SCC_PolarizedGowdy}
P.~Chrusciel, J.~Isenberg, and V.~Moncrief.
\newblock Strong cosmic censorship in polarised {G}owdy spacetimes.
\newblock \emph{Class. Quantum Grav.}, 7\penalty0 (10):\penalty0 1671--1680,
  1990.

\bibitem[Cicortas(2023)]{CicortasScattering}
S.~Cicortas.
\newblock Scattering for the wave equation on de {S}itter space in all even
  spatial dimensions, 2023.
\newblock Preprint available at \url{https://arxiv.org/abs/2309.07342}.

\bibitem[Costa et~al.(2019)Costa, Nat{\'a}rio, and Oliveira]{CostaNatarioPedro}
J.~Costa, J.~Nat{\'a}rio, and P.~Oliveira.
\newblock Decay of solutions of the wave equation in expanding cosmological
  spacetimes.
\newblock \emph{J. Hyperbolic Differ. Equ.}, 16\penalty0 (01):\penalty0 35--58,
  2019.

\bibitem[Costa et~al.(2023)Costa, Franzen, and Oliver]{CostaFranzenOliver}
J.~Costa, A.~Franzen, and J.~Oliver.
\newblock Semilinear wave equations on accelerated expanding {FLRW} spacetimes.
\newblock \emph{Ann. Henri Poincar\'{e}}, 24\penalty0 (9):\penalty0 3185--3207,
  2023.

\bibitem[Dafermos(2005)]{Dafermos_trapped}
M.~Dafermos.
\newblock Spherically symmetric spacetimes with a trapped surface.
\newblock \emph{Class. Quantum Grav.}, 22\penalty0 (11):\penalty0 2221--2232,
  2005.

\bibitem[Dafermos et~al.(2018)Dafermos, Rodnianski, and
  Shlapentokh-Rothman]{DRS_scattering}
M.~Dafermos, I.~Rodnianski, and Y.~Shlapentokh-Rothman.
\newblock A scattering theory for the wave equation on {K}err black hole
  exteriors.
\newblock \emph{Ann. Sci. {\'E}c. Norm. Sup{\'e}r.}, 51:\penalty0 371--486,
  2018.

\bibitem[Damour et~al.(2002)Damour, Henneaux, Rendall, and Weaver]{DHRW}
T.~Damour, M.~Henneaux, A.~Rendall, and M.~Weaver.
\newblock Kasner-like behaviour for subcritical {E}instein-matter systems.
\newblock \emph{Ann. Henri Poincar\'{e}}, 3\penalty0 (6):\penalty0 1049--1111,
  2002.

\bibitem[Demaret et~al.(1985)Demaret, Henneaux, and
  Spindel]{DemaretHenneauxSpindel}
J.~Demaret, M.~Henneaux, and P.~Spindel.
\newblock Non-oscillatory behaviour in vacuum {K}aluza-{K}lein cosmologies.
\newblock \emph{Phys. Lett. B}, 164\penalty0 (1):\penalty0 27--30, 1985.

\bibitem[Dimock(1985)]{Dimock_scattering}
J.~Dimock.
\newblock Scattering for the wave equation on the {S}chwarzschild metric.
\newblock \emph{Gen. Relativ. Gravit.}, 17\penalty0 (4):\penalty0 353--369,
  1985.

\bibitem[{\relax DLMF}()]{NIST:DLMF}
{\relax DLMF}.
\newblock {\it NIST Digital Library of Mathematical Functions}.
\newblock http://dlmf.nist.gov/, Release 1.1.6 of 2022-06-30, 2010.
\newblock F. Olver, A. {Olde Daalhuis}, D. Lozier, B. Schneider, R. Boisvert,
  C. Clark, B. Miller, B. Saunders, H. Cohl, and M. McClain, eds.

\bibitem[Fajman and Urban(2022)]{FajmanUrban}
D.~Fajman and L.~Urban.
\newblock Cosmic censorship near {FLRW} spacetimes with negative spatial
  curvature, 2022.
\newblock Preprint available at \url{https://arxiv.org/abs/2211.08052}.

\bibitem[Fournodavlos and Luk(2023)]{FournodavlosLuk}
G.~Fournodavlos and J.~Luk.
\newblock Asymptotically {K}asner-like singularities.
\newblock \emph{Amer. J. Math.}, 145\penalty0 (4):\penalty0 1182--1272, 2023.

\bibitem[Fournodavlos and Sbierski(2020)]{FournodavlosSbierski}
G.~Fournodavlos and J.~Sbierski.
\newblock Generic blow-up results for the wave equation in the interior of a
  {S}chwarzschild black hole.
\newblock \emph{Arch Ration Mech Anal}, 235\penalty0 (2):\penalty0 927--971,
  2020.

\bibitem[Fournodavlos et~al.(2023)Fournodavlos, Rodnianski, and
  Speck]{FournodavlosRodnianskiSpeck}
G.~Fournodavlos, I.~Rodnianski, and J.~Speck.
\newblock Stable {B}ig {B}ang formation for {E}instein's equations: The
  complete sub-critical regime.
\newblock \emph{J. Amer. Math. Soc.}, 36\penalty0 (3):\penalty0 827--916, 2023.

\bibitem[Friedrich and Rendall(2000)]{FriedrichRendall}
H.~Friedrich and A.~Rendall.
\newblock The {C}auchy problem.
\newblock In \emph{Einstein's Field Equations and their Physical Implications}.
  Springer Berlin, Heidelberg, 2000.

\bibitem[Futterman et~al.(1988)Futterman, Handler, and
  Matzner]{futterman_handler_matzner_1988}
J.~Futterman, F.~Handler, and R.~Matzner.
\newblock \emph{Scattering from Black Holes}.
\newblock Cambridge Monographs on Mathematical Physics. Cambridge University
  Press, 1988.

\bibitem[Grisales(2023)]{grisales2023asymptotics}
A.~F. Grisales.
\newblock Asymptotics of solutions to silent wave equations, 2023.
\newblock Preprint available at \url{https://arxiv.org/abs/2310.03582}.

\bibitem[Groeniger et~al.(2023)Groeniger, Petersen, and
  Ringstr{\"o}m]{groeniger2023formation}
H.~Groeniger, O.~Petersen, and H.~Ringstr{\"o}m.
\newblock Formation of quiescent big bang singularities, 2023.
\newblock Preprint available at \url{https://arxiv.org/abs/2309.11370}.

\bibitem[Hintz(2016)]{HintzdS}
P.~Hintz.
\newblock Global analysis of quasilinear wave equations on asymptotically de
  {Sitter} spaces.
\newblock \emph{Ann. de l'Institut Fourier}, 66\penalty0 (4):\penalty0
  1285--1408, 2016.

\bibitem[H\"{o}rmander(2007)]{HormanderPseudo}
L.~H\"{o}rmander.
\newblock \emph{The Analysis of Linear Partial Differential Operators III}.
\newblock Springer Berlin, Heidelberg, 2007.

\bibitem[Kasner(2008)]{Kasner}
E.~Kasner.
\newblock Geometrical theorems on {E}instein's cosmological equations.
\newblock \emph{Gen. Relativ. Gravit.}, 40\penalty0 (4):\penalty0 865--876,
  2008.

\bibitem[Kehle and Shlapentokh-Rothman(2019)]{KehleYakov}
C.~Kehle and Y.~Shlapentokh-Rothman.
\newblock A scattering theory for linear waves on the interior of
  {R}eissner--{N}ordstr{\"o}m black holes.
\newblock \emph{Ann. Henri Poincar\'{e}}, 20\penalty0 (5):\penalty0 1583--1650,
  2019.

\bibitem[Khalatnikov and Lifshitz(1963)]{kl63}
I.~Khalatnikov and E.~Lifshitz.
\newblock Investigations in relativistic cosmology.
\newblock \emph{Adv. Phys.}, 12\penalty0 (46):\penalty0 185--249, 1963.

\bibitem[Kichenassamy(2007)]{KichenassamyFuchsian}
S.~Kichenassamy.
\newblock \emph{Fuchsian Reduction}.
\newblock Birkh{\"a}user Boston, MA, 2007.

\bibitem[Kichenassamy and Rendall(1998)]{KichenassamyRendall}
S.~Kichenassamy and A.~Rendall.
\newblock Analytic description of singularities in {G}owdy spacetimes.
\newblock \emph{Class. Quantum Grav.}, 15\penalty0 (5):\penalty0 1339--1355,
  1998.

\bibitem[Klainerman and Sarnak(1981)]{KlainermanSarnak}
S.~Klainerman and P.~Sarnak.
\newblock Explicit solutions of {$\square u = 0$} on the
  {F}riedmann-{R}obertson-{W}alker space-times.
\newblock \emph{Ann. Henri Poincar\'{e}}, 35\penalty0 (4):\penalty0 253--257,
  1981.

\bibitem[Kofman et~al.(2011)Kofman, Uzan, and Pitrou]{KofmanUzanPitrou}
L.~Kofman, J.-P. Uzan, and C.~Pitrou.
\newblock Perturbations of generic {K}asner spacetimes and their stability.
\newblock \emph{J. Cosmol. Astropart. Phys.}, 2011\penalty0 (05), 2011.

\bibitem[Kommemi(2013)]{kommemi}
J.~Kommemi.
\newblock The global structure of spherically symmetric charged scalar field
  spacetimes.
\newblock \emph{Comm. Math. Phys.}, 323\penalty0 (1):\penalty0 35--106, 2013.

\bibitem[Li(2023)]{Me_Kasner}
W.~Li.
\newblock Kasner-like description of spacelike singularities in spherically
  symmetric spacetimes with scalar matter, 2023.
\newblock Preprint available at \url{https://arxiv.org/abs/2304.04802}.

\bibitem[Masaood(2022{\natexlab{a}})]{Masaood1}
H.~Masaood.
\newblock A scattering theory for linearised gravity on the exterior of the
  {S}chwarzschild black hole {I}: {T}he {T}eukolsky equations.
\newblock \emph{Comm. Math. Phys.}, 393\penalty0 (1):\penalty0 477--581, 2022.

\bibitem[Masaood(2022{\natexlab{b}})]{Masaood2}
H.~Masaood.
\newblock A scattering theory for linearised gravity on the exterior of the
  {S}chwarzschild black hole {II}: {T}he full system, 2022.
\newblock Preprint available at \url{https://arxiv.org/abs/2211.07462}.

\bibitem[Petersen(2016)]{PetersenMode}
O.~Petersen.
\newblock The mode solution of the wave equation in kasner spacetimes and
  redshift.
\newblock \emph{Math. Phys. Anal. Geom.}, 19\penalty0 (4):\penalty0 26, 2016.

\bibitem[Rendall(2008)]{RendallPDE}
A.~Rendall.
\newblock \emph{Partial Differential Equations in General Relativity}.
\newblock Oxford University Press, 2008.

\bibitem[Ringstr{\"o}m(2009)]{SCC_T3Gowdy}
H.~Ringstr{\"o}m.
\newblock Strong cosmic censorship in {$\mathbb{T}^3$}-{G}owdy spacetimes.
\newblock \emph{Ann. Math.}, 170\penalty0 (3):\penalty0 1181--1240, 2009.

\bibitem[Ringstr\"{o}m(2020)]{RingstromAsterisque}
H.~Ringstr\"{o}m.
\newblock Linear systems of wave equations on cosmological backgrounds with
  convergent asymptotics.
\newblock \emph{Ast{\'e}risque}, 420:\penalty0 1--510, 2020.

\bibitem[Rodnianski and Speck(2018{\natexlab{a}})]{RodnianskiSpeck0}
I.~Rodnianski and J.~Speck.
\newblock {A regime of linear stability for the Einstein-scalar field system
  with applications to nonlinear Big Bang formation}.
\newblock \emph{Ann. Math.}, 187\penalty0 (1):\penalty0 65 -- 156, 2018.

\bibitem[Rodnianski and Speck(2018{\natexlab{b}})]{RodnianskiSpeck1}
I.~Rodnianski and J.~Speck.
\newblock {S}table {B}ig {B}ang formation in near-{FLRW} solutions to the
  {E}instein-scalar field and {E}instein-stiff fluid systems.
\newblock \emph{Sel. Math.}, 24\penalty0 (5):\penalty0 4293--4459, 2018.

\bibitem[Rodnianski and Speck(2022)]{RodnianskiSpeck2}
I.~Rodnianski and J.~Speck.
\newblock On the nature of {H}awking's incompleteness for the {E}instein-vacuum
  equations: {T}he regime of moderately spatially anisotropic initial data.
\newblock \emph{J. Eur. Math. Soc.}, 24\penalty0 (1):\penalty0 167--263, 2022.

\bibitem[Simon and Reed(1979)]{ReedSimonScattering}
B.~Simon and M.~Reed.
\newblock \emph{Scattering Theory (Methods of Modern Mathematical Physics, Vol.
  3)}.
\newblock Academic Press, 1979.

\bibitem[Speck(2018)]{SpeckS3}
J.~Speck.
\newblock The maximal development of near-{FLRW} data for the {E}instein-scalar
  field system with spatial topology {$\mathbb{S}^3$}.
\newblock \emph{Comm. Math. Phys.}, 364\penalty0 (3):\penalty0 879--979, 2018.

\bibitem[Tao(2006)]{TaoNonlinearDispersive}
T.~Tao.
\newblock \emph{Nonlinear Dispersive Equations. Local and Global Analysis}.
\newblock Number 106 in Regional Conference Series in Mathematics. American
  Mathematical Society, 2006.

\bibitem[Taujanskas(2019)]{TaujanskasScattering}
G.~Taujanskas.
\newblock Conformal scattering of the {M}axwell-scalar field system on de
  {S}itter space.
\newblock \emph{J. Hyperbolic Differ. Equ.}, 16\penalty0 (04):\penalty0
  743--791, 2019.

\bibitem[Vasy(2010)]{VasydS}
A.~Vasy.
\newblock The wave equation on asymptotically de {S}itter-like spaces.
\newblock \emph{Adv. Math.}, 223\penalty0 (1):\penalty0 49--97, 2010.

\end{thebibliography}
\bibliographystyle{abbrvnat_mod}

\end{document}